\documentclass[letterpaper,11pt,oneside,reqno]{amsart}

\usepackage{bm}
\usepackage{mathrsfs}
\usepackage{amsfonts,amsmath, amssymb,amsthm,amscd,stmaryrd}

\topmargin=0in
\oddsidemargin=0in
\evensidemargin=0in
\textwidth=6.5in
\textheight=8.5in
\usepackage[mpexclude,DIV13]{typearea}
\usepackage{verbatim}
\usepackage{graphicx}
\usepackage[latin1]{inputenc}
\usepackage{latexsym}
\usepackage{lscape}
\usepackage{epsfig}
\include{bibtex}
\usepackage[colorlinks=true,linkcolor=blue]{hyperref}
\usepackage{setspace}
\usepackage{parskip}
\DeclareGraphicsRule{.tif}{png}{.png}{`convert #1 `basename
#1.tif`.png}

\begin{document}
\bibliographystyle{alpha}
\newcommand{\cn}[1]{\overline{#1}}
\newcommand{\e}[0]{\varepsilon}
\newcommand{\bbf}[0]{\mathbf}
\newcommand{\bX}{{\bf X}}
\newcommand{\Pfree}[5]{\ensuremath{\mathbb{P}^{#1,#2,#3,#4,#5}}}
\newcommand{\PfreeShort}{\ensuremath{\mathbb{P}^{BB}}}

\newcommand{\WH}[8]{\ensuremath{\mathbb{W}^{#1,#2,#3,#4,#5,#6,#7}_{#8}}}
\newcommand{\Wfree}[5]{\ensuremath{\mathbb{W}^{#1,#2,#3,#4,#5}}}
\newcommand{\WHShort}[3]{\ensuremath{\mathbb{W}^{#1,#2}_{#3}}}
\newcommand{\WHShortCouple}[2]{\ensuremath{\mathbb{W}^{#1}_{#2}}}

\newcommand{\walk}[3]{\ensuremath{X^{#1,#2}_{#3}}}
\newcommand{\walkupdated}[3]{\ensuremath{\tilde{X}^{#1,#2}_{#3}}}
\newcommand{\walkfull}[2]{\ensuremath{X^{#1,#2}}}
\newcommand{\walkfullupdated}[2]{\ensuremath{\tilde{X}^{#1,#2}}}

\newcommand{\PH}[8]{\ensuremath{\mathbb{Q}^{#1,#2,#3,#4,#5,#6,#7}_{#8}}}
\newcommand{\PHShort}[1]{\ensuremath{\mathbb{Q}_{#1}}}
\newcommand{\PHExp}[8]{\ensuremath{\mathbb{F}^{#1,#2,#3,#4,#5,#6,#7}_{#8}}}

\newcommand{\D}[8]{\ensuremath{D^{#1,#2,#3,#4,#5,#6,#7}_{#8}}}
\newcommand{\DShort}[1]{\ensuremath{D_{#1}}}
\newcommand{\partfunc}[8]{\ensuremath{Z^{#1,#2,#3,#4,#5,#6,#7}_{#8}}}
\newcommand{\partfuncShort}[1]{\ensuremath{Z_{#1}}}
\newcommand{\bolt}[8]{\ensuremath{W^{#1,#2,#3,#4,#5,#6,#7}_{#8}}}
\newcommand{\boltShort}[1]{\ensuremath{W_{#1}}}
\newcommand{\boltNew}{\ensuremath{W}}
\newcommand{\QTLH}{\ensuremath{\mathfrak{H}}}
\newcommand{\QTLHgen}{\ensuremath{\mathfrak{L}}}

\newcommand{\whitenoise}{\ensuremath{\mathscr{\dot{W}}}}
\newcommand{\mf}{\mathfrak}
\newcommand{\di}{{\rm{disc}}}

\newcommand{\EE}{\ensuremath{\mathbb{E}}}
\newcommand{\PP}{\ensuremath{\mathbb{P}}}
\newcommand{\var}{\textrm{var}}
\newcommand{\N}{\ensuremath{\mathbb{N}}}
\newcommand{\R}{\ensuremath{\mathbb{R}}}
\newcommand{\C}{\ensuremath{\mathbb{C}}}
\newcommand{\Z}{\ensuremath{\mathbb{Z}}}
\newcommand{\Q}{\ensuremath{\mathbb{Q}}}
\newcommand{\T}{\ensuremath{\mathbb{T}}}
\newcommand{\E}[0]{\mathbb{E}}
\newcommand{\OO}[0]{\Omega}
\newcommand{\F}[0]{\mathfrak{F}}
\def \Ai {{\rm Ai}}
\newcommand{\G}[0]{\mathfrak{G}}
\newcommand{\ta}[0]{\theta}
\newcommand{\w}[0]{\omega}
\newcommand{\grad}{\nabla}
\renewcommand{\P}{\mathbb{P}}
\newcommand{\ra}[0]{\rightarrow}
\newcommand{\vectoro}{\overline}
\newcommand{\crairy}{\mathcal{CA}}
\newcommand{\nc}{\mathsf{NoTouch}}
\newcommand{\ncf}{\mathsf{NoTouch}^f}
\newcommand{\wxy}{\mathcal{W}_{k;\bar{x},\bar{y}}}
\newcommand{\AP}{\mathfrak{a}}
\newcommand{\cm}{\mathfrak{c}}
\newcommand{\bz}{\mathbf{z}}
\newtheorem{theorem}{Theorem}[section]
\newtheorem{partialtheorem}{Partial Theorem}[section]
\newtheorem{conj}[theorem]{Conjecture}
\newtheorem{lemma}[theorem]{Lemma}
\newtheorem{proposition}[theorem]{Proposition}
\newtheorem{corollary}[theorem]{Corollary}
\newtheorem{claim}[theorem]{Claim}
\newtheorem{experiment}[theorem]{Experimental Result}

\def\todo#1{\marginpar{\raggedright\footnotesize #1}}
\def\change#1{{\color{green}\todo{change}#1}}
\def\note#1{\textup{\textsf{\color{blue}(#1)}}}

\theoremstyle{definition}
\newtheorem{rem}[theorem]{Remark}

\theoremstyle{definition}
\newtheorem{com}[theorem]{Comment}

\theoremstyle{definition}
\newtheorem{definition}[theorem]{Definition}

\theoremstyle{definition}
\newtheorem{definitions}[theorem]{Definitions}

\theoremstyle{definition}
\newtheorem{conjecture}[theorem]{Conjecture}

\newcommand{\airysh}{\mathcal{A}}
\newcommand{\hfixed}{\mathcal{H}}
\newcommand{\afixed}{\mathcal{A}}
\newcommand{\canopynoarg}{\mathsf{C}}
\newcommand{\canopy}[3]{\ensuremath{\mathsf{C}_{#1,#2}^{#3}}}
\newcommand{\argmax}{x_{{\rm max}}}
\newcommand{\zmax}{z_{{\rm max}}}

\newcommand{\Rkle}{\ensuremath{\mathbb{R}^k_{>}}}
\newcommand{\Ronele}{\ensuremath{\mathbb{R}^k_{>}}}
\newcommand{\ewxy}{\mathcal{E}_{k;\bar{x},\bar{y}}}

\newcommand{\bxyf}{\mathcal{B}_{\bar{x},\bar{y},f}}
\newcommand{\bxyflr}{\mathcal{B}_{\bar{x},\bar{y},f}^{\ell,r}}

\newcommand{\bxyfone}{\mathcal{B}_{x_1,y_1,f}}

\newcommand{\ptac}{p}
\newcommand{\ptact}{v}

\newcommand{\fext}{\mathfrak{F}_{{\rm ext}}}
\newcommand{\gext}{\mathfrak{G}_{{\rm ext}}}
\newcommand{\xext}{{\rm xExt}(\mathfrak{c}_+)}

\newcommand{\dd}{\, {\rm d}}
\newcommand{\signc}{\Sigma}
\newcommand{\wxylr}{\mathcal{W}_{k;\bar{x},\bar{y}}^{\ell,r}}
\newcommand{\wxylrprime}{\mathcal{W}_{k;\bar{x}',\bar{y}'}^{\ell,r}}
\newcommand{\Rklezero}{\ensuremath{\mathbb{R}^k_{>0}}}
\newcommand{\XYfM}{\textrm{XY}^{f}_M}

\newcommand{\upright}{D}
\newcommand{\staircase}{SC}
\newcommand{\energy}{E}
\newcommand{\xmax}{{\rm max}_1}
\newcommand{\ymax}{{\rm max}_2}
\newcommand{\lppls}{\mathcal{L}}
\newcommand{\lpplsre}{\mathcal{L}^{{\rm re}}}
\newcommand{\lpplsarg}[1]{\mathcal{L}_{n}^{\fa \to #1}}
\newcommand{\larg}[3]{\mathcal{L}_{n}^{#1,#2;#3}}
\newcommand{\BP}{M}
\newcommand{\weight}{\mathsf{Wgt}}
\newcommand{\pairweight}{\mathsf{PairWgt}}
\newcommand{\sumweight}{\mathsf{SumWgt}}
\newcommand{\mpgood}{\mathcal{G}}
\newcommand{\mpg}{\mathsf{Fav}}
\newcommand{\mcgone}{\mathsf{Fav}_1}
\newcommand{\radnik}[2]{\mathsf{RN}_{#1,#2}}
\newcommand{\size}[2]{\mathsf{S}_{#1,#2}}
\newcommand{\pdr}{\mathsf{PolyDevReg}}
\newcommand{\pwr}{\mathsf{PolyWgtReg}}
\newcommand{\lwr}{\mathsf{LocWgtReg}}
\newcommand{\hwp}{\mathsf{HighWgtPoly}}
\newcommand{\fbr}{\mathsf{ForBouqReg}}
\newcommand{\bbr}{\mathsf{BackBouqReg}}
\newcommand{\fsc}{\mathsf{FavSurCon}}
\newcommand{\maxpoly}{\mathrm{MaxDisjtPoly}}
\newcommand{\maxswf}{\mathsf{MaxScSumWgtFl}}
\newcommand{\emaxswf}{\e \! - \! \maxswf}
\newcommand{\minswf}{\mathsf{MinScSumWgtFl}}
\newcommand{\eminswf}{\e \! - \! \minswf}
\newcommand{\surreg}{\mathcal{R}}
\newcommand{\scf}{\mathsf{FavSurgCond}}
\newcommand{\disjtpoly}{\mathsf{DisjtPoly}}
\newcommand{\intint}[1]{\llbracket 1,#1 \rrbracket}
\newcommand{\maxsym}{*}
\newcommand{\polynum}{\#\mathsf{Poly}}
\newcommand{\dlp}{\mathsf{DisjtLinePoly}}
\newcommand{\lowb}{\underline{B}}
\newcommand{\highb}{\overline{B}}
\newcommand{\tottt}{s_{1,2}^{2/3}}
\newcommand{\tot}{s_{1,2}}
\newcommand{\btone}{{\bf{t}}_1}
\newcommand{\bttwo}{{\bf{t}}_2}
\newcommand{\formerE}{C}
\newcommand{\rcon}{r_0}
\newcommand{\para}{Q}
\newcommand{\Cstrong}{E}

\newcommand{\mc}{\mathcal}
\newcommand{\vect}{\mathbf}
\newcommand{\bt}{\mathbf{t}}
\newcommand{\scB}{\mathscr{B}}
\newcommand{\scBres}{\mathscr{B}^{\mathrm{re}}}
\newcommand{\rightshadow}{\mathrm{RS}Z}
\newcommand{\dbm}{D}
\newcommand{\edgedbm}{D^{\rm edge}}
\newcommand{\gue}{\mathrm{GUE}}
\newcommand{\edgegue}{\mathrm{GUE}^{\mathrm{edge}}}
\newcommand{\eqdist}{\stackrel{(d)}{=}}
\newcommand{\geqdist}{\stackrel{(d)}{\succeq}}
\newcommand{\leqdist}{\stackrel{(d)}{\preceq}}
\newcommand{\scal}{{\rm sc}}
\newcommand{\fa}{x_0}
\newcommand{\hit}{H}
\newcommand{\scaledle}{\mathsf{Nr}\mc{L}}
\newcommand{\cenleup}{\mathscr{L}^{\uparrow}}
\newcommand{\cenledown}{\mathscr{L}^{\downarrow}}
\newcommand{\eln}{T}
\newcommand{\xmin}{{\rm Corner}^{\mfl,\mc{F}}}
\newcommand{\ymin}{{\rm Corner}^{\mfr,\mc{F}}}
\newcommand{\barxmin}{\overline{\rm Corner}^{\mfl,\mc{F}}}
\newcommand{\barymin}{\overline{\rm Corner}^{\mfr,\mc{F}}}
\newcommand{\qmin}{Q^{\mc{F}^1}}
\newcommand{\barqmin}{\bar{Q}^{\mc{F}^1}}
\setcounter{tocdepth}{2}
\newcommand{\test}{T}
\newcommand{\mfl}{\mf{l}}
\newcommand{\mfr}{\mf{r}}
\newcommand{\gfl}{\ell}
\newcommand{\gfr}{r}
\newcommand{\jre}{J}
\newcommand{\highfl}{{\rm HFL}}
\newcommand{\flyleap}{\mathsf{FlyLeap}}
\newcommand{\touch}{\mathsf{Touch}}
\newcommand{\notouch}{\mathsf{NoTouch}}
\newcommand{\close}{\mathsf{Close}}
\newcommand{\abovepar}{\mathsf{High}}
\newcommand{\vecint}{\bar{\iota}}
\newcommand{\cornthree}{{\rm Corner}^\mc{G}_{k,\mfl}}
\newcommand{\cornfour}{{\rm Corner}^\mc{H}_{k,\fa}}
\newcommand{\mpgg}{\mathsf{Fav}_{\mc{G}}}

\newcommand{\lefta}{M_{1,k+1}^{[-2\eln,\gfl]}}
\newcommand{\mida}{M_{1,k+1}^{[\gfl,\gfr]}}
\newcommand{\righta}{M_{1,k+1}^{[\gfr,2\eln]}}
\newcommand{\Var}{{\textrm{Var}}}
\newcommand{\ipdval}{d}
\newcommand{\ctemp}{d_0}

\newcommand{\wien}{W}
\newcommand{\pole}{P}
\newcommand{\pp}{p}

\newcommand{\const}{D_k}
\newcommand{\numcone}{14}
\newcommand{\numctwo}{13}
\newcommand{\numcthree}{6}
\newcommand{\rsC}{C}
\newcommand{\rsc}{c}
\newcommand{\cone}{c_1}
\newcommand{\Cone}{C_1}
\newcommand{\Ctwo}{C_2}
\newcommand{\smallc}{c_0}
\newcommand{\smallcprime}{c_1}
\newcommand{\smallcanother}{c_2}
\newcommand{\smallcnew}{c_3}
\newcommand{\Cda}{D}
\newcommand{\Kzero}{K_0}
\newcommand{\Rmac}{R}
\newcommand{\rmac}{r}
\newcommand{\conseqmac}{D}
\newcommand{\constn}{C'}
\newcommand{\coninit}{\Psi}
\newcommand{\condee}{\hat{D}}
\newcommand{\conbrac}{\hat{C}}
\newcommand{\Cnew}{\tilde{C}}
\newcommand{\Cbig}{C^*}
\newcommand{\Ctbd}{C_+}
\newcommand{\Ctbs}{C_-}
\newcommand{\cE}{\mathcal{E}}
\newcommand{\Cov}{Cov}

\newcommand{\imax}{i_{{\rm max}}}

\newcommand{\wlp}{{\rm WLP}}

\newcommand{\canopynumber}{\mathsf{Canopy}{\#}}

\newcommand{\cannum}{{\#}\mathsf{SC}}

\newcommand{\boundgood}{\mathsf{G}}
\newcommand{\lshift}{\mc{L}^{\rm shift}}
\newcommand{\deltapi}{\theta}
\newcommand{\rootneigh}{\mathrm{RNI}}
\newcommand{\rootneighuse}{\mathrm{RNI}}
\newcommand{\manycan}{\mathsf{ManyCanopy}}
\newcommand{\specialpt}{\mathrm{spec}}

\newcommand{\dist}{\vert\vert}
\newcommand{\fik}{\mc{F}_i^{[K,K+1]^c}}
\newcommand{\mcfa}{\mc{H}[\fa]}
\newcommand{\tent}{{\rm Tent}}
\newcommand{\goodk}{\mc{G}_{K,K+1}}
\newcommand{\pairsep}{{\rm PS}}
\newcommand{\mbf}{\mathsf{MBF}}
\newcommand{\nbd}{\mathsf{NoBigDrop}}
\newcommand{\bd}{\mathsf{BigDrop}}
\newcommand{\jleft}{j_{{\rm left}}}
\newcommand{\jright}{j_{{\rm right}}}
\newcommand{\smalljfluc}{\mathsf{SmallJFluc}}
\newcommand{\mfone}{M_{\mc{F}^1}}
\newcommand{\mfthree}{M_{\mc{G}}}
\newcommand{\rhomac}{P}
\newcommand{\phimac}{\varphi}
\newcommand{\chimac}{\chi}
\newcommand{\xnmac}{z_{\mathcal{L}}}
\newcommand{\Cwb}{E_0}
\newcommand{\initcond}{\mathcal{I}}
\newcommand{\neargeod}{\mathsf{NearGeod}}
\newcommand{\polyunique}{\mathrm{PolyUnique}}
\newcommand{\latecoal}{\mathsf{LateCoal}}
\newcommand{\nolatecoal}{\mathsf{NoLateCoal}}
\newcommand{\normalcoal}{\mathsf{NormalCoal}}
\newcommand{\regfluc}{\mathsf{RegFluc}}
\newcommand{\mdeltaweight}{\mathsf{Max}\Delta\mathsf{Wgt}}
\newcommand{\ovbar}[1]{\mkern 1.5mu\overline{\mkern-1.5mu#1\mkern-1.5mu}\mkern 1.5mu}
\newcommand{\fluc}{\mathsf{ScaledFluc}}
\newcommand{\fluct}{\mathsf{Fluc}}

\newcommand{\maxmin}{\pwr}
\newcommand{\nmac}{N}

\newcommand{\notlow}{{\rm NotLow}}

\newcommand{\rmreg}{{\rm Reg}}

\newcommand{\down}{\mathsf{Fall}}
\newcommand{\up}{\mathsf{Rise}}

\newcommand{\safeguard}{\mathsf{SafeGuard}}
\newcommand{\lowoverlap}{\mathsf{LowOverlap}}
\newcommand{\longex}{\mathsf{LongExcursions}}
\newcommand{\totexdur}{\mathsf{HighTotExcDur}_n^{0,t}}
\newcommand{\shortexc}{\mathsf{Short}_n^{0,t}}
\newcommand{\shortnonthinexc}{\mathsf{ShortNonThin}_n^{0,t}}
\newcommand{\longexc}{\mathsf{Long}_n^{0,t}}
\newcommand{\sigover}{\mathsf{SigOver}}
\newcommand{\steady}{\mathsf{Steady}}
\newcommand{\uni}{\mathsf{UnifBdd}}
\newcommand{\paradelta}{\Delta^{\cup}\,}

\newcommand{\aplus}{a_+}
\newcommand{\xminus}{x^-}

\newcommand{\maxdist}{\mathsf{MaxDist}}
\newcommand{\proxymimicry}{\mathsf{ProxyMimicry}_n^{0,t}(L,H)}
\newcommand{\nowideexc}{\mathsf{NoWideExc}_n^{0,t}}
\newcommand{\manyunlucky}{\mathsf{ManyUnlucky}_n^{0,t}}

\newcommand{\consistsep}{\mathsf{ConsistSep}_n^{0,t}}
\newcommand{\dmacd}{d}
\newcommand{\hata}{\sigma}
\newcommand{\hatapr}{\sigma}
\newcommand{\remainder}{r_0}
\newcommand{\nolow}{\mathsf{NoLow}_n}
\newcommand{\nohigh}{\mathsf{NoHigh}_n}
\newcommand{\low}{\mathsf{Low}_n}
\newcommand{\high}{\mathsf{High}_n}
\newcommand{\heightmac}{Y}
\newcommand{\duration}{{\rm dur}}
\newcommand{\scriptdur}{\mathscr{D}}

\newcommand{\redconst}{d_0}
\newcommand{\tza}{\theta}
\newcommand{\reg}{\mathsf{r}}
\newcommand{\tnonza}{\tau_0^\alpha}

\newcommand{\macroseventeen}{19}
\newcommand{\macrobig}{1370}
\newcommand{\cmac}{c_0}
\newcommand{\Cmac}{C_0}
\newcommand{\dmac}{d}

\title[Gaussian polymer near-ground states]{The geometry of near ground states \\ in Gaussian polymer models}

\author[S. Ganguly]{Shirshendu Ganguly}
\address{S. Ganguly\\
  Department of Statistics\\
 U.C. Berkeley \\
  401 Evans Hall \\
  Berkeley, CA, 94720-3840 \\
  U.S.A.}
  \email{sganguly@berkeley.edu}
  
\author[A. Hammond]{Alan Hammond}
\address{A. Hammond\\
  Departments of Mathematics and Statistics\\
 U.C. Berkeley \\
 899 Evans Hall \\
  Berkeley, CA, 94720-3840 \\
  U.S.A.}
  \email{alanmh@berkeley.edu}
  \subjclass{$82C22$, $82B23$ and  $60H15$.}
\keywords{Brownian last passage percolation, energy landscapes, near maximizers, Brownian Gibbs analysis.}

\begin{abstract} 
The energy and geometry of maximizing paths in integrable last passage percolation models are governed by the characteristic KPZ scaling exponents of one-third and two-thirds.
When represented in scaled coordinates that respect these exponents, this random field of paths may be viewed as a complex energy landscape. We investigate the structure of valleys and connecting pathways in this landscape. The routed weight profile $\R \to \R$ associates to $x \in \R$
the maximum scaled energy obtainable by a path whose scaled journey from $(0,0)$ to $(0,1)$ passes through the point $(x,1/2)$. Developing tools of Brownian Gibbs analysis from \cite{BrownianReg} and~\cite{DeBridge}, we prove an assertion of strong similarity of this profile for Brownian last passage percolation  to Brownian motion of rate two on the unit-order scale. A sharp estimate on the rarity that two macroscopically different routes in the energy landscape offer energies close to the global maximum results. {We prove robust assertions concerning modulus of continuity for the energy and geometry of scaled maximizing paths, that develop the results and approach of~\cite{HammondSarkar}, 
delivering estimates valid on all scales above the microscopic.}
The geometry of excursions of near ground states about the maximizing path is investigated: indeed,  
we estimate the energetic  shortfall of scaled paths forced to closely mimic the geometry of the maximizing route while remaining disjoint from it.
We also provide bounds on the approximate gradient of the maximizing path, viewed as a function, ruling out sharp steep movement down to the microscopic scale. Our results find application in a companion study~\cite{Dynamics}
of the stability, and fragility, of last passage percolation under a dynamical perturbation.
\end{abstract}

\maketitle



\begingroup
\hypersetup{linktocpage=false}
\setcounter{tocdepth}{2}
\renewcommand{\baselinestretch}{0.9}\normalsize
\tableofcontents
\renewcommand{\baselinestretch}{1.0}\normalsize
\endgroup

\section{Introduction}

\subsection{KPZ universality, last passage percolation models, and scaled coordinates}
The $1 + 1$ dimensional Kardar-Parisi-Zhang [KPZ] universality class includes many microscopic models in which a random interface is suspended over a one-dimensional domain, whose growth in a direction normal to the surface competes with a smoothening surface tension in the presence of a local force that randomly roughens the surface.
Many planar last passage percolation [LPP] models exhibit these characteristics. In a planar LPP model, directed paths, moving in directions in the first quadrant, 
are assigned energy via a random environment, which is independent in disjoint regions. This energy is assigned  by integrating the environment's value along the path.
For a given pair of planar points, the directed path between them attaining the maximum energy  is called a geodesic.

For LPP models lying in the KPZ class, a geodesic moving in a non-axial direction, crossing  a large distance $n$ has an energy that is typically  linear in $n$ with a standard deviation of order~$n^{1/3}$. The associated random interface   mentioned at the outset is the function obtained when the lower geodesic endpoint is held fixed, and the geodesic energy is  a function of the other endpoint varying horizontally. In this particular case, where the first endpoint is fixed, the energy profile is termed `narrow wedge'.
 Non-trivial correlations in the interface occur between points with separation of order $n^{2/3}$. The same exponent governs the related notion of transversal fluctuation of the geodesic from the straight line joining its endpoints.
 Despite the predicted universality, these assertions have been rigorously demonstrated for only a few LPP models with certain exactly solvable features: the seminal work of Baik, Deift and Johansson~\cite{BDJ1999}
 established the one-third exponent, and the GUE Tracy-Widom distributional limit, for the case of Poissonian last passage percolation, while Johansson~~\cite{Johansson2000} derived the two-thirds power law for maximal transversal fluctuation for this model.

In view of these facts, it is natural to represent the field of geodesics in a scaled system of coordinates. Under this scaling, a northeasterly displacement of order~$n$  becomes a vertical displacement of one unit, while a horizontal displacement of order $n^{2/3}$ becomes a unit horizontal displacement. The system of energies also transfers to scaled coordinates, with the scaled geodesic energy being specified by centring about the mean value and normalizing by the typical scale of $n^{1/3}$.

In this way, the LPP geodesic that begins at $(0,0)$ and ends at $(n,n)$ has a scaled counterpart, which we will refer to as a polymer and label $\rho \big[ (0,0) \to (0,1) \big]$, that travels between $(0,0)$
 and~$(0,1)$. This polymer has a scaled energy, or weight, that we denote by $\weight \big[ (0,0) \to (0,1) \big]$, which in the high $n$ limit is distributed according to the GUE Tracy-Widom distribution.
  In the scaled LPP description more generally, a polymer  $\rho \big[ (x,s) \to (y,t) \big]$ is associated  to each pair of planar points $(x,s)$ and $(y,t)$ with $s < t$. The polymer's weight is denoted by  $\weight \big[ (x,s) \to (y,t) \big]$.
 
 \subsection{The energy landscape of scaled LPP and the structure of its valleys}

Many statistical mechanical systems may be described by a probability measure whose density $e^{-H(x)}$ with respect to a background measure $\mu$, supported on a space $X$, is specified by a Hamiltonian $H: X \to \R$ which may be viewed as an energy landscape over $X$. Such a system may be viewed as a particle that dwells randomly in $X$; the system is held at equilibrium by a Markovian dynamics in which the present state evolves locally according to a Metropolis rule governed by the relative values of the function $e^{-H(x)}$. 
The present state is thus a snapshot of a particle wandering in the energy landscape, which is typically attracted into the landscape's local valleys; its long-term behaviour is governed by the structure of valleys---their number; depths; and the heights and geometry of mountain passes that connect them. For Gaussian models of disorder, including Gaussian polymers, \cite{Chatterjee14} proved via an interpolation method that certain strong concentration properties exhibited by such systems are equivalent to an abundance of well-separated valleys---which abundance entails chaotic behaviour of observables when the system is slightly perturbed. The landscape geometry of several models has since then been studied.  Landscapes of general smooth Gaussian functions
on the sphere in high dimensions as well as those related to spin-glass models have been studied in \cite{auffinger1, auffinger2}. More recently, refined results for the number of valleys for  spin-glasses have been obtained in \cite{DEZ,CHL2018,Eldan}.

In this article, we investigate the structure of the energy landscape of {\em Brownian}  last passage percolation, a semi-discrete polymer wandering through Gaussian noise, in its scaled coordinate description.  This will find application in a companion study~\cite{Dynamics} of the transition from stability to chaos of this model subject to a dynamical perturbation.  Brownian LPP model will be recalled shortly; its noise environment is comprised of Brownian randomness. The model
carries a parameter $n \in \N$
which in rising approximates a limiting scaled description; in our present heuristic purpose, we omit mention of it; indeed we already did so, in indicating the meaning of the field of weights  $\weight \big[ (x,s) \to (y,t) \big]$.
 
Let $x \in \R$  and $a \in (0,1)$. Set $Z(x,a)$ equal to the supremum of weights of scaled paths on the route from $(0,0)$ to $(0,1)$ that pass through $(x,a)$. That is,
\begin{equation}\label{e.ztwoweight}
 Z(x,a) = \weight \big[ (0,0) \to (x,a) \big] +  \weight \big[  (x,a) \to (0,1) \big] \, .
\end{equation}
We refer to the random process $\R \to \R : x \to Z(x,a)$ as the {\em routed weight profile} at height $a$, because this process records weights of paths that are routed through a given location at this height. This profile is a cross-section of the LPP energy landscape that is pertinent for understanding the geometry of the polymer $\rho \big[ (0,0) \to (0,1) \big]$ and how effectively scaled paths that share the polymer's endpoints but that take alternative routes compete in weight with the polymer.
For example, the horizontal location at which  $\rho \big[ (0,0) \to (0,1) \big]$ traverses height $a$ is the maximizer~$M$ of  the random function $\R \to \R : x \to Z(x,a)$  (an almost surely unique location, as we will indicate in Lemma~\ref{l.routedprofile}(2)). For $x \in \R$, the quantity $Z(M,a) - Z(x,a) \geq 0$ is the shortfall in weight relative to the polymer's of a scaled LPP path from $(0,0)$ to $(0,1)$ that is constrained to pass via the point~$(x,a)$.

\subsection{Principal conclusions and themes in overview}
In this article, we will prove several conclusions concerning the energy landscape of scaled Brownian LPP.
 As we informally summarise them now, we continue to omit mention of the scaling parameter $n \in \N$: roughly, our assertions should be understood uniformly in high choices of this parameter.
 
 \subsubsection{Brownianity of the routed weight profile}\label{s.brownianity}  On scales larger the unit scale, the profile $x \to Z(x,a)$ is curved, following the parabola $x \to -2^{-1/2} a^{-1}(1-a)^{-1}x^2$. On the unit scale, however, it resembles Brownian motion. Building on a Brownian comparison result for narrow wedge weight profiles from \cite{DeBridge},  our first result, Theorem~\ref{t.brownianroutedprofile}, offers a strong attestation of this resemblance. 
For $a \in (0,1)$, the profile $z \to Z(x,a)$ enjoys a strong similarity with Brownian motion $B$ {\em of rate two}. Indeed, if $A$ denotes a collection of continuous functions on $[-1,1]$ that vanish at $-1$ for which the probability that $[-1,1] \to \R: x \to B(x) - B(-1)$
 is denoted by $\eta$, then the probability that the profile $[-1,1] \to \R : x \to Z(x,a) - Z(-1,a)$ belongs to $A$ is at most an expression of the form $\eta \cdot \exp \big\{ \Theta(1) (\log \eta^{-1})^{5/6} \big\}$. The latter, correction, term grows much more slowly than any inverse power of~$\eta$ in the limit of $\eta \searrow 0$. The constant implied by use of the notation $\Theta(1)$
 may be chosen uniformly as $a$ varies over any given compact set in $(0,1)$.

 \subsubsection{The rarity of twin peaks}
 If the maximizer $M$ of $x \to Z(x,1/2)$ lies on the right, so that say the positive probability event $M \in [1,3]$ is satisfied, the random variable 
 $$
 Z(M,1/2) \, - \, \sup \big\{ Z(x,1/2): x \in [-3,-1] \big\} \geq 0
 $$
 equals the shortfall in weight relative to the polymer's along those scaled paths from $(0,0)$ to $(0,1)$ that instead pass on the left, via locations in $[-3,-1]$, at the mid-life time one-half.
 If this random variable is less than a given small quantity $\sigma > 0$, the profile $x \to Z(x,1/2)$ resembles a pair of peaks, with the left hill's height rivalling the right hill's to within a distance of $\sigma$. When this {\em twin peaks'} event occurs, a local valley in the LPP energy landscape lies at a significant remove from the global valley while succeeding to  rival the latter's depth. An upper bound on this event's probability thus sheds light on the landscape's geometry. Given the strong resemblance of the profile to Brownian motion, the probability of the twin peaks' event is inherited from the counterpart Brownian probability. Our second principal conclusion, Theorem~\ref{t.nearmax}, asserts that twin peaks with discrepancy $\sigma$ arise in the routed weight profile with probability at most  $\sigma \cdot \exp \big\{ \Theta(1) (\log \sigma^{-1})^{5/6} \big\}$. 
  Figure~\ref{f.twinpeaksneartouch} offers a guide to twin peaks in the energy landscape via the equivalent notion of {\em near touch} for a natural decomposition of the routed weight profile.
 
\begin{figure}[ht]
\begin{center}
\includegraphics[height=7cm]{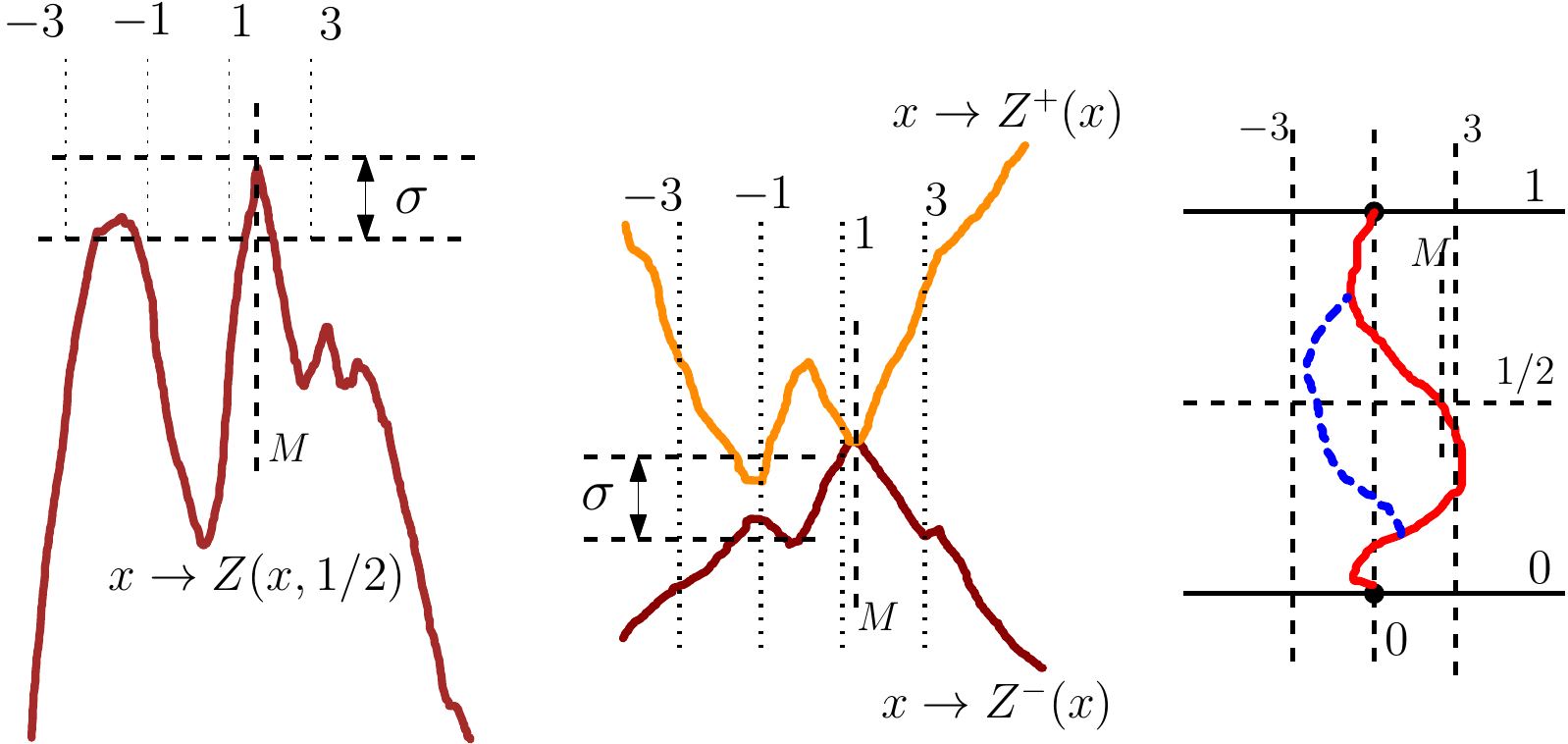}
\caption{{\em Left:} The routed weight profile $x \to Z(x,1/2)$ realizes the twin peaks' event. {\em Middle:} Let $Z^-(x) = \weight_n \big[ (0,0) \to (x,1/2) \big]$; and let $x \to Z^+(x)$ denote the vertical translate of $x \to -\weight_n \big[ (x,1/2) \to (0,1) \big]$ such that the graphs of $Z^-$ and $Z^+$ touch, but do not cross. The horizontal coordinate of the point of touch is~$M$, the maximizer of $Z(\cdot,1/2)$; the occurrence of twin peaks is now represented by a near touch on the part  of the two graphs in the strip $[-3,-1] \times \R$. {\em Right:} The bold polymer $\rho \big[ (0,0) \to (0,1) \big]$ has horizontal coordinate $M \in [1,3]$ at height one-half. The rival path, following the bold-dashed-bold route, attains a weight within $\sigma$ of the polymer's while swinging left, into $[-3,-1]$, at the mid-life time one-half.}
\label{f.twinpeaksneartouch}
\end{center}
\end{figure} 
 
 \subsubsection{Robust assertions of modulus of continuity for geometry and weight of polymers}
Polymers such as  $\rho = \rho \big[ (0,0) \to (0,1) \big]$ may be viewed as functions of the vertical coordinate;  in this way, we interpret $\rho: [0,1] \to \R$ as a random real-valued function.
A modulus of continuity for this function is known by~\cite{HammondSarkar,DOV} to take the form of a large constant multiple of $z \to z^{2/3} \big( \log z^{-1} \big)^{1/3}$. The weight of the polymer restricted to $[0,t]$ may also be viewed as a function of $t \in [0,1]$; these same references prove a modulus of continuity for this weight profile of order $z \to z^{1/3}  \big( \log z^{-1} \big)^{2/3}$.  We provide robust forms of such assertions in Theorems~\ref{t.toolfluc} and~\ref{t.weight}. These results are  valid in Brownian LPP uniformly in high values of its parameter $n \in \N$ and in variation of polymer endpoints over compact regions. Just as significantly, they control variation in polymer weight and geometry not merely in response to small $n$-independent changes in the vertical coordinate as do~\cite{HammondSarkar} and \cite{DOV}, but on any vertical scale down to the microscopic separation~$n^{-1}$. A related local fluctuation result was also a crucial ingredient in \cite{SlowBond}.

 \subsubsection{Slender excursions around the polymer are typically uncompetitive}
 The weight $Z(M,1/2)$ is realized by the maximum weight path on the route from $(0,0)$ to $(0,1)$, namely the polymer  $\rho = \rho \big[ (0,0) \to (0,1) \big]$. For $z \in \R$ small, the weight $Z(M+z,1/2)$
 is realized by a scaled path~$\phi$ that begins from $(0,0)$ by following the course of $\rho$; that departs this course at some height $h_1 \in (0,1/2)$ to visit $M+z$ at height one-half before rejoining $\rho$ at some height $h_2 \in (1/2,1)$; and that then follows the course of $\rho$ until $(0,1)$. As such, $[h_1,h_2]$  is an interval during which $\phi$ makes an excursion away from~$\rho$. The KPZ exponent of two-thirds for polymer geometry indicates that $(h_2 - h_1)^{2/3}$ has typical order $z$. That is, we expect $\phi$ to make an excursion, on an interval that contains one-half, for a duration~$h_2 - h_1$ of order $z^{3/2}$; to maintain a horizontal distance from~$\rho$ of order $z$ during much of the excursion's duration; and, in view of the Gaussian-order increment of the routed weight profile,  to accrue a shortfall in weight relative to $\rho$ of order $z^{1/2}$.
 
We present a conclusion, Theorem~\ref{t.nocloseness}, that validates this heuristic view.  It considers the maximum weight accrued by a path~$\phi$ that makes an excursion of duration $h \in (0,1)$  from the polymer $\rho$ in such a way that the horizontal discrepancy between $\rho$ and $\phi$ is at most $h^{2/3}\tza$ at most moments during the excursion's lifetime. The parameter $\tza > 0$ will be chosen to be small, so that a {\em slender} excursion is being demanded, one that deviates horizontally from the polymer by a factor of $\tza$ less than is expected. We will prove that the weight of any such path~$\phi$ is highly likely to fall short of the polymer weight by an order of  at least $h^{1/3} \tza^{-1}$. When $\tza$ is of unit order, this shortfall is predicted by the KPZ exponent of one-third for polymer weight; when $\tza \ll 1$, the factor of  $\tza^{-1}$ represents a weight penalty for the forcibly confined geometry endured by the excursion. Results of a similar flavor have appeared before in \cite{BGH18,Watermelon, BHS18,Bootstrapping}.
   
 \subsubsection{The polymer advances in a regular fashion microscopically}
 Our final main result, Theorem~\ref{t.notallcliffs}, concerns the microscopic structure of the trajectory of the polymer in Brownian LPP, and is more vividly expressed in unscaled coordinates. Consider then the Brownian LPP geodesic~$\Gamma_n$ that runs from $(0,0)$ to $(n,n)$. The geodesic's progression is globally diagonal; we prove that, even on the shortest of scales, this progression is manifest. A {\em cliff} in $\Gamma$ is a subpath of $\Gamma$ in which $\Gamma$ advances horizontally by one unit while advancing vertically by $A$ units. We prove that, when the positive parameter~$A$ is fixed at a high value, and except on an event of probability that decays at an exponential rate in~$n$, the proportion of $\Gamma$ that is comprised of cliffs is bounded away from one.
 
 \subsection{Probabilistic and geometric inquiry into KPZ universality}
 The study of
KPZ universality has advanced through physical 
insights, numerical analysis, and several techniques
of integrable or algebraic origin. Rather than hazard a 
summary of literature to support this one sentence history, we refer to the reader to~\cite{IvanSurvey} for a 
KPZ survey from 2012; in fact, integrable
and analytic approaches to KPZ have attracted huge interest around and since that time. A recent wave of 
KPZ research has brought probabilistic and geometric tools to the fore, making use of integrable aspects of the models of study as occasional inputs in arguments, 
albeit essential ones. Three examples are the solution~\cite{SlowBond} of the slow bond problem, in 
which the integrable model of exponential LPP is perturbed by altering the random environment along a one-dimensional subspace, and the resulting geometry and energy of geodesics is studied; 
the construction~\cite{AiryLE} of the  Airy line ensemble, a KPZ universal object encoding polymer weights in the narrow wedge case as a continuous system of mutually avoiding random curves;
and the construction~\cite{DOV} of the Airy sheet (or the directed landscape), a rich scaling limit for the weights of KPZ polymers in which these weights are obtained as LPP values in an environment specified by the Airy line ensemble after the subtraction of a parabola. 

The present article pursues the study of problems in KPZ in a probabilistic and geometric vein. 
In particular, our results Theorems~\ref{t.brownianroutedprofile} and~\ref{t.nearmax} on Brownianity and twin peaks' rarity for the routed weight profile lie in the domain of {\em Brownian Gibbs} analysis of LPP. 
The parabolic Airy line ensemble is in essence a mutually avoiding system of Brownian motions, subject to suitable boundary conditions. As we will indicate more clearly early in Section~\ref{s.twinpeaks}, this ensemble of random curves thus satisfies the Brownian Gibbs property, a simple and attractive resampling property involving Brownian motion and avoidance. The Brownian Gibbs technique led to the construction of the Airy line ensemble in~\cite{AiryLE}. The technique has been pursued in \cite{BrownianReg} and \cite{DeBridge} to yield strong inferences regarding the similarity to Brownian motion of the Airy$_2$ process, which is, after a parabolic shift, the scaling limit of the narrow wedge polymer weight profile in integrable LPP models. 

Our results on Brownianity and twin peaks' rarity develop this strand of research, 
begun in \cite{AiryLE}, and pursued in \cite{BrownianReg} and \cite{DeBridge}, 
so that results such as Theorems~\ref{t.brownianroutedprofile} and~\ref{t.nearmax} pertinent to the LPP energy landscape now become available.

By means of a more algebraic approach that analyses  representations involving Fredholm determinants,
strong Brownian comparison estimates have also been obtained in~\cite{MQR17}.  This work constructs a universal Markov process called the KPZ fixed point that describes the evolution of geodesic energy profiles starting from rather arbitrary initial data; an earlier result in \cite{QR1} identified domains of attraction for the one point fluctuations of the KPZ equation starting from general initial data. 
The assertion that the {A}iry$_2$ process closely resembles Brownian motion on the unit-order scale, and counterpart results for scaled geodesic energy profiles in LPP models, have been instrumental in several recent inquiries into geometric and fractal  properties of the KPZ fixed point. In \cite{BGcorrelation,BGZcorrelation, FO}, exponents governing temporal correlations induced by various initial data have been determined, thereby settling conjectures by Ferrari and Spohn from~\cite{FS}. (In \cite{CGH}, an analogous result for the KPZ equation is derived.)
Profile Brownianity also drives the fractal geometry and Hausdorff dimension  results for exceptional sets found in the space-time Airy sheet that are the subject of~\cite{BGHfractal1,BGHfractal2}.

\subsection{Stability and chaos in dynamical Brownian LPP}
Control on polymer geometry and weight; on the excursion geometry of LPP paths that in weight are competitive with the maximum; on the microscopic structure of geodesics---these geometric inferences are, we believe, robust tools that will serve to advance the analysis of LPP; its scaling limit; and its reaction to perturbation. Indeed, this article's results find application in 
a companion study~\cite{Dynamics} of the stability, and fragility, of Brownian LPP under dynamical perturbation. Very shortly, we will define Brownian LPP; for now, we note merely that its noise environment is specified by a countable system of independent Brownian motions. A dynamics may be introduced that leaves Brownian LPP invariant, in which each of these constituent Brownian motions is updated according to Ornstein-Uhlenbeck dynamics. In \cite{Dynamics} is identified the time-scale that heralds the transition from stability to chaos for dynamical Brownian LPP---the polymer from $(0,0)$ to $(0,1)$ is largely unperturbed in the stable zone and is profoundly altered in the chaotic phase. This time-scale takes the form $n^{-1/3 + o(1)}$ when a geodesic of extension~$n \in \N$ is considered; this corresponds to updating $n^{2/3 + o(1)}$ bits along the geodesic in a discrete LPP model. Every one of the results that we have indicated in the preceding overview has a role to play in proving this transition in~\cite{Dynamics}. 
The robust probabilistic and geometric results and technique 
that we present undergird the companion dynamical  LPP analysis and, we believe, will find further application in the study of scaled KPZ structure.

In the next two sections, we define Brownian LPP and introduce some of its basic objects; and we specify the transformation that specifies the scaled coordinates in which we couch our principal results and proofs. In the remaining introductory sections, we then present the statements of our main results, in the same order in which we have just summarized them.  

\subsection{Brownian last passage percolation [LPP]}\label{s.brlpp}
On a probability space equipped with a law labelled~$\PP$,
let $B:\R \times \Z \to \R$ denote a collection of 
independent  two-sided standard Brownian motions $B(\cdot,k):\R\to \R$, $k \in \Z$. The indexing of the 
domain in the form $\R \times \Z$ is unusual, with the other choice $\Z \times \R$ being more conventional. 
The choice of $\R \times \Z$ is made because it permits us to visualize this index set for the ensemble $B$'s 
curves as a subset of $\R^2$ with the usual Cartesian 
coordinate order being respected by the notation. 

Let $i,j \in \Z$ with $i \leq j$.
$\llbracket i,j \rrbracket$ will denote the integer interval $\{i,\cdots,j\}$.
For $x,y \in \R$ with $x \leq y$,
consider the collection of  non-decreasing lists 
 $\big\{ z_k: k \in \llbracket i+1,j \rrbracket \big\}$ of 
values $z_k \in [x,y]$. 
Adopting the convention that $z_i = x$ and $z_{j+1} = y$,
we associate to any such list the energy $\sum_{k=i}^j \big( B ( z_{k+1},k ) - B( z_k,k ) \big)$.
We then define the maximum energy to be
$$
M \big[ (x,i) \to (y,j) \big] \, = \, \sup \, \bigg\{ \, \sum_{k=i}^j \Big( B ( z_{k+1},k ) - B( z_k,k ) \Big) \, \bigg\} \, , 
$$
where the supremum is taken over all such lists. The random process $M \big[ (0,1) \to (\cdot,n) \big] :
[0,\infty) \to \R$ was introduced by~\cite{GlynnWhitt} and further studied in~\cite{O'ConnellYor}.

\subsubsection{Staircases}\label{s.staircase}
Set $\N = \{ 0,1,\cdots \}$.
For $i,j \in \N$ with $i \leq j$, and 
$x,y \in \R$ with $x \leq y$,
an energy has been ascribed to  any non-decreasing list 
$\big\{ z_k: k \in \llbracket i+1,j \rrbracket \big\}$ of values $z_k \in [x,y]$.
In order to emphasize the geometric aspects of this definition, 
we associate to each list a subset of $[x,y] \times [i,j] \subset \R^2$, that we call a staircase, which will be the 
range of a piecewise affine path.
 
To define the staircase above we again adopt the convention that $z_i = x$ and  $z_{j+1} = y$. 
The staircase will be specified as the union of certain horizontal planar line segments, and certain vertical 
ones.
The horizontal segments take the form $[ z_k,z_{k+1} ] \times \{ k \}$ for $k \in \llbracket i , j \rrbracket$.
A vertical planar line segment of unit length connects the 
right and left endpoints of each consecutive pair of horizontal segments. It is this collection of vertical line 
segments that form
the vertical segments of the staircase.

The resulting staircase may be depicted as the range of 
an alternately rightward and upward moving path from starting point $(x,i)$ to ending point $(y,j)$. 
The set of staircases with these starting and ending points will be denoted by $\staircase \big[ (x,i) \to (y,j) \big]$.
Since such staircases are in bijection with the collection of non-decreasing lists, any staircase $\phi \in \staircase \big[ (x,i) \to (y,j) \big]$
is assigned an energy $E(\phi) = \sum_{k=i}^j \big( B ( z_{k+1},k ) - B( z_k , k  ) \big)$ via the associated $z$-list.

\subsubsection{Energy maximizing staircases are called geodesics.}\label{s.geodesics}
A staircase  $\phi \in \staircase \big[ (x,i) \to (y,j) \big]$ 
whose energy  attains the maximum value $M \big[ (x,i) \to (y,j) \big]$ is called a geodesic from $(x,i)$ to~$(y,j)$.
That this geodesic exists for all choices of $x,y \in \R$ 
with $x \leq y$, is a simple consequence of the 
continuity of the constituent Brownian paths $B(k,\cdot).$
Further, for any given such choice of the pair $(x,y)$,  by~\cite[Lemma~$A.1$]{Patch},
 there is  an almost surely unique geodesic from  $(x,i)$ to~$(y,j)$. We denote it by $\Gamma \big[ (x,i) \to (y,j) \big]$.

\subsection{Scaled coordinates for Brownian LPP}\label{s:scaledcoordinates}

Members of the KPZ universality class enjoy scalings 
represented by the 
characteristic exponents of one-third 
and two-thirds. The one-third exponent governs the energetic fluctuation of the 
geodesic between $(0,0)$ and $(n,n)$, that is, if we write
\begin{equation}\label{e.weightfirst}
M \big[ (0,0) \to (n,n) \big] = 2n +  n^{1/3} \weight_n  \big[ (0,0) \to (0,1) \big] \, ,
\end{equation}
then the term $\weight_n  \big[ (0,0) \to (0,1) \big]$ is a random, tight in $n$ unit-order quantity. This is the 
scaled geodesic energy, which we will call {\em weight}. The exponent two-thirds appears in the fact that when 
geodesic energy $[0,\infty) \to \R: x \to 
M \big[ (0,0) \to (x,n) \big]$ is varied from $x =n$, it is 
changes of order~$n^{2/3}$ in~$x$ that result in non-trivial correlation. 

Given the above, it is natural to work in scaled coordinates under which the journey between $(0,0)$ 
and $(n,n)$ corresponds to the unit vertical journey 
between $(0,0)$ and $(0,1)$, while horizontal perturbation of the endpoint $(n,n)$ by magnitude 
$n^{2/3}$ corresponds to unit-order scaled horizontal perturbation. This will lead to the notion of  scaled 
energy, or weight,  associated to the image of any path in scaled coordinates. This is done next, namely, we 
specify the scaling map $R_n:\R^2 \to \R^2$ whose range specifies scaled coordinates; 
introduce notation for scaled paths; and specify the form of scaled energy.
\subsubsection{The scaling map.}\label{s.scalingmap}
For $n \in \N$, consider the $n$-indexed {\em scaling} 
map $R_n:\R^2 \to \R^2$ given by
\begin{equation}\label{e.scalingmap}
 R_n \big(v_1,v_2 \big) = \Big( 2^{-1} n^{-2/3}( v_1 - v_2) \, , \,   v_2/n \Big) \, .
\end{equation}

The scaling map naturally acts on subsets $C$ of $\R^2$ with
$R_n(C) = \big\{ R_n(x): x \in C \big\}$.

\subsubsection{Scaling transforms staircases to zigzags.}\label{s.staircasezigzag}
The image of any staircase under $R_n$
will be called an $n$-zigzag. The starting and ending 
points of an $n$-zigzag $Z$ are defined to be the image under $R_n$
of the corresponding points for the staircase $S$ such that $Z = R_n(S)$.

Note that the set of horizontal lines is invariant under $R_n$, while vertical lines are mapped to lines of 
gradient  $- 2 n^{-1/3}$.
Thus, an $n$-zigzag is the range of a piecewise affine 
path from the starting point to the ending point which alternately moves rightwards  along horizontal line 
segments  and northwesterly along sloping line segments with gradient  $- 2 n^{-1/3}$.

Note for example that, for given real choices of $x$ and $y$,
 a journey which in the original coordinates occurs between  $(2n^{2/3}x,0)$ and $(n  + 2n^{2/3}y,n)$ takes place 
in scaled coordinates between $(x,0)$ and $(y,1)$.
We may view the first coordinate as space and the second as time, though the latter interpretation should 
not be confused with dynamic time $t$; with this view in mind, the journey at hand is between $x$ and $y$ over the
 unit lifetime $[0,1]$.

 \subsubsection{Compatible triples}\label{s.compatible}
 Let $(n,s_1,s_2) \in \N \times \R_\leq^2$,  
 where we write $R_\leq^2 = \big\{ (s_1,s_2) \in \R^2: s_1 \leq s_2 \big\}$.
 Taking $x,y \in \R$, 
does there exist an $n$-zigzag from $(x,s_1)$ and $(y,s_2)$? 
Two conditions must be satisfied for an affirmative answer.

First: as far as the data $(n,s_1,s_2)$ is concerned, 
such an $n$-zigzag may exist only if 
 \begin{equation}\label{e.ctprop}
     \textrm{$s_1$ and $s_2$ are integer multiplies of $n^{-1}$} \, .
\end{equation}
We say that data $(n,s_1,s_2)  \in \N \times \R_\leq^2$ 
is a {\em compatible triple} if the above holds.
We will consistently impose this condition, whenever we seek to study $n$-zigzags whose lifetime is $[s_1,s_2]$.
The use of compatible triples should be thought of as a fairly minor detail. As the index $n$ increases, the 
$n^{-1}$-mesh becomes finer, so that the space of $n$-
zigzags better approximates a field of functions, defined 
on arbitrary finite intervals of the vertical coordinate, and taking values in the horizontal coordinate.

Associated to a compatible triple is the notation $\tot$, which will denote the difference $s_2 - s_1$. The law of 
the underlying Brownian ensemble $B: \R \times \Z  \to \R$ is invariant under integer shifts in the latter, curve 
indexing, coordinate. This translates to an invariance in 
law of scaled objects under vertical shifts by multiples of 
$n^{-1}$, thus making the parameter $\tot$
of far greater relevance than the individual values $s_1$ 
or $s_2$.

Returning to the above posed question, the second 
needed condition is that the horizontal coordinate of the 
unscaled counterpart of the latter endpoint must be at least the former which translates to the condition
\begin{equation}\label{e.xycond}
y -x \geq - 2^{-1}n^{1/3} \tot\, . 
\end{equation}

\subsubsection{Zigzag subpaths}\label{s.zigzagsubpaths}
Let $\phi$ denote an $n$-zigzag between elements $(x,s_1)$ and $(y,s_2)$ in $\R \times n^{-1}\Z$.
Let $(u,s_3)$ and $(v,s_4)$ be elements 
in $\phi \cap \big( [s_1,s_2] \cap n^{-1}\Z  \big)$. Suppose that $s_3 \leq s_4$ (and that $u \leq v$ if equality here holds), so 
that $(u,s_3)$ is encountered before $(v,s_4)$ in the journey along $\phi$. 
The removal of $(u,s_3)$ and $(v,s_4)$  from $\phi$ results in three connected components. The closure of 
one of these contains these two points and this closure will be denoted by $\phi_{(u,s_3) \to (v,s_4)}$. This is 
the zigzag subpath, or sub-zigzag, of $\phi$ between  $(u,s_3)$ and $(v,s_4)$.

\subsubsection{Staircase energy scales to zigzag weight.}
Let $n \in \N$ and $i,j \in \N$ satisfy $i < j$.
Any $n$-zigzag $Z$ from $(x,i/n)$ to $(y,j/n)$  is 
ascribed a scaled energy, which we will refer to as its 
weight, 
$\weight(Z) = \weight_n(Z)$, given by 
\begin{equation}\label{e.weightzigzag}
 \weight(Z) =  2^{-1/2} n^{-1/3} \Big( E(S) - 2(j - i)  - 2n^{2/3}(y-x) \Big) 
\end{equation}
where $Z$ is the image under $R_n$ of the staircase $S$.

\subsubsection{Maximum weight.}\label{s.maxweight} 
Let $n \in \N$. The quantity
 $\weight_n \big[ (0,0) \to (0,1) \big]$ specified in~(\ref{e.weightfirst}) is simply  
 the maximum weight ascribed to any $n$-zigzag from $(0,0)$ to $(0,1)$.
 
Let $(n,s_1,s_2) \in \N \times \R_\leq^2$ be a compatible triple. 
Suppose that $x,y \in \R$ satisfy 
 $y \geq x - 2^{-1} n^{1/3} \tot$. We will now define  $\weight_n \big[ (x,s_1) \to (y,s_2) \big]$ in a way such 
that this quantity equals the maximum weight of any $n$-zigzag from $(x,s_1)$ to $(y,s_2)$.
 We must set 
\begin{align}
 \,\,&   \weight_n \big[ (x,s_1) \to (y,s_2) \big]  \label{e.weightgeneral} \\
 &    =    2^{-1/2} n^{-1/3} \Big(  M \big[ (n s_1 + 2n^{2/3}x,n s_1) \to (n s_2 + 2n^{2/3}y,n s_2) \big] - 2n \tot -  2n^{2/3}(y-x) \Big) \, . \nonumber
\end{align}

The quantity  $\weight_n \big[ (x,s_1) \to (y,s_2) \big]$ may be expected to be, for given real choices of $x$ and $y$ that differ by order $\tot^{2/3}$, a unit-order random quantity; this collection of random variables is tight in the scaling parameter $n \in \N$ and in such choices of $s_1, s_2 \in n^{-1}\Z$ and $x,y \in \R$.

 \subsubsection{Highest weight zigzags are called polymers.}\label{s.polymer}
 An $n$-zigzag that attains the maximum weight given 
its endpoints will be called an $n$-polymer, or, usually, simply a polymer.
 Thus, under the scaling map, geodesics map to polymers.
As we recalled in Subsection~\ref{s.geodesics}, the 
geodesic with any given pair of endpoints is almost 
surely unique. For $x,y \in \R$ and $(n,s_1,s_2) \in \N \times \R_\leq^2$ a compatible triple, 
the almost surely unique $n$-polymer from $(x,s_1)$ to $(y,s_2)$ will be denoted by~$\rho_n \big[ (x,s_1) \to (y,s_2) \big]$; see Figure~\ref{f.scaling}.

\begin{figure}[ht]
\begin{center}
\includegraphics[height=7cm]{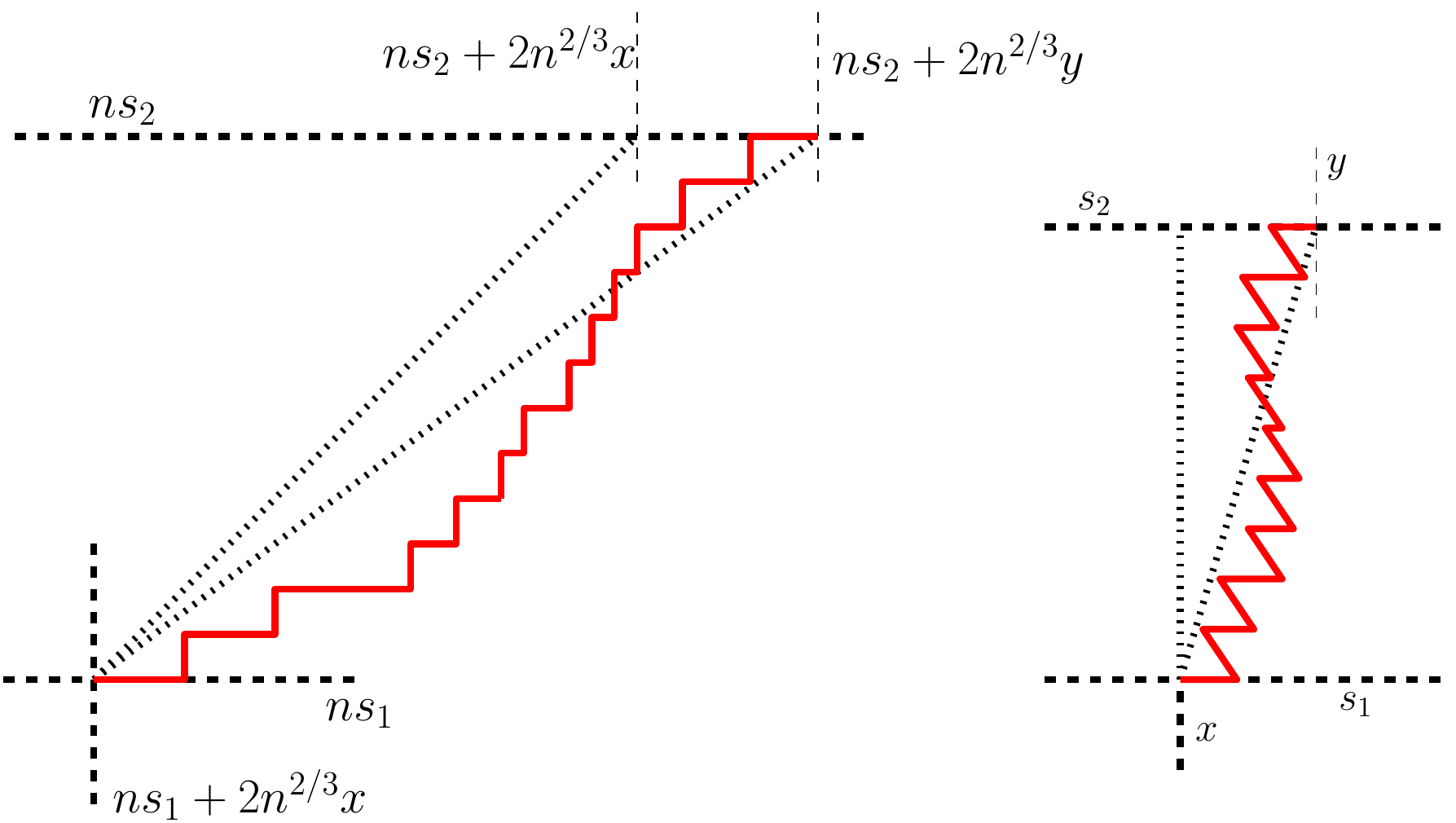}
\caption{Let $(n,s_1,s_2)$ be a compatible triple and let $x, y \in \R$. The endpoints of the geodesic in the left sketch are such that, when the scaling map~$R_n$ is applied to produce the right sketch,  the result is the $n$-polymer $\rho_n \big[ (x,s_1) \to (y,s_2) \big]$ from $(x,s_1)$ to $(y,s_2)$.
  }
\label{f.scaling}
\end{center}
\end{figure}  
Though not standard, since the term `polymer' is often used to refer to typical realizations of the path measure in LPP models at positive temperature, the above 
  usage of the term `polymer' for `scaled geodesic' is quite apt for our study, owing to the central role played by these objects.  
  
\subsubsection{Zigzags as near functions of the vertical coordinate}\label{s.zigzagfunction}
Suppose again that $\phi$ is an $n$-zigzag between 
points $(x,s_1)$ and $(y,s_2)$ in $\R \times n^{-1}\Z$.  For $s \in [s_1,s_2] \cap n^{-1}\Z$, we will write $\phi(s)$
for the supremum of values $x \in \R$ for which $(x,s) \in \phi$. This abuse of notation permits $\phi(s)$ to denote 
the horizontal coordinate of the point of departure from vertical coordinate $s$
in the journey along $\phi$ from $(x,s_1)$ to $(y,s_2)$. This convention is adopted partly because it captures 
the notion that the typical zigzags~$\phi$ we will consider---polymers or concatenations thereof---are 
closely approximable by a real-valued function of the vertical coordinate $s \in [s_1,s_2]$, at least when $n$ 
is high---indeed, the maximum length of the horizontal line segments in an $n$-polymer is readily seen to 
decay to zero in~$n$ with high probability. (Our few cliffs'  Theorem~\ref{t.notallcliffs} quantifies this assertion.)

\subsection{Brownianity and twin peaks' rarity for the routed weight profile}\label{s.browniantwinpeaks}

To specify the routed weight profile for scaled Brownian LPP,
let $n \in \N$ and $a \in n^{-1}\Z \cap (0,1)$. 
For $x \in \R$, let $\Psi_n(x,a)$ denote the set of $n$-zigzags $\phi$ that begin at $(0,0)$; end at $(0,1)$; and for which $x = \sup \big\{ z \in \R: (z,a) \in \phi \big\}$.
In other words, $\Psi_n(x,a)$ comprises those $n$-zigzags on the route from $(0,0)$ to $(0,1)$ whose point of departure from level $a$ occurs at $(x,a)$. We set $Z_n(x,a)$ equal to the supremum of the weights of elements of~$\Psi_n(x,a)$. In this way, the {\em routed weight profile} $Z_n(\cdot,a): \R \to \R$
records the maximum weight of zigzags that are constrained to exit level $a$ at a given horizontal location. We did not allude to this exit constraint in the heuristic discussion of Subsection~\ref{s.brownianity}: in the microscopic model, where $n \in \N$ is finite, this definition renders the maximizer location $M \in \R$ at which $Z_n(M,a) = \sup_{x \in \R} Z_n(x,a)$
unique, while maintaining that $Z_n(M,a) = \weight_n \big[ (0,0) \to (0,1) \big]$; in Lemma~\ref{l.routedprofile}, we will moreover see that, in the counterpart expression to~(\ref{e.ztwoweight}), the two right-hand terms are independent, even when $n$ is finite, when the present definition is adopted.

Next we make precise the notion of comparison that we will make to Brownian motion.
\begin{definition}\label{d.verybrownian}
Let $K \in \R$ and $d > 0$. Let $I$ denote the interval $[K-d,K+d]$.
We denote by $\mc{C}_{0,*}( I , \R )$
the space of continuous functions $f:I \to \R$ such that $f(K-d) = 0$.
For $\nu > 0$, let $\mc{B}_{0,*}^{\nu;I}$ denote the law on this function space given by Brownian motion $B:I \to \R$, $B(K-d) = 0$, of diffusion rate $\nu$.

{Let $g$ and $G$ be positive real parameters; and let $m \in \N$. A continuous random function
$X:I \to \R$ defined under a law~$\PP$  
is said to be $\big(g,G,\ipdval,m,\nu\big)$-Brownian if the following holds.} Let $A$ denote a Borel measurable subset of $\mc{C}_{0,*}( I , \R )$. Set $\eta = \mc{B}_{0,*}^{\nu;I}(A)$. Then the condition that $e^{-gm^{1/12}} \leq \eta \leq g \wedge e^{-G \ipdval^6}$ implies that
\begin{equation}\label{e.verybrownian}
\PP \Big( I \to \R: x \to X(x) - X(K-d) \, \, \textrm{belongs to $A$} \Big) \, \leq \, \eta \cdot G \exp \Big\{ G \ipdval \big( \log \eta^{-1} \big)^{5/6} \Big\} \, .
\end{equation}
\end{definition}

In heuristic overview, we compared the routed weight profile to Brownian motion of rate two on $[-1,1]$. Our rigorous result crucially relying on the main result in \cite{DeBridge} makes the comparison on any compact interval:
 after the addition of the linear term
 $2^{1/2}\big( a(1-a) \big)^{-1} Rx$, the  profile $x \to Z_n(x,a)$  is very similar to Brownian motion of rate two,  in the locale of any given $R \in \R$. 
\begin{theorem}\label{t.brownianroutedprofile}
There exist positive constants $c$, $G$ and $g$ such that the following holds. 
Let $n \in \N$ and $a \in n^{-1}\Z \cap (0,1)$. Suppose that $\vert R \vert \leq 2^{-1}c n^{1/9} \big( a \wedge (1 - a - n^{-1}) \big)^{7/9}$. Let $\ell \in \R$ satisfy $-2^{-1}n^{1/3}a \leq R - \ell$ and $R + \ell \leq  2^{-1}n^{1/3}(1 - a- n^{-1})$. The process $Z_n(\cdot,a):[R-\ell,R+\ell] \to \R$ may be expressed in the form 
$$
Z_n(x,a) = X(x) - 
\Big( 2^{1/2}\big( a(1-a) \big)^{-1} R + \e \Big) x \, , 
$$
where $X:[R-\ell,R+\ell] \to \R$ is $\big( g,G' \ell^6,   \ell, \min \{ a,1-a \} n ,2 \big)$-Brownian. Here, the constant $G'$ is up to an absolute positive factor equal to $G^{17/6} g^{-5/6} \big( a \wedge (1-a - n^{-1}) \big)^{-34/3}$; and   
 $\e = \e(a,R,n)$, given by $\e = 2^{1/2} \big( (1 - a - n^{-1})(1-a) \big)^{-1} R  n^{-1}$, is an error term without dependence on~$x$. 
\end{theorem}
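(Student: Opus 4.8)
The plan is to decompose the routed weight profile into two independent narrow-wedge weight profiles, to feed each into the Brownian comparison of~\cite{DeBridge}, and then to prove a convolution principle: that the sum of two independent Brownian-like processes is Brownian-like, now of rate two, since a rate-two Brownian motion is the sum of two independent rate-one Brownian motions. First I would invoke Lemma~\ref{l.routedprofile} to write, on the window of interest,
$$ Z_n(x,a) \; = \; \weight_n^{\mfl}\big[(0,0)\to(x,a)\big] \; + \; \weight_n^{\mfr}\big[(x,a)\to(0,1)\big], $$
where the two right-hand processes --- the top weights of zigzags obeying the exit constraint at level $a$ --- are \emph{independent}, the first a function of the Brownian curves at levels $\le a$ and the second of those at levels $>a$; this independence, together with the asymmetric bookkeeping of the single curve sitting at level $a$, is what yields the $1-a-n^{-1}$'s and the error term $\e$ in place of a clean $1-a$. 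The first process is a narrow-wedge weight profile over time-length $a$; by the reflection symmetry of Brownian LPP under $(x,s)\mapsto(-x,c-s)$, the second has the law of a spatially reversed narrow-wedge weight profile over time-length $1-a-n^{-1}$. Each narrow-wedge profile, in the normalisation~(\ref{e.weightzigzag}), has local Brownian diffusion rate one, so the sum should be comparable to Brownian motion of rate two.

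From each piece I would subtract the tangent line at $x=R$ to the parabola it follows --- namely $x\mapsto -2^{-1/2}a^{-1}x^2$ for the first and $x\mapsto -2^{-1/2}(1-a-n^{-1})^{-1}x^2$ for the second --- so that what remains is a stationary-type weight fluctuation minus a parabola centred at $R$: precisely the object that the narrow-wedge Brownian comparison of~\cite{DeBridge} (itself resting on the Brownian Gibbs regularity of~\cite{BrownianReg}) controls. I would reach that result via the scaling invariance of Brownian LPP, which carries the time-length $\tau\in\{a,\,1-a-n^{-1}\}$ profile on the window $[R-\ell,R+\ell]$ into a unit-time narrow-wedge profile borne by $\approx\tau n$ Brownian curves on the window of radius $\tau^{-2/3}\ell$ about $\tau^{-2/3}R$, the rescaling leaving the local rate equal to one. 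The region of validity of~\cite{DeBridge} --- a compact window whose radius is a small constant times the ninth root of the number of curves --- then translates, after undoing the scaling, into exactly the hypotheses $|R|\le 2^{-1}c\,n^{1/9}\big(a\wedge(1-a-n^{-1})\big)^{7/9}$ and the two constraints on $\ell$ (which at the same time guarantee the existence of the constituent zigzags). The conclusion is that each of the two pieces, minus its tangent line at $R$, is $\big(g,G_\tau,\ell,\lfloor\tau n\rfloor,1\big)$-Brownian on $[R-\ell,R+\ell]$, with $G_\tau$ an explicit amplification of~\cite{DeBridge}'s constant under the $\tau^{-2/3}$-dilation; and summing the two tangent lines and collecting the $n^{-1}$ corrections reproduces exactly the affine term $-\big(2^{1/2}(a(1-a))^{-1}R+\e\big)x$ of the theorem, with $\e=2^{1/2}\big((1-a-n^{-1})(1-a)\big)^{-1}Rn^{-1}$ the discrepancy between the true curvature $(1-a-n^{-1})^{-1}$ of the second parabola and the idealised $(1-a)^{-1}$.

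What remains, and what I expect to be the crux, is the convolution principle: if $X_1,X_2$ are independent with $X_i$ being $(g,G_i,\ell,m_i,1)$-Brownian, then $X_1+X_2$ is $\big(g,G',\ell,\min\{m_1,m_2\},2\big)$-Brownian, using that on $[R-\ell,R+\ell]$ a rate-two Brownian motion pinned at $R-\ell$ is the sum of two independent rate-one Brownian motions pinned there. I would prove this by conditioning on $X_2$: for a fixed realisation $x_2$, the event $\{X_1+x_2\in A\}$ is a Borel subset $A-x_2$ of path space, so the rate-one comparison for $X_1$ bounds $\PP(X_1+x_2\in A\mid X_2=x_2)$ by $\psi(x_2)\,G\exp\{G\ell(\log\psi(x_2)^{-1})^{5/6}\}$, where $\psi(x_2)=\mc{B}^{1;I}_{0,*}(A-x_2)$; one then integrates this bound against the law of $X_2$. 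The difficulty is that $x_2\mapsto\psi(x_2)$ sweeps over a wide band --- it can dip below the admissibility floor $e^{-gm^{1/12}}$ of Definition~\ref{d.verybrownian}, where the comparison for $X_2$ carries no information --- so the integration must be organised as a layer-cake decomposition over the super-level sets of $x_2\mapsto\psi(x_2)G\exp\{G\ell(\log\psi(x_2)^{-1})^{5/6}\}$, each a Borel event to which the comparison for $X_2$ is applied in turn, with the thin low-probability tail absorbed crudely using the exponential decay afforded by the lower constraint on $\eta$. Tracking how the two correction factors compose through this double use, together with the $\tau^{-2/3}$-dilations that inflate~\cite{DeBridge}'s constant, is what produces the stated $G'\asymp G^{17/6}g^{-5/6}\big(a\wedge(1-a-n^{-1})\big)^{-34/3}$, the output rate $2$, the diffusion data $m=\min\{a,1-a\}n$, and the interval-size dependence $G'\ell^6$. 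By contrast, the reduction to narrow-wedge profiles and the appeal to~\cite{DeBridge} amount to bookkeeping, provided one is careful with the parabolic and $n^{-1}$ corrections.
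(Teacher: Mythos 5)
Your proposal reproduces the paper's architecture: the decomposition of $Z_n(\cdot,a)$ via Lemma~\ref{l.routedprofile} into independent forward and backward profiles, the subtraction of the tangent lines at $R$ to the two parabolas of curvatures $a^{-1}$ and $(1-a-n^{-1})^{-1}$ (whose slopes do indeed sum to $2^{1/2}(a(1-a))^{-1}R+\e$ with the stated $\e$), the reduction by LPP scaling invariance to the narrow-wedge Brownian comparison of~\cite{DeBridge} (Theorem~\ref{t.debridge}, entering through Proposition~\ref{p.shiftbrownian} and the dilation Lemma~\ref{l.browniantobrownian}, whence the $\tau^{-4}$ per piece and the eventual exponent $-34/3$), and an additivity lemma asserting that the sum of two independent rate-one Brownian-like processes is rate-two Brownian-like. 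The one genuine divergence is your proof of that additivity step. The paper (Lemma~\ref{l.brownianadditivity}) performs a Lebesgue decomposition and a dyadic layer-cake over the level sets of the Radon--Nikodym derivatives of \emph{each} of the two processes with respect to Brownian motion, trading each process for an independent Brownian motion at multiplicative cost $2^{k+1}$ on the $k$\textsuperscript{th} level set and controlling the tail $k\ge J$ by the Brownian property itself; you instead condition on $X_2$, apply the comparison for $X_1$ pointwise to obtain $F(\psi(X_2))$ with $\psi(x_2)=\mc{B}^{1;I}_{0,*}(A-x_2)$ and $F(t)=tG\exp\{G\ell(\log t^{-1})^{5/6}\}$, and then layer-cake over the super-level sets of $\psi$ using the comparison for $X_2$. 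This is workable: with $q(u)=\mc{B}^{1;I}_{0,*}(\psi>u)$ one has $\eta=\int_0^1 q(u)\,du$, and the resulting bound $\int_0^1 F(q(u))\,F'(u)\,du$ is of the required form $\eta\exp\{\Theta(1)G\ell(\log\eta^{-1})^{5/6}\}$ after splitting at $u=\eta$ and using $q(u)\le\eta/u$ --- bookkeeping of the same flavour as the paper's. Two technical points you should not elide: both applications of Definition~\ref{d.verybrownian} occur at probabilities ($\psi(x_2)$, $q(u)$) that may violate the upper-bound hypothesis $\eta\le g\wedge e^{-G\ipdval^6}$, so you need the analogue of Lemma~\ref{l.noupperbound}, which removes that hypothesis at the cost of a factor $\exp\{\Theta(1)G^{11/6}\ell^6g^{-5/6}\}$ (this is where the $g^{-5/6}$ in $G'$ originates); and when $q(u)$ falls below the floor $e^{-gm_2^{1/12}}$ the comparison is silent, so one must enlarge the event to one of Brownian probability exactly the floor and invoke the standing hypothesis $\eta\ge e^{-g(m_1\wedge m_2)^{1/12}}$, exactly as the paper does.
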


Given the above quantitative comparison to Brownian motion, the next result presents our conclusion regarding the rarity  of {\em twin peaks}. The probability that there exists $x \in \R$ such that $Z_n(x,a)$
rivals the maximum value of $Z_n(\cdot,a)$, with $Z_n(x,a)$ being less than this maximum by a small multiple~$\hata$ of the square-root distance $\big( x - \rho_n(a) \big)^{1/2}$ is bounded above by the product of $\hata$ and a lower-order correction $\exp \big\{ \Theta(1) \big( \log \hata^{-1} \big)^{5/6} \big\}$; a further factor of $e^{-\Theta(1) R^2 \ell}$ penalizes the maximizer for being of a large order~$R > 0$.

\begin{theorem}\label{t.nearmax}
For $K$ any compact interval of $(0,1)$,
there exist positive constants $H = H(K)$ and $h = h(K)$ and an integer $n_0 = n_0(K)$ such that
the following holds.
Let $n \in \N$, $R \in \R$, $\ell \geq 1$, $\ell' > 0$, $a \in n^{-1}\Z \cap K$, $\hata > 0$ and $\e > 0$.
 Suppose that $n \geq n_0$,
 $\vert R \vert \leq h n^{1/9}$,
$\ell \in (3\e, h n^{1/\macrobig})$ and $\ell' \in (3\e,\ell]$.
Denoting $\hata \wedge 1$ by $\hata_*$, we have that  
\begin{eqnarray*}
& & \PP \Big( M \in [R - \ell/3,R+\ell/3]  \, , \, \sup_{x \in \R: \vert x - M \vert  \in [\e,\ell'/3]} \big( Z_n(x,a) +  \hata (x - M)^{1/2} \big)  \geq Z_n(M,a) \Big) \\
 & \leq & \log \big( \ell' \e^{-1} \big) \max \Big\{ \hata_* \cdot 
 \exp \big\{  - hR^2 \ell + H \ell^{\macroseventeen} \big( 1 + R^2 + \log \hata_*^{-1}  \big)^{5/6} \big\} ,   \exp \big\{ - h n^{1/12}   \big\} \Big\} \, , 
\end{eqnarray*}
where $M$ denotes $\rho_n(a)$, the almost surely unique maximizer of $x \to Z_n(x,a)$.
\end{theorem}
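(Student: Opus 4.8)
The plan is to transfer the twin peaks event to an event for rate-two Brownian motion by means of Theorem~\ref{t.brownianroutedprofile}, to estimate the Brownian probability of that event sharply, and then to feed the resulting bound back through the Brownian-comparison inequality encoded in Definition~\ref{d.verybrownian}, dealing separately with the boundary parameter regimes. For the first step, apply Theorem~\ref{t.brownianroutedprofile} with $K=R$ and $d=\ell$, so that on $I=[R-\ell,R+\ell]$ one may write $Z_n(\cdot,a)=X(\cdot)-\big(2^{1/2}(a(1-a))^{-1}R+\e_{\mathrm{err}}\big)\,(\cdot)$ with $X$ an $\big(g,G'\ell^6,\ell,\min\{a,1-a\}n,2\big)$-Brownian process and $\e_{\mathrm{err}}=\e_{\mathrm{err}}(a,R,n)$ of order $Rn^{-1}$; the hypotheses $\vert R\vert\le hn^{1/9}$ and $\ell\in(3\e,hn^{1/\macrobig})$ ensure, for $h$ small and $n\ge n_0$, that the conditions of that theorem hold (with $a\in K$ giving $a\wedge(1-a-n^{-1})\asymp_K 1$). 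First I would observe that the event in the statement is measurable with respect to $Z_n(\cdot,a)|_I$: on $\{M\in[R-\ell/3,R+\ell/3]\}$ the global maximizer $M$ (almost surely unique, cf.\ Lemma~\ref{l.routedprofile}) agrees with the maximizer of $Z_n(\cdot,a)$ over $I$, since $[R-\ell/3,R+\ell/3]\subseteq I$; replacing $M$ by the restricted maximizer only enlarges the event, and the supremum over $x$ ranges over $\vert x-M\vert\le\ell'/3\le\ell/3$, hence over a subinterval of $I$ at distance $\ge 2\ell/3$ from $\partial I$. Absorbing the deterministic affine term into the description of the set, the event becomes $\{x\mapsto X(x)-X(R-\ell)\in A\}$ for a Borel $A\subseteq\mc{C}_{0,*}(I,\R)$.

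\textbf{The Brownian estimate.} Let $\eta=\mc{B}_{0,*}^{2;I}(A)$; this is the probability that a rate-two Brownian motion $W$ on $I$, after subtracting a linear drift of slope $2^{1/2}(a(1-a))^{-1}R+\e_{\mathrm{err}}$ (of order $R$, uniformly for $a\in K$), has its maximizer $M$ over $I$ in the middle third and, for some $x$ with $\vert x-M\vert\in[\e,\ell'/3]$, descends from its maximum value by at most $\hata\vert x-M\vert^{1/2}$. I would bound $\eta$ by the product of a \emph{drift penalty} and a \emph{near-maximum penalty}, conditioning on a fine discretization of the location of the maximizer. For the drift penalty: for the maximizer of a rate-two Brownian motion with drift of order $-R$ on an interval of radius $\ell$ to lie in the central third, a Gaussian increment across the driven-down portion (of width $\asymp\ell$) must exceed its mean by $\asymp R\ell$, an event of probability $e^{-\Theta(1)R^2\ell}$ (read as the trivial bound when $R$ is of unit order or smaller). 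For the near-maximum penalty: conditionally on the maximizer location, Williams' path decomposition represents the two descents $t\mapsto W(M)-W(M\pm t)$ as independent three-dimensional Bessel processes issued from $0$, run until they exit $I$ (which leaves free evolution over $t\in[0,\ell'/3]$ given the $\ge 2\ell/3$ room on each side); and the event that such a Bessel process lies below $\hata\sqrt t$ for some $t\in[\e,\ell'/3]$ is, after the substitution $t=e^u$ under which $R(e^u)e^{-u/2}$ becomes a positive recurrent diffusion, governed by the number of its descents below level $\hata$ in a $u$-interval of length $\log\big(\ell'(3\e)^{-1}\big)$, whose expectation is $\asymp\log(\ell'\e^{-1})\,\hata$ for small $\hata$ (each such excursion lasting $\asymp\hata^2$ while the process occupies an $\asymp\hata^3$-fraction of the region below $\hata$). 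Replacing $\hata$ by $\hata_\ast=\hata\wedge 1$ covers $\hata\ge 1$. This yields $\eta\le h_0^{-1}\log(\ell'\e^{-1})\,\hata_\ast\,e^{-h_0R^2\ell}$.

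\textbf{Feeding back, and the regimes.} Now invoke Definition~\ref{d.verybrownian} with the parameters from the first step, so $Gd^6=G'\ell^{12}$ and $gm^{1/12}\gtrsim_K gn^{1/12}$. If $e^{-gm^{1/12}}\le\eta\le g\wedge e^{-G'\ell^{12}}$, the comparison gives that the target probability is at most $\eta\cdot G'\ell^6\exp\{G'\ell^7(\log\eta^{-1})^{5/6}\}$; inserting the bound on $\eta$, using $\log\eta^{-1}\le C\ell\,(1+R^2+\log\hata_\ast^{-1}+\log\log(\ell'\e^{-1}))$ together with $G'\asymp G^{17/6}g^{-5/6}(a\wedge(1-a-n^{-1}))^{-34/3}\lesssim_K 1$, and collecting the powers of $\ell$ (crudely bounding $\ell^6\cdot\ell^7\cdot\ell^{5/6}$ and $\log(G'\ell^6)\le C\ell$, all dominated by $\ell^{\macroseventeen}$), produces the overall factor $\log(\ell'\e^{-1})$ times the first claimed term $\hata_\ast\exp\{-hR^2\ell+H\ell^{\macroseventeen}(1+R^2+\log\hata_\ast^{-1})^{5/6}\}$. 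The two remaining regimes are handled softly. If $\eta<e^{-gm^{1/12}}$, fatten $A$ to $A'$ with $\mc{B}_{0,*}^{2;I}(A')=e^{-gm^{1/12}}$ and apply the comparison to $A'$; since $m\gtrsim_K n$ while $\ell\le hn^{1/\macrobig}$ is negligible against $n$, one gets $e^{-gm^{1/12}}G'\ell^6\exp\{G'\ell^7(gm^{1/12})^{5/6}\}\le\exp\{-hn^{1/12}\}$, the second claimed term. If instead $\eta>g\wedge e^{-G'\ell^{12}}$, then $R^2\ell$ and $\log\hata_\ast^{-1}$ are $O(G'\ell^{12})$, so for $H$ large enough $H\ell^{\macroseventeen}(1+R^2+\log\hata_\ast^{-1})^{5/6}\ge hR^2\ell$, whence the first claimed term is $\ge\hata_\ast$, and is itself an upper bound after noting the probability in question is at most $\hata_\ast$ up to the $\log$ factor. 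Taking the maximum over the regimes gives the statement, with $n_0=n_0(K)$ chosen to validate the uses of Theorem~\ref{t.brownianroutedprofile} and the "$n$ large" comparisons above.

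\textbf{Main obstacle.} The crux is the sharp Brownian estimate of the middle step---in particular securing the \emph{linear} dependence on $\hata_\ast$ and the precise prefactor $\log(\ell'\e^{-1})$ for the near-maximum event (rather than some cruder polynomial loss), which genuinely requires the path decomposition of Brownian motion at its maximum together with an excursion-theoretic count of descents of the associated stationary diffusion, as well as the handling of the Bessel killing at the endpoint depths; and then combining this with the $e^{-\Theta(R^2\ell)}$ drift penalty despite the two being correlated through the maximizer's location, which is why one conditions on a discretization of that location. The bookkeeping needed to track how the correction factor of Definition~\ref{d.verybrownian} inflates the exponents---and thereby to land on the constants $\macroseventeen$ and the stated form of $G'$ while respecting the hypotheses on $\ell$, $R$ and $n$---is routine but must be carried out with care.
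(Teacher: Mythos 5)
Your proposal follows the same architecture as the paper's proof---transfer to rate-two Brownian motion via Theorem~\ref{t.brownianroutedprofile}, establish a Brownian twin-peaks estimate of the form (drift penalty)\,$\times$\,(near-touch penalty), and feed the result back through Definition~\ref{d.verybrownian} with a separate analysis of the boundary regimes of $\eta$---but you execute the central Brownian estimate by a genuinely different route. The paper proves a single-scale estimate (Proposition~\ref{p.midneartouch}): conditionally on the maximizer location and the endpoint values, the descents from the maximum are Brownian bridges conditioned to stay negative (Lemma~\ref{l.excursioncond}), and the near-return probability on one dyadic scale $[\e,2\e]$ is bounded by $D\hata$ via a resampling of the midpoint value in a three-item bridge decomposition (Proposition~\ref{p.brownianepsilonexc}); the comparison inequality is then applied once per dyadic scale, and the sum over the $\approx\log_2(\ell'\e^{-1})$ scales is taken at the very end. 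You instead handle all scales at once inside the Brownian estimate, via Williams' decomposition, a BES(3) approximation of the post-maximum path, and an excursion count for the time-changed stationary diffusion $R(e^u)e^{-u/2}$, and then apply the comparison a single time to the union event. Both routes land on the same bound; the paper's per-scale resampling is more elementary and sidesteps the one real analytic burden in your version, namely justifying that the negatively-conditioned bridge (with endpoint $y\le 0$ arbitrary and length $\ge 2\ell/3$) is comparable to a free BES(3) on the initial segment $[0,\ell'/3]$ uniformly in $y$---you flag this, and it is exactly what Proposition~\ref{p.brownianepsilonexc} delivers without any Bessel machinery. Your single-application of the comparison is legitimate and even marginally tidier in the exponent bookkeeping.

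Two repairable slips in the feedback step. First, from the upper bound on $\eta$ you write an \emph{upper} bound on $\log\eta^{-1}$; the inequality goes the other way, and the correct move (used in Corollary~\ref{c.mid}) is the monotonicity of $x\mapsto x\exp\{Gd(\log x^{-1})^{5/6}\}$ for small $x$, which lets you replace $\eta$ by its upper bound in the whole product. Second, in the regime $\eta>g\wedge e^{-G'\ell^{12}}$ you assert that "the probability in question is at most $\hata_*$ up to the $\log$ factor"---but that is a statement about the Brownian probability, not the LPP probability, and no comparison is available to transfer it in this regime. The correct repair is to show that in this regime the claimed right-hand side already exceeds one (which follows from $hR^2\ell+\log\hata_*^{-1}\le H\ell^{\macroseventeen}(1+R^2+\log\hata_*^{-1})^{5/6}$ there), so the bound is trivial; this is precisely what Lemma~\ref{l.noupperbound} packages.
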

The right-hand factor of  $\log \big( \ell' \e^{-1} \big)$ reflects a union bound indexed by dyadic scales intersecting the interval $[\e,\ell'/3]$. There is no non-smallness condition on the scale~$\e$. The probability upper bound  $\exp \big\{ - h n^{1/12}   \big\}$ becomes operative for extremely small values of $\sigma$.

\subsection{Robust modulus of continuity for the geometry and weight of polymers}

The next result offers a quantified prelimiting expression for the $z \to z^{2/3} \big( \log z^{-1} \big)^{1/3}$ modulus of continuity for polymer geometry in a fashion that is uniform as the polymer's endpoints vary over a compact region and that holds on all scales above the microscopic separation~$n^{-1}$.\\\\

 \begin{theorem}\label{t.toolfluc} 
\leavevmode
 \begin{enumerate}
\item There exist positive $H$,  $h$ and $r_0$, and $n_0 \in \N$, such that, when $n \in \N$ satisfies $n \geq n_0$, $k \in \N$ satisfies $2^k \leq h n$ and $r \in \R$ satisfies  $r_0 \leq r \leq n^{1/10}$, it is with probability at least $1 - H \exp \big\{ - h r^3 k \big\}$ that the following event occurs.
Let $x,y \in \R$ be of absolute value at most~$r$.  Let $h_1,h_2 \in n^{-1}\Z \cap [0,1]$ satisfy $h_{1,2} \in (2^{-k-1},2^{-k}]$ and
let $u,v \in \R$ be such that $(u,h_1)$ and $(v,h_2)$ belong to $\rho_n \big[ (x,0) \to (y,1) \big]$. 
Then
$$
\big\vert v -u \big\vert \leq H h_{1,2}^{2/3} \big( \log (1 + h_{1,2}^{-1}) \big)^{1/3}r \, .
$$
\item  There exist positive $G$, $H$,  $h$ and $r_0$, and $n_0 \in \N$, such that, when $n \in \N$ satisfies $n \geq n_0$, and $r \in \R$ satisfies  $r_0 \leq r \leq n^{1/10}$, it is with probability at least $1 - H n^{- h r^3}$  that the following event occurs.
As above, let $x,y \in \R$ be of absolute value at most~$r$, and let $u,v \in \R$ be such that $(u,h_1)$ and $(v,h_2)$ belong to $\rho_n \big[ (x,0) \to (y,1) \big]$. 
 Consider any $h_1,h_2 \in n^{-1}\Z \cap [0,1]$ that satisfy  $h_{1,2} < H n^{-1}$.
Then
$$
\big\vert v -u \big\vert \leq G n^{-2/3} ( \log n )^{1/3}r
 \, .
$$
\end{enumerate}
 \end{theorem}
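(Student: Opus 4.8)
The plan is to derive both moduli of continuity from uniform-in-$n$ moderate deviation estimates for scaled weights, combined with the restriction property of polymers, and then to upgrade pointwise bounds to statements uniform in the scale and in the endpoints by a multi-scale union bound whose cardinality at each dyadic level is $O(2^k)$ rather than $O(n)$.

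\emph{Part (1), the weight-comparison mechanism at fixed endpoints.} Write $\rho=\rho_n\big[(x,0)\to(y,1)\big]$ and fix $h_1<h_2$ with $t:=h_2-h_1$. Since the portion of $\rho$ between $(u,h_1):=(\rho(h_1),h_1)$ and $(v,h_2):=(\rho(h_2),h_2)$ is itself the polymer $\rho_n\big[(u,h_1)\to(v,h_2)\big]$, the quantity $|v-u|$ we must bound is the net transversal displacement of a polymer over a window of duration $t$, which KPZ scaling predicts to be of order $t^{2/3}$. To convert a large displacement into a rare event, compare $\rho$ with the competitor that agrees with $\rho$ on $[0,h_1]$, then runs straight up from $(u,h_1)$ to $(u,h_2)$, then follows a polymer from $(u,h_2)$ to $(y,1)$; optimality of $\rho$ forces $\weight_n[(u,h_1)\to(v,h_2)]-\weight_n[(u,h_1)\to(u,h_2)]\ge \weight_n[(u,h_2)\to(y,1)]-\weight_n[(v,h_2)\to(y,1)]$. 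By the parabolic curvature of scaled weight profiles recalled in Subsection~\ref{s.brownianity} (and underlying Lemma~\ref{l.routedprofile}), the left side has mean $\approx -2^{-1/2}(v-u)^2/t$ while the right side has mean of order $|v-u|\,r$ and unit-order fluctuations. Hence $|v-u|\ge\lambda t^{2/3}$ forces $\weight_n[(u,h_1)\to(v,h_2)]$ to exceed its mean by $\gtrsim\lambda^2 t^{1/3}$, i.e. by $\gtrsim\lambda^2$ in units of its natural scale $t^{1/3}$; applying the uniform-in-$n$ upper-tail estimate for scaled weights from the Brownian Gibbs analyses of~\cite{BrownianReg} and~\cite{DeBridge} to the rescaled window (a Brownian LPP system of parameter $nt$, since $h_1,h_2\in n^{-1}\Z$) gives probability at most $\exp\{-c\lambda^3\}$, valid for $\lambda$ up to a positive power of $nt$. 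One replaces the random pair $(\rho(h_1),\rho(h_2))$ by fixed net values, using H\"older-$\tfrac12^{-}$ regularity of weight profiles; the net need only have resolution polynomial in $t$, so its size does not involve $n$.

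\emph{Part (1), the two uniformities and assembly.} For the vertical uniformity it suffices, at each dyadic level $k$ with $2^k\le hn$, to control the oscillation of $\rho$ over each of the $\lesssim 2^k$ ``double-blocks'' $[j2^{-k},(j+2)2^{-k}]\subseteq[0,1]$: any $h_1<h_2$ with $h_2-h_1\in(2^{-k-1},2^{-k}]$ lies in such a block, on which $\log(1+h_{1,2}^{-1})\asymp k$, so an oscillation bound $\lesssim 2^{-2k/3}k^{1/3}r$ yields precisely the claimed estimate. Taking $\lambda=Hk^{1/3}r$ above, each block oscillation fails with probability $\lesssim\exp\{-cH^3kr^3\}$, and a union bound over the $\lesssim 2^k$ blocks and the $\mathrm{poly}(2^k)$-sized endpoint nets leaves $\exp\{k\log 2+C\log(2^k)-cH^3kr^3\}\le H\exp\{-hr^3k\}$ once $r\ge r_0$ and $H$ is large --- with no surviving factor of $n$. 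For the horizontal uniformity (the continuum $x,y\in[-r,r]$), fix an endpoint net of resolution polynomial in $2^{-k}$ and invoke the ordering of polymers with ordered endpoints together with a coalescence estimate: polymers with net-adjacent endpoints coincide except within a $\mathrm{poly}(2^{-k})$-neighbourhood of heights $0$ and $1$, which meets only $O(1)$ double-blocks (handled by cruder bounds), so on every other block the oscillation of $\rho_n\big[(x,0)\to(y,1)\big]$ equals that of a net-point polymer, already controlled.

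\emph{Part (2) and the main obstacle.} When $h_2-h_1<Hn^{-1}$ the rescaled window is a Brownian LPP system with $O(1)$ lines, so the Airy-type technology degenerates; one argues directly with the Brownian environment over $O(1)$ consecutive lines, establishing that the geodesic's horizontal progress over such a window has a \emph{cubic} tail in the scaled displacement --- a local fluctuation estimate in the spirit of the input to~\cite{SlowBond} --- which is exactly what turns a displacement $\asymp n^{-2/3}(\log n)^{1/3}r$ into probability $n^{-cr^3}$; a union bound over the $O(n)$ choices of $h_1\in n^{-1}\Z$ and the $n$-free endpoint net then gives $1-Hn^{-hr^3}$, as in part (2). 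The main obstacle is the interface between these two regimes: the weight-comparison argument of part~(1) needs the upper-tail weight estimate in a deviation range growing like $k^{2/3}r^2$, whereas the uniform-in-$n$ estimate is only guaranteed up to a power of $nt\asymp n2^{-k}$, so it degrades as $2^k$ approaches $n$. One must therefore either push the uniform weight bounds to a wider deviation range, or cover the largest dyadic scales ($2^k$ comparable to $n$) by iterating the part~(2) local estimate over a window of $O(1)$ sub-windows, and then verify that across the entire range $2^k\le hn$ one of the two routes always applies with constants matching the asserted thresholds on $r$ (namely $r_0\le r\le n^{1/10}$) and the stated form of the probability bound. Reconciling these mechanisms cleanly on every scale between $n^{-1}$ and a constant, uniformly in $n$, is the delicate part; it is also where this result genuinely develops~\cite{HammondSarkar} and~\cite{DOV}, which stop at $n$-independent scales.
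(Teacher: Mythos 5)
Your fixed-endpoint mechanism --- parabolic curvature penalizing a transversal displacement $\lambda t^{2/3}$ by a weight deficit of order $\lambda^2 t^{1/3}$, hence a probability $\exp\{-c\lambda^3\}$ --- is the same probabilistic heart as the paper's, which packages it as Theorem~\ref{t.maximizersecond} via the routed weight profile and the regular-ensemble parabolic-decay estimates. The genuine gap is in the uniformization. Controlling ``the oscillation of $\rho$ over a double-block'' is not a single fixed-endpoint event: the points $(\rho(h_1),h_1)$ and $(\rho(h_2),h_2)$ are random, and the restriction of $\rho$ to $[h_1,h_2]$ is a polymer whose endpoints are only known to lie in an interval of width $O(r)$ --- over which the endpoint displacement of \emph{some} polymer is trivially as large as $O(r)$. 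The constraint that makes the increment small comes from pinning down $\rho$'s position at $h_1$ and $h_2$ to within $O(2^{-2k/3}k^{1/3}r)$ of mesh points, and this pinning must itself be propagated from coarser scales. Your proposed device for this --- a net of endpoints plus a ``coalescence estimate'' asserting that polymers with net-adjacent endpoints coincide away from heights $0$ and $1$ --- does not work as a black box: quantitative coalescence at that level of uniformity is at least as hard as the theorem you are proving and is normally derived \emph{from} transversal fluctuation estimates, so the argument is circular. Simple sandwiching (Lemma~\ref{l.sandwich}) between two net polymers also does not suffice on its own, because the horizontal gap between the two sandwiching polymers at a generic height is of order $r$, not of order $2^{-2k/3}$. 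The paper's resolution is the missing idea: a mesh $G_i=(V_i,E_i)$ whose horizontal resolution $2^{-2i/3}(j+1)^{1/3}\kappa r$ shrinks in lockstep with the vertical one, an openness event (\ref{e.open}) for each mesh edge controlled by Theorem~\ref{t.maximizersecond}, and an induction on the scale (the viability Lemma~\ref{l.viable}) which uses polymer ordering to confine $\rho$'s departure points to horizontal pieces of the correct width at every scale simultaneously, with an entropy cost $\lesssim 2^{5i/3}r$ per scale that never involves $n$. The Kolmogorov-type summation over scales $j\ge k$ then gives part (1).

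For part (2) you propose a probabilistic ``cubic tail'' for the geodesic's horizontal progress over $O(1)$ lines; this estimate is neither needed nor the right shape (at the microscopic scale the relevant tails are Gaussian, as in Proposition~\ref{p.microfluc}). The paper instead observes (Lemma~\ref{l.localcontrol}) that \emph{deterministically} any intermediate point of a zigzag lies within $|y-x|+2^{-1}n^{1/3}\tot$ of its endpoints, so once the mesh control of Corollary~\ref{c.fluc} is available at the finest scale $K^*=\lfloor\log_2(hn)\rfloor$ --- which already carries the probability $n^{-\Theta(r^3)}$ --- the sub-microscopic regime is handled with no further randomness. Your closing concern about the interface between the two regimes is well placed, but it is resolved by exactly this division: probabilistic estimates are only ever invoked for scales $2^{-j}\ge (hn)^{-1}$, and everything below that is deterministic.
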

As a special case, we gain control on the maximum fluctuation of such polymers.

 \begin{corollary}\label{c.lateral}
There exist positive $H$,  $h$ and $r_0$, and $n_0 \in \N$, such that, when $n \in \N$ satisfies $n \geq n_0$, and $r \in \R$ satisfies  $r_0 \leq r \leq n^{1/10}$, it is with probability at least $1 - H \exp \big\{ - h r^3\big\}$ that the following holds.
Let $x,y \in \R$ be of absolute value at most $r$. If $(u,h') \in \R \times \big( n^{-1}\Z \cap [0,1] \big)$ lies in   $\rho_n \big[ (x,0) \to (y,1) \big]$, then  
$\vert u \vert \leq H r$.
 \end{corollary}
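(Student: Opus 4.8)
The plan is to read off Corollary~\ref{c.lateral} from Theorem~\ref{t.toolfluc} by a union bound over dyadic height scales, exploiting that the modulus function $t \mapsto t^{2/3}\big(\log(1+t^{-1})\big)^{1/3}$ is bounded on $(0,1]$. Write $C_0 := \sup_{t \in (0,1]} t^{2/3}\big(\log(1+t^{-1})\big)^{1/3} < \infty$, note that $n^{-2/3}(\log n)^{1/3} \le 1$ once $n \ge n_0$, and abbreviate $\rho := \rho_n\big[(x,0)\to(y,1)\big]$. First I would take the good event to be the intersection of the events supplied by Theorem~\ref{t.toolfluc}(1) for every $k \in \N$ with $1 \le 2^k \le hn$, together with the event supplied by Theorem~\ref{t.toolfluc}(2). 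Enlarging $r_0$ so that $e^{-hr_0^3} \le 1/2$, the geometric sum of the part-(1) failure probabilities is $\sum_{k \ge 1} H e^{-hr^3 k} \le 2He^{-hr^3}$, while the part-(2) failure probability is $Hn^{-hr^3} \le He^{-hr^3}$ since $\log n \ge 1$; hence the good event has probability at least $1 - 3He^{-hr^3}$, which after renaming constants is the asserted probability bound.

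Second, on the good event I would bound $|u|$ for an arbitrary polymer point $(u,h') \in \rho$ with $h' \in n^{-1}\Z \cap [0,1]$. If $h' = 0$ then $u = x$ and $|u| \le r$, and if $h' = 1$ then $u = y$ and $|u| \le r$. For $h' \in (0,1/2]$ I would apply Theorem~\ref{t.toolfluc} to the two polymer points $(x,0)$ and $(u,h')$, whose height separation equals $h'$: depending on whether $h'$ lies above or below the finest scale reached by part~(1) --- equivalently, whether the integer $nh'$ exceeds a constant depending only on $h$ --- either part~(1) gives $|u - x| \le H h'^{2/3}\big(\log(1+h'^{-1})\big)^{1/3} r \le HC_0 r$, or part~(2) gives $|u - x| \le Gn^{-2/3}(\log n)^{1/3} r \le Gr$. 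Thus $|u - x| \le Cr$ with $C := \max\{HC_0, G\}$, whence $|u| \le (1+C)r$. The case $h' \in (1/2,1)$ is symmetric, applying Theorem~\ref{t.toolfluc} to the points $(u,h')$ and $(y,1)$ with separation $1 - h' \in (0,1/2)$, and again gives $|u| \le (1+C)r$. Taking the constant $H$ in the corollary to be the larger of $1+C$ and the probability constant from the first step completes the argument, since the corollary concerns only polymer points at heights in $n^{-1}\Z$.

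I do not expect a substantive obstacle: the corollary is essentially a repackaging of Theorem~\ref{t.toolfluc}. The two points that need care are the convergence of the geometric series in the union bound, which is what forces the enlargement of $r_0$, and the matching of the scale thresholds of parts~(1) and~(2) so that every height in $(0,1/2] \cap n^{-1}\Z$ is covered by one of them; the latter is arranged by requiring the constant in part~(2) to be at least of order $h^{-1}$, since part~(1) already reaches all scales exceeding order $(hn)^{-1}$.
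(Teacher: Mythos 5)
Your argument is correct in outline, but it is considerably heavier than what the paper does, and the extra weight introduces one seam worth flagging. The paper's proof of Corollary~\ref{c.lateral} is a one-liner: it applies Theorem~\ref{t.toolfluc} at the single coarsest scale $k=0$, with $(u,h_1)=(x,0)$ and $(v,h_2)=(u,h')$. The point you did not exploit is that \emph{every} height $h'\in[0,1]$ is within distance at least $1/2$ of one of the two endpoints $0$ and $1$; so pairing $(u,h')$ with whichever endpoint is at separation $\geq 1/2$ means only the scale $k=0$ (and, for the boundary value $h'=1/2$, $k=1$) of Theorem~\ref{t.toolfluc}(1) is ever needed, the modulus factor is simply $(\log 3)^{1/3}$, and the failure probability is $H e^{-hr^3}$ with no summation. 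Your version instead pairs $(u,h')$ with the \emph{nearer} endpoint, which forces you through all dyadic scales down to the microscopic one and hence into part~(2) of the theorem. That is where the seam lies: part~(1) reaches scales $h_{1,2}\gtrsim (hn)^{-1}$ while part~(2) covers $h_{1,2}<Hn^{-1}$, and since $h$ is a small constant these ranges need not overlap. Your proposed fix---``requiring the constant in part~(2) to be at least of order $h^{-1}$''---is not licensed by the theorem as stated, because enlarging that constant strengthens the assertion of part~(2) rather than weakening it. The gap is harmless (it can be closed by Lemma~\ref{l.localcontrol}, or avoided entirely by the far-endpoint pairing above), but as written your case analysis does not quite cover heights $h'$ with $Hn^{-1}\leq h'\lesssim (hn)^{-1}$. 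I would recommend replacing the multi-scale decomposition by the single coarse-scale application, which also makes the probability bookkeeping trivial.
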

 A control, similar to that offered by Theorem~\ref{t.toolfluc}, on the  $z \to z^{1/3} \big( \log z^{-1} \big)^{2/3}$ modulus of continuity for polymer weight is available.
 \begin{theorem}\label{t.weight}
 \leavevmode
 \begin{enumerate} 
 \item
There exist positive $H$,  $h$ and $r_0$, and $n_0 \in \N$, such that, when $n \in \N$ satisfies $n \geq n_0$; $k \in \N$ satisfies $2^k \leq hn$;  and $r \in \R$ satisfies $r \geq r_0$, it is with probability at least $1 - H \exp \big\{ - h r^3 k \big\}$ that the following occurs.
Let $h_1,h_2 \in n^{-1}\Z \cap [0,1]$ satisfy $h_{1,2} \in ( 2^{-k-1}, 2^{-k}]$ and 
  $r \leq (n h_{1,2})^{1/64}$;
let $x,y \in \R$ be of absolute value at most $r$; and let
$u,v \in \R$ be such that $(u,h_1)$ and $(v,h_2)$ belong to $\rho_n \big[ (x,0) \to (y,1) \big]$. Then
$$
\big\vert \weight_n \big[ (u,h_1) \to (v,h_2) \big] \big\vert \leq H^2 r^2 \cdot
 h_{1,2}^{1/3} \big( \log  h_{1,2}^{-1} \big)^{2/3} 
\, .
$$
\item 
There exist positive $G$, $H$ and $r_0$, and $n_0 \in \N$, such that, when $n \in \N$ satisfies $n \geq n_0$, and $r \in \R$ satisfies $r \geq r_0$, it is with probability at least
 $1 - H n^{- h r^3}$  that, if
  $h_{1,2} < H n^{-1}$;  $x,y \in \R$  have absolute value at most $r$; and 
$u,v \in \R$ are such that $(u,h_1)$ and $(v,h_2)$ belong to $\rho_n \big[ (x,0) \to (y,1) \big]$; 
 then $\big\vert \weight_n \big[ (u,h_1) \to (v,h_2) \big] \big\vert \leq G r^2 \cdot n^{-1/3}  ( \log n )^{2/3}$. 
 \end{enumerate}
 \end{theorem}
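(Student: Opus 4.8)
The plan is to prove Theorem~\ref{t.weight} by recognizing $\weight_n\big[(u,h_1)\to(v,h_2)\big]$, when $(u,h_1)$ and $(v,h_2)$ both lie on $\rho_n\big[(x,0)\to(y,1)\big]$, as the increment over $[h_1,h_2]$ of the \emph{polymer weight profile} $P(t):=\weight_n\big[(x,0)\to(\rho_n(t),t)\big]$, where $\rho_n(t)$ is the horizontal coordinate of the polymer at height $t$. Indeed, the restriction of the polymer to $[0,t]$ is itself a polymer, whence $P(t_2)-P(t_1)=\weight_n\big[(\rho_n(t_1),t_1)\to(\rho_n(t_2),t_2)\big]$. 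Thus both parts assert a modulus of continuity for $P$ (and, by invariance of Brownian LPP under vertical reflection, for the reversed profile), in the spirit of \cite{HammondSarkar}: part~(1) on all dyadic scales from the macroscopic down to a large multiple of $n^{-1}$, and part~(2) on the residual microscopic scales $h_{1,2}<Hn^{-1}$. Part~(2) is not merely a boundary case but a tool for part~(1): it is what lets us pass from the polymer's genuine, non-mesh, random endpoints to a fixed spatial and temporal mesh.

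For part~(1) I would first restrict to the high-probability event of Theorem~\ref{t.toolfluc}(1) and Corollary~\ref{c.lateral}, so that for all relevant $h_1,h_2$ one has $|\rho_n(h_1)|,|\rho_n(h_2)|\le Hr$ and $|\rho_n(h_2)-\rho_n(h_1)|\le H h_{1,2}^{2/3}\big(\log(1+h_{1,2}^{-1})\big)^{1/3}r$. On this event $\weight_n\big[(\rho_n(h_1),h_1)\to(\rho_n(h_2),h_2)\big]$ is a weight between points of vertical separation $h_{1,2}$ and horizontal separation at most the above lateral bound. I would then bound, uniformly over point pairs drawn from a mesh of spacing $\sim h_{1,2}$: for the upper side, the weight is at most the free-upper-endpoint weight from $(u,h_1)$ to height $h_2$, which after rescaling the strip to unit height is the maximum of a parabolically adjusted $\afixed$-type profile, an $O(h_{1,2}^{1/3})$ quantity with upper tail $\exp\{-ct^{3/2}\}$; for the lower side, I would invoke the point-to-point weight lower-tail estimate imported, as in \cite{BrownianReg,DeBridge}, from Brownian Gibbs theory, which gives that the weight exceeds $-c(v-u)^2 h_{1,2}^{-1}-\lambda h_{1,2}^{1/3}$ except with probability $\exp\{-c\lambda^3\}$. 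Feeding in $|v-u|\le H h_{1,2}^{2/3}(\log(1+h_{1,2}^{-1}))^{1/3}r$ makes the parabolic term $-c(v-u)^2 h_{1,2}^{-1}$ of order $H^2 r^2 h_{1,2}^{1/3}(\log h_{1,2}^{-1})^{2/3}$ — this square of the polymer's lateral modulus is the dominant contribution, and the origin of the factor $r^2$ and of the $2/3$ power on the logarithm — while the residual stochastic terms are lower order. The passage from the polymer's actual endpoints to mesh points uses super-additivity of weight (concatenating the microscopic sub-polymers $(p_0,h_1-n^{-1})\to(\rho_n(h_1),h_1)$ and $(\rho_n(h_2),h_2)\to(q_0,h_2+n^{-1})$ with the relevant sub-polymer), reducing the task to a mesh weight plus two microscopic weights controlled by part~(2), with continuous maximal inequalities absorbing sub-mesh fluctuations. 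Crucially, this is organized as a chaining argument over dyadic heights $2^{-j}$ for $k\le j\lesssim\log_2 n$: at level $j$ the union bound ranges over only $\sim 2^{O(j)}r^{O(1)}$ point pairs rather than over all $\sim n$ heights, and one takes the level-$j$ threshold for the rescaled weight of the form $cH^2 j^{2/3}r^2+\big(A(j+\Lambda_1 r^3 k)\big)^{2/3}$; the $\exp\{-ct^{3/2}\}$ tail then yields a per-piece probability $\exp\{-cA(j+\Lambda_1 r^3 k)\}$, so for $A$, $\Lambda_1$ and $r_0$ chosen suitably the sum over $j\ge k$ of (number of pieces)$\times$(per-piece probability) is geometric, dominated by $j=k$, and at most $H\exp\{-hr^3 k\}$, while summing the level thresholds $2^{-j/3}\big(cH^2 j^{2/3}r^2+(A(j+\Lambda_1 r^3 k))^{2/3}\big)$ over $j\ge k$ produces a total of order $r^2 h_{1,2}^{1/3}(\log h_{1,2}^{-1})^{2/3}$. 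The technical hypothesis $r\le(nh_{1,2})^{1/64}$ is what guarantees that the imported one-point and Brownian comparison estimates apply with the required uniformity at the scales in play, the spatial parameter $\sim r\,(\log h_{1,2}^{-1})^{1/3}$ needing to be a sufficiently small power of $nh_{1,2}$; the precise exponent is inherited from those inputs.

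For part~(2) the interval $[h_1,h_2]$ contains only $O(1)$ levels of the $n^{-1}\Z$-mesh. On the event of Theorem~\ref{t.toolfluc}(2) the polymer's horizontal increment is at most $Gn^{-2/3}(\log n)^{1/3}r$, so in unscaled coordinates $\weight_n\big[(u,h_1)\to(v,h_2)\big]$ is a sum of $O(1)$ Brownian increments over horizontal intervals of length $O((\log n)^{1/3}r)$, minus a centering term of size $O((\log n)^{1/3}r)$. A Gaussian tail bound for the increments, together with a union bound over the $\sim n$ possible heights and a spatial mesh (sub-mesh fluctuations handled by the Brownian maximal inequality), gives that the scaled weight is at most $Gr^2 n^{-1/3}(\log n)^{2/3}$ except with probability $n^{-hr^3}$, once $r\ge r_0$ is large enough to absorb the polynomial-in-$n$ size of the union.

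The main obstacle will be the bookkeeping in part~(1): one must obtain the logarithmic power $(\log h_{1,2}^{-1})^{2/3}$ — not $(\log n)^{2/3}$ — and the probability $\exp\{-hr^3 k\}$ simultaneously, which forces the multiscale organization above so that no individual scale pays a union bound over $\sim n$ heights, and a careful interplay between the polymer-geometry modulus of Theorem~\ref{t.toolfluc}(1), which supplies the dominant parabolic cost, and the imported one-point tail estimates, which control the residual weight fluctuations. A secondary delicate point is the discretization of the polymer's random endpoints, since weight is not monotone in its endpoints; this is handled via super-additivity together with part~(2), but the errors must be tracked to see that they remain lower order on the scales where part~(1) operates.
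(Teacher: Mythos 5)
Your proposal isolates the same two mechanisms that drive the paper's proof: the dominant term is the parabolic curvature penalty $2^{-1/2}(v-u)^2h_{1,2}^{-1}$, which, fed the lateral modulus $\vert v-u\vert\leq Hh_{1,2}^{2/3}(\log(1+h_{1,2}^{-1}))^{1/3}r$ from Theorem~\ref{t.toolfluc}(1) and Corollary~\ref{c.lateral}, produces exactly the factor $H^2r^2h_{1,2}^{1/3}(\log h_{1,2}^{-1})^{2/3}$; and the residual stochastic fluctuation is controlled by tails of the form $\exp\{-ct^{3/2}\}$ at threshold $t\sim r^2k^{2/3}$, yielding the probability $\exp\{-hr^3k\}$. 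Your part~(2) is essentially the paper's argument verbatim (Theorem~\ref{t.toolfluc}(2), then unscaled Brownian oscillation and a union bound over $\sim n$ heights). Where you diverge is in how uniformity over the polymer's random, non-mesh endpoints is obtained in part~(1). The paper works at the \emph{single} scale $k$: it covers the possible locations of $(u,h_1)$ and $(v,h_2)$ by $O(2^{5k/3}k^{-1/3})$ translates of a rectangle of width $H2^{-2k/3}k^{1/3}r$ and height $2^{-k}$, and invokes Proposition~\ref{p.tailweight} (via the scaling principle with ${\bf n}=6n2^{-k}$), which already supplies a supremum over \emph{continuous} endpoint variation within each rectangle; the continuum uniformity was paid for earlier, in the tree construction of Proposition~\ref{p.threepart} and the shear invariance of Proposition~\ref{p.invariance}. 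You instead rebuild that uniformity by telescoping the weight over dyadic sub-intervals at every scale $j\geq k$ down to the microscopic, using one-point tails at mesh points plus super-additivity and part~(2) for the microscopic concatenations.

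The concrete gap is at the fine end of your chaining. At scale $j$ the sub-polymer piece has, after rescaling the strip of height $2^{-j}$ to unit height, horizontal endpoint separation of order $j^{1/3}r$, and the one-point tail inputs (Lemma~\ref{l.onepointbounds}, or \cite[Proposition~1.5]{ModCon}) require this to be at most a small power of the effective parameter $n2^{-j}$. Once $j$ approaches $\log_2(hn)$, the quantity $n2^{-j}$ is of constant order while $j^{1/3}r\gtrsim(\log n)^{1/3}r_0$ is not, so the hypotheses of the estimates you are summing fail for the last $\Theta(\log(r\log n))$ dyadic generations; and the crude fallback of unscaled Brownian oscillation over those $({r\log n})^{O(1)}$ microscopic levels produces a contribution that is polylogarithmically too large when $k$ itself is close to $\log_2 n$ (where the target is only $\sim n^{-1/3}(\log n)^{2/3}$ since the hypothesis $r\leq(nh_{1,2})^{1/64}$ then forces $r=O(1)$). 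This is precisely why the paper does \emph{not} chain the weight over scales: the hypothesis $r\leq(nh_{1,2})^{1/64}$ guarantees the applicability of Proposition~\ref{p.tailweight} at scale $k$ only (the proof notes the induced constraint $rk^{1/3}\leq\Theta(1)(n2^{-k})^{1/60}$), and no finer scale is ever visited for the weight. To repair your argument you would either need to truncate the chaining at the largest scale $j^*$ where the one-point inputs remain valid and supply a separate, sharper estimate for the residual stretch, or replace the per-scale one-point bounds by a continuum-uniform weight-tail statement at scale $k$ — which is what Proposition~\ref{p.tailweight} is.
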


 We present two further results, which  emerge in the course of the proof of Theorem~\ref{t.toolfluc}. The former offers uniform control on the maximum fluctuation of polymers, in which we permit to vary  the polymer endpoints and the moment during the lifetime of the polymer at which fluctuation is measured.  The latter proves the rarity of short polymers of extreme weight that begin and end in a unit-order region.

In order to  state the first result, we specify a measure of the fluctuation of the polymer $\rho_n \big[ (x,s_1)\to (y,s_2) \big]$ at the intermediate moment $h \in [s_1,s_2] \cap n^{-1} \Z$, measuring the horizontal distance between the polymer at this height $h$ relative to the height-$h$ location $\tfrac{s_2 - h}{\tot} x + \tfrac{h- s_1}{\tot} y$ of the line that interpolates $(x,s_1)$ and $(y,s_2)$. We set 
\begin{equation}\label{e.flucdef}
\fluct_n \big[ (x,s_1) \to (y,s_2) ; h \big] 
  =   \sup \Big\{ \big\vert u - \tfrac{s_2 - h}{\tot} x - \tfrac{h- s_1}{\tot} y \big\vert:  u \in \R \, , \, (u,h) \in \rho_n \big[ (x,s_1) \to (y,s_2) \big] \Big\} \, .
\end{equation}
The typical order of this quantity is $\lambda^{2/3}$, where $\lambda$ equals $(h - s_1) \wedge (s_2 - h)$, with $\wedge$ denoting minimum.

\begin{theorem}\label{t.deviation}
Let $K > 0$, $r \geq r_0$, $a \in (0,1/4]$, $n \in \N$ and $s_1,s_2 \in n^{-1}\Z \cap [0,1]$ satisfy $s_1 \leq s_2$; $n \tot a$ and $n \tot(1-a)$ are at least $\Theta(1)$; $K a^{1/3} \leq \Theta(1)$; and $\vert K \vert \leq (n \tot)^{2/3}$.
Then
\begin{equation}\label{e.deviationvariant} 
  \PP \Big( \sup \fluct_n \big[ (x,h_1) \to (y,h_2) ; h \big] \geq  r (a \tot)^{2/3}\big(\log a^{-1} \big)^{1/3} \Big) \leq \Theta(1) K^2 a^{-10/3}  a^{\Theta(1) r^3 } \, ,
\end{equation}
where the supremum is taken over $x,y \in [-K,K]\cdot s_{1,2}^{2/3}$, $h_1 \in n^{-1}\Z \cap [s_1,s_1 + \tot/3]$, $h_2 \in n^{-1}\Z \cap [s_2 - \tot/3,s_2]$ and 
$h \in n^{-1}\Z$ such that $\tfrac{h-h_1}{h_{1,2}} \in [a,2a] \cup [1-2a,1-a]$.
\end{theorem}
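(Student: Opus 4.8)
The plan is to reduce the uniform bound~\eqref{e.deviationvariant} to a single-configuration estimate --- a tail bound for $\fluct_n[(x,h_1)\to(y,h_2);h]$ with $x,y,h_1,h_2$ and $h$ all fixed --- and then to pay for the supremum by a union bound over a mesh of cardinality $\Theta(1)K^2a^{-10/3}$. By the reflection symmetry $h\mapsto h_1+h_2-h$ it suffices to treat $\tfrac{h-h_1}{h_{1,2}}\in[a,2a]$, in which case $\lambda:=(h-h_1)\wedge(h_2-h)$ equals $h-h_1\asymp a\tot$ while $h_2-h\asymp\tot$, and the threshold $M:=r(a\tot)^{2/3}(\log a^{-1})^{1/3}$ sits a factor $r(\log a^{-1})^{1/3}$ above the natural transversal scale $(a\tot)^{2/3}$. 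For the mesh I would grid $x,y$ over $[-K,K]\tot^{2/3}$ at spacing $(a\tot)^{2/3}$ and $h_1,h_2$ over their length-$\tot/3$ ranges at spacing $a\tot$, and take $h$ at $\Theta(1)$ values across its length-$\asymp a\tot$ window, so that there are $\Theta\big((Ka^{-2/3})^2\big)\cdot\Theta(a^{-2})\cdot\Theta(1)=\Theta(K^2a^{-10/3})$ configurations. Monotonicity of polymers in their endpoints sandwiches $\rho_n[(x,h_1)\to(y,h_2)](h)$ between the values at adjacent grid endpoints, across which the interpolating chord at height $h$ moves by only $\Theta((a\tot)^{2/3})$; combined with a polymer stability/coalescence lemma for bridging adjacent grid values of $h_1,h_2$ and a ``no large short-time excursion'' bound for bridging adjacent values of $h$ (the polymer moves by $\Theta((a\tot)^{2/3})$ times a small power of $r$ over any time-interval of length $\asymp a\tot$, off an event of probability $a^{\Theta(1)r^3}$), this reduces the supremum to the grid at the cost of a constant-factor slack in $M$, absorbed since $r\geq r_0$; the hypothesis $Ka^{1/3}\leq\Theta(1)$ enters to ensure $|v_0-x|\leq\Theta((a\tot)^{2/3})$, where $v_0$ is the chord location at height $h$.

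For the single-configuration bound, set $u:=\rho_n[(x,h_1)\to(y,h_2)](h)$ and decompose $\weight_n[(x,h_1)\to(y,h_2)]=A_u+B_u$ with $A_v:=\weight_n[(x,h_1)\to(v,h)]$ and $B_v:=\weight_n[(v,h)\to(y,h_2)]$; these two families are independent, by the splitting of the Brownian environment at height $h$ that underlies Lemma~\ref{l.routedprofile}, and $u$ maximises $v\mapsto A_v+B_v$ because the polymer must cross height $h$ somewhere. Comparing with the chord point $v_0$, the first-order term of $\EE[(A+B)_{v_0+\delta}-(A+B)_{v_0}]$ in $\delta$ vanishes identically --- the chord is the mean-optimal route --- leaving a parabolic penalty $\asymp\delta^2\big((h-h_1)^{-1}+(h_2-h)^{-1}\big)\asymp\delta^2/\lambda$ for a deviation of magnitude $\delta$. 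Tiling the region $\{v:|v-v_0|\in[2^jM,2^{j+1}M]\}$ by intervals $I$ of width $(a\tot)^{2/3}$, I would bound $\PP(u\in I)$ by the probability that $\sup_{v\in I}(A_v-A_{v_0})+\sup_{v\in I}(B_v-B_{v_0})$ reaches $0$, hence that one of these two independent quantities exceeds half of $\Delta_j\asymp(2^jM)^2/\lambda$ above its (negative) mean.

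For the $A$-term I use the one-point weight upper tail $\exp\{-\Theta(1)s^{3/2}\}$ on scale $(a\tot)^{1/3}$, promoted to a supremum over the width-$(a\tot)^{2/3}$ interval by near-Brownianity and chaining, which gives $\exp\{-\Theta(1)(\Delta_j/(a\tot)^{1/3})^{3/2}\}=a^{\Theta(1)2^{3j}r^3}$. The $B$-term is a supremum-minus-value increment of the narrow-wedge weight profile over a window of width $2^jM\ll\tot^{2/3}$, and here the Brownian comparison of~\cite{DeBridge} is essential: under rate-two Brownian motion the corresponding event has probability $\exp\{-\Theta(1)\Delta_j^2/(2^jM)\}=a^{\Theta(1)2^{3j}r^3}$ by the reflection principle, and the correction factor $\exp\{\Theta(1)(\log\eta^{-1})^{5/6}\}$ from~\eqref{e.verybrownian} is sub-dominant since $\eta$ is a genuine power of $a$ and $r\geq r_0$; a one-point tail bound would only give $a^{\Theta(1)2^{3j}r^3a^{1/2}}$ and would not survive the union bound. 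Summing over the $\Theta(2^jr(\log a^{-1})^{1/3})$ tiles at scale $j$ and over $j\geq0$ (a convergent series dominated by $j=0$), and absorbing the polynomial prefactor into the constant, yields the single-configuration bound $a^{\Theta(1)r^3}$; the exponent is linear in $r^3$ precisely because $M^3/\lambda^2\asymp r^3\log a^{-1}$, which is the entire point of the $(\log a^{-1})^{1/3}$ factor in the threshold. Multiplying by the $\Theta(1)K^2a^{-10/3}$ configurations and treating the case $\tfrac{h-h_1}{h_{1,2}}\in[1-2a,1-a]$ symmetrically gives~\eqref{e.deviationvariant}.

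The main obstacle is the discretisation bookkeeping rather than the single-configuration estimate: keeping the mesh cardinality free of $n$ --- and only polynomial in $K$ and $a^{-1}$ --- forces one to discretise exactly at the scales $a\tot$ and $(a\tot)^{2/3}$ and to lean on coalescence and monotonicity instead of a naive union bound over the $\asymp n\tot$ admissible heights, and then one must check that the single-configuration tail is degraded neither by the grid-bridging lemmas nor by the independence split. Within the latter, the step where the weight-profile increment $B_v-B_{v_0}$ has to be controlled with a Gaussian tail is exactly where the Brownian Gibbs input of~\cite{DeBridge} cannot be dispensed with.
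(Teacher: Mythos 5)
Your overall architecture --- a single-configuration tail bound of size $a^{\Theta(1)r^3}$ paid for by a union bound over $\Theta(1)K^2a^{-10/3}$ configurations --- matches the paper's, but both halves are executed quite differently. The paper does not mesh the endpoints directly: it covers the product of the starting and ending regions by $\Theta(1)K^2a^{-10/3}$ images of one fixed pair of narrow zones under compositions of a shear, a KPZ dilation and shifts (Lemma~\ref{l.coveringnumber}), and transfers the estimate to each image via shear invariance (Lemma~\ref{l.rhoxa}(2)) and the scaling principle. Its single-configuration input, Lemma~\ref{l.deviation}, never opens the polymer up at height $h$: it sandwiches every polymer with endpoints in the two zones between the two full-lifetime polymers $\rho_n[(z^{\pm},s_1)\to(z^{\pm},s_2)]$ and applies the already-established modulus of continuity, Theorem~\ref{t.toolfluc}, to those two paths over dyadic time scales down to $a/4$; both the factor $a^{\Theta(1)r^3}$ and the $(\log a^{-1})^{1/3}$ in the threshold are inherited from that dyadic structure. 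Your alternative --- splitting $\weight_n[(x,h_1)\to(y,h_2)]$ at height $h$ into independent forward and backward profiles, locating the maximizer via the parabolic penalty $\delta^2/\lambda$, and combining a one-point $\exp\{-\Theta(s^{3/2})\}$ tail on the short side with a Gaussian increment bound from the Brownian comparison on the long side --- is a legitimate and more self-contained route to the same exponent, and your observation that a one-point bound on the long side would only give $a^{\Theta(1)r^3a^{1/2}}$, so that the \cite{DeBridge} input cannot be dispensed with there, is exactly right.

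The one step that does not go through as written is the bridging over $h_1$ and $h_2$. Planar ordering (Lemma~\ref{l.sandwich}) controls the variation of $\rho_n[(x,h_1)\to(y,h_2)](h)$ in $x$ and $y$ only for \emph{fixed} endpoint heights; it says nothing about perturbing $h_1,h_2$, and the ``polymer stability/coalescence lemma'' you invoke for adjacent grid values of $h_1,h_2$ is neither available in the paper nor a soft statement --- quantitative stability of the polymer under endpoint-height perturbation is a harder input than anything else in your argument. The repair is to drop the grid in $h_1,h_2$ entirely and use the paper's device: choose $z^{\pm}$ so that $\rho_n[(z^{\pm},s_1)\to(z^{\pm},s_2)]$ stay in narrow vertical strips near the top and bottom of the lifetime; then any polymer whose starting and ending points lie between these two bounding paths is trapped between them at every intermediate height by the crossing argument, uniformly over $h_1\in[s_1,s_1+\tot/3]$ and $h_2\in[s_2-\tot/3,s_2]$. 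This substitution only improves your mesh count. Two further points should be recorded: your ``no large short-time excursion'' step for bridging values of $h$ is itself a transversal-fluctuation estimate at scale $a\tot$ and must be supplied by an application of Theorem~\ref{t.toolfluc} at that dyadic scale (Lemma~\ref{l.localcontrol}'s deterministic bound is far too weak here); and the Brownian comparison of Definition~\ref{d.verybrownian} carries the constraint $\eta\geq e^{-gm^{1/12}}$, so your long-side estimate implicitly requires $r^3\log a^{-1}\lesssim(n\tot)^{1/12}$, a restriction of the same kind as the bounds on $r$ hypothesised in Theorem~\ref{t.toolfluc}.
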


To express our result on weights, 
let $\low(\zeta,\ell,L,M)$ denote the event that 
$$
 \tot^{-1/3} \weight^{\cup}_n \big[  (x,s_1) \to  (y, s_2) \big]  
$$ 
is less than $-\zeta$ for some pair $(x,s_1), (y,s_2) \in \R  \times  n^{-1}\Z \cap [0,1]$ with $|x| \vee |y| \le M$, $\vert x - y \vert \leq 2^{-2\ell/3} L$ and $s_{1,2} \in (2^{-\ell-1},2^{-\ell}]$. Let $\high(\zeta,\ell,L,M)$ denote the event that the displayed quantity exceeds~$\zeta$  for some such pair. 
\begin{proposition}\label{p.onepoint}  
When $n \geq \Theta(1) 2^\ell$, $L \leq \Theta(1) (n 2^{-\ell})^{1/46}$, $\Theta(1) \leq \zeta \leq \Theta(1) (n 2^{-\ell})^{1/30}$ and $M>0$,
$$
\PP \Big(  \low\big(\zeta,\ell,L,M\big) \cup  \high\big(\zeta,\ell,L,M\big) \Big) \, \leq \, \Theta(1)  2^{5\ell/3}  M L
\exp \big\{ - \Theta(1) \zeta ^{3/2} \big\}
 \, .
$$ 
\end{proposition}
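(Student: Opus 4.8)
We prove this by rescaling to unit temporal extent and feeding in a uniform one-point estimate at that scale. The engine of the argument is the following \emph{unit-scale} bound: there are constants $c, C > 0$ and $m_0 \in \N$ such that, whenever $m \geq m_0$, $M' > 0$, $1 \leq L' \leq C^{-1} m^{1/46}$ and $C^{-1} \leq \zeta \leq C^{-1} m^{1/30}$, the probability that $\big\vert (s_2 - s_1)^{-1/3} \weight^{\cup}_m \big[ (x,s_1) \to (y,s_2) \big] \big\vert$ exceeds $\zeta$ for some $(x,s_1), (y,s_2) \in \R \times \big( m^{-1}\Z \cap [0,1] \big)$ with $\vert x \vert \vee \vert y \vert \leq M'$, $s_2 - s_1 \geq 1/4$ and $\vert x - y \vert \leq L' (s_2 - s_1)^{2/3}$ is at most $C M' L' \exp \{ - c \zeta^{3/2} \}$. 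Granting this, Proposition~\ref{p.onepoint} follows at once. Cover $[0,1]$ by $\Theta(2^\ell)$ overlapping intervals $S_j$ of length $2^{-\ell+1}$, with left endpoints which we may take in $n^{-1}\Z$ since $n \geq \Theta(1) 2^\ell$, so that every pair $(s_1,s_2)$ appearing in $\low$ or $\high$ satisfies $[s_1,s_2] \subseteq S_j$ for some $j$. By vertical translation invariance of the scaled weight field and the action of the scaling map $R_n$, the field restricted to $S_j$ is in distribution a copy of the unit-extent weight field with parameter $m = n\vert S_j\vert = n 2^{-\ell+1}$; under this rescaling the window $\vert x\vert \vee \vert y\vert \leq M$ becomes $\vert x'\vert \vee \vert y'\vert \leq \Theta(M 2^{2\ell/3}) =: M'$, the separation condition $\vert x-y\vert \leq 2^{-2\ell/3}L$ becomes $\vert x'-y'\vert \leq \Theta(L)(s_2'-s_1')^{2/3}$, and the normalised quantity $\tot^{-1/3}\weight^{\cup}$ is preserved. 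Applying the unit-scale bound in each slab with $M'$, $L' = \Theta(L)$ and the given $\zeta$ — the hypotheses $n \geq \Theta(1) 2^\ell$, $L \leq \Theta(1)(n2^{-\ell})^{1/46}$ and $\zeta \leq \Theta(1)(n2^{-\ell})^{1/30}$ being precisely what guarantees $m \geq m_0$, $L' \leq C^{-1}m^{1/46}$ and $\zeta \leq C^{-1}m^{1/30}$ — and summing over the $\Theta(2^\ell)$ slabs gives $\Theta(1) \cdot 2^\ell \cdot M 2^{2\ell/3} L \cdot \exp\{-\Theta(1)\zeta^{3/2}\} = \Theta(1) 2^{5\ell/3} M L \exp\{-\Theta(1)\zeta^{3/2}\}$, as claimed.

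It remains to establish the unit-scale bound. Its inputs are the one-point tail estimates for Brownian LPP weights (in the form we use, available through the Brownian Gibbs analysis of \cite{BrownianReg}): the upper tail $\PP\big(\weight_m[(0,0)\to(0,1)] \geq \zeta\big) \leq \exp\{-\Theta(1)\zeta^{3/2}\}$, valid for $\zeta \leq \Theta(1)m^{1/30}$, together with the stronger lower tail $\PP\big(\weight_m[(0,0)\to(0,1)] \leq -\zeta\big) \leq \exp\{-\Theta(1)\zeta^{3}\}$; and a modulus-of-continuity input for the weight as a function of its endpoints, available on the unit scale from the Brownian comparison for narrow-wedge weight profiles. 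Since the lower tail decays faster than $\exp\{-\Theta(1)\zeta^{3/2}\}$ for $\zeta \geq \Theta(1)$, the $\low$ direction is never the bottleneck; both directions are fed by these estimates. One first passes from the on-diagonal endpoints $(0,0),(0,1)$ to general $(x,s_1),(y,s_2)$ using the exact distributional identity for $\weight_m$ under affine shifts of the spatial coordinate and rescaling of the temporal one — this is precisely the identity that the definition of $\weight^{\cup}$ (weight with the deterministic parabolic term restored) is designed to render scale- and shift-stationary. One then discretises the spatial endpoints: $x$ over a unit-mesh grid in $[-M',M']$ (that is, $\Theta(M')$ points) and, given $x$, $y$ over a unit-mesh grid within distance $\Theta(L')$ of $x$ (that is, $\Theta(L')$ points), applies the transferred one-point bounds at each grid pair, and uses the regularity input to bound the fluctuation of the weight between grid points by $\tfrac14\zeta(s_2-s_1)^{1/3}$ with the required probability (the failure probability of this good event, because $\zeta \geq \Theta(1)$, again being at most $\exp\{-\Theta(1)\zeta^{3/2}\}$). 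This produces the prefactor $\Theta(1) M' L'$.

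The genuinely delicate point — and the main obstacle — is uniformity over the \emph{temporal} endpoints $s_1, s_2$, which range over the fine lattice $m^{-1}\Z$ of cardinality $\Theta(m)$; a naive union bound over them would introduce an inadmissible factor of $m$. This is resolved by superadditivity of weights. Fix a coarse grid of temporal mesh $\tfrac18$; since $s_2 - s_1 \geq \tfrac14$ this grid meets $(s_1,s_2)$, and one sandwiches $\weight^{\cup}_m[(x,s_1)\to(y,s_2)]$ between quantities built from weights whose temporal endpoints lie on the coarse grid, together with a bounded number of short ``protruding'' pieces of temporal extent in $[m^{-1},\tfrac18]$: for $\high$, one expands the coarse-grid weight over the enclosing interval as a sum along the polymer's grid-crossings and bounds the protruding excesses from below; for $\low$, one expands $\weight^{\cup}_m[(x,s_1)\to(y,s_2)]$ itself as such a sum with the short pieces attached inside. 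The coarse-grid contributions are controlled as in the previous paragraph, now over only $\Theta(1)$ coarse-grid pairs — the intermediate crossing locations being confined to within $\Theta(1)$ of the endpoints by polymer transversal fluctuation, which costs only a further constant — so no extra factor of $L'$ is incurred there, and the overall prefactor remains $\Theta(1)M'L'$ as governed by the endpoints. The short protruding pieces require a separate lower-tail estimate for the weight of a zigzag of temporal extent $\tau \in [m^{-1},\tfrac18]$ at level $\Theta(\zeta)$, uniform in $\tau$: for $\tau$ bounded below by a constant this is the lower tail above, and for small $\tau$, down to $m^{-1}$, the weight is after the scaling dominated by a single Brownian increment, yielding a Gaussian and hence even stronger lower tail; in every case the bound is $\exp\{-\Theta(1)\zeta^{3}\} \leq \exp\{-\Theta(1)\zeta^{3/2}\}$ and is summable against the $\Theta(M'L')$-fold spatial union. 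Tracking the constants through the affine transfer and the coarse-grid reduction produces the stated range conditions, the exponents $1/30$ and $1/46$ being inherited from the validity windows of the one-point and regularity inputs, which completes the proof.
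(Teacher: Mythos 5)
Your reduction is exactly the paper's: rescale temporal slabs of length $\Theta(2^{-\ell})$ to unit extent via the scaling principle, cover the index set, and union bound to produce the prefactor $\Theta(1)2^{5\ell/3}ML$; the unit-scale input you posit is, up to reformulation, Proposition~\ref{p.tailweight}, which the paper's (very short) proof of Proposition~\ref{p.onepoint} simply invokes. The problem is that your proposed proof of that unit-scale input has a genuine gap precisely at the point you flag as the main obstacle: uniformity over the $\Theta(m)$ temporal lattice endpoints. A single coarse grid of mesh $\tfrac18$ does not dispose of this entropy. The protruding pieces are still indexed by $s_1$ ranging over $m^{-1}\Z$, and for the $\Theta(m)$ values of $s_1$ whose gap $\tau$ to the next coarse grid point is of order $\tfrac18$, the lower tail of the protruding piece at level $\Theta(\zeta)$ is only $\exp\{-\Theta(1)\zeta^{3/2}\}$ (your claimed $\exp\{-\Theta(1)\zeta^{3}\}$ is no help when $\zeta = \Theta(1)$, which the hypotheses permit), so the union bound over these pieces reinstates the factor of $m$ you set out to avoid. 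The improvement of the tail as $\tau$ shrinks only kicks in for $\tau \ll 1$ and is irrelevant for the dominant, order-$\tfrac18$ pieces; and your "single Brownian increment, Gaussian tail" claim is valid only for $\tau = O(m^{-1}\log m)$, not for all $\tau \leq \tfrac18$.

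What is actually needed is a full multiscale chaining rather than a one-level coarse-graining: at temporal scale $2^{-k}$ one demands a deviation of order $\zeta (k+1)^{2/3}2^{-k/3}$, whose per-edge failure probability $\exp\{-c\zeta^{3/2}(k+1)\}$ beats the $4^{k}$-fold entropy of edges at that scale, the total weight sacrificed along the chain being $\zeta\sum_k (k+1)^{2/3}2^{-k/3} = \Theta(\zeta)$. This is the dyadic tree construction in the proof of Proposition~\ref{p.threepart}(1), descending to scale $K_0 n^{-1}\log n$ and finishing with a microscopic jump controlled by the Brownian oscillation bound of Proposition~\ref{p.microfluc}; the high direction then follows by the superadditive sandwich against the fixed endpoints $(0,-4)$ and $(0,4)$ as in Proposition~\ref{p.threepart}(3). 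A secondary soft spot: the affine transfer to wide spatial windows is not an exact distributional identity for $\weight^{\cup}$ --- the shear map preserves the unscaled energy field exactly (Lemma~\ref{l.invariance}) but introduces error terms in the parabolically adjusted scaled weight that must be, and in Proposition~\ref{p.invariance} are, separately controlled.
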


\subsection{Slim pickings for slender excursions}\label{s.slimpickings}
Here we present results asserting that zigzags constrained to stay close to a deterministic path or the polymer are typically uncompetitive in weight.   

Consider a given zigzag $\phi$ 
from $(0,0)$ to $(0,1)$. Let $(x,s_1),(y,s_2) \in \R\times n^{-1}\Z \cap [0,1]$, $s_1 < s_2$, be two points, neither of which necessarily lies in $\phi$.
A zigzag 
$\psi$ from $(x,s_1)$ to $(y,s_2)$ that is disjoint from~$\rho_n$ will be called an excursion, even though this name might more properly be reserved for the case where $\psi$'s endpoints lie in $\phi$.
For an excursion $\psi$, 
consider the  set of $s \in [s_1,s_2] \cap n^{-1}\Z$ for which, to use the language of Subsection~\ref{s.zigzagfunction}, 
$\vert \psi(s) - \phi(s) \vert$ is at most $\tot^{2/3}  \tza$, where $\tza > 0$ is given.
If this set has  cardinality at least $(1-\chi) \big\vert [s_1,s_2] \cap n^{-1}\Z \big\vert$
and contains the values $s_1$ and $s_2$, then the excursion $\psi$  is called $(\phi,\tza,1 - \chi)$-close.
 The parameter $\tza > 0$ measures 
constraint in movement beyond the factor $\tot^{2/3}$ that is dictated by KPZ scaling, and our notion of closeness indicates that this constraint is satisfied at a high percentage of levels in $[s_1,s_2]$.
See Figure~\ref{f.slenderexcursion}.

\begin{figure}[t]
\centering{\epsfig{file=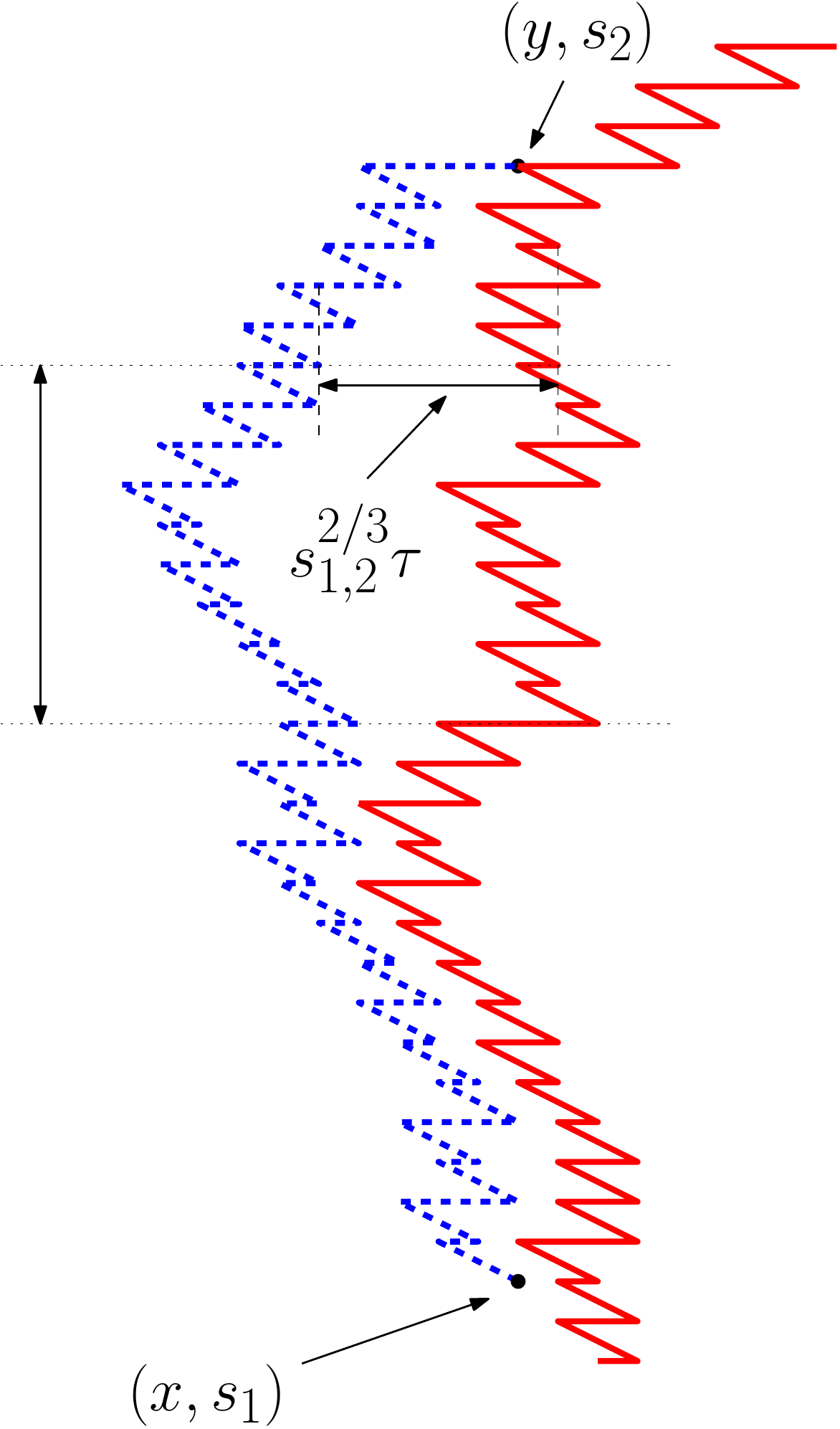, scale=0.4}}
\caption{The dashed zigzag $\psi$ from $(x,s_1)$ to $(y,s_2)$ is an excursion around the bold zigzag $\phi$.
 The vertical double-arrowed line indicates the set of coordinates---an interval in this instance---at which departures from vertical levels differ by more than the quantity appearing in the definition of a slender excursion. Indeed, if such levels exclude $s_1$ and $s_2$ and constitute a fraction less than $\chi$ of all levels in $[s_1,s_2]$, the excursion is $(\phi,\tza,1-\chi)$-close.}\label{f.slenderexcursion}
\end{figure}

The supremum of the weights of $(\phi,\tza,1-\chi)$-close excursions will be denoted by 
$$
\weight_n \big[  (x,s_1) \to (y,s_2)  \, ; \, (\phi, \tza,1 - \chi)\textrm{-close}  \big] \, .
$$ 

 For $\ell \in \N$ and $d_0 >0$, let $\mathsf{LowSlenderExcursion}\big( \ell, \tza,1-\chi;  \phi \big)$
denote the event that   
$$ 
\sup \, \tot^{-1/3} \weight_n \big[  (x,s_1) \to (y,s_2) \, ; \, (\phi,\tza,1-\chi)\textrm{-close}  \big] \leq - 
  \redconst {\tza}^{-1}  \, ,$$
 where the supremum is taken by varying the points $(x,s_1) , (y,s_2) \in \R \, \times \, [0,1] \cap n^{-1}\Z$ over choices such that $2^{-1 - \ell} \leq \tot \leq 2^{-\ell}$.

Our first assertion of slender slim pickings concerns excursions about the polymer $\rho_n$.
\begin{theorem}\label{t.slenderpolymer} There exist  constants $d_0,C>0$ such that we may find $\chi_0 \in (0,1)$, $d_2 > 0$ and $n_0 \in \N$ 
for which  $\chi \in (0,\chi_0)$  and $n \geq n_0$ imply that when $ \tza^{-1/4} > C\log n$, and  
 $\ell \in \N$ satisfies $2^{\ell}\leq n \tza^{40}$,

$$ 
\PP \Big( \neg \,  \mathsf{LowSlenderExcursion}(\ell,\tza , 1 - \chi;  \rho_n) \Big) \leq  \exp \Big\{ - d_2 \tza^{-1/2}\Big\} \, .
$$ 
\end{theorem}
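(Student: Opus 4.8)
I would fix the dyadic scale $\ell$ and reduce, by a union bound over a mesh of endpoints, to the statement that for a single time window $[s_1,s_2]$ with $\tot = s_2 - s_1 \in [2^{-1-\ell},2^{-\ell}]$ and fixed $x,y \in \R$, every $(\rho_n,\tza,1-\chi)$-close excursion from $(x,s_1)$ to $(y,s_2)$ has weight at most $-\redconst\tza^{-1}\tot^{1/3}$, off an event of probability $\exp\{-\Theta(\tza^{-1/2})\}$. The union-bound cost is polynomial in $n$: the pairs $s_1 \le s_2$ in $n^{-1}\Z$ with $s_2 - s_1$ of order $2^{-\ell}$ number at most $n^2$; and since $s_1,s_2$ must be levels at which the excursion lies within $\tot^{2/3}\tza$ of $\rho_n$, one has $\vert x - \rho_n(s_1)\vert \vee \vert y - \rho_n(s_2)\vert \le \tot^{2/3}\tza$, so --- using Corollary~\ref{c.lateral} with parameter of order $\tza^{-1/6}$ to confine $\rho_n$ to a bounded interval off an event of probability $\exp\{-\Theta(\tza^{-1/2})\}$ --- it suffices to let $x,y$ range over a mesh of spacing $n^{-C}$ there, the discretisation error being negligible since weights are polynomially Lipschitz in their endpoints (on a further likely event) while $\redconst\tza^{-1}\tot^{1/3}$ is at least an inverse power of $n$ by the scale constraints. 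Now $\tza^{-1/4} > C\log n$ gives $\tza^{-1/2} > C^2(\log n)^2 \gg \log n$, so the $\mathrm{poly}(n)$ factor is absorbed; and $2^\ell \le n\tza^{40}$ gives $\tza^{-1} \le n^{1/40}$, hence $\tza^{-1/2} \ll n^{1/12}$, so the term $\exp\{-hn^{1/12}\}$ in the conclusion is also dominated by $\exp\{-\Theta(\tza^{-1/2})\}$ --- it appears only because some inputs (the prelimiting Brownian comparison behind Theorems~\ref{t.brownianroutedprofile} and~\ref{t.nearmax}, cf.\ Definition~\ref{d.verybrownian}) saturate at that rate at extreme probabilities.

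\textbf{The single-window estimate by multi-scale decomposition.} Work on a likely event $\mathcal{G}$ on which $\rho_n$ obeys Corollary~\ref{c.lateral}, the modulus of continuity of Theorem~\ref{t.toolfluc}(1), and the weight-regularity of Theorem~\ref{t.weight}(1) and Proposition~\ref{p.onepoint} at every relevant sub-scale, all with parameters of order $\tza^{-\Theta(1)}$, so that $\PP(\neg\mathcal{G}) \le \exp\{-\Theta(\tza^{-1/2})\}$. Partition $[s_1,s_2] \cap n^{-1}\Z$ into $N$ consecutive sub-windows of common duration $\tau = \tot/N$, with $N$ chosen so that $\beta := N^{2/3}\tza < 1$ --- whence over each sub-window the corridor of half-width $\tot^{2/3}\tza = \beta\,\tau^{2/3}$ is a factor $\beta$ thinner than the natural transversal scale $\tau^{2/3}$ --- while keeping $N$ not so large that the exceptional sub-windows below can swamp the accumulated deficit (a choice $N$ of order $\tza^{-5/4}$ works), and also picking the partition offset so that few sub-window endpoints are bad levels. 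By additivity, $\weight_n(\psi) = \sum_J \weight_n(\psi\vert_J)$ for any $(\rho_n,\tza,1-\chi)$-close excursion $\psi$. The key input --- and the main obstacle --- is a single-sub-window estimate, which I would prove by Brownian Gibbs analysis of the underlying Airy-type line ensemble in the spirit of~\cite{BrownianReg,DeBridge} and developing the approach of~\cite{HammondSarkar}: off an event of probability at most a small constant $q$, every zigzag over $J$ that is disjoint from $\rho_n\vert_J$, stays within $\beta\,\tau^{2/3}$ of it at all but a $\chi$-like fraction of the levels of $J$, and has its endpoints within $\beta\,\tau^{2/3}$ of those of $\rho_n\vert_J$, gathers weight at most $\weight_n(\rho_n\vert_J) - \kappa\,\beta^{-1}\tau^{1/3}$ for an absolute $\kappa>0$. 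Heuristically this encodes the energetic isolation of the polymer inside a thin tube about itself --- squeezing a disjoint competitor a factor $\beta$ below the natural width costs order $\tau^{1/3}$ per unit of natural-scale corridor-depth it is barred from, hence $\beta^{-1}\tau^{1/3}$ over $J$ --- and I would derive it by conditioning on $\rho_n$ and on the environment outside a tube about $\rho_n\vert_J$, resampling inside via the Brownian Gibbs property, reducing the confined-and-disjoint constraint to a Brownian computation, and absorbing the in-sub-window slack through Proposition~\ref{p.onepoint}. After this conditioning the sub-window events are conditionally independent functions of disjoint time-stretches of the resampled ensemble, so a Chernoff bound gives that, off an event of probability $\exp\{-\Theta(N)\}$, all but a fraction $\delta_0$ of the sub-windows enjoy the deficit.

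\textbf{Summation.} Summing over $J$ and using that $\sum_J \weight_n(\rho_n\vert_J) = \weight_n(\rho_n\vert_{[s_1,s_2]})$ telescopes irrespective of the split into ``good'' and exceptional sub-windows, we get, for any $(\rho_n,\tza,1-\chi)$-close $\psi$,
\[ \weight_n(\psi) \;\le\; \weight_n(\rho_n\vert_{[s_1,s_2]}) \;-\; \kappa\,\big(1 - O(\chi+\delta_0)\big)\,N\beta^{-1}\tau^{1/3} \;+\; O(\chi+\delta_0)\,N\,\big(\text{per-sub-window fluctuation}\big), \]
where the exceptional-sub-window fluctuations are controlled, sub-window by sub-window, via Proposition~\ref{p.onepoint} and Theorem~\ref{t.weight}(1) at scale $\tau$. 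Since $\weight_n(\rho_n\vert_{[s_1,s_2]}) = O(\tot^{1/3})$ up to a polylogarithm (Theorem~\ref{t.weight}(1)), $N\beta^{-1}\tau^{1/3} = \tza^{-1}\tot^{1/3}$, and the calibration of $N$ renders the last term $o(\tza^{-1}\tot^{1/3})$ in $\chi_0,\delta_0$, one obtains $\weight_n(\psi) \le -\tfrac{\kappa}{2}\,\tza^{-1}\tot^{1/3}$ once $\chi_0,\delta_0$ are small relative to $\kappa$ and $n_0$ is large; this gives the claim with $\redconst = \kappa/2$. The genuine difficulty lies in the Brownian Gibbs single-sub-window estimate, and in the calibration of $N$, $\chi_0$ and $\delta_0$ making the shortfalls overwhelm both the polymer's own weight and the exceptional sub-windows (those bearing too much of the $1-\chi$ slack, or where the single-sub-window estimate fails, including the polynomially growing fluctuations incurred when union-bounding over all $N$ of them); the dyadic/mesh union bound and the handling of the $\exp\{-hn^{1/12}\}$ term are routine given the hypotheses.
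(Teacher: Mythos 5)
Your global architecture (dyadic/mesh union bounds absorbed via $\tza^{-1/4}>C\log n$, a multi-scale decomposition of $[s_1,s_2]$ into sub-windows, per-window deficits summed against entropy of exceptional windows) matches the paper's in outline, but the step you yourself identify as ``the key input'' is a genuine gap, and it is where the entire difficulty of the theorem lives. Your single-sub-window claim --- that a competitor confined to a corridor of half-width $\beta\tau^{2/3}$ with $\beta=N^{2/3}\tza\ll 1$ suffers a deficit $\kappa\beta^{-1}\tau^{1/3}$ --- is precisely Theorem~\ref{t.nocloseness} at scale $\tau$ with confinement parameter $\beta$. The accounting is scale-invariant: $N\beta^{-1}\tau^{1/3}=\tza^{-1}\tot^{1/3}$ for \emph{every} choice of $N$, so with $N=1$ your ``key input'' \emph{is} the statement to be proved, and your choice $N\sim\tza^{-5/4}$ (giving $\beta=\tza^{1/6}\to 0$) does not break the circle --- you still need a quantitative confinement penalty diverging as $\tza\searrow 0$, now at a smaller scale. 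The sketched route (condition on $\rho_n$ and the environment outside a tube, resample inside via Brownian Gibbs, reduce to a Brownian computation) is not a proof of this: the Brownian Gibbs property in this paper attaches to the RSK line ensembles of the \emph{weight profiles}, not to the noise field restricted to a spatial tube around the polymer, and conditioning on $\rho_n$ --- which is a global functional of the environment --- both correlates the sub-windows and biases the exterior field, so the claimed conditional independence and the Chernoff step do not follow as stated.

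The paper escapes the circularity by two moves you are missing. First, it chooses the sub-window duration so that the corridor width $\tot^{2/3}\tza$ is a \emph{constant} fraction $b$ of the sub-window's natural transversal scale (duration $\asymp(\tza/b)^{3/2}\tot$, so $N\asymp\tza^{-3/2}$): at that scale there is no confinement penalty to prove, and the per-window deficit is simply the strictly negative mean of the point-to-point weight (Proposition~\ref{p.negativemean}, the negative mean of the GUE Tracy--Widom law), corrected by an $O((b\kappa)^{1/2})$ endpoint-fluctuation term controlled by Proposition~\ref{p.fluc} --- whence the need to take $b$ small. The factor $\tza^{-1}$ then emerges from \emph{counting}: $\kappa^{-3/2}$ windows times $\kappa^{1/2}$ deficit each, with Proposition~\ref{p.Vershynin} upgrading the mean to a tail bound, and a union over the ``plentiful segment collections'' (which levels carry the closeness constraint) beaten by taking $\chi_0$ small. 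Second, and essentially, the independence across sub-windows is legitimate only for a \emph{deterministic} regular zigzag $\phi$; this is Theorem~\ref{t.nocloseness}. The passage to $\rho_n$ is then made by the FKG inequality (Lemma~\ref{FKG}): the environment on ${\rm Ext}(\rho_n)$ is stochastically dominated by a fresh environment, $\mathsf{LowSlenderExcursion}$ is a decreasing event on that exterior (this is exactly why the theorem is stated for \emph{excursions}, i.e., zigzags disjoint from $\rho_n$, and why the starred event cannot be handled this way), and Theorem~\ref{t.toolfluc} supplies the regularity of $\rho_n$ needed to invoke Theorem~\ref{t.nocloseness}. Without an analogue of this decoupling step, your direct conditioning on $\rho_n$ leaves the correlation between the polymer and the environment unaddressed.
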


Theorem~\ref{t.slenderpolymer} will follow from our second assertion:
any zigzag that is constrained to stay close to a deterministic zigzag $\phi$ is typically uncompetitive in weight. 
To this end, for  $\phi$ any $n$-zigzag from $(0,0)$ to $(0,1)$, let 
$\weight^*_n \big[  (x,s_1) \to (y,s_2)  \, ; \, (\phi, \tza,1 - \chi)\textrm{-close}  \big]$ 
 denote the supremum of the weights of $(\phi,\tza,1-\chi)$-close zigzags~$\psi$. Note that  $\psi$ varies over a class of zigzags; it is not assumed to be  an excursion about $\phi$, and may intersect $\phi$.
The superscript asterisk in the new notation indicates this distinction. 

Let $\mathsf{LowSlenderWeight}^*(\ell, \tza,1-\chi;  \phi)$
denote the event specified above, with $\weight_n$ replaced by $\weight_n^*$.
 
The upcoming assertion, that $\mathsf{LowSlenderWeight}^*$  is typical, will suppose a certain regularity on $\phi$.

We say that an $n$-zigzag $\phi$ from $(0,0)$ to $(0,1)$ is {\em $\mathfrak{R}$-regular} if, whenever 
 $h_1,h_2 \in n^{-1}\Z \cap [0,1]$ and $u,v \in \R$ satisfy $(u,h_1),(v,h_2) \in \phi$, we have that
\begin{equation}\label{regular-criterion}
\big\vert v -u \big\vert \leq h_{1,2}^{2/3} \mathfrak{R} \, .
\end{equation}

\begin{theorem}\label{t.nocloseness}  There exist constants $d_0,C>0$ such that
we may find $\chi_0 \in (0,1)$, $d_2 > 0$ and $n_0 \in \N$ 
for which  $\chi \in (0,\chi_0)$  and $n \geq n_0$ imply that,  when $ \tza^{-1/4} > C\log n$ and  
 $\ell \in \N$ satisfies $2^{\ell}\leq n \tza^{40}$,
$$ 
\PP \Big( \neg \,  \mathsf{LowSlenderWeight}^*(\ell,\tza , 1 - \chi;  \phi) \Big) \leq  \exp \big\{ - d_2 \tza^{-3/2} \big\}
$$ 
for any {\em given} zigzag $\phi$ from $(0,0)$ to $(0,1)$ which is $\theta^{-1/4}$-regular.
\end{theorem}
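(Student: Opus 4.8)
\emph{Overview of the plan.} The plan is to split the lifetime $[s_1,s_2]$ into $K \asymp \theta^{-3/2}$ consecutive blocks, each of duration of order $\delta := c_1 \theta^{3/2}\tot$, to show that any $(\phi,\theta,1-\chi)$-close zigzag contributes to each block a weight whose expectation is a strictly negative multiple of that block's natural weight scale $\delta^{1/3}$, and then to sum these $K$ essentially independent contributions by a Chernoff bound. Since $K\delta^{1/3} \asymp \tot^{1/3}\theta^{-1}$, this produces a deficit of the advertised order $\tot^{1/3}\theta^{-1}$, and a large-deviation bound for a sum of $K$ independent summands yields a failure probability of order $\exp\{-\Theta(1)K\} = \exp\{-\Theta(1)\theta^{-3/2}\}$. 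First I would reduce to a fixed pair $(s_1,s_2)$: the event $\neg\,\mathsf{LowSlenderWeight}^*(\ell,\theta,1-\chi;\phi)$ is a supremum over $\psi$ and over its endpoints, but a $(\phi,\theta,1-\chi)$-close $\psi$ has $s_1,s_2$ among its close levels, so the continuum of admissible $(x,y)=(\psi(s_1),\psi(s_2))$ lies within $\tot^{2/3}\theta$ of $(\phi(s_1),\phi(s_2))$ and will be subsumed into suprema over the first and last blocks below; only the $O(n^2)$ choices of $(s_1,s_2)\in(n^{-1}\Z)^2$ with $\tot\in(2^{-\ell-1},2^{-\ell}]$ remain. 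A union bound over these costs a polynomial-in-$n$ factor, which is harmless: under the hypothesis $\theta^{-1/4}>C\log n$ one has $\theta^{-3/2}\gg\log n$, so $\exp\{-\Theta(1)\theta^{-3/2}\}$ dominates any power of $n$.

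\emph{Block decomposition.} Fix $(s_1,s_2)$. The close levels of $\psi$ form a subset of $[s_1,s_2]\cap n^{-1}\Z$ of relative density at least $1-\chi$ that contains $s_1$ and $s_2$; using this I would construct a partition $s_1=t_0<t_1<\dots<t_{K'}=s_2$ with each $t_j$ a close level, with $t_j-t_{j-1}\in[\delta,2\delta]$ for all but at most a $\chi$-fraction of indices, and with the remaining ``long'' blocks (each straddling a run of non-close levels of length $\ge\delta$) of total length at most $2\chi\tot$. The grid $n^{-1}\Z$ supports $\delta$-blocks because $n\delta\gtrsim\theta^{3/2}(n2^{-\ell})\gtrsim\theta^{-38.5}\gg1$ under $2^{\ell}\le n\theta^{40}$. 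By additivity of weight along $\psi$ and the elementary fact that a sub-zigzag's weight is at most the maximal zigzag weight between its endpoints,
\[
\weight_n(\psi)\,\le\,\sum_{j=1}^{K'}\weight_n\big[(\psi(t_{j-1}),t_{j-1})\to(\psi(t_j),t_j)\big]\,\le\,\sum_{j=1}^{K'}L_j^{1/3}\,Y_j\,,
\]
where $L_j=t_j-t_{j-1}$ and, after rescaling the $j$-th block to unit duration, $Y_j$ is the supremum of the parabolically renormalized weight $\weight^{\cup}$ --- for a copy of scaled Brownian LPP of parameter $nL_j$, built from Brownian curves disjoint across blocks, so the $Y_j$ are independent --- over a box of fixed (order-one) side length centred at the horizontal displacement $e_j:=(\phi(t_j)-\phi(t_{j-1}))L_j^{-2/3}$, which satisfies $|e_j|\le\mathfrak{R}=\theta^{-1/4}$ by $\mathfrak{R}$-regularity of $\phi$.

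\emph{Negative mean and the Chernoff bound.} The crux is that $\E Y_j\le-\mu_0<0$ uniformly in $j$, $\phi$ and $n$. I would obtain this by combining a one-point mean estimate for $\weight^{\cup}$ --- valid uniformly over displacements up to order $\theta^{-1/4}\ll(n\delta)^{1/9}$, placing this mean within $o(1)$ of the negative Tracy--Widom mean --- with the local Brownianity of the de-parabolized profile from \cite{BrownianReg,DeBridge}, which ensures that the supremum over an order-one box exceeds the mean by only an order-one amount; enlarging the constant $c_1$ shrinks the box and keeps $\E Y_j$ below a fixed negative constant. The matching upper tail $\PP(Y_j\ge t)\le\Theta(1)\exp\{-\Theta(1)t^{3/2}\}$ for $t$ above a constant is exactly Proposition~\ref{p.onepoint} at the block scale (its polynomial prefactors absorbed as above), whence $\log\E e^{\lambda Y_j}\le-\mu_0\lambda+\Theta(1)\lambda^2$ for $\lambda$ in a bounded range. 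Writing $Y_j=(Y_j+\mu_0)-\mu_0$ and using $\sum_j L_j^{1/3}\ge(1-\chi)K\delta^{1/3}\gtrsim\tot^{1/3}\theta^{-1}$ by concavity, it remains to bound the probability that $\sum_j L_j^{1/3}(Y_j+\mu_0)$ exceeds a small multiple of $\tot^{1/3}\theta^{-1}$; an exponential-moment bound with per-block parameter $\lambda L_j^{1/3}$, together with $\sum_j L_j^{2/3}\lesssim\theta^{-1/2}\tot^{2/3}$ over short blocks and $\lesssim\chi^{5/3}\theta^{-3/2}\tot^{2/3}$ over long blocks, and optimization in $\lambda$, gives $\exp\{-\Theta(1)d_0^2\theta^{-3/2}\}$ provided $\chi$ is below an absolute constant $\chi_0$. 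This is the claimed estimate, and it pins down $d_0$ as a small multiple of $\mu_0 c_1^{-2/3}$ and $C$ via $\theta^{-3/2}\gg\log n$.

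\emph{Main obstacle.} The two delicate points are (i) the uniform negative-mean bound $\E Y_j\le-\mu_0$, which requires controlling the \emph{mean}, not merely the tail, of $\weight^{\cup}$ over a box that may sit at horizontal displacement of order $\theta^{-1/4}$ from the diagonal, uniformly in that displacement and in the large finite parameter $nL_j$ --- hence a careful marriage of the Brownian-comparison technology of \cite{BrownianReg,DeBridge} with the one-point control of Proposition~\ref{p.onepoint}; and (ii) the combinatorics of the $\le\chi$-fraction of non-close levels, where one must ensure that long runs of such levels, handled as single long blocks, do not inflate $\sum_j L_j^{2/3}$ enough to spoil the Gaussian regime of the Chernoff bound (this being why the long blocks are allotted total length only $O(\chi\tot)$, why $\chi_0$ must be absolute and small, and where, if needed, the modulus-of-continuity estimate Theorem~\ref{t.toolfluc} can be invoked to confine the optimal zigzag across such a run).
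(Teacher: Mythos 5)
Your architecture is essentially the paper's: the proof in Section~\ref{s.slenderresults} also partitions the lifetime into blocks of duration $\asymp \tza^{3/2}\tot$ (so $\asymp\tza^{-3/2}$ of them), bounds the weight of each block by a supremum of $\weight_n$ over a small box of rescaled side $\asymp b$ centred on $\phi$, shows this supremum has mean at most a fixed negative constant per block (Proposition~\ref{p.negativemean} for the point-to-point mean, Proposition~\ref{p.fluc} to show the box-supremum correction is $O(b^{1/2})$ and hence harmless for $b$ small --- your ``enlarge $c_1$'' is the paper's ``shrink $b$''), applies Bernstein-type concentration (Proposition~\ref{p.Vershynin}), treats the $\chi$-fraction of exceptional levels by a separate one-point tail bound, and absorbs the polynomial union bound over $(s_1,s_2)$ using $\tza^{-1/4}>C\log n$. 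Your uniform negative-mean step and your handling of the long blocks are both sound and match Propositions~\ref{p.weightuseful} and Lemmas~\ref{l.weighto}--\ref{l.e}.

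The genuine gap is the union bound over partitions, which you never supply and which is not free. Your decomposition $\weight_n(\psi)\le\sum_j L_j^{1/3}Y_j$ uses partition points $t_j$ that are \emph{close levels of $\psi$}, hence depend on the path being optimised over; a Chernoff bound applies only to a sum of random variables fixed in advance, so you must union over the admissible partitions. As written, the $t_j$ range over $n^{-1}\Z$ subject only to spacings in $[\delta,2\delta]$, so the number of partitions is at least $2^{K}=\exp\{\Theta(\tza^{-3/2})\}$ with an absolute constant (and as much as $\exp\{\Theta(\tza^{-3/2}\log n)\}$ if the $t_j$ are genuinely free in $n^{-1}\Z$), whereas the per-partition probability gain is $\exp\{-d_2\tza^{-3/2}\}$ with a $d_2$ that is \emph{small} (it is a fraction of the Tracy--Widom mean deficit); nothing forces $d_2>\log 2$, so the entropy can swamp the estimate. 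The paper's resolution is to rigidify: segments are pinned to a $\kappa^{3/2}$-grid (only $n\kappa^{3/2}$ offsets $\remainder$, a polynomial factor), so the only combinatorial freedom is \emph{which} $\chi$-fraction of grid points to omit, with entropy ${\lceil\tot\kappa^{-3/2}\rceil\choose\lceil\chi\tot\kappa^{-3/2}\rceil}=\exp\{O(\chi\log\chi^{-1})\tza^{-3/2}\}$, beaten by taking $\chi_0$ small --- this is the true reason $\chi_0$ must be small, beyond your variance consideration. The price of grid-snapping is that the retained close levels in a residue class need not include $s_1,s_2$, forcing the trimming of $\psi$ into $\psi^-\circ\psi^0\circ\psi^+$ and the auxiliary uniform-boundedness event $\uni_n$ (Lemma~\ref{l.lowcontain}) to control the discarded ends; your plan avoids the trimming but only by incurring the uncontrolled partition entropy. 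You should either adopt the grid-and-entropy device or explain how to fix a single $\psi$-independent partition; without one of these the concentration step does not close.
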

Theorem \ref{t.slenderpolymer} will follow from Theorem \ref{t.nocloseness} and an  application of the FKG inequality to the effect that conditioning on $\rho_n$ is negative on the environment exterior to $\rho_n$.  This is why  we consider excursions, namely zigzags that are disjoint from $\rho_n$. Theorem \ref{t.nocloseness} is then applicable because 
  $\mathsf{LowSlenderExcursion}(\ell, \tza , 1 - \chi;  \rho_n)$ is a decreasing event on this exterior environment.  Theorem~\ref{t.toolfluc} will show that $\rho_n$ typically is suitably regular.

\subsection{There are few cliffs along the geodesic}

The geodesic~$\Gamma_n$ from $(0,0)$ to $(n,n)$ progresses in a roughly diagonal fashion, even at the microscopic scale. 
We now state this {\em few cliffs'} assertion, using unscaled coordinates because these are suited to discussing the microscopic scale.

For $n \in \N$, let $\Gamma_n \subset [0,n]^2$ denote the almost surely unique staircase of maximum energy  with starting and ending points $(0,0)$ and $(n,n)$ in static Brownian LPP. For $A \in \N$ a positive integer, we divide the vertical coordinate interval $[0,n]$ into consecutive subintervals of lengths $A$, as well as a remaining subinterval of shorter length if need be. We set $m$ to be the greatest integer {\em strictly} less than $n/A$, so that there are  $m$ subintervals of length~$A$. There is also one remaining interval, whose length is at least one and at most $A$.

We record a sequence $\big\{ X_i: i \in \llbracket 0, m+1 \rrbracket \big\}$ of horizontal coordinates of departure of $\Gamma_n$ from the horizontal borders of  the consecutive strips indexed by the vertical subintervals that we have just defined. Formally, when $i \in \llbracket 0 , m \rrbracket$, $X_i$ is equal to the supremum of those $x \in [0,n]$ for which $(x,iA) \in \Gamma_n$; while for $i = m+1$, $X_i = \sup \big\{ x \in [0,n]: (x,n) \in \Gamma_n \big\}$, so that $X_i = n$.

Now set, for each such index $i$, $Z_i = \lfloor X_i \rfloor$. The sequence  $\big\{ Z_i: i \in \llbracket 0, m+1 \rrbracket \big\}$ is a non-decreasing list of integers lying in $\llbracket 0,n\rrbracket$ 
that offers a unit-scale coarse-grained description of the horizontal progress of the staircase $\Gamma_n$
as consecutive vertical milestones at separation $A$ are passed.
This description is equally captured by the difference function
\begin{equation}\label{e.zdifferencefunction}
 \Psi: \llbracket 0,m \rrbracket \to \llbracket 0, n \rrbracket \, , \, \Psi(i) = Z_{i+1} - Z_i \, .
\end{equation}
Note that $\sum_{i=0}^m \Psi(i) = Z_{m+1} - Z_0$ is at most $n$. 
We now specify a set $\mc{I}$ of indices marking slow horizontal advance---cliffs in the graph of $\Gamma_n$; we set $\mathcal{I}(\Gamma_n)$ equal to the set of $i \in \llbracket 0,m \rrbracket$ for which $\Psi(i)$ is at most two.
\begin{theorem}\label{t.notallcliffs}
There exist $A > 0$, $\alpha_0 \in (1/2,1)$, $h > 0$ and $n_0 \in \N$, such that, when $n \geq n_0$ and $\alpha \geq \alpha_0$ satisfies $\alpha m \in \N$ where $m = \lfloor n/A \rfloor$, 
$$
 \PP \Big( \big\vert \mc{I}(\Gamma_n) \big\vert < \alpha m \Big) \geq 1 - e^{-h n} \, .
$$
\end{theorem}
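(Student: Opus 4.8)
The plan is to stay in the unscaled coordinates of the statement and to contradict the occurrence of many cliffs via a last-passage energy-budget argument; we take $A \in \N$ large and $\alpha_0 \in (1/2,1)$ (any such value works). Write $M_n = M\big[(0,0)\to(n,n)\big]$, let $Z_i = \lfloor X_i \rfloor$ be as in the statement, and put $t_i = Z_{i+1}-Z_i$ for $i \in \llbracket 0,m-1\rrbracket$.

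\textbf{Lower bound on $M_n$.} By superadditivity, $M_n \ge \sum_{i=0}^{m-1} M\big[(iA,iA)\to((i{+}1)A,(i{+}1)A)\big]$, a sum of $m$ independent copies of $M\big[(0,0)\to(A,A)\big]$. Since $A^{-1}\E\, M\big[(0,0)\to(A,A)\big]\to 2$ (a basic Brownian LPP fact, \cite{GlynnWhitt,O'ConnellYor}), and since each summand is $\sqrt A$-subgaussian about its mean --- writing each competing path's energy as $\int f_\gamma \, \mathrm d \whitenoise$ against the noise $\whitenoise$ of the relevant $A$ curves, with $\Vert f_\gamma\Vert_{L^2}^2$ equal to the total horizontal length $A$ of any path from $x=0$ to $x=A$, and invoking the Borell--TIS inequality for the supremum --- one gets $\PP\big(M_n \le (2-\delta)n\big)\le e^{-c\delta^2 n}$ for any fixed $\delta>0$. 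Denote the complementary event by $G_1$; the value of $\delta$, a small multiple of $2-2\sqrt{1-\alpha_0}$, is fixed below.

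\textbf{Uniform control of strip energies.} The energy decomposes as $M_n = \sum_{i=0}^{m-1}E_i + E_{\mathrm{rem}}$, where $E_i$ is the energy accrued by $\Gamma_n$ in the $i$th height-$A$ strip. The sub-staircase in that strip has both endpoints in the unit boxes $[Z_i,Z_i{+}1]\times\{iA\}$ and $[Z_{i+1},Z_{i+1}{+}1]\times\{(i{+}1)A\}$, so $E_i \le \overline M\big[(Z_i,iA)\to(Z_{i+1},(i{+}1)A)\big]$, where $\overline M\big[(a,k)\to(b,k')\big]$ denotes the maximal energy of staircases from $[a,a{+}1]\times\{k\}$ to $[b,b{+}1]\times\{k'\}$; likewise $E_{\mathrm{rem}}\le \overline M\big[(Z_m,mA)\to(n,n)\big]$. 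At integer corners, $\overline M\big[(a,k)\to(b,k')\big]$ is $\sqrt{b-a+1}$-subgaussian about a mean that is at most $(2+\varepsilon_A)\sqrt{(b-a{+}1)(k'-k)} + C$ for an absolute constant $C$ and $\varepsilon_A\to 0$ as $A\to\infty$ (by Brownian scaling in space, the asymptotic above, and an $O(1)$ oscillation correction at the two boundary curves; used with $k'-k=A$ for the strips and $k'-k=n-mA\le A$ for the remainder). I would then establish the event $G_2$ that, \emph{for every} non-decreasing integer sequence $0 \le x_0 \le \cdots \le x_{m+1}=n$,
\[
\sum_{i=0}^{m-1}\Big(\overline M\big[(x_i,iA)\to(x_{i+1},(i{+}1)A)\big] - 2\sqrt{(x_{i+1}-x_i)A}\,\Big) + \overline M\big[(x_m,mA)\to(n,n)\big] - 2\sqrt{(n-x_m)(n-mA)} \ \le\ \delta'' n .
\]
For a fixed sequence the $m{+}1$ terms are independent (they read disjoint sets of curves), their means sum to at most $\delta_*(A)\,n + o(n)$ with $\delta_*(A)\to 0$ as $A\to\infty$ (from the mean bound, the constraint $\sum_i(x_{i+1}-x_i)\le n$, and concavity of $\sqrt{\cdot}$), and their subgaussian variance proxies sum to at most $2n$; a Chernoff bound then gives failure probability $e^{-c(\delta'')^2 n}$ for that sequence. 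Since the number of admissible sequences is at most $(3eA)^{n/A} = e^{(n/A)\log(3eA)}$, fixing $A$ large enough that $A^{-1}\log(3eA) < c(\delta'')^2/2$ and $\delta_*(A)<\delta''/2$ yields $\PP(G_2^c)\le e^{-c'(\delta'')^2 n}$.

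\textbf{The contradiction.} On $G_1\cap G_2$ suppose $\big\vert \mc{I}(\Gamma_n)\big\vert \ge \alpha m$. At most one element of $\mc{I}(\Gamma_n)$ equals $m$, so at least $\alpha m - 1$ of $t_0,\dots,t_{m-1}$ are at most $2$, whence the number of non-cliff strips is at most $(1-\alpha)m+1$; also $\sum_{i=0}^{m-1}t_i = Z_m-Z_0 \le n$. Cauchy--Schwarz over the non-cliff strips gives $\sum_{i=0}^{m-1}\sqrt{t_i} \le \sqrt2\, m + \sqrt{((1-\alpha)m+1)\,n}$, so with $m \le n/A$,
\[
\sum_{i=0}^{m-1}2\sqrt{t_iA} \ \le\ 2\sqrt2\, n/\sqrt A + 2\sqrt{1-\alpha}\; n + 2\sqrt{An}, \qquad 2\sqrt{(n-x_m)(n-mA)} \le 2\sqrt{An} = o(n).
\]
Inserting these into $M_n = \sum_{i=0}^{m-1}E_i + E_{\mathrm{rem}} \le \sum_i \overline M\big[(Z_i,iA)\to(Z_{i+1},(i{+}1)A)\big] + \overline M\big[(Z_m,mA)\to(n,n)\big]$ and using $G_2$,
\[
M_n \ \le\ 2\sqrt{1-\alpha}\; n + \big(\delta'' + 2\sqrt2/\sqrt A\big)\, n + o(n) \ \le\ \big(2\sqrt{1-\alpha_0} + \delta'' + 2\sqrt2/\sqrt A\big)\, n + o(n),
\]
using $\alpha \ge \alpha_0$. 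Writing $g_0 = 2 - 2\sqrt{1-\alpha_0}>0$ and choosing $\delta''$, $A^{-1/2}$, then $\delta$ to be small enough multiples of $g_0$, the right-hand side is at most $(2-\tfrac23 g_0)n < (2-\delta)n$ once $n \ge n_0$, contradicting $G_1$. Hence $\big\vert \mc{I}(\Gamma_n)\big\vert < \alpha m$ on $G_1\cap G_2$, and $\PP\big(\big\vert \mc{I}(\Gamma_n)\big\vert \ge \alpha m\big) \le \PP(G_1^c) + \PP(G_2^c) \le e^{-hn}$ with $h>0$ fixed (depending only on $\alpha_0$).

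The main obstacle is the uniformity needed for $G_2$: one must bound each strip's energy by a quantity evaluated at deterministic integer corners, to make the union finite, yet still absorb the $(3eA)^{n/A}$ crossing sequences. This works precisely because $A$ is a large fixed constant, so $A^{-1}\log A \to 0$ while the per-sequence Gaussian tail stays $e^{-\Theta(n)}$; the same largeness of $A$ powers the final step, where the cliff cost $2\sqrt2\, n/\sqrt A$ and the slack $\delta'' n$ are dominated by the gap $g_0 n$ forced by making more than a fraction $\alpha$ of the strips advance faster than the diagonal rate.
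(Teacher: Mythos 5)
Your overall architecture is sound and reaches the stated bound, but it takes a genuinely different route from the paper's at the key step. Both arguments share the same skeleton: a union bound over coarse-grained horizontal-advance sequences at vertical spacing $A$, a per-sequence estimate showing that a cliff-heavy trajectory cannot carry energy close to $2n$, and a matching lower bound on $E(\Gamma_n)$. The difference lies in how the per-sequence deficit is produced. The paper compares $\Gamma(\Psi)$ to a coarse-grained cousin visiting deterministic points, then invokes a rearrangement lemma (translating the crossed rectangles into increasing order of width) to force the reordered path through a single point at distance of order $n$ from the diagonal, and finally applies a two-point GUE estimate to get a linear energy shortfall; its union bound is over the set $\Theta$ of cliff-heavy difference functions, whose entropy is controlled only when $\alpha_0$ is near $1$. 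You instead extract the deficit directly from concavity: writing the leading term of each strip crossing as $2\sqrt{t_iA}$ and applying Cauchy--Schwarz to the at most $(1-\alpha)m+1$ non-cliff strips gives $\sum_i 2\sqrt{t_iA}\le 2\sqrt{1-\alpha}\,n+O(n/\sqrt A)$, and you absorb the fluctuations uniformly over \emph{all} $(3eA)^{n/A}$ non-decreasing sequences rather than only the cliff-heavy ones. This buys you something: your argument needs only $A$ large and works for any $\alpha_0\in(1/2,1)$ (indeed any $\alpha_0>0$), whereas the paper's entropy count forces $\alpha_0$ close to $1$. The price is that your event $G_2$ must hold simultaneously for a much larger family of sequences, which your choice of $A$ handles.

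There is one claim you must repair: in establishing $G_2$ you assert that for a fixed sequence the $m+1$ strip maxima $\overline M\big[(x_i,iA)\to(x_{i+1},(i+1)A)\big]$ are independent because "they read disjoint sets of curves." They do not: a staircase from height $iA$ to height $(i+1)A$ accrues energy on curves $iA,iA+1,\dots,(i+1)A$, so consecutive strips share the curve at height $(i+1)A$ (on an interval of length at least one around $x_{i+1}$), and the summands are correlated. This is precisely the issue the paper's construction is designed to avoid -- $\Gamma_0(\Psi)$ is required to jump upward by one unit immediately upon visiting each coarse point, so that the fragment crossing the $i$th strip reads only curves $(i-1)A+1,\dots,iA$, and the resulting $O(1)$-per-junction loss is then controlled by the Brownian-oscillation terms $4(h_{1,2}+1)^{1/2}$ in its Lemma~5.3. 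Your step is recoverable without independence -- for instance, apply Borell--TIS to the entire sum $\sum_i\overline M_i$, which is the supremum of a Gaussian family whose variance is the total horizontal length plus at most a unit-interval overlap per junction, hence still $O(n)$; or adopt the paper's unit-jump device and pay an $O(m)$ correction with Gaussian tails. But as written the concentration for $G_2$ does not follow from the stated (false) independence, and this is the one place the write-up needs genuine additional work.
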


{\bf Acknowledgments.} The authors thank Milind Hegde for several useful discussions. The first author is partially supported by NSF grant DMS-$1855688$, NSF CAREER Award
DMS-$1945172$, and a Sloan Research Fellowship.  
The second author is supported by NSF grant DMS-$1512908$ and by a Miller Professorship from the Miller Institute for Basic Research in Science at U.C. Berkeley.

\section{Brownian regularity and twin peaks}\label{s.twinpeaks} 

Here we prove the Brownianity and twin peaks' rarity assertions Theorems~\ref{t.brownianroutedprofile} and~\ref{t.nearmax} concerning the routed weight profile $x \to Z_n(x,a)$. We begin by noting 
some similiarities between the specification~(\ref{e.ztwoweight}) of this profile in heuristic discussion and the actual definition at the start of Section~\ref{s.browniantwinpeaks}.
\begin{lemma}\label{l.routedprofile}
Set $\aplus = a+ n^{-1}$ and $\xminus = x - 2^{-1}n^{-2/3}$.
\begin{enumerate}
\item The routed weight profile is given by a sum of independent weight profiles,
\begin{equation}\label{e.routedweightprofile}
 Z_n(x,a) \, = \,  \weight_n \big[(0,0) \to (x,a) \big] \, + \, \weight_n \big[(\xminus,\aplus) \to (0,1) \big]  \, . 
\end{equation}
\item Almost surely, the maximizer of $Z_n(\cdot,a)$, namely the value of $x \in \R$ for which $Z_n(x,a)$ equals the supremum of $Z_n(z,a)$ over $z \in \R$, is unique and equals $\rho_n(a)$.
\end{enumerate}
\end{lemma}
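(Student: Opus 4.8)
The plan is to recast $\Psi_n(x,a)$ in the staircase picture, where the defining exit condition becomes the fixing of a single staircase coordinate, and then to exploit that this fixing splits the underlying maximization into two sub-problems governed by disjoint blocks of Brownian motions. In detail: an $n$-zigzag $\phi$ from $(0,0)$ to $(0,1)$ is $R_n(S)$ for a staircase $S$ from $(0,0)$ to $(n,n)$ carrying a non-decreasing list $0 = z_0 \le z_1 \le \dots \le z_n \le z_{n+1} = n$ with energy $E(S) = \sum_{k=0}^n \big( B(z_{k+1},k) - B(z_k,k) \big)$ and $\weight_n(\phi) = 2^{-1/2}n^{-1/3}\big( E(S) - 2n \big)$. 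Writing $w = na + 2n^{2/3}x$, the identity $R_n(z_{na+1},na) = \big( 2^{-1}n^{-2/3}(z_{na+1} - na), a \big)$ shows that $\sup\{ z : (z,a)\in\phi\}$ equals the first coordinate of $R_n(z_{na+1},na)$, so $\phi \in \Psi_n(x,a)$ precisely when $z_{na+1} = w$; consequently $Z_n(x,a) = 2^{-1/2}n^{-1/3}\big( \sup E(S) - 2n \big)$, the supremum ranging over staircases from $(0,0)$ to $(n,n)$ subject to $z_{na+1} = w$ (both sides being $-\infty$ when $w \notin [0,n]$, i.e.\ when $\Psi_n(x,a)=\emptyset$).

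Next I would carry out the factorization. Splitting $E(S) = \sum_{k=0}^{na}(\,\cdot\,) + \sum_{k=na+1}^n(\,\cdot\,)$ and freezing $z_{na+1}=w$, the surviving variables satisfy $z_1 \le \dots \le z_{na} \le w$ in the first block and $w \le z_{na+2} \le \dots \le z_n$ in the second, and these constraints involve disjoint sets of variables, so
\[
\sup E(S) \; = \; M\big[ (0,0) \to (w,na) \big] \; + \; M\big[ (w,na+1) \to (n,n) \big] \, .
\]
By (\ref{e.weightgeneral}) the first summand is the unscaled last passage value that defines $\weight_n\big[(0,0)\to(x,a)\big]$, while the second is the value defining $\weight_n\big[(\xminus,\aplus)\to(0,1)\big]$ --- here one uses $R_n(w,na+1) = (\xminus,\aplus)$, which is exactly why the second base point is shifted up by one Brownian level rather than being $(x,a)$. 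Converting both $M$'s back to scaled weights via (\ref{e.weightgeneral}) and collecting the deterministic offsets yields (\ref{e.routedweightprofile}). Independence is then immediate: $M\big[(0,0)\to(w,na)\big]$ is a measurable function of $\{ B(\cdot,k) : 0 \le k \le na\}$ and $M\big[(w,na+1)\to(n,n)\big]$ of $\{ B(\cdot,k) : na+1 \le k \le n\}$, and as $x$ varies only the horizontal location $w$ changes, not these index sets; hence the two processes $x \mapsto \weight_n[(0,0)\to(x,a)]$ and $x \mapsto \weight_n[(\xminus,\aplus)\to(0,1)]$ are driven by independent families of Brownian motions.

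For part (2), observe that $\sup_x Z_n(x,a)$ is the supremum of $\weight_n(\phi)$ over all $n$-zigzags from $(0,0)$ to $(0,1)$, namely $\weight_n\big[(0,0)\to(0,1)\big]$, and that this is attained by the almost surely unique $n$-polymer $\rho_n = \rho_n\big[(0,0)\to(0,1)\big]$. Since $\rho_n \in \Psi_n\big(\rho_n(a),a\big)$, the location $\rho_n(a)$ attains the overall supremum, so it is a maximizer. Conversely, if $x^{\ast}$ is any maximizer then the supremum defining $Z_n(x^{\ast},a)$ equals $\weight_n\big[(0,0)\to(0,1)\big]$, and that supremum is attained --- the constrained set of staircases from part (1) is compact and $E$ is continuous on it --- by a maximal-weight $n$-zigzag from $(0,0)$ to $(0,1)$, which must coincide with $\rho_n$ almost surely; hence $x^{\ast} = \rho_n(a)$.

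The conceptual content here is light, and I expect the only step needing real care to be the bookkeeping in the first two paragraphs: pinning down $z_{na+1}$ (and not $z_{na}$) as the coordinate frozen by the exit condition, checking that the split at $k=na$ versus $k=na+1$ is precisely what decouples the monotonicity constraint, and verifying that the shifted base point $(\xminus,\aplus) = R_n(w,na+1)$ together with the normalization in (\ref{e.weightgeneral}) reproduces (\ref{e.routedweightprofile}). This one-level shift is moreover the device that makes the two factors genuinely independent at finite $n$, since it places the single Brownian level $k=na$ shared by the two staircase segments entirely inside the first one.
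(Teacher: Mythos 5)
Your argument is correct and is essentially the paper's proof, carried out one level lower: the paper splits the zigzag $\psi$ at $(x,a)$ and $(\xminus,\aplus)$ and adds the two sub-weights, while you perform the identical split on the staircase list by freezing $z_{na+1}=w$, which makes the decoupling of the monotonicity constraint and the disjointness of the Brownian levels $\llbracket 0,na\rrbracket$ and $\llbracket na+1,n\rrbracket$ fully explicit; your part (2) likewise rests, as the paper's does, on the almost sure uniqueness of the polymer. One pedantic remark: if you actually collect the deterministic offsets in (\ref{e.weightgeneral}) you will find that the centring terms do not quite telescope across the one-level gap --- the two pieces subtract $2n-2+1=2n-1$ where the full path subtracts $2n$ --- so the right-hand side of (\ref{e.routedweightprofile}) exceeds $Z_n(x,a)$ by the deterministic, $x$-independent constant $2^{-1/2}n^{-1/3}$; this wrinkle is shared with the paper's own statement and is harmless for every downstream use (increments, maximizer location, Brownian comparison), but your claim that the offsets cancel exactly is not literally true.
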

{\bf Proof: (1).} 
Let $\psi$ denote an $n$-zigzag that begins at $(0,0)$, ends at $(0,1)$, and for which $x = \sup \big\{ z \in \R: (z,a) \in \psi \big\}$. Let $\psi^-$ denote the initial zigzag of $\psi$ that ends at $(x,a)$. Note that $\psi$ reaches $\R \times \{ a + n^{-1} \}$ at $(\xminus,\aplus)$. Let $\psi^+$ denote the final sub-zigzag of $\psi$ that begins at $(\xminus,\aplus)$. Thus, $\weight_n (\psi) = \weight_n (\psi^-)+ \weight_n (\psi^+)$. By definition, $Z_n(x,a)$ equals the supremum of  $\weight_n (\psi)$ over such $\psi$. We see that $Z_n(x,a)$ is at most the right-hand side of~(\ref{e.routedweightprofile}). But equality may be obtained by varying $(\psi^-,\psi^+)$ subject to the endpoint constraints that specify this pair.
Moreover, the two right-hand terms in~(\ref{e.routedweightprofile}) are independent because they are respectively measurable with respect to randomness indexed by the disjoint regions $\R \times [0,a]$ and $\R \times [a+n^{-1},1]$. \\
{\bf (2).} The polymer $\rho_n$ is almost surely unique by~\cite[Lemma~$4.6(1)$]{Patch}.
Since 
$\rho_n(a)$ is by definition the location of departure of the polymer $\rho_n$ from $\R \times \{ a \}$, we see that it is the maximizer of $x \to Z_n(x,a)$. \qed

The notation $\aplus$ and $\xminus$ is adopted henceforth. It reflects the two denoted quantities being merely microscopically perturbed copies of $a$ and $x$.

The proof of Theorem~\ref{t.nearmax} will harness 
Theorem~\ref{t.brownianroutedprofile}.
The derivation of the latter result is comprised of four steps; two further steps will yield  the former.
\begin{enumerate}
\item The two right-hand weights in the formula~(\ref{e.routedweightprofile}) for the routed weight profile may be viewed as functions of the variable $x \in \R$. In a simple $a$-dependent change of coordinates, we will present {\em normalized} counterparts to these profiles. These normalized profiles are Brownian of rate one on the unit scale in a sense that is uniform in $a$ and $n$; they are globally governed by the curvature of a shared parabola, $x \to - 2^{-1/2}x^2$.
\item  We will recall from~\cite{NonIntPoly} that any normalized profile may be embedded via the Robinson-Schensted-Knuth correspondence as the uppermost curve of a {\em regular} ensemble. The latter object is a random ordered system of continuous curves which enjoy an attractive probabilistic resampling, the Brownian Gibbs property, alongside certain boundary conditions.  
\item We will recall from \cite{DeBridge} that the curves in a regular ensemble enjoy strong similarity to Brownian motion of rate one, in the sense of Definition~\ref{d.verybrownian}. 
\item The profile $x \to Z_n(x,a)$ is thus seen via~(\ref{e.routedweightprofile})  to be the sum of two independent processes that bear a demanding comparison to standard Brownian motion; it would thus seem---and we will prove---that this profile withstands such a comparison to Brownian motion of rate two. In this way, we will obtain Theorem~\ref{t.brownianroutedprofile}.
\item Twin peaks are rare for Brownian motion (of rate two).
\item Thus, and as Theorem~\ref{t.nearmax} asserts, they are also rare for the profile $x \to Z_n(x,a)$. 
\end{enumerate}
The six ensuing subsections give rigorous renderings of these respective steps.

\subsection{Forward and backward weight profiles}\label{s.encode}

By the formula~(\ref{e.routedweightprofile}), the routed weight profile is exhibited as a sum of two independent random processes. The first may be labelled `forward', because the origin $(0,0)$ is fixed, and the spatial variable $x$ is attached to the more advanced height $a \in (0,1)$. The latter process may be called `backward', because the fixed point $(0,1)$ is more advanced that the height $a^+ = a + n^{-1}$ of the endpoint $(x^-,a^+)$ that varies with $x$. 

It is useful to discuss further the forward and backward processes, and we consider a given compatible triple $(n,s_1,s_2) \in \N \times \R^2_\leq$  in order to do so.
Consider the forward weight profile, given by
$$
\weight_n \big[ (0,s_1) \to (\cdot,s_2)] : \big[  - 2^{-1} n^{1/3} \tot  , \infty\big) \to \R \, ;
$$
and the backward profile,
$$
\weight_n \big[ (\cdot,s_1) \to (0,s_2)] :  \big( - \infty ,   2^{-1} n^{1/3} \tot \big] \to \R \, .
$$
 
It is valuable to vividly picture these two profiles. 
Each locally resembles Brownian motion but globally follows the contour of the parabola $- 2^{-1/2} (y-x)^2 \tot^{-1}$ 
as a function of $y$ or $x$ in the forward or backward case. Each profile adopts values of order $\tot^{1/3}$
when $x$ and $y$ differ by an order of~$\tot^{2/3}$. More negative values, dictated by parabolic curvature, are witnessed outside this region. This description holds sway in a region that expands from the origin as the parameter $n$ rises.

Clearly, then, our profiles have fundamental differences according to the value of $\tot$: sharply peaked ensemble curves when $\tot$ is small, and much flatter curves when $\tot$ is large.
 A simple further parabolic transformation will serve to put the profiles on a much more equal footing. 
 Since the profiles are already scaled objects, we will use the term `normalized' to allude to the newly transformed counterparts.

That is, we define the {\em normalized} forward profile 
$$
 \scaledle_{n;(x,s_1)}^{\uparrow;s_2}: \big[- 2^{-1} (n \tot)^{1/3}  , \infty \big) \to \R \, ,
$$
setting,
 for $z \geq - 2^{-1} n^{1/3} \tot^{-2/3}$, 
\begin{equation}\label{e.normfwdprofile1}
\scaledle_{n;(x,s_1)}^{\uparrow;s_2}(  z )
 = \tot^{-1/3} \, \weight_n \big[(x,s_1) \to (x + \tot^{2/3}z,s_2) \big] \, .
\end{equation}
The normalized backward profile  
$\scaledle_{n;s_1}^{\downarrow;(y,s_2)}: \big(-\infty, 2^{-1} (n \tot)^{1/3}   \big] \to \R$
is specified by setting 
\begin{equation}\label{e.normbckprofile1}
\scaledle_{n;s_1}^{\downarrow;(y,s_2)}( z )
 = \tot^{-1/3} \, \weight_n \big[(y + \tot^{2/3}z,s_1) \to (y,s_2) \big]
\end{equation}
for  $z \leq  2^{-1} n^{1/3} \tot^{-2/3}$.

The new curves locally resemble Brownian motion as before, but they have been centred and squeezed so that now the parabola that dictates their overall shape is $- 2^{-1/2} z^2$.
This picture is accurate in a region that expands as the parameter $n \tot$ rises.

\subsection{Brownian Gibbs line ensembles}

The Robinson-Schensted-Knuth correspondence permits any given forward and backward weight profile to be embedded as the uppermost curve in an ordered system---or line ensemble---of random continuous curves that enjoy an attractive and valuable probabilistic resampling called the Brownian Gibbs property. 
The notion of a  Brownian Gibbs line ensemble was introduced in~\cite{AiryLE} to capture a system of ordered curves that arise by conditioning Brownian motions or bridges on mutual avoidance. The precise definition is not logically needed in this article, but we offer an informal summary next, and then indicate how our normalized profiles satisfy this definition.

\subsubsection{An overview}  
Let $n \in \N$ and 
let $I$ be a closed interval in the real line.
A $\intint{n}$-indexed line ensemble defined on $I$ is a random collection of continuous curves  $\mc{L}:\intint{n} \times I \to \R$ specified under a probability measure $\PP$. The $i\textsuperscript{th}$ curve is thus $\mc{L}(i,\cdot): I \to \R$. (The adjective `line' has been applied to these systems perhaps because of their origin in such models as Poissonian LPP, where the counterpart object has piecewise constant curves. We will omit it henceforth.)
An ensemble is called {\em ordered} if $\mc{L}(i,x) > \mc{L}(i+1,x)$ whenever $i \in \intint{n-1}$ and $x$ lies in the interior of $I$.
The curves may thus assume a common value at any finite endpoint of $I$.
We will consider ordered ensembles that satisfy a condition called the Brownian Gibbs property.
Colloquially, we may say that an ordered ensemble is called Brownian Gibbs if it arises from a system of Brownian bridges or Brownian motions defined on $I$ by conditioning on the mutual avoidance of the curves at all times in $I$.
 
\subsubsection{Defining $(c,C)$-regular ensembles} 
We are interested in ensembles that are not merely Brownian Gibbs but that hew to the shape of a parabola and have one-point distributions for the uppermost curve that enjoy tightness properties. We will employ the next definition, which specifies a  $(\bar\phi,\rsc,\rsC)$-regular ensemble from~\cite[Definition~$2.4$]{BrownianReg},
in the special case where the vector $\bar\phi$  equals  $(1/3,1/9,1/3)$.

\begin{definition}\label{d.regularsequence} 
Consider a Brownian Gibbs ensemble of  the form 
$$
\mc{L}: \intint{\nmac} \times \big[ - \xnmac , \infty \big) \to \R  \,  ,
$$
and which is defined on a probability space under the law~$\PP$.
The number $\nmac = \nmac(\mathcal{L})$ of ensemble curves and the absolute value $\xnmac$ of the finite endpoint may take any values in $\N$ and $[0,\infty)$.

Let 
$\para:\R \to \R$ denote the parabola $\para(x) = 2^{-1/2} x^2$.
 
Let $\rsC$ and $\rsc$ be two positive constants. The ensemble $\mc{L}$
is said to be $(\rsc,\rsC)$-regular
 if the following conditions are satisfied.
\begin{enumerate}
\item {\bf Endpoint escape.} $\xnmac \geq  \rsc N^{1/3}$.
\item {\bf One-point lower tail.} If $z \in [ -\xnmac, \infty)$ satisfies $\vert z \vert \leq \rsc \nmac^{1/9}$, then
$$
\PP \Big( \mc{L} \big( 1,z\big) + \para(z) \leq - s \Big) \leq \rsC \exp \big\{ - \rsc s^{3/2} \big\}
$$
for all $s \in \big[1, \nmac^{1/3} \big]$.
\item {\bf One-point upper tail.}  If $z \in [ -\xnmac, \infty)$ satisfies $\vert z \vert \leq \rsc \nmac^{1/9}$, then
$$
\PP \Big( \mc{L} \big( 1,z\big) +  \para(z) \geq  s \Big) \leq \rsC \exp \big\{ - \rsc s^{3/2} \big\}
$$
for all $s \in [1, \infty)$.
\end{enumerate}
We will call these conditions ${\rm Reg}(1)$, ${\rm Reg}(2)$ and ${\rm Reg}(3)$.

A Brownian Gibbs ensemble of the form 
$$
\mc{L}: \intint{\nmac} \times \big( -\infty , \xnmac  \big] \to \R
$$
is also said to be $(\rsc,\rsC)$-regular if the reflected ensemble $\mc{L}( \cdot, - \cdot)$ is. This is equivalent to the above conditions when instances of $[ - \xnmac, \infty)$
are replaced by $(-\infty, \xnmac]$.
\end{definition}

\subsubsection{The normalized forward and backward profiles may be embedded in regular ensembles}\label{s.normalized}

We say that a random function of the form $\mc{L}: [-\xnmac,\infty) \to \R$ or  $\mc{L}: (-\infty,\xnmac] \to \R$ is $(c,C,m)$-regular
if there exists an $m$-curve $(c,C)$-regular ensemble of which it is the lowest indexed curve.

Our reason for invoking the theory of regular Brownian Gibbs ensembles is that the normalized Brownian LPP profiles are regular.
\begin{proposition}\label{p.shiftbrownian}
There exist values for the positive parameters $C$ and $c$ such that, for $n \in \N$ and $a \in n^{-1}\Z \cap (0,1)$, the following hold.
\begin{enumerate}
\item 
The process $\scaledle_{n;(0,0)}^{\uparrow;a}: \big[- 2^{-1} (n a)^{1/3}  , \infty \big) \to \R$
is $\big(c,C,na+1\big)$-regular.
\item 
The process $\scaledle_{n;\aplus}^{\downarrow;(0,1)}: \big( - \infty , 2^{-1} \big(n (1- \aplus)\big)^{1/3}   \big] \to \R$
is $\big(c,C,n(1-a)\big)$-regular.
\end{enumerate}
\end{proposition}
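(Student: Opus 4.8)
The plan is to reduce both statements to a single input, available from~\cite{NonIntPoly}: for every $p \in \N$, the standard (unit-duration) normalized forward weight profile $\scaledle_{p;(0,0)}^{\uparrow;1}$ is the lowest-indexed curve of a $(p+1)$-curve Brownian Gibbs ensemble that is $(c,C)$-regular, with $c$ and $C$ \emph{uniform in} $p$. Granting this, part~(1) becomes an exact change of variables: from the scaled-weight formula~(\ref{e.weightgeneral}), the definition~(\ref{e.normfwdprofile1}), and $na \in \N$, one reads off that
$$
\scaledle_{n;(0,0)}^{\uparrow;a}(z) \, = \, a^{-1/3}\, \weight_n \big[ (0,0) \to (a^{2/3}z,a) \big] \, = \, \weight_{na} \big[ (0,0) \to (z,1) \big] \, = \, \scaledle_{na;(0,0)}^{\uparrow;1}(z)
$$
on the common domain $z \geq - 2^{-1}(na)^{1/3}$, using only $n^{-1/3}a^{-1/3} = (na)^{-1/3}$ and $n^{2/3}a^{2/3} = (na)^{2/3}$ (the latter forcing the two underlying maximum-energy endpoints to coincide). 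Applying the quoted result with $p = na$ then exhibits $\scaledle_{n;(0,0)}^{\uparrow;a}$ as the top curve of an $(na+1)$-curve $(c,C)$-regular ensemble. This same identity shows that the normalized governing parabola is the scale-free $\para(z) = 2^{-1/2}z^2$, and it renders the endpoint-escape condition ${\rm Reg}(1)$ the trivial inequality $2^{-1}(na)^{1/3} \geq c(na+1)^{1/3}$, which holds for suitably small $c$ because $na \geq 1$.

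For part~(2) I would exploit two elementary symmetries of Brownian LPP. In unscaled coordinates, the backward profile $\weight_n[(\cdot,\aplus)\to(0,1)]$ is governed by the Brownian LPP value from row $n\aplus = na+1$ up to row $n$, hence by the $n(1-a)$ curves $B(\cdot,k)$, $k \in \llbracket na+1,n\rrbracket$. Invariance of the law of $B$ under integer shifts of the curve index carries these rows down to $\llbracket 0, n(1-\aplus)\rrbracket$; and the reflection symmetry $M[(x,0)\to(y,m)] \eqdist M[(-y,0)\to(-x,m)]$, obtained from the substitution $w_k = -z_{m+1-k}$ together with $B(\cdot,k) \mapsto -B(-\cdot, m-k)$, interchanges the roles of the fixed and the varying endpoint. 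Since both operations commute with the non-intersecting-path (equivalently RSK) representation, they act on the entire ensemble, not merely on its top curve. Carrying them through~(\ref{e.weightgeneral}) and~(\ref{e.normbckprofile1}) and bookkeeping the powers of $\tot = 1-\aplus$, one arrives at the equality in law of processes
$$
\big\{\, \scaledle_{n;\aplus}^{\downarrow;(0,1)}(z) \,\big\}_{z} \, \eqdist \, \big\{\, \scaledle_{n(1-\aplus);(0,0)}^{\uparrow;1}(-z) \,\big\}_{z} \, .
$$
By part~(1) applied with $p = n(1-\aplus)$, the right-hand process is the spatial reflection of the top curve of a $(c,C)$-regular ensemble with $n(1-\aplus)+1 = n(1-a)$ curves; by the final sentence of Definition~\ref{d.regularsequence}, the backward profile $\scaledle_{n;\aplus}^{\downarrow;(0,1)}$ is thus $\big(c,C,n(1-a)\big)$-regular, on the domain $\big(-\infty, 2^{-1}(n(1-\aplus))^{1/3}\big]$ as claimed. (Here one reads the proposition with $\aplus < 1$, i.e.\ $a \leq 1-2n^{-1}$; the boundary value $a = 1-n^{-1}$ gives a degenerate backward profile and is disposed of separately.)

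All the analytic substance---the Brownian Gibbs resampling of the RSK ensemble and, above all, the one-point moderate-deviation bounds $\PP\big(\mc{L}(1,z)+\para(z) \leq -s\big) \leq \rsC e^{-\rsc s^{3/2}}$ and $\PP\big(\mc{L}(1,z)+\para(z) \geq s\big) \leq \rsC e^{-\rsc s^{3/2}}$ behind ${\rm Reg}(2)$ and ${\rm Reg}(3)$, valid over the spatial window $|z| \leq \rsc N^{1/9}$, uniformly in the curve number $N$, and for $1 \leq s \leq N^{1/3}$ and $1 \leq s$ respectively---is already contained in~\cite{NonIntPoly} and is not re-proved here. The real points of care are organizational: pinning down the curve counts exactly (the $na+1$, and the $n(1-\aplus)+1 = n(1-a)$, are each easily mis-stated by one), checking that the reflection identity underlying part~(2) is an equality of \emph{processes}---indeed of ensembles---rather than merely of one-dimensional marginals, and confirming that the constants handed down by~\cite{NonIntPoly} genuinely do not depend on the ensemble's curve number, since that independence is exactly what delivers the uniformity in $n$ and in $a \in n^{-1}\Z \cap (0,1)$ asserted by the proposition. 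If~\cite{NonIntPoly} should turn out to furnish only the Brownian Gibbs embedding and not the regularity, then ${\rm Reg}(2)$ and ${\rm Reg}(3)$ must in addition be extracted from the standard upper- and lower-tail bounds for scaled Brownian LPP weights, together with a parabolic-curvature shift to cover the full window $|z| \leq \rsc N^{1/9}$; that tail input, rather than any Gibbs manipulation, would then be the principal obstacle.
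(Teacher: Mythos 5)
Your proposal is correct and takes essentially the same route as the paper, which simply cites \cite[Proposition~$4.2$]{NonIntPoly} for both statements; your scaling identity reducing part~(1) to the unit-duration profile $\scaledle_{na;(0,0)}^{\uparrow;1}$ and the shift/reflection symmetry for part~(2) are just explicit bookkeeping for that citation, with the analytic content (Brownian Gibbs embedding plus the one-point tail bounds, uniform in the curve number) imported exactly as the paper imports it. Your curve counts $na+1$ and $n(1-\aplus)+1 = n(1-a)$ are both right.
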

{\bf Proof.} Values of $C$ and $c$ that validate these two statements are offered by~
\cite[Proposition~$4.2$]{NonIntPoly}. \qed

The reader may consult the fifth paragraph of Section~$5.8$ of \cite{NonIntPoly} for a point of departure to the proof of~\cite[Proposition~$4.2$]{NonIntPoly}.

\subsection{The Brownianity of the narrow wedge weight profile}
We now state  the principal result of~\cite{DeBridge}, asserting the Brownianity of scaled Brownian LPP polymer weight profiles in the narrow wedge case (recall the notion of approximate Brownianity from Definition \ref{d.verybrownian}), when one endpoint is fixed, and the other varies horizontally. This conclusion is expressed in terms of regular ensembles in~\cite{DeBridge}. Our concern is merely with the uppermost curve and we record the result only in this case. 

\begin{theorem}\label{t.debridge} 

Suppose that $\mc{L}_m$ is an $m$-curve $(c,C)$-regular ensemble for some $m \in \N$ and $C,c \in (0,\infty)$. Let $\ipdval \geq 1$ and $K \in \R$ satisfy $[K-\ipdval,K+ \ipdval] \subset \rsc/2 \cdot [-m^{1/9},m^{1/9}]$. There exist values of the positive parameters $g$ and $G$, chosen without dependence on $m$, $K$ or $d$, such that  the random function $[K - \ipdval,K+\ipdval] \to \R: x \to \mc{L}_m(1,x) + 2^{1/2}Kx$
is  $\big(g,G,\ipdval,m,1\big)$-Brownian.

\end{theorem}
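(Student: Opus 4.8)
The plan is simply to invoke~\cite{DeBridge}: the assertion is the $i=1$ specialization of the principal theorem of that paper. A $(\rsc,\rsC)$-regular $m$-curve Brownian Gibbs ensemble in the sense of Definition~\ref{d.regularsequence} is precisely the class of objects to which the main result of~\cite{DeBridge} applies, and that result furnishes, for every window $[K-\ipdval,K+\ipdval] \subseteq \tfrac{\rsc}{2}\cdot[-m^{1/9},m^{1/9}]$, the $\big(g,G,\ipdval,m,1\big)$-Brownian comparison for the curve $x \mapsto \mc{L}_m(1,x) + 2^{1/2}Kx$, with $g$ and $G$ depending only on $\rsc$ and $\rsC$. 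The affine term $2^{1/2}Kx$ is the linearization at $K$ of the limiting parabola $\para$ carried by the curve, so that the residual trend over the window is the bounded curvature $-2^{-1/2}(\cdot - K)^2$, which the argument of~\cite{DeBridge} carries through directly. Hence there is nothing to prove beyond citing~\cite{DeBridge}; for orientation I will recall the structure of that argument.

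The engine is the Brownian Gibbs property. One would enlarge the window slightly to an interval $J \supset [K-\ipdval,K+\ipdval]$ and condition on the $\sigma$-algebra $\mathcal{F}$ generated by $\mc{L}_m$ off $\{1\}\times J$ --- in particular on the second curve $\mc{L}_m(2,\cdot)$ and on the two endpoint values of $\mc{L}_m(1,\cdot)$ on $J$. Under $\mathcal{F}$, the law of $\mc{L}_m(1,\cdot)|_J$ is that of a Brownian bridge $B$ on $J$ with the given endpoints, reweighted by the indicator of the avoidance event $\mathsf{Av} = \{ B(x) > \mc{L}_m(2,x) \text{ for all } x \in J \}$, whence, for any Borel set $A$ of functions on $[K-\ipdval,K+\ipdval]$,
\[
  \PP\big( \mc{L}_m(1,\cdot)|_{[K-\ipdval,K+\ipdval]} \in A \,\big|\, \mathcal{F} \big) \;=\; \frac{\E_B\big[\mathbf{1}_A\,\mathbf{1}_{\mathsf{Av}}\big]}{\E_B\big[\mathbf{1}_{\mathsf{Av}}\big]} \;\le\; \frac{\PP_B(A)}{\PP_B(\mathsf{Av})} \,.
\]
One then lower-bounds $\PP_B(\mathsf{Av})$ by combining ${\rm Reg}(2)$ and ${\rm Reg}(3)$ (which pin the one-point law of $\mc{L}_m(1,\cdot)$, hence of $B$ near the endpoints) with the parabolic envelope forcing $\mc{L}_m(2,\cdot)$ to lie well below $\mc{L}_m(1,\cdot)$ on the window with overwhelming probability --- this is where regularity of $\mc{L}_m$ and the constraint $|K|\le \tfrac{\rsc}{2}m^{1/9}$ are used. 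Since the favourable lower-envelope event for the \emph{random} curve $\mc{L}_m(2,\cdot)$ must itself be supplied by a comparison one level down, the scheme is run as an induction over curve index, or equivalently in~\cite{DeBridge} as a bootstrap that iteratively upgrades the strength of the comparison. Carried out crudely this already yields $\PP(A) \le \eta\cdot\exp\{\Theta(\mathrm{poly}(\ipdval))\}$ for $\eta \le e^{-G\ipdval^6}$, the cutoff reflecting the probability of the favourable envelope event.

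The sharpening to $\eta\cdot G\exp\{G\ipdval(\log\eta^{-1})^{5/6}\}$ is the substantive part of~\cite{DeBridge}, and is where I expect the genuine difficulty to lie. The idea is to replace the estimate $\PP_B(A)/\PP_B(\mathsf{Av})$, which yields only a multiplicative constant, by a quantitatively tight control of the Radon--Nikodym derivative of the resampled profile with respect to unconditioned Brownian motion: one proves an $L^p$ bound on this density growing like $\exp\{\Theta(p^{5})\}$, with the implied constant polynomial in $\ipdval$, then applies H\"older's inequality, $\PP(A) \le \big\|\tfrac{d\PP}{d\mathbb{W}}\big\|_p\,\mathbb{W}(A)^{1-1/p} = \exp\{\Theta(p^5)\}\cdot\eta\cdot\eta^{-1/p}$, and optimises over $p$; the minimum of $\alpha p^5 + p^{-1}\log\eta^{-1}$ (for a constant $\alpha$ depending on $\ipdval$) is attained at $p\asymp(\log\eta^{-1})^{1/6}$, which is the origin of the exponent $5/6 = 1-\tfrac16$. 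Establishing the $L^p$ bound with this power of $p$ requires a delicate mesoscopic decomposition of the window, repeated applications of the Brownian Gibbs property on sub-blocks, and the regularity estimates to control the avoidance constraints at each scale; the lower cutoff $\eta \ge e^{-gm^{1/12}}$ marks the scale at which finiteness of $m$ begins to obstruct the argument. I would take all of this as an input from~\cite{DeBridge}.
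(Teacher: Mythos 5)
Your proposal is correct and matches the paper's own proof, which likewise consists of citing the main result of~\cite{DeBridge} (its Theorem~$3.11$) specialized to the lowest-indexed curve $k=1$. The only content the paper adds beyond the citation is what it itself calls a ``mundane check'': explicit choices of $g$ and $G$ in terms of the quoted result's constants (e.g.\ $G \geq 24^6 D_1^{-3}$, $g \leq e^{-1}\wedge (17)^{-1}C_1^{-1}D_1^{-1}$, and a smallness condition on $g$ forcing $m$ to be large whenever $e^{-gm^{1/12}}\leq\eta$), which you assert exist but do not spell out.
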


{\bf Proof.} The result follows from~\cite[Theorem~$3.11$]{DeBridge} by considering the curve with the lowest index $k=1$
in the ensemble $\mc{L}_m$. We now indicate conditions on the claimed constants $G$ and $g$ 
that render valid the application of this theorem---a mundane check phrased in terms of parameters $D_1$ and $C_1$ from the quoted result. 
We choose $G$ to be at least the value of this parameter as specified in~\cite[Theorem~$3.11$]{DeBridge} while satisfying $G \geq 24^6 D_1^{-3}$ and $G \geq 4932 D_1^{5/2}$.
We choose $g > 0$  to satisfy $g \leq e^{-1} \wedge (17)^{-1} C_1^{-1} D_1^{-1}$.
We further demand that $g \leq \big( \rsc/2 \wedge 2^{1/2} \big) D_1^{-1}$; and ensure that this parameter  is small enough that 
 $\exp( - gm_0^{1/2} ) \geq e^{-1}$
for $m_0 = (c/3)^{-18} \vee  6^{36}$, in order that the condition $e^{-g m^{1/12}} \leq \eta$ be impossible to satisfy unless $m \geq m_0$. 
We impose the lower bound of $e^{-1}$ since the upper bound that $\eta$ is assumed to satisfy in Definition \ref{d.verybrownian} implies that $\eta \leq e^{-1}$ in view of $G,d \geq 1$.
\qed

\subsection{Brownianity for the routed weight profile}

We now prove Theorem~\ref{t.brownianroutedprofile}. 
We begin by rewriting the basic formula Lemma~\ref{l.routedprofile}(1) in normalized form: 
\begin{equation}\label{e.zrewrite}
Z_n(x,a) =   a^{1/3} \scaledle_{n;(0,0)}^{\uparrow;a} \big(  a^{-2/3} x \big) +   (1-\aplus)^{1/3} \scaledle_{n;\aplus}^{\downarrow;(0,1)} \big(  (1-\aplus)^{-2/3} \xminus \big) \, . 
\end{equation}

Theorem~\ref{t.brownianroutedprofile} is concerned with the Brownian character of the profile $x \to Z_n(x,a)$ in a neighbourhood of a given $R \in \R$. This profile is curved parabolically in a global sense, so that a suitable drift must be identified for the Brownian motion with which we seek to make comparison. We find the drift by determining the counterpart drifts for the two right-hand terms in~(\ref{e.zrewrite}). We are thus led to make a further simple change of coordinates. 

Set $\ell:\R^2 \to \R$ to equal $\ell(x,y) = - 2^{-1/2}x^2 - 2^{1/2}x(y-x)$; thus, $y \to \ell(x,y)$
is the line tangent at $x \in \R$ to the parabola $z \to - 2^{-1/2}z^2$.

Let $s \in n^{-1}\Z \cap (0,1)$. For $x \in \R$, define the {\em shifted} forward profile 
$$
\scaledle_{n;x;(0,0)}^{{\rm shift}\uparrow;s}:\big[- 2^{-1} (n s)^{1/3} -x , \infty \big) \to \R
$$
by setting 
$$
 \scaledle_{n;x;(0,0)}^{{\rm shift}\uparrow;s} (   z ) = 
 \scaledle_{n;(0,0)}^{\uparrow;s} (  x +  z ) - \ell \big( x, x +z  \big) \, .
$$
For $y \in \R$, define the {\em shifted} backward profile 
$\scaledle_{n;y;a}^{{\rm shift}\downarrow;(0,1)}:\big( - \infty,  2^{-1} (n \tot)^{1/3} - y \big] \to \R$
by setting 
$$
 \scaledle_{n;y;a}^{{\rm shift}\downarrow;(0,1)} (   z ) = 
 \scaledle_{n;a}^{\downarrow;(0,1)} (   y +  z ) - \ell \big(  y,  y +z \big) \, .
$$
Further set 
\begin{equation}\label{e.uparrowshift}
Z_n^\uparrow(x,a) =  a^{1/3}  \scaledle_{n;a^{-2/3}R;(0,0)}^{{\rm shift}\uparrow;a} \big( a^{-2/3}(x-R) \big)
\end{equation}
and 
$$
Z_n^\downarrow(x,a) =  (1-\aplus)^{1/3}
 \scaledle_{n;(1-\aplus)^{-2/3}R;\aplus}^{{\rm shift}\downarrow;(0,1)} \Big(  \big(1-\aplus\big)^{-2/3}(\xminus-R) \Big) \, .
$$
Since $\ell(x,y) = 2^{-1/2}x^2 - 2^{1/2}xy$, we find then from~(\ref{e.zrewrite}) that 
\begin{equation}\label{e.zupdownarrow}
Z_n(x,a) = 
Z_n^\uparrow(x,a) +
Z_n^\downarrow(x,a)  + \Theta(x) + \mc{E}(x) \, ,
\end{equation}
where $\Theta(x) = 2^{-1/2} \big( a(1-a) \big)^{-1} R(R-2x)$ and 
$$
\mc{E}(x) = 2^{-1/2} \big( (1 - \aplus)(1-a) \big)^{-1} R(R-2x) n^{-1} + 2^{-1/2}(1 - \aplus)^{-1}R n^{-2/3} \, .
$$

In the next result, we see how shifted coordinates, which have made possible the formula~(\ref{e.zupdownarrow}), put us in excellent shape to derive Theorem~\ref{t.brownianroutedprofile}. Indeed, as we will argue shortly, largely on the basis of the upcoming lemma and Theorem~\ref{t.debridge}, the first two right-hand terms of this formula are independent processes that are very similar to Brownian motion of rate one; while the third term~$\Theta$ records the drift inherited from parabolic curvature that is manifest in the locale of the location~$R \in \R$.
\begin{lemma}\label{l.shiftbrownian}
There exist values for the positive parameters $C$, $c$, $G$ and $g$ such that, for $n \in \N$ and $a \in n^{-1}\Z \cap (0,1)$, the following hold.
\begin{enumerate}
\item Suppose that $\vert x \vert \leq c/2 \cdot (na + 1)^{1/9}$. Then the process
$\scaledle_{n;x;(0,0)}^{{\rm shift}\uparrow;a}:\big[- 2^{-1} (n a)^{1/3} -x , \infty \big) \to \R$
is $\big(c/2,C,na + 1\big)$-regular.
\item Suppose that $\vert y \vert \leq c/2 \cdot \big(n(1-a)\big)^{1/9}$. Then the process
$$
\scaledle_{n;y;a}^{{\rm shift}\downarrow;(0,1)}:\big( - \infty,  2^{-1} \big(n (1-a)\big)^{1/3} - y \big] \to \R
$$ 
is $\big(c/2,C,n(1-a)\big)$-regular.
\item
The processes  $Z_n^\uparrow(\cdot,a),Z_n^\downarrow(\cdot,a):[R,R + \ell] \to \R$ are independent.
\item Suppose that $\vert R \vert \leq 2^{-1}c n^{1/9} a^{7/9}$.
Let $\ell \in \R$ satisfy $\ell - R \leq  2^{-1}n^{1/3}a$. Then the process 
$Z_n^\uparrow(\cdot,a):[R-\ell,R+\ell] \to \R$ is $\big(g,G a^{-4},\ell,an + 1,1\big)$-Brownian.
\item  Suppose that $\vert R \vert \leq 2^{-1}c n^{1/9} (1 - \aplus)^{7/9}$.
Let $\ell \in \R$ satisfy $R + \ell \leq  2^{-1}n^{1/3}(1 - \aplus)$. Then the process $Z_n^\downarrow(\cdot,a):[R-\ell,R + \ell] \to \R$ is $\big(g,G (1 - \aplus)^{-4},\ell,(1-a)n,1\big)$-Brownian.
\end{enumerate}
\end{lemma}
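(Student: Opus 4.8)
I would treat the five parts in turn. Parts (1) and (2) instantiate a single principle: an affine reparametrisation of a regular Brownian Gibbs ensemble is again regular, with the constant $c$ replaced by $c/2$. For (1), fix $x$ with $|x|\leq(c/2)(na+1)^{1/9}$, take the $(c,C)$-regular ensemble $\mc{L}:\intint{na+1}\times[-2^{-1}(na)^{1/3},\infty)\to\R$ whose bottom curve is $\scaledle_{n;(0,0)}^{\uparrow;a}$ furnished by Proposition~\ref{p.shiftbrownian}(1), and consider $\mc{L}'(i,z):=\mc{L}(i,x+z)-\ell(x,x+z)$. Because $z\mapsto\ell(x,x+z)$ is affine, coordinate translation and the addition of a common affine function to all curves preserve both ordering and the Brownian Gibbs resampling, so $\mc{L}'$ is an ordered Brownian Gibbs ensemble whose bottom curve is, by definition, $\scaledle_{n;x;(0,0)}^{{\rm shift}\uparrow;a}$. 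The tangent-line identity $-\para(y)-\ell(x,y)=-\para(y-x)$, immediate from $\ell(x,y)=2^{-1/2}x^2-2^{1/2}xy$, yields $\mc{L}'(1,z)+\para(z)=\mc{L}(1,x+z)+\para(x+z)$; hence the one-point tail conditions ${\rm Reg}(2)$, ${\rm Reg}(3)$ for $\mc{L}'$ at a point $z$ are exactly those for $\mc{L}$ at $x+z$, which hold for $|x+z|\leq c(na+1)^{1/9}$ and therefore for all $|z|\leq(c/2)(na+1)^{1/9}$ by the bound on $|x|$. The endpoint-escape condition ${\rm Reg}(1)$ for $\mc{L}'$, namely $2^{-1}(na)^{1/3}+x\geq(c/2)(na+1)^{1/3}$, follows from ${\rm Reg}(1)$ for $\mc{L}$ together with $|x|\leq(c/2)(na+1)^{1/3}$. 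So $\scaledle_{n;x;(0,0)}^{{\rm shift}\uparrow;a}$ is $(c/2,C,na+1)$-regular. Part (2) is identical after reflecting the spatial coordinate and starting from Proposition~\ref{p.shiftbrownian}(2).

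Part (3) is essentially free: $Z_n^\uparrow(\cdot,a)$ is a deterministic function of the forward weight profile $x\mapsto\weight_n\big[(0,0)\to(x,a)\big]$ and $Z_n^\downarrow(\cdot,a)$ of the backward profile $x\mapsto\weight_n\big[(\xminus,\aplus)\to(0,1)\big]$, and these two profiles are independent by Lemma~\ref{l.routedprofile}(1), being measurable with respect to Brownian curves carried by the disjoint strips $\R\times[0,a]$ and $\R\times[\aplus,1]$.

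For (4) — with (5) the mirror image — I would feed part (1) into Theorem~\ref{t.debridge} and transfer the conclusion through the diffusive rescaling in~(\ref{e.uparrowshift}). Put $x_0=a^{-2/3}R$; the hypothesis $|R|\leq2^{-1}cn^{1/9}a^{7/9}$ gives $|x_0|\leq(c/2)(na+1)^{1/9}$, so part (1) makes $\mc{M}:=\scaledle_{n;x_0;(0,0)}^{{\rm shift}\uparrow;a}$ a $(c/2,C,na+1)$-regular process, while the hypothesis $\ell-R\leq2^{-1}n^{1/3}a$ is exactly the statement that $-2^{-1}(na)^{1/3}-x_0\leq-a^{-2/3}\ell$, so $\mc{M}$ is defined on $[-a^{-2/3}\ell,a^{-2/3}\ell]$. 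Applying Theorem~\ref{t.debridge} with $K=0$ and $d=a^{-2/3}\ell$ (the linear correction $2^{1/2}Kx$ then vanishing), $\mc{M}$ on this interval is $(g_0,G_0,a^{-2/3}\ell,na+1,1)$-Brownian, with $g_0,G_0$ the constants Theorem~\ref{t.debridge} attaches to the pair $(c/2,C)$. Its sole non-automatic hypothesis, $a^{-2/3}\ell\leq(c/4)(na+1)^{1/9}$, I would secure by a vacuity dichotomy: if it fails, then $G_0a^{-4}\ell^6=G_0(a^{-2/3}\ell)^6>G_0(c/4)^6(na+1)^{2/3}$, which — once $g$ is fixed small relative to $G_0$ and $c$ — exceeds $g(na+1)^{1/12}$, so no $\eta$ satisfies $e^{-g(na+1)^{1/12}}\leq\eta\leq g\wedge e^{-Ga^{-4}\ell^6}$ and the assertion of part (4) is vacuous. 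Finally, since $Z_n^\uparrow(R+u,a)=a^{1/3}\mc{M}(a^{-2/3}u)$ and the map $w\mapsto\big(u\mapsto a^{1/3}w(a^{-2/3}u)\big)$ intertwines rate-one Wiener measure on $[-a^{-2/3}\ell,a^{-2/3}\ell]$ with rate-one Wiener measure on $[-\ell,\ell]$ (Brownian scaling: $a^{1/3}B(a^{-2/3}\cdot)$ again has rate one), this map leaves the reference probability $\eta$ of any Borel path-set fixed while carrying $\mc{M}-\mc{M}(-a^{-2/3}\ell)$ to $Z_n^\uparrow(R+\cdot,a)-Z_n^\uparrow(R-\ell,a)$. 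The half-width $\ell$ in the $x$-variable is the half-width $a^{-2/3}\ell$ for $\mc{M}$, so the upper $\eta$-threshold $e^{-G_0(a^{-2/3}\ell)^6}$ becomes $e^{-G_0a^{-4}\ell^6}$ and the correction exponent $G_0(a^{-2/3}\ell)$ becomes $G_0a^{-2/3}\ell$; choosing the final constants so that $g\leq g_0$ and $G\geq G_0$ — whence $Ga^{-4}\geq\max\{G_0a^{-4},G_0a^{-2/3},G_0\}$, using $a\leq1$ — turns the rescaled statement into the claimed $(g,Ga^{-4},\ell,an+1,1)$-Brownianity. Part (5) runs the same way, with $1-\aplus$ in place of $a$, $m=n(1-a)$, shift $y_0=(1-\aplus)^{-2/3}R$, and the harmless microscopic translation $x\mapsto\xminus$ built into $Z_n^\downarrow$; the hypothesis $R+\ell\leq2^{-1}n^{1/3}(1-\aplus)$ ensures the backward profile is defined on the needed interval.

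The hard part will be the constant bookkeeping in (4) and (5): checking scrupulously that the diffusive rescaling preserves the \emph{quantitative} Brownian comparison of Definition~\ref{d.verybrownian} — that $\eta$ is genuinely invariant, and that the multiplicative correction $\eta\cdot G_0\exp\{G_0(a^{-2/3}\ell)(\log\eta^{-1})^{5/6}\}$ passes to the asserted form with $d=\ell$ and blow-up factor $Ga^{-4}$ — and that the two genuine hypotheses of Theorem~\ref{t.debridge} (the interval lying in the domain of $\mc{M}$, and inside $(c/4)[-(na+1)^{1/9},(na+1)^{1/9}]$) hold exactly in the regime where the stated conclusion has content. Parts (1)--(3) are formalities once the tangent-line identity and the independence of Lemma~\ref{l.routedprofile}(1) are recorded.
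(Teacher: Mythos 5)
Your proposal is correct and follows essentially the same route as the paper: parts (1) and (2) are the affine-shift invariance of regularity (which the paper simply cites from \cite[Lemma~$3.4$]{ModCon}/\cite[Lemma~$2.26$]{BrownianReg}, and whose proof is exactly your tangent-line computation), part (3) is the disjoint-strips observation, and parts (4) and (5) are Theorem~\ref{t.debridge} applied to the shifted ensemble followed by the diffusive rescaling that the paper isolates as Lemma~\ref{l.browniantobrownian}, with the same bookkeeping $G'\ge G_0a^{-4}\ge G_0a^{-2/3}$. Your vacuity dichotomy for the hypothesis $a^{-2/3}\ell\le(c/4)(na+1)^{1/9}$ of Theorem~\ref{t.debridge} is a point the paper leaves implicit, and it is handled correctly.
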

{\bf Proof.}
{\bf (1,2).} These are due to~\cite[Lemma~$3.4$]{ModCon} or 
\cite[Lemma~$2.26$] {BrownianReg}.

{\bf (3).} The regions $\R \times [0,a]$ and $\R \times [a+n^{-1},1]$
that respectively specify $Z_n^\uparrow(\cdot,a)$ and  $Z_n^\downarrow(\cdot,a)$ are disjoint, so that these processes are independent.

{\bf (4).} By the third part of the lemma, and Theorem~\ref{t.debridge}, there exist positive values $G$ and $g$ such that $\scaledle_{n;a^{-2/3}R;(0,0)}^{{\rm shift}\uparrow;a}: [-a^{-2/3}\ell,a^{-2/3}\ell]\to \R$ is $\big(g,G,a^{-2/3}\ell,an + 1,1\big)$-Brownian, where we have used $[-a^{-2/3}\ell,a^{-2/3}\ell] \subset \big[- 2^{-1} (n a)^{1/3} - a^{-2/3}R , \infty \big)$ and $a^{-2/3}\vert R \vert \leq 2^{-1}c (na + 1)^{1/9}$. The former condition is implied by 
$a^{-2/3} \ell \leq 2^{-1}(na)^{1/3} + a^{-2/3}R$ and thus by our hypothesis that $\ell - R \leq 2^{-1} n^{1/3}a$.  The latter is implied by $\vert R \vert \leq 2^{-1}c n^{1/9} a^{7/9}$.  

In light of this, and~(\ref{e.uparrowshift}), we may apply the next presented Lemma~\ref{l.browniantobrownian}
with 
 $\kappa = a$ to verify Lemma~\ref{l.shiftbrownian}(6) holds.

{\bf (5).} Invoking similarly the fourth part of the lemma, 
$$
\scaledle_{n;(1-\aplus)^{-2/3}R;\aplus}^{{\rm shift}\downarrow;(0,1)}: \big[-(1 - \aplus)^{-2/3}\ell,(1 - \aplus)^{-2/3}\ell\big]\to \R
$$ 
is seen to be $\big(g,G,(1 - \aplus)^{-2/3}\ell, (1-\aplus)n,1\big)$-Brownian. This time, we need 
$$
\big[ - (1-\aplus)^{-2/3}\ell,(1 - \aplus)^{-2/3}\ell \big] \subset \big( -\infty,  2^{-1} \big(n (1 - \aplus) \big)^{1/3} - (1 - \aplus)^{-2/3}R \big]
$$ 
and $(1 - \aplus)^{-2/3}\vert R \vert \leq 2^{-1}c (n(1-\aplus) + 1)^{1/9}$.
 The former condition is implied by $R + \ell \leq  2^{-1}n^{1/3}(1 - \aplus)$; the latter by $\vert R \vert \leq 2^{-1}c n^{1/9} (1 - \aplus)^{7/9}$. \qed

The $a$-dependent spatial-temporal scaling in~(\ref{e.uparrowshift}) respects Brownian motion; it is unsurprising then that this scaling in essence leaves invariant a property of similarity to Brownian motion. The next result is a rigorous interpretation of this notion.
\begin{lemma}\label{l.browniantobrownian}
Let $G,g,\ell > 0$, $\kappa \in (0,1)$, $m \in \N$ and $R \in \R$.  
Suppose that  the random function $\mc{L}: \big[-\kappa^{-2/3}\ell,\kappa^{-2/3}\ell\big] \to \R$ is 
$\big(g,G,\kappa^{-2/3}\ell,m,1\big)$-Brownian.
 Set $\mc{L}^\kappa(x) = \kappa^{1/3} \mc{L} \big( \kappa^{-2/3}(x-R) \big)$. Then  $\mc{L}^\kappa: [R-\ell,R + \ell] \to \R$ is $\big(g,G \kappa^{-4},\ell,m,1\big)$-Brownian.
\end{lemma}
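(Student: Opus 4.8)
The plan is to exploit that the diffusive rescaling $x \mapsto \kappa^{1/3}\mc{L}\big(\kappa^{-2/3}(x-R)\big)$ is precisely the transformation under which rate-one Brownian motion is invariant, so that the comparison in Definition~\ref{d.verybrownian} transfers essentially verbatim, with only the parameters $d$ and $G$ being relabelled.

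First I would record the map on path space. Write $I_0 = \big[-\kappa^{-2/3}\ell,\kappa^{-2/3}\ell\big]$ and $I_1 = [R-\ell,R+\ell]$, and let $\Phi: \mc{C}_{0,*}(I_0,\R) \to \mc{C}_{0,*}(I_1,\R)$ be given by $(\Phi f)(x) = \kappa^{1/3}f\big(\kappa^{-2/3}(x-R)\big)$. Since $x \mapsto \kappa^{-2/3}(x-R)$ maps $I_1$ bijectively onto $I_0$ and carries $R-\ell$ to $-\kappa^{-2/3}\ell$, the map $\Phi$ is a homeomorphism for the uniform topologies, with inverse $(\Phi^{-1}g)(z) = \kappa^{-1/3}g\big(R+\kappa^{2/3}z\big)$; in particular it takes Borel sets to Borel sets. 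The only computation needed is that $\Phi$ pushes $\mc{B}_{0,*}^{1;I_0}$ forward to $\mc{B}_{0,*}^{1;I_1}$: if $B$ has law $\mc{B}_{0,*}^{1;I_0}$, then $\Phi B$ is continuous, vanishes at $R-\ell$, is Gaussian with independent increments, and for $x_1,x_2 \in I_1$ one has $\Var\big((\Phi B)(x_2)-(\Phi B)(x_1)\big) = \kappa^{2/3}\cdot\big|\kappa^{-2/3}(x_2-x_1)\big| = |x_2-x_1|$, so $\Phi B$ is a rate-one Brownian motion on $I_1$ started from $0$ at the left endpoint.

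Next, fix a Borel set $A \subseteq \mc{C}_{0,*}(I_1,\R)$, put $A_0 = \Phi^{-1}(A)$, and set $\eta = \mc{B}_{0,*}^{1;I_1}(A)$, so that $\eta = \mc{B}_{0,*}^{1;I_0}(A_0)$ by the pushforward identity. From $\mc{L}^\kappa(x) - \mc{L}^\kappa(R-\ell) = \kappa^{1/3}\big(\mc{L}(\kappa^{-2/3}(x-R)) - \mc{L}(-\kappa^{-2/3}\ell)\big)$ it follows that the centred path $x \mapsto \mc{L}^\kappa(x) - \mc{L}^\kappa(R-\ell)$ lies in $A$ exactly when the centred path $z \mapsto \mc{L}(z) - \mc{L}(-\kappa^{-2/3}\ell)$ lies in $A_0$; hence these two events have the same probability. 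I would then note that the admissibility window for $\eta$ is unchanged between the two statements: the parameters $g$ and $m$ are common, and the upper threshold $e^{-Gd^6}$ of Definition~\ref{d.verybrownian} applied to $\mc{L}$ with $d = \kappa^{-2/3}\ell$ equals $e^{-G\kappa^{-4}\ell^6}$, which is exactly the threshold $e^{-(G\kappa^{-4})\ell^6}$ demanded of $\mc{L}^\kappa$. Thus, whenever $e^{-gm^{1/12}} \le \eta \le g\wedge e^{-G\kappa^{-4}\ell^6}$, the $\big(g,G,\kappa^{-2/3}\ell,m,1\big)$-Brownianity of $\mc{L}$ applied to $A_0$ gives
\[
\PP\big(\text{centred path of }\mc{L}^\kappa\in A\big) \;=\; \PP\big(\text{centred path of }\mc{L}\in A_0\big) \;\le\; \eta\, G\exp\big\{G\kappa^{-2/3}\ell\,(\log\eta^{-1})^{5/6}\big\}.
\]

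Finally I would absorb the scaling factors. Since $\kappa\in(0,1)$ we have $1 \le \kappa^{-2/3}\le\kappa^{-4}$, and $\log\eta^{-1} > 0$ because $\eta \le e^{-G\kappa^{-4}\ell^6} < 1$; consequently the right-hand side above is at most $\eta\,G\kappa^{-4}\exp\big\{G\kappa^{-4}\ell\,(\log\eta^{-1})^{5/6}\big\}$, which is precisely the bound in Definition~\ref{d.verybrownian} needed for $\mc{L}^\kappa$ to be $\big(g,G\kappa^{-4},\ell,m,1\big)$-Brownian. There is no serious obstacle here; the only points requiring attention are the measurability and pushforward bookkeeping for the homeomorphism $\Phi$, and the observation that the $\eta$-window is preserved, which works cleanly because $d^6$ scales by exactly the factor $\kappa^{-4}$ that is also applied to $G$.
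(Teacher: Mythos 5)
Your proposal is correct and follows essentially the same route as the paper's proof: push the Borel set back through the diffusive rescaling, use Brownian scaling to identify the two values of $\eta$, observe that the $\eta$-window is preserved because $d^6$ scales by exactly $\kappa^{-4}$, and absorb the factor $\kappa^{-2/3}$ in the exponent into $G\kappa^{-4}$ using $\kappa\in(0,1)$. The extra care you take over measurability and the variance computation is fine but not a departure in substance.
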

{\bf Proof.} 
Recalling the notation of Definition~\ref{d.verybrownian}, let $B \subseteq \mc{C}_{0,*} \big( [R-\ell,R+\ell] , \R \big)$. Set 
$$
B^* = \Big\{ f \in \mc{C}_{0,*} \big( [-\kappa^{-2/3}\ell,\kappa^{-2/3}\ell] , \R \big) : x \to \kappa^{1/3} f \big( \kappa^{-2/3}(x-R) \big) \in B \Big\} \, .
$$
Then Brownian scaling implies that, when $B$ is Borel measurable, $\mc{B}_{0,*}^{[0,\kappa^{-2/3}\ell]}(B^*) = \mc{B}_{0,*}^{[R,R+ \ell]}(B)$. 

Taking $I = \big[-\kappa^{-2/3}\ell,\kappa^{-2/3}\ell\big] $, the process $\mc{L}: \big[-\kappa^{-2/3}\ell,\kappa^{-2/3}\ell\big] \to \R$
meets the condition on $X: I \to \R$ in Definition~\ref{d.verybrownian}
for the parameter quintet $\big(g,G,\kappa^{-2/3}\ell,m,1\big)$. It is our task to verify that  $\mc{L}^\kappa:[R-\ell,R + \ell] \to \R$  does so for the quintet $\big(g,G \kappa^{-4},\ell,m,1\big)$. The third parameter, which is one-half the length of the domain interval, has decreased, by a factor of $\kappa^{2/3}$.
By the preceding paragraph, the value of the Brownian probability $\eta$ is shared in the definition as it applies to the processes $\mc{L}$ and~$\mc{L}^\kappa$. If we denote the second element of the latter quintet by $G'$, we may note that we must demand of it that $G' \geq G \big( \kappa^{-2/3} \big)^6 = G \kappa^{-4}$---so that the hypothesis $\eta \leq e^{-G' \ell^6}$ is implied by $\eta \leq \exp \big\{ -G \big( \kappa^{-2/3}\ell \big)^6 \big\}$; and that $G' \geq G \kappa^{-2/3}$---so that $G' \ell \geq G \kappa^{-2/3} \ell$ may be applied to obtain the right-hand side in the display in Definition~\ref{d.verybrownian}. The choice $G' = G \kappa^{-4}$ meets these two requirements. Since the three further parameters, $g$, $G$ and $m$, transmit unaltered, we obtain Lemma~\ref{l.browniantobrownian}. \qed

Each profile
$x \to Z_n^\uparrow(x,a)$ and
$x \to Z_n^\downarrow(x,a)$ will shortly be shown to be very similar to standard Brownian motion by an argument that harnesses Lemmas~\ref{l.shiftbrownian} and~\ref{l.browniantobrownian} to the fundamental estimate Theorem~\ref{t.debridge}.  The profile $x \to Z_n(x,a)$, after linear adjustment, will then be seen via~(\ref{e.zupdownarrow}) to resemble Brownian motion of rate two (as  Theorem~\ref{t.brownianroutedprofile} asserts), provided that we argue that Brownianity in the sense of Definition~\ref{d.verybrownian} is stable under addition of processes. After we establish this in Lemma~\ref{l.brownianadditivity}, we will be ready to give a short proof of Theorem~\ref{t.brownianroutedprofile}.  First, however, we present a result that permits us to dispense with Definition~\ref{d.verybrownian}'s inconsequential but practically irksome Brownian probability hypothesis $\eta \leq g \wedge e^{-G \ipdval^d}$.

\begin{lemma}\label{l.noupperbound}
Under the circumstances of Definition~\ref{d.verybrownian}, suppose on the parameter $\eta$, instead of the condition  $e^{-gm^{1/12}} \leq \eta \leq g \wedge e^{-G d^6}$ , that merely the lower bound $\eta \geq e^{-gm^{1/12}}$  holds. Suppose also that $g \in (0,1)$ and that $G,d  \geq 1$. Then
$$
\PP \Big( I \to \R: x \to X(x) - X(K-d) \, \, \textrm{belongs to $A$} \Big) \, \leq \, \eta \cdot     \exp \Big\{ 5 G^{11/6} \ipdval^6  g^{-5/6}      \Big\} \exp \Big\{ G \ipdval \big( \log \eta^{-1} \big)^{5/6} \Big\} \, .
$$  
\end{lemma}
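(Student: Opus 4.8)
Write $\alpha = g \wedge e^{-G\ipdval^6}$ and let $\mc{B}$ denote the Brownian measure $\mc{B}_{0,*}^{\nu;I}$ with $I = [K-\ipdval, K+\ipdval]$, so that $\eta = \mc{B}(A)$ and, by hypothesis, $\eta \geq e^{-gm^{1/12}}$. The plan is to reduce to Definition~\ref{d.verybrownian} by partitioning $A$ and then to carry out the resulting bookkeeping on constants. First I would dispose of the case $\eta \leq \alpha$: then the pair of inequalities $e^{-gm^{1/12}} \leq \eta \leq g \wedge e^{-G\ipdval^6}$ holds outright, so~(\ref{e.verybrownian}) applies verbatim and bounds the probability in question by $\eta \cdot G\exp\{G\ipdval(\log\eta^{-1})^{5/6}\}$. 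Since $G,\ipdval \geq 1$ and $g \in (0,1)$ give $\log G \leq G \leq G^{11/6}\ipdval^6 g^{-5/6}$, the prefactor $G$ may be absorbed into the exponent, and as $\exp\{G\ipdval(\log\eta^{-1})^{5/6}\} \geq 1$ this already yields the asserted inequality.

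In the remaining case $\eta > \alpha$, I would use that $\mc{B}$ is non-atomic to split $A$ into $N := \lceil \eta/\alpha \rceil$ disjoint Borel pieces $A_1,\dots,A_N$, each of Brownian measure $\eta_0 := \eta/N$; since $\eta \geq \alpha$ one checks that $\eta_0 \in [\alpha/2,\alpha]$, so each $A_i$ is admissible in the sense of Definition~\ref{d.verybrownian} (this uses $\alpha/2 \geq e^{-gm^{1/12}}$, which holds whenever the window in Definition~\ref{d.verybrownian} is non-degenerate; the opposite possibility forces $m$ to be bounded in terms of $g,G,\ipdval$ and is disposed of directly from $\PP(\,\cdot\,) \leq 1$). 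Applying~(\ref{e.verybrownian}) to each $A_i$ and summing gives
$$
\PP\Big( x \mapsto X(x) - X(K-\ipdval) \in A \Big) \;\leq\; \sum_{i=1}^{N} \eta_0\, G\exp\big\{ G\ipdval (\log \eta_0^{-1})^{5/6} \big\} \;=\; \eta\, G\exp\big\{ G\ipdval (\log \eta_0^{-1})^{5/6} \big\} .
$$
Since $\eta_0 \geq \alpha/2$ we have $\log \eta_0^{-1} \leq \log 2 + \max(\log g^{-1}, G\ipdval^6)$; combining this with subadditivity of $t \mapsto t^{5/6}$ and the elementary bounds $(\log 2)^{5/6} \leq 1$, $\log g^{-1} \leq g^{-1}$ and $(G\ipdval^6)^{5/6} = G^{5/6}\ipdval^5$ yields $G\ipdval(\log\eta_0^{-1})^{5/6} \leq G\ipdval + G\ipdval g^{-5/6} + G^{11/6}\ipdval^6 \leq 3 G^{11/6}\ipdval^6 g^{-5/6}$; adjoining the prefactor $G$ as before turns the right-hand side above into at most $\eta\exp\{ 4 G^{11/6}\ipdval^6 g^{-5/6}\}$, which is dominated by the claimed bound, once more using $\exp\{G\ipdval(\log\eta^{-1})^{5/6}\} \geq 1$.

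The only genuine obstacle is the final constant-chasing: the target exponent $5G^{11/6}\ipdval^6 g^{-5/6}$ is reached only with a small margin, so the care lies in routing each elementary estimate ($\log G \leq G$, $\log g^{-1} \leq g^{-1}$, $G\ipdval \leq G^{11/6}\ipdval^6$, subadditivity of $t \mapsto t^{5/6}$) so that no term escapes the exponent; the partition of $A$ into pieces of prescribed measure and the measurability of those pieces are routine since $\mc{B}_{0,*}^{\nu;I}$ is a non-atomic Borel probability measure on the Polish space $\mc{C}_{0,*}(I,\R)$.
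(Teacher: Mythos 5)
Your argument is correct, but in the main case it takes a genuinely different route from the paper's. The paper's proof never touches the set $A$: it observes that once $\eta$ exceeds the admissible ceiling $g \wedge e^{-G\ipdval^6}$, the asserted right-hand side is automatically at least one (because $\eta \cdot H \geq 1$ for the explicit constant $H \geq \max\{g^{-1}, e^{G\ipdval^6}\}$), so the inequality is vacuous there and the entire content of the lemma is the constant-chasing that shows $GH \leq \exp\{5G^{11/6}\ipdval^6 g^{-5/6}\}$. You instead partition $A$ into $N = \lceil \eta/\alpha\rceil$ Borel pieces of equal measure $\eta_0 \in [\alpha/2,\alpha]$, apply Definition~\ref{d.verybrownian} to each, and sum; this buys a bound that is genuinely of the form $\eta$ times a constant rather than the trivial bound by one, at the price of an extra measure-theoretic input (exact-measure splitting of a Borel set under the non-atomic Wiener law) and of essentially the same elementary estimates at the end. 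Two remarks. First, with the exponent $5G^{11/6}\ipdval^6 g^{-5/6}$ as large as it is, the partition gains nothing over the vacuity observation, since the right-hand side already exceeds one whenever $\eta > \alpha$. Second, and more importantly, your disposal of the degenerate sub-case $\alpha/2 < e^{-gm^{1/12}}$ via ``$\PP(\cdot) \leq 1$'' is exactly the paper's vacuity argument in disguise, and it is the one place where you assert rather than verify that the right-hand side is at least one; you should record the check, namely that in this sub-case $\eta \geq e^{-gm^{1/12}} > \alpha/2$ while $5G^{11/6}\ipdval^6 g^{-5/6} \geq \log g^{-1} + G\ipdval^6 + \log 2 \geq \log(2/\alpha)$ (using $\log g^{-1} \leq g^{-5/6}$ and $G,\ipdval \geq 1$), whence $\eta \cdot \exp\{5G^{11/6}\ipdval^6 g^{-5/6}\} \geq 1$. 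With that line added, the proof is complete.
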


{\bf Proof.}
Note that by Theorem \ref{t.debridge} the condition  $e^{-gm^{1/12}} \leq \eta$ implies that
\begin{equation}\label{removalconstraint}
\PP \Big( I \to \R: x \to X(x) - X(K-d) \, \, \textrm{belongs to $A$} \Big) \, \leq \, \eta \cdot  H G\exp \Big\{ G \ipdval \big( \log \eta^{-1} \big)^{5/6} \Big\} \, ,
\end{equation}
where $H = \big( \max \big\{ g^{-1} , e^{G \ipdval^6} \big\} +1 \big)  \exp \Big\{ G \ipdval \big( \log \max \big\{ g^{-1} , e^{G \ipdval^6} \big\}   \big)^{5/6} \Big\}$. The above follows from Theorem \ref{t.debridge} since $H\ge 1$ and moreover for $\eta\ge g \wedge e^{-G d^6},$ we have $ \eta H\ge 1$ while $G\exp \Big\{ G \ipdval \big( \log \eta^{-1} \big)^{5/6}\Big\}$ is always at least $1$ since $G\ge 1$ by hypothesis.

Since $g \in (0,1)$, note that 
$$
 H \leq 2  g^{-1}  e^{G \ipdval^6}  \exp \Big\{ G \ipdval \big( \log ( g^{-1} e^{G \ipdval^6} )   \big)^{5/6} \Big\} \leq 2  g^{-1}  e^{G \ipdval^6}  \exp \Big\{ G^{11/6} \ipdval^6  g^{-5/6}      \Big\} \, ,
$$
where we have applied
$a e^x \leq e^{ax}$ for $a,x \geq 1$ with $a = g^{-1}$ and $x = G \ipdval^6$. Thus, 
$$
GH \leq 2  g^{-1}G  e^{G \ipdval^6}  \exp \Big\{ G^{11/6} \ipdval^6  g^{-5/6}      \Big\} \leq g^{-1} e^{3G \ipdval^6}  \exp \Big\{ G^{11/6} \ipdval^6  g^{-5/6}      \Big\} \leq g^{-1}  \exp \Big\{ 4 G^{11/6} \ipdval^6  g^{-5/6}      \Big\} \, ,
$$
where we used $G \geq 1 \geq g$ and $d \geq 1$. Noting that $g^{-1} \leq \exp \big\{ g^{-5/6} \big\}$ alongside $G,d \geq 1$ completes the proof. \qed

 Recall Definition \ref{d.verybrownian}. 
\begin{lemma}[Additive stability for Brownian regularity]\label{l.brownianadditivity}
Let $\nu_i \in (0,\infty)$; $n_i \in \N$ for $i \in \{1,2\}$; $g,G,\ell > 0$; and $R\in \R$.
Let the random functions $X,Y :[R-\ell,R+\ell] \to \R$ be independent under the law $\PP$. Suppose that $X$ is $\big( g,G,\ell,m_1,\nu_1\big)$-Brownian, and that $Y$ is $\big( g,G,\ell,m_2,\nu_2\big)$-Brownian. Then $X+Y$ is $\big( g ,G', \ell  ,m_1 \wedge m_2 , \nu_1 + \nu_2 \big)$-Brownian, where $G'$ is a multiple of $G^{17/6} g^{-5/6}  \ell^6$
by an absolute positive factor.
\end{lemma}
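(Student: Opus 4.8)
The plan is to bound $\PP(\tilde X + \tilde Y \in A)$, where $\tilde X = X - X(R-\ell)$ and $\tilde Y = Y - Y(R-\ell)$ are independent and vanish at $R-\ell$, and $\eta := \mc{B}_{0,*}^{\nu_1+\nu_2;I}(A)$ with $I = [R-\ell,R+\ell]$. Since a diffusion-rate-$(\nu_1+\nu_2)$ Brownian motion from $0$ equals in law a sum of independent rate-$\nu_1$ and rate-$\nu_2$ Brownian motions from $0$, we may fix such independent motions $B_1,B_2$ and write $\eta = \PP(B_1 + B_2 \in A)$, whence $\EE\big[\mc{B}_{0,*}^{\nu_1;I}(A-B_2)\big] = \eta$. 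The first, structurally essential, move is a reduction: if the asserted bound $\eta\,G'\exp\{G'\ell(\log\eta^{-1})^{5/6}\}$ is at least $1$ there is nothing to prove, and otherwise, solving $\log\eta^{-1} > \log G' + G'\ell(\log\eta^{-1})^{5/6}$, one gets $\log\eta^{-1} \geq \Theta(1)(G'\ell)^6$, hence $\log\eta^{-1}\geq (5G\ell/6)^6$ once $G'$ is a sufficiently large multiple of $G$. Writing $\psi(t) = t\exp\{5G^{11/6}\ell^6 g^{-5/6}\}\exp\{G\ell(\log t^{-1})^{5/6}\}$ for the majorant that Lemma~\ref{l.noupperbound} supplies for $\PP_{\tilde X}(\tilde X \in \cdot)$ and $\PP_{\tilde Y}(\tilde Y \in \cdot)$ in terms of the Brownian measure of the event, a short computation shows that $\psi$ is increasing and concave on $(0, e^{-(5G\ell/6)^6}]$, with $\psi'$ vanishing at the right endpoint, so that the concave majorant of $\psi|_{[0,1]}$ coincides with $\psi$ on $(0,e^{-(5G\ell/6)^6}]$; beyond that point $\psi$ balloons to be exponentially large in $G^6\ell^6$, which would be fatal. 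The displayed lower bound on $\log\eta^{-1}$ is exactly what keeps $\eta$—and, as it will turn out, every auxiliary Brownian probability met below—inside the favourable regime $(0, e^{-(5G\ell/6)^6}]$, and is therefore the linchpin of the constant bookkeeping.

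I would then argue in two conditioning steps. \emph{Step 1}: control the half-Brownian quantity $\Xi := \PP(\tilde X + B_2 \in A)$, with $B_2$ independent of $\tilde X$. Conditioning on $B_2 = f$ and setting $\beta_1(f) = \mc{B}_{0,*}^{\nu_1;I}(A - f)$, Lemma~\ref{l.noupperbound} applied to $X$ gives $\PP_{\tilde X}(\tilde X \in A - f) \leq \psi(\beta_1(f))$ whenever $\beta_1(f) \geq e^{-g m_1^{1/12}}$; when $\beta_1(f) < e^{-g m_1^{1/12}}$, enlarge $A - f$, by a routine measurable choice, to an event of $\nu_1$-Brownian measure exactly $\eta$—admissible since $e^{-g(m_1\wedge m_2)^{1/12}} \leq \eta \leq g \wedge e^{-G\ell^6}$—to obtain the uniform bound $\PP_{\tilde X}(\tilde X \in A - f) \leq \psi(\eta)$. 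Averaging over $B_2$, using that $B_2$ is a \emph{genuine} Brownian motion so $\EE[\beta_1(B_2)] = \eta$, and applying Jensen's inequality to the concave majorant of $\psi$—legitimate because $\eta \leq e^{-(5G\ell/6)^6}$ places the relevant range in the concave regime—gives $\Xi \leq 2\psi(\eta)$, which is doubly small.

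\emph{Step 2}: For the target, the layer-cake formula yields, with $\Psi(g) = \PP_{\tilde X}(\tilde X + g \in A)$,
$$
\PP(\tilde X + \tilde Y \in A) = \EE_{\tilde Y}\big[\Psi(\tilde Y)\big] = \int_0^1 \PP_{\tilde Y}\big(\Psi(\tilde Y) > s\big)\, {\rm d}s \, .
$$
For each $s$, the level set $\{g : \Psi(g) > s\}$ has $\nu_2$-Brownian measure $\PP_{B_2}(\Psi(B_2) > s) \leq \Xi/s$ by Markov; enlarging it to Brownian measure $\max\{\Xi/s,\eta\}$ and applying Lemma~\ref{l.noupperbound} to $Y$ gives $\PP_{\tilde Y}(\Psi(\tilde Y) > s) \leq 1 \wedge \psi(\max\{\Xi/s,\eta\})$. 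Because $\psi$ increases on $(0,e^{-(5G\ell/6)^6}]$ and $\eta$ lies there, this is under control for $s \geq \Xi e^{(5G\ell/6)^6}$, while the complementary $s$-range has length $\Xi e^{(5G\ell/6)^6}$, which is negligible precisely because Step 1 made $\Xi$ doubly small. Splitting the integral at $\Xi e^{(5G\ell/6)^6}$ and at $\Xi/\eta$, and evaluating the main piece by the substitution $t = \Xi/s$—which turns it into $\Xi \int_{\eta}^{e^{-(5G\ell/6)^6}} \psi(t)\,t^{-2}\, {\rm d}t$, at most a logarithmic factor times $\psi(\eta)$ after inserting $\Xi \leq 2\psi(\eta)$—one arrives at a bound of the shape $\Theta(1)\,\eta\,\exp\{10 G^{11/6}\ell^6 g^{-5/6}\}\exp\{(2G\ell + 1)(\log\eta^{-1})^{5/6}\}$.

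It then remains to check that this lies below $\eta\,G'\exp\{G'\ell(\log\eta^{-1})^{5/6}\}$ for $G'$ an absolute multiple of $G^{17/6}g^{-5/6}\ell^6$. Here I would re-use $\log\eta^{-1} \geq \Theta(1)(G'\ell)^6$: since $G'\ell - 2G\ell \geq G'\ell/2$, the factor $\exp\{(G'\ell - 2G\ell)(\log\eta^{-1})^{5/6}\}$ is at least $\exp\{\Theta(1)(G'\ell)^6\}$, and because $(G')^6 \gg G^{11/6}g^{-5/6}$ this dominates $\exp\{10 G^{11/6}\ell^6 g^{-5/6}\}$ together with the $\Theta(1)$ prefactor and the logarithmic factors, once the absolute constant in $G'$ is taken large enough (using $G,\ell \geq 1$ and $g < 1$, as in Definition~\ref{d.verybrownian} and Lemma~\ref{l.noupperbound}); the weakening of the fourth index from $m_1, m_2$ to $m_1\wedge m_2$ is immediate, since it only relaxes the lower threshold on $\eta$ and thus permits both of $X$, $Y$ to be invoked. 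I expect the main obstacle to be exactly the non-concavity and explosive growth of $\psi$ away from the origin—the reason the exponent $17/6$, rather than something smaller, appears: it is dealt with only by the opening reduction and by verifying, at each use of a Brownianity hypothesis, that the Brownian mass in play ($\eta$, $\Xi/s$, and the measurable enlargements of level sets) never leaves the regime $(0, e^{-(5G\ell/6)^6}]$—the extra smallness of $\Xi$ extracted in Step 1 being what secures this in Step 2.
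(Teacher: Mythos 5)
Your argument is correct, but it takes a genuinely different route from the paper's. The paper's proof performs a Lebesgue decomposition of the laws of $\hat X$ and $\hat Y$ against their reference Wiener measures, partitions by the dyadic level sets $D_k$ of the Radon--Nikodym derivatives, bounds the tail $\eta_k = \mc{B}^{\nu_2;I}_{0,*}\big(\cup_{j\geq k}D_j\big)$ by applying Lemma~\ref{l.noupperbound} to these unions (which forces $2^k \leq \hat{G}\exp\{G\ell(\log\eta_k^{-1})^{5/6}\}$), and then replaces each process in turn by a genuine Brownian motion on each level set at multiplicative cost $2^{k+1}$, truncating at $J \approx 2G\ell(\log\eta^{-1})^{5/6}$. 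You instead replace the two processes asymmetrically: $\tilde X$ via Jensen's inequality applied to the concave majorant of the comparison function $\psi$ --- legitimate because your opening reduction (``nothing to prove if the asserted bound exceeds one'') pins $\eta$, and hence every Brownian mass you feed into $\psi$, inside $(0,e^{-(5G\ell/6)^6}]$, where $\psi$ is indeed increasing and concave with $\psi'$ vanishing at the right endpoint --- and $\tilde Y$ via a layer-cake/Markov argument on the level sets of $g\mapsto\PP(\tilde X+g\in A)$, to which Lemma~\ref{l.noupperbound} is applied after the same enlargement device the paper uses in~(\ref{aprioribounds1}). This dispenses with Radon--Nikodym derivatives and the singular-part bookkeeping entirely, at the price of the concavity analysis and the initial reduction (which is genuinely needed: the hypothesis $\eta\leq e^{-G'\ell^6}$ alone does not guarantee $\eta\leq e^{-(5G\ell/6)^6}$ for all admissible $g,\ell$). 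One small correction: the exponent $17/6$ does not originate in the non-concavity of $\psi$; in both arguments it arises when the prefactor $\exp\{\Theta(1)G^{11/6}\ell^6 g^{-5/6}\}$ from Lemma~\ref{l.noupperbound} is absorbed against $\exp\{\Theta(1)G\ell(\log\eta^{-1})^{5/6}\}$ via $e^a e^b\leq e^{ab}$, yielding $G'\ell=\Theta(1)G^{17/6}g^{-5/6}\ell^7$ --- but this is a side remark, not a gap.
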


{\bf Proof.} Write $\mc{C} =  \mc{C}_{0,*}\big( I ,\R \big)$ where $I =  [R-\ell,R+\ell]$. Our argument will rely on analysing the different values of the Radon-Nikodym derivative of $\hat{X}:=X(\cdot) - X(R - \ell)$ with respect to Brownian motion. So we first apply the Lebesgue decomposition theorem to obtain a Borel set $\mc{X}_1\subset \mc{C}$ such that $\hat X$ is absolutely continuous  with respect to $\mc{B}^{\nu_1 + \nu_2;I}_{0,*}$ on $\mc{X}_1$ and $\mc{B}^{\nu_1 + \nu_2;I}_{0,*}(\mc{X}_1^c)=0.$
Similarly for $\hat Y$, we define $\mc{X}_2$.

We first claim the following bounds:
\begin{align}\label{aprioribounds1}
\P(\hat X\in \mc{X}_1^c)&\le   G\exp \Big\{ 5G \ell g^{-5/6} m_1^{5/72}  \Big\} e^{-g m_1^{1/12}} \\
\nonumber
\P(\hat Y\in \mc{X}_2^c)&\le  G\exp \Big\{ 5G \ell g^{-5/6} m_2^{5/72}  \Big\} e^{-g m_2^{1/12}} .
\end{align} 
We only discuss the first bound since the argument for the second is similar. Note that since  $\mc{B}^{\nu_1 + \nu_2;I}_{0,*}(\mc{X}_1^c)=0,$ one cannot directly appeal to the fact that $X$ is $\big( g,G,\ell,m_1,\nu_1\big)$-Brownian. However, a simple enlargement argument gives a Borel set $S$ such that $\mc{X}_1^c\subset S$ and $\mc{B}^{\nu_1 + \nu_2;I}_{0,*}(S)=e^{-g m_1^{1/12}}.$
Thus we get 
$$
\P(\hat X\in \mc{X}_1^c)\le \P(\hat X\in S)\le G\exp \Big\{ G \ell g^{5/6} m_1^{5/72}  \Big\} e^{-g m_1^{1/12}} \, .
$$

Next, let $A \subset \mc{C}$ be Borel measurable. Set $\hatapr = \mc{B}^{\nu_1 + \nu_2;I}_{0,*}(A)$, and suppose that $\hatapr$ is at least the quantity $\exp \big\{ - g (m_1 \wedge m_2)^{1/12} \big\}$.  
For $f \in \mc{C}$, set $A_f = \big\{ g \in \mc{C}\cap \mc{X}_2 : f +g \in A \big\}$.

Now let  $F: \mc{C} \to [0,\infty)$ denote the Radon-Nikodym derivative
of the law of $\hat X$ with respect to the law $\mc{B}^{\nu_1;I}_{0,*}$ on $\mc{X}_1$; and let $G: \mc{C} \to [0,\infty)$ denote the counterpart with the replacements $\hat X \to \hat Y$,
 $\nu_1 \to  \nu_2$ and $\mc{X}_1\to \mc{X}_2$ made.

For $k \in \N$, write $D_k = \big\{ f \in \mc{C}: 2^k \leq G(f) < 2^{k+1} \big\}$ for $k\ge 1$ and let $D_0 = \big\{ f \in \mc{C}:  G(f) < 2 \big\}$. Set  $\eta_k = \mc{B}^{\nu_2;I}_{0,*} \big( \cup_{j=k}^\infty D_j \big)$.

Note that  $\PP \big( \hat Y \in \cup_{j=k}^\infty D_j \big) \geq 2^k \eta_k$.
However, by Lemma~\ref{l.noupperbound}, the condition  
\begin{equation}\label{e.etakcond}
e^{-g m_2^{1/12}} \leq \eta_k  \, , 
\end{equation}
implies that 
\begin{equation}\label{levelsets}
 \PP \big( \hat Y \in \cup_{j=k}^\infty D_j \big) \, \leq  \,  \exp \Big\{ 5 G^{11/6} \ell^6  g^{-5/6}      \Big\} \exp \Big\{ G \ell \big( \log \eta_k^{-1} \big)^{5/6} \Big\} \eta_k
 \, .
\end{equation}
Thus, when (\ref{e.etakcond}) holds, $2^k \leq \hat{G} \exp \big\{ G \ell \big( \log \eta_k^{-1} \big)^{5/6} \big\}$ with $\hat{G} =  \exp \big\{ 5 G^{11/6} \ell^6  g^{-5/6}      \big\}$, whence 
$$
\eta_k \leq \exp \big\{ - (G \ell)^{-6/5} \big( k \log 2  - \log \hat{G} \big)^{6/5}  \big\} \, .
$$
Hence, $\eta_k \leq \exp \big\{ - (G \ell)^{-6/5} \big( k \log 2  - \log \hat{G} \big)^{6/5}  \big\} \vee e^{-g m_2^{1/12}}$, whether or not~(\ref{e.etakcond}) holds.

Set $J \in \N$ to be minimal such that $\exp \big\{ - (G \ell)^{-6/5} \big( J \log 2  - \log \hat{G} \big)^{6/5}  \big\}  \leq \hatapr$. Note that $J$ is at most $2 G\ell \big( \log \hatapr^{-1} \big)^{5/6}$ provided that 
$G\ell \big( \log \hatapr^{-1} \big)^{5/6} \geq  \log \hat{G}$.

If $\eta_J \geq e^{-g m_2^{1/12}}$, then $\eta_J \leq \hatapr$, so that 
 \begin{equation}\label{e.yjj}
 \PP \big( \hat Y \in \cup_{j=J}^\infty D_j \big) \leq    \exp \big\{ 5 G^{11/6} \ell^6  g^{-5/6}      \big\} \exp \big\{ G \ell \big( \log \hatapr^{-1} \big)^{5/6} \big\} \hatapr \, ,
 \end{equation}
  where we used that $\hatapr \in (0,\hatapr_0)$ for $\hatapr_0 > 0$ small enough that $(0,\hatapr_0) \to \R: x \to \exp \big\{ G \ell \big( \log x^{-1} \big)^{5/6} \big\} x$ is increasing.
If $\eta_J < e^{-g m_2^{1/12}}$, then, by an enlargement argument,  
$$
\PP \big( \hat Y \in \cup_{j=J}^\infty D_j \big) \leq    \exp \big\{ 5 G^{11/6} \ell^6  g^{-5/6}      \big\} \exp \Big\{ G \ell g^{5/6} m_2^{5/72}  \Big\} e^{-g m_2^{1/12}} \, .
$$ 
Since $\hatapr \geq e^{-g m_2^{1/12}}$, we find that~(\ref{e.yjj}) holds, whether or not $\eta_J \geq e^{-g m_2^{1/12}}$. 

Writing $B: I \to \R$, $B(R-\ell) =0$, to denote a Brownian motion of diffusion rate one under the law~$\PP$, independent of $X$,
note  that
\begin{align}\label{decomposition12}
 \PP \big( \hat X+ \hat Y \in A  \big) & =   \PP \big( \hat Y \in A_{\hat X} \big) \\
 \nonumber
 &\le \P(\hat X \in \mc{X}^c_1)+\P(\hat Y \in \mc{X}^c_2)+ \sum_{k = 0}^{J-1} \PP \big( \hat Y \in A_{\hat X} \cap D_k, \hat Y \in \mc{X}_2,\hat X \in \mc{X}_1 \big) \\
 \nonumber
 & \,\,\,+ \,  \PP \big( \hat Y \in \cup_{j = J}^\infty D_j, \hat Y \in \mc{X}_2  \big) \, .
 \end{align}
The first two terms will be bounded using \eqref{aprioribounds1}. For the next two terms, observe that 
 \begin{align}
 \nonumber
& \quad  \sum_{k = 0}^{J-1} \PP \big( \hat Y \in A_{\hat X} \cap D_k,\hat X \in \mc{X}_1 \big)+  \PP \big( \hat Y \in \cup_{j = J}^\infty D_j \big)\\
 \nonumber
 & \leq  \sum_{k = 0}^{J-1} 2^{k+1}
 \PP \big( B \in A_{\hat X} \cap D_k, {\hat X} \in \mc{X}_1\big) \, \, + \,  
  \exp \big\{ 4 G^{11/6} \ell^6  g^{-5/6}      \big\} \exp \big\{ G \ell \big( \log \hatapr^{-1} \big)^{5/6} \big\} \hatapr \\
  \nonumber
  & \leq   2^J     
  \PP (  B \in A_{\hat X},{\hat X} \in \mc{X}_1 ) \, \, + \,  \exp \big\{ 5 G^{11/6} \ell^6  g^{-5/6}      \big\} \exp \big\{ G \ell \big( \log \hatapr^{-1} \big)^{5/6} \big\} \hatapr \\
  \label{finalbound12}
  & \leq    2^{2 G\ell ( \log \hatapr^{-1} )^{5/6}}   \PP (  B \in A_{\hat X},{\hat X} \in \mc{X}_1 ) \, + \,   \exp \big\{ 5 G^{11/6} \ell^6  g^{-5/6}      \big\} \exp \big\{ G \ell \big( \log \hatapr^{-1} \big)^{5/6} \big\} \hatapr \, .
\end{align}
In a specification similar to that of $D_k$, set $E_k = \big\{ f \in \mc{C}: 2^k \leq F(f) < 2^{k+1} \big\}$ for $k\ge 1,$ and $E_0 = \big\{ f \in \mc{C}:  F(f) \leq 2 \big\}$. By a verbatim argument that invokes $\hatapr \geq e^{-g m_1^{1/12}}$,  we see that $\PP \big( \hat X \in \cup_{j=J}^\infty E_j \big)$ is at most the right-hand side of~(\ref{e.yjj}).   

Write $B':I \to \R$, $B'(R - \ell) = 0$, for a further Brownian motion of diffusion rate one, defined under the law $\PP$ and chosen independently of $B$.
Note then that 
\begin{eqnarray*}
& & \PP \big( B \in A_{\hat X}, {\hat X}\in \mc{X}_1 \big) \\
 & = & \sum_{k = 0}^{J-1} 
\PP \big( B \in A_{\hat X} , {\hat X} \in E_k \big)
\, \, + \,  \PP \big( {\hat X} \in \cup_{j = J}^\infty E_j \big)  \\ 
 & \leq &  \sum_{k = 0}^{J-1} 2^{k+1}
\PP \big( B \in A_{B'} , B' \in E_k \big)   \, \, + \,  \exp \big\{ 5 G^{11/6} \ell^6  g^{-5/6}      \big\} \exp \big\{ G \ell \big( \log \hatapr^{-1} \big)^{5/6} \big\} \sigma
 \\
   & \leq &  2^J \PP \big( B \in A_{B'} \big)   \, + \,  \exp \big\{ 5 G^{11/6} \ell^6  g^{-5/6}      \big\} \exp \big\{ G \ell \big( \log \hatapr^{-1} \big)^{5/6} \big\} \hatapr
   \\
   & = & 2^{2 G\ell ( \log \hatapr^{-1} )^{5/6}}  \PP \big( B + B' \in A  \big) \, + \,  \exp \big\{ 5 G^{11/6} \ell^6  g^{-5/6}      \big\} \exp \big\{ G \ell \big( \log a^{-1} \big)^{5/6} \big\} \hatapr \\
    & = &   \Big( \exp \big\{ 2 \log 2 \cdot G\ell \big( \log \hatapr^{-1} \big)^{5/6} \big\}   \, + \,  \exp \big\{ 5 G^{11/6} \ell^6  g^{-5/6}      \big\} \exp \big\{ G \ell \big( \log \hatapr^{-1} \big)^{5/6} \big\} \Big) \, \hatapr \, \\
     & \leq & \exp \big\{  2 \cdot 5 G^{11/6} \ell^6  g^{-5/6} \cdot 2\log 2 \cdot G \ell \big( \log \hatapr^{-1} \big)^{5/6} \big\} \, \hatapr \, \\
     & = & 
 \exp \big\{  20 \log 2 \cdot  G^{17/6} g^{-5/6}  \ell^7    \big( \log \hatapr^{-1} \big)^{5/6} \big\}  \cdot \hatapr \, ,
\end{eqnarray*} 
where to get the inequality in the second line, for the term involving the sum, we simply use the definition of $E_k,$ and independence of $B$ and $X$ as well as of $B$ and $B',$ to pass from $\PP \big( B \in A_{\hat X} , {\hat X} \in E_k \big)$ to  $2^{k+1}\PP \big( B \in A_B' , B' \in E_k \big)$. The second term is bounded using the already stated bound on $\PP \big( {\hat X} \in \cup_{j = J}^\infty E_j \big)$. The final inequality uses $G,\ell \geq 1$, $g \leq 1$, $\hatapr \leq e^{-1}$ and $2e^{x} \leq e^{2x}$ for $x \geq 1$.

The preceding display and \eqref{finalbound12} yield 
\begin{align*}
&\sum_{k = 0}^{J-1} \PP \big( \hat Y \in A_{\hat X} \cap D_k,\hat X \in \mc{X}_1 \big)+  \PP \big( \hat Y \in \cup_{j = J}^\infty D_j \big)\\
&\le 2^{2 G\ell ( \log \hatapr^{-1} )^{5/6}}    \exp \big\{  20 \log 2 \cdot  G^{17/6} g^{-5/6}  \ell^7    \big( \log \hatapr^{-1} \big)^{5/6} \big\}  \cdot \hatapr \\ \,
&\hspace{2in} + \,   \exp \big\{ 5 G^{11/6} \ell^6  g^{-5/6}      \big\} \exp \big\{ G \ell \big( \log \hatapr^{-1} \big)^{5/6} \big\} \hatapr \, .
\end{align*}
This, along with \eqref{decomposition12} and \eqref{aprioribounds1}, implies that
$$
 \PP \big( \hat X+ \hat Y \in A  \big) \le \exp \big\{ \Theta(1) G^{17/6} g^{-5/6}  \ell^7 \big( \log \hatapr^{-1} \big)^{5/6} \big\} \hatapr
$$
and hence completes the proof of Lemma~\ref{l.brownianadditivity}. \qed

{\bf Proof of Theorem~\ref{t.brownianroutedprofile}.} 
If we set $X(x) = Z^\uparrow_n(x,a) +  Z^\downarrow_n(x,a)$, then (\ref{e.zupdownarrow})  implies that 
$$
Z_n(x,a) = X(x) - 
\Big( 2^{1/2}\big( a(1-a) \big)^{-1} R + \e \Big) x + c(a,R,n)
$$
where 
$\e = 2^{1/2} \big( (1 - a - n^{-1})(1-a) \big)^{-1} R  n^{-1}$ and $c(a,R,n)$ is a constant.
 The process $X$ is found to be  $\big( g,G', \max \{ a^{-2/3} , (1-a)^{-2/3} \} \ell, \min \{ a,1-a \} n ,2 \big)$-Brownian for the value of $G'$ given in Theorem~\ref{t.brownianroutedprofile} by means of Lemma~\ref{l.shiftbrownian}(3,4,5) and Lemma~\ref{l.brownianadditivity}.
 It remains only to remove the constant $c(a,R,n)$ from the representation of $Z_n(x,a)$ in order to complete the proof of  Theorem~\ref{t.brownianroutedprofile}. The stated Brownian property of $X$ is unaltered by the addition of a constant to this process; thus, we may absorb the constant by adding it to $X$. \qed

\subsection{The rarity of twin peaks in the Brownian case}
Here, we state and prove our Brownian twin peaks' rarity result, Proposition~\ref{p.midneartouch}. Although the result is not new, we could not locate a reference;  the following quantitative form is suitable for our applications.

The argument rests in part on 
Proposition~\ref{p.brownianepsilonexc}, a near-return probability estimate for Brownian meander, which is stated and proved in the later part of the section.

Let $K$ and $r$ be positive. Let $B: [-r,r] \to \R$, $B(0) = 0$, denote standard Brownian motion under a law labelled $\PP$. Set $W:[-r,r] \to \R$, $W(x) = B(x) + Kx$. Let $M \in [-r,r]$ denote the almost surely unique point at which the process $W$ attains its maximum. 
Let $\mathsf{Mid}$ denote the {\em middle-third event} that $M \in [-r/3,r/3]$.
For parameters $\e \in (0,r/6)$ and $\hata \in (0,1)$, let $\mathsf{NT} = \mathsf{NT}(B)$  denote the {\em near-touch event}
that there exists a value of $z \in [-r,r]$ such that $\vert z - M \vert \in [\e,2\e]$
for which $W(z) \geq W(M) - \hata \e^{1/2}$. The near-touch event will be considered only when the middle-third event occurs, so that the condition that $z \in [-r,r]$ will be implied by the demand that $\vert z - M \vert \leq 2\e$.

\begin{proposition}\label{p.midneartouch}
For a constant $D > 0$ that is independent of $K \geq 0$, $r > 0$, $\e \in (0,r/6)$ and $\hata \in (0,1)$,
we have that 
$$
\PP \big( \mathsf{NT} \cap \mathsf{Mid} \big) \leq  
D K^{-1}r^{-1/2}  \exp \big\{ - K^2 r/18  \big\} \min \big\{ \hata , 1 \big\} \, ,
$$ 
as well as 
$\PP \big( \mathsf{NT} \cap \mathsf{Mid} \big) \leq  D \hata$.
\end{proposition}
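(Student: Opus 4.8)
The plan is to factor the estimate into two inputs that then multiply: a bound showing near-touch is rare near a typical maximum, which supplies the factor $\min\{\hata,1\}$, and a bound showing the maximum rarely sits in the central third, which supplies $K^{-1}r^{-1/2}\exp\{-K^2r/18\}$. Concretely, let $\mathcal{G}=\sigma\big(M,W(M)\big)$, so $\mathsf{Mid}\in\mathcal{G}$. On $\mathsf{Mid}$ each of the intervals $[-r,M]$ and $[M,r]$ has length at least $2r/3$, which (as $\e<r/6$) comfortably exceeds the window length $2\e$. By a path decomposition at the maximum of a Brownian motion with drift, in the spirit of the decompositions of Williams and Denisov, conditionally on $\mathcal{G}$ the forward and backward excursions from $M$, namely $Y^{+}(t)=W(M)-W(M+t)$ on $[0,r-M]$ and $Y^{-}(t)=W(M)-W(M-t)$ on $[0,M+r]$, are independent, each started from $0$, each a Brownian motion with drift ($-K$ and $+K$ respectively) conditioned to remain positive over its time interval (the pinning at the far endpoint, at distance of order $r$, being irrelevant on the window $[0,2\e]$). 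Since near-touch is precisely the event $\{\inf_{[\e,2\e]}Y^{+}\le\hata\e^{1/2}\}\cup\{\inf_{[\e,2\e]}Y^{-}\le\hata\e^{1/2}\}$, it suffices to bound each of the two conditional one-sided probabilities, on $\mathsf{Mid}$, by a constant multiple of $\hata$.

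\emph{Each one-sided near-return costs a factor $\hata$.} Restricting $Y^{\pm}$ to the window $[0,2\e]$ and rescaling by $(2\e)^{1/2}$, the one-sided event becomes that the rescaled path dips below $\hata\,2^{-1/2}$ somewhere on $[1/2,1]$. The law of this rescaled, restricted path is absolutely continuous with respect to the standard Brownian meander; the Radon--Nikodym derivative, after the drift's Girsanov factor and the replacement of ``positive over the full horizon'' by ``positive over $[0,2\e]$'' are accounted for, has its macroscopic dependence on $K$ and on $r$ cancel, leaving a derivative comparable to (the endpoint value)$/(2\e)^{1/2}$. This is not bounded over all meander paths, but it is $O(1)$ on the event that the path is atypically small near the origin, which is exactly the event under consideration; on the side on which the drift pushes the excursion away from the origin one may alternatively run a coupling comparison to the driftless case. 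Feeding this into Proposition~\ref{p.brownianepsilonexc}, the near-return estimate for the Brownian meander, bounds each one-sided conditional probability by a constant times $\hata$ (the constant absorbing the factor $2^{-1/2}$ and the trivial bound for $\hata$ near $1$). Hence $\PP(\mathsf{NT}\mid\mathcal{G})\le D\hata$ on $\mathsf{Mid}$.

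\emph{The maximum is rarely central, and conclusion.} On $\mathsf{Mid}$ one has $W(M)=\max_{[-r,r/3]}W$ and simultaneously $W(t)\le W(M)$ for all $t\in[r/3,r]$. Conditioning on $W(r/3)$ makes the increments of $W$ on $[r/3,r]$ independent of $W|_{[-r,r/3]}$; the maximum of $W$ over $[r/3,r]$ is at least its value at $r$, which lies a deterministic amount of order $Kr$ above $W(r/3)$ plus a centred Gaussian of variance of order $r$, whereas $\max_{[-r,r/3]}W$ exceeds $W(r/3)$ by at most the running maximum, over a time of order $r$, of a driftless Brownian motion. Thus $\mathsf{Mid}$ is contained in the event that two independent Gaussian-type quantities with total variance of order $r$ jointly exceed a level of order $Kr$; a standard Gaussian tail bound, retaining the polynomial prefactor, gives $\PP(\mathsf{Mid})\le D\,K^{-1}r^{-1/2}\exp\{-K^2r/18\}$, the exponent constant $1/18$ being far from optimal. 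Finally $\PP(\mathsf{NT}\cap\mathsf{Mid})=\E\big[\mathbf{1}_{\mathsf{Mid}}\,\PP(\mathsf{NT}\mid\mathcal{G})\big]\le D\hata\,\PP(\mathsf{Mid})\le D\hata\min\{1,\,DK^{-1}r^{-1/2}\exp\{-K^2r/18\}\}$, which, after renaming the constant and using $\hata\in(0,1)$ so that $\hata=\min\{\hata,1\}$, is both of the asserted bounds.

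\emph{Main obstacle.} The delicate step is the one-sided near-return bound of the second paragraph: the excursion from the maximum is a \emph{drifted} Brownian motion conditioned to stay positive over a macroscopic horizon, so on the short window $[0,2\e]$ it is only locally meander-like, and its Radon--Nikodym derivative with respect to a genuine meander genuinely diverges (as $\e/r\to0$) over the bulk of meander paths; moreover a crude Cauchy--Schwarz use of the meander near-return probability would yield only $\hata^{1/2}$. The point that rescues both issues is that near-touch confines the excursion to an atypically small neighbourhood of the origin, where that derivative is $O(1)$ and where, carefully integrated, the expected contribution is of order $\hata$ rather than $\hata^{1/2}$; organising the proof so that the meander comparison is invoked only in tandem with Proposition~\ref{p.brownianepsilonexc} — stated in a form already adapted to this conditioned process — is what makes the clean factor $\hata$ emerge.
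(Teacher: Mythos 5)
Your overall architecture — bound $\PP(\mathsf{Mid})$ by the Gaussian-tail quantity, bound the conditional probability of $\mathsf{NT}$ given the location and value of the maximum by $D\hata$ via a near-return estimate for the excursion away from the maximum, and multiply — is the same as the paper's. Your bound on $\PP(\mathsf{Mid})$, via conditioning on $W(r/3)$ and comparing the running maximum on $[-r,r/3]$ with the drift gain on $[r/3,r]$, is a workable (if slightly more elaborate) alternative to the paper's one-line observation that $\mathsf{Mid}$ forces $\sup_{[-r,r]}|B|\geq Kr/3$, and after Brownian rescaling to $r=1$ both produce the stated prefactor and exponent.

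The genuine gap is in your second paragraph. Conditioning only on $\mathcal{G}=\sigma(M,W(M))$ leaves you with drifted Brownian motions conditioned to stay positive over a horizon of order $r$, and your comparison of these, on the window $[0,2\e]$, to a meander rests on the claim that the Radon--Nikodym derivative is $O(1)$ ``on the event that the path is atypically small near the origin, which is exactly the event under consideration.'' That claim is false: the derivative of the long-horizon conditioned law restricted to $[0,2\e]$ against the short-horizon conditioned law is (up to the Girsanov factor) a function of the \emph{terminal} value $Y(2\e)$, of order $\min\bigl(\sqrt{r/\e},\,Y(2\e)/\sqrt{\e}\bigr)$, and the near-touch event $\{\inf_{[\e,2\e]}Y\leq\hata\e^{1/2}\}$ places no upper bound on $Y(2\e)$. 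So on the very event you care about the derivative can be arbitrarily large (as $\e/r\to 0$), and a naive bound degrades the estimate; recovering the clean factor $\hata$ would require an endpoint-weighted version of the near-return estimate, which is not what Proposition~\ref{p.brownianepsilonexc} provides. Note also that Proposition~\ref{p.brownianepsilonexc} is not a meander estimate: it is stated for a Brownian \emph{bridge} $\mc{B}_{0,y}^{[0,s]}$ conditioned to stay negative, uniformly over the terminal value $y\leq 0$. That is the clue to the repair, which is what the paper does (Lemma~\ref{l.excursioncond}): condition additionally on the endpoint values $X^{\pm}$ at the far ends of the two intervals. Once the endpoints are pinned, the drift $K$ is entirely absorbed into the (arbitrary, integrated-out) endpoint value — a drifted Brownian bridge and a driftless one with the same endpoints have the same law — so each side becomes exactly a Brownian bridge from $0$ to some $y\leq 0$ conditioned negative, Proposition~\ref{p.brownianepsilonexc} applies verbatim with a constant uniform in $y$ and in the interval length, and no Radon--Nikodym comparison is needed at all. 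Your proof as written does not close without this extra conditioning or a quantitative substitute for the $O(1)$ derivative claim.
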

{\bf Proof.} The case of general $r > 0$ may be reduced to that where $r=1$ by considering $r^{-1/2}W(rx): [-1,1] \to \R$. Brownian scaling entails that the $r=1$ result implies the general result when the replacement of $K$ by $K r^{1/2}$ is made. Thus, it suffices to prove the lemma with $r$ set equal to one, a choice which we now make.

 We will first argue that
\begin{equation}\label{e.midprob}
\PP(\mathsf{Mid}) \leq 3 \cdot 2^{3/2} \pi^{-1/2} K^{-1}  \exp \big\{ - \tfrac{K^2}{18}  \big\} \, .
\end{equation}
Note that $\mathsf{Mid}$ entails that $W(1) < W(M)$ with $M \in [-1/3,1/3]$, so that $B(1) < B(M) - 2K/3$, which forces one of $B(1)$ and $B(M)$ to exceed $K/3$ in absolute value. Thus,
$$
 \mathsf{Mid} \subseteq \Big\{ \sup \big\{ \vert B(x) \vert : x \in [-1,1] \big\} \geq K/3 \Big\} \, .
$$
By symmetry and the reflection principle, the probability of this right-hand event is at most 
\begin{equation}\label{e.reflection}
2 \cdot 2 \PP \big( B(1) \geq K/3 \big) \leq 4 (2\pi)^{-1/2} (K/3)^{-1} \exp \big\{ - 2^{-1} (K/3)^2 \big\} = 3 \cdot 2^{3/2} \pi^{-1/2} K^{-1}  \exp \big\{ - \tfrac{K^2}{18}  \big\} \, ,
\end{equation}
 the displayed inequality by a standard upper bound on the tail of a Gaussian random variable. Thus, we obtain~(\ref{e.midprob}).

The next result will be important as we turn to analysing the conditional probability of $\mathsf{NT}$ given $\mathsf{Mid}$. For $r > 0$ and $y \leq 0$, we write $\mc{B}_{0,y}^{[0,r]}$ for the law of Brownian 
bridge $B$ of diffusion rate one on $[0,r]$ with $B(0) = 0$ and $B(r) = y$; thus, $\mc{B}_{0,y}^{[0,r]} \big( \cdot \big\vert B < 0\big)$ is the law resulting from conditioning $B$ on $B(x)<0$ for $x \in (0,r]$.
\begin{lemma}\label{l.excursioncond}
Under the conditional law $\PP \big( \cdot \big\vert \mathsf{Mid}  \big)$, consider the processes 
  $X^-:[0,M+1] \to \R$ and
 $X^+:[0,1-M] \to \R$,  given by $X^-(x) = B(M-x) - B(M)$ and $X^+(x) = B(M+x) - B(M)$. 
\begin{enumerate} 
\item Conditionally on the value of $M \in [-1/3,1/3]$ and on $X^+(1-M)$ being any given value $y \leq 0$, the conditional distribution of~$X^+$ is given by the law $\mc{B}_{0,y}^{[0,1-M]} \big( \cdot \big\vert B < 0\big)$.
\item Conditionally on the value of $M \in [-1/3,1/3]$ and on $X^-(M+1)$ being any given value $y \leq 0$, the conditional distribution of~$X^-$ is given by the law $\mc{B}_{0,y}^{[0,M+1]} \big( \cdot \big\vert B < 0\big)$.
\end{enumerate}
\end{lemma}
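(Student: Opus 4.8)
The assertion is a Williams-type decomposition of the path $W$ at its global maximum on $[-1,1]$, and I would prove it by discretising the location of that maximum and passing to a limit. By the reflection $x\mapsto -x$ — which interchanges the two fragments and reverses the sign of the drift $K$, a change irrelevant to the conditional structure below — it suffices to treat part~(1). First I record that $W=B+K\cdot\mathrm{id}$ has an almost surely unique argmax $M$ on $[-1,1]$, so that $X^{\pm}$ are well defined: for disjoint intervals $J,J'$ with rational endpoints, $\max_J W$ and $\max_{J'}W$ are independent atomless random variables, hence a.s.\ distinct, and the event that $W$ attains its maximum at two distinct points lies in the countable union of the events $\{\max_J W=\max_{J'}W\}$. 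Since $M$ is not a stopping time, the strong Markov property cannot be invoked at $M$ directly; this is the reason for the discretisation.

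\textbf{Discretisation.} Fix a deterministic mesh $-1=t_0<t_1<\dots<t_N=1$ of width $\delta>0$, and consider an index $i$ with $t_i\in[-1/3,1/3]$, so that $\{M\in(t_{i-1},t_i]\}$ has positive probability and forces $\mathsf{Mid}$. Conditionally on $W(t_{i-1})$, the Markov property at the deterministic time $t_{i-1}$ makes the pre-$t_{i-1}$ and post-$t_{i-1}$ paths independent; moreover $\mu:=\max_{[-1,t_{i-1}]}W$ is measurable with respect to the pre-$t_{i-1}$ path. Hence, conditioning in addition on $\mu$, on $W(1)$, and on $\{M\in(t_{i-1},t_i]\}$ — an event that, given $\mu$, depends only on the post-$t_{i-1}$ path — the post-$t_{i-1}$ fragment is a Brownian motion of drift $K$ started from $W(t_{i-1})$, pinned at $W(1)$, conditioned to push its running maximum above $\mu$ inside $(t_{i-1},t_i]$ and never to exceed $\mu$ afterwards; and it remains conditionally independent of the pre-$t_{i-1}$ path.

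\textbf{Passing to the limit.} Letting $\delta\downarrow 0$ and tracking the cell $(t_{i-1},t_i]$ that contains $M$, one has $t_{i-1}\to M$, $W(t_{i-1})\to W(M)$ and $\mu\to W(M)$ off a null set, by continuity of $W$ and a.s.\ uniqueness of the argmax, and the constraint above collapses to ``$W(M+x)<W(M)$ for every $x\in(0,1-M]$''. Conditioning on the two endpoint values $W(M)$ and $W(1)$ turns the drifted Brownian motion into a driftless Brownian bridge from $W(M)$ to $W(1)$ on $[0,1-M]$, since a constant drift is absorbed into endpoint conditioning. Recentring by $W(M)$, so that ``$W(M+x)<W(M)$'' becomes ``$X^+(x)<0$'' and $X^+(1-M)=W(1)-W(M)=:y\le 0$ (as $W(1)\le W(M)$), this conditional law is exactly $\mc{B}_{0,y}^{[0,1-M]}(\,\cdot\mid B<0)$; and the conditional independence of $X^+$ from $X^-$ and from $W(M)$ is inherited from the independence present at each finite stage. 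Conditioning further on $\mathsf{Mid}$ changes nothing, because $\mathsf{Mid}=\{M\in[-1/3,1/3]\}$ only restricts the admissible values of $M$. Part~(2) follows by applying part~(1) to the reflected process.

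\textbf{Main obstacle.} The delicate point is this limit taken jointly with the conditioning on the measure-zero events $\{M=t\}$, $\{W(M)=v\}$, $\{W(1)=v+y\}$: one must set up regular conditional probabilities carefully and exploit the joint continuity and strict positivity of the relevant finite-dimensional Brownian densities to ensure that the conditional law on $\{M=t,\,W(M)=v,\,W(1)=v+y\}$ really is the weak limit of the conditional laws on shrinking positive-probability neighbourhoods, and that the ``below the past maximum'' constraint does not evaporate in the limit — it does not, by continuity of $W$ together with the strict negativity of $X^+$ on the open interval $(0,1-M]$ that the a.s.\ uniqueness of $M$ provides. The other ingredients — the Markov property at a fixed time, absorption of the drift by endpoint conditioning, and the handling of $\mathsf{Mid}$ — are routine.
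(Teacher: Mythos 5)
Your proposal is correct in substance and arrives at the same probabilistic content as the paper's proof, but by a genuinely more laborious route. The paper's argument is two sentences long: it simply \emph{asserts} that, under the stated conditioning, $X^+$ is a Brownian motion with drift $K$ conditioned on its terminal value $y$ and on strict negativity on $(0,1-M)$, and then observes that conditioning a drifted Brownian motion on its terminal value yields the driftless bridge $\mc{B}_{0,y}^{[0,1-M]}$ (by decomposing $B$ as a bridge plus a linear term with independent Gaussian coefficient), whence the further negativity conditioning gives the claimed law. Your drift-absorption step is identical to the paper's. What you do differently is to justify the decomposition at the maximum itself---the step the paper takes for granted---by conditioning at deterministic mesh times, where the Markov property is legitimately available, and then shrinking the cell containing $M$. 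This buys rigour at exactly the point where the paper is most terse (the argmax is not a stopping time), at the cost of the weak-limit/regular-conditional-probability machinery that you flag as the ``main obstacle''; you describe that step convincingly but do not execute it, which is acceptable for a sketch since it is standard. Two small remarks. First, in your discretised event $\{M\in(t_{i-1},t_i]\}$ the post-$t_i$ constraint should be that $W$ never exceeds $\sup_{(t_{i-1},t_i]}W$, not that it never exceeds $\mu=\max_{[-1,t_{i-1}]}W$; the two constraints coincide only in the $\delta\downarrow 0$ limit, where the cell collapses and both converge to $W<W(M)$ on $(M,1]$, so your conclusion is unaffected. Second, your handling of $\mathsf{Mid}$ as a mere restriction on the admissible values of $M$, and your reflection argument for part (2), match the paper's treatment.
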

{\bf Proof: (1).} Under the stated conditioning, the process $X^+:[0,1-M] \to \R$ is given by  $[0,1-M] \to \R: x \to X(x)$, where $X(x) = B(x) + Kx$, with $B$ being standard Brownian motion, conditioned on $X(1-M) = y$ and on $X(x) < 0$ for $x \in (0,1-M)$. Conditioning $X$ as given by the preceding formula on $X(1-M) = y$ results in the Brownian bridge law $\mc{B}_{0,y}^{[0,1-M]}$, as we may readily verify by
decomposing $B:[0,1-M]\to \R$ as the sum of a Brownian bridge and a linear term with an independent Gaussian coefficient.
 The further conditioning $X(x) < 0$ results in the conditional distribution stated in the lemma. \\
{\bf (2).} This almost verbatim argument is omitted. \qed

We wish to apply the shortly upcoming Proposition~\ref{p.brownianepsilonexc} 
alongside Lemma~\ref{l.excursioncond} to find that there exists $D > 0$ such that, for any $y < 0$, $$\mc{B}_{0,y}^{[0,1-M]} \big(\mathsf{NT}  \big\vert B < 0\big)+\mc{B}_{0,y}^{[0,M+1]} \big(\mathsf{NT}  \big\vert B < 0\big) \le D \hata.$$

Above, we are abusing notation a little in denoting by $\mathsf{NT}$ the event that there exists a value of $z \in [-\e,2\e]$ such that 
for which $B(z) \geq - \hata \e^{1/2}$.

We set the proposition's parameters: 
$r = \e$ and 
$s = 1\pm M$. The proposition's hypothesis  $s \geq 3r$ is valid because  because $M \leq 1/3$ when $\mathsf{Mid}$ occurs, so that $\e$ is merely supposed to be at most a given positive constant.
  Equipped with the just stated outcome, we find that
\begin{eqnarray*}
 \PP \big( \mathsf{Mid} \cap \mathsf{NT} \big) & = &
 \PP \big( \mathsf{Mid}  \big) 
 \PP \big( \mathsf{NT} \big\vert \mathsf{Mid} \big) \\
 & \leq & 3 \cdot 2^{3/2} \pi^{-1/2} K^{-1}  \exp \big\{ - \tfrac{K^2}{18}  \big\}
 \E\Big( \mc{B}_{0,y}^{[0,1-M]} \big(\mathsf{NT}  \big\vert B < 0\big)+\mc{B}_{0,y}^{[0,M+1]} \big(\mathsf{NT}  \big\vert B < 0\big)\Big)\\
 & \leq & 3 \cdot 2^{3/2} \pi^{-1/2} K^{-1}  \exp \big\{ - \tfrac{K^2}{18}  \big\} D \hata \, .
\end{eqnarray*}
It is in the second inequality that the conclusion of Proposition~\ref{p.brownianepsilonexc} is used. 
The first inequality is due to~(\ref{e.midprob}), and the mean $\E$ on this inequality's right-hand side is taken over $y$ and $M$. 
Note that the displayed assertion in Proposition~\ref{p.midneartouch} may be viewed as a pair of bounds due to the right-hand factor of $\min \{ \sigma, 1 \}$.
The proof of the bound including the factor of $\sigma$ has just been completed, while the bound without this factor is implied by~(\ref{e.midprob}). 
The latter assertion of Proposition~\ref{p.midneartouch} follows from $\PP \big( \mathsf{NT} \big\vert \mathsf{Mid} \big) \leq D \hata$. \qed

\begin{proposition}\label{p.brownianepsilonexc} 
Let $s$, $r$ be positive, with $s \geq 3r$; let $y \leq 0$; and let $\e > 0$. 
Let $X:[0,s] \to \R$  be a random process specified under the law~$\PP$ whose law is  $\mc{B}_{0,y}^{[0,s]} \big( \cdot \big\vert B < 0\big)$.
Let $E = E(X,r,\e)$ denote the event that $\sup_{x \in [r,2r]} X(x)$ is at least $- r^{1/2} \e$. There exists a constant $D > 0$ such that, for any such $s$, $r$, $y$ and $\e$, $\PP(E) \leq D \e$.
\end{proposition}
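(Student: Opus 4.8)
The plan is to reduce to a canonical interval by Brownian scaling, then use the spatial Markov property of the negatively conditioned bridge to pin the process at the two endpoints $r$ and $2r$ of the relevant sub-interval, to evaluate the resulting conditional probability \emph{exactly} by the reflection principle, and finally to integrate the explicit expression against the joint law of the two pinned values, which can be controlled explicitly. For the scaling step: the process $x \mapsto r^{-1/2} X(rx)$ on $[0, s/r]$ has law $\mc{B}_{0,\, r^{-1/2}y}^{[0,\, s/r]}\big(\,\cdot \mid B < 0\big)$, the hypothesis $s \ge 3r$ becomes $s/r \ge 3$, and $E$ becomes $\{\sup_{[1,2]}(\cdot) \ge -\e\}$; so it suffices to treat $r = 1$, $s \ge 3$. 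Moreover one may assume $\e$ is below a fixed absolute constant $\e_0$, since for $\e \ge \e_0$ the bound $\PP(E) \le 1 \le \e_0^{-1}\e$ already suffices.

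Next, $\mc{B}_{0,y}^{[0,s]}\big(\,\cdot \mid B < 0\big)$ is the standard weak limit (Doob $h$-transform) of the negatively conditioned bridge as the left endpoint tends to $0^-$; in particular $X$ is a Markov process and, conditionally on $(X(1), X(2)) = (-a,-b)$ with $a,b>0$ almost surely, the restriction $X|_{[1,2]}$ is a Brownian bridge from $-a$ to $-b$ conditioned to stay negative on $(1,2)$ (when $y = 0$ the same formulas hold in the limit; the applications have $y < 0$). The reflection principle then gives, for $a,b > \e$,
\[
\PP\big(\sup\nolimits_{[1,2]} X \ge -\e \;\big|\; X(1)=-a,\, X(2)=-b\big) \;=\; \frac{e^{-2(a-\e)(b-\e)} - e^{-2ab}}{1 - e^{-2ab}},
\]
the value being $1$ when $a \wedge b \le \e$. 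Since $2ab - 2(a-\e)(b-\e) = 2\e(a+b-\e)$ and $1-e^{-w}\le w$, this conditional probability is at most $\min\{1,\, 2\e(a+b)(1-e^{-2ab})^{-1}\}$ and in particular at most $e^{-2(a-\e)_+(b-\e)_+}$. Writing $(a,b) = (-X(1),-X(2))$ and using the bound $1$ on $\{a\wedge b\le\e\}$, I get
\[
\PP(E) \;\le\; \PP(-X(1)\le\e) + \PP(-X(2)\le\e) + 2\e\, \E\Big[\tfrac{a+b}{1-e^{-2ab}}\,\mathbf 1(a>\e,\ b>\e)\Big].
\]

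For the explicit law, the joint density of $(-X(1),-X(2))$ at $(a,b)$ is proportional to $\mu_1(a)\,q_1(a,b)\,q_{s-2}(b,|y|)$, where $q_t(u,v) = (2\pi t)^{-1/2}(e^{-(u-v)^2/2t}-e^{-(u+v)^2/2t})$ is the killed heat kernel on $(0,\infty)$ and $\mu_1(u) \propto u\,e^{-u^2/2}$ is the meander entrance density. Using $q_t(u,v) \le (1-e^{-2uv/t})\,p_t(u,v) \le (2uv/t)\,p_t(u,v)$ and recognizing the residual Gaussian ratio $p_1(0,a)p_1(a,b)p_{s-2}(b,|y|)/p_s(0,|y|)$ as the density of the ordinary Brownian bridge on $[0,s]$ sampled at times $1$ and $2$ (whose covariance has determinant $(s-2)/s$, bounded above and below for $s\ge 3$), one obtains a bound of the form
\[
f_{-X(1),-X(2)}(a,b) \;\le\; C\,a\,b\, \exp\!\big\{-c\big((a-x_1^\ast)^2 + (b-x_2^\ast)^2\big)\big\}, \qquad x_i^\ast = i|y|/s,
\]
with $C,c$ absolute. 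In particular the marginals vanish at $0$, so the first two terms above are $O(\e)$. For the third term I would split according to whether $|y|\le s$ (equivalently $x_1^\ast\le 1$): when $|y|$ is bounded, use $(a+b)(1-e^{-2ab})^{-1} \le \tfrac12(\tfrac1a+\tfrac1b)+(a+b)$ together with the uniform finiteness of $\int_0^\infty x^{-1}\cdot x\,e^{-c(x-x^\ast)^2}\dd x$; when $|y|$ is large, keep the decaying factor by using instead the competing bound $e^{-2(a-\e)(b-\e)}$ for the conditional probability, noting that the region $2ab\lesssim 1$ (where the first bound is dangerous) forces $a\wedge b$ to stay bounded and hence far from the large centre $x_2^\ast$, so that the Gaussian supplies the needed decay.

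The main obstacle is precisely this last estimate — that is, showing $\sup_{s\ge 3,\ y\le 0,\ \e\in(0,\e_0)} \E\big[(a+b)(1-e^{-2ab})^{-1}\,\mathbf 1(a>\e,\ b>\e)\big] < \infty$, the point being uniformity in $s$ and $y$. The two delicate regimes are: one of $a,b$ near $\e$ (handled because the density vanishes there, so the apparent $1/(a-\e)$-type singularity is integrable), and $|y|$ large, where the conditioned bridge is pushed far from the wall and one must verify that exponential decay in $|y|$ beats the polynomial-in-$|y|$ prefactors produced by the normalization constant. Bookkeeping these regions is the one genuinely fussy calculation; the scaling reduction, the Markov reduction, and the reflection identity are all standard.
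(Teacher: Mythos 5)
Your argument is correct in outline but follows a genuinely different route from the paper's. The paper (after scaling to $r=2$) avoids transition densities entirely: it represents $X$ on $[1,5]$ by the three-item decomposition --- the outer restriction, the two bridges on $[1,3]$ and $[3,5]$, and the relative midpoint value $N=X(3)-\bigl(X(1)+X(5)\bigr)/2$, which is a variance-one Gaussian independent of the other items --- then resamples $N$ with a fresh standard Gaussian, accepts if the resampled path stays negative, and iterates until acceptance. Conditionally on the other two items, the set of midpoint values producing both acceptance and the near-touch event is an interval of length at most $2^{3/2}\e$, and the acceptance probability is bounded below by an absolute constant; dividing gives $\PP(E)\le D\e$ with no case analysis in $s$ or $y$ whatsoever. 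Your route --- Markov decomposition at times $1$ and $2$, the exact reflection identity for the negatively conditioned bridge, and integration against the explicit entrance-law density --- buys an exact formula and in principle sharper information (e.g.\ genuine decay in $|y|/s$), at the price of the uniformity estimate you correctly flag as the remaining obstacle. That estimate does close: the kernel $q_1(a,b)=p_1(a,b)\bigl(1-e^{-2ab}\bigr)$ contributes precisely the factor $1-e^{-2ab}$ that cancels the denominator in your third term, so nothing worse than the integrable $\tfrac12(a^{-1}+b^{-1})$ you already noted survives near $ab=0$; and when $|y|/s$ is large you must, as you say, retain the factor $e^{-2(a-\e)(b-\e)}$, since the Gaussian centred at $(|y|/s,2|y|/s)$ otherwise has unbounded polynomial moments --- splitting on whether $a$ and $b$ each exceed half their respective centres then gives a bound that is uniformly $O(1)$. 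So your proof can be completed, but the paper's resampling argument is softer and sidesteps the entire $(s,y)$-uniformity bookkeeping.
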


Some preliminaries will be of aid in proving this proposition.

Let $f:[u_0,v_0] \to \R$ denote a continuous function defined on a compact real interval. For any closed subinterval $[u,v] \subseteq [u_0,v_0]$, let $f^{[u,v]}:[u,v] \to \R$ denote the {\em bridge}---that is, the continuous function with vanishing endpoint values---that is an affine translate of $f$'s restriction to $[u,v]$. Namely,
$$
 f^{[u,v]}(x) \, = \, f (x) - 
 \tfrac{v - x}{v-u}  f(u) 
 - \tfrac{x-u}{v-u} f(v)  \, \, \, \, \textrm{for} \, \, \, \,  x \in  [u,v ] \, .
$$

Let $x,y \in [u_0,v_0]$ satisfy $x < y$.
We view the interval $[x,y]$ as a union $L \cup R$ of a left and a right subinterval,  
setting $L = [x,m]$ and $R = [m,y]$, where $m = (x+y)/2$. 

Note that the function $f$  is characterized by the list of data:
\begin{itemize}
\item the restriction of $f$ to $[u_0,x] \cup [y,v_0]$;
\item the left bridge $f^L$ and the right bridge $f^R$; and
\item the relative midpoint value $f\big((x+y)/2\big) - \big( f(x) + f(y) \big)/2$.
\end{itemize}
Indeed, given the listed data, $f$ may be reconstructed by recording its values on $[u_0,x] \cup [y,v_0]$; by recording the value $f\big((x+y)/2\big)$ via the first and third items; and by recovering its remaining values by adding to the affine interpolations of the endpoint values on $L$ and $R$ the respective bridges $f^L$ and $f^R$.
\begin{lemma}\label{l.threeitem}
Let $B:[u_0,v_0] \to \R$ have the law $\mc{B}_{0,*}^{[u_0,v_0]}$. The elements in the three-item list for~$B$ are independent. The bridges in the second item have respective laws---$\mc{B}_{0,0}^L$
and $\mc{B}_{0,0}^R$---of Brownian bridge on $L$ and $R$ with vanishing endpoint values. The law of the real random variable in the third item is Gaussian with mean zero and variance $(y-x)/4$. 
\end{lemma}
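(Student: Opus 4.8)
The plan is to peel off the three items successively, at each stage invoking the (two-sided) Markov property of Brownian bridge together with the elementary fact that subtracting from a Gaussian process the affine interpolation of two of its values leaves behind a process that is independent of those two values. Throughout, write $m = (x+y)/2$ and note $m-x = y-m = (y-x)/2$.

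First I would condition on the $\sigma$-algebra $\mathcal{G}$ generated by the restriction of $B$ to $[u_0,x]\cup[y,v_0]$; this $\sigma$-algebra records, in particular, the values $B(x)$ and $B(y)$, so the first item in the list is $\mathcal{G}$-measurable. By the Markov property of Brownian bridge, conditionally on $\mathcal{G}$ the process $(B(t))_{t\in[x,y]}$ is a Brownian bridge from $(x,B(x))$ to $(y,B(y))$, independent of the rest; equivalently, the bridge $\beta := B^{[x,y]}$ obtained by subtracting the affine interpolation of $B(x)$ and $B(y)$ is, conditionally on $\mathcal{G}$, a standard Brownian bridge on $[x,y]$ with vanishing endpoint values, whose conditional law does not depend on $\mathcal{G}$. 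Hence $\beta$ is independent of $\mathcal{G}$, and in particular independent of the first item.

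Next I would decompose $\beta$ about its midpoint $m$. A short computation gives $\beta(m) = B(m) - \tfrac12\big(B(x)+B(y)\big)$, which is exactly the relative midpoint value of the third item; and the bridges of $\beta$ over $L=[x,m]$ and $R=[m,y]$ coincide with the bridges $f^L = B^{[x,m]}$ and $f^R = B^{[m,y]}$ of $B$ itself over those intervals. The one point needing a word here is this last identification: the difference $\beta^{[x,m]} - B^{[x,m]}$ is an affine function of $t$ that vanishes at both $t=x$ and $t=m$, hence is identically zero, and likewise over $R$. Applying the Markov property once more --- this time to the standard bridge $\beta$ --- I get that conditionally on $\beta(m)$ the restrictions of $\beta$ to $L$ and to $R$ are independent Brownian bridges between their now-determined endpoint values, so $\beta^{[x,m]}$ and $\beta^{[m,y]}$ are independent standard bridges with laws $\mc{B}_{0,0}^L$ and $\mc{B}_{0,0}^R$, each independent of $\beta(m)$; and $\beta(m)$, being the value at the midpoint of a standard bridge on $[x,y]$, is centred Gaussian with variance $(m-x)(y-m)/(y-x) = (y-x)/4$.

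Combining the two paragraphs: the first item is independent of $\beta$, while the triple $\big(\beta(m),\beta^{[x,m]},\beta^{[m,y]}\big)$ is mutually independent with the marginals just described; since these three objects are (respectively) the third item, $f^L$ and $f^R$, all four pieces are mutually independent, which is the assertion. The only mildly technical ingredient is the conditional-independence-of-bridge-halves statement used twice, but I do not expect this to be an obstacle: it is a standard consequence of the Gaussianity of Brownian bridge. Indeed, an alternative route that dispenses with the Markov property entirely is to observe that the joint law of finitely many evaluations of $B|_{[u_0,x]\cup[y,v_0]}$, of $f^L$, of $f^R$, and of the midpoint value is Gaussian, and to verify directly that the relevant cross-covariances vanish --- the only slightly delicate checks being $\mathrm{Cov}\big(f^L(s),f^R(t)\big)=0$ and $\mathrm{Cov}\big(f^L(s),\beta(m)\big)=0$ --- after which block-diagonality of the covariance gives independence.
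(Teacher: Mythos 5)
Your proof is correct and is essentially the paper's argument written out in full: the paper simply cites L\'evy's (midpoint) construction of Brownian motion, which is exactly your two-step decomposition — condition on the outside to get a bridge on $[x,y]$, then split that bridge at its midpoint into an independent Gaussian midpoint displacement of variance $(y-x)/4$ and two independent standard bridges on $L$ and $R$. The covariance-computation alternative you sketch at the end is also fine but unnecessary.
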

{\bf Proof.} Given the first element in the three item list for $B$, the third item takes the indicated form by explicit computation. By L\'evy's construction of Brownian motion~\cite{MortersPeres}, the two second item bridges are then independent standard Brownian bridges. 
\qed

{\bf Proof of Proposition~\ref{p.brownianepsilonexc}.} For $t > 0$, set $X_t:[0,s/t] \to \R$,  $X_t(x) = t^{-1/2} X(tx)$. Note that $X_t(s/t) = t^{-1/2}y$.
By Brownian scaling, we note that $X_r$ has the law of $X$ indexed by parameters $(r/t,s/t,t^{-1/2}y,\e)$ in place of $(r,s,y,\e)$.
Moreover, the spatial-temporal scaling $(x,y) \to \big(tx,t^{1/2}y\big)$ sends the event $E(X,r,\e)$ to the event $E(X_t,r/t,\e)$.
It is thus enough to prove the proposition for a given value of $r > 0$.
We will do so with $r = 2$.

Set $L = [1,3]$, $R = [3,5]$, and write $N = X(3) - \big( X(1) + X(5) \big)/2$. 
Consider the three-item list that represents $X$ in the case that $[x,y] = [1,5]$.
The two bridges in the second item are $X^L$ and $X^R$, and the relative value in the third item equals $N$.

We will prove the proposition by analysing a random experiment in which the process $X$ is sampled and then altered to produce a coupled process $X^r$. 
This resampled process $X^r$ will share the law of~$X$. The discussion of the experiment will include four claims each of whose proofs is given straight after the claim in question is stated.  

In the experiment, $X$ is first sampled and represented in the format of the three-item list.
The third element is discarded and resampled to equal $N^r$, a random variable that is selected independently according to the standard Gaussian law. 
Let $X^r: [0,s] \to \R$ denote
the process arising from the resampled three-item list. 

Let $S$ denote the event that $X^r(x) < 0$ for all $x \in (0,s)$. If $S$ occurs, we set $Z = X^r$. In the other case, we sample an independent copy of the process $X$ and repeat the procedure. This process continues until $Z:[0,\infty) \to \R$ is specified.

{\em Claim~$1$.} The process $Z:[0,s] \to \R$ has the law of $X$.\\
{\em Proof.} It is enough to argue that the conditional distribution of $X^r$
given the occurrence of $S$ equals the law of $X$. We will establish the {\em stronger assertion} that, 
given the first and second items in the three-item list that specifies $X$,
the conditional distribution of the third item that specifies $X$, and of the third item that specifies $X^r$ given $S$, coincide.
 
Thus suppose given the first and second items that specify $X$. Let $\heightmac$ denote the  random height, measurable with respect to these items, such that setting the third item value equal to $\heightmac$ ensures that $X:[0,s] \to \R$ assumes the value zero in $[1,5]$, but is never positive in this interval. Since the value $(y-x)/4$ in Lemma~\ref{l.threeitem} equals one in the present case, this lemma implies that, given the first and second items, the conditional distribution of the third item in the specification of $X$ is the law of a standard Gaussian random variable conditioned to be less than $\heightmac$. But the characterization of the event $S$ 
given the first and second items is simply that the standard Gaussian random variable~$N^r$ be less than $\heightmac$. This confirms the stronger assertion and completes the proof of Claim~$1$. \qed

In view of Claim~$1$, it suffices for the proof of Proposition~\ref{p.brownianepsilonexc} to argue that, for some $D > 0$, the bound  
$\PP \big( E(Z,2,\e) \big) \leq D \e$ holds for all $\e > 0$. 
 
 Let $K \in \N^+$ denote the step at which the procedure terminates. 
Thus, $S = \{ K = 1\}$. 
Let $\mc{F}$ denote the $\sigma$-algebra generated by the first and second items in the three-item list for $X$.
Let $H \subset \R$ denote the $\mc{F}$-measurable random set of $h \in \R$
such that the specification of the third item value to~$h$ alongside the given first and second items causes the event $S \cap E\big( X^r,2,\e \big)$ to occur. 

{\em Claim~$2$.} The random set $H$ is an $\mc{F}$-measurable interval whose length is almost surely at most~$2^{3/2}\e$. \\
{\em Proof.} Given~$\mc{F}$, the event $S \cap E\big( X^r,2,\e \big)$  is characterized by the condition that $\sup \big\{ X^r(z): z \in [2,4] \big\} \in (-2^{1/2}\e,0)$. When $N^r = \heightmac$, this supremum equals zero; let $Z \in [2,4]$
satisfy $X^r(Z) = 0$ when $N^r = \heightmac$. Then $X^r(Z) = - \tfrac{2 - \vert Z - 3\vert}{2}\lambda \leq - 2^{-1} \lambda$ when $N^r = \heightmac - \lambda$, so that the supremum is at most $-2^{1/2}\e$ when $\lambda \geq 2^{3/2}\e$. Thus, $H \subset [\heightmac - 2^{3/2}\e,\heightmac]$. The monotonicity of $X^r$ in $N^r$ implies that $H$ is an interval, so that Claim~$2$ is validated. \qed

Denoting by $\mu$ the standard Gaussian law, note that   
\begin{equation}\label{e.excevente}
\PP \big( E(Z,2,\e) \, \big\vert \, K = 1  \big) = \frac{\PP( N_r \in H )}{\PP(S)}  = \frac{\mu(H)}{\PP(S)} \, .
\end{equation}

{\em Claim~$3$.}  $\PP(S) \geq 4^{-1} \mu(1,\infty) e^{-2}$. \\
{\em Proof.} Consider four independent events: $\big\{ X(1) \leq -1 \big\} \cap \big\{ X(5) < X(1) \big\}$;  the supremum of the bridge $X^{[1,3]}$
is at most one; likewise for the bridge $X^{[3,5]}$; and $N^r < 0$.
 Lower bounds on the probabilities of these four events are: $2^{-1}\mu(1,\infty)$; $e^{-1}$; $e^{-1}$; and $1/2$. Indeed, $X(1)$ is stochastically dominated by a standard Gaussian random variable, as is $X(5) - X(1)$ conditionally on the value of $X(1)$; $\mc{B}_{0,0}^{[1,3]} \big( \sup_{x \in [1,3]} B(x) \geq r \big) = e^{-r^2}$
for $r \geq 0$ by Brownian scaling and equation $(3.40)$ in~\cite[Chapter~$4$]{KaratzasShreve}; while the third bound follows similarly to the second, and the fourth is trivial.
The four events ensure that $S$ occurs, whence Claim~$3$. \qed

{\em Claim~$4$.} For $x \in \R$ and $a > 0$, $\mu \big[ x, x + a \big] \leq \big( 2 \pi \big)^{-1/2} a$. \\
{\em Proof.} The standard Gaussian density is at most $(2\pi)^{-1/2}$. \qed

Applying Claims~$2$,~$3$ and~$4$ to~(\ref{e.excevente}), we learn that
$$
\PP \big( E(Z,2,\e) \, \big\vert \, K = 1  \big) \leq D \e
$$
where $D = 2 \pi^{-1/2} \cdot 4 \mu(1,\infty)^{-1} e^2$.
Note then that
$$
 \PP \big( E(X,2,\e) \big) = \PP \big( E(Z,2,\e) \big) =  \PP \big( E(Z,2,\e) \, \big\vert \,  K = 1  \big) \, ,
$$
so that  $\PP \big( E(X,2,\e) \big) \leq D\e$ is seen to hold for the same choice of $D > 0$. Thus we obtain Proposition~\ref{p.brownianepsilonexc}. \qed

\subsection{The rarity of twin peaks for the routed weight profile}

Theorem~\ref{t.nearmax} would now seem to be readily at hand on the basis of Theorem~\ref{t.brownianroutedprofile} and Proposition~\ref{p.midneartouch}. 
There is a gap to be bridged, however. To explain this, let $X = Y^\perp$   mean that `the random function $X$ is very similar to the random function $Y$ in the sense of Definition~\ref{d.verybrownian}'.
 By Theorem~\ref{t.brownianroutedprofile}, $Z_n(\cdot,a):[R-\ell,R+\ell] \to \R$ has (up to an additive shift) the form $B^\perp + \ell$, where $B$ is Brownian motion of rate two and $\ell$ is the linear function $\ell(x) = Kx$, with $K = 2^{1/2} \big( a(1-a)\big)^{-1}R$. Proposition~\ref{p.midneartouch} delivers pertinent information about the process $B + \ell$.  The extra little element we thus need is to understand that a random function that has the form $B^\perp + \ell$ also has the form $(B + \ell)^\perp$. This element is furnished in the proof of the next result, which translates Proposition~\ref{p.midneartouch} into a form which in unison with 
 Theorem~\ref{t.brownianroutedprofile} will then readily deliver 
Theorem~\ref{t.nearmax}.\\

\begin{corollary}\label{c.mid}
Let $R \in \R$, $\ell \geq 1$, $a \in n^{-1}\Z \cap (0,1)$, $\hata > 0$ and $\e > 0$.
Let $M$ denote the maximizer of  $Z_n(\cdot,a):[R-\ell,R+\ell] \to \R$. Suppose that $\vert R \vert \leq 2^{-1}c n^{1/9} \big( a \wedge (1 - a - n^{-1}) \big)^{7/9}$, where $c$ appears in Theorem~\ref{t.brownianroutedprofile}. Let $\ell > 0$ satisfy $-2^{-1}n^{1/3}a \leq R - \ell$ and $R + \ell \leq  2^{-1}n^{1/3}(1 - a- n^{-1})$. 
Abbreviating $Z = Z_n(\cdot,a)$,
let $\mathsf{Mid}(Z)$ denote the event that $M$ lies in $[R - \ell/3,R+\ell/3]$. 
Let $\mathsf{NT}(Z)$ denote the event that there exists $x \in [R-\ell,R+\ell]$ such that 
$\vert x - M \vert \in [\e,2\e]$ and 
$Z(x) \geq Z(M) - \hata\e^{1/2}$.
When $a$ lies in a compact interval in $(0,1)$, then there exist constants $H,h > 0$ and $n_0 \in \N$ determined by this compact interval such that, if we further suppose that  $\ell \leq h n^{1/\macrobig}$ and $n \geq n_0$, then
$$
\PP \Big( \mathsf{Mid}(Z) \cap \mathsf{NT}(Z) \Big) \, \leq \, \max \Big\{ \hata_* \cdot 
 \exp \big\{  - hR^2 \ell + H \ell^{\macroseventeen} \big( 1 + R^2 + \log \hata_*^{-1}  \big)^{5/6} \big\} ,   \exp \big\{ - h n^{1/12}   \big\} \Big\} \, , 
$$
where we denote $\hata_* = \min \{ \sigma,1 \}$.
\end{corollary}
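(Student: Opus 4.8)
The plan is to transport the Brownian twin peaks' estimate Proposition~\ref{p.midneartouch} through the Brownian comparison Theorem~\ref{t.brownianroutedprofile}. By the latter, on $I := [R-\ell,R+\ell]$ we may write $Z_n(x,a) = X(x) - \tilde K x + c_0$, where $\tilde K = 2^{1/2}\big(a(1-a)\big)^{-1}R + \e$, $c_0$ is constant, and $X$ is $\big(g, G'\ell^6, \ell, m_*, 2\big)$-Brownian with $m_* = \min\{a,1-a\}\,n$; since $a$ ranges over a fixed compact subinterval of $(0,1)$ and $n \ge n_0$, the quantity $G'$ is at most $\Theta(1)$, with this and all subsequent implied constants depending only on that subinterval (and we may take $G'\ell^6 \ge 1$ by enlarging $G'$). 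Since $\mathsf{Mid}(Z)$ and $\mathsf{NT}(Z)$ are invariant under adding a constant to $Z$, the event $\mathsf{Mid}(Z)\cap\mathsf{NT}(Z)$ takes the form $\{\,[x\mapsto X(x)-X(R-\ell)]\in A\,\}$ for the Borel set $A\subseteq\mc{C}_{0,*}(I,\R)$ of those $f$ for which $x\mapsto f(x)-\tilde K\big(x-(R-\ell)\big)$ has its maximizer in $[R-\ell/3,R+\ell/3]$ and admits some $x$ with $|x-\mathrm{maximizer}|\in[\e,2\e]$ at which its value is at least the maximum minus $\hata\,\e^{1/2}$.

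\textbf{Estimating the Brownian weight of $A$.} Set $\eta := \mc{B}_{0,*}^{2;I}(A)$. By construction and shift-invariance, $\eta$ equals the probability that a rate-two Brownian motion on $I$ carrying the linear drift $x\mapsto -\tilde K x$ realizes $\mathsf{Mid}$ and $\mathsf{NT}$; after recentring $I$ at $R$, rescaling to diffusion rate one (which replaces $\tilde K$ by $\tilde K/\sqrt2$ and $\hata$ by $\hata/\sqrt2$) and reflecting the interval if necessary to make the drift coefficient nonnegative, Proposition~\ref{p.midneartouch} applied with $r=\ell$ yields $\eta\le\Theta(1)\,\hata_*\exp(-\tilde K^2\ell/36)$, as well as $\eta\le\Theta(1)\,\hata_*$. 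As $\e$ and $R$ share a sign, $|\tilde K|\ge 2^{1/2}\big(a(1-a)\big)^{-1}|R|\ge\Theta(1)|R|$, so these combine to give $\eta\le\bar\eta := \Theta(1)\,\hata_*\exp(-h_0R^2\ell)$ for a positive constant $h_0$. (The factor $|\tilde K|^{-1}\ell^{-1/2}$ present in Proposition~\ref{p.midneartouch} is at most $1$ unless $|\tilde K|\ell^{1/2}<1$, in which regime $R^2\ell=\Theta(1)$, so reverting to $\eta\le\Theta(1)\hata_*$ and reinserting $\exp(-h_0R^2\ell)$ costs only a constant.)

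\textbf{Transfer, and the two regimes.} If $\eta< e^{-gm_*^{1/12}}$, enlarge $A$ to a Borel set $S\supseteq A$ with $\mc{B}_{0,*}^{2;I}(S)=e^{-gm_*^{1/12}}$, as in the proof of Lemma~\ref{l.brownianadditivity}, and apply Lemma~\ref{l.noupperbound} — with its parameters $(G,d,m)$ set to $(G'\ell^6,\ell,m_*)$ — to $S$; since $m_*\ge\Theta(1)n$ while $\ell\le hn^{1/\macrobig}$, the resulting exponents $\Theta(1)\ell^{17}$ and $\Theta(1)\ell^7 m_*^{5/72}$ are negligible against $m_*^{1/12}$, so the bound is at most $\exp\{-\tfrac g2 m_*^{1/12}\}\le\exp\{-hn^{1/12}\}$, the second term in the maximum. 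If instead $\eta\ge e^{-gm_*^{1/12}}$, Lemma~\ref{l.noupperbound} gives $\PP\big(\mathsf{Mid}(Z)\cap\mathsf{NT}(Z)\big)\le\Phi(\eta)$, where $\Phi(t):=\exp\{\Theta(1)\ell^{17}\}\,t\exp\{G'\ell^7(\log t^{-1})^{5/6}\}$; this $\Phi$ increases on $(0,e^{-v^*}]$ and decreases on $[e^{-v^*},1)$, with $v^*:=(\tfrac56 G'\ell^7)^6=\Theta(\ell^{42})$. When $\bar\eta\le e^{-v^*}$, monotonicity gives $\PP\le\Phi(\eta)\le\Phi(\bar\eta)$, and a routine expansion — using $\log\bar\eta^{-1}\le\Theta(1)(1+R^2\ell+\log\hata_*^{-1})$, then $1+R^2\ell+\log\hata_*^{-1}\le\ell(1+R^2+\log\hata_*^{-1})$, subadditivity of $t\mapsto t^{5/6}$, and $\ell\ge1$ — bounds $\Phi(\bar\eta)$ by $\hata_*\exp\{-hR^2\ell+H\ell^{\macroseventeen}(1+R^2+\log\hata_*^{-1})^{5/6}\}$, the first term in the maximum (the choice $\macroseventeen=19$ leaving room over the powers $\ell^{17}$ and $\ell^{7+5/6}$ that actually arise). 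When $\bar\eta> e^{-v^*}$, one has $\PP\le\Phi(\eta)\le\Phi(e^{-v^*})=\exp\{\Theta(\ell^{42})\}$, which is unhelpful; but here one uses instead the trivial bound $\PP\le1$, because $\bar\eta> e^{-v^*}$ forces $\log\hata_*^{-1}+h_0R^2\ell<v^*+\Theta(1)=\Theta(\ell^{42})$, whence — distinguishing according as $\log\hata_*^{-1}$ or $R^2\ell$ is the larger, and using $|R|\le\Theta(1)n^{1/9}$ and $\ell\ge1$ — the first term in the maximum is seen to exceed $1$ once $H$ is chosen large. Combining the two regimes, and fixing $H$, $h$, $n_0$ in terms of the compact subinterval, completes the proof.

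\textbf{Main obstacle.} The crux is the last regime above: the Brownianity-correction factor $\exp\{G'\ell^7(\log\eta^{-1})^{5/6}\}$ from Lemma~\ref{l.noupperbound} is not uniformly bounded, and it is maximised exactly when $\eta$ sits near the critical value $e^{-v^*}=e^{-\Theta(\ell^{42})}$, so that substituting the a priori upper bound $\bar\eta$ for $\eta$ yields no useful estimate. The resolution — that this situation can only occur when $\hata_*\exp(-h_0R^2\ell)\gtrsim e^{-\Theta(\ell^{42})}$, and that for such parameter values the claimed right-hand side already exceeds $1$, so that the bound $\PP\le1$ closes the gap — is what forces the choice of a large constant $H$ and essentially dictates the exponent $\macroseventeen$. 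A secondary, purely computational, point is to verify that the hypothesis $\ell\le hn^{1/\macrobig}$ renders every power of $\ell$ arising in the argument negligible against $n^{1/12}$, together with the routine recentring, rescaling and reflection needed to bring matters into the precise form required by Proposition~\ref{p.midneartouch}.
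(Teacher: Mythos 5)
Your proof is correct and follows essentially the same route as the paper: apply Theorem~\ref{t.brownianroutedprofile} to express $Z$ as a $(g,G'\ell^6,\ell,m_*,2)$-Brownian process plus a linear drift of slope comparable to $R$, bound the rate-two Brownian probability of $\mathsf{Mid}\cap\mathsf{NT}$ by Proposition~\ref{p.midneartouch}, and transfer through Lemma~\ref{l.noupperbound}, with the sub-threshold case $\eta<e^{-gm_*^{1/12}}$ yielding the $\exp\{-hn^{1/12}\}$ term. The only divergence is technical: where you substitute the a priori bound $\bar\eta$ for $\eta$ via monotonicity of $\Phi$ and must therefore split on $\bar\eta\lessgtr e^{-v^*}$ (closing the bad regime with $\PP\le 1$), the paper instead enlarges $A$ to a Borel superset of Brownian measure exactly $q=\bar\eta$ and applies the lemma to that set, which sidesteps the case analysis entirely; both devices are valid.
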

{\bf Proof.} Set $X^*:[R-\ell,R+\ell] \to \R$, $X^*(R) = 0$, to be Brownian motion of diffusion rate two. Thus $X^*$ is the pure counterpart to the process $X:[R-\ell,R+\ell] \to \R$ from  Theorem~\ref{t.brownianroutedprofile} which is $\big( g,G' \ell^6,  \ell, \min \{ a,1-a \} n ,2 \big)$-Brownian for $G' = \Theta(1) G^{17/6} g^{-5/6} \big( a \wedge (1-a - n^{-1}) \big)^{-34/3}$.

Now define $\mathsf{MidNT}^*(X^*)$ to be the event that the process $[R-\ell,R+\ell] \to \R: x \to X^*(x) + Kx$ realizes the event $\mathsf{Mid} \cap \mathsf{NT}$. That is, 
$\mathsf{MidNT}^*$ is the set of those  continuous functions $f : [R-\ell,R+\ell] \to \R$ that vanish at $R$ and for which
$$
 \textrm{the map $[R-\ell,R+\ell] \to \R: x \to f(x) + K(x-R)$ belongs to $\mathsf{Mid} \cap \mathsf{NT}$} \, .
$$  
Let $Z^*:[R-\ell,R+\ell] \to \R$ be given by $Z^*(x) = X^*(x) + K(x-R)$, where $K = - 2^{1/2}\big( a(1-a) \big)^{-1} R + \e$, with $\e$ specified by Theorem~\ref{t.brownianroutedprofile}.
Given the form of $\e$, we see that, provided that $n \geq n_0$, we have $d_1 \vert R \vert \leq \vert K \vert \leq D_1 \vert R \vert$, where the positive constants $d_1$ and $D_1$ and the natural number $n_0$ are determined by the compact interval in $(0,1)$ in which $a \in n^{-1}\Z$ is supposed to lie.

Note that, by definition,
$$
\PP \Big( \mathsf{Mid}(Z^*) \cap \mathsf{NT}(Z^*) \Big) =  \PP \Big( \mathsf{MidNT}^*(X^*)  \Big) \, .
$$
Let $p \in (0,1)$ denote this probability. Recalling from above that the process $X:[R-\ell,R+\ell] \to \R$ is $\big( g,G' \ell^6,  \ell, \min \{ a,1-a \} n ,2 \big)$-Brownian for $G' =  \Theta(1) G^{17/6} g^{-5/6} ( a \wedge (1-a - n^{-1}) )^{-34/3}$.  
By Lemma~\ref{l.noupperbound}, we thus have that, when $\exp \big\{ - g \big( \min \{ a,1-a \} n \big)^{1/12} \big\} \leq p$, 
$$
\PP \Big( \mathsf{MidNT}^*(X)  \Big)  \, \leq \, q \cdot \exp \big\{ 5 (G')^{11/6} \ell^{6 + 77/6} g^{-5/6} \big\} \exp \Big\{ G' \ell \big( \log q^{-1} \big)^{5/6} \Big\} \, .
$$
for any value $q \in [p,1]$. 
 This left-hand side equals 
$\PP \big( \mathsf{Mid}(Z) \cap \mathsf{NT}(Z)  \big)$ by definition, while the value of $p$ satisfies
$$
 p \leq 
 D
 K^{-1}\ell^{-1/2}  \exp \big\{ - K^2 \ell/18  \big\} \min \{ \hata , 1 \} 
$$
as well as $p \leq D \hata$
by Proposition~\ref{p.midneartouch}. Choose $q = 
D
K^{-1}\ell^{-1/2}  \exp \big\{ - K^2 \ell/18  \big\}  \min \{ \hata , 1 \}$.
Recalling the notation $\hata_* = \hata \wedge 1$,
we find that
\begin{eqnarray*}
\PP \big( \mathsf{Mid}(Z) \cap \mathsf{NT}(Z) \big)   & \leq &  
D
K^{-1}\ell^{-1/2}  \exp \big\{ - K^2 \ell/18  \big\} \hata_*  \cdot g^{-1} \exp \big\{ 5 (G')^{11/6} \ell^{113/6} g^{-5/6} \big\} \\
 & & \qquad \qquad \times \, \exp \Big\{ G' \ell \big( \log \big(  D^{-1} K \ell^{1/2}  \exp \big\{  K^2 \ell/18  \big\} \hata_*^{-1}  \big) \big)^{5/6} \Big\} \, .
\end{eqnarray*}
Using $d_1 \vert R \vert \leq \vert K \vert \leq D_1 \vert R \vert$, and absorbing the $D^{-1} K \ell^{1/2}$ factor in the third exponential term into $\exp \big\{  K^2 \ell/18  \big\}$,
we obtain, for positive constants $H$ and $h$ determined by the compact interval in $(0,1)$ in which $a$ is supposed to lie, 
$$
\PP \big( \mathsf{Mid}(Z) \cap \mathsf{NT}(Z) \big) \leq H \exp \big\{ - h R^2 \ell \big\}  \exp \big\{ H  \ell^{\macroseventeen}  g^{-5/6} \big\} \exp \big\{ H \ell^7 \big( R^2 \ell + \log \hata_*^{-1}  \big)^{5/6} \big\} \,  \hata_* \, .
$$
(The dependence on $G$ and $g$ has been absorbed by $H$.)
Further note that, the factor $K^{-1}\ell^{-1/2}$, which is problematic for small $K \ell^{1/2}\geq 0$,
has been omitted by  making $H$ large enough; indeed, $\exp \big\{ - h R^2 \ell \big\}$ approaches one as  $K \ell^{1/2} \searrow 0$, while the other  right-hand exponential terms are at least one, and the left-hand side, being a probability, is at most one.

Noting that $\ell \geq 1$, and suitably increasing $H$, this upper bound is at most 
$$
 \exp \Big\{  - hR^2 \ell + H \ell^{\macroseventeen} \big( 1 + R^2 + \log \hata_*^{-1}  \big)^{5/6} \Big\} \, \hata_* \, .
$$

This completes the proof of Corollary~\ref{c.mid} in the case that  $\exp \big\{ - g \big( \min \{ a,1-a \} n \big)^{1/12} \big\} \leq p$. 
Suppose now that the opposing inequality holds. Choosing $q = \exp \big\{ - g \big( \min \{ a,1-a \} n \big)^{1/12} \big\}$, we find that
\begin{eqnarray*}
\PP \big( \mathsf{Mid}(Z) \cap \mathsf{NT}(Z) \big)   & \leq &  \exp \big\{ - g \big( \min \{ a,1-a \} n \big)^{1/12} \big\} \cdot g^{-1} \exp \big\{ 4 (G')^{11/6} \ell^6 g^{-5/6} \big\} \\
 & & \qquad \qquad \times \, \exp \Big\{ G' \ell g^{5/6} \big( \min \{ a,1-a \} n \big)^{5/72}   \Big\} \, .
\end{eqnarray*}
For positive constants $H$ and $h$ determined by the compact interval in $(0,1)$ in which $a$ is supposed to lie, we obtain
$$
\PP \big( \mathsf{Mid}(Z) \cap \mathsf{NT}(Z) \big)    \leq   \exp \big\{ - h n^{1/12} + H  \ell^{\macroseventeen}   + H \ell^7  n^{5/72}   \Big\} \, ;
$$
or, more simply,
$$
\PP \big( \mathsf{Mid}(Z) \cap \mathsf{NT}(Z) \big)    \leq   \exp \big\{ - h n^{1/12} + H \ell^{\macroseventeen}  n^{5/72}   \Big\} \, .
$$
Since we suppose that $\ell$ is at most a small constant multiple of $n^{1/\macrobig}$, we also have, after a decrease in the value of $h >0$,
$$
\PP \big( \mathsf{Mid}(Z) \cap \mathsf{NT}(Z) \big)    \leq   \exp \big\{ - h n^{1/12}   \big\} \, .
$$
This completes the proof of Corollary~\ref{c.mid}. \qed

{\bf Proof of Theorem~\ref{t.nearmax}.}
Corollary~\ref{c.mid} implies that,
when $a \in n^{-1}\Z$ lies in a compact interval in $(0,1)$, there exist constants $H,h > 0$ determined by this compact interval such that, for $\e \in (0,\ell/3)$,
\begin{eqnarray*}
& & \PP \Big( M \in [R - \ell/3,R+\ell/3]  \, , \, \sup_{x \in \R: \vert x - M \vert \in [\e,2\e]} Z_n(x,a) \geq Z_n(M,a) - \hata \e^{1/2} \Big) \\
 & \leq & \max \Big\{ \hata \cdot 
 \exp \big\{  - hR^2 \ell + H \ell^{\macroseventeen} \big( 1 + R^2 + \log \hata^{-1}  \big)^{5/6} \big\} ,   \exp \big\{ - h n^{1/12}   \big\} \Big\} \, . 
\end{eqnarray*}
Summing this bound over dyadic scales $[2^j \e, 2^{j+1}\e]$ from that indexed by $j =0$ until a final truncated scale of the form $[2^j \e, \ell']$, we learn that
\begin{eqnarray*}
& & \PP \Big( M \in [R - \ell/3,R+\ell/3]  \, , \, \sup_{x \in \R: \vert x - M \vert \in [\e,\ell/3]} \big(Z_n(x,a)+ \hata 2^{-1/2} (x - M)^{1/2}\big) \geq Z_n(M,a)  \Big) \\
 & \leq & \lceil \log_2 \big( \ell' \e^{-1} \big) \rceil \max \Big\{ \hata \cdot 
 \exp \big\{  - hR^2 \ell + H \ell^{\macroseventeen} \big( 1 + R^2 + \log \hata^{-1}  \big)^{5/6} \big\} ,   \exp \big\{ - h n^{1/12}   \big\} \Big\} \, . 
\end{eqnarray*}
Relabelling $\hata$ and adjusting the values of $H$ and $h$, we obtain
 Theorem~\ref{t.nearmax}. \qed

\section{Fluctuation in polymer weight and geometry}

Here we prove our robust modulus of continuity assertions Theorem~\ref{t.toolfluc}, which concerns polymer geometry, and Theorem~\ref{t.weight}, which concerns polymer weight.
As a consequence, we will prove Corollary~\ref{c.lateral}, which addresses the maximum fluctuation of polymers. We further prove Theorem~\ref{t.deviation}, a result which offers control on the fluctuation of polymers that is uniform in compact endpoint variation and in variation on a given dyadic scale for the polymer lifetime proportion at which fluctuation is measured.

There are eight subsections. The first introduces some basic tools needed on several later occasions in this article. 
Four sets of preliminaries that are needed for  the proofs of  Theorems~\ref{t.toolfluc} and~\ref{t.weight} are respectively treated in the ensuing four subsections: the assertion of a strong form of invariance for parabolic weight in Brownian LPP; a result, in the style of Corollary~\ref{c.lateral}, concerning the maximum fluctuation of polymers; control on polymer weight that is uniform in compact endpoint variation; and control on large local fluctuations in unscaled geodesic energy.
 The sixth subsection proves Theorems~\ref{t.toolfluc} and~\ref{t.weight}  and Corollary~\ref{c.lateral}. The seventh proves  Theorem~\ref{t.deviation} and the eighth Proposition~\ref{p.onepoint}.

\subsection{Some basics}
\subsubsection{The scaling principle}\label{s.scalingprinciple}
Write $\R^2_< = \big\{ (x,y) \in \R^2: x < y\big\}$. Let $(n,s_1,s_2) \in \N \times \R^2_<$ be a compatible triple. The quantity $n \tot$ is a positive integer, in view of the defining property~(\ref{e.ctprop}).
The scaling map $R_k: \R^2 \to \R^2$ has been defined whenever $k \in \N^+$, and thus we may speak of $R_n$  and $R_{n \tot}$.
The map $R_n$ is the composition of $R_{n \tot}$ and the transform $S_{\tot^{-1}}$ given by $\R^2 \to \R^2: (a,b) \to \big(a\tot^{-2/3},b\tot^{-1}\big)$. That is, the system of $n\tot$-zigzags is transformed into the system of $n$-zigzags
by an application of  $S_{\tot^{-1}}$.  Note 
that $\weight_n \big[ (x,s_1) \to (y,s_2) \big] =  \tot^{1/3} \weight_{n \tot}  \big[ (x \tot^{-2/3},\kappa) \to (y \tot^{-2/3},\kappa + 1) \big]$, where $\kappa = s_1 \tot^{-1}$; indeed this weight transformation law is valid for all zigzags, rather than just polymers, in view of~(\ref{e.weightzigzag}).
 
We may summarise these inferences by saying that the system of $n\tot$-zigzags, including their weight data, is transformed into the $n$-zigzag system, and its accompanying weight data, by the transformation $\big( a,b,c \big) \to \big( a \tot^{-1/3}  ,b \tot^{-2/3} , c \tot^{-1} \big)$, where the components refer to the changes suffered in weight,  
and
 horizontal and  vertical coordinates.
 This fact leads us to what we call the {\em scaling principle}. 
 
 \noindent{\em The scaling principle.} 
 Let $(n,s_1,s_2) \in \N \times \R^2_<$ be a compatible triple.
 Any statement concerning the system of $n$-zigzags, including weight information, is equivalent to the corresponding statement concerning the system of $n\tot$-zigzags, provided that the following changes are made:
 \begin{itemize}
 \item the index $n$ is replaced by $n\tot$;
 \item any time is multiplied by $\tot^{-1}$;
 \item any weight is multiplied by $\tot^{1/3}$;
 \item and any horizontal distance is multiplied by $\tot^{-2/3}$.
 \end{itemize}

\subsubsection{Tail bounds on one-point polymer weight}\label{s.parabolicweight}

Let  $\weight^\cup_n \big[ (x,h_1) \to (y,h_2) \big]$ denote the parabolically adjusted weight  $\weight_n \big[ (x,h_1) \to (y,h_2) \big] + 2^{-1/2}(y-x)^2 h_{1,2}^{-1}$.
We will have need on several occasions for control on the upper and lower tail of this random variable.
\begin{lemma}\label{l.onepointbounds}
There exist positive constants $C$ and $c$, and $n_0 \in \N$, such that the following holds. Let $n \in \N$ and $x,y \in \R$
satisfy $n \geq n_0$ and $\vert x -y \vert \leq c n^{1/9}$.
\begin{enumerate}
\item For $t \geq 0$,
$$
\PP \Big( \weight^\cup_n \big[ (x,0) \to (y,1) \big]  \geq t \Big) \leq C \exp \big\{ - c t^{3/2} \big\} \, .
$$
\item For  $t \geq 0$,
$$
\PP \Big( \weight^\cup_n \big[ (x,0) \to (y,1) \big]  \leq - t \Big) \leq C \exp \big\{ - c t^{3/2} \big\} \, .
$$
\end{enumerate}
\end{lemma}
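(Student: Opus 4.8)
The strategy is to transfer the question, via translation invariance and the Robinson--Schensted--Knuth embedding, onto the one-point tail bounds already built into the notion of a regular Brownian Gibbs ensemble, and then to supplement those bounds in the deep lower-tail regime by a crude but robust comparison with a single Brownian increment.

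\emph{Reduction.} Because the driving Brownian motions $B(\cdot,k)$ have stationary increments, the energy of a staircase is unchanged in distribution under a common horizontal shift of all of its vertices and endpoints; translated into scaled coordinates this gives $\weight^{\cup}_n\big[(x,0)\to(y,1)\big] \eqdist \weight^{\cup}_n\big[(0,0)\to(y-x,1)\big]$, so we may assume $x=0$ and write $z=y-x$, with $|z|\le cn^{1/9}$. Taking lifetime $\tot=1$ in~(\ref{e.normfwdprofile1}), the normalized forward profile $\scaledle_{n;(0,0)}^{\uparrow;1}$ coincides with $z\mapsto\weight_n\big[(0,0)\to(z,1)\big]$, and by the RSK embedding recalled before Proposition~\ref{p.shiftbrownian} (i.e.\ \cite[Proposition~4.2]{NonIntPoly}) this profile is the lowest curve $\mc{L}(1,\cdot)$ of an $N$-curve $(c,C)$-regular ensemble with $N$ of order $n$. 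Since $\para(z)=2^{-1/2}z^2$ we have $\mc{L}(1,z)+\para(z)=\weight^{\cup}_n\big[(0,0)\to(z,1)\big]$, and the hypothesis $|z|\le cn^{1/9}$ places $z$ inside the window $|z|\le cN^{1/9}$ in which the conditions ${\rm Reg}(2)$ and ${\rm Reg}(3)$ of Definition~\ref{d.regularsequence} apply.

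\emph{The upper tail (part~(1)).} This is immediate: ${\rm Reg}(3)$ yields the claimed bound for all $t\ge1$, and for $t\in[0,1)$ the probability is at most $1\le Ce^{-c}$ after enlarging $C$.

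\emph{The lower tail (part~(2)).} For $t\in[1,N^{1/3}]$ the bound is exactly ${\rm Reg}(2)$, and $t\in[0,1)$ is again trivial after enlarging $C$. It remains to treat $t>N^{1/3}$. Here we discard the non-negative parabolic term and bound $\PP\big(\weight_n[(0,0)\to(z,1)]\le-t\big)$ from above. Restricting the last-passage supremum to the path that runs fully to the right along level $0$ and then vertically shows $\weight_n[(0,0)\to(z,1)]\ge 2^{-1/2}n^{-1/3}\big(G-2n-2n^{2/3}z\big)$, where $G$ is a centred Gaussian of variance $n+2n^{2/3}z$, which for $n\ge n_0$ is of order $n$. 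A standard Gaussian lower-tail estimate then gives, once $t$ exceeds a fixed multiple of $n^{2/3}$, a bound of the form $\exp\{-\Theta(1)n^{-1/3}t^2\}$, which is stronger than $\exp\{-ct^{3/2}\}$ in that range. For the remaining window $N^{1/3}<t\lesssim n^{2/3}$ one divides $[0,1]$ into $m$ equal sub-intervals and uses super-additivity of last passage together with the scaling principle of Section~\ref{s.scalingprinciple} to bound $\weight_n[(0,0)\to(z,1)]$ below by $m^{-1/3}$ times a sum of $m$ independent narrow-wedge weights at parameter $n/m$; choosing $m$ of order $n^{-1}t^{3}$ puts the relevant deviation of this sum in the regime governed by ${\rm Reg}(2)$ at parameter $n/m$, and a Cram\'er-type estimate --- for which the crude Gaussian comparison above supplies the needed exponential moments --- recovers the exponent $t^{3/2}$. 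Alternatively one may invoke directly the deep-lower-tail estimates for regular Brownian Gibbs ensembles from \cite{NonIntPoly, BrownianReg}. Combining the three ranges of $t$ gives part~(2).

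\emph{Main obstacle.} The upper tail and the moderate portion of the lower tail are bookkeeping on top of ${\rm Reg}(2)$--${\rm Reg}(3)$; the genuinely delicate point is the lower tail for $t$ between $N^{1/3}$ and a constant multiple of $n^{2/3}$, which lies strictly outside the range covered both by the black-box regular-ensemble bound and by the elementary Gaussian comparison, and must be recovered either by the super-additive decomposition together with a careful concentration inequality for the resulting sum, or by an external deep-lower-tail input.
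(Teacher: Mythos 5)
Your reduction to $x=0$ by translation invariance, and your handling of the upper tail and of the lower tail in the range $t\in[0,N^{1/3}]$ via ${\rm Reg}(2)$ and ${\rm Reg}(3)$ applied to the regular ensemble furnished by Proposition~\ref{p.shiftbrownian}, are correct. For comparison, the paper's own proof is a one-line citation of \cite[Proposition~$3.6$]{DeBridge} together with translation invariance, so you are reconstructing from the regular-ensemble axioms what the paper outsources; that is a legitimate and more self-contained route for those ranges of $t$.

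The genuine gap is in the deep lower tail, for $t$ between $N^{1/3}$ and a constant multiple of $n^{2/3}$ --- exactly the window you flag as delicate --- and neither of your proposed repairs closes it. In the super-additive decomposition into $m\asymp n^{-1}t^{3}$ blocks, each block weight $X_i$ must deviate by about $s_0=tm^{-2/3}=(n/m)^{1/3}$, i.e.\ by the full extent of the range in which ${\rm Reg}(2)$ applies at parameter $n/m$; the probability that even one block exits this range is controlled only by $m\exp\{-c(n/m)^{1/2}\}=m\exp\{-cnt^{-3/2}\}$, which for $t$ near $n^{2/3}$ is of unit order, so a truncation at level $s_0$ cannot be absorbed into the target bound $e^{-ct^{3/2}}$. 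The Cram\'er route fares no better: the crude single-path comparison yields only $\E e^{-\lambda X_i}\le\exp\{2^{1/2}\lambda(n/m)^{2/3}+\lambda^{2}(n/m)^{1/3}/4\}$, and the linear-in-$\lambda$ cost $(n/m)^{2/3}=n^{2/3}/t$ per block, summed over the $m$ blocks, is $2^{1/2}\lambda m^{1/3}n^{2/3}$, which is at least the target term $\lambda t m^{1/3}$ throughout the regime $t\le 2^{1/2}n^{2/3}$. The loss is structural: the single-path bound permits each block to fluctuate by $\Theta((n/m)^{2/3})$ rather than $\Theta(1)$, so its moment generating function is far too large. Closing the window requires either a genuine induction on scales whose inductive hypothesis is the unrestricted bound $\PP(X\le -s)\le Ce^{-cs^{3/2}}$ for all $s\ge0$ at the smaller parameter (with attention to the non-degradation of constants), or an external input --- for instance the identity in law of $M\big[(0,0)\to(n,n)\big]$ with $n^{1/2}$ times the top eigenvalue of a GUE matrix together with its known lower-tail estimates, or the bound the paper actually cites. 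Your fallback of invoking ``deep-lower-tail estimates for regular Brownian Gibbs ensembles'' is likewise not available off the shelf from ${\rm Reg}(2)$, whose hypothesis is precisely the restriction $s\le N^{1/3}$ that you need to remove.
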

{\bf Proof.} This result follows from \cite[Proposition~$3.6$]{DeBridge} and translation invariance of Brownian LPP. \qed

\subsubsection{Polymer uniqueness and ordering}\label{s.uniquenessordering}

A polymer with given endpoints is almost surely~unique.

\begin{lemma}\cite[Lemma~$4.6(1)$]{Patch}\label{l.polyunique}
  Let $x,y \in \R$. 
 There exists an $n$-zigzag from $(x,0)$ to $(y,1)$
 if and only if $y \geq x - n^{1/3}/2$.
 When the last condition is satisfied, there is almost surely a unique $n$-polymer from $(x,0)$ to $(y,1)$.
 \end{lemma}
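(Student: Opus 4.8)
The statement to prove is Lemma~\ref{l.polyunique}, which is cited as \cite[Lemma~$4.6(1)$]{Patch}. Since it is cited verbatim from another paper, a full proof is not expected here; instead I would sketch the natural argument. There are two assertions: an existence/non-existence dichotomy governed by the inequality $y \geq x - n^{1/3}/2$, and the almost sure uniqueness of the maximizing $n$-polymer when a zigzag exists at all.

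\textbf{Plan for the existence dichotomy.} The first claim is purely combinatorial/geometric and follows by unwinding the definition of $n$-zigzags as images of staircases under the scaling map $R_n$ (Subsection~\ref{s.staircasezigzag}). A staircase from $(x',0)$ to $(y',n)$ in unscaled coordinates exists precisely when $y' \geq x'$, since the staircase must move weakly rightward. Translating this through $R_n$ via~(\ref{e.scalingmap}), with $(x,0)$ and $(y,1)$ the scaled endpoints, the unscaled horizontal endpoints are $2n^{2/3}x$ and $n + 2n^{2/3}y$; the condition $n + 2n^{2/3}y \geq 2n^{2/3}x$ rearranges exactly to $y \geq x - n^{1/3}/2$. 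This is essentially condition~(\ref{e.xycond}) with $s_{1,2} = 1$. So the first assertion is a direct computation.

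\textbf{Plan for almost sure uniqueness.} For the second claim, the key point is that distinct staircases carry distinct energies almost surely. Fix two distinct staircases $S_1, S_2 \in \staircase\big[(x',i) \to (y',j)\big]$ (there are countably many up to the relevant combinatorial type once one quotients by the continuum of breakpoint positions — more precisely, one argues over the countable set of \emph{jump-index patterns} and then handles the continuum of breakpoints). The energy difference $E(S_1) - E(S_2)$ is a finite linear combination of Brownian increments of the form $B(z_{k+1},k) - B(z_k,k)$; the crucial observation is that because $S_1 \neq S_2$, at least one Brownian curve $B(\cdot, k)$ is evaluated over a genuinely different interval, so $E(S_1) - E(S_2)$ is, conditionally on the breakpoint locations, a nondegenerate Gaussian (its variance is strictly positive), hence has no atom at zero. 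One then integrates/takes a union over the countable family of combinatorial types and a dense countable set of rational breakpoint configurations, together with a continuity argument to upgrade from rationals to all configurations, to conclude that almost surely no two staircases share the maximal energy. Since a maximizer exists by compactness and continuity of the constituent Brownian paths (Subsection~\ref{s.geodesics}), it is almost surely unique; applying $R_n$ transfers uniqueness to the $n$-polymer.

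\textbf{Main obstacle.} The delicate point is the measure-theoretic bookkeeping in the uniqueness argument: the space of staircases with fixed endpoints is not countable (the breakpoints $z_k$ range over intervals), so one cannot naively union-bound over all pairs. The standard resolution is to first argue that the maximizer's breakpoints are characterized by a finite set of strict inequalities and that, for each fixed \emph{pattern} of which Brownian curves are active over which subintervals, the optimal breakpoints are a.s.\ unique by a first-order (derivative) condition on the Brownian paths, then union over the countably many patterns. Alternatively, one invokes the general principle that the argmax of a process built from independent Brownian motions is a.s.\ unique when the process has a.s.\ distinct values at distinct points of a suitable countable dense skeleton. Since this is imported wholesale from~\cite{Patch}, I would simply cite it and perhaps remark that it rests on the no-atom-at-zero property of nondegenerate Gaussians applied to energy differences.
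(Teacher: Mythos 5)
Your proposal is correct and matches the paper's treatment: the paper imports this lemma wholesale from \cite{Patch} (and separately cites \cite[Lemma~$A.1$]{Patch} for geodesic uniqueness), while the existence dichotomy is exactly the computation behind condition~(\ref{e.xycond}) with $s_{1,2}=1$, which you carry out correctly. Your sketch of the uniqueness argument — no atom at zero for energy differences plus a reduction to countably many combinatorial patterns — is the standard route and correctly identifies the measure-theoretic subtlety.
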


A rather simple sandwiching fact about polymers will also be needed. Let $(x_1,x_2),(y_1,y_2) \in \R^2$ and  consider
a zigzag  $Z_1$ from $(x_1,s_1)$ to $(y_1,s_2)$ and another
 $Z_2$ from $(x_2,s_1)$ to $(y_2,s_2)$. 
We declare that $Z_1 \preceq Z_2$ if `$Z_2$ lies on or to the right of $Z_1$': formally, if $Z_2$ is contained in the union of the closed horizontal planar line segments whose left endpoints lie in $Z_1$.
\begin{lemma}\cite[Lemma $5.7$]{NonIntPoly}\label{l.sandwich}
Let $(n,s_1,s_2)$ be a compatible triple, and let  $(x_1,x_2)$ and $(y_1,y_2)$ belong to $\R_\leq^2$. Suppose that there is a unique $n$-polymer from $(x_i,s_1)$ to $(y_i,s_2)$, both when $i=1$ and $i=2$. (This circumstance occurs almost surely, and the resulting polymers have been labelled $\rho_n \big[ (x_1,s_1) \to (y_1,s_2) \big]$ and  $\rho_n \big[ (x_2,s_1) \to (y_2,s_2) \big]$.)
Now let $\rho$ denote any $n$-polymer that begins in $[x_1,x_2] \times \{ s_1\}$
and ends in  $[y_1,y_2] \times \{ s_2 \}$. Then 
$$
\rho_n  \big[ (x_1,s_1) \to (y_1,s_2) \big]  \, \preceq \, \rho \, \preceq \, \rho_n \big[ (x_2,s_1) \to (y_2,s_2) \big] \, .
$$
\end{lemma}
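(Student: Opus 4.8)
The statement is a monotonicity/sandwiching fact: any polymer $\rho$ that starts in the horizontal segment $[x_1,x_2]\times\{s_1\}$ and ends in $[y_1,y_2]\times\{s_2\}$ is squeezed, in the $\preceq$ ordering, between the extreme polymers $\rho_n[(x_1,s_1)\to(y_1,s_2)]$ and $\rho_n[(x_2,s_1)\to(y_2,s_2)]$. Since the excerpt cites this as \cite[Lemma~$5.7$]{NonIntPoly}, I would give the short self-contained argument rather than quote it. The whole proof is a classical planarity-plus-uniqueness argument, phrased in the zigzag language. By the scaling principle of Subsection~\ref{s.scalingprinciple} there is no loss in taking $(s_1,s_2)=(0,1)$, and by Lemma~\ref{l.polyunique} all three polymers named in the statement exist and are a.s.\ unique on the event we work on; I would restrict attention to that full-probability event throughout.

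\textbf{Step 1: reduce to the two one-sided inequalities.} It suffices to prove $\rho_n[(x_1,0)\to(y_1,1)]\preceq\rho$ and $\rho\preceq\rho_n[(x_2,0)\to(y_2,1)]$ separately; the two statements are mirror images of each other under the reflection $x\mapsto -x$ (which reverses $\preceq$ and swaps the roles of the subscripts $1,2$), so it is enough to establish, say, the second one. Write $\sigma=\rho_n[(x_2,0)\to(y_2,1)]$ for brevity.

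\textbf{Step 2: crossing and swapping.} Suppose, for contradiction, that $\rho\not\preceq\sigma$, i.e.\ there is some height at which $\rho$ lies strictly to the right of $\sigma$. Since $\rho$ starts weakly to the left of $\sigma$ (its starting point is in $[x_1,x_2]\times\{0\}$, so at horizontal coordinate $\le x_2$) and ends weakly to the left of $\sigma$ (ending coordinate $\le y_2$), the two zigzags must cross: there exist heights $h_1<h_2$ in $n^{-1}\Z\cap[0,1]$ and a common lattice point of $\rho$ and $\sigma$ at height $h_1$ and another at height $h_2$, between which $\rho$ runs strictly to the right of $\sigma$ and on the complement of $(h_1,h_2)$ the two agree on which is to the left --- more precisely, one can choose a point $P=(u,h_1)\in\rho\cap\sigma$ which is the last common point before the excursion of $\rho$ to the right, and $Q=(v,h_2)\in\rho\cap\sigma$ the first common point after it; such points exist because $\rho$ and $\sigma$ are continuous piecewise-affine paths that coincide at the two ends in the $\preceq$-sense and must therefore intersect. (Two $n$-zigzags on a common time interval whose one ordering relation fails must literally share a lattice point; this is the elementary planarity input, and I would spell it out using that horizontal segments of $n$-zigzags live on the discrete set of heights $n^{-1}\Z$, so a crossing forces a shared vertex.) Now form the swapped pair: let $\rho'$ be the zigzag that follows $\rho$ outside $[h_1,h_2]$ and follows the sub-zigzag $\sigma_{P\to Q}$ on $[h_1,h_2]$; and let $\sigma'$ follow $\sigma$ outside $[h_1,h_2]$ and $\rho_{P\to Q}$ on $[h_1,h_2]$. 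Because the weight of a zigzag is additive over sub-zigzags (as in Lemma~\ref{l.routedprofile}(1)'s proof), we get $\weight_n(\rho')+\weight_n(\sigma')=\weight_n(\rho)+\weight_n(\sigma)$. Also $\rho'$ still runs from the start of $\rho$ to the end of $\rho$, and $\sigma'$ still runs from $(x_2,0)$ to $(y_2,1)$, so both are admissible competitors with the correct endpoints.

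\textbf{Step 3: derive the contradiction via uniqueness.} From $\weight_n(\rho')+\weight_n(\sigma')=\weight_n(\rho)+\weight_n(\sigma)$ together with the maximality of $\rho$ (for its endpoints) and of $\sigma$ (for the endpoints $(x_2,0),(y_2,1)$), we obtain $\weight_n(\rho')=\weight_n(\rho)$ and $\weight_n(\sigma')=\weight_n(\sigma)$; in particular $\sigma'$ is also a maximum-weight zigzag from $(x_2,0)$ to $(y_2,1)$. But on $[h_1,h_2]$ the zigzag $\sigma$ runs strictly to the left of $\rho$ (that was the content of the crossing), so $\sigma\ne\sigma'$, contradicting the a.s.\ uniqueness of the polymer from $(x_2,0)$ to $(y_2,1)$ given by Lemma~\ref{l.polyunique}. (A symmetric remark: one could instead conclude $\rho\ne\rho'$ and invoke uniqueness of $\rho$, but $\rho$'s endpoints are random within the allowed segments, so it is cleaner to conclude via the deterministic-endpoint polymer $\sigma$; on the full-probability event of Lemma~\ref{l.polyunique} applied to the fixed endpoints $(x_2,0),(y_2,1)$ this is legitimate.) Hence $\rho\preceq\sigma$, and by the mirrored argument $\rho_n[(x_1,0)\to(y_1,1)]\preceq\rho$, proving the lemma. \qed

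\textbf{Expected main obstacle.} The only genuinely delicate point is Step~2: making rigorous the claim that a failure of the $\preceq$ relation between two $n$-zigzags with suitably ordered endpoints forces an actual shared lattice vertex enclosing a ``pure excursion'' to one side, so that the swap in Step~3 produces a $\sigma'$ that differs from $\sigma$. One must be careful about ties (horizontal segments, the $\sup$ convention defining $\phi(s)$ in Subsection~\ref{s.zigzagfunction}) and about the fact that $n$-zigzags are not graphs of functions. I expect this to be handled by working with the discrete height set $n^{-1}\Z$, choosing $h_1$ and $h_2$ as the extreme heights of the region of ``wrong'' ordering, and using that at those heights the two zigzags must coincide at a lattice point --- everything else (weight additivity, the maximality comparison, the appeal to uniqueness) is then immediate.
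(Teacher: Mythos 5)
The paper does not prove this lemma; it is quoted verbatim from \cite[Lemma~$5.7$]{NonIntPoly}. Your crossing-and-swapping argument --- locate two common points of $\rho$ and $\sigma$ at heights in $n^{-1}\Z$ enclosing the region of violated ordering, exchange the intermediate sub-zigzags, use weight additivity and maximality to conclude both new paths are maximizers, and contradict the a.s.\ uniqueness of $\sigma$ --- is correct and is essentially the proof given in the cited source, including your correct observation that any crossing of two staircases must contain a shared point at a lattice height. The only cosmetic quibble is Step~1: rather than invoking a reflection symmetry of the model, it is cleaner to note that the argument of Steps~2--3 is deterministic on the uniqueness event and simply re-run it with left and right exchanged to get the other inequality.
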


\subsubsection{Boldface notation for parameters in statement applications}\label{s.boldface}
Some later used outside results come equipped with parameters that must be set in any given application. When such applications are made, we employ a boldface notation to indicate the parameter labels of the results being applied. This device permits occasional reuse of symbols and disarms notational conflict.

\subsection{Invariance of the polymer weight field}

Given two collections of real-valued random variables $M^1$ and $M^2$ indexed by pairs $(x,h_1),(y,h_2) \in \R \times \Z$, we write $M^1 \equiv M_2$ to indicate that the two collections have the same law. This notation will be applied in the unscaled picture, when the fields of random variables are energies of geodesics, such as $M \big[ (x,h_1) \to (y,h_2) \big]$. Thus, the conditions $x \leq y$ and $h_1 \leq h_2$ must be imposed, to ensure that these energies are well defined. We abuse notation by setting values such as  $M \big[ (x,h_1) \to (y,h_2) \big]$ equal to zero when they are not well defined, so that the indexing by $(x,h_1),(y,h_2) \in \R \times \Z$ is admissible. The $\equiv$ notation will also be adopted in the scaled picture, where two collections of real-valued random variables $W^1$ and $W^2$ indexed by pairs $(x,h_1),(y,h_2) \in \R \times n^{-1}\Z$
are said to satisfy $W^1 \equiv W^2$ when they have the same law, and where a similar extension of domain is applied to permit this choice of domain for the two collections.

For $K \in \R$, let $\tau_K: \R^2 \to \R^2$ denote the shear map 
\begin{equation}\label{e.shearmap}
\tau(x,y) = (x + K y,y)
\end{equation}
The shear map will permit us to straighten sharply sloping corridors.  Our invariance result asserts that parabolic weight is statistically almost unchanged under application of the shear map. Later, when we prove Proposition~\ref{p.threepart}(2,3), it will permit us to propagate control on polymer weights from roughly square rectangles to much wider ones.

\begin{proposition}\label{p.invariance}
Suppose given two disjoint compact real intervals $I_1$ and $I_2$.
For $n \in \N$ and $K > 0$, let $\mc{K}_n(K)$ denote the set of quadruples $(x,y,h_1,h_2) \in \R^4$,
where the real values $x$ and $y$ vary over $[-2K,2K]$
and
where $h_1$ and $h_2$ respectively vary over elements of $n^{-1}\Z$ lying in $I_1$ and~$I_2$.
Suppose that $n \geq \Theta(1)$ and $\Theta(1) \leq K \leq \Theta(1) n^{1/18}$. Then 
$$
 \weight_n^\cup \big[ \tau_K(x,h_1) \to \tau_K(y,h_2) \big] \equiv 
 \weight_n^\cup \big[ (x,h_1) \to (y,h_2) \big] + \mc{E}(x,y,h_1,h_2)
$$
where the error terms $\mc{E} (x,y,h_1,h_2)$ are random but small, satisfying the uniform tail bound
\begin{eqnarray}
 & & \P \bigg( \sup \Big\{ \big\vert \mc{E}(x,y,h_1,h_2) \big\vert :  (x,y,h_1,h_2) \in \mc{K}_n(K) \Big\} \geq \Theta(1) n^{-1/9} \bigg) \nonumber \\
 &
  \leq & 
  \Theta(1) n^{2 + 2/3} K^{-2}  \exp \big\{ - \Theta(1) n^{1/12} K^{-3/2} \big\} +  
\Theta(1) K^2  \exp \big\{ - \Theta(1) n^{1/12} \big\}  \label{e.invariance}
 \, .
\end{eqnarray}
The $\Theta(1)$ terms may depend on the pair $(I_1,I_2)$ and on no other parameter.
\end{proposition}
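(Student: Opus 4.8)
The plan is to reduce the shear invariance to a statement about unscaled Brownian LPP, where the shear becomes a genuine symmetry, and then to control the error incurred in passing between the exact (unsheared) coordinates and the integer-mesh coordinates that the zigzag system lives on. First I would translate everything to the unscaled picture via~(\ref{e.weightgeneral}): the scaled statement $\weight_n^\cup[\tau_K(x,h_1)\to\tau_K(y,h_2)]$ involves the energy $M[\,\cdot\,]$ between unscaled points whose first coordinates are $n h_i + 2n^{2/3}(x + K h_i)$ at heights $n h_i$. The key observation is that in unscaled Brownian LPP the collection of Brownian motions $B(\cdot,k)$ has a distributional invariance: applying a common shear to all the curves, i.e. replacing $B(\cdot,k)$ by $B(\cdot+ck,k)$ for an arithmetic progression of offsets, preserves the law of the increments that define $M$, because each $B(\cdot,k)$ is a two-sided Brownian motion and hence stationary under spatial shifts. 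Concretely, the shear $\tau_K$ in scaled coordinates corresponds, after unscaling, to translating the $k$-th Brownian curve by an amount linear in $k$; since the $z$-lists defining the energy only see increments $B(z_{k+1},k)-B(z_k,k)$, a shear of the endpoints is absorbed exactly by this shift of the environment, and the parabolic term $2^{-1/2}(y-x)^2 h_{1,2}^{-1}$ is exactly the one making $\weight^\cup$ the natural translation-covariant object. So at the level of the continuum this is an exact identity, and $\mc{E}$ would be zero.

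The reason $\mc{E}$ is not identically zero, and the source of the whole error estimate, is the compatible-triple constraint: the sheared heights $h_i$ are the same (heights are unchanged by $\tau_K$), but the offsets $n h_i\cdot 2n^{2/3}K/(2n^{2/3}) = n^{2/3}K h_i$ that we want to apply to the environment's spatial coordinate need not respect any mesh, and more importantly the \emph{geodesic} between the sheared endpoints, when read back in the original (unsheared) frame, is a near-geodesic but not the actual geodesic, because the sheared problem's optimal $z$-list, after un-shearing, is a \emph{monotone} list only up to the shear. The clean way to handle this is: (i) write $\weight_n^\cup[\tau_K(x,h_1)\to\tau_K(y,h_2)]$ exactly as a shifted-environment LPP value, which by Brownian stationarity has the same law as the original $\weight_n^\cup[(x,h_1)\to(y,h_2)]$ \emph{provided} the LPP is taken over \emph{all} monotone lists in the shifted environment; (ii) observe that this equals the original LPP over monotone lists, so the only discrepancy is that the sheared staircase, being a genuine staircase in the sheared frame, maps to a not-quite-monotone path in the original frame near the mesh lines — but the horizontal jumps of a polymer across a single mesh interval of height $n^{-1}$ are tiny. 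The error $\mc{E}$ is then bounded by the total weight fluctuation of the polymer over the $O(1)$ extra ``corridor width'' of size $O(n^{-1/3}K)$ that the shear introduces at the two ends, and one estimates this using the one-point weight tail bounds of Lemma~\ref{l.onepointbounds} together with a union bound over a net of the $\Theta(n)$ choices of $h_1,h_2\in n^{-1}\Z$ and a finer net in $x,y\in[-2K,2K]$.

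More precisely, the error-control step I would carry out is: cover $\mc{K}_n(K)$ by a net of mesh $n^{-1/9}$ in the $x,y$ variables (so $\Theta(K n^{1/9})^2$ points) and the full $\Theta(n^{4/3})$ pairs $(h_1,h_2)$ in the height variables; at each net point, bound $|\mc{E}|$ by a difference of two $\weight^\cup$ values whose endpoints differ by $O(n^{-1/3}K)$ in space, invoke a modulus-of-continuity / two-point tightness estimate (which follows from Lemma~\ref{l.onepointbounds} applied to the difference, after a further shear to straighten, i.e.\ a bootstrap within this very proposition restricted to square rectangles), getting a tail $\Theta(1)\exp\{-\Theta(1) n^{1/12}K^{-3/2}\}$ per net point; and finally multiply by the net cardinality $\Theta(n^{4/3})\cdot\Theta(K^2 n^{2/9})=\Theta(n^{2+2/3}K^2)$ — wait, I would instead organise the union bound so the $K^{-2}$ appears, namely by choosing the spatial net mesh proportional to $K$ rather than constant, trading net size against the per-point scale, which is exactly how the two terms $\Theta(1) n^{2+2/3}K^{-2}\exp\{-\Theta(1)n^{1/12}K^{-3/2}\}$ and $\Theta(1)K^2\exp\{-\Theta(1)n^{1/12}\}$ in~(\ref{e.invariance}) arise (the second term being the contribution of the ``off-net'' fluctuation, controlled by a cruder bound on a mesh of size $\Theta(1)$ with scale $\Theta(1)$).

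The main obstacle I anticipate is step (ii): making rigorous the claim that shearing the \emph{environment} exactly preserves the LPP law while shearing the \emph{endpoints} of a staircase produces only a mesh-scale, hence $n^{-1/3}$-order, distortion of the optimal path. This requires care because a staircase in the sheared frame is, after un-shearing, a path whose horizontal segments have been tilted — one must argue that replacing such a tilted path by the ``rounded'' monotone staircase changes the energy by $O(n^{-1/9})$ uniformly, which is where the one-point lower/upper tail bounds and a chaining over mesh lines enter, and where the precise powers of $n$ in the final estimate are pinned down. The parabolic-weight formulation is essential here precisely because $\weight^\cup$ (rather than $\weight$) is the object with exact translation covariance, so that the leftover error is genuinely lower-order rather than of the parabola's own macroscopic size.
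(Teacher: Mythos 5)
There is a genuine gap, and it lies at the heart of your step (i)--(ii). You propose to realise the shear as an exact symmetry of the environment by replacing $B(\cdot,k)$ with $B(\cdot+ck,k)$, $c=2n^{-1/3}K$, and to treat the mismatch as a mesh-scale artifact. But this row-by-row shift does not map the staircase ensemble to itself: a staircase for the sheared endpoints corresponds, after undoing the shift, to a path that occupies $[z_k+ck,\,z_{k+1}+ck]$ on row $k$ and therefore makes a horizontal jump of size $c$ at \emph{every one} of the $\Theta(n h_{1,2})=\Theta(n)$ row transitions, not merely "near the mesh lines" or "at the two ends". Comparing the supremum over this jumpy class with the supremum over genuine staircases requires controlling, uniformly over adaptively chosen paths, a sum of $\Theta(n)$ Brownian increments each over an interval of length $c$; even the typical size of such a sum is $\sqrt{nc}=n^{1/3}K^{1/2}$ in unscaled units, i.e.\ of unit order after the $n^{-1/3}$ normalisation, and a worst-case bound is far larger. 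So the error is not $O(n^{-1/9})$ by the mechanism you describe, and I do not see how to rescue it without a substantially new idea. Relatedly, your diagnosis that $\mc{E}$ is purely a discretisation effect (and would vanish "at the level of the continuum") is not what happens in this prelimiting model: the paper's error contains the genuinely random term $\Theta(n^{-5/18})\,\weight_n[(x,h_1)\to(y,h_2)]$, which is not a mesh artifact.

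The paper's route is different in exactly the place where yours breaks. Instead of a row-dependent translation, it uses the Brownian dilation $B(z,k)\to\kappa^{1/2}B\big(x+\kappa^{-1}(z-x),k\big)$ with $\kappa=1+2n^{-1/3}K$ (Lemma~\ref{l.invariance}). Because this map acts identically on every row, it carries monotone $z$-lists bijectively to monotone $z$-lists, hence maps the sheared optimisation problem \emph{exactly} onto an unsheared one with (a) a multiplicative prefactor $\kappa^{1/2}$ on the energy and (b) endpoints perturbed from $(x,y)$ to $(x',y')=(x,y)(1+2n^{-1/3}K)^{-1}$, a displacement of size $O(K^2n^{-1/3})$. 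All remaining terms are explicit deterministic quantities whose expansion cancels against the parabolic adjustment (Lemma~\ref{l.obtained.desired}). The two resulting errors are then controlled by external inputs: the endpoint perturbation by the modulus-of-continuity estimate \cite[Theorem~1.1]{ModCon} (this is where the $n^{2/3}K^{-2}$ entropy factor and the $\exp\{-\Theta(1)n^{1/12}K^{-3/2}\}$ tail come from), and the prefactor term by the one-point tail \cite[Proposition~1.5]{ModCon} (the $K^2\exp\{-\Theta(1)n^{1/12}\}$ term); the $n^2$ in~(\ref{e.invariance}) is the union bound over the $\Theta(n^2)$ pairs $(h_1,h_2)$, not the $\Theta(n^{4/3})$ you quote. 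Your closing paragraph correctly identifies the obstacle (making the path-class comparison rigorous); the resolution is not to push through that comparison but to replace the translation by the dilation so that no such comparison is needed.
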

The parameters $h_1$ and $h_2$ vary subject to $h_{1,2} = \Theta(1)$ in this result. During the upcoming derivation, we will monitor dependence on $h_{1,2}$
more closely, with a view to the potential for applications where $h_{1,2} \ll 1$, and will impose $h_{1,2} = \Theta(1)$ as we close out the proof of Proposition~\ref{p.invariance}.

The first element on our route to proving the proposition is the assertion of a strong form of  energetic invariance that is enjoyed by Brownian LPP. 
\begin{lemma}\label{l.invariance}
For $\kappa > 0$,
$M \big[ (x,h_1) \to (y,h_2) \big] \equiv \kappa^{1/2} M \big[ (x,h_1) \to (x + \kappa^{-1}(y-x),h_2) \big]$.
\end{lemma}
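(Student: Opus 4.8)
The plan is to deduce Lemma~\ref{l.invariance} from the diffusive scaling invariance of a collection of independent standard Brownian motions, together with a change of variables inside the variational formula defining $M$; no integrable input is needed.

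First I would pass to increments based at the common left endpoint. The energy $\sum_{k=h_1}^{h_2}\big(B(z_{k+1},k)-B(z_k,k)\big)$ of any staircase from $(x,h_1)$ to $(y,h_2)$ depends on the ensemble $B$ only through the increments $B(t,k)-B(x,k)$ with $t\ge x$. By the stationary-independent-increments property of Brownian motion, the reparametrized ensemble $\tilde B(s,k)=B(x+s,k)-B(x,k)$, $k\in\Z$, is again a collection of independent standard Brownian motions. Writing $u_k=z_k-x$ turns a non-decreasing list $(z_k)$ in $[x,y]$ pinned at $z_{h_1}=x$, $z_{h_2+1}=y$ into a non-decreasing list $(u_k)$ in $[0,y-x]$ pinned at $u_{h_1}=0$, $u_{h_2+1}=y-x$, and
\[
M\big[(x,h_1)\to(y,h_2)\big]=\sup\Big\{\sum_{k=h_1}^{h_2}\big(\tilde B(u_{k+1},k)-\tilde B(u_k,k)\big)\Big\},
\]
the supremum being over all such $(u_k)$.

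Next I would apply Brownian scaling. For $\kappa>0$, the map sending a curve $f$ to $\kappa^{1/2}f(\kappa^{-1}\cdot)$, applied simultaneously to every curve $\tilde B(\cdot,k)$, is a measure-preserving transformation of the law of the ensemble $\big(\tilde B(\cdot,k)\big)_{k\in\Z}$: each rescaled curve is again a standard Brownian motion and independence across $k$ is preserved. Setting $\tilde B_\kappa(s,k)=\kappa^{1/2}\tilde B(\kappa^{-1}s,k)$, the variational formula above with $\tilde B$ replaced by $\tilde B_\kappa$, together with the substitution $u_k=\kappa v_k$, gives, for every $y\in\R$ and every pair $h_1\le h_2$ at once,
\[
\sup\Big\{\sum_{k=h_1}^{h_2}\big(\tilde B_\kappa(u_{k+1},k)-\tilde B_\kappa(u_k,k)\big)\Big\}=\kappa^{1/2}\sup\Big\{\sum_{k=h_1}^{h_2}\big(\tilde B(v_{k+1},k)-\tilde B(v_k,k)\big)\Big\}=\kappa^{1/2}\,M\big[(x,h_1)\to(x+\kappa^{-1}(y-x),h_2)\big],
\]
where on the right the list $(v_k)$ ranges over non-decreasing lists in $[0,\kappa^{-1}(y-x)]$ pinned at $0$ and $\kappa^{-1}(y-x)$. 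Since the left-hand side is $M$ computed from the ensemble $\tilde B_\kappa$, whose law equals that of $\tilde B$, and since a single measure-preserving map was used, the claimed identity of laws follows, jointly over the right endpoint $y$ and the indices $h_1\le h_2$.

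I do not expect any substantive obstacle here; this is a routine Brownian-scaling identity. The only point requiring care is bookkeeping: the rescaling must be applied to the whole ensemble simultaneously, so that the conclusion is a genuine identity of collections rather than merely of one-dimensional marginals, and the natural joint form of the statement keeps the left endpoint fixed—it is an identity among configurations sharing the left horizontal coordinate $x$—which is precisely the form in which it is invoked in the proof of Proposition~\ref{p.invariance}.
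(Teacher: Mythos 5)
Your proof is correct and is essentially the paper's argument: the paper's one-line proof observes that the law of the ensemble is invariant under $B(z,k)\to\kappa^{1/2}B(x+\kappa^{-1}(z-x),k)$, which is exactly the shift-to-$x$ followed by diffusive rescaling that you spell out, together with the change of variables in the variational formula. Nothing is missing; you have simply written out the details the paper leaves implicit.
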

{\bf Proof.} The law of the Brownian motions $B: \R \times \Z \to \R$ that constituent the noise environment of Brownian LPP is invariant under the scaling
$B(z,n) \to \kappa^{1/2} B(x + \kappa^{-1}(z-x),n)$. \qed

By the definition of the shear map~$\tau_K$ and the specification~(\ref{e.weightgeneral}) of weight in terms of its unscaled counterpart, energy, we see that
$\weight_n \big[  \tau_K(x,h_1) \to \tau_K(y,h_2)  \big]$  is equal to

\begin{equation}\label{e:scaledcoordinates1} 
 2^{-1/2} n^{-1/3} \Big( M \big[ \big( nh_1 + 2n^{2/3}(x + K h_1),h_1 \big) \to   \big( nh_1 + 2n^{2/3}(y + K h_2),h_2 \big) \big]
  - (2n +K) h_{1,2} - 2 n^{2/3}(y-x) \Big) \, .
\end{equation} 
Set $x' = x \big(1 + 2 n^{-1/3} K \big)^{-1}$ and $y' = y \big(1 + 2 n^{-1/3} K \big)^{-1}$. Applying  Lemma~\ref{l.invariance}, we see that 
\begin{eqnarray*}
& & M \big[ \big( nh_1 + 2n^{2/3}(x + K h_1),h_1 \big) \to   \big( nh_1 + 2n^{2/3}(y + K h_2),h_2 \big) \big] \\
&
 \equiv &   \big(1 + 2 n^{-1/3} K \big)^{1/2} 
M \big[ \big( nh_1 + 2n^{2/3}x',h_1 \big) \to   \big( nh_1 + 2n^{2/3}y',h_2 \big) \big] \, ,
\end{eqnarray*}
where note that, in the usage of the notation $\equiv$, the two fields are viewed as functions of the variables  $(x,h_1)$ and $(y,h_2)$, with the dependence on $x$ and $y$ being communicated via
$x'$ and $y'$ in the right-hand case. Translating to scaled coordinates, we see that $\weight_n \big[  \tau_K(x,h_1) \to \tau_K(y,h_2)  \big]$ takes the form
$$
 \big(1 + 2 n^{-1/3} K \big)^{1/2}  \weight_n \big[  (x',h_1) \to (y',h_2)  \big] + R_1 + R_2 + R_3 \, ,
$$
where 
\begin{align*}
R_1 &= 2^{1/2} n^{2/3} h_{1,2} \big(  \big( 1 + 2 n^{-1/3}K \big)^{1/2} - 1 \big);\\ 
R_2 &= 2^{1/2} n^{1/3} (y-x)  \big(  \big( 1 + 2 n^{-1/3}K \big)^{-1/2} - 1 \big); \, \textrm{and} \\
R_3 &= - 2^{1/2}n^{1/3}K h_{1,2} \, .
\end{align*}
 
The term $R_1$ is seen to take the form $2^{1/2} n^{1/3} h_{1,2} K - 2^{-1/2} h_{1,2} K^2 + \Theta(n^{-1/6})h_{1,2}$, where the $\Theta(n^{-1/6})$ term is due to $\vert K \vert \leq \Theta(1) n^{1/18}$.
We have that
$$
 R_2 = -2^{1/2} (y-x) K + 2^{-1/2}3 n^{-1/3} (y-x) K^2 + \Theta \big( n^{-2/3} K^3 \big) \vert y - x \vert \, .
$$
Thus, $R_2 = -2^{1/2} (y-x) K + \Theta(n^{-1/6})$, since $\vert x - y \vert \leq 2K$ and $\vert K \vert \leq \Theta(1) n^{1/18}$.

Considering now the parabolically adjusted weight $\weight_n^\cup \big[  \tau_K(x,h_1) \to \tau_K(y,h_2)  \big]$ that is the subject of Proposition~\ref{p.invariance},
we see that it has the form 
$$
\weight_n \big[  \tau_K(x,h_1) \to \tau_K(y,h_2)  \big] + 2^{-1/2} \big( y-x + Kh_{1,2} \big)^2 h_{1,2}^{-1} \, .
$$
We expand the right-hand square $(a+b)^2$, $a = y-x$ and $b = K h_{1,2}$, and make use of the noted forms for $R_1$, $R_2$ and $R_3$, to find that
 $\weight_n^\cup \big[  \tau_K(x,h_1) \to \tau_K(y,h_2)  \big]$  equals (in the sense of the relation $\equiv$)
 $$
 \big(1 + 2 n^{-1/3} K \big)^{1/2}  \weight_n \big[  (x',h_1) \to (y',h_2)  \big] + 2^{-1/2} (y-x)^2 h_{1,2}^{-1} \, + \, \Theta(1)n^{-1/6}h_{1,2} \, .
 $$
Since $x'$ and $y'$ are small perturbations of $x$ and $y$, we can already recognise the desired weight $\weight_n^\cup \big[  (x,h_1) \to (y,h_2)  \big]$ in this display.
We next summarise our progress by stating and proving a lemma that clarifies the form of the discrepancy between the obtained and desired terms. A second lemma offers control on the tail of the discrepancy, so that the two lemmas will directly yield Proposition~\ref{p.invariance}.  We employ the shorthand  $\weight^\cup\Delta (x,y;x',y')$ to denote the parabolically adjusted weight difference
$$
\weight^\cup_{n h_{1,2}} \big[ \big(x'h_{1,2}^{-2/3},0\big)\to \big( y'h_{1,2}^{-2/3},1 \big) \big] -  \weight^\cup_{n h_{1,2}} \big[ \big(x h_{1,2}^{-2/3},0\big)\to \big( y h_{1,2}^{-2/3},1 \big) \big] \, .
$$ 
\begin{lemma}\label{l.obtained.desired}
$$
 \weight_n^\cup \big[  \tau_K(x,h_1) \to \tau_K(y,h_2)  \big]  \equiv  \weight_n^\cup \big[  (x,h_1) \to (y,h_2)  \big]  +  \Theta(1)n^{-1/6}h_{1,2}  + {\rm Error}_1(x,y) + {\rm Error}_2(x,y) \, ,
$$
where  
$$
{\rm Error}_1(x,y) \equiv  \big( 1 + \Theta(n^{-5/18}) \big) \Big( h_{1,2}^{1/3} \weight^\cup\Delta (x,y;x',y')  \,  + \,  \, \Theta(1) h_{1,2}^{-4/3} K^3 n^{-1/3} \Big) \, ;
$$
and 
$$
{\rm Error}_2(x,y) \equiv 
\Theta(n^{-5/18})  \weight_n \big[  (x,h_1) \to (y,h_2)  \big]  \, .
$$
\end{lemma}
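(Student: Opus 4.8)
The plan is to continue the computation directly from the identity established just above the statement. There it was shown, in the sense of $\equiv$, that
\[
\weight_n^\cup \big[ \tau_K(x,h_1) \to \tau_K(y,h_2) \big] \equiv \big( 1 + 2 n^{-1/3} K \big)^{1/2} \weight_n \big[ (x',h_1) \to (y',h_2) \big] + 2^{-1/2} (y-x)^2 h_{1,2}^{-1} + \Theta(1) n^{-1/6} h_{1,2} \, ,
\]
with $x' = x \big( 1 + 2 n^{-1/3} K \big)^{-1}$ and $y' = y \big( 1 + 2 n^{-1/3} K \big)^{-1}$. From here only three essentially algebraic moves remain, each turning a piece of the right-hand side into one of the stated terms; I do not expect a genuine obstacle, since the lemma is a packaging step.

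First, I would rewrite the forward weight $\weight_n \big[ (x',h_1) \to (y',h_2) \big]$ as a perturbation of $\weight_n \big[ (x,h_1) \to (y,h_2) \big]$. The scaling principle of Subsection~\ref{s.scalingprinciple} at index $n h_{1,2}$ is a pathwise identity $\weight_n \big[ (a,h_1) \to (b,h_2) \big] = h_{1,2}^{1/3} \weight_{n h_{1,2}} \big[ (a h_{1,2}^{-2/3}, h_1 h_{1,2}^{-1}) \to (b h_{1,2}^{-2/3}, h_2 h_{1,2}^{-1}) \big]$; composing it with the invariance in law of scaled objects under the vertical shift by $-h_1 h_{1,2}^{-1}$ (an integer multiple of $(n h_{1,2})^{-1}$, applied to the entire noise environment at once) yields $\weight_n \big[ (a,h_1) \to (b,h_2) \big] \equiv h_{1,2}^{1/3} \weight_{n h_{1,2}} \big[ (a h_{1,2}^{-2/3},0) \to (b h_{1,2}^{-2/3},1) \big]$, with the same realisation of the $n h_{1,2}$-indexed field serving simultaneously for $(a,b) = (x,y)$ and $(a,b) = (x',y')$ (which is the joint-in-spatial-variables form that the union-bound argument of Proposition~\ref{p.invariance} needs). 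Subtracting these two instances, and then re-inserting the parabolic adjustment on the right-hand side, converts $\weight_n \big[ (x',h_1) \to (y',h_2) \big] - \weight_n \big[ (x,h_1) \to (y,h_2) \big]$ into $h_{1,2}^{1/3} \weight^\cup\Delta(x,y;x',y')$ plus a parabola-correction term that is a constant multiple of $h_{1,2}^{-4/3} (y-x)^2 \big( 1 - ( 1 + 2 n^{-1/3} K )^{-2} \big)$; since $\vert x - y \vert \leq 2K$ and $\vert K \vert \leq \Theta(1) n^{1/18}$, this last term is of size $\Theta(1) h_{1,2}^{-4/3} K^3 n^{-1/3}$.

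Second, I would expand the prefactor. Because $\vert K \vert \leq \Theta(1) n^{1/18}$ forces $2 n^{-1/3} K = \Theta(n^{-5/18})$, we have $( 1 + 2 n^{-1/3} K )^{1/2} = 1 + \Theta(n^{-5/18})$. Distributing this factor over the decomposition obtained in the first step splits off precisely the term $\Theta(n^{-5/18}) \weight_n \big[ (x,h_1) \to (y,h_2) \big]$, which is ${\rm Error}_2(x,y)$, and leaves $\big( 1 + \Theta(n^{-5/18}) \big)$ times $\big( h_{1,2}^{1/3} \weight^\cup\Delta(x,y;x',y') + \Theta(1) h_{1,2}^{-4/3} K^3 n^{-1/3} \big)$, which is ${\rm Error}_1(x,y)$. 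Third, the parabola $2^{-1/2} (y-x)^2 h_{1,2}^{-1}$ inherited from the opening identity combines with $\weight_n \big[ (x,h_1) \to (y,h_2) \big]$ to form $\weight_n^\cup \big[ (x,h_1) \to (y,h_2) \big]$ by the very definition of parabolically adjusted weight, and the residual $\Theta(1) n^{-1/6} h_{1,2}$ is retained unchanged; assembling the four pieces produces the claimed identity.

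The only points needing real attention are bookkeeping ones: verifying that the chain of $\equiv$ relations composes correctly --- each link being a symmetry of the Brownian noise environment (horizontal scaling via Lemma~\ref{l.invariance}, the deterministic scaling principle, and vertical translation invariance), so that the compositions remain valid --- and keeping honest track of the orders of the error terms, in particular the powers of $h_{1,2}$ (which the sequel will ultimately evaluate at $h_{1,2} = \Theta(1)$) and the exponent $5/18$, which arises as $n^{-1/3} \cdot n^{1/18}$ and is therefore pinned down by the hypothesis $\vert K \vert \leq \Theta(1) n^{1/18}$.
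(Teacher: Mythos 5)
Your proposal is correct and follows essentially the same route as the paper: expand the prefactor $(1+2n^{-1/3}K)^{1/2}=1+\Theta(n^{-5/18})$ using $\vert K\vert\leq\Theta(1)n^{1/18}$, set $E=\weight_n[(x',h_1)\to(y',h_2)]-\weight_n[(x,h_1)\to(y,h_2)]$, convert $E$ via the scaling principle and vertical translation invariance into $h_{1,2}^{1/3}\weight^\cup\Delta(x,y;x',y')$ plus a parabolic correction of order $\Theta(1)h_{1,2}^{-4/3}K^3n^{-1/3}$, and reassemble so that the $\Theta(n^{-5/18})$ factor acting on $\weight_n[(x,h_1)\to(y,h_2)]$ becomes ${\rm Error}_2$. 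Your explicit attention to the fact that the scaling and shift act on the whole noise field at once, so the $\equiv$ holds jointly in the spatial variables, is a point the paper leaves implicit but which is indeed needed for the subsequent union-bound application.
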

{\bf Proof.}
Setting $E = \weight_n \big[  (x',h_1) \to (y',h_2)  \big] -  \weight_n \big[  (x,h_1) \to (y,h_2)  \big]$, and recalling that $\vert K \vert$ is at most $\Theta(1)n^{1/18}$, we find that
\begin{eqnarray*}
 \weight_n^\cup \big[  \tau_K(x,h_1) \to \tau_K(y,h_2)  \big] 
& \equiv & \big( 1 + \Theta(n^{-5/18}) \big)  \Big( \weight_n^\cup \big[  (x,h_1) \to (y,h_2)  \big] + E \Big) \\
& = & \weight_n^\cup \big[  (x,h_1) \to (y,h_2)  \big]  + {\rm Error} \, ,
\end{eqnarray*}
with  ${\rm Error} = {\rm Error}_1 + {\rm Error}_2
  +  \Theta(1)n^{-1/6}h_{1,2}$, where  ${\rm Error}_2$ has the form asserted in the lemma; and where ${\rm Error}_1 = 
 \big( 1 + \Theta(n^{-5/18}) \big) E$, for  a system $E$ that  by the scaling principle is seen to satisfy
$$
 E \, \equiv \, h_{1,2}^{1/3} \Big( \weight_{n h_{1,2}} \big[ \big(x'h_{1,2}^{-2/3},0\big)\to \big( y'h_{1,2}^{-2/3},1 \big) \big] -  \weight_{n h_{1,2}} \big[ \big(x h_{1,2}^{-2/3},0\big)\to \big( y h_{1,2}^{-2/3},1 \big) \big]  \Big) \, ;
$$
and thus 
$$
E \, \equiv  \, h_{1,2}^{1/3} \weight^\cup\Delta (x,y;x',y')  \,   - \, 2^{-1/2} (x' - y')^2 h_{1,2}^{-4/3} \, + \,  2^{-1/2} (x - y)^2 h_{1,2}^{-4/3} \, .
$$
Note that $2^{-1/2} (x' - y')^2 h_{1,2}^{-4/3}  -   2^{-1/2} (x - y)^2 h_{1,2}^{-4/3}$ equals 
$2^{-1/2} h_{1,2}^{-4/3} \Theta \big( \vert y - x \vert ( \vert x \vert + \vert y \vert ) \big) n^{-1/3}K$,
since $x' = x \big( 1 + \Theta(n^{-1/3}K) \big)$ and  $y' = y \big( 1 + \Theta(n^{-1/3}K) \big)$. Note that  $\Theta \big( \vert y - x \vert ( \vert x \vert + \vert y \vert ) \big) = \Theta(K^2)$
since $\vert x \vert$ and $\vert y \vert$ are at most $2K$; we have obtained the desired form for ${\rm Error}_1$ and thus Lemma~\ref{l.obtained.desired}. \qed 
\begin{lemma}\label{l.twoerrors}
Recall that $x,y \in \R$ are in absolute value at most $2K$.
\begin{enumerate}
\item Suppose that $h_{1,2} \leq \Theta(1)$,  $n h_{1,2} \geq \Theta(1)$ and
  $\Theta(1) h_{1,2}^{-1/18} \leq   K  \leq \Theta(1) n^{1/18} h_{1,2}^{13/18}$.
Then 
 $\P \Big(  \sup_{\vert x \vert , \vert y \vert \leq K} \big\vert {\rm Error}_1(x,y) \big\vert \geq \Theta(n^{-1/9}) \Big) \leq
 \Theta(1) n^{2/3} K^{-2} \exp \big\{ - \Theta(1) n^{1/12} K^{-3/2} \big\}$.
\item Suppose that  $n \geq \Theta(1)$, $K \leq \Theta(1) n^{1/18}$ and $h_{1,2} = \Theta(1)$.  Then 
$$
\P \Big(  \sup_{\vert x \vert , \vert y \vert \leq K}  \big\vert {\rm Error}_2(x,y) \big\vert \geq \Theta(n^{-1/6}) \Big) \leq
\Theta(1) K^2  \exp \big\{ - \Theta(1) n^{1/12} \big\}  \, .
$$
\end{enumerate}
\end{lemma}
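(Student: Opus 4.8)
\textbf{Proof of Lemma~\ref{l.twoerrors}: proposal.}
The plan is to treat the two parts in turn, in each case reducing the claimed supremum bound to a union bound over a suitable mesh of the box $\{|x|,|y|\le K\}$, where the per-cell estimates come from the one-point weight tail bounds of Lemma~\ref{l.onepointbounds} and from the approximate Brownianity of forward and backward weight profiles furnished by Theorem~\ref{t.debridge}. For Part (1), recall from Lemma~\ref{l.obtained.desired} that $\mathrm{Error}_1(x,y)\equiv(1+\Theta(n^{-5/18}))\big(h_{1,2}^{1/3}\weight^\cup\Delta(x,y;x',y')+\Theta(1)h_{1,2}^{-4/3}K^3n^{-1/3}\big)$, with $x'=x(1+2n^{-1/3}K)^{-1}$ and $y'=y(1+2n^{-1/3}K)^{-1}$. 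Under the hypothesis $K\le\Theta(1)n^{1/18}h_{1,2}^{13/18}$ the deterministic term $\Theta(1)h_{1,2}^{-4/3}K^3n^{-1/3}$ is at most $\Theta(1)h_{1,2}^{5/6}n^{-1/6}\le\Theta(n^{-1/6})$, hence harmless; so it remains to bound $\sup_{|x|,|y|\le K}h_{1,2}^{1/3}|\weight^\cup\Delta(x,y;x',y')|$. Writing $m=nh_{1,2}$, $a=xh_{1,2}^{-2/3}$, $b=yh_{1,2}^{-2/3}$, $a'=x'h_{1,2}^{-2/3}$, $b'=y'h_{1,2}^{-2/3}$, we telescope
\[
\weight^\cup\Delta=\Big(\weight^\cup_m\big[(a',0)\to(b',1)\big]-\weight^\cup_m\big[(a',0)\to(b,1)\big]\Big)+\Big(\weight^\cup_m\big[(a',0)\to(b,1)\big]-\weight^\cup_m\big[(a,0)\to(b,1)\big]\Big),
\]
so that each term is, up to a deterministic parabolic/linear correction of size $\Theta(K^3h_{1,2}^{-4/3}n^{-1/3})\le\Theta(n^{-1/6})$, an increment of the parabolically adjusted forward profile $z\to\weight_m[(a',0)\to(z,1)]+2^{-1/2}(z-a')^2$ (respectively the backward profile) over a horizontal displacement of size at most $\delta_m:=\Theta(n^{-1/3}K^2h_{1,2}^{-2/3})$.

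By translation invariance of Brownian LPP and Proposition~\ref{p.shiftbrownian} (together with the references underlying Lemma~\ref{l.shiftbrownian}), each such adjusted profile is, on the relevant interval of half-length $\Theta(Kh_{1,2}^{-2/3})$, the top curve of a $(c,C)$-regular ensemble of $\Theta(m)$ curves, hence, after the further linear shift, $(g,G,\Theta(Kh_{1,2}^{-2/3}),\Theta(m),1)$-Brownian by Theorem~\ref{t.debridge}; one checks the hypotheses $\Theta(Kh_{1,2}^{-2/3})\ge1$ and $\Theta(Kh_{1,2}^{-2/3})\le\Theta(m^{1/9})$ using the bounds on $K$ and $h_{1,2}$. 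We then tile the $(a,b)$-box, which has side $\Theta(Kh_{1,2}^{-2/3})$, by $\Theta(n^{2/3}K^{-2})$ sub-cells of side $\delta_m$; on each cell the probability that Brownian motion oscillates by more than $c_0h_{1,2}^{-1/3}n^{-1/9}$ is at most $\eta_0=\exp\{-\Theta(1)n^{1/9}K^{-2}\}$, provided $\eta_0\ge e^{-gm^{1/12}}$, and otherwise we use $\eta_0=e^{-gm^{1/12}}$; since $K\ge\Theta(1)h_{1,2}^{-1/18}$ forces $n^{1/12}K^{-3/2}\le\Theta(1)(nh_{1,2})^{1/12}$, in either regime $\eta_0\le\exp\{-\Theta(1)n^{1/12}K^{-3/2}\}$. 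Passing from Brownian motion to the adjusted profiles via Definition~\ref{d.verybrownian} multiplies this by the sub-polynomial factor $\exp\{G\ipdval(\log\eta_0^{-1})^{5/6}\}$, which is absorbed, and a union bound over the $\Theta(n^{2/3}K^{-2})$ cells yields the asserted estimate for Part (1).

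For Part (2), since $\mathrm{Error}_2(x,y)\equiv\Theta(n^{-5/18})\weight_n[(x,h_1)\to(y,h_2)]$, the event in question is contained in $\{\sup_{|x|,|y|\le K}|\weight_n[(x,h_1)\to(y,h_2)]|\ge\Theta(n^{1/9})\}$. Because $h_{1,2}=\Theta(1)$, the scaling principle of Subsection~\ref{s.scalingprinciple} reduces matters, up to constants, to the unit-lifetime field $\weight^\cup_{nh_{1,2}}$, whose one-point law obeys the stretched-exponential tails of Lemma~\ref{l.onepointbounds} (applicable since $|x-y|\le2K\le\Theta(n^{1/18})\le c(nh_{1,2})^{1/9}$ for $n\ge n_0$); as the parabolic correction $2^{-1/2}(y-x)^2h_{1,2}^{-1}$ is $\Theta(K^2)=\Theta(n^{1/9})$, taking the target constant large enough relative to the constant in $K\le\Theta(n^{1/18})$ makes $|\weight_n|\ge\Theta(n^{1/9})$ force $|\weight^\cup_{nh_{1,2}}|\ge\Theta(n^{1/9})$. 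Tiling $\{|x|,|y|\le K\}$ by $\Theta(K^2)$ unit cells, on each cell we bound the supremum of $|\weight^\cup_{nh_{1,2}}|$ by its value at the cell centre, whose tail is $C\exp\{-c n^{1/6}\}$ by Lemma~\ref{l.onepointbounds}, plus the oscillation over the cell, which as in Part (1) (via the forward/backward Brownianity of Theorem~\ref{t.debridge}) exceeds half the target with probability at most $\exp\{-\Theta(1)\min\{n^{2/9},(nh_{1,2})^{1/12}\}\}\le\exp\{-\Theta(1)n^{1/12}\}$, the cap at $(nh_{1,2})^{1/12}$ being again forced by the validity range of Definition~\ref{d.verybrownian}. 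A union bound over the $\Theta(K^2)$ cells completes Part (2).

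The delicate step is Part (1): converting the pointwise observation ``nearby endpoints produce a small weight increment'' into a bound uniform over the $(a,b)$-box that carries the correct prefactor $n^{2/3}K^{-2}$, while tracking how the target scale $h_{1,2}^{-1/3}n^{-1/9}$ sits relative both to the Brownian typical oscillation $\delta_m^{1/2}$ and to the floor $\eta\ge e^{-gm^{1/12}}$ below which the comparison of Definition~\ref{d.verybrownian} ceases to apply. It is precisely this floor---rather than a raw Gaussian tail, which would give the stronger but unusable exponent $n^{1/9}K^{-2}$---that produces the exponent $n^{1/12}K^{-3/2}$ in the statement. With this bookkeeping in hand, Lemma~\ref{l.obtained.desired} together with the two parts just established yields Proposition~\ref{p.invariance} on specializing to $h_{1,2}=\Theta(1)$.
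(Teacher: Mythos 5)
Your overall architecture matches the paper's: you start from Lemma~\ref{l.obtained.desired}, dispose of the deterministic term $\Theta(1)h_{1,2}^{-4/3}K^3n^{-1/3}$ using the upper bound on $K$, reduce Part~(1) to a tail bound on $\sup_{|x|,|y|\le K}|\weight^\cup\Delta(x,y;x',y')|$, and your mesh of $\Theta(n^{2/3}K^{-2})$ cells of side $\Theta(n^{-1/3}K^2h_{1,2}^{-2/3})$ is exactly the covering the paper uses to produce the prefactor. The numerology of the exponent also checks out: under $K\ge\Theta(1)h_{1,2}^{-1/18}$ both of your candidate rates, $n^{1/9}K^{-2}$ and $(nh_{1,2})^{1/12}$, dominate $n^{1/12}K^{-3/2}$.

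There is, however, a genuine gap at the per-cell estimate, and it is precisely where the paper instead invokes \cite[Theorem~$1.1$]{ModCon} (and, for Part~(2), \cite[Proposition~$1.5$]{ModCon}). The quantity you must control on each cell is the \emph{two-parameter} supremum $\sup_{u_1,u_2\in I,\,v_1,v_2\in J}\big|\weight^\cup_m[(u_1,0)\to(v_1,1)]-\weight^\cup_m[(u_2,0)\to(v_2,1)]\big|$. Your telescoping writes a single difference as a forward-profile increment plus a backward-profile increment, but the forward profile $z\mapsto\weight^\cup_m[(a',0)\to(z,1)]$ is a \emph{different} random function for each $a'$, and $a'$ sweeps the entire cell as $x$ varies; likewise the backward profile depends on the varying $b$. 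A single application of Theorem~\ref{t.debridge} per cell therefore controls the oscillation of one fixed profile, not the supremum over the continuum of profiles indexed by the other endpoint. Closing this requires either a chaining argument in both coordinates simultaneously or an appeal to the quoted results of \cite{ModCon}, which are exactly the black-box two-parameter modulus-of-continuity statements the paper uses; your sketch does not supply that step, and the same issue recurs in your oscillation bound for Part~(2). Relatedly, your diagnosis that the exponent $n^{1/12}K^{-3/2}$ is forced by the floor $\eta\ge e^{-gm^{1/12}}$ of Definition~\ref{d.verybrownian} is not how the paper obtains it: there it arises as $\mathbf{R}^{3/2}$ with $\mathbf{R}=\Theta(1)n^{1/18}K^{-1}$, the stretched-exponential tail exponent of \cite[Theorem~$1.1$]{ModCon} capped by that theorem's hypothesis on $\mathbf{R}$. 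Your mechanism happens to give a compatible rate under the hypothesis $K\ge\Theta(1)h_{1,2}^{-1/18}$, but it is not the operative one, and it cannot substitute for the missing two-parameter control.
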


{\bf Proof of Proposition~\ref{p.invariance}.} Following directly from the two preceding lemmas is a form of the sought result in which $h_1$ and $h_2$ are given; the factor of $\Theta(1)n^2$ is absent from line~(\ref{e.invariance}) in this version.
When a sum indexed by $h_1 \in n^{-1}\Z \cap I_1$ and $h_2 \in n^{-1}\Z \cap I_2$ is performed, the factor of $\Theta(1) n^2$ enters the right-hand side. \qed 

{\bf Proof of Lemma~\ref{l.twoerrors}:(1).}
Our argument principally rests on establishing that
\begin{eqnarray} & & \P \bigg( \sup \Big\{     \big\vert \weight^\cup\Delta (x,y;x',y') \big\vert  :  \vert x \vert, \vert y \vert \leq K    \Big\} 
  \geq  
 \Theta(1) h_{1,2}^{-1/3} n^{-1/9}  \bigg) \label{e.weightcup} \\
 & \leq &  \Theta(1) n^{2/3} K^{-2}  \exp \big\{ - \Theta(1) n^{1/12} K^{-3/2} \big\}
  \, . \nonumber
\end{eqnarray}
 Indeed,  ${\rm Error}_1(x,y)$ is a sum of two terms, and the displayed bound controls the first. The second, $\Theta(1) h_{1,2}^{-4/3} K^3 n^{-1/3}$,
 is at most $\Theta(n^{-1/9})$ because the needed condition $K \leq \Theta(1) h_{1,2}^{4/9} n^{2/27}$ is implied by the hypotheses of  Lemma~\ref{l.twoerrors}(1); thus, it indeed suffices to prove~(\ref{e.weightcup}).

The difference  $\weight^\cup\Delta (x,y;x',y')$  of parabolic weights will be addressed by~\cite[Theorem~$1.1$]{ModCon}.
Indeed, if we denote by $I$ and $J$ two compact real intervals, and set, for $r > 0$,
$$
p_{I,J}(r) = \P \bigg( \sup \Big\{ \weight^\cup_{nh_1} \big[ (u_1,0) \to (v_1,1) \big] -  \weight^\cup_{nh_1} \big[ (u_2,0) \to (v_2,1) \big] : u_1,u_2 \in I , v_1,v_2 \in J \Big\} \geq r  \bigg) \, ,
$$
then~\cite[Theorem~$1.1$]{ModCon} will shortly provide an upper bound on $p_{I,J}(r)$. To make a choice of the interval-pair $(I,J)$ for which such a bound will aid our derivation of~(\ref{e.weightcup}), we begin by noting that 
$$
 \max \big\{ \vert x' - x \vert , \vert y' - y \vert \big\} \leq 4K^2 n^{-1/3} \, .
$$ 
Indeed, $\vert x' - x \vert = \vert x \vert \big( 1 - (1 + 2 n^{-1/3}K)^{-1} \big) \leq 8 K^2 n^{-1/3}$ since $\vert x \vert \leq 2K$ and $2n^{-1/3}K \leq 1/2$; and similarly for $\vert y' -y \vert$.
In considering, as we do, $\weight^\cup\Delta (x,y;x',y')$, we see that the pair of starting endpoints $x' h_{1,2}^{-2/3}$ and $x h_{1,2}^{-2/3}$ are at distance at most $8K^2 n^{-1/3} h_{1,2}^{-2/3}$ and have absolute value at most $2K h_{1,2}^{-2/3}$. As such, we may find a set $\mc{I}$ of compact intervals, each contained in $\big[-2K h_{1,2}^{-2/3}, 2K h_{1,2}^{-2/3} \big]$ and of length~$16K^2 n^{-1/3} h_{1,2}^{-2/3}$, in such a way that $\vert \mc{I}\vert \leq \Theta(1) n^{1/3} K^{-1}$ while, for every $x \in [-K,K]$, there exists $I \in \mc{I}$ for which $x,x' \in I$. 
 
This of course implies that, for any $x,y \in \R$ in absolute value at most $2K$, we may find $(I,J) \in \mc{I}^2$ such that $x,x' \in I$ and $y,y' \in J$. By a union bound indexed by $\mc{I}^2$, we see that the left-hand side of~(\ref{e.weightcup}) is at most 
\begin{equation}\label{e.ij}
 \Theta(1) n^{2/3} K^{-2} \cdot
p_{I,J}\Big( \Theta(1) h_{1,2}^{-1/3} n^{-1/9} \Big) \, , 
\end{equation}
where $(I,J)$  is some element of $\mc{I}^2$. 
Thus we see that our usage of~\cite[Theorem~$1.1$]{ModCon}  should be made so as to find an upper bound on $p_{I,J}\big( \Theta(1) h_{1,2}^{-1/3} n^{-1/9} \big)$.
We set this result's parameters so that ${\bf n} = n h_{1,2}$; ${\bf x}$ and ${\bf y}$ are the left endpoints of the intervals $I$ and $J$ in~(\ref{e.ij}); ${\bm \e}$ equals the interval length $8 K^2 n^{-1/3} h_{1,2}^{-2/3}$; and with ${\bf R}$ chosen so that ${\bm \e}^{1/2} {\bf R} =  \Theta(1) h_{1,2}^{-1/3} n^{-1/9}$---which is to say, ${\bf R} = \Theta(1) n^{1/18} K^{-1}$.
 What we learn from this application of~\cite[Theorem~$1.1$]{ModCon}  is that the $p_{I,J}$ term in~(\ref{e.ij}) is at most $\exp \big\{ - \Theta(1) n^{1/12} K^{-3/2} \big\}$.
Thus are~(\ref{e.weightcup}) and Lemma~\ref{l.twoerrors}(1) obtained. It remains only to verify that the hypotheses of  Lemma~\ref{l.twoerrors}(1) are adequate to permit the application of~\cite[Theorem~$1.1$]{ModCon}. The application requires five conditions:
\begin{eqnarray*}
& &  [1] \, \, K^2 n^{-1/3} h_{1,2}^{-2/3} \leq \Theta(1) \, ;
 [2] \, \,  n h_{1,2} \geq \Theta(1) ,\ ;
 [3] \, \,  K h_{1,2}^{-2/3} \leq \Theta(1) n^{1/18} h_{1,2}^{1/18} \, ; \\
 & & [4] \, \, n^{1/18} K^{-1} \geq \Theta(1) \, ;
 \, \, \textrm{and} \,\, [5] \, \, n^{1/18} K^{-1} \leq \Theta(1)n^{1/18} h_{1,2}^{1/18} \, .
\end{eqnarray*}
 We consider the further conditions
 $$
 [6] \, \, K \leq n^{1/6} h_{1,2}^{1/3}  \Theta(1) \, ;  [7] \, \,  K  \leq \Theta(1) n^{1/18} h_{1,2}^{13/18} \, ;
 [8] \, \, h_{1,2} \leq 1 \, ; \, \, \textrm{and} \, \,
 [9] \, \,  K \geq \Theta(1) h_{1,2}^{-1/18}
 $$ 
Denoting equivalence and implication by $\leftrightarrow$ and $\to$, note that 
   $[1] \leftrightarrow [6]$; $[3] \leftrightarrow [7]$; $[7] \to [6]$; $[7,8] \to 4$; and 
 $[5] \leftrightarrow [9]$. Thus $[2,7,8,9] \to [1,2,3,4,5]$. Since $[2,7,8,9]$ are the hypotheses of Lemma~\ref{l.twoerrors}(1)
 and  $[1,2,3,4,5]$ permit its proof, this completes the needed hypothesis verification for this result. \qed

 {\bf (2).} The tail of the parabolically shifted weight $\weight^\cup_n \big[  (x,h_1) \to (y,h_2)  \big]$
is addressed by~\cite[Proposition~$1.5$]{ModCon}. Indeed, when this result is applied via the scaling principle, and with parameter settings ${\bf n} = n h_{1,2}$,
 ${\bf x} = x h_{1,2}^{-2/3}$, ${\bf y} = y h_{1,2}^{-2/3}$ and  ${\bf t} = R$, the outcome is the bound 
$$
 \P  \bigg( \sup \Big\{ \big\vert \weight^\cup_n \big[  (x+u,h_1) \to (y+v,h_2)  \big] \big\vert : u \in h_{1,2}^{2/3} \cdot [0,1] , v \in  h_{1,2}^{2/3}\cdot [0,1] \Big\} \geq h_{1,2}^{1/3} R  \bigg) \leq  
 \exp \big\{ - \Theta(1) R^{3/2} \big\} \, ,
$$
where we should bear in mind for the upcoming selection of $R > 0$ that this application demands that $R \leq \Theta(1) n^{1/18} h_{1,2}^{1/18}$.
A union bound over a mesh of  order $\big( Kh_{1,2}^{-2/3} \big)^2$ points $(x,y) \in [-K,K]^2$ then yields that
$$
 \P  \bigg( \sup \Big\{ \big\vert \weight^\cup_n \big[  (x,h_1) \to (y,h_2)  \big] \big\vert : x,y \in [-K,K]  \Big\} \geq h_{1,2}^{1/3} R  \bigg) \leq  
 \Theta(1) K^2 h_{1,2}^{-4/3} \exp \big\{ - \Theta(1) R^{3/2} \big\} \, .
$$
If we set   $h_{1,2}^{1/3} R$ equal to $\Theta(1)n^{1/18}$---a choice made so that the needed upper bound on $R$ is satisfied, provided that $h_{1,2}$ has unit order $\Theta(1)$, as we impose that it does---then this right-hand side takes the form $\Theta(1) K^2 h_{1,2}^{-4/3} \exp \big\{ - \Theta(1) n^{1/12} h_{1,2}^{-1/2} \big\}$.
 Since $h_{1,2} = \Theta(1)$, this has the form of the right-hand quantity in the bound maintained by Lemma~\ref{l.twoerrors}(2); thus,
if
the superscript~$\cup$ for $\weight_n$ were omitted from the preceding display---so that weight without parabolic shift were instead addressed---we would obtain the result that we seek.
 This alteration is permitted because the parabolic term $(y-x)^2 h_{1,2}^{-1}$, being at most $4K^2 h_{1,2}^{-1}$, is at most $\Theta(1) n^{2/9}$, in view  $K \leq \Theta(1) n^{1/9}$ (which is implied by our hypothesis on $K$) and $h_{1,2} = \Theta(1)$.  Thus, we obtain  Lemma~\ref{l.twoerrors}(2), subject to verifying that the hypotheses of this result are enough to permit usage of inputs during its proof. These inputs are 
the application of~\cite[Proposition~$1.5$]{ModCon}, which requires the conditions 
$$
 [1] \, \,  n h_{1,2} \geq \Theta(1) ; \, [2] \, \,  \vert x - y \vert h_{1,2}^{-2/3} \leq \Theta(1) n^{1/18} h_{1,2}^{2/3} ; \,
 [3] \, \, R \geq \Theta(1) ; \, \, \textrm{and} \, \,
 [4] \, \, R \leq \Theta(1)n^{1/18} h_{1,2}^{1/18} \, ;
 $$
 and control on a parabolic term, which requires 
 $[5] \, \, K \leq n^{1/9} h_{1,2}^{1/2}$.
 Recall that $\vert x - y \vert \leq 2K$, since $\vert x \vert$ and $\vert y \vert$ are at most $K$. In the preceding proof, we have set $R = \Theta(1)n^{1/18}h_{1,2}^{-1/3}$.
 The hypotheses of Lemma~\ref{l.twoerrors}(2), namely that $K \leq \Theta(1) n^{1/18}$, $n \geq \Theta(1)$ and $h_{1,2} = \Theta(1)$, are readily seen to imply $[1,2,3,4,5]$; so the needed hypothesis verification has been carried out for the proof of  Lemma~\ref{l.twoerrors}(2).  \qed
 
 \subsection{Local energetic fluctuation}
 
 In this short section, we offer control in Proposition~\ref{p.microfluc} on the tail of large local fluctuation in the unscaled geodesic energy, relying on oscillation estimates for Brownian motion which constitutes the underlying noise.
 
\begin{definition}
Let $n \in \N$ and $K,\sigma >0$. Let $\Delta_n(K,\sigma) \subseteq \big( \R \times \Z \big)^2$
denote the set of pairs $(x_1,s_1),(x_2,s_2) \in \R \times \Z$ with $x_1 \leq x_2 \leq x_1 + \sigma$;  $s_1 \leq s_2 \leq s_1 + \sigma$; and with each of $x_1$, $x_2$, $s_1$ and~$s_2$ at most $nK$ in absolute value.
\end{definition}

\begin{proposition}\label{p.microfluc}
For $K,\sigma > 0$ and  $r \geq 0$,
  \begin{eqnarray*}
 & & \P \bigg( \sup \Big\{ \big\vert M \big[ (x_1,s_1)\to(x_2,s_2) \big] \big\vert : \big(  (x_1,s_1),(x_2,s_2) \big) \in \Delta_n(K,\sigma) \Big\}  \geq r \bigg) \\ 
 & \leq & n \cdot 2^5 \pi^{-1/2} K \sigma^{-1/2} (\sigma + 1) r^{-1} \exp \big\{ - 2^{-4} r^2  (\sigma + 1)^{-2} \sigma^{-1} \big\} \, .
  \end{eqnarray*}
\end{proposition}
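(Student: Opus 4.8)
The plan is to reduce this statement about the unscaled geodesic energy $M\big[(x_1,s_1)\to(x_2,s_2)\big]$ to an oscillation estimate for the underlying Brownian noise. Recall that $M\big[(x_1,s_1)\to(x_2,s_2)\big]$ is, by definition, a supremum over non-decreasing lists $z_k$ of $\sum_{k=s_1}^{s_2}\big(B(z_{k+1},k)-B(z_k,k)\big)$, with $z_{s_1}=x_1$ and $z_{s_2+1}=x_2$. When $(x_1,s_1),(x_2,s_2)\in\Delta_n(K,\sigma)$, the index set $\llbracket s_1,s_2\rrbracket$ contains at most $\sigma+1$ integers, and every $z_k$ lies in $[x_1,x_2]\subseteq[x_1,x_1+\sigma]$; moreover $|x_1|,|x_2|\le nK$. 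Hence each summand $B(z_{k+1},k)-B(z_k,k)$ is an increment of the Brownian motion $B(\cdot,k)$ across an interval of length at most $\sigma$, contained in $[-nK,nK]$. Therefore
\begin{equation}\label{e.microflucbound}
\big\vert M\big[(x_1,s_1)\to(x_2,s_2)\big]\big\vert \;\le\; (\sigma+1)\cdot \max_{\,|k|\le nK}\ \operatorname{osc}_{[-nK,nK],\sigma}\big(B(\cdot,k)\big)\,,
\end{equation}
where $\operatorname{osc}_{[-nK,nK],\sigma}(f)=\sup\{|f(u)-f(v)|: u,v\in[-nK,nK],\,|u-v|\le\sigma\}$ is the modulus of continuity of $f$ at scale $\sigma$. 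The point is that the supremum over $\Delta_n(K,\sigma)$ on the left of the proposition is dominated by $(\sigma+1)$ times a single modulus-of-continuity quantity for the finitely many relevant Brownian curves.

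Next I would bound the probability that this modulus of continuity exceeds $r/(\sigma+1)$ for a fixed curve $B(\cdot,k)$. Partition $[-nK,nK]$ into $\lceil 2nK/\sigma\rceil$ consecutive blocks of length $\sigma$; any two points at distance at most $\sigma$ lie in at most two adjacent blocks, so the oscillation at scale $\sigma$ over $[-nK,nK]$ is at most twice the maximum over the blocks of the oscillation of $B(\cdot,k)$ over a single length-$\sigma$ block. For one block, Brownian scaling reduces the oscillation over an interval of length $\sigma$ to $\sigma^{1/2}$ times the oscillation of standard Brownian motion over $[0,1]$, and $\PP\big(\operatorname{osc}_{[0,1]}(B)\ge t\big)\le \PP\big(\sup_{[0,1]}B - \inf_{[0,1]}B\ge t\big)$, which by the reflection principle and a union bound is at most $4\,\PP\big(B(1)\ge t/2\big)\le 4(2\pi)^{-1/2}(t/2)^{-1}\exp\{-t^2/8\}$ via the standard Gaussian tail bound. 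Combining the block union bound (over $\lceil 2nK/\sigma\rceil$ blocks), the union bound over the at most $2nK+1$ relevant values of $k$, and the factor $\sigma+1$ from \eqref{e.microflucbound}, and then bookkeeping the constants and the $\sigma$-dependence (each block contributes $\sigma^{1/2}$, so the threshold $r/(\sigma+1)$ translates into a standard-Brownian threshold $t = r\sigma^{-1/2}(\sigma+1)^{-1}$), yields a bound of the claimed shape $n\cdot\Theta(1)\,K\,\sigma^{-1/2}(\sigma+1)\,r^{-1}\exp\{-\Theta(1)r^2(\sigma+1)^{-2}\sigma^{-1}\}$; one then checks that the explicit constants $2^5\pi^{-1/2}$ and $2^{-4}$ are admissible after accounting for the factors of $2$ from reflection, adjacent blocks, and the halving in the Gaussian tail.

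There is no serious obstacle here; the argument is entirely elementary. The main thing to be careful about is keeping the dependence on $\sigma$ correct through the two scalings: the prefactor $(\sigma+1)$ from the number of Brownian increments in \eqref{e.microflucbound} and the $\sigma^{1/2}$ from Brownian scaling of each increment combine to give the $\sigma^{-1/2}(\sigma+1)^{-2}$ in the exponent and the $\sigma^{-1/2}(\sigma+1)$ in the polynomial prefactor, and the number of length-$\sigma$ blocks in $[-nK,nK]$ contributes the linear-in-$nK$, inverse-in-$\sigma$ combinatorial factor that, together with the $(2nK+1)$ curves, produces the overall $n\cdot K$ (the block count $2nK/\sigma$ is absorbed with the $\sigma^{-1/2}$ by bounding $\sigma^{-1}\le \sigma^{-1/2}(\sigma+1)$ for, say, all $\sigma$ after suitably enlarging the constant, or more simply by noting $2nK/\sigma \le 2nK\sigma^{-1/2}(\sigma+1)$). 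A final routine verification is that the stated numerical constants are not too small; this just amounts to tracking the factors $4$ (reflection plus symmetry), $2$ (adjacent blocks), $1/2$ (Gaussian threshold), and collecting powers of $2$ and $\pi^{-1/2}$.
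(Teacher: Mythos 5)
Your argument is, modulo bookkeeping, the paper's own: the proof there bounds the supremum over $\Delta_n(K,\sigma)$ by $(\sigma+1)$ times the worst scale-$\sigma$ modulus of continuity of the curves $B(\cdot,s)$, and then establishes exactly your grid-plus-reflection estimate as Lemma~\ref{l.brownianmodcon}; so the reduction and the probabilistic input are both correct, and there is no conceptual gap. The deferred constant check does not, however, come out quite as you assert, in two respects. First, because you measure the full two-sided oscillation $\sup B-\inf B$ over each length-$\sigma$ block, you pay two successive halvings of the threshold (one to pass to adjacent blocks, one to pass from $\sup B-\inf B$ to $\sup\vert B\vert$), and the exponent your computation actually yields is $2^{-5}r^2(\sigma+1)^{-2}\sigma^{-1}$ rather than the stated $2^{-4}r^2(\sigma+1)^{-2}\sigma^{-1}$; the paper avoids the second halving by anchoring at the nearest grid point $x\in\sigma\Z$ on the left and controlling $\sup_{y\in[x,x+2\sigma]}\vert B(y)-B(x)\vert$ over intervals of length $2\sigma$, which is how $2^{-4}$ is recovered. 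Second, your own accounting multiplies the block count $\Theta(nK\sigma^{-1})$ by the curve count $\Theta(nK)$, which produces a prefactor of order $n^2K^2\sigma^{-1/2}(\sigma+1)r^{-1}$, not the single factor of $nK$ that you record; the displayed bound in the proposition is precisely the single-curve output of Lemma~\ref{l.brownianmodcon} with $L=2nK$, and the union over the $\Theta(nK)$ curves is not visibly carried in the paper's proof either. Neither discrepancy matters where the proposition is used, since the Gaussian factor swamps any polynomial prefactor and any fixed positive constant in the exponent would serve, but if you intend to claim the literal constants you should switch to the anchored form of the oscillation and either track or concede the extra polynomial factor.
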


 Let $L > 0$, and let $f:[0,L] \to \R$. For $I \subseteq [0,L]$ and $r \in (0,L]$, set 
 $$
 \omega_I(f,r) \, 
 = \,  \sup \, \Big\{ \, \vert f(y) - f(x) \vert : 0 \leq x \leq y \leq x+r \leq L \, , \, x \in I \, \Big\} \, .
 $$
 In this way, $(0,L] \to \R: r \to \omega_{[0,L]}(f,r)$ is the modulus of continuity of the function $f$.
 \begin{lemma}\label{l.brownianmodcon}
 For $L > 0$,  let $B:[0,L] \to \R$ have the law of standard Brownian motion. For  $r \in (0,L]$ and $x > 0$,
 $$
  \P \Big( \omega_{[0,L]}\big( B, r \big) \geq x  \Big) \leq 2^4 \pi^{-1/2} L r^{-1/2} x^{-1} \exp \big\{ - 2^{-4} x^2 r^{-1} \big\} \, .
 $$
 \end{lemma}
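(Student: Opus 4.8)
The plan is to discretize the supremum defining $\omega_{[0,L]}(B,r)$ by covering $[0,L]$ with overlapping intervals of length $2r$, apply a union bound, and then control the oscillation of $B$ over each such interval via the reflection principle together with the standard Gaussian tail estimate $\P\big( N(0,1) \ge u \big) \le (2\pi)^{-1/2} u^{-1} e^{-u^2/2}$.

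First I would fix the cover. Let $N = \lceil L/r \rceil$ and, for $i \in \llbracket 0 , N-1 \rrbracket$, set $J_i = [ir,(i+2)r] \cap [0,L]$, an interval of length at most $2r$. For any pair $0 \le x \le y \le x + r \le L$ one has $x \le L - r$, so $i := \lfloor x/r \rfloor$ satisfies $i \le N-1$, and $ir \le x \le y \le x + r < (i+2)r$, whence $x,y \in J_i$. Consequently, writing $\mathrm{osc}_{J}(B) = \sup_{s,t \in J}\vert B(s) - B(t)\vert$ (a genuine random variable by continuity of $B$), we get $\omega_{[0,L]}(B,r) \le \max_{0 \le i \le N-1} \mathrm{osc}_{J_i}(B)$, so that by a union bound $\P\big( \omega_{[0,L]}(B,r) \ge x \big) \le \sum_{i=0}^{N-1} \P\big( \mathrm{osc}_{J_i}(B) \ge x \big)$, and it suffices to bound one summand.

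Next I would estimate $\P\big( \mathrm{osc}_{J_i}(B) \ge x \big)$. Writing $\ell_i$ for the left endpoint of $J_i$ and using the triangle inequality through $B(\ell_i)$ gives $\mathrm{osc}_{J_i}(B) \le 2 \sup_{t \in J_i} \vert B(t) - B(\ell_i)\vert$; by stationarity of increments and $\vert J_i\vert \le 2r$, the latter is bounded above in distribution by $2\sup_{0 \le t \le 2r}\vert B(t)\vert$ for a standard Brownian motion. By the reflection principle (and symmetry), $\P\big( \sup_{0 \le t \le 2r}\vert B(t)\vert \ge x/2 \big) \le 4\,\P\big( B(2r) \ge x/2 \big) = 4\,\P\big( N(0,1) \ge x/(2\sqrt{2r}) \big)$. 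Applying the Gaussian tail bound with $u = x/(2\sqrt{2r})$, for which $u^2/2 = 2^{-4} x^2 r^{-1}$ and $u^{-1} = 2\sqrt 2\, r^{1/2} x^{-1}$, yields $\P\big( \mathrm{osc}_{J_i}(B) \ge x \big) \le 8\pi^{-1/2} r^{1/2} x^{-1} \exp\{ - 2^{-4} x^2 r^{-1}\}$. Finally, since $r \le L$ forces $L/r \ge 1$ and hence $N = \lceil L/r\rceil \le 2L/r$, multiplying by $N$ produces exactly $2^4 \pi^{-1/2} L r^{-1/2} x^{-1} \exp\{-2^{-4} x^2 r^{-1}\}$, as claimed.

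I do not expect a substantive obstacle: the argument is a routine reflection-principle-plus-union-bound estimate. The only points needing care are (i) checking that the overlapping length-$2r$ cover really captures every admissible pair $(x,y)$ — in particular the boundary pairs near $L$ — which is why the cover is taken with the step $r$ rather than $2r$; and (ii) tracking the numerical constants so that the output matches the stated prefactor $2^4\pi^{-1/2}$ rather than merely being of the correct order. Since this lemma feeds only into Proposition~\ref{p.microfluc}, a slightly worse constant would in any case be harmless, but the computation above loses nothing.
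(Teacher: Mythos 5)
Your proof is correct and follows essentially the same route as the paper's: discretize at mesh $r$ (the paper restricts left endpoints to the lattice $r\Z\cap[0,L]$, you cover with overlapping length-$2r$ intervals anchored there), lose a factor of $2$ by passing through a grid point, union-bound over the $\le 2L/r$ pieces, and finish with the reflection principle and the standard Gaussian tail, yielding the identical constant. No gaps.
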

{\bf Proof.}  Set $I = r\Z \cap [0,L]$. It is readily verified that $\omega_I\big( B, 2r \big) \geq \omega_{[0,L]}\big( B, r \big)/2$. This enables the first of the next displayed bounds,
  \begin{eqnarray*}
   \P \Big(  \omega_{[0,L]}\big( B, r \big) \geq x \Big) & \leq & \P \Big(  \omega_I\big( B, 2r \big) \geq x/2 \Big) 
     \leq  \vert I \vert \cdot \P \Big( \sup \big\{ \vert B(z) \vert : -1 \leq z \leq 1 \big\} \geq 2^{-3/2}xr^{-1/2} \Big) \\
     & \leq & 
    2L r^{-1} \cdot  2^3 \pi^{-1/2} L r^{-1/2} x^{-1} \exp \big\{ - 2^{-4} x^2 r^{-1} \big\} \, ,
  \end{eqnarray*}
  where the second bound is due to Brownian translation and scaling symmetries and, in whose third,
  $L \geq r$ is used to bound $\vert I \vert$ above by $2L/r$, and the reasoning in~(\ref{e.reflection}) bounds above the probability term. \qed
  
  {\bf Proof of Proposition~\ref{p.microfluc}.}
The concerned supremum is at most 
$$
 \big( \sigma + 1 \big) \cdot \sup \Big\{  \omega_{[-nK,nK]} \big(   x \to B(x,s) , \sigma \big) :  s \in \Z \cap [-K,K] \Big\} \, ,
$$
where recall that the constituent curves $B:\R \times \Z \to \R$ in the Brownian LPP noise environment are independent two-sided standard Brownian motions.   The displayed supremum thus has probability of being at least $r/(\sigma + 1)$ given by the bound offered by Lemma~\ref{l.brownianmodcon} with ${\bf L} = 2nK$, ${\bf r} = \sigma$ and ${\bf x} = r/(\sigma + 1)$. 
The result is Proposition~\ref{p.microfluc}. \qed

\subsection{Global polymer fluctuation}

In this section, we present and prove the next result, 
which concerns the location 
at which the polymer 
$\rho_n = \rho_n \big[ (0,0) \to (x,1) \big]$ departs a level $a \in n^{-1}\Z \cap (0,1)$. 
This theorem is in essence a weaker form of Corollary~\ref{c.lateral}; it will be needed for our principal proofs.

In a shorthand usage, 
$\rho_n^x$ will denote $\rho_n \big[ (0,0) \to (x,1) \big]$ (a polymer that is almost surely unique for given $x \in \R$, by Lemma~\ref{l.polyunique}). By the convention of Subsection~\ref{s.zigzagfunction}, $\rho_n^x(a)$ thus denotes the concerned location, at which a forward-in-time tracing of this polymer departs from the horizontal line $\R \times \{ a \}$.
\begin{theorem}\label{t.maximizersecond}
For $K$ any compact interval of $(0,1)$,
there exist positive constants $H = H(K)$ and $h = h(K)$ and an integer $n_0 = n_0(K)$ such that, if  
  $n \in \N$, $R \in \R$, $a \in n^{-1}\Z \cap K$ and $x \in \R$ satisfy $n \geq n_0$, 
 $\vert R \vert \leq  h n^{1/9}$ and $\vert x \vert \leq n^{2/3}$, then 
$$
 \PP \Big( \big\vert \rho_n^x(a) - xa \big\vert \geq R  \Big) \, \leq \, H  \exp \big\{  - hR^3 \big\}  \, . 
$$
\end{theorem}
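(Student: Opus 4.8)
The plan is to reduce the event $\big\vert \rho_n^x(a) - xa \big\vert \geq R$ to a comparison of weights of two competing families of zigzags, and then to control the relevant weights by the one-point tail bound Lemma~\ref{l.onepointbounds} together with the parabolic curvature of the weight profile. The first step is to localise: by the sandwiching lemma, Lemma~\ref{l.sandwich}, if $\rho_n^x(a) \geq xa + R$ then the polymer $\rho_n^x$ passes to the right of $(xa+R,a)$, and decomposing $\rho_n^x$ at height $a$ gives that there exists $w \geq xa + R$ with
$$
\weight_n\big[(0,0) \to (w,a)\big] + \weight_n\big[(w,a^+) \to (x,1)\big] \;\geq\; \weight_n\big[(0,0) \to (0,1)\big] \, ,
$$
where $a^+ = a + n^{-1}$; the left-hand side is at most $Z_n^{(x)}(w,a)$, the routed weight profile with moving endpoint $(x,1)$ in place of $(0,1)$. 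Thus the event is contained in $\big\{ \sup_{w: \vert w - xa\vert \geq R} Z_n^{(x)}(w,a) \geq Z_n^{(x)}(xa,a)\big\}$ (one also needs that $Z_n^{(x)}(xa,a)$ is close to its maximum, but since $xa$ is the line-interpolation point one expects the maximizer to be near there; more robustly, compare $Z_n^{(x)}(w,a)$ at large $\vert w - xa\vert$ directly against $\weight_n[(0,0)\to(x,1)]$, which is the true maximum).

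The second step is the core estimate: for $w$ with $\vert w - xa\vert \geq R$, the parabolically adjusted weights
$$
\weight_n^\cup\big[(0,0)\to(w,a)\big] = \weight_n\big[(0,0)\to(w,a)\big] + 2^{-1/2}w^2 a^{-1}
$$
and its backward analogue are, after the scaling principle (Subsection~\ref{s.scalingprinciple}), unit-order random variables with stretched-exponential tails by Lemma~\ref{l.onepointbounds}. Summing the two parabolic terms at $w$ versus at $xa$ produces a deterministic curvature gain: a short computation shows that
$$
2^{-1/2}\tfrac{w^2}{a} + 2^{-1/2}\tfrac{(w-x)^2}{1-a} - 2^{-1/2}\tfrac{(xa)^2}{a} - 2^{-1/2}\tfrac{(xa-x)^2}{1-a} \;=\; 2^{-1/2}\big(a(1-a)\big)^{-1}(w - xa)^2 \, ,
$$
which is at least $c_K R^2$ when $\vert w - xa\vert \geq R$ and $a \in K$. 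So the event on a \emph{fixed} $w$ forces one of two parabolically adjusted weights to deviate by at least $\Theta_K(1) R^2$ from its typical value, which by Lemma~\ref{l.onepointbounds} has probability at most $C\exp\{-c (R^2)^{3/2}\} = C\exp\{-c R^3\}$ — the desired exponent. The constraint $\vert R\vert \leq h n^{1/9}$ is exactly what is needed so that the shifted horizontal coordinates stay within the $n^{1/9}$-window where Lemma~\ref{l.onepointbounds} applies, and $\vert x\vert \leq n^{2/3}$ keeps the endpoints in the admissible scaled region.

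The third step is to remove the supremum over $w$. One caps $\vert w - xa\vert$ at a polynomial scale using Corollary~\ref{c.lateral} (or, to avoid circularity if this theorem is logically prior, a crude a~priori transversal bound from Lemma~\ref{l.onepointbounds} showing the polymer cannot stray past $\Theta(1) n^{1/9}$ with superpolynomially small failure probability), then runs a chaining/union bound over a mesh of $w$-values at spacing $n^{-1}$, using the modulus-of-continuity control on the weight profile — either from Theorem~\ref{t.weight} or directly from~\cite[Theorem~$1.1$]{ModCon} as invoked in the proof of Lemma~\ref{l.twoerrors} — to pass from mesh points to the continuum. Since the number of mesh points is polynomial in $n$ and each term contributes $C\exp\{-cR^3\}$, and since one restricts to $R \leq h n^{1/9}$ so that $\exp\{-cR^3\}$ dominates any polynomial prefactor after adjusting $h$, the union bound closes. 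The dyadic-in-$\vert w - xa\vert$ organisation (as in Theorem~\ref{t.nearmax}) keeps the curvature gain $R^2$ growing across scales so the tail sum converges. \textbf{The main obstacle} is the third step: one must be careful that the a~priori localisation used to truncate the range of $w$ does not itself depend on Theorem~\ref{t.maximizersecond}, and that the modulus-of-continuity input is available uniformly down to the microscopic scale $n^{-1}$ with a failure probability small enough not to swamp $\exp\{-cR^3\}$ — this is precisely the kind of scale-robust control the rest of Section~3 is building, so the argument should be arranged to use only the already-established inputs (Lemmas~\ref{l.onepointbounds}, \ref{l.sandwich}, \ref{l.polyunique} and~\cite[Theorem~$1.1$, Proposition~$1.5$]{ModCon}) rather than the corollaries it will later help prove.
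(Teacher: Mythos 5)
Your core mechanism for the $x=0$ case is the same as the paper's: the event that the height-$a$ location is at distance $R$ from where it should be forces the routed profile at that displaced location to beat the global maximum, and the combination of a deterministic parabolic curvature gain of order $R^2$ with a stretched-exponential tail of exponent $3/2$ yields $\exp\{-cR^3\}$. The paper's Proposition~\ref{p.maximizer} runs exactly this comparison, except that it controls the supremum over the displaced window not by a mesh-and-chaining argument but by embedding the two normalized profiles in regular Brownian Gibbs ensembles (Proposition~\ref{p.shiftbrownian}) and quoting the uniform parabolic-decay and far-field bounds \cite[Propositions~$2.28$ and~$2.30$]{BrownianReg}; this sidesteps both your chaining step and the circularity worry you raise about Corollary~\ref{c.lateral}.

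The genuine gap is in your treatment of general $x$ with $\vert x \vert$ up to $n^{2/3}$. You claim that $\vert x\vert \leq n^{2/3}$ "keeps the endpoints in the admissible scaled region", but the one-point bound Lemma~\ref{l.onepointbounds} requires the horizontal discrepancy of the endpoints to be at most $c\,n^{1/9}$; after the scaling principle, the forward weight $\weight_n\big[(0,0)\to(w,a)\big]$ with $w \approx xa$ demands $\vert w \vert a^{-2/3} \leq c (na)^{1/9}$, which fails badly when $\vert x \vert$ is of order $n^{2/3}$. The same obstruction defeats any attempt to apply the regular-ensemble tail bounds, or Proposition~\ref{p.invariance} (whose shear parameter is capped at $\Theta(1)n^{1/18}$), in a corridor tilted by slope $x$ of that magnitude. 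The paper's missing ingredient here is Lemma~\ref{l.rhoxa}: an \emph{exact} distributional identity, coming from Brownian scaling of the noise field, asserting that $a \mapsto \rho_n^x(a) - xa$ has the law of $\big(1 + 2^{-1}n^{-2/3}x\big)\rho_n(a)$. This reduces the general-$x$ statement to the $x=0$ statement with only a bounded dilation of $R$, at no probabilistic cost and with no window restriction beyond $\vert x \vert \leq n^{2/3}$ (which keeps the dilation factor in $[1/2,3/2]$). Without this invariance, or some substitute for it, your step~2 cannot be closed for the full stated range of $x$. A secondary, fixable point: your final union bound should be organised as the paper's is — summing the "maximizer in $[i-1,i+1]$" bounds over integer $i$ from $\lceil R \rceil$ to $\lfloor h n^{1/9}\rfloor$ and disposing of the far field $\vert w - xa \vert \geq h n^{1/9}$ separately — since the single-scale event you bound is "location in a unit window at distance $R$", not directly "location at distance at least $R$".
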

Next is a result that includes an important special case of Theorem~\ref{t.maximizersecond}. We write $\rho_n$ for $\rho_n^0 = \rho_n \big[ (0,0) \to (0,1) \big]$.
\begin{proposition}\label{p.maximizer}
For $K$ any compact interval of $(0,1)$,
there exist positive constants $H = H(K)$ and $h = h(K)$ and an integer $n_0 = n_0(K)$ such that, if $n \in \N$, $R \in \R$  and $a \in n^{-1}\Z \cap K$ satisfy $n \geq n_0$ and
 $\vert R \vert \leq h n^{1/9}$, then 
$$
 \PP \Big( \rho_n(a) \in [R - 1,R+1] \Big) \, \leq \, H  \exp \big\{  - hR^3  \big\}
$$
and
$$
 \PP \Big(  \big\vert \rho_n(a) \big\vert \geq h n^{1/9}  \Big) \, \leq \, H \exp \big\{ - h n^{1/3} \big\} \, .
$$  
\end{proposition}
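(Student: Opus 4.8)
\noindent\textbf{Proof plan for Proposition~\ref{p.maximizer}.}
By Lemma~\ref{l.routedprofile}, $\rho_n(a)$ is almost surely the unique maximizer of the routed weight profile $x \mapsto Z_n(x,a)$, and $Z_n(\rho_n(a),a) = \weight_n\big[ (0,0) \to (0,1) \big]$. The plan is a \emph{weight competition}: for every $t > 0$ and every Borel set $I \subseteq \R$,
\[
 \big\{ \rho_n(a) \in I \big\} \ \subseteq \ \big\{ \weight_n\big[ (0,0) \to (0,1) \big] \le -t \big\} \ \cup \ \big\{ \exists\, x \in I : Z_n(x,a) \ge -t \big\} \, .
\]
The first event is governed by the one-point lower tail: since the spatial separation vanishes, $\weight_n\big[ (0,0) \to (0,1) \big] = \weight^\cup_n\big[ (0,0) \to (0,1) \big]$, so Lemma~\ref{l.onepointbounds}(2) gives $\PP\big( \weight_n[(0,0)\to(0,1)] \le -t \big) \le C \exp\{ - c t^{3/2} \}$ for all $t \ge 0$. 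For the second event I exploit the parabolic shape of $Z_n(\cdot,a)$. Combining~(\ref{e.zrewrite}) with Proposition~\ref{p.shiftbrownian}, according to which the normalized profiles $\scaledle_{n;(0,0)}^{\uparrow;a}$ and $\scaledle_{n;\aplus}^{\downarrow;(0,1)}$ are the uppermost curves of $(c,C)$-regular ensembles, one sees that $Z_n(x,a)$ equals the explicit parabolic term $- 2^{-1/2}\big( a^{-1} x^2 + (1-\aplus)^{-1} (\xminus)^2 \big)$ plus $a^{1/3}$ and $(1-\aplus)^{1/3}$ multiples of the \emph{parabolically adjusted} profiles $z \mapsto \scaledle_{n;(0,0)}^{\uparrow;a}(z) + \para(z)$ and $z \mapsto \scaledle_{n;\aplus}^{\downarrow;(0,1)}(z) + \para(z)$, evaluated at $z = a^{-2/3}x$ and $z = (1-\aplus)^{-2/3}\xminus$. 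These adjusted profiles are unit-order and Brownian-like, and for $a \in K$ and $|x| \ge 1$ the parabolic term is at most $- c_K x^2$; hence on $\{ Z_n(x,a) \ge -t \}$ at least one adjusted profile exceeds $c'_K( x^2 - t )$ at the corresponding point.

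To prove the first displayed bound, take $I = [R-1,R+1]$ and $t$ a suitable small constant multiple of $R^2$ (we may assume $|R| \ge 2$, the bound being trivial otherwise once $H$ is large); then $x^2 - t \ge \Theta(R^2)$ throughout $I$, so $\{ \exists\, x \in I : Z_n(x,a) \ge -t \}$ lies in the union of two events of the form $\big\{ \sup_{|z - z_0| \le D_K} \big( \scaledle_{n;(0,0)}^{\uparrow;a}(z) + \para(z) \big) \ge c''_K R^2 \big\}$ and its backward analogue. Provided $|R| \le h n^{1/9}$ with $h = h(K)$ small enough, all the relevant points $z$ lie in the bulk windows $|z| \le c(na+1)^{1/9}$ and $|z| \le c(n(1-a))^{1/9}$ where ${\rm Reg}(2)$ and ${\rm Reg}(3)$ apply; I bound each supremum using the one-point upper tail ${\rm Reg}(3)$ (valid for all $s \ge 1$) together with its local modulus-of-continuity refinement for regular ensembles (see \cite{BrownianReg, ModCon}), the latter invoked only at deviation $\Theta(R^2) \le \Theta(n^{2/9})$, well below the bulk scale $(na)^{1/3}$. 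Each probability is $\le C \exp\{ - c R^3 \}$, and together with the $C \exp\{ - c t^{3/2} \} = C\exp\{ - cR^3 \}$ bound on the first event this yields $\PP\big( \rho_n(a) \in [R-1,R+1] \big) \le H \exp\{ - h R^3 \}$ after adjusting constants.

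For the second displayed bound, apply the decomposition with $I = \{ x : |x| \ge h n^{1/9} \}$ and $t = \Theta(h^2 n^{2/9})$, a small constant multiple of $(h n^{1/9})^2$. The first event then has probability $\le C \exp\{ - c (h^2 n^{2/9})^{3/2} \} = C \exp\{ - c' n^{1/3} \}$. For the second event I split $\{ |x| \ge h n^{1/9} \}$ into dyadic annuli $\{ 2^j h n^{1/9} \le |x| < 2^{j+1} h n^{1/9} \}$; the $O(1)$ annuli still contained in the bulk ($2^{j+1} h n^{1/9} \le c' n^{1/9}$) are treated exactly as above, each contributing $\le C \exp\{ - c(2^j h n^{1/9})^3 \} \le C\exp\{ - c h^3 n^{1/3} \}$. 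The main obstacle is the far range $|x| > c' n^{1/9}$, which lies outside the bulk and is not covered by the one-point bounds for regular ensembles quoted above. There I invoke instead the parabolic law of large numbers for Brownian LPP energy, $M\big[ (0,0) \to (N',N) \big] \approx 2\sqrt{N N'}$, together with standard concentration and transversal-fluctuation estimates for Brownian LPP (see, e.g., \cite{NonIntPoly, HammondSarkar}): these furnish $\weight_n\big[ (0,0) \to (x,a) \big] \le - \tfrac12 c_K x^2$ uniformly over $c' n^{1/9} \le |x| \le \Theta(n^{1/3})$ off an event of probability at most $\exp\{ - c n^{1/3} \}$, and since the backward profile is bounded above by its (with high probability unit-order) maximum, $Z_n(x,a) < - t$ throughout this range once $c'$ is large. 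Summing the $O(1)$ bulk dyadic contributions, the far-range contribution, and the first-event bound, and then decreasing $h$ so that $h^4 \le c$, gives $\PP\big( |\rho_n(a)| \ge h n^{1/9} \big) \le H \exp\{ - h n^{1/3} \}$. (The same three-piece estimate, with the dyadic sum over $x \ge R$ truncated at the bulk boundary and the far range folded into the term $\exp\{ - c n^{1/3} \} \le H\exp\{ - h R^3 \}$, valid since $R \le h n^{1/9}$ and $h^4 \le c$, re-derives the first bound as well; the two assertions of Proposition~\ref{p.maximizer} are thus established together.)
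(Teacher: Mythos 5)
Your proposal follows essentially the same route as the paper: the weight-competition decomposition into the events that the global maximum $\weight_n[(0,0)\to(0,1)]$ drops below $-t$ (handled by Lemma~\ref{l.onepointbounds}(2)) and that $Z_n(\cdot,a)$ exceeds $-t$ somewhere on the bad set (handled via~(\ref{e.zrewrite}), Proposition~\ref{p.shiftbrownian}, and parabolic-curvature tail bounds for regular ensembles), with $t\asymp R^2$ for the first assertion and $t\asymp n^{2/9}$ for the second. The one place you improvise --- the far range $|x|\gg n^{1/9}$, which you treat by an informal appeal to the LPP law of large numbers and concentration --- is exactly what the paper dispatches with a single citation of the outside-the-bulk collective bound \cite[Proposition~$2.30$]{BrownianReg} for regular ensembles, which is the clean way to close that step.
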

{\bf Proof.}
Recall from Lemma~\ref{l.routedprofile}(2) that $\rho_n(a)$ is the almost surely unique maximizer of the random function $x \to Z_n(x,a)$, and recall the formula~(\ref{e.zrewrite}) for the latter process.

To prove the first bound, we first note from~(\ref{e.zrewrite}) that the event that 
$Z_n(x,a) \geq - r$ for given $x,r \in \R$ entails that either  $a^{1/3} \scaledle_{n;(0,0)}^{\uparrow;a} \big(   a^{-2/3} x \big) \geq - 2^{-1}r$ or  
$$
(1-\aplus)^{1/3} \scaledle_{n;\aplus}^{\downarrow;(0,1)} \big(  (1-\aplus)^{-2/3} \xminus \big)  \geq - 2^{-1} r \, .
$$
Thus,  the event $\big\{ \sup_{x \in [R-1,R+1]} Z_n(x,a) \geq  - R^2 \big\}$ is contained in
\begin{eqnarray*}
   & & \bigg\{ \sup_{x \in a^{-2/3} \cdot [R-1,R+1]} \scaledle_{n;(0,0)}^{\uparrow;a} (   x )   \geq  - 2^{-1} a^{-1/3} R^2 \bigg\} \\
 & \cup & 
    \bigg\{ \sup_{x \in (1-\aplus)^{-2/3}  \cdot[R-1,R+1]} 
 \scaledle_{n;\aplus}^{\downarrow;(0,1)} ( \xminus )    \geq  - 2^{-1} (1-\aplus)^{-1/3} R^2 \bigg\} \, . 
\end{eqnarray*}
We now apply  the regularity of normalized ensembles expressed in Proposition~\ref{p.shiftbrownian}(1,2), and a tail bound~\cite[Proposition~$2.28$]{BrownianReg} for the deviation of regular ensemble top curves relative to parabolic curvature.  
Indeed, using the boldface notation, we choose the latter result's parameter ${\bf n}$ equal to $na$ and to $n(1 - \aplus)$, and thus find that the probability of each event in the above union is at most 
$$
 H \max \Big\{  \exp \big\{ - h R^3 \big\}  ,   \exp \big\{ - h n^{1/2} \big\} \Big\} \, ,
$$
where the existence of positive constants $H$ and $h$ is ensured because we may select the lower bound $n_0(K)$ so that $na$ and $n(1-\aplus)$ are at least the lower bound on ${\bf n}$ demanded in \cite[Proposition~$2.28$]{BrownianReg}. 

Since $\vert R \vert \leq h n^{1/9}$, we find that 
$$
 \PP \bigg( \sup_{x \in [R-1,R+1]} Z_n(x,a) \geq  - R^2 \bigg) \leq  2H  \exp \big\{ - h R^3 \big\}  \, .
 $$
 However, $\sup_{x \in \R} Z_n(x,a)$ equals $\weight_n \big[ (0,0) \to (0,1) \big]$ and thus has probability to 
 exceed $- R^2$ which is seen to be least $1 - C \exp \big\{ - c R^3  \big\}$ provided that  $R \leq h n^{1/3}$ (with $h = c^{1/2}$) by Lemma~\ref{l.onepointbounds}(2).
 
 From these inferences, the former assertion of Proposition~\ref{p.maximizer} follows after a relabelling of $H$ and~$h$.

To prove the latter bound, we first note from~(\ref{e.zrewrite}) that the event that 
$Z_n(x,a) \geq - g n^{2/9}$ for given $x \in \R$ entails that either  $\scaledle_{n;(0,0)}^{\uparrow;a} \big(  a^{-2/3} x \big) \geq - 2^{-1}g n^{2/9}$ or  
$$
(1-\aplus)^{1/3} \scaledle_{n;\aplus}^{\downarrow;(0,1)} \big( (1-\aplus)^{-2/3} \xminus \big)  \geq - 2^{-1} g n^{2/9} \, .
$$

By Proposition~\ref{p.shiftbrownian}(1,2),
 and \cite[Proposition~$2.30$]{BrownianReg}, for any $g' > 0$, there exist positive $G$ and $g > 0$, such that, for $a \in n^{-1}\Z \cap K$, the probabilities
$$
 \PP \Big( \exists \, y \in \R : \vert y \vert \geq a^{-2/3} g' n^{1/9} \, , \, \scaledle_{n;(0,0)}^{\uparrow;a} (  y ) \geq - 2^{-1}g n^{2/9} \Big)
$$
and
$$
 \PP \Big( \exists \, y \in \R : \vert y \vert \geq (1 - \aplus)^{-2/3} g' n^{1/9} \, , \,  \scaledle_{n;\aplus}^{\downarrow;(0,1)} (  y ) \geq - 2^{-1}g n^{2/9} \Big)
$$
are at most $G \exp \big\{ - g n^{1/3} \big\}$. Thus, the probability that $Z_n(x,a)$ attains a value of at least $- g n^{2/9}$ for $x \in \R$
 satisfying $\vert x \vert \geq g' n^{1/9}$ is at most $2G \exp \big\{ - g n^{1/3} \big\}$. However, $\sup_{x \in \R} Z_n(x,a)$ equals $\weight_n \big[ (0,0) \to (0,1) \big]$ and thus has probability to 
 exceed $- g n^{2/9}$ which is seen by 
 Lemma~\ref{l.onepointbounds}(2) to be at least $1 - C \exp \big\{ - c g^{3/2} n^{1/3} \big\}$.
 From these inferences, the latter assertion of Proposition~\ref{p.maximizer} follows.
\qed

To obtain Theorem~\ref{t.maximizersecond} from Proposition~\ref{p.maximizer}, we need to reach a comparable conclusion about polymers of route $(0,0) \to (x,1)$
as we have in the case $x=0$. In this task, we benefit from the strong invariance property of Brownian LPP that we indicated in Lemma~\ref{l.invariance}, but which is now presented in a scaled guise in the next lemma.  
Recall that~$\rho_n^x(a)$ denotes the location of departure  $\rho_n \big[ (0,0) \to (x,1) \big](a)$ of this polymer from the horizontal line~$\R~\!\!\times~\!\!\{ a \}$.
\begin{lemma}\label{l.rhoxa}
 \leavevmode
\begin{enumerate}
\item 
Let $x > - 2 n^{2/3}$.  The random processes $n^{-1}\Z \cap [0,1] \to \R$ that map $a$ to $\rho_n^x(a) - xa$ and  $\big( 1 + 2^{-1} n^{-2/3} x \big)\rho_n(a)$ are equal in law. 
\item 
Let $b > - 2n^{2/3}$. 
Let $\tau_b: \R^2 \to \R^2$ denote the shear map $\tau(x,y) = (x + yb,y)$.  The field of polymers $\rho_n$ is indexed by starting and ending points in $\R \times n^{-1}\Z$
(with a formal empty-set value for inadmissible choices of index). 
The image under $\tau_b$ of this field  is equal in law to the field of polymers $\big( 1 + 2^{-1} n^{-2/3} b \big)\rho_n$. 
\end{enumerate} 
\end{lemma}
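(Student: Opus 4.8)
The plan is to obtain both identities from a single structural fact — that the noise environment underlying Brownian LPP is invariant in law under the rescaling $\tilde B(z,k)=\kappa^{-1/2}B(\kappa z,k)$, the property already exploited, for energies, in Lemma~\ref{l.invariance} — but tracked now at the level of geodesics (hence polymers), and for part~(2) of the whole random field of polymers at once. To prove part~(1), pass to unscaled coordinates through $R_n$: $\rho_n^x=\rho_n[(0,0)\to(x,1)]$ is the scaled image of the Brownian LPP geodesic from $(0,0)$ to the point $R_n^{-1}(x,1)=\big(n+2n^{2/3}x,\,n\big)$, while $\rho_n$ is the scaled image of the geodesic from $(0,0)$ to $R_n^{-1}(0,1)=(n,n)$. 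Let $\kappa$ be the ratio of the first coordinates of these two upper endpoints; then $\kappa>0$ exactly under the stated hypothesis on $x$, and $\kappa$ will be the dilation factor that appears in part~(1). Set $\tilde B(z,k)=\kappa^{-1/2}B(\kappa z,k)$; by Brownian scaling $\tilde B$ is again a family of independent standard two-sided Brownian motions, so $\tilde B\eqdist B$, and the substitution $z_k\mapsto\kappa^{-1}z_k$ in the staircase-energy formula shows that the $\tilde B$-geodesic from $(0,0)$ to $(n,n)$ is the horizontal dilation by $\kappa^{-1}$ of the $B$-geodesic from $(0,0)$ to $(n\kappa,n)=\big(n+2n^{2/3}x,n\big)$ — matching geodesics to geodesics, not merely maxima to maxima. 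Since $\tilde B\eqdist B$, the geodesic for $\rho_n^x$ thus equals in law the horizontal dilation by $\kappa$ of the geodesic for $\rho_n$.

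Restoring scaled coordinates now finishes part~(1): at vertical level $na$ the geodesic for $\rho_n$ has horizontal coordinate $na+2n^{2/3}\rho_n(a)$ and the affine reference line sits at $na$, whereas for $\rho_n^x$ the reference line sits at $na+2n^{2/3}\cdot xa$; applying the dilation by $\kappa$ to $na+2n^{2/3}\rho_n(a)$ and separating the affine part of the answer (which is precisely $xa$) from the multiplicative part yields $\rho_n^x(a)-xa\eqdist\kappa\,\rho_n(a)$, which is part~(1). For part~(2), note that the rescaling $\tilde B$, together with horizontal translations of the environment (harmless because LPP sees $B$ only through its increments), realizes a symmetry of the \emph{entire} field $\mathcal{G}$ of polymers: in unscaled coordinates each such symmetry is a map $(v_1,v_2)\mapsto(av_1+c,v_2)$ with $a>0$, it carries $B$ to an equal-in-law environment and $n$-zigzags to $n$-zigzags, hence carries $\mathcal{G}$ to a field of the same law. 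The scaled shear $\tau_b$ is not itself of this type — in scaled coordinates a symmetry of the above kind consists of a dilation in the space variable composed with a shear (with coefficient proportional to $n^{1/3}(a-1)$) and a translation — but one can solve for $a$ and the translation so that $\tau_b$ composed with the corresponding scaled symmetry $\Psi$ is exactly the horizontal dilation by the factor asserted in the statement. Since $\Psi(\mathcal{G})\eqdist\mathcal{G}$ as random fields and $\tau_b$ is a fixed deterministic map, applying $\tau_b$ to both sides gives $\tau_b(\mathcal{G})=\tau_b(\Psi(\mathcal{G}))$ in distribution, and the right side is the horizontal dilation of $\mathcal{G}$ by that factor; this is part~(2).

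The conceptual content here is light — Brownian scale invariance of the noise plus the trivial invariance under horizontal translation — and the real work is bookkeeping. One must (i) verify carefully that $\tilde B\eqdist B$ and that the energy change of variables identifies whole geodesics, so that the argument genuinely lifts to the full field; and (ii) carry the three scalings built into $R_n$ ($n^{-2/3}$ in space, $n^{-1}$ in time, $n^{-1/3}$ in weight) correctly through the computation, so as to extract the affine correction $-xa$ in part~(1) and, in part~(2), to pin down the exact dilation factor and translation that turn the scaled symmetry $\Psi$ into the shear $\tau_b$. I expect this last step — identifying the precise constant $1+2^{-1}n^{-2/3}b$ in part~(2), and the matching factor in part~(1) — to be the fiddliest point, since it is where all three scalings of $R_n$ interact.
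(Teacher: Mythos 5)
Your strategy coincides with the paper's: the horizontal dilation $z\mapsto\kappa z$ of the noise field is, modulo the harmless factor $\kappa^{1/2}$ multiplying all energies, a symmetry of Brownian LPP (this is Lemma~\ref{l.invariance} read at the level of geodesics rather than of maximal energies), so the geodesic to $(n+2n^{2/3}x,n)$ is in law the horizontal dilation of the geodesic to $(n,n)$, and part~(2) follows by running the identical argument simultaneously on the whole endpoint-indexed field. That part of the proposal is fine and matches the paper's own (very terse) argument.

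The gap is that you defer the one step in which the lemma's content actually resides --- the identification of the dilation factor --- and the two concrete assertions you make about it do not survive the computation. The ratio of the first coordinates of the unscaled upper endpoints is
\[
\kappa \;=\; \frac{n+2n^{2/3}x}{n} \;=\; 1+2n^{-1/3}x \, ,
\]
which is positive precisely when $x>-2^{-1}n^{1/3}$ (the existence threshold of Lemma~\ref{l.polyunique}), not when $x>-2n^{2/3}$; so your claim that ``$\kappa>0$ exactly under the stated hypothesis'' is false. Pushing the dilation through $R_n$: writing $X(na)=na+2n^{2/3}\rho_n(a)$ for the unscaled horizontal coordinate of $\Gamma_n\big[(0,0)\to(n,n)\big]$ at level $na$, the identity $X^x(na)\eqdist\kappa X(na)$ gives
\[
\rho_n^x(a)\;=\;2^{-1}n^{-2/3}\big(X^x(na)-na\big)\;\eqdist\;2^{-1}n^{-2/3}(\kappa-1)na+\kappa\,\rho_n(a)\;=\;xa+\big(1+2n^{-1/3}x\big)\rho_n(a) \, ,
\]
so the affine part is indeed $xa$, but the multiplicative factor your argument produces is $1+2n^{-1/3}x$, which differs from the stated $1+2^{-1}n^{-2/3}x$ by a factor of $4n^{1/3}$; likewise, in part~(2), solving for the unscaled dilation that converts $\tau_b$ composed with your symmetry $\Psi$ into a pure horizontal dilation yields the factor $1-2n^{-1/3}b$, again not the stated one. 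Since the sole quantitative content of the lemma is this constant (it is exactly what Corollary~\ref{c.maximizer} invokes to keep the dilation bounded for $\vert x\vert\leq n^{2/3}$), a proof that postpones its computation and asserts it ``will be'' the one in the statement is incomplete: you must carry the bookkeeping through, and when you do, you obtain a constant that does not match the statement --- a discrepancy you then have to confront rather than elide. (The paper's own proof is no more explicit at this point, and an honest version of your computation exposes the same issue there.)
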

{\bf Proof: (1).} The geodesics $\Gamma_n \big[ (0,0) \to (n + 2n^{2/3}x,n)\big]$
and  $\Gamma_n \big[ (0,0) \to (n,n)\big]$
are mapped to $\rho_n^x$ and~$\rho_n$ by the scaling map~$R_n$ in~(\ref{e.scalingmap}).
We {\em claim} that the former geodesic is mapped to a distributional copy of the latter by applying the transformation $\R \times \Z \to \R \times \Z$, $(z,h) \to \big(z, (1 + 2^{-1} n^{-2/3} x)^{-1} h\big)$. Indeed, the image of the noise environment of static Brownian LPP under this transformation is a copy of Brownian LPP in which the constituent Brownian motions have rate $1 + 2^{-1} n^{-2/3} x$. The energy of any given staircase is thus simply the product of  $\big( 1 + 2^{-1} n^{-2/3} x \big)^{1/2}$ and the energy of the staircase in Brownian LPP where the motions have the standard rate of one. Since this factor is present in computing the energy of all staircases, its presence does not affect the status of the geodesic, so that the claim is confirmed. Given the claim, we  apply  the scaling map~$R_n$ to obtain Lemma~\ref{l.rhoxa}(1).

{\bf (2).} The above inferences apply equally when we instead consider the $(z_1,z_2)$-indexed  fields of geodesics $\Gamma_n \big[ (2n^{2/3}z_1,0) \to (n + 2n^{2/3}(z_2+x),n)\big]$ 
and  $\Gamma_n \big[ (2n^{2/3}z_1,0) \to (2n^{2/3}n + z_2,n)\big]$ where $z_1,z_2 \in \R$. \qed

Allying Proposition~\ref{p.maximizer} with Lemma~\ref{l.rhoxa} leads to the next result, which equips us to give a short proof of Theorem~\ref{t.maximizersecond}.
\begin{corollary}\label{c.maximizer}
For $K$ any compact interval of $(0,1)$,
there exist positive constants $H = H(K)$ and $h = h(K)$ and an integer $n_0 = n_0(K)$ such that, if  
  $n \in \N$, $R \in \R$, $a \in n^{-1}\Z \cap K$ and $x \in \R$ satisfy $n \geq n_0$, 
 $\vert R \vert \leq h n^{1/9}$ and  $\vert x \vert \leq n^{2/3}$, then 
$$
 \PP \Big( \rho_n^x(a) - xa \in [R - 1,R+1] \Big) \, \leq \, H \exp \big\{  - hR^3 \big\} \, , 
$$
and
$$
 \PP \Big(  \big\vert \rho^x_n(a) - xa  \big\vert \geq h n^{1/9}  \Big) \, \leq \, H \exp \big\{ - h n^{1/3} \big\} \, .
$$  
\end{corollary}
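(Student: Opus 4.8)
\textbf{Proof proposal for Corollary~\ref{c.maximizer}.}
The plan is to deduce the corollary from Proposition~\ref{p.maximizer} (the $x=0$ case) by transporting its two estimates along the distributional identity recorded in Lemma~\ref{l.rhoxa}(1). Write $\lambda = 1 + 2^{-1} n^{-2/3} x$. The hypothesis $\vert x \vert \leq n^{2/3}$ forces $\lambda \in [1/2,3/2]$; in particular $x > -2n^{2/3}$, so Lemma~\ref{l.rhoxa}(1) applies and tells us that, as a process in $a \in n^{-1}\Z \cap [0,1]$, the random function $a \mapsto \rho_n^x(a) - xa$ is equal in law to $a \mapsto \lambda\,\rho_n(a)$. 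Hence both probabilities appearing in Corollary~\ref{c.maximizer} may be rewritten as probabilities concerning $\lambda \rho_n(a)$, and the task reduces to pushing the bounds of Proposition~\ref{p.maximizer} through multiplication by the bounded factor $\lambda$.

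First I would handle the first bound. The event $\{ \lambda \rho_n(a) \in [R-1,R+1] \}$ coincides with $\{ \rho_n(a) \in [\lambda^{-1}(R-1),\lambda^{-1}(R+1)] \}$. Since $\lambda^{-1} \in [2/3,2]$, this target interval has length at most $4$ and its centre $\lambda^{-1}R$ satisfies $(2/3)\vert R\vert \leq \vert \lambda^{-1}R\vert \leq 2\vert R\vert$; thus it is contained in a union of at most five unit intervals $[R'-1,R'+1]$, each with $(2/3)\vert R\vert - 3 \leq \vert R'\vert \leq 2\vert R\vert + 3$. The upper bound on $\vert R'\vert$, combined with $\vert R\vert \leq h n^{1/9}$ for a sufficiently small constant $h$, keeps each $R'$ within the admissibility range $\vert R'\vert \leq \Theta(1) n^{1/9}$ needed to invoke Proposition~\ref{p.maximizer} for the given compact interval $K$; summing the resulting bounds $H\exp\{-h(R')^3\}$ over the $O(1)$ intervals and using the lower bound $(2/3)\vert R\vert - 3 \leq \vert R'\vert$ produces an upper bound of the form $H'\exp\{-h'R^3\}$ after absorbing the additive $O(1)$ correction and relabelling constants.

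For the second bound, the event $\{\vert \lambda \rho_n(a)\vert \geq h n^{1/9}\}$ is contained in $\{\vert \rho_n(a)\vert \geq (2/3) h n^{1/9}\}$ because $\lambda \leq 3/2$, and the second assertion of Proposition~\ref{p.maximizer} (applied with its constant $h$ replaced by a suitable smaller multiple) then gives the bound $H\exp\{-h n^{1/3}\}$. There is no genuine obstacle in this argument: the entire content is the scaling identity of Lemma~\ref{l.rhoxa}(1), and the only point requiring a little care is the bookkeeping that dilation by the bounded factor $\lambda$ neither spoils the proportionality of the ``distance from the origin'' in $R$ (so that $\exp\{-hR^3\}$ survives up to a change of constant) nor violates the constraint $\vert R\vert \leq \Theta(1) n^{1/9}$ under which Proposition~\ref{p.maximizer} is available.
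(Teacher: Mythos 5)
Your proposal is correct and follows essentially the same route as the paper: both rest on the distributional identity of Lemma~\ref{l.rhoxa}(1), the observation that the dilation factor $1+2^{-1}n^{-2/3}x$ lies in $[1/2,3/2]$, and a union bound over $O(1)$ unit intervals before invoking Proposition~\ref{p.maximizer} and relabelling constants. The bookkeeping you spell out (keeping the recentred $R'$ within the admissibility range $\vert R'\vert\leq \Theta(1)n^{1/9}$ and absorbing the additive $O(1)$ shift into the exponent) is exactly what the paper's terser proof leaves implicit.
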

{\bf Proof.} The dilation factor $1 + 2^{-1} n^{-2/3} x$ in Lemma~\ref{l.rhoxa}
lies on the interval $[1/2,3/2]$ under the present hypothesis that $\vert x \vert \leq n^{2/3}$. Denote this factor by $d$.
When the probabilities in Corollary~\ref{c.maximizer} are addressed by Proposition~\ref{p.maximizer} via Lemma~\ref{l.rhoxa}, the proposition is applied with ${\bf R} = d^{-1}R, d^{-1}R-1$ and $d^{-1}R+1$ in alliance with a simple union bound.  Thus we obtain the corollary by suitably adjusting the values of the constants~$H$ and $h$. \qed

{\bf Proof of Theorem~\ref{t.maximizersecond}.} Note that the set $\R \setminus (-R,R)$
is a subset of the union of intervals: 
$$
  \big(-\infty, - h n^{1/9}\big] \cup \big[ h n^{1/9}, \infty \big)  \, \cup \bigcup_{i= \lceil R \rceil}^{ \lfloor h n^{1/9} \rfloor} \Big( \big[ -i - 1,  -i + 1  \big] \cup  \big[ i - 1,  i + 1  \big] \Big) \, . 
$$
By the latter assertion of Corollary~\ref{c.maximizer}, the probability that $\rho_n^x(a) - xa$ takes a value in the union of the first two displayed intervals is bounded above by  $H \exp \big\{ - h n^{1/3} \big\}$.
Let $i \in \N$ satisfy $i \leq h n^{1/9}$. By this corollary's former assertion applied with ${\bf R}$ equal to either $i$ or $-i$,
 the   probability that $\rho_n^x(a) - xa$ assumes a value in $[i-1,i+1]$, or in $[-i-1,-i+1]$, 
 is at most  $H \exp \big\{ - h i^3 \big\}$. Summing these bounds over $i \in \llbracket  \lceil R \rceil, \lfloor h n^{1/9} \rfloor \rrbracket$, and employing the first inference alongside $\vert R \vert \leq h n^{1/9}$, we obtain Theorem~\ref{t.maximizersecond} after a relabelling of $H$ and $h$. \qed

\subsection{Compact uniform control on polymer weight}
In this section, we present a further tool needed for the proofs of Theorems~\ref{t.toolfluc} and~\ref{t.weight}. Control is gained on the tail of the maximum absolute value of the parabolic weight of polymers whose endpoints are varied over compact regions. 

\begin{proposition}\label{p.tailweight}
 For $n \geq \Theta(1)$, $\Theta(1) \leq R \leq n^{1/30}$  and $0 <K \leq n^{1/46}$, 
$$
 \P \left(   \sup_{\begin{subarray} c (x,h_1) \in [-K,K] \times [-3,-1] \\   (y,h_2) \in [-K,K] \times [1,3] \end{subarray}} \Big\vert   \weight^\cup_n \big[ (x,h_1) \to (y,h_2) \big]  \Big\vert \geq R   \right) \, \leq \, \Theta(1) K^2 \exp \big\{ - \Theta(1) R^{3/2} \big\} \, .
$$
\end{proposition}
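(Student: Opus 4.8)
The plan is to obtain the bound by combining the single-location (or few-location) tail estimate for parabolically adjusted polymer weight, namely Lemma~\ref{l.onepointbounds}, with a discretization argument over a mesh of endpoint pairs, and to bridge the gap between a fixed dyadic mesh and the supremum over the continuum by means of the modulus-of-continuity estimate from~\cite{ModCon}. First I would reduce the problem: by the scaling principle of Subsection~\ref{s.scalingprinciple}, the weight $\weight^\cup_n[(x,h_1)\to(y,h_2)]$ with $h_{1,2}\in[2,4]$, say, is (up to an $n$-independent constant factor on weights and a rescaling of horizontal coordinates by a bounded factor, and a replacement of $n$ by $nh_{1,2}$, which remains $\Theta(n)$) a polymer weight with lifetime $[0,1]$. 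So it suffices to control $\weight^\cup_n[(x,0)\to(y,1)]$ uniformly over $x\in[-K',K']$, $y\in[-K',K']$ with $K'=\Theta(K)$. Along the way I would also need to handle the variation in $h_1\in[-3,-1]$ and $h_2\in[1,3]$; but since the number of admissible $h_i\in n^{-1}\Z$ in these intervals is $\Theta(n)$, I can afford a union bound over those $\Theta(n^2)$ choices at the cost of an extra polynomial-in-$n$ factor, which will be absorbed comfortably into $\exp\{-\Theta(1)R^{3/2}\}$ provided $R$ is not too small — this is exactly why the hypothesis $R\geq\Theta(1)$ and the restriction $R\le n^{1/30}$ (and $K\le n^{1/46}$) appear: one needs $R^{3/2}$ large compared with $\log n$, and one needs the mesh-size constraints to be compatible with the ranges of validity of Lemma~\ref{l.onepointbounds} and of~\cite[Theorem~1.1]{ModCon}.

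The core of the argument: fix a mesh of spacing $\delta$ in each of the four coordinates $(x,h_1,y,h_2)$, with $\delta$ a small negative power of $n$ (e.g. $\delta = n^{-1/3}$, so that $\delta$-spacing is finer than the $n^{-1/3}$ horizontal scale and the number of mesh points is polynomial in $n$). For each mesh point $(x_0,h_{1,0},y_0,h_{2,0})$, Lemma~\ref{l.onepointbounds} (via the scaling principle, and using $|x_0-y_0|\le \Theta(K)\le \Theta(1)n^{1/46}\ll n^{1/9}$ so the one-point estimate applies) gives $\P(|\weight^\cup_n[(x_0,h_{1,0})\to(y_0,h_{2,0})]|\ge R/2)\le C\exp\{-c(R/2)^{3/2}\}$. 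A union bound over the $\Theta(n^{c'})$ mesh points yields $\Theta(n^{c'})C\exp\{-c(R/2)^{3/2}\}$, which for $R\le n^{1/30}$ and $R\ge\Theta(1)$ is $\le \Theta(1)\exp\{-\Theta(1)R^{3/2}\}$ after decreasing the constant $c$; the factor $K^2$ in the statement I would get from the fact that one genuinely needs $\Theta(K^2)$ mesh points in the two horizontal variables (the $\delta$-mesh over $[-K,K]^2$), while the $h$-variables and the finer-than-$K$ refinement contribute only the polynomial-in-$n$ factor that is then absorbed. Then I would upgrade from the mesh to the supremum: on the event that $\sup$ over the continuum exceeds $R$ but $\max$ over the mesh is at most $R/2$, there must exist a mesh point within $\ell^\infty$-distance $\delta$ of some $(x,h_1,y,h_2)$ for which $|\weight^\cup_n[(x,h_1)\to(y,h_2)] - \weight^\cup_n[(x_0,h_{1,0})\to(y_0,h_{2,0})]|\ge R/2$; here one uses~\cite[Theorem~1.1]{ModCon} (the same modulus-of-continuity input used in the proof of Lemma~\ref{l.twoerrors}(1), invoked via the scaling principle) to show that the probability of such a large oscillation over a $\delta$-box is itself bounded by $\exp\{-\Theta(1)(R\delta^{-1/2})^{3/2}\}$ summed over a polynomial number of boxes, which is even smaller. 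The variation in the $h$-variables is handled the same way, or alternatively by noting polymer-weight continuity in the vertical endpoint (this is effectively what Theorem~\ref{t.weight} proves, but here we only need the crude version).

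The main obstacle I anticipate is bookkeeping the ranges of validity so that every invoked result applies: Lemma~\ref{l.onepointbounds} requires $|x-y|\le cn^{1/9}$ and is stated for $t\ge 0$ but really controls the tail; after the scaling principle the effective index $nh_{1,2}$ and the effective horizontal separation $|x-y|h_{1,2}^{-2/3}$ must still satisfy that constraint, which forces $K\le\Theta(1)n^{1/9}$ (more than satisfied by $K\le n^{1/46}$); and~\cite[Theorem~1.1]{ModCon} comes with its own hypotheses on the ratio of the box size to $n$-powers, which is precisely what dictates the exponent $1/46$ and $1/30$ in the statement. A secondary point requiring care is that $\weight^\cup_n$ is the parabolically adjusted weight, so when comparing weights at nearby endpoints one must account for the deterministic parabolic term $2^{-1/2}(y-x)^2h_{1,2}^{-1}$, but since $|y-x|\le\Theta(K)$ and $h_{1,2}=\Theta(1)$ this term is $\Theta(K^2)=\Theta(n^{1/23})$, and its variation over a $\delta$-box is negligible; so the parabola contributes only through the union-bound count and is harmless. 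Assembling these pieces gives the claimed bound $\Theta(1)K^2\exp\{-\Theta(1)R^{3/2}\}$. \qed
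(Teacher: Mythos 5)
There is a genuine gap at the heart of your argument: the union bound over a mesh of polynomially-many-in-$n$ points cannot produce the claimed bound when $R$ is of constant order. Your mesh has $\Theta(n^{c'})$ points (coming from the $\Theta(n)$ admissible values of each of $h_1,h_2\in n^{-1}\Z$ and from the $\delta=n^{-1/3}$ spacing in $x,y$), so the first stage of your union bound gives $\Theta(n^{c'})\,C\exp\{-c(R/2)^{3/2}\}$. For this to be at most $\Theta(1)K^2\exp\{-\Theta(1)R^{3/2}\}$ you would need $n^{c'}\leq\Theta(1)K^2\exp\{c''R^{3/2}\}$ for some $c''>0$, which fails as $n\to\infty$ whenever $R$ and $K$ are fixed constants --- and the proposition is asserted for all $R\geq\Theta(1)$, not merely for $R\gtrsim(\log n)^{2/3}$. "Decreasing the constant $c$" absorbs polynomial factors in $R$, not polynomial factors in $n$. (Your second stage, the oscillation over $\delta$-boxes, is fine precisely because there the threshold scales like $R\delta^{-1/2}$; the problem is confined to the first stage.) A secondary issue: the modulus-of-continuity input \cite[Theorem~$1.1$]{ModCon}, as used in this paper, controls variation of the \emph{horizontal} endpoints at fixed heights; it does not give you continuity in $h_1,h_2$, and appealing to Theorem~\ref{t.weight} for that purpose would be circular, since that theorem is proved using Proposition~\ref{p.tailweight}.

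The paper avoids any $n$-dependent entropy cost by a different route. For the lower tail it runs a chaining argument on a quadtree rooted at $(0,0)$: the infimum of $\weight_n[(0,0)\to(y,h)]$ over a unit box of endpoints $(y,h)$ is bounded below by a sum of edge-weights along tree branches, and at scale $m$ the entropy $4^m$ is beaten by a per-edge tail $\exp\{-cR^{3/2}(m+1)\}$, so the continuum of endpoints (including the vertical coordinate) is handled with no polynomial-in-$n$ loss. Superadditivity through $(0,0)$ then gives the two-sided lower bound, a sandwiching argument between $(0,-4)$ and $(0,4)$ combined with the one-point upper tail gives the upper bound, and the passage from $K=1$ to general $K$ is done by covering the endpoint set with $\Theta(K^2)$ sheared copies of the unit configuration via the shear-invariance Proposition~\ref{p.invariance} --- which is where the $K^2$ prefactor and the exponents $1/30$, $1/46$ actually come from. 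If you want to salvage a mesh-based proof you would need a mesh of cardinality $\Theta(K^2)$ only, and then the oscillation control over unit-sized boxes in all four coordinates that you would need to close the argument is essentially the statement being proved.
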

The argument that we will give for Proposition~\ref{p.tailweight} mimics that of \cite[Propositions~$10.1$ and~$10.5$]{SlowBond}.

\noindent{\bf Proof of Proposition~\ref{p.tailweight}.}
The next presented proposition is sufficient to prove this assertion. Indeed, Proposition~\ref{p.threepart}(2) and~(3) imply it. \qed
 
\begin{proposition}\label{p.threepart}
 For $n \geq \Theta(1)$ and $\Theta(1) \leq R \leq \Theta(1)  n^{4/9} (\log n)^{-2}$, the following hold.
\begin{enumerate}
\item 
The probability that 
\begin{equation}\label{e.partone}
  \inf \Big\{   \weight^\cup_n \big[ (0,0) \to (y,h)  \big]  : y \in [-1,1] , h \in [1,3]  \Big\} \leq -R 
\end{equation}
is at most $\Theta(1)  \exp \big\{ -\Theta(1) R^{3/2} \big\}$.
\item For $K \leq \Theta(1)n^{1/18}$,
the probability that  the condition 
$$
  \inf \Big\{   \weight^\cup_n \big[ (x,h_1) \to (y,h_2) \big]   : x,y \in [-K,K] , h_1 \in [-3,-1],h_2 \in [1,3]  \Big\} \leq -R 
$$ 
holds is at most $\Theta(1) K^2 \exp \big\{ -\Theta(1) R^{3/2} \big\} + \Theta(1) n^3 K^4 \exp \big\{ - \Theta(1) n^{1/12} K^{-3/2} \big\}$.
\item Likewise for the condition 
$$
  \sup \Big\{   \weight^\cup_n \big[ (x,h_1) \to (y,h_2) \big]  : x,y \in [-K,K] , h_1 \in [-3,-1] , h_2 \in [1,3]  \Big\} \geq R \, .
$$  
\end{enumerate}
\end{proposition}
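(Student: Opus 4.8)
The plan is to prove the three parts in sequence, with part~(1) as the seed and parts~(2),(3) following by an application of the shear-invariance Proposition~\ref{p.invariance} together with a chaining/union-bound argument over the compact parameter region. I will describe each in turn.

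\textbf{Part~(1).} The quantity $\weight_n^\cup\big[(0,0)\to(y,h)\big]$ is, up to the scaling principle of Subsection~\ref{s.scalingprinciple}, a normalized forward weight profile evaluated near the origin; more precisely, $\weight_n^\cup\big[(0,0)\to(y,h)\big] = h^{1/3}\weight_{nh}^\cup\big[(0,0)\to(yh^{-2/3},1)\big]$ plus the parabolic correction, and the latter (recalling Proposition~\ref{p.shiftbrownian}) is the top curve of an $(c,C)$-regular ensemble after a parabolic shift. So the plan is: first fix $h$ in the dyadic-like range $[1,3]$ and $y\in[-1,1]$; observe that $y h^{-2/3}$ ranges over a compact set of size $\Theta(1)$; then invoke the lower-tail control on $\weight_n^\cup$ at a single point from Lemma~\ref{l.onepointbounds}(2), combined with the modulus-of-continuity estimate \cite[Theorem~$1.1$]{ModCon} (equivalently a $(c,C)$-regularity input, e.g.\ \cite[Proposition~$2.28$]{BrownianReg}) to upgrade the single-point lower tail to a uniform-over-compact-region lower tail. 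A union bound over an $O(1)$-sized mesh of $(y,h)$ pairs, each contributing $\Theta(1)\exp\{-\Theta(1)R^{3/2}\}$, together with a continuity estimate bounding the fluctuation between mesh points by a constant, yields the bound $\Theta(1)\exp\{-\Theta(1)R^{3/2}\}$. The restriction $R\leq \Theta(1)n^{4/9}(\log n)^{-2}$ enters exactly to ensure the parameters stay within the windows of validity of Lemma~\ref{l.onepointbounds} and \cite[Theorem~$1.1$]{ModCon}.

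\textbf{Parts~(2) and~(3).} Here the new feature is that the starting height $h_1$ is now negative and the horizontal endpoints range over $[-K,K]$ rather than $[-1,1]$, so the polymer corridors can be steeply sloped. The plan is to straighten these corridors via the shear map. Concretely: given $x\in[-K,K]$, $h_1\in[-3,-1]$, $y\in[-K,K]$, $h_2\in[1,3]$, write the pair $\big((x,h_1),(y,h_2)\big)$ as $\big(\tau_b(x',h_1),\tau_b(y',h_2)\big)$ for a suitable shear parameter $b = \Theta(K)$ chosen so that $x',y'$ lie in a fixed $O(1)$-sized interval (the shear straightens the line joining the two points). Then Proposition~\ref{p.invariance} gives
$$
\weight_n^\cup\big[(x,h_1)\to(y,h_2)\big] \;\equiv\; \weight_n^\cup\big[(x',h_1)\to(y',h_2)\big] + \mc{E}(x',y',h_1,h_2),
$$
with the error $\mc{E}$ controlled uniformly (over the relevant $(x',y',h_1,h_2)$) by the tail bound~(\ref{e.invariance}) of Proposition~\ref{p.invariance} with parameter $K$ of order at most $n^{1/46}\leq n^{1/18}$. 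The shifted-endpoint weight $\weight_n^\cup\big[(x',h_1)\to(y',h_2)\big]$ now has endpoints in a compact region, and by translation invariance of Brownian LPP (shifting $h_1$ up to $0$) it reduces to exactly the situation of part~(1) — or, for the lower tail with $x'\neq 0$, to a mild variant of part~(1) obtained by the same single-point-plus-modulus argument applied at a base point $x'$ of $O(1)$ size. To pass from a fixed quadruple to the supremum/infimum over the full region, I would lay down a mesh of $\Theta(K^2)$ points $(x,y)\in[-K,K]^2$ (with $h_1,h_2$ taken over $n^{-1}\Z$, contributing the $n$-powers), apply the single-quadruple bound plus the shear-error bound at each, and absorb fluctuations between mesh points using \cite[Proposition~$1.5$]{ModCon} (parabolic weight continuity). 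This produces the claimed bound: the main term $\Theta(1)K^2\exp\{-\Theta(1)R^{3/2}\}$ from the weight tails, and the additive term $\Theta(1)n^3 K^4\exp\{-\Theta(1)n^{1/12}K^{-3/2}\}$ directly from the shear-error tail~(\ref{e.invariance}) summed over the $\Theta(n^2)$ height pairs and $\Theta(K^2)$ mesh (the precise bookkeeping of the $n$- and $K$-powers matching the two terms on the right of~(\ref{e.invariance})). Part~(3), the upper tail, is handled by the verbatim same argument, using Lemma~\ref{l.onepointbounds}(1) (one-point upper tail) and the upper-tail regularity input in place of the lower-tail ones.

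\textbf{Main obstacle.} The routine but genuinely delicate part is the bookkeeping in parts~(2),(3): one must choose the mesh spacing and the shear parameter so that (a) the shear map straightens every corridor in the region with a uniformly bounded $b = \Theta(K)$, (b) the shear-error tail from Proposition~\ref{p.invariance}, summed over the mesh and over the $\Theta(n^2)$ choices of $(h_1,h_2)\in (n^{-1}\Z)^2$, still collapses to the stated $\Theta(1)n^3K^4\exp\{-\Theta(1)n^{1/12}K^{-3/2}\}$ term, and (c) all parameters remain inside the simultaneous validity windows of Lemma~\ref{l.onepointbounds}, Proposition~\ref{p.invariance}, and the two modulus-of-continuity inputs from~\cite{ModCon}; this is where the hypotheses $R\leq n^{1/30}$ (part of the ambient proposition, via $R^{3/2}\leq n^{1/20}$) and $K\leq n^{1/46}$ get used. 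The conceptual content — that steeply sloped corridors are reduced to near-diagonal ones by a measure-preserving shear, and that near-diagonal parabolic weights have stretched-exponential tails of exponent $3/2$ — is entirely supplied by the earlier results; the work is to assemble it uniformly over the compact region without losing the exponent.
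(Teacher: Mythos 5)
Your architecture for parts (2) and (3) — shear to straighten the corridors, cover $[-K,K]^2$ by $\Theta(K^2)$ distortions of a standard region, control the shear error with Proposition~\ref{p.invariance}, and reduce to part (1) — matches the paper's. But there is a genuine gap at the seed, part (1), and it propagates. The infimum in~(\ref{e.partone}) runs over $h \in n^{-1}\Z \cap [1,3]$, i.e.\ over $\Theta(n)$ microscopic heights, and the claimed bound $\Theta(1)\exp\{-\Theta(1)R^{3/2}\}$ carries no factor of $n$ while $R$ may be of constant order. Your plan — one-point lower tail plus \cite[Theorem~$1.1$]{ModCon} plus ``an $O(1)$-sized mesh'' with fluctuations between mesh points ``bounded by a constant'' — cannot deliver this: \cite[Theorem~$1.1$]{ModCon} controls horizontal endpoint perturbation at fixed heights, not variation of the weight as the \emph{vertical} endpoint moves through $n^{-1}\Z$, and a uniform vertical modulus of continuity is precisely what is not yet available at this point of the paper (it is a downstream consequence, via Proposition~\ref{p.tailweight}, of the very proposition you are proving). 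A union bound over the $\Theta(n)$ heights costs a factor of $n$ in the main term, which destroys the statement. The paper's solution is a multiscale chaining argument: an embedded quadtree rooted at $(0,0)$ whose vertex closure contains the target region, a lower bound on $\weight_n[(0,0)\to(y,h)]$ by summing weights along the tree path to $(y,h)$ (truncated at scale $n^{-1}\log n$, with the last microscopic jump handled by the Brownian oscillation bound of Proposition~\ref{p.microfluc}), and a union bound over tree edges whose failure probabilities $C\exp\{-cR^{3/2}(m+1)\}$ decay geometrically in the generation $m$ and so beat the $4^m$ entropy. This is the missing idea; note also that a lower bound on an infimum of weights genuinely requires building paths, since superadditivity only bounds point-to-point weights from above by reference to nearby points.

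Two further points. First, in your parts (2)--(3) you let the $(h_1,h_2)$ mesh over $(n^{-1}\Z)^2$ ``contribute the $n$-powers'': those powers are harmless inside the shear-error term (crushed by $\exp\{-\Theta(1)n^{1/12}K^{-3/2}\}$) but must not multiply the main term $\Theta(1)K^2\exp\{-\Theta(1)R^{3/2}\}$; the paper avoids this because the $K=1$ case is obtained from part (1) by superadditivity through $(0,0)$ (lower tail) rather than by any union over heights. Second, your claim that the upper tail (part (3)) follows by the ``verbatim same argument'' with the one-point upper tail substituted is not right: superadditivity bounds $\weight_n[(x,h_1)\to(y,h_2)]$ from \emph{below} by a sum through $(0,0)$, not from above, so the supremum requires a different (surgery) step — the paper embeds any high-weight crossing into the route $(0,0-4)\to(0,4)$, uses part (2) to show the two connecting pieces are not too negative, and concludes that $\weight_n[(0,-4)\to(0,4)]$ would be atypically large, contradicting the one-point upper tail.
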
 
\noindent{\bf Proof.} We will starting by proving the proposition's first part; and then the later parts when $K=1$. Note that the parabolic discrepancy  $\weight^\cup_n \big[ (x,h_1) \to (y,h_2) \big]  - \weight_n \big[ (x,h_1) \to (y,h_2) \big]$ is uniformly bounded in absolute value over all concerned routes $(0,0) \to (y,h)$ or $(x,h_1) \to (y,h_2)$ in these cases. Thus, we may prove these assertions with $\weight_n$ in place of $\weight_n^\cup$.

{\bf (1).}
We start by defining an infinite tree~$T = (V,E)$, embedded in the plane and rooted at $(0,0)$, each of whose vertices has four offspring.
The children of each vertex will be called left-low, left-high, right-low and right-high. The root is the unique vertex in generation zero. Its left-low child is at $(-1/2,1/2)$; its left-high child at $(-1/2,1/2 + 1)$; its right-low child at $(1/2,1/2)$; and its right-high child at $(1/2,1/2 +1)$.
Let $w$ denote the four-vector of planar points whose coordinates are these respective locations. The offspring of any child~$c$ of the root lie at $c + w/2$.
Iteratively, suppose that the locations of any vertex in $T$ of generation at most $m \in \N$ have been determined. Vertices in generation $m+1$
are placed in locations in the set $\big\{ v + 2^{-m}w: v \in V_n \big\}$, with an edge running from parent to child.

It is straightforward that the closure of the vertex set $V$ contains $[-1,1] \times [1,2]$. Let $(y,h)$ be an element in the latter set for which $h \in n^{-1}\Z$. In order to find a lower bound on $\weight_n \big[ (0,0) \to (y,h) \big]$, we aim to consider the sum of the weights of polymers that interpolate the endpoints of the edges in~$T$ along the end of the tree that runs from $(0,0)$ to the element $(y,h)$ of the closure of $V$. However, such endpoints lie in the plane, rather than in $\R \times n^{-1}\Z$; we begin by finding a nearby path through the latter index set. This path~$P$ will follow the tree end until its distance from the destination $(y,h)$ is a large multiple (of order $\log n$) of the microscopic spacing~$n^{-1}$; then it will jump directly to $(y,h)$, so that the approximating path is of finite length.

To any planar point $(z,s) \in \R^2$, we associate $(z,s)^{\downarrow}$, the element of $\R \times n^{-1}\Z$ that is first encountered on a journey due south that commences at $(z,s)$. 

We now specify $j \in \N$ to denote the smallest integer such that $2^{-j} \leq K_0 n^{-1}\log n$, where $K_0$ is a large constant.

Let $\big\{ (x_i,s_i): i \in \N \big\}$ denote a sequence of adjacent elements of $V$ with $(x_0,s_0) = 0$ whose limit equals $(y,h)$. Set $\big( x_i(n), s_i(n) \big) = (x_i,s_i)^{\downarrow}$. 
Respecify $\big(x_j(n),s_j(n) \big)$ to equal $(y,h)$. The path $P$ has elements $\big(x_i(n),s_i(n) \big)$ for $i \in \llbracket 0, j \rrbracket$, with edges between consecutively indexed elements. 

The path $P$ indeed offers a lower bound on the polymer weight from $(0,0)$ to $(y,h)$. Namely,
\begin{equation}\label{e.weightpath}
 \weight_n  \big[ (0,0) \to (y,h) \big] \, \geq \,  \sum_{i=0}^{j-1} \weight_n \big[ \big(x_i(n),s_i(n) \big) \to \big(x_{i+1}(n),s_{i+1}(n) \big) \big] \, .
\end{equation}
An edge in $T$ that connects a vertex~$(u,h_1)$ in generation $m \in \N$
to one of its children~$(v,h_2)$ satisfies $h_{1,2} \in 2^{1-m} \cdot \{ 1,3 \}$
and $\vert v -u \vert = 2^{-1 -m}$. Let $R > 0$. The edge
  is called $R$-typical if 
\begin{equation}\label{e.dtypical}
 2^{(m+1)/3} \Big\vert  \weight_n \big[  (u,h_1)^{\downarrow} \to (v,h_2)^{\downarrow} \big]  \Big\vert \leq R (m+1)^{2/3} \, .
\end{equation} 
The left-hand quantity is a normalized weight---it is random but of unit order, satisfying tail bounds that are uniform over edges in the tree~$T$. Indeed, the scaling principle from Section~\ref{s.scalingprinciple} and Lemma~\ref{l.onepointbounds} imply that any given edge crossing between generations $m$ and $m+1$ such that $n2^{-(m+1)}\geq n_0$---a condition verified when $m \leq j$---fails to be $R$-typical with probability that is at most~$C \exp \big\{ - c R^{3/2}(m+1) \big\}$. The parabolic curvature term in Lemma~\ref{l.onepointbounds} has been discarded because it takes the form $h_{1,2}^{-4/3}(u-v)^2= h_{1,2}^{-4/3}\cdot h_{1,2}^2= h_{1,2}^{2/3}\leq 1$.

We now specify the event $\mathsf{Typical} = \mathsf{Typical}(R)$ that every edge in the tree between generations $m$ and $m+1$ with $m \in \llbracket 0,j-1 \rrbracket$ is $R$-typical. We see that
$$
 \P \big( \neg \, \mathsf{Typical} \big)\, \leq \, \sum_{m=0}^{j-1} 4^m C \exp \big\{ - c R^{3/2} (m+1) \big\} \, ,
$$ 
whose right-hand side is at most $2C \exp \big\{ - 2^{-1} c R^{3/2} \big\}$ provided that $R \geq \Theta(1)$.

In its final step $\big(x_{j-1}(n),s_{j-1}(n) \big) \to \big(x_j(n),s_j(n) \big)$,  $P$ departs from the path beaten along $T$. In this regard,
we {\em claim} that
\begin{eqnarray}
 & & \PP \bigg(  \Big\vert \weight_n \big[ \big(x_{j-1}(n),s_{j-1}(n) \big) \to \big(x_j(n),s_j(n) \big) \big] \Big\vert \geq 1 \bigg) \label{e.finalterm} \\
 & \leq &  
 \Theta(1)
 n^{2/3} (\log n)^{1/2}  K_0^{1/2}   \exp \big\{ - 
 2^{-5} 
 n^{2/3}  (K_0 \log n + 2)^{-3}  \big\}
 \, . \nonumber
\end{eqnarray}
The differences $x_j(n) - x_{j-1}(n)$ and  $s_j(n) - s_{j-1}(n)$ are at most $K_0 n^{-1} \log n + n^{-1}$ in absolute value; and the four coordinates $x_{j-1}(n)$, $s_{j-1}(n)$, $x_j(n)$ and  $s_j(n)$ are at most two in this sense. 
Merely an order of $\log n$ microscopic levels separate the vertical coordinates  $s_{j-1}(n)$ and $s_j(n)$, making the unscaled picture a suitable context for proving the claim.
Indeed, specifying weight in terms of energy via~(\ref{e.weightgeneral}), and applying Proposition~\ref{p.microfluc} with ${\bf K} = 3$,  ${\bm \sigma} = K_0 \log n + 1$ and ${\bf r} = 2^{-1/2}n^{1/3}$,  we obtain the claimed~(\ref{e.finalterm}).

Applying~(\ref{e.dtypical}) and~(\ref{e.finalterm}) to~(\ref{e.weightpath}), we find that, when $\Theta(1) \leq R \leq \Theta(1) K_0^{-2} n^{4/9} (\log n)^{-2}$,
$$
\P \left( \inf_{\begin{subarray}  {c} y  \in [-1,1] \\ h \in [1,2] \end{subarray}}   \weight_n \big[ (0,0) \to (y,h) \big]   \leq -\kappa R \right) \leq \Theta(1) \exp \big\{ -  \Theta(1) R^{3/2} \big\} 
$$
for suitable positive $d$, $H$ and $h$;  here, we set $\kappa = 1+\sum_{m=0}^\infty (m+1)^{2/3} 2^{-(m+1)/3}$. The appearance of $\kappa$ may be absorbed by the usage of $\Theta(1)$ notation.  The result is Proposition~\ref{p.threepart}(1) with the instance of $[1,3]$ in~(\ref{e.partone}) replaced by $[1,2]$. We need to obtain the counterpart statement where the interval in question is $[2,3]$. We move the tree~$T$ upwards by one unit and add to it the edge that connects $(0,0)$ to $(0,1)$. We treat this edge in the preceding analysis as if it connects vertices of generation zero and one, and follow the rest of the analysis unchanged. This completes the proof of Proposition~\ref{p.threepart}(1).

\noindent{\bf (2) for $K=1$.} 
We prove this assertion in the {\em stronger form} where the probability upper bound is  $\Theta(1) K^2 \exp \big\{ -\Theta(1) R^{3/2} \big\}$.

Let $x,y \in [-1,1]$ and note that  $\weight_n \big[ (x,h_1) \to (y,h_2) \big]$ is at least the sum of  $\weight_n \big[ (x,h_1) \to (0,0) \big]$ and  $\weight_n \big[ (0,0) \to (y,h_2) \big]$. Since $\big\{ \weight_n \big[ (x,h_1) \to (0,0) \big] : (x,h_1) \in [-K,K] \times [-3,-1]\big\}$ has the law of   $\big\{ \weight_n \big[ (0,0)  \to (y,h_2) \big] : (y,h_2) \in [-K,K] \times [1,3] \big\}$ 
when the identification of $(x,h_1)$ with $(-x,-h_1)$ is made, 
two applications of Proposition~\ref{p.threepart}(1) and a union bound yield Proposition~\ref{p.threepart}(2), up to a relabelling of the constants $H$ and $h$.

\noindent{\bf (3) for $K=1$.} We also prove this assertion in the above mentioned stronger form.

Note that the occurrence of the condition
$$
\sup \Big\{  \weight_n \big[ (x,-1) \to (y,1) \big] :  (x,h_1) \in [-K,K] \times [-3,-1] \, , \,  (y,h_2) \in [-K,K] \times [1,3] \Big\} > R\,,
$$
alongside the conditions
\begin{equation}\label{e.twocond}
\min \, \Bigg\{ \, \inf_{\begin{subarray}{c}x \in [-K,K]\\ h_1 \in [-3,1] \end{subarray}} \weight_n \big[ (0,-4) \to (x,-1) \big] \, , \,  \inf_{\begin{subarray}{c}y \in [-K,K]\\ h_2 \in [1,3] \end{subarray}} \weight_n \big[ (y,h_2) \to (0,4) \big]  \, \Bigg\} \, \geq \, -  R/4\,, 
\end{equation}
entails that 
\begin{equation}\label{e.weightconc}
\weight_{n;(0,-4)}^{(0,4)}  > R/2 \, .
\end{equation}
Bounds on the failure probabilities of the two conditions~(\ref{e.twocond}) arise by applying Proposition~\ref{p.threepart}(2) in light of simple scaling properties. The one-point upper tail control offered by Lemma~\ref{l.onepointbounds}(1) via the scaling principle  provides an upper bound on the probability of~(\ref{e.weightconc}). Thus we obtain Proposition~\ref{p.threepart}(3). \qed

\noindent{\bf (2,3) for general $K$.} Proposition~\ref{p.invariance} permits us to reduce this derivation to the just obtained stronger form of the special case where $K=1$,
at the expense of the appearance of the additive term $\Theta(1) n^3 K^4 \exp \big\{ - \Theta(1) n^{1/12} K^{-3/2} \big\}$. Indeed, this further term takes the form of a multiple $\Theta(1) K^2$
of an upper bound on the probability appearing in  Proposition~\ref{p.invariance}. The reason for this form is that the indexing set-pair 
$\big( [-K,-K] \times [-3,-1] , [-K,K] \times [1,3] \big)$ may be covered by a union of at most $\Theta(1) K^2$ sets that are {\em distortions} of the  {\em standard} set-pair $\big( [-1,-1] \times [-3,-1] , [-1,1] \times [1,3] \big)$. By distortion, we mean that both elements in the pair are the image of their standard counterpart by an application of a given shear map of the form $\tau_\kappa$, where $\vert \kappa \vert \leq \Theta(K)$, composed with a horizontal translation. Proposition~\ref{p.invariance} with ${\bf K} = \kappa$ offers control over weights indexed by pairs $(x,h_1)$ and $(y,h_2)$ lying in a given image in this sense, and a union bound is then taken over the $\Theta(1)K^2$ distortions, so that the general~$K$ result is obtained with the indicated additive term. Note that it is these applications of Proposition~\ref{p.invariance} which are responsible for the imposition of the hypothesis that $K = \Theta(1) n^{1/18}$ in Proposition~\ref{p.threepart}. This completes the proof of this proposition.  \qed

\subsection{Modulus of continuity for polymer weight and geometry}
Here we prove Theorems~\ref{t.toolfluc} and~\ref{t.weight}, and Corollary~\ref{c.lateral}. To survey the route ahead,  it is perhaps helpful to recall first that the two theorems are expressed via a parameter $k \in \N$,
and that they control polymer fluctuation and subpath weights uniformly along stretches of vertical extension at most $2^{-k}$. 
Theorem~\ref{t.maximizersecond} offers control on the mid-life fluctuation of a polymer that traverses between given endpoints. By use of polymer ordering, a uniform form of this control may be gained for the fluctuation at height one-half of all polymers on routes $(x,0) \to (y,1)$ for $x,y \in \R$ of absolute value at most $r$. A mesh of points in a rectangular pattern with heights zero, one-half and one---and with horizontal distance between adjacent points shrunk in essence by a factor of $2^{2/3}$ from the level zero value of order~$r$---may thus be constructed such that polymers progress without sudden left or right movements between consecutive levels in the mesh.  Theorem~\ref{t.maximizersecond}  may be applied to mesh endpoints at adjacent vertical levels, so that, in view of polymer sandwiching, control on polymer fluctuation is gained at heights one-quarter and three-quarters. The construction proceeds iteratively, down to a dyadic scale that we will label $j \in \N$. The outcome is that  Lemmas~\ref{l.viable} and~\ref{l.viableconseq} assert that, with a high probability indicated in Lemma~\ref{l.open}(3), polymer fluctuation between consecutive mesh heights at level $j$---at distance $2^{-j}$---is controlled as Theorem~\ref{t.toolfluc} asserts, with an upper bound of the form $\Theta(1) 2^{-2j/3} (\log j)^{1/3}$.
For Theorem~\ref{t.toolfluc} to be obtained, what remains is to give the Kolmogorov continuity criterion argument, in which a sum over dyadic scales $j$ at least $k$ is performed to find a similar fluctuation upper bound between generic heights at displacement of order $2^{-k}$. There is a slight twist: the mesh construction becomes unmanageable at close to the microscopic scale, and a separate but simple device, Lemma~\ref{l.localcontrol}, is used to handle the very short scale. This apparatus also delivers  Theorem~\ref{t.weight}, which concerns polymer subpath weight. The aspect ratio of rectangles 
whose consecutive corners are vertically or horizontally adjacent in the mesh at scale~$j$ respects the two-thirds KPZ exponent up to a factor that is polylogarithmic in $j$. 
We  may thus apply (via the scaling principle) the uniform control gained in Proposition~\ref{p.tailweight} on weights for polymers crossing  rectangles to derive Theorem~\ref{t.weight}.

We begin, then, by constructing the meshes. Let $n \in \N$.
For each $i \in \N$ for which $2^i \leq n-1$, let $Y_i$ denote a subset of $n^{-1} \llbracket 0,n \rrbracket$
of cardinality $2^i +1$ such that, for each $k \in \llbracket 0,2^i \rrbracket$, there exists $y \in Y_i$ satisfying $\big\vert  y - k 2^{-i} \big\vert \leq 2^{-1}n^{-1}$. For example, we take $Y_0 = \{ 0,1 \}$ and $Y_1 = \{ 0, n^{-1} \lfloor n/2 \rfloor, 1 \}$. Note further that this set sequence may be constructed so that each set $Y_i$ for $i \in \N^+$ contains its predecessor, with the set $Y_i \setminus Y_{i-1}$ of newcomers interlacing the set $Y_{i-1}$ of existing members.

Let $r$ and $\kappa$ be positive. Set $X_0 = \{ -r,r\}$. For $i \in \N^+$ and $j \in \N$, set $X_i = X_i(r)$ according to 
$$
X_i(r) =  2^{-2i/3}(  j +1 )^{1/3} 
\kappa 
r \Z \cap (-2r,2r) \, .
$$ 
The parameter $r$ appears in Theorem~\ref{t.toolfluc}. The just introduced $\kappa$ is presently unspecified; it will shortly be joined by two further positive parameters $\kappa_1$ and $\kappa_2$, with the triple being specified when the relation that is demanded of them is derived.

Suppose given $j \in \N$. Let $i \in \llbracket 0,j \rrbracket$. We consider the graph $G_i = G_i(j) = \big( V_i, E_i \big)$ whose vertex set~$V_i$ equals $X_i \times Y_i$. Note that two dyadic scale parameters $i$ and $j$ (with $i \leq j$) are implicated in this definition, because, although we omit reference to $j$ from the notation $X_i$, this parameter enters via this object. What we called the mesh of scale~$j$ in overview is $G_j = G_j(j)$---in our inductive gaining of control on polymer fluctuation, we will treat the parameter $j \in \N$ as fixed, with a view to controlling fluctuation at vertical displacement $\Theta(1)2^{-j}$; we will descend through the scales $2^{-i}$, considering the $G_i(j)$ as $i$ increases to its final value of $j$. 

 Two elements $v,w \in V_i$ are {\em vertically adjacent} if their vertical coordinates $v_2$ and $w_2$ differ, and no element of $Y_i$ lies in the open interval delimited by $u_2$ and $v_2$. The elements $v$ and $w$ are {\em horizontally close} if their horizontal coordinates differ by at most
$\kappa_1 2^{-2i/3}(j + 1)^{1/3} r$, where $\kappa_1$ is a parameter that is at least $\kappa$. The edge between $v$ and $w$ belongs to the edge-set $E_i$ precisely when the vertices $v$ and $w$ are vertically adjacent and horizontally close. Note that $V_i$ may be identified with a rectangle in the lattice $\Z^2$ and that, when this identification is made, an element of $E_i$ is an edge between vertices that differ by one vertical unit and by at most $\kappa_1 \kappa^{-1}$ horizontal units. See Figure \ref{f.kolmogorovcontinuity}.

\begin{figure}[t]
\centering{\epsfig{file=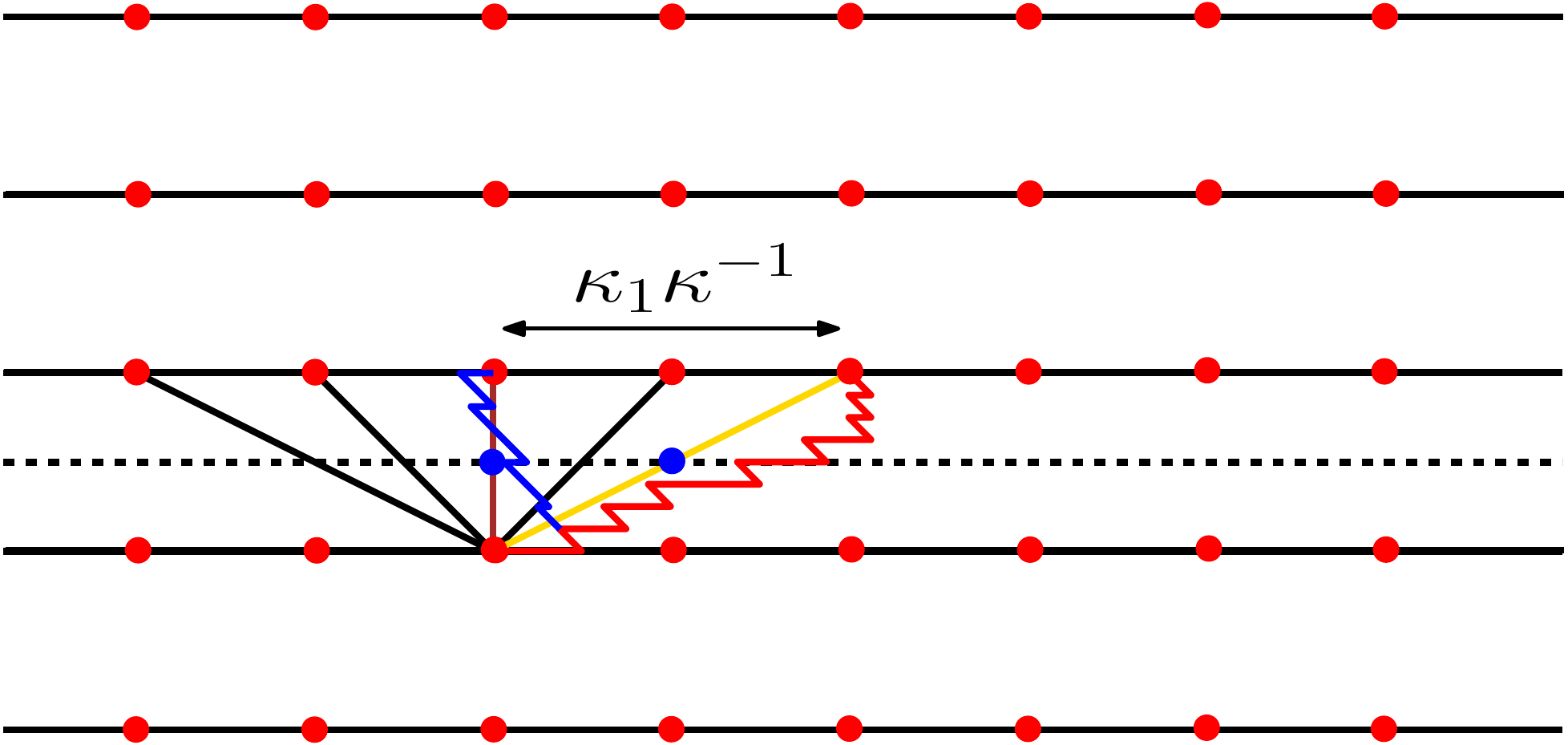, scale=0.4}}
\caption{The graph structure $G_i$ on  vertex set $V_i$, in which edges in $E_i$ connect elements that are vertically adjacent and horizontally close. The rightmost depicted edge is closed on account of fluctuation of the polymer away from the midpoint of the straight line representing the edge, while the vertical edge is open. 
}\label{f.kolmogorovcontinuity}
\end{figure}

We now assign a status of open, or closed, to each edge in each of the graphs $G_i$.
The assignation of this status will be random, and determined by a common realization of static Brownian LPP, governed by a law labelled~$\PP$ in a manner we now specify.  

Let $e \in E_i$ be an edge of the graph $G_i$ that connects the vertices $(x_1,h_1)$
and $(x_2,h_2)$, with $h_1 < h_2$. 
The distance $h_{1,2} = h_2 - h_1$ satisfies $\vert h_{1,2} - 2^{-i} \vert \leq n^{-1}$. There is a unique element $h^+$ of $Y_{i+1}$ in $(h_1,h_2)$, and its distance from both $h_1$ and $h_2$ differs from $2^{-i-1}$ by at most $n^{-1}$.

Let $\ell \big[ (x_1,h_1) \to (x_2,h_2) \big]$ denote the planar line segment with endpoints $(x_1,h_1)$
and $(x_2,h_2)$. Extending the notational abuse introduced in Subsection~\ref{s.zigzagfunction}, 
we write $\ell \big[ (x_1,h_1) \to (x_2,h_2) \big](h)$ for the horizontal coordinate at which this line segment visits the vertical coordinate $h \in [h_1,h_2]$.
 
Let $\kappa_2 > 0$ be a further parameter. The edge $e \in E_i$ will be declared to be $r$-{\em open} if 
\begin{equation}\label{e.open}
\Big\vert \, \rho_n \big[ (x_1,h_1) \to (x_2,h_2) \big] (h^+) - \ell \big[ (x_1,h_1) \to (x_2,h_2) \big](h^+) \, \Big\vert \, \leq \, \kappa_2 2^{-2i/3} 
\big( j + 1 \big)^{1/3}
 r \, .
\end{equation} 
An edge $e \in E_i$ that is not $r$-open is $r$-{\em closed}.

\begin{lemma}\label{l.open}
There exist positive $h$, $g_1$, $g_2$, $g_3$ and $r_0$, and $n_0 \in \N$, such that, when $n \in \N$ and  $j \in \N$ satisfy $n \geq n_0$ and $2^j \leq hn$, and $r \in \R$ satisfies  $r_0 \leq r \leq n^{1/10}$, the following hold.
\begin{enumerate}
\item Let $i \in \llbracket 0,j \rrbracket$. The $\PP$-probability that a given edge $e \in E_i$ is $( j + 1)^{1/3} r$-closed is at most $\exp \big\{ - g_1 r^3 (j + 1) \big\}$.
\item  Let $i \in \llbracket 0,j \rrbracket$.  
The $\PP$-probability that a  $( j + 1)^{1/3}  r$-closed edge in $E_i$ exists is at most
$\exp \big\{ - g_2 r^3 (j+1) \big\}$.
\item The $\PP$-probability that there exist $i \in \llbracket 0, j \rrbracket$
and an edge in $E_i$ that is $\big( j + 1 \big)^{1/3}r$-closed is at most $\exp \big\{ - g_3 r^3 (j+1) \big\}$. 
\end{enumerate}  
\end{lemma}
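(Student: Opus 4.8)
The plan is to prove the three parts of Lemma~\ref{l.open} in order, with the first being the true analytic input and the latter two obtained by union bounds.

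\textbf{Part (1).}  Fix $i \in \llbracket 0,j\rrbracket$ and an edge $e \in E_i$ joining $(x_1,h_1)$ to $(x_2,h_2)$ with $h_1<h_2$, $\vert h_{1,2} - 2^{-i}\vert \le n^{-1}$, and with $\vert x_1-x_2\vert \le \kappa_1 2^{-2i/3}(j+1)^{1/3}r$.  Writing $h^+$ for the intermediate level of $Y_{i+1}$, the event that $e$ is $(j+1)^{1/3}r$-closed is, by~(\ref{e.open}), that
$$
\big\vert \rho_n\big[(x_1,h_1)\to(x_2,h_2)\big](h^+) - \ell\big[(x_1,h_1)\to(x_2,h_2)\big](h^+) \big\vert > \kappa_2 2^{-2i/3}(j+1)^{2/3} r \, .
$$
I would invoke the scaling principle of Subsection~\ref{s.scalingprinciple} to rescale the interval $[h_1,h_2]$ to unit length, turning the polymer $\rho_n[(x_1,h_1)\to(x_2,h_2)]$ into $\rho_{n h_{1,2}}[(x_1',0)\to(x_2',1)]$ with $\vert x_i'\vert \le \Theta(1)(j+1)^{1/3}r$ (using $2^{-2i/3}h_{1,2}^{-2/3}=\Theta(1)$) and the fluctuation threshold becoming a multiple of $(j+1)^{2/3}r$.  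Then Theorem~\ref{t.maximizersecond}, applied with its parameters ${\bf n}= n h_{1,2}$, ${\bf R}= \Theta(1)(j+1)^{2/3}r$, ${\bf x}=x_2'-x_1'$ shifted so the starting point is the origin (and ${\bf a}$ equal to the rescaled position of $h^+$, which lies in a compact subinterval of $(0,1)$ since $h^+$ is roughly the midpoint), gives that this probability is at most $H\exp\{-h((j+1)^{2/3}r)^3\} = H\exp\{-h(j+1)^2 r^3\}$.  This is in fact stronger than the claimed $\exp\{-g_1 r^3(j+1)\}$; I would record the claimed weaker bound (absorbing $H$, possible by taking $r_0$ large, and using $(j+1)^2 \ge j+1$).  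The hypotheses of Theorem~\ref{t.maximizersecond} — namely $n h_{1,2}\ge n_0$, $\vert{\bf R}\vert \le h(n h_{1,2})^{1/9}$, $\vert{\bf x}\vert\le (n h_{1,2})^{2/3}$ — need checking: $n h_{1,2}\ge n 2^{-i-1}\ge n 2^{-j-1}\ge h^{-1}/2$ by $2^j\le hn$, while $(j+1)^{2/3}r \le \Theta(1)(\log n)^{2/3} n^{1/10} \le h(n 2^{-j})^{1/9}$ holds once $2^j \le hn$ with $h$ small and $r\le n^{1/10}$, and similarly for the ${\bf x}$ condition.  These are the routine inequalities I would not grind through but would flag.

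\textbf{Part (2).}  The number of edges in $E_i$ is at most $\vert V_i\vert \cdot (2\kappa_1\kappa^{-1}+1) = \Theta(1)(2^i+1)\cdot\Theta(r) = \Theta(1)2^i r$ — bounded polynomially in $2^i$ and $r$.  Since $2^i \le 2^j \le hn$, this count is at most $\Theta(1) n r \le \Theta(1) n^{2}$, a quantity that is dwarfed by $\exp\{g_1 r^3(j+1)\}$ once $r\ge r_0$ is large (using $r^3(j+1) \ge r^3$ and choosing $r_0$ so that $\exp\{g_1 r_0^3/2\}$ beats any fixed polynomial in $n$ up to $n\le 2^{j/h}$... more carefully: we want $\#E_i \cdot \exp\{-g_1 r^3(j+1)\} \le \exp\{-g_2 r^3(j+1)\}$, i.e.\ $\log \#E_i \le (g_1-g_2)r^3(j+1)$; since $\log\#E_i \le \Theta(1)(j\log 2 + \log r)\le \Theta(1)(j+1)\log r$, this holds for $g_2 = g_1/2$ once $r_0^3 \ge \Theta(1)\log r_0 / (g_1/2)$, i.e.\ $r_0$ a suitable absolute constant).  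A union bound over $E_i$ then gives part (2) with $g_2 = g_1/2$.

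\textbf{Part (3).}  Union over $i \in \llbracket 0,j\rrbracket$: the probability that some closed edge exists in some $G_i$ is at most $\sum_{i=0}^{j}\exp\{-g_2 r^3(j+1)\} = (j+1)\exp\{-g_2 r^3(j+1)\}$.  Absorbing the polynomial factor $(j+1)$ into the exponential (again $(j+1)\le \exp\{g_2 r^3(j+1)/2\}$ trivially, since $r\ge r_0\ge 1$ and $g_2>0$, because $\log(j+1)\le j+1$), we obtain the bound $\exp\{-g_3 r^3(j+1)\}$ with $g_3 = g_2/2$.  Setting $h$ to be the minimum of the constant appearing in $2^j\le hn$ above and taking $r_0$ and $n_0$ to be the maxima of the thresholds encountered completes the proof.

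\textbf{Main obstacle.}  The only genuinely nontrivial step is Part (1): matching up the fluctuation event~(\ref{e.open}) — which is phrased in terms of deviation from the \emph{chord} $\ell[(x_1,h_1)\to(x_2,h_2)]$ evaluated at $h^+$ — with the quantity $\rho_n^x(a)-xa$ (deviation from the chord to the \emph{fixed} endpoint $(0,0)$) controlled in Theorem~\ref{t.maximizersecond}, and carefully tracking all the scaling-principle factors so that the threshold $(j+1)^{2/3}r$, the endpoint magnitudes $(j+1)^{1/3}r$, and the rescaled number of levels $n h_{1,2}$ all land inside the admissible ranges in Theorem~\ref{t.maximizersecond}.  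This requires translating coordinates so one endpoint is the origin, verifying $\vert{\bf R}\vert \le h(n h_{1,2})^{1/9}$ under the combined constraints $2^j\le hn$ and $r\le n^{1/10}$ (the worst case $i=j$ being the binding one), and noting that $h^+/h_{1,2}$ stays in a compact subinterval of $(0,1)$ because $h^+$ is within $n^{-1}$ of the midpoint — all of which are routine but must be done with some care about which constant $h$ controls which inequality.
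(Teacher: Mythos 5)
Your proposal is correct and follows essentially the same route as the paper: part (1) by the scaling principle reducing to a unit-lifetime polymer and an application of Theorem~\ref{t.maximizersecond}, and parts (2) and (3) by union bounds over the (polynomially many) edges and over $i \in \llbracket 0,j\rrbracket$, absorbing the entropy factors into the exponential via $r \geq r_0$. The only divergence is your reading of ``$(j+1)^{1/3}r$-closed'': the paper scales the event to a fluctuation threshold of order $(j+1)^{1/3}r$ (so ${\bf R} = \tfrac{3}{2}(j+1)^{1/3}r$ and the bound $\exp\{-h(3/2)^3 r^3(j+1)\}$ comes out exactly), whereas your threshold $(j+1)^{2/3}r$ yields the stronger $\exp\{-\Theta(1)(j+1)^2r^3\}$, which you correctly weaken to the claimed form, so the discrepancy is immaterial.
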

The various constants in this result may depend on $\kappa$, $\kappa_1$ and $\kappa_2$.

{\bf Proof of Lemma~\ref{l.open}: (1).} By the scaling principle from Section~\ref{s.scalingprinciple}, the probability $\PP \big( e \, \textrm{is closed} \big)$ 
takes the form 
$$
\PP \Big( \big\vert \rho_m \big[ (0,0) \to (x,1) \big] (a) - xa \big\vert \geq \big(j + 1 \big)^{1/3}r \big( 1 + \varepsilon \big) \Big) 
$$
where $m \in \N$ satisfies $\big\vert m - 2^{-i}n \big\vert < 1$; $a \in m^{-1}\Z \cap (0,1)$ satisfies $\vert a - 1/2 \vert \leq m^{-1}$; $x \in \R$ satisfies $\vert x \vert \leq \big(j + 1 \big)^{1/3}r$; and $\varepsilon$ is a small error term, satisfying $\vert \varepsilon \vert \leq m^{-1}$.

Since $\vert \varepsilon  \vert \leq 1/2$, the displayed probability is bounded above by Theorem~\ref{t.maximizersecond}
with ${\bf n} = m$ and ${\bf R} = 3/2 \cdot \big(j + 1 \big)^{1/3}r$. 
The theorem implies that the probability in question is at most the quantity $\exp \big\{ - h(3/2)^3 r^3 (j + 1) \big\}$. Setting $g_1 = h(3/2)^3$ yields the lemma's first part.

{\bf (2).} By the preceding part and a union bound, the probability in question is found to be at most 
$2^{5i/3}r \exp \big\{ - g_1 r^3 (j+1) \big\}$.
Since $i \leq j$ and $r \geq r_0$, the desired bound results by making suitable positive choices for $g_2$ and $r_0$.

{\bf (3).} The second part of the lemma is summed over $i \in \llbracket 0,j \rrbracket$ to obtain this result. \qed

Let $i \in \llbracket 0, j \rrbracket$.
A {\em horizontal piece} of scale~$i$ is a closed horizontal planar interval whose endpoints are consecutive elements in $X_i \times Y_i$. 
(This means that the concerned elements of $X_i$ differ by $2^{-2i/3} (j+1)^{1/3}r$.)
If the vertical coordinates of two horizontal pieces are {\em vertically adjacent}, we apply the latter term to the pair of pieces.

Let $\phi$ be an $n$-zigzag from $(x,0)$ to $(y,1)$ where $\vert x \vert$ and $\vert y \vert$ are at most $r$. Let $P$ denote the set of horizontal pieces of scale~$i$ that contain a point of departure of $\phi$ from a horizontal planar line segment (of the form $\R \times \{ h \}$ for some $h \in n^{-1}\Z \cap [0,1]$).
Consider any pair $\chi = \big\{ [x_1,x_2] \times \{h_1 \} , [y_1,y_2] \times \{ h_2 \} \big\}$ of vertically adjacent horizontal pieces in $P$. 
The pair $\chi$ is called {\em good} if two conditions are met. First, $\vert y_1 - x_1 \vert$, which equals $\vert y_2 - x_2 \vert$, must be at most $\kappa_1 \cdot 2^{-2i/3} (j+1)^{1/3}r$. Second, $\vert x_1 \vert$ and $\vert y_1 \vert$ must be at most $(j+1)^{1/3} r \big( 4 -  2^{-2i/3}  ( 1 - 2^{-2/3})^{-1} \kappa \big)$.

We call $\phi$ viable at scale $i$ if every pair of vertically adjacent horizontal pieces of scale~$i$ in $P$ is good. In a viable zigzag, fluctuation on a vertical mesh of scale $2^{-i}$ is consistently controlled; and the global horizontal location is also controlled via the upper bounds on $\vert x_1 \vert$ and $\vert y_1 \vert$, which contain a negative term with a factor of $2^{-2i/3}$ in order to facilitate the induction on $i$ that will deliver the next result. 
\begin{lemma}\label{l.viable}
Let $j \in \N$ satisfy $2^j \leq hn$, where the constant $h > 0$ is furnished by Lemma~\ref{l.open}. 
Suppose that every edge in $E_i$ is open for all $i \in \llbracket 0,j \rrbracket$.
Let $x$ and $y$ be two reals of absolute value at most $r$. 
When $n \geq 2^{39}$, the polymer $\rho_n \big[ (x,0) \to (y,1) \big]$ is viable at scale~$j$.
\end{lemma}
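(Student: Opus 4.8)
The plan is to prove, by induction on $i \in \llbracket 0,j \rrbracket$, that under the stated hypotheses the polymer $\phi := \rho_n\big[(x,0)\to(y,1)\big]$ is viable at scale $i$; the case $i=j$ is then the lemma. Along the way the constants $\kappa$, $\kappa_1$, $\kappa_2$ of the mesh construction will be pinned down so that a single budget inequality, of the shape $\tfrac{\kappa_1}{2} + \Theta(1)(\kappa + \kappa_2) + \Theta(h) \le \kappa_1 2^{-2/3}$, holds; this is feasible because $2^{-2/3} > 1/2$, by first choosing $\kappa_1$ and then taking $\kappa$, $\kappa_2$ and the threshold constant $h$ (hence the requirement $n \ge 2^{39}$) sufficiently small, in a manner consistent with $\kappa_1 \ge \kappa$ and $\kappa_2 > 0$.

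The base case $i = 0$ will be immediate. Since $X_0 = \{-r,r\}$, $Y_0 = \{0,1\}$ and $|x|,|y| \le r$, the only scale-$0$ horizontal pieces meeting a departure point of $\phi$ are $[-r,r]\times\{0\}$ and $[-r,r]\times\{1\}$, and this vertically adjacent pair is good: the two left endpoints coincide, and $r \le (j+1)^{1/3}r\big(4 - (1-2^{-2/3})^{-1}\kappa\big)$ once $\kappa$ is small.

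For the inductive step I will assume $\phi$ viable at scale $i < j$. Because $Y_{i+1}\setminus Y_i$ interlaces $Y_i$, every vertically adjacent pair of levels in $Y_{i+1}$ has the form $\{h_1,h_2\}$ with $h_1 \in Y_i$ and $h_2$ the unique newcomer lying between $h_1$ and its $Y_i$-neighbour $h_3$. The piece of $\phi$ between its departure points $(\phi(h_1),h_1)$ and $(\phi(h_3),h_3)$ is a sub-zigzag of maximal weight for those endpoints, hence a polymer. Letting $\pi_1 \ni \phi(h_1)$, $\pi_3 \ni \phi(h_3)$ be the scale-$i$ pieces containing these points, with left endpoints $a_1, a_3 \in X_i$ and common width $w_i := \kappa\, 2^{-2i/3}(j+1)^{1/3}r$, the induction hypothesis makes $\{\pi_1,\pi_3\}$ good, so $|a_1-a_3| \le \kappa_1 2^{-2i/3}(j+1)^{1/3}r$ and $|a_1|,|a_3| \le (j+1)^{1/3}r\big(4 - 2^{-2i/3}(1-2^{-2/3})^{-1}\kappa\big)$. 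Consequently the vertices $(a_1,h_1),(a_3,h_3)$ and their rightward translates by $w_i$ are vertically adjacent and horizontally close, so the two corresponding edges lie in $E_i$ and are open by hypothesis; by Lemma~\ref{l.sandwich} the sub-polymer of $\phi$ is $\preceq$-sandwiched between $\rho_n\big[(a_1,h_1)\to(a_3,h_3)\big]$ and $\rho_n\big[(a_1+w_i,h_1)\to(a_3+w_i,h_3)\big]$. Evaluating at $h_2$, which is precisely the point $h^+$ in the openness condition for both of these edges, and using openness, I will conclude that $\phi(h_2)$ lies within $w_i + \Theta(1)(\kappa + \kappa_2 + h)\, 2^{-2i/3}(j+1)^{1/3}r$ of $\tfrac12(a_1+a_3)$; the $\Theta(h)$ term absorbs the $O(2^i n^{-1})$ discrepancy between $h_2$ and the true midpoint of $[h_1,h_3]$, which is negligible since $2^i \le 2^j \le hn$ and $n \ge 2^{39}$.

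It then remains to verify the two clauses of ``good'' for the scale-$(i+1)$ pair determined by $\phi(h_1)$ and $\phi(h_2)$. For the horizontal-proximity clause I will combine the above location of $\phi(h_2)$ with $|\phi(h_1) - \tfrac12(a_1+a_3)| \le w_i + \tfrac12|a_1-a_3|$ and the scale-$(i+1)$ width $w_{i+1} = 2^{-2/3}w_i$: the separation of the two scale-$(i+1)$ left endpoints is at most $\big(\tfrac{\kappa_1}{2} + \Theta(1)(\kappa+\kappa_2) + \Theta(h)\big)2^{-2i/3}(j+1)^{1/3}r$, which is at most $\kappa_1 2^{-2(i+1)/3}(j+1)^{1/3}r$ exactly by the budget inequality. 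For the absolute-location clause I will instead write $|\phi(h_2)| \le \max\{|a_1|,|a_3|\}$ plus the chord deviation, feed in the induction-hypothesis bound on $\max\{|a_1|,|a_3|\}$, and note that the scale-$(i+1)$ allowance exceeds the scale-$i$ one by exactly $\kappa\, 2^{-2i/3}(j+1)^{1/3}r$, so that the same inequality (with the residual slack placed into the choice of constants) closes it. Induction with $i=j$ then yields the lemma. The hard part will be precisely this constant bookkeeping: one must select $(\kappa,\kappa_1,\kappa_2)$ and the admissible range of $h$ so that, at every scale, the contribution ``piece width $+$ openness deviation $+$ discretization error'' fits inside the per-scale budget encoded by the geometric factor $2^{-2i/3}(1-2^{-2/3})^{-1}$; a lesser technical point is to confirm that the restriction of $\phi$ between two of its departure points is genuinely a polymer and that the $\preceq$-ordering supplied by Lemma~\ref{l.sandwich} transfers to the values of the departure-location function at the intermediate height $h_2$.
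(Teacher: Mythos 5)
Your proposal is correct and follows essentially the same route as the paper's proof: induction on the scale $i$, with the inductive step obtained by locating the departure points at the two coarser adjacent levels inside good scale-$i$ pieces, using openness of the induced edge(s) in $E_i$ together with polymer ordering (Lemma~\ref{l.sandwich}) to pin down $\phi$ at the intervening newcomer level, and closing both clauses of goodness via a budget inequality on $(\kappa,\kappa_1,\kappa_2)$ that exploits $2^{-2/3}>1/2$ and the telescoping identity $2^{2/3}(\lambda-1)=\lambda$ for $\lambda=(1-2^{-2/3})^{-1}$. The only differences are cosmetic (you run the induction $i\to i+1$ and use both the left-endpoint edge and its translate for two-sided control, where the paper argues one side and invokes symmetry; and you account for the $O(n^{-1})$ level-discrepancy as a relative error $O(2^in^{-1})$ rather than the paper's cruder $8n^{-1/3}$ bound, either of which suffices).
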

{\bf Proof.} We will prove by induction on $i \in \llbracket 0, j \rrbracket$ that, under the hypothesis of the lemma, any polymer of the given form is viable at scale~$i$. First take $i = 0$. The edges with endpoint pairs $\big\{ (-r,0),(-r,1) \big\}$ and $\big\{ (r,0),(r,1) \big\}$ have elements whose horizontal coordinates are shared by members of a pair and which are in absolute value equal to $r$. Thus these pairs are good, and the concerned polymer is viable at scale zero.

Now consider $i \in \intint{j}$, and assume that the inductive hypothesis holds for values of the index that are lower than~$i$. Write $\rho = \rho_n \big[ (x,0) \to (y,1) \big]$.
Let $[x_1,x_2] \times \{ h_1\}$ and $[y_1,y_2] \times \{ h_2 \}$ be two vertically adjacent horizontal pieces of scale $i$ with $h_1 < h_2$ that contain the point of departure of $\rho(h)$ at the respective heights $h \in \{ h_1,h_2\}$.
Thus $h_1$ and $h_2$ are elements of $Y_i$.

In order to demonstrate that this pair of vertically adjacent horizontal pieces is good, and thus complete the proof of the inductive step, we must show that
\begin{equation}\label{e.yonexone}
 \big\vert y_1 - x_1 \big\vert \leq \kappa_1 \cdot 2^{-2i/3} (j+1)^{1/3} r  \, ;
 \end{equation} 
\begin{equation}\label{e.ytwoxtwo} 
\big\vert y_2 - x_2 \big\vert \leq  \kappa_1 \cdot 2^{-2i/3} (j+1)^{1/3} r  \, ;
\end{equation}
and 
\begin{equation}\label{e.maxytwoxtwo} 
\max \big\{ \vert x_2 \vert , \vert y_2 \vert \big\} \leq  (j+1)^{1/3}r \big( 4 -   2^{-2i/3}\lambda \kappa \big) \, ,
\end{equation}
where we set $\lambda =  (1 - 2^{-2/3})^{-1}$.

We will argue that  $y_1 - x_1  \geq - \kappa_1 \cdot 2^{-2i/3} (j+1)^{1/3} r$. Indeed, the bound  $y_2 - x_2 \leq \kappa_1 \cdot 2^{-2i/3} (j+1)^{1/3} r$ follows from an almost identical argument to the one that we are about to give. These two bounds imply~(\ref{e.yonexone}) and~(\ref{e.ytwoxtwo}) because $y_2 - x_2 $ is in fact equal to  $y_1 - x_1$.

Since a horizontal piece of scale $i$ has length $2^{-2i/3} (j+1)^{1/3} \kappa r$,
we see that
\begin{equation}\label{e.xyoffset}
y_1 \geq \rho(h_2) - 2^{-2i/3} (j+1)^{1/3}\kappa r \, .
\end{equation}

One or other of $h_1$ and $h_2$ also belongs to $Y_{i-1}$. Suppose that $h_1 \in Y_{i-1}$; the other case entails no further complication. Let $h_3$ denote the lowest element of $Y_{i-1}$ that exceeds $h_1$.

Let $[u_1,u_2] \times \{ h_1 \}$ be the horizontal piece of scale $i-1$ that contains
$\rho(h_1)$.
Let $[v_1,v_2] \times \{ h_3 \}$ be a horizontal piece of scale $i-1$ that contains $\rho(h_3)$. 

By the inductive hypothesis, $\vert u_1 \vert \leq (j+1)^{1/3}r \big( 4 - 2^{-2(i-1)/3}\lambda \kappa \big)$. Note that 
\begin{equation}\label{e.xonebound}
\vert u_1 - x_1 \vert \leq 2^{-2(i-1)/3}(j+1)^{1/3}\kappa r
\end{equation}
because $\rho(h_1)$ lies in $[u_1,u_2]$ and $[x_1,x_2]$, so that the distance between $x_1$ and~$u_1$ may be at most the length $u_2 - u_1$, which is the longer of these two intervals. Thus, 
$$
\vert x_1 \vert \leq (j+1)^{1/3}r \big( 4 -  2^{-2(i-1)/3}\lambda \kappa  + 2^{-2(i-1)/3} \kappa \big) \, .
$$
This implies that $\vert x_1 \vert \leq (j+1)^{1/3} r \big( 4 -  2^{-2i/3}\lambda \kappa \big)$ because 
  $2^{2/3} (\lambda - 1)$ is equal to $\lambda$.  
A symmetric argument furnishes the same bound for $\vert x_2 \vert$. Thus do we obtain~(\ref{e.maxytwoxtwo}).

 The planar intervals  $[u_1,u_2] \times \{ h_1 \}$ and $[v_1,v_2] \times \{ h_3 \}$
are vertically consecutive horizontal intervals of scale $i-1$ that $\phi$ intersects. 
By the inductive hypothesis, the edge with endpoints $(u_1,h_1)$ and $(v_1,h_3)$ is thus seen to belong to $E_{j-1}$. By the lemma's hypothesis, this edge is open.  By~(\ref{e.open}), we learn that
\begin{equation}\label{e.rhohtwo}
 \rho (h_2) - \ell \big[ (u_1,h_1) \to (v_1,h_3) \big](h_2)  \, \geq \, - \kappa_2 2^{-2(i-1)/3} (j+1)^{1/3} r \, .
\end{equation}
The levels $h_1$, $h_2$ and $h_3$ each differ by at most $n^{-1}$
from the respective elements of a three-term arithmetic progression of real numbers with consecutive difference $2^{-j}$. Thus,
 $\big\vert h_2 - (h_1+h_3)/2 \big\vert \leq   2^{1-j} n^{-1} \leq 2 \vert h_3 - h_1 \vert n^{-1}$.
We know by the inductive hypothesis that $u_1$ and $v_1$ have absolute value at most $2 (j+1)^{1/3} r$. We thus see that
\begin{equation}\label{e.elluv}
 \ell \big[ (u_1,h_1) \to (v_1,h_3) \big](h_2) \, \geq \, \tfrac{u_1 + v_1}{2} \, - \, 2n^{-1} \cdot 4 (j+1)^{1/3} r \, .
\end{equation}
By~(\ref{e.xyoffset}),~(\ref{e.rhohtwo}) and~(\ref{e.elluv}), 
$$
 y_1 \geq u_1 - \tfrac{u_1 - v_1}{2} - 8 n^{-1} (j+1)^{1/3} r  - 2^{-2(i-1)/3} (j+1)^{1/3} \kappa_2 r - 2^{-2i/3} (j+1)^{1/3}\kappa r \, .
$$
Note that $v_1 - u_1 \geq - \kappa_1 \cdot 2^{-2(i-1)/3}(i+1)^{1/3} r$ because, as we have noted, the edge with endpoints $(u_1,h_1)$ and $(v_1,h_3)$ lies in $E_{i-1}$. Also using~(\ref{e.xonebound}), we find that
\begin{eqnarray*}
 y_1 & \geq & x_1 - 2^{-2i/3} (j+1)^{1/3}  2^{2/3}  \kappa r  -  2^{-1} \kappa_1 \cdot 2^{-2(i-1)/3} (j+1)^{1/3} r \\
 & & \qquad -\,  8  (j+1)^{1/3} n^{-1}  r \, - \,  2^{-2(i-1)/3} (j+1)^{1/3} \kappa_2 r \, - \, 2^{-2i/3} (j+1)^{1/3}\kappa r
\end{eqnarray*}
and, since $2^i \leq n$,
$$
 y_1 - x_1 \geq - 2^{-2i/3} (j+1)^{1/3} r \, \Big( (2^{2/3}  +1)  \kappa   + 2^{-1/3}\kappa_1    + 2^{2/3}\kappa_2 
  + 8 n^{-1/3}  \Big) \, .
$$
We now choose $\kappa$, $\kappa_1$ and $\kappa_2$ positive so that $(2^{2/3}+1) \kappa + 2^{-1/3} \kappa_1 + 2^{2/3} \kappa_2  < \kappa_1 - 2^{-10}$, alongside 
the already supposed $\kappa_1 \geq \kappa$.
 We find then that, for $n \geq 2^{39}$,
$$
 y_1 - x_1 \geq - \kappa_1  2^{-2i/3} (j+1)^{1/3} r  \, .
$$
This is the bound that we sought to show in order to verify~(\ref{e.yonexone}) and~(\ref{e.ytwoxtwo}). Since we already obtained~(\ref{e.maxytwoxtwo}), the proof of the inductive step in deriving Lemma~\ref{l.viable} is complete. \qed

\begin{lemma}\label{l.viableconseq}
Let $j \in \N$, and let $x,y \in \R$ be such that $\vert x \vert$ and $\vert y \vert$ are at most $r$. For $i \in \llbracket 0,j \rrbracket$, let $\phi$ be an $n$-zigzag between $(x,0)$ and $(y,1)$ that is viable at scale~$i$ for each $i \in \llbracket 0, j \rrbracket$. For any such~$i$, let $s_1,s_2 \in n^{-1}\Z \cap (0,1)$
denote {\em consecutive} elements of $Y_i$. Then
\begin{equation}\label{e.claimkolmogorov}
  \big\vert \phi(s_2) - \phi(s_1) \big\vert \leq
 \big( \kappa_1 + \kappa \big) 2^{-2i/3} (j+1)^{1/3} r \, .
\end{equation}
\end{lemma}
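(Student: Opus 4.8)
The plan is to prove Lemma~\ref{l.viableconseq} by unwinding the definitions of viability and horizontal pieces, reducing the estimate~(\ref{e.claimkolmogorov}) to the defining inequality for a good pair of vertically adjacent horizontal pieces. First I would fix $i \in \llbracket 0,j \rrbracket$ and $s_1 < s_2$ consecutive elements of $Y_i$, and consider the two points of departure $\phi(s_1)$ and $\phi(s_2)$ of the zigzag $\phi$ from the levels $s_1$ and $s_2$. Each of these lies in a unique horizontal piece of scale~$i$: say $\phi(s_1) \in [x_1,x_2] \times \{ s_1 \}$ and $\phi(s_2) \in [y_1,y_2] \times \{ s_2 \}$, where the endpoints are consecutive elements of $X_i \times Y_i$. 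Since $s_1$ and $s_2$ are consecutive in $Y_i$, these two horizontal pieces are vertically adjacent, and both belong to the set $P$ of horizontal pieces that meet $\phi$ (because they contain points of departure of $\phi$). Viability of $\phi$ at scale~$i$ therefore applies to this pair, which we may call $\chi$.

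Next I would invoke the first condition in the definition of a good pair: $\vert y_1 - x_1 \vert \leq \kappa_1 \cdot 2^{-2i/3}(j+1)^{1/3} r$. The remaining task is purely a triangle-inequality bookkeeping step. Write
$$
 \big\vert \phi(s_2) - \phi(s_1) \big\vert \leq \big\vert \phi(s_2) - y_1 \big\vert + \big\vert y_1 - x_1 \big\vert + \big\vert x_1 - \phi(s_1) \big\vert \, .
$$
The first and third terms are each bounded by the length of the respective horizontal piece. A horizontal piece of scale~$i$ has length equal to the common gap between consecutive elements of $X_i$, namely $2^{-2i/3}(j+1)^{1/3}\kappa r$ (here $X_i = X_i(r)$ is evaluated with the parameter appearing in the lemma, after the substitution $r \to (j+1)^{1/3}r$ implicit in the usage of $X_i(r)$; I would be careful to track that the factor $(j+1)^{1/3}$ is precisely the one appearing in the definition of $X_i$). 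So each of $\vert \phi(s_2) - y_1 \vert$ and $\vert x_1 - \phi(s_1) \vert$ is at most $2^{-2i/3}(j+1)^{1/3}\kappa r$, but in fact only one of them is needed at full strength: since $\phi(s_1) \in [x_1,x_2]$ and $\phi(s_2) \in [y_1,y_2]$, and $x_2 - x_1 = y_2 - y_1$, a slightly more careful accounting gives that the sum of the two boundary contributions is at most $2^{-2i/3}(j+1)^{1/3}\kappa r$ rather than twice that. Combining, $\vert \phi(s_2) - \phi(s_1)\vert \leq (\kappa_1 + \kappa) 2^{-2i/3}(j+1)^{1/3}r$, which is~(\ref{e.claimkolmogorov}).

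I do not expect a serious obstacle here; the lemma is essentially a repackaging of the definition of viability, and the only real content is the observation that consecutive levels in $Y_i$ force the two relevant horizontal pieces to be vertically adjacent, so that the goodness hypothesis can be applied. The one place requiring mild care is the constant: one must check whether the correct bound is $\kappa_1 + \kappa$ or $\kappa_1 + 2\kappa$, and this hinges on the fact that $\phi(s_1)$ and $\phi(s_2)$ each sit strictly inside horizontal pieces of equal length, so the combined slack from the two endpoints is one piece-length, not two. If one is content with a slightly weaker constant $\kappa_1 + 2\kappa$ the argument is immediate; to land exactly $\kappa_1 + \kappa$ one compares $\phi(s_1) - x_1 \in [0, x_2 - x_1]$ with $\phi(s_2) - y_1 \in [0, y_2 - y_1]$ and uses $\vert \phi(s_2) - \phi(s_1) \vert \leq \vert y_1 - x_1\vert + (x_2 - x_1)$, which follows since $\phi(s_1) \geq x_1$, $\phi(s_2) \leq y_2 = y_1 + (x_2 - x_1)$, and symmetrically in the other direction. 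The whole proof is a few lines.
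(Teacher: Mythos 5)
Your proposal is correct and follows essentially the same route as the paper's (very brief) proof: identify the two vertically adjacent horizontal pieces of scale~$i$ containing $(\phi(s_1),s_1)$ and $(\phi(s_2),s_2)$, use viability/goodness to bound the separation of their left endpoints by $\kappa_1\cdot 2^{-2i/3}(j+1)^{1/3}r$, and add one piece-length $\kappa\cdot 2^{-2i/3}(j+1)^{1/3}r$ for the offsets within the pieces. Your careful accounting of why the two offsets contribute only one piece-length (their difference lies in $[-L,L]$ since each offset lies in $[0,L]$) is exactly what makes the constant $\kappa_1+\kappa$ rather than $\kappa_1+2\kappa$, and matches the paper's reasoning.
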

{\bf Proof.}
The horizontal pieces of scale $i$ to which $\big( \phi(s_1),s_1 \big)$
and $\big( \phi(s_2),s_2 \big)$ belong have length $2^{-2i/3} (j+1)^{1/3} \kappa r$
and are bordered on the left by a pair of points forming the endpoints of an edge in $E_i$; thus, these left-hand endpoints have horizontal separation of at 
$\kappa_1 \cdot 2^{-2i/3} (j+1)^{1/3} r$. From this, Lemma~\ref{l.viableconseq} is seen to hold. \qed

 \begin{corollary}\label{c.fluc}
There exist positive $H$, $h$ and $r_0$, and $n_0 \in \N$, such that, when $n \in \N$ and  $j \in \N$ satisfy $n \geq n_0$ and $2^j \leq hn$, and $r \in \R$ satisfies  $r_0 \leq r \leq n^{1/10}$, it is with probability at least $1 - H \exp \big\{ - h r^3 (j+1)\big\}$
that, for every $x,y \in \R$ of absolute value at most $r$,  and for any {\em consecutive} elements $h_1,h_2 \in n^{-1}\Z \cap [0,1]$ of $Y_j$,
$$
\Big\vert \rho_n \big[ (x,0) \to (y,1) \big](h_1) -  
\rho_n \big[ (x,0) \to (y,1) \big](h_2) \Big\vert \leq H h_{1,2}^{2/3} ( j+1 )^{1/3}r \, .
$$
 \end{corollary}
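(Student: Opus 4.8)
\textbf{Proof plan for Corollary~\ref{c.fluc}.}
The plan is to combine the deterministic implication established in Lemmas~\ref{l.viable} and~\ref{l.viableconseq} with the probabilistic input of Lemma~\ref{l.open}(3). First I would invoke Lemma~\ref{l.open}(3): with $\mathbf{r} = r$, for $n \geq n_0$ and $2^j \leq hn$ and $r_0 \leq r \leq n^{1/10}$, the $\PP$-probability that there exists $i \in \llbracket 0,j \rrbracket$ and an edge in $E_i$ that is $(j+1)^{1/3}r$-closed is at most $\exp\{-g_3 r^3 (j+1)\}$. On the complementary event, every edge of every $G_i$, $i \in \llbracket 0,j\rrbracket$, is $(j+1)^{1/3}r$-open. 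A small bookkeeping point to address: the definition of viability was phrased with the ``bare'' threshold $r$, while Lemma~\ref{l.open} gives openness at level $(j+1)^{1/3}r$; since the quantities $\kappa r$, $\kappa_1 r$, $\kappa_2 r$ appearing throughout the mesh construction are all linear in the size parameter, replacing $r$ by $(j+1)^{1/3}r$ throughout (equivalently, choosing the mesh to have base width $(j+1)^{1/3}r$, as indeed the definition of $X_i$ already builds in the factor $(j+1)^{1/3}$) makes the openness statement of Lemma~\ref{l.open} match exactly the ``all edges in $E_i$ open'' hypothesis of Lemma~\ref{l.viable} at scale size $(j+1)^{1/3}r$. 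Concretely, one applies Lemma~\ref{l.viable} with its ``$r$'' set to $(j+1)^{1/3}r$ (noting $(j+1)^{1/3}r \leq (j+1)^{1/3} n^{1/10}$, which is still well within the regime where $2^j \leq hn$ permits the construction, after a possible mild shrinking of $h$), concluding that on this event the polymer $\rho_n\big[(x,0)\to(y,1)\big]$ is viable at scale $j$ for every $x,y$ of absolute value at most $r \leq (j+1)^{1/3}r$.

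Next I would feed this viability into Lemma~\ref{l.viableconseq}, again with its size parameter set to $(j+1)^{1/3}r$: for consecutive elements $h_1,h_2 \in n^{-1}\Z \cap (0,1)$ of $Y_j$, viability at all scales $i \in \llbracket 0,j\rrbracket$ yields
$$
\big\vert \rho_n\big[(x,0)\to(y,1)\big](h_2) - \rho_n\big[(x,0)\to(y,1)\big](h_1) \big\vert \leq (\kappa_1 + \kappa)\, 2^{-2j/3} (j+1)^{1/3}\, (j+1)^{1/3} r.
$$
Since $h_1,h_2$ are consecutive in $Y_j$, we have $\vert h_{1,2} - 2^{-j}\vert \leq n^{-1} \leq 2^{-j}$ (using $2^j \leq hn \leq n$), so $2^{-2j/3}$ is comparable to $h_{1,2}^{2/3}$ up to an absolute constant; absorbing that constant and the constant $\kappa_1 + \kappa$ into a single constant $H$, and noting that the extra $(j+1)^{1/3}$ produced by using threshold $(j+1)^{1/3}r$ in Lemma~\ref{l.viableconseq} can be carried along as a polylogarithmic factor (it is at most $(j+1)^{1/3}$, matching the form $H h_{1,2}^{2/3}(j+1)^{1/3}r$ demanded in the statement once one is willing to allow $H$ to absorb it — but in fact the cleaner route is to note $(j+1)^{1/3}r$ is itself the ``$r$'' one should plug in, so the output is $H h_{1,2}^{2/3} \cdot \big((j+1)^{1/3}r\big)$, which is exactly $H h_{1,2}^{2/3}(j+1)^{1/3}r$), we obtain the desired bound on this favourable event. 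The endpoints $h_1,h_2$ range over the finitely many consecutive pairs of $Y_j$, and $x,y$ over $[-r,r]$, but the event we have controlled is simultaneous over all of these — viability is a single event on the Brownian environment, not one requiring a union bound over $(x,y)$ — so no further union bound is needed beyond what is already in Lemma~\ref{l.open}(3).

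Finally, setting $H$ and $h$ to be the maxima / minima of the constants produced along the way (and shrinking $h$, enlarging $r_0$, and enlarging $n_0$ as needed so that $n \geq 2^{39}$ and the hypotheses of Lemmas~\ref{l.open},~\ref{l.viable},~\ref{l.viableconseq} are all met), the failure probability is at most $\exp\{-g_3 r^3(j+1)\} \leq H\exp\{-h r^3(j+1)\}$, which completes the proof. I do not anticipate a serious obstacle here: the corollary is essentially a repackaging of the preceding three lemmas, and the only genuine care required is the alignment of the two meanings of the size parameter $r$ (``bare'' versus scaled by $(j+1)^{1/3}$) between the openness estimate and the viability machinery, together with the routine passage from $2^{-2j/3}$ to $h_{1,2}^{2/3}$ using the consecutiveness of $Y_j$-points and the constraint $2^j \leq hn$. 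The main ``obstacle,'' if any, is purely notational care in the constants, since the constructions carry three interdependent parameters $\kappa, \kappa_1, \kappa_2$ whose constraints were fixed in the proof of Lemma~\ref{l.viable} and must not be re-disturbed.
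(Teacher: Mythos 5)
Your proposal is correct and takes essentially the same route as the paper, whose proof simply cites Lemma~\ref{l.open}(3) together with Lemmas~\ref{l.viable} and~\ref{l.viableconseq}. The only superfluous element is your discussion of rescaling the size parameter to $(j+1)^{1/3}r$: that factor is already built into the definitions of $X_i$, of horizontal closeness, and of $r$-openness in~(\ref{e.open}), so Lemmas~\ref{l.viable} and~\ref{l.viableconseq} apply with the bare parameter $r$ and deliver the bound $(\kappa_1+\kappa)\,2^{-2j/3}(j+1)^{1/3}r$ directly, after which the comparison $2^{-2j/3}\asymp h_{1,2}^{2/3}$ finishes the argument exactly as you say.
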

 {\bf Proof.}  This statement follows from Lemma~\ref{l.open}(3), and Lemmas~\ref{l.viable} and~\ref{l.viableconseq}. \qed

\begin{lemma}\label{l.localcontrol}
Let $\phi$ be an $n$-zigzag. 
Let $(x,s_1)$ and $(y,s_2)$ be elements in $\big(\R \times n^{-1}\Z \big) \cap \phi$ with $s_1 < s_2$. Let $(z,s)$ denote an element of $\phi$ that is encountered after $(x,s_1)$
but before $(y,s_2)$ as $\phi$ is traced in the sense of increasing height. Then 
$\vert z - x \vert$ and 
$\vert z - y \vert$
are at most $\vert y - x \vert + 2^{-1} n^{1/3} \tot$.
\end{lemma}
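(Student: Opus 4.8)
The plan is to exploit the rigid combinatorial geometry of $n$-zigzags recalled in Subsection~\ref{s.staircasezigzag}. Traced in the direction of increasing height, such a path moves alternately rightward along horizontal segments and northwesterly along sloping segments of gradient $-2n^{-1/3}$; hence its horizontal coordinate can decrease only along the sloping portions, and it does so at a fixed rate---an advance of $\delta$ in the vertical coordinate (necessarily achieved along sloping segments) is accompanied by a retreat of exactly $2^{-1}n^{1/3}\delta$ in the horizontal coordinate. I would record this by introducing, for any two points $p=(a,t_1)$ and $q=(b,t_2)$ of $\phi$ with $p$ encountered no later than $q$ (which forces $t_1 \le t_2$, the vertical coordinate being non-decreasing along $\phi$), the quantity $\mathrm{Right}(p,q)\ge 0$ equal to the total rightward horizontal displacement accumulated along $\phi$ between $p$ and $q$. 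This functional is nonnegative and additive under concatenation of sub-zigzags, and decomposing the net horizontal displacement into its rightward and (sloping) leftward parts yields the identity $b - a = \mathrm{Right}(p,q) - 2^{-1}n^{1/3}(t_2-t_1)$.

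I would then apply this identity with the three points $(x,s_1)$, $(z,s)$, $(y,s_2)$, which by hypothesis occur in this order along $\phi$ (so in particular $s_1\le s\le s_2$). Write $R$, $R_1$ and $R_2$ for the rightward displacements of $\phi$ from $(x,s_1)$ to $(y,s_2)$, from $(x,s_1)$ to $(z,s)$, and from $(z,s)$ to $(y,s_2)$, respectively. By additivity $R=R_1+R_2$ with $R_1,R_2\ge 0$, so $R_1,R_2\in[0,R]$; and the identity gives $R=(y-x)+2^{-1}n^{1/3}\tot$. From $z-x=R_1-2^{-1}n^{1/3}(s-s_1)$ and $R_1\in[0,R]$ one reads off $-2^{-1}n^{1/3}\tot\le z-x\le R=(y-x)+2^{-1}n^{1/3}\tot$, and since $-2^{-1}n^{1/3}\tot\ge -\vert y-x\vert-2^{-1}n^{1/3}\tot$ and $y-x\le \vert y-x\vert$, this yields $\vert z-x\vert\le \vert y-x\vert+2^{-1}n^{1/3}\tot$. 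The bound $\vert z-y\vert\le \vert y-x\vert+2^{-1}n^{1/3}\tot$ follows verbatim from $y-z=R_2-2^{-1}n^{1/3}(s_2-s)$ together with $R_2\in[0,R]$.

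There is no genuine obstacle here; the argument is elementary bookkeeping with signs. The one point requiring care is the displacement identity $b-a=\mathrm{Right}(p,q)-2^{-1}n^{1/3}(t_2-t_1)$ when $p$ or $q$ lies in the interior of a segment of $\phi$ rather than at a breakpoint. This I would handle by observing that the sloping segments are precisely the portions of $\phi$ responsible for vertical progress, each contributing leftward horizontal movement at the exact rate $2^{-1}n^{1/3}$ per unit of vertical progress, so that the total leftward movement between $p$ and $q$ equals $2^{-1}n^{1/3}(t_2-t_1)$ regardless of where $p$ and $q$ fall; equivalently, one may pull the assertion back through the scaling map $R_n$ to the level of staircases, where all moves are unit-length and axis-parallel and the identity is immediate, and then transport it back. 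As a consistency check, $R\ge 0$ recovers the existence condition~\eqref{e.xycond} for the sub-zigzag of $\phi$ from $(x,s_1)$ to $(y,s_2)$.
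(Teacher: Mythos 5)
Your argument is correct and is essentially the paper's own proof, made explicit: the paper simply notes that the deterministic structure of an $n$-zigzag (leftward motion only along sloping segments, at rate $2^{-1}n^{1/3}$ per unit of vertical progress) forces $z \geq x - 2^{-1}n^{1/3}\tot$ and $z \leq y + 2^{-1}n^{1/3}\tot$, which is exactly what your $\mathrm{Right}(\cdot,\cdot)$ bookkeeping delivers. No changes needed.
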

{\bf Proof.} By the deterministic properties of an $n$-zigzag outlined in Section \ref{s.staircasezigzag}, the leftmost position that $z$ may adopt is $x - 2^{-1}n^{1/3} \tot$. The rightmost such position is $y +  2^{-1}n^{1/3} \tot$. \qed

{\bf Proof of Theorem~\ref{t.toolfluc}:(1).} 
This argument is in the style of the derivation of the Kolmogorov continuity criterion. 

Recall that the statement we seek to verify comes equipped with a parameter $k \in \N$ that satisfies $2^k \leq h n$ for a small constant $h > 0$.
The statement claims the existence of an event of probability at least 
 $1 - H \exp \big\{ - h r^3 k \big\}$ 
on which the conclusion holds. We choose this event to be the intersection over indices $j \in \N$ satisfying $k \leq j \leq \log_2 (hn)$
of the event in Corollary~\ref{c.fluc}, modifying the values of~$H$ and $h$ so that this lower bound on probability holds. In deriving the inequality in the conclusion of Theorem~\ref{t.toolfluc}(1), we are thus permitted to invoke the conclusion of this corollary for any such index~$j$.

Recall further that we suppose $h_{1,2} \in (2^{-k-1},2^{-k}]$. Let $[s_1,s_2]$ be an interval of maximum length among those that are contained in $[h_1,h_2]$ and whose endpoints $s_1$ and $s_2$ are consecutive elements of $Y_i$ for some index $i$. We denote by $k^*$ the index $i$ thus selected; note that $k^*$ is one among $k$, $k+1$ or $k+2$. 
We select in the interval $[h_1,s_1]$ an interval of maximum length delimited by a pair $(s_3,s_4)$ of consecutive elements of some mesh $Y_i$; necessarily,  
$s_4 = s_1$, with the concerned value of $i$ being at least $k^*$. Likewise, an interval is selected within $[s_2,h_2]$, with the resulting endpoint pair $(s_5,s_6)$ satisfying $s_5 = s_2$.
 We write $K^*$ for the maximum index $i \in \N$ for which $2^{-i} \geq hn$, where the positive constant $h$ is contributed by Corollary~\ref{c.fluc}.  
 The selection of intervals is iterated, both to the left and to the right. It runs upwards through dyadic scales, and is stopped when all intervals of scale $i$ at most $K^*$ have been selected.  
 
 Among the closed intervals obtained in the procedure, the represented scales $i$ satisfy $k^* \leq i \leq K^*$, with $K^* = \lfloor \log_2 ( hn ) \rfloor$, and with each such scale appearing at most twice;
the union  $[s_1,s_2]$ of the intervals satisfies $0 \leq s_1 - h_1 \leq 2h^{-1}n^{-1}$ and  $0 \leq h_2 - s_2  \leq 2h^{-1}n^{-1}$.

Recall that we consider elements $(u,h_1)$ and $(v,h_2)$ of $\rho$, where here we denote  $\rho = \rho_n \big[ (x,0) \to (y,1) \big]$ for given $x,y \in \R$ with $\max \{ \vert x \vert , \vert y \vert \} \leq r$. To obtain the sought upper bound on $\vert u-v \vert$, we write 
\begin{equation}\label{e.uvtriangle}
 \vert u -v  \vert \leq  \vert u - \rho(s_1) \vert + \vert \rho(s_2) - \rho(s_1) \vert + \vert \rho(s_2) -v \vert \, ,
\end{equation}
whose middle right-hand term is seen to be at most $2 H \sum_{i = k^*}^{K^*} 2^{-2i/3} (i + 1)^{1/3}$, and thus at most $H 2^{-2k/3} (k + 1)^{1/3}$ after increase of~$H$, by the form of the procedure that constructed $[s_1,s_2]$ alongside Corollary~\ref{c.fluc}.
After an increase of $H$, this upper bound is seen to take the form $H 2^{-2k/3} (k + 1)^{1/3}$; or equally the form $H h_{1,2}^{2/3} \big( \log h_{1,2}^{-1} \big)^{1/3}$.
Thus, Theorem~\ref{t.toolfluc}(1) will be obtained, provided that we verify that the first and third right-hand terms in~(\ref{e.uvtriangle}) are smaller than the middle term.
Seeking to prove this, we let $s_0$ denote the greatest element of $Y_{K^*}$ that is less than $s_1$, and let $s_3$ be the least element of this set that exceeds $s_2$. 
Since $s_0 < h_1 \leq s_1$, $(u,h_1)$ lies on the subpath of $\rho$ between $\big(\rho(s_0),s_0\big)$ and  $\big(\rho(s_1),s_1\big)$.
Similarly,  $(v,h_2)$ lies on the subpath of $\rho$ between $\big(\rho(s_2),s_2\big)$ and  $\big(\rho(s_3),s_3\big)$.
 Lemma~\ref{l.localcontrol} thus implies that
 $\vert u - \rho(s_1) \vert$ is at most  $\vert \rho(s_0) - \rho(s_1) \vert + 2^{-1} n^{1/3} s_{0,1}$; and that
  $\vert \rho(s_2) -v \vert$ is at most  $\vert \rho(s_2) - \rho(s_3) \vert + 2^{-1} n^{1/3} s_{2,3}$. 
The pairs $(s_0,s_1)$ and $(s_2,s_3)$ comprise consecutive elements of $Y_{K^*}$, where recall that $K^* = \lfloor \log_2 (hn) \rfloor$; 
so that $s_{0,1}$ and $s_{2,3}$ are 
at most $(2 h^{-1} +1)n^{-1}$.
By the conclusion of Corollary~\ref{c.fluc},  $\vert \rho(s_0) - \rho(s_1) \vert$ and $\vert \rho(s_2) - \rho(s_3) \vert$ are thus seen to be at most of order $n^{-2/3} \big( \log n \big)^{1/3}r$ after suitable adjustment to $H$. The sought upper bound, of order $2^{-2k/3} (k + 1)^{1/3}$, holds because $2^k \leq n$. 
This completes the proof of Theorem~\ref{t.toolfluc}(1).

{\bf (2).} Recall that instead we suppose that $h_{1,2} < H n^{-1}$. Let $[s_0,s_3]$ denote an interval  containing $[h_1,h_2]$ whose endpoints 
are consecutive elements of the mesh $Y_{K^*}$; here, $K^*$ continues to denote $\lfloor \log_2 (hn) \rfloor$,
with $h$ now decreased from its value in Corollary~\ref{c.fluc} suitably to ensure the existence of such $s_0$ and $s_3$.
The notation $s_0$ and $s_3$ is used to indicate the similarity of these quantities with the usage in the preceding case.
 Now, however, $s_{0,3}$ is at most  $2n^{-1}$. 
Thus, Corollary~\ref{c.fluc} with~$j$ chosen so that $2^j = \Theta(n)$ yields that, on an event of probability at least 1 - $n^{-hr^3}$, the bound  $\vert \rho(s_0) - \rho(s_3) \vert \leq G n^{-2/3} \big( \log n \big)^{1/3}r$ for suitably high $G$ and for all concerned choices of $h_1$ and $h_2$. Since $(u,h_1)$ and $(v,h_2)$ lie on the subpath of $\rho$ between 
$\big(\rho(s_0),s_0\big)$ and $\big(\rho(s_3),s_3\big)$, Lemma~\ref{l.localcontrol} thus implies that $\vert u - \rho (s_0) \vert$ and $\vert v - \rho(s_0) \vert$ are at most $G n^{-2/3} \big( \log n \big)^{1/3}r + n^{-2/3}H$. Applying the triangle inequality, we learn that Theorem~\ref{t.toolfluc}(2) holds with a suitably increased value of $G$. \qed

 {\bf Proof of Corollary~\ref{c.lateral}.} Take ${\bf j}= 0$, ${\bf u} = x$, ${\bf h_1} = 0$, ${\bf v} = u$ and ${\bf h_2} = h$ in Theorem~\ref{t.toolfluc}. Since $\vert x \vert \leq r$, the corollary follows by increasing $H > 0$. \qed

{\bf Proof of Theorem~\ref{t.weight}: (1).} The mooted event of probability at least $1 - H \exp \big\{ - h r^3 k \big\}$  will be chosen to ensure that the conclusions of Theorem~\ref{t.toolfluc} and Corollary~\ref{c.lateral} hold.

We have $h_{1,2} \in (2^{-k-1},2^{-k}]$. By the conclusion of Theorem~\ref{t.toolfluc}, 
\begin{equation}\label{e.uvdifference}
\vert v - u \vert \leq H h_{1,2}^{2/3} \big( \log (1 + h_{1,2}^{-1}) \big)^{1/3}r \, .
\end{equation}
By the conclusion of Corollary~\ref{c.lateral}, $\vert u \vert \leq H r$. 

Let $R \subset \R^2$ denote the rectangle $\big[ 0, Y \big] \times \big[0,2^{-k} \big]$, where $Y$ denotes  $H 2^{-2k/3} k^{1/3}r$. 
The lower-third $R^-$ of $R$ is $[0,Y] \times \big[0,2^{-k}/3 \big]$; its upper-third $R^+$ is  $[0,Y] \times \big[2/3 \cdot 2^{-k} , 2^{-k} \big]$. To any translate $T = R+(x,y)$ indexed by $(x,y)$
is evidently associated a lower-third $T^-$ and an upper-third $T^+$.  

In light of the noted bounds, we may find a collection $\mc{C}$ of translates of $R$ by vectors in $\R \times n^{-1}\Z$ such that $\vert \mc{C} \vert$ is at most a constant multiple of $2^{5k/3} k^{-1/3}$ such that $(u,h_1) \in T^-$ and $(v,h_2) \in T^+$ for some element $T$ of $\mc{C}$.

We apply Proposition~\ref{p.tailweight} with ${\bf K}$ of order  $k^{1/3}r$, ${\bf R} = K$ and ${\bf n} = 6n 2^{-k}$ via the scaling principle, and use a union bound, to find that, for $K$ sufficiently high,
$$
 \PP \Big( \sup \big\vert \weight_n \big[ (u,h_1) \to (v,h_2) \big] \big\vert \geq \Theta(1) (K + H^2 k^{2/3} r^2) 2^{-k/3} \Big) \, \leq \, \vert \mc{C} \vert \cdot  k^{2/3}r^2 G \exp \big\{ - d K^{3/2} \big\} \, ,
$$
where the supremum is taken over all choices $(u,h_1)$ and $(v,h_2)$ with  $h_1,h_2 \in n^{-1}\Z \cap [0,1]$ and  $h_{1,2} \in (2^{-k-1},2^{-k}]$ that belong to $\rho_n \big[ (x,0) \to (y,1) \big]$
where $x$ and $y$ vary over real values of absolute value at most $r$. Because Proposition~\ref{p.tailweight} treats parabolically adjusted weight, we need to take account of parabolic curvature, and, in view of~(\ref{e.uvdifference}),  we do so by means of the above term  $H^2 k^{2/3} r^2 2^{-k/3}$. 
The upper bounds on ${\bf K}$ and ${\bf R}$ hypothesised by the proposition are satisfied in view of the assumption that $r \leq (n h_{1,2})^{1/50}$.

By choosing $K$ to be a large multiple of $\big( \log (r 2^k) \big)^{2/3}$, the right-hand factors of $\vert \mc{C} \vert  k^{2/3}$, which grows as a power of $2^k$, and of $r^2$, may be removed, at the expense of a decrease in the value of $d > 0$. The form of Theorem~\ref{t.weight}(1) entails that we desire this failure probability upper bound  $\exp \big\{ - d K^{3/2} \big\}$ be at most $H \exp \big\{ - h r^3 k \big\}$. This condition is ensured if we adjust $K$ so that it equals a large constant multiple of $r^2 k^{2/3}$. This adjustment is an increase to the value of $K$: indeed,  since $r$ is supposed to be high, and $h_{1,2} \leq 1$, the adjusted value of $K$ is at least the maximum of $r^2$ and  the given large multiple of $\big( \log (r 2^k) \big)^{2/3}$.
(The choice $K = \Theta(1)r^2 k^{2/3}$ imposes the constraint $r k^{1/3} \leq \Theta(1) (n 2^{-k})^{1/60}$ to admit the above usage of  Proposition~\ref{p.tailweight}. The upper bound on $r$ in Theorem~\ref{t.weight}(1) implies this constraint.)
 Since the adjusted value of $K$ may be absorbed into the preceding display's term  $H^2 k^{2/3} r^2$ that arose from parabolic curvature, we complete the derivation of Theorem~\ref{t.weight}(1) by decreasing $h > 0$ if need be.

{\bf (2).} Suppose instead that $h_{1,2} < H n^{-1}$. The points $(u,h_1)$ and $(v,h_2)$ lie on a polymer of the form 
$\rho \big[ (x,0) \to (y,1) \big]$ where $\vert x \vert$ and $\vert y \vert$ are at most $r$. In this case, we will rely on control on Brownian oscillation and adopt the unscaled perspective to finish the proof. 
By Theorem \ref{t.toolfluc}(2), with probability
 at least $1 - H n^{- h r^3}$,
$$
\big\vert v -u \big\vert \leq G_1 n^{-2/3} ( \log n )^{1/3}r
 \, .
$$
Thus, recalling \eqref{e.weightgeneral}, it suffices  to prove 
that, with probability at least $1 - H n^{- h r^3},$
$$M \big[ (u,i) \to (v,j) \big] \le \frac{G}{10} r^2 \cdot  ( \log n )^{2/3}$$ for all $1\le i\le j\le n$, with $j-i\le H$, $u\le v,$ $|u-v|\le G_1( \log n )^{1/3}r,$ and $|u|\le O\big(n( \log n )^{1/3}r\big)$.
To prove this bound, we will simply bound the probability that 
$$
M \big[ (u,i) \to (v,i+1) \big] \ge \frac{G}{H} r^2 \cdot  ( \log n )^{2/3} \, ,
$$ 
for some $u,v$ as above. Since we may suppose that $G\ge 2H,$ simple Brownian oscillation estimates, relying on the reflection principle, yield that this probability is at most 
$$
O \Big( n \exp\big\{-r^4   ( \log n )^{4/3}/r  (\log n )^{1/3}\big\} \Big) \le \exp\big\{- 2^{-1} r^3 \log n \big\} \, = \, n^{-2^{-1}r^3} \, ,
$$ 
for all large enough $r$.
Thus the proof of Theorem~\ref{t.weight}(2) is complete. \qed

\subsection{Polymer fluctuation tails, uniform in variation of endpoints and lifetime fraction}
In this section, we prove Theorem~\ref{t.deviation}. 

Let $a \in (0,1)$ and $r > 0$. Define the {\em lower zone} $Z_n^-(a,r)$ to be the product of $[-1,1] \cdot  r a^{2/3}\big(\log a^{-1} \big)^{1/3}$
and $n^{-1}\Z \cap [0,a/4]$;  and the {\em upper zone} $Z_n^+(a,r)$  to be the product of $[-1,1] \cdot  r a^{2/3}\big(\log a^{-1} \big)^{1/3}$
and $n^{-1}\Z \cap [1 - a/4,1]$.

Recall $\fluct_n \big[\cdot, \cdot \big]$ from \eqref{e.flucdef}.
\begin{lemma}\label{l.deviation} There exist positive constants $C_1,C_2,C_3$ and $C_4$ such that, 
for $r \geq C_1$, $a \in (0,1/16]$ and $n \in \N$ for which $\min \big\{ na,n(1-a)\big\} \geq C_2$,
  $$
  \PP \Big( \sup \fluct_n \big[ (x,h_1) \to (y,h_2) ; h \big] \geq  3r a^{2/3}\big(\log a^{-1} \big)^{1/3} \Big) \leq  C_4 a^{C_3 r^3 } \, ,
  $$
where the supremum is taken over  $(x,h_1)$ in the lower zone $Z^-_n(a,r)$;  $(y,h_2)$ in the upper zone $Z^+_n(a,r)$; 
  and $h \in n^{-1}\Z \cap \big( [a/2,4a] \cup [1-4a,1-a/2] \big)$.
 \end{lemma}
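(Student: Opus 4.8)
The strategy is to dominate the polymer fluctuation event by a union of two more tractable events: a weight lower bound for the constrained polymer forced to make a large deviation, and a weight upper bound for the unconstrained polymer; if the constrained weight stays above the constrained lower bound but the deviation is too large, we obtain a contradiction with the unconstrained upper bound. More precisely, fix $h$ in the stated range (by symmetry we may take $h \in [a/2, 4a]$), together with endpoints $(x,h_1) \in Z_n^-(a,r)$ and $(y,h_2) \in Z_n^+(a,r)$, and a candidate height-$h$ location $u \in \R$ with $\big\vert u - \tfrac{h_2-h}{h_{1,2}}x - \tfrac{h-h_1}{h_{1,2}}y \big\vert \geq 3ra^{2/3}(\log a^{-1})^{1/3}$. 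The weight of the polymer $\rho_n\big[(x,h_1)\to(y,h_2)\big]$ at least equals $\weight_n\big[(x,h_1)\to(u,h)\big] + \weight_n\big[(u,h)\to(y,h_2)\big]$ for the optimal such $u$; but if the polymer departs level $h$ so far from the interpolating line, then the parabolic curvature term in the parabolically adjusted weight on at least one of the two segments $(x,h_1)\to(u,h)$ or $(u,h)\to(y,h_2)$ costs an amount of order $r^2 a^{1/3}(\log a^{-1})^{2/3}$ relative to the typical weight scale $a^{1/3}$ (after rescaling the short segment of duration $\Theta(a)$ by the scaling principle, the horizontal discrepancy of order $ra^{2/3}(\log a^{-1})^{1/3}$ becomes a discrepancy of order $r(\log a^{-1})^{1/3}$ on a unit-duration segment, incurring a parabolic penalty of order $r^2(\log a^{-1})^{2/3}$).

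The concrete steps are as follows. First, I would reduce by the scaling principle (Subsection~\ref{s.scalingprinciple}) all weights to unit-duration form, so that the three segments---the full trip $(x,h_1)\to(y,h_2)$ of duration $\Theta(1)$, the initial segment of duration $\Theta(a)$, and the final segment of duration $\Theta(1)$---become standard unit-order objects, with the parameter $n$ replaced by $\Theta(na)$ or $\Theta(n)$ as appropriate. Second, I would apply Proposition~\ref{p.tailweight} (via the scaling principle) to the full trip to get a uniform-in-endpoints upper bound $\weight_n^\cup\big[(x,h_1)\to(y,h_2)\big] \leq \Theta(1)r^2(\log a^{-1})^{2/3}\cdot\Theta(1)$ on an event of probability at least $1 - \Theta(1)(\log a^{-1})^{2/3}r^2 a^{-2/3}(\log a^{-1})^{-1/3} \cdot \exp\{-\Theta(1)(r^2(\log a^{-1})^{2/3})^{3/2}\}$, which simplifies to the target form $1 - \Theta(1)a^{\Theta(1)r^3}$ after using $(r^2(\log a^{-1})^{2/3})^{3/2} = r^3(\log a^{-1})$ and absorbing polynomial prefactors. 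Third---and this is the input that does the real work---I would apply Proposition~\ref{p.tailweight} again (this time to the two sub-segments, with parameter $\mathbf{R}$ of order $r(\log a^{-1})^{1/3}$ and rescaled noise index of order $na$ resp.\ $n$) to get a matching \emph{lower} bound on each sub-segment weight, of the form $\weight^\cup \geq -\Theta(1)r(\log a^{-1})^{1/3}$, uniformly over the rescaled endpoint windows, again on an event of probability $1-\Theta(1)a^{\Theta(1)r^3}$. Fourth, on the intersection of these good events, combine: the weight along any zigzag routed through $(u,h)$ is at most the full-trip weight $\leq \Theta(1)r^2(\log a^{-1})^{2/3}\cdot a^{1/3}$ in raw (unadjusted) coordinates; but it is at least (lower bound on segment weights) minus (the parabolic penalty of order $r^2 a^{1/3}(\log a^{-1})^{2/3}$ that the large deviation forces); choosing constants so the penalty dominates the full-trip bound gives the contradiction, hence the deviation event has probability at most $\Theta(1)a^{\Theta(1)r^3}$.

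A few bookkeeping points require care. The endpoint windows $Z_n^\pm(a,r)$ have horizontal width of order $ra^{2/3}(\log a^{-1})^{1/3}$; after rescaling the short (duration $\Theta(a)$) segment they become windows of width $\Theta(r(\log a^{-1})^{1/3})$, so the parameter $\mathbf{K}$ in Proposition~\ref{p.tailweight} is of order $r(\log a^{-1})^{1/3}$, and the hypotheses $\mathbf{K}\le\mathbf{n}^{1/46}$, $\mathbf{R}\le\mathbf{n}^{1/30}$ translate into the conditions $r \le \Theta(1)(na)^{c}$ for suitable small $c$; these must be checked against the blanket hypothesis $\min\{na,n(1-a)\}\ge C_2$ together with an implicit upper bound on $r$ (the statement as written does not cap $r$, so either $C_2$ must be taken large relative to $r$, or, more likely, the intended reading is that $r$ is bounded by a small power of $na$---I would state this explicitly). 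There is also a uniformity issue: the supremum over $u\in\R$ in $\fluct_n$ is a supremum over a continuum, but since the polymer departs level $h$ at finitely many points (the polymer is a zigzag), and since large $u$ is excluded by a crude a priori bound (e.g.\ Corollary~\ref{c.lateral} rescaled, or the $K\le\Theta(1)n^{1/46}$ constraint propagated), a union bound over a mesh of $\Theta(1)$-many or polynomially-many candidate windows for $u$ suffices, contributing only polynomial prefactors absorbed into $a^{\Theta(1)r^3}$ after a mild decrease of the exponent constant.

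\textbf{Main obstacle.} The delicate point is the third step: extracting a \emph{uniform} lower bound on the weight of \emph{all} polymer sub-segments whose endpoints range over the rescaled windows, when one of those sub-segments has duration $\Theta(a)\ll 1$ and the horizontal window has width $\Theta(r(\log a^{-1})^{1/3})\gg 1$ after rescaling. Proposition~\ref{p.tailweight} is stated for windows of the fixed form $[-K,K]\times([-3,-1]\cup[1,3])$ with the mild constraint $K\le n^{1/46}$; applying it with $K$ growing like $r(\log a^{-1})^{1/3}$ is fine as long as $na$ is large enough, but the bookkeeping of how the probability prefactor $\Theta(1)K^2\exp\{-\Theta(1)R^{3/2}\}$ behaves when $K$ and $R$ are both of order $r(\log a^{-1})^{1/3}$---giving $\exp\{-\Theta(1)r^{3/2}(\log a^{-1})^{1/2}\}$, which must be bounded by $a^{\Theta(1)r^3}=\exp\{-\Theta(1)r^3\log a^{-1}\}$---only works because $r$ is bounded below by $C_1$, so $r^{3/2}(\log a^{-1})^{1/2}$ is comparable to $r^3(\log a^{-1})$ only up to the exponent on $r$, meaning one actually needs $R$ of order $r(\log a^{-1})^{1/3}$ to be paired with the curvature penalty of order $r^2(\log a^{-1})^{2/3}$, whose $3/2$ power is exactly $r^3(\log a^{-1})$. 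So the correct pairing is: apply Proposition~\ref{p.tailweight} to the sub-segments with $\mathbf{R}$ of order $r^2(\log a^{-1})^{2/3}$ (not $r(\log a^{-1})^{1/3}$)---the extra slack is available precisely because the large-deviation penalty is quadratic in the rescaled displacement---and then the exponent $\mathbf{R}^{3/2}=r^3(\log a^{-1})$ matches the target $a^{\Theta(1)r^3}$. Getting this matching of scales exactly right, and verifying that the curvature penalty genuinely dominates with the right constant (so that the contradiction in step four is real, not marginal), is the crux of the argument; the rest is routine scaling-principle bookkeeping and union bounds.
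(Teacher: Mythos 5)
The core of your argument --- routing the polymer through a far-off point $(u,h)$ and charging it the parabolic curvature penalty --- is a standard device, but you make the comparison at the wrong scale, and the contradiction in your fourth step does not materialize. The deviation threshold is $D = 3ra^{2/3}(\log a^{-1})^{1/3}$ and the short segment $(x,h_1)\to(u,h)$ has duration $h - h_1 = \Theta(a)$, so the curvature penalty is $2^{-1/2}D^2/(h-h_1) = \Theta\big(r^2 a^{1/3}(\log a^{-1})^{2/3}\big)$, which tends to \emph{zero} as $a \searrow 0$ for fixed $r$. The quantity you compare it against --- the full-trip weight over the duration $h_2 - h_1 \geq 1/2$ --- has fluctuations of unit order, not of order $a^{1/3}$ as written in your step four; a vanishing deficit on the short segment is routinely compensated by an order-one favourable fluctuation of the long segment $(u,h)\to(y,h_2)$, at essentially no probability cost. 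So ``the penalty dominates the full-trip bound'' is false for small $a$, and no contradiction is obtained. To rescue the weight-based route you would have to localize the comparison to a window of height $\Theta(a)$ around $h$ --- compare the weight of the sub-polymer from $(x,h_1)$ to its exit point from level $\sim 8a$, routed through $(u,h)$, against the unconstrained weight of that same sub-polymer --- but controlling that exit point is itself a transversal-fluctuation estimate at scale $a$, so the argument becomes a multi-scale induction; that induction is precisely the content of the proof of Theorem~\ref{t.toolfluc}, which the paper already has in hand.

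The paper's proof avoids this entirely. It fixes the two points $z^{\pm} = \pm\tfrac54\, ra^{2/3}(\log a^{-1})^{1/3}$ and applies Theorem~\ref{t.toolfluc} (at dyadic scales descending to $a/4$, plus the microscopic case) to show that each of the two full-length polymers $\rho_n\big[(z^{\pm},0)\to(z^{\pm},1)\big]$ stays within a horizontal window of width $2\alpha(a,r)$ about $z^{\pm}$ during the heights $[0,4a]\cup[1-4a,1]$, each with failure probability $\Theta(1)a^{\Theta(1)r^3}$. The polymer-ordering Lemma~\ref{l.sandwich} then sandwiches every polymer with endpoints in $Z_n^-(a,r)\times Z_n^+(a,r)$ between these two, confining it to the strip of half-width $\tfrac32 ra^{2/3}(\log a^{-1})^{1/3}$ at the relevant heights; this yields the stated bound on $\fluct_n$ uniformly over the continuum of endpoints and intermediate heights with only two events to control, so no union bound over $u$ or over endpoint windows is needed.
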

 {\bf Proof.} Let $z \in \R$.
Define the event $\mathsf{Narrow}_n(z,a,r)$ that
\begin{eqnarray}
 & & \rho_n \big[ (z,0)\to (z,1) \big] \cap  \bigg( \R \times \Big(   n^{-1}\Z \cap \big( \big[0,4a\big] \cup \big[1-4a,1\big] \big) \Big) \bigg) \label{e.strip} \\
 & \subseteq &  \big[z - \alpha(a,r),z+  \alpha(a,r) \big]  \times \big[0,1 \big] \, , \nonumber
\end{eqnarray}
where $\alpha(a,r) = 4^{-1}r a^{2/3}\big(\log a^{-1} \big)^{1/3}$.

To argue that this event is typical, we make two sets of applications of Theorem~\ref{t.toolfluc},
first with ${\bf h_1} = 0$ and then with ${\bf h_2} = 1$. 
In each, we apply Theorem~\ref{t.toolfluc}(1) with ${\bf k}$ ranging upwards from the maximal $i \in \N$ 
for which $2^{-i} \geq a/4$; and then use Theorem~\ref{t.toolfluc}(2) to treat the smallest scale.
We take   ${\bf r} = r $ in these applications.
 Since $a \geq \Theta(1) n^{-1}$, what we learn by doing so is that it is with probability at least $1-\Theta(1) a^{\Theta(1) r^3}$ that~(\ref{e.strip}) holds when $\alpha(a,r)$ is multiplied by a large constant.  
 By replacing $r$ by a small constant multiple of this quantity, and relabelling,
  we find that, for $z \in \R$,
 \begin{equation}\label{e.narrowprob}
\PP \Big( \neg \, \mathsf{Narrow}_n(z,a,r) \Big) \leq \Theta(1) a^{\Theta(1)r^3} \, .
\end{equation}
Set $z^- = - 5/4 \cdot r a^{2/3} \big( \log a^{-1} \big)^{1/3}$ and $z^+ = 5/4 \cdot r a^{2/3} \big( \log a^{-1} \big)^{1/3}$.
When $\mathsf{Narrow}_n\big(z^-,a,r\big)$ and $\mathsf{Narrow}_n\big(z^+,a,r\big)$ occur, every polymer of the form
$\rho_n\big[ (x,h_1)\to (y,h_2) \big]$, with $(x,h_1)$ in the lower zone $Z_n^-(a,r)$ and  $(y,h_2)$ in the upper zone $Z_n^+(a,r)$,
has the property that its endpoint locations $(x,h_1)$ and $(y,h_2)$ are bounded on the left by $\rho_n\big[ (z^-,0)\to (z^-,1) \big]$, and on the right by $\rho_n\big[ (z^+,0)\to (z^+,1) \big]$.
By polymer ordering  Lemma~\ref{l.sandwich}, every point in $\rho_n\big[ (x,h_1)\to (y,h_2) \big]$ is also thus bounded. However, $\rho_n\big[ (z^-,0)\to (z^-,1) \big]$ and $\rho_n\big[ (z^+,0)\to (z^+,1) \big]$
remain in vertical strips 
$$
\big[z - \alpha(a,r),z+  \alpha(a,r) \big]  \times \big[0,1 \big] \, , 
$$
with $z$ equal to $z^-$ or $z^+$, during  $\big[0,4a\big] \cup \big[1-4a,1\big]$ as indicated in~(\ref{e.strip}). We learn that $\rho_n\big[ (x,h_1)\to (y,h_2) \big]$ remains in the strip $\big[-\frac32  r a^{2/3} \big( \log a^{-1} \big)^{1/3},\frac32 r a^{2/3} \big( \log a^{-1} \big)^{1/3}\big]$
during  $\big[0,4a\big] \cup \big[1-4a,1\big]$. 
This implies that the quantity $\fluct_n \big[ (x,h_1) \to (y,h_2) ; h \big]$ appearing in Lemma~\ref{l.deviation} is for $h \in n^{-1}\Z \cap \big( [a/2,4a] \cup [1-4a,1-a/2] \big)$ at most  $3r a^{2/3} \big( \log a^{-1} \big)^{1/3}$, since $(x,h_1) \in Z_n^-(a,r)$ and  $(y,h_2) \in Z_n^+(a,r)$.

Thus the proof of Lemma~\ref{l.deviation} is complete, because,  the upper bound $\Theta(1) a^{\Theta(1) r^3}$ in this result is in light of~(\ref{e.narrowprob}) a bound on the probability that 
$\mathsf{Narrow}_n\big(z^-,a,r\big) \cup \mathsf{Narrow}_n\big(z^+,a,r\big)$ fails to occur. \qed

{\bf Proof of Theorem~\ref{t.deviation}.} By the scaling principle, it suffices to treat that case that $s_1 =0$ and $s_2 = 1$.
Specify the starting region $S$ equal to the product of $[-K,K]$ and $n^{-1}\Z \cap [0,1/3]$; and the ending region $E$ equal to the product of $[-K,K]$ and $n^{-1}\Z \cap [2/3,1]$.
We are concerned with journeys between $(x,h_1)$ and $(y,h_2)$, where~$(x,h_1)$ lies in the starting region~$S$ and $(y,h_2)$ in the ending region~$E$.
We plan to argue that typically such journeys simultaneously have the desired property of fluctuation by applying Lemma~\ref{l.deviation} via the scaling principle and using a union bound. We first construct a family of maps which will map any pair of points in $S\times E$  to a pair of points in  $Z_n^-(a,r)\times Z_n^+(a,r)$ as in Lemma~\ref{l.deviation}, which then will allow us to finish the proof by applying the lemma.

For $K \in \R$, recall from \eqref{e.shearmap} the shear map $\tau_K:\R^2 \to \R^2$, $\tau(x,y) = (x+Ky,y)$. 
Further, define a {\em KPZ dilation} to be a map of the form $\R^2 \to \R^2: (x,y) \to (\zeta^{2/3} x,\zeta y)$ which sends the line $\R \times \{ 1\}$ to a line of the form $\R \times \{ y\}$ for $y \in n^{-1} \N$ with $y > 0$; that is, we ask that $\zeta \in n^{-1}\N^+$. A {\em vertical shift} is a map of the form $\R^2 \to \R^2: (x,y) \to (x,y+h)$, where $h \in n^{-1}\Z$. 
A {\em horizontal shift}   is a map of the form $\R^2 \to \R^2: (x,y) \to (x+u,y)$, where $u \in \R$. 

Let $\Theta$ denote the class of maps from $\R^2$ to $\R^2$ that take the form $\phi_h \circ \phi_v \circ \phi_d \circ \phi_s$, where $\phi_s$ is a shear map $\tau_\kappa$ with $\vert \kappa \vert \leq n^{2/3}$; $\phi_d$ is a KPZ dilation; $\phi_v$ is a vertical shift; and $\phi_h$ is a horizontal shift. 

A basic covering pair for the product set $S \times E$ is a pair $(B^-,B^+)$, where there exists an element $\theta \in \Theta$ for which $B^-$ is the image under $\theta$
of the lower zone  $Z_n^-(a,r)$; $B^+$ is the image under $\theta$
of the upper zone  $Z_n^+(a,r)$ {where $a$ and $r$ are as in the statement of Theorem \ref{t.deviation},} with $B^- \cap S \not= \emptyset$; and $B^+ \cap E \not= \emptyset$.
The {\em covering number} is defined to be the minimum cardinality of a set of basic covering pairs for $S \times E$ such that $S \times E \subseteq \bigcup \big( B^- \times B^+ \big)$ where the union ranges over pairs $(B^-,B^+)$ in the set.

\begin{lemma}\label{l.coveringnumber}
Suppose that $K a^{1/3}$ is bounded above.
The covering number is at most a constant multiple of $a^{-10/3} K^2$.
\end{lemma}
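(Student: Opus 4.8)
The plan is to count explicitly how many elements of $\Theta$ we need to apply to the pair $\big(Z_n^-(a,r),Z_n^+(a,r)\big)$ in order that the resulting basic covering pairs exhaust $S \times E$. The key observation is that a single map $\theta = \phi_h \circ \phi_v \circ \phi_d \circ \phi_s \in \Theta$ produces a basic covering pair whose two components, $B^- = \theta\big(Z_n^-(a,r)\big)$ and $B^+ = \theta\big(Z_n^+(a,r)\big)$, are approximately parallelogram-shaped regions: $B^-$ sits at a chosen vertical location (a translate, by $\phi_v$, of the interval $[0,a/4]$ dilated by the KPZ dilation $\phi_d$), has a chosen horizontal location (set by $\phi_h$), a chosen slope (set by $\phi_s = \tau_\kappa$), and horizontal width which, after the KPZ dilation by some factor $\zeta \in n^{-1}\N^+$, becomes $\zeta^{2/3}\cdot 2 r a^{2/3}(\log a^{-1})^{1/3}$; similarly for $B^+$, which is anchored near the top. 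To cover $S \times E = \big([-K,K]\times(n^{-1}\Z\cap[0,1/3])\big) \times \big([-K,K]\times(n^{-1}\Z\cap[2/3,1])\big)$ it suffices to choose $\zeta$ of unit order (so that $\phi_d$ sends $[0,a/4]$ to an interval of length of order $a$, and the full box $[0,1]$ to a box of vertical extent of order one, comfortably containing the span from $S$ to $E$), and then tile.

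First I would fix the dilation factor $\zeta$ to be, say, the element of $n^{-1}\N^+$ closest to $1$, so $\phi_d$ is essentially the identity; this single choice is enough — there is no need to vary $\zeta$. With $\zeta$ fixed, the remaining freedom is in $\phi_s$ (the slope $\kappa$), $\phi_v$ (the vertical shift, which must bring the image of $[0,a/4]$ into $[0,1/3]$ and simultaneously the image of $[1-a/4,1]$ into $[2/3,1]$ — but since the dilated box has vertical extent of order one, one vertical shift of unit order achieves both, so $\phi_v$ contributes only a bounded number of choices), and $\phi_h$ (the horizontal shift). The horizontal and slope parameters must be chosen so that $B^-$ meets the horizontal window $[-K,K]$ at the bottom level and $B^+$ meets $[-K,K]$ at the top level. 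Since each of $B^-$, $B^+$ has horizontal width of order $a^{2/3}(\log a^{-1})^{1/3}$, and we must cover horizontal locations over a range of size $2K$ at {\em both} the bottom and the top, we need a grid of horizontal offsets of size of order $K / \big(a^{2/3}(\log a^{-1})^{1/3}\big)$ in each of two independent directions — one for the bottom anchor and one for the top anchor; equivalently, $\big(K\,a^{-2/3}(\log a^{-1})^{-1/3}\big)^2$ pairs. Absorbing the logarithmic factor and the bounded contributions from $\phi_d$, $\phi_v$, this gives a count of at most a constant multiple of $K^2 a^{-4/3}$, which is stronger than the claimed $K^2 a^{-10/3}$; the weaker stated bound leaves comfortable slack (in particular it absorbs any polylogarithmic losses and the mild constraint $K a^{1/3} \leq \Theta(1)$, which is exactly what guarantees the shear slopes $\kappa$ needed stay within the permitted range $|\kappa| \leq n^{2/3}$).

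The main obstacle — really the only nontrivial point — is bookkeeping the joint constraint that a {\em single} $\theta \in \Theta$ must simultaneously place $B^-$ correctly near the bottom {\em and} $B^+$ correctly near the top, because $\phi_s, \phi_v, \phi_d$ act on both zones at once and only $\phi_h$ is shared. I would handle this by noting that the shear $\tau_\kappa$ shifts the top of the box by $\kappa$ relative to the bottom, so choosing $\kappa$ to run over a grid of spacing of order $a^{2/3}(\log a^{-1})^{1/3}$ and range of order $K$ controls the relative horizontal offset between $B^-$ and $B^+$, while $\phi_h$ then independently slides the whole configuration to cover the absolute horizontal position of $B^-$; together these two one-parameter grids realize every needed (bottom-location, top-location) pair. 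The remaining checks — that $a \leq 1/16$ and $K a^{1/3} = \Theta(1)$ make the slopes admissible and the dilation factor an admissible positive multiple of $n^{-1}$ — are routine. This yields the lemma.
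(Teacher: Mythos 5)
There is a genuine gap, and it sits exactly in the vertical bookkeeping that you dismiss as contributing only boundedly many choices. A basic covering pair is the image under a single $\theta \in \Theta$ of the pair $\big(Z_n^-(a,r), Z_n^+(a,r)\big)$, and these zones are \emph{thin slabs}: their vertical extents are $[0,a/4]$ and $[1-a/4,1]$, of thickness $a/4$, not the full box $[0,1]$. For $(x,h_1)\in S$ and $(y,h_2)\in E$ to lie in $B^-$ and $B^+$ respectively, you need $h_1$ to land in the image of the interval $[0,a/4]$ and, simultaneously, $h_2$ to land in the image of $[1-a/4,1]$. Since $h_1$ ranges over $[0,1/3]$, placing the lower slab requires a grid of vertical shifts of spacing $O(a)$ over a unit-order range, i.e.\ $\Theta(a^{-1})$ choices of $\phi_v$ — not $O(1)$. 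Moreover, once $\zeta$ is fixed the separation between the two image slabs is essentially rigid (about $\zeta$), whereas $h_2-h_1$ varies over $[1/3,1]$, an interval of unit length; matching this gap to within the slab thickness $O(a)$ forces $\Theta(a^{-1})$ choices of the dilation factor $\zeta$ as well (the paper takes $\zeta^{-1}\in[1,3]$ from a set of cardinality of order $a^{-1}$). Your scheme, with one $\zeta$ and boundedly many vertical shifts, covers only those pairs $(h_1,h_2)$ whose vertical coordinates lie in a set of measure $O(a)\times O(a)$, so it does not cover $S\times E$; indeed a simple measure count shows at least $\Theta(a^{-2})$ basic pairs are needed for the vertical coverage alone, which is precisely the factor separating your $K^2a^{-4/3}$ from the stated $K^2a^{-10/3}$.

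Your horizontal analysis is essentially right and matches the paper's: one factor of order $Ka^{-2/3}$ for the absolute horizontal position (the horizontal shift) and one for the relative offset between top and bottom (the shear slope $\kappa$), with the hypothesis $Ka^{1/3}\leq\Theta(1)$ absorbing the final adjustment. The paper organizes the count as an explicit five-step inversion — undo the horizontal shift ($Ka^{-2/3}$ choices), the vertical shift ($a^{-1}$), the KPZ dilation ($a^{-1}$), the shear ($Ka^{-2/3}$), and a final bounded shift — whose product is $K^2a^{-10/3}$. To repair your argument you need only restore the two $a^{-1}$ grids for $\phi_v$ and $\phi_d$.
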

{\bf Proof.}
Let $(x,h_1) \in S$ and $(y,h_2) \in E$. We want to locate a basic covering pair $(B^-,B^+)$ with  $(x,h_1) \in B^-$ and $(y,h_2) \in B^+$. Associated to $(B^-,B^+)$ is the composition of a shear, a KPZ dilation, a vertical shift and a horizontal shift. We attempt to recover these operations by undoing them. 
\begin{enumerate}
\item {\em The inverse horizontal shift.} Shift the two points horizontally by a common displacement that is a multiple of $3^{-3/2} a^{2/3}$ so that the resulting points  $(x',h_1)$ and $(y',h_2)$ satisfy the condition that
$\vert x' \vert \leq 3^{-3/2} a^{2/3}$.  
\item {\em 
Next is the inverse vertical shift.} Shift the two new points down by the maximum multiple of $n^{-1} \lfloor n a/12 \rfloor$ so that the resulting points $(x',h_1')$ and $(y',h_2')$ lie in the upper half-plane. Note that $0 \leq h_1' < a/12$ and $1/3 < h_2' \leq 1.$  
\item {\em 
Next the inverse KPZ dilation.} We map $(x',h_1')$ and $(y',h_2')$ to $(\hat{x},\hat{h}_1)$ and $(\hat{y},\hat{h}_2)$ via an inverse KPZ dilation $(x,y) \to (\zeta^{-2/3}x,\zeta^{-1}y)$, where $\zeta^{-1} \in [1,3]$ is chosen to ensure that $0 \leq \hat{h}_1 \leq a/4$  and $1 - a/4 \leq \hat{h}_2 \leq 1$. This choice of $\zeta$ may be made from a set of cardinality of order $a^{-1}$, where this entropy factor is adequate to ensure the desired bounds on $\hat{y}$. 
\item {\em Now, we undo the shear map.} Let $(\hat{x}_1,\hat{h}_1)$ and $(\hat{y}_1,\hat{h}_2)$
denote the image of $(\hat{x},\hat{h}_1)$ and $(\hat{y},\hat{h}_2)$ under an inverse shear map $\tau_{-\kappa}$, selected to ensure that $\vert \hat{y}_1 - \hat{x}_1 \vert \leq a^{2/3}$.
Since $0 \leq \hat{x} \leq a^{2/3} \leq 1 \leq K$ and  $\vert \hat{y} \vert \leq 3K$, the value of $\kappa$ may be chosen among a constant multiple of $K a^{-2/3}$ options to guarantee this outcome. 
\item {\em A final shift.} Having undone the several maps, we hoped to obtain $(\hat{x}_1,\hat{h}_1) \in Z_n^-(a,r)$ and $(\hat{y}_1,\hat{h}_2) \in Z_n^+(a,r)$. The vertical coordinates satisfy the desired conditions $\hat{h}_1 \in [0,a/4]$ and $\hat{h}_2 \in [1 - a/4,1]$; and the horizontal displacement $\vert \hat{y}_1 - \hat{x}_1 \vert$, being at most $a^{2/3}$, is consistent with our aim. But a final horizontal shift is needed, to ensure that the horizontal coordinates are both at most $r a^{2/3}\big(\log a^{-1} \big)^{1/3}$. Since $\vert \hat{x}_1 \vert$ is readily seen to have order at most $Ka$, and $r \geq 1$ as well as $a \leq e^{-1}$, we see that this final shift may be chosen from among an order of $K a^{1/3}$ choices. However, since we hypothesise that $Ka^{1/3}$ is most a large constant, the entropic term associated to this final step is bounded.
\end{enumerate}
The product of upper bounds neglecting bounded factors on the number of choices for the maps employed in the respective steps is equal to
$$
 Ka^{-2/3} \times a^{-1} \times a^{-1} \times K a^{-2/3} \times 1  = K^2 a^{-10/3} \, .
$$
If we apply the inverses of our inverse maps---in reverse order!---to the lower and upper zones   $Z_n^-(a,r)$ and  $Z_n^+(a,r)$, we will obtain $B^-$ and $B^+$, elements of a basic covering pair that respectively contain the given points  $(x,h_1) \in S$ and $(y,h_2) \in E$. Since the constructed map is one among a collection whose cardinality is at most the displayed quantity up to a bounded factor, we have completed the proof of Lemma~\ref{l.coveringnumber}. \qed

In seeking to obtain Theorem~\ref{t.deviation} in the case that $s_1 =0$ and $s_2 =1$, it is enough, in view of $h_{1,2} \leq 1$, to bound above the probability of the event---that we will denote by $A$---that there exist  $(x,h_1) \in S$ and $(y,h_2) \in E$ for which
\begin{eqnarray*}
 & & \textrm{there exists  a moment~$h$ at which a fraction lying in $[a,2a] \cup [1-2a,1-a]$} \\
 & & \textrm{of the lifetime $[h_1,h_2]$ has elapsed such that} \\
  & & h_{1,2}^{-2/3} \big\vert \rho_n \big[ (x,h_1) \to (y,h_2) \big](h) - \ell \big[ (x,h_1) \to (y,h_2) \big](h) \big\vert \, \, \textrm{is at least $R a^{2/3} \big(\log a^{-1} \big)^{1/3}$} \, ,
 \end{eqnarray*}
 when $R=5r$, where the positive parameter $R$ is displayed to permit convenient later reference to the display, and where the choice of a multiple of five---or, indeed, of any given multiple, but five will work for our purpose---is admissible via the absorptive proclivity of $\Theta(1)$ terms in the theorem. 
  We cover $E \times S$ by a union indexed by $i \in \mc{C}$ of basic covering boxes $(B^-_i,B^+_i)$ which by Lemma~\ref{l.coveringnumber} we know may be chosen so that $\vert \mc{C} \vert$ is at most a constant multiple of $K^2 a^{-10/3}$. 
  
  Fix a given index $i \in \mc{C}$ and consider the event  
  that there exist $(x,h_1) \in B^-_i$ and $(y,h_2) \in B^+_i$ such that the displayed circumstance takes place with $R=r$.  
We have constructed a composite function that maps $B^-_i$ into  $Z_n^-(a,r)$
and   $B^+_i$ into  $Z_n^+(a,r)$. The composite is the outcome of a five-step composition. After each step, the given points $(x,h_1) \in B^-_i$ and   $(y,h_1) \in B^+_i$ have been mapped to certain locations; we may consider the probability that there exist  $(x,h_1) \in B^-_i$ and $(y,h_2) \in B^+_i$ such that the last displayed event occurs when $(x,h_1)$ and $(y,h_2)$ are replaced by these locations.  For the step indexed by $j \in \llbracket 0, 5 \rrbracket$, we denote this probability by $p_j(R,i)$. 

Note that the probability $\PP(A)$ that we seek to bound in order to obtain 
 Theorem~\ref{t.deviation} when $s_1 =0$ and $s_2 =1$ is bounded above by a constant multiple of $K^2 a^{-10/3} \sup_{i \in \mc{C}} p_0(5r,i)$.
 Thus, if we can show that $p_0(5r,i)$ is at most $a^{dr^3}$ for each $i \in \mc{C}$, we will have proved this theorem.
 
 On the other hand,
$p_5(3^{4/3}r,i)$ for any given $i \in \mc{C}$ is at most the probability that the last displayed event occurs with $R = 3^{4/3}r$ and when $(x,h_1)$ and $(y,h_2)$ are respectively replaced by certain given elements of  $Z_n^-(a,r)$ and $Z_n^+(a,r)$. 
As such, $p_5(3^{4/3}r,i)$ is bounded above by Lemma~\ref{l.deviation}: indeed, this result implies that $p_5\big(3^{4/3}r,i \big) \leq 
\Theta(1)
a^{\Theta(1)
r^3}$ because $h_{1,2} \geq 1/3$ (since $h_1 \leq 1/3$ and $h_2 \geq 2/3$) and because, if $h \in [0,1]$ satisfies $\tfrac{h - h_1}{h_{1,2}} \in [a,2a] \cup [1-2a,1-a]$ for $h_1 \in [0,a/4]$ and $h_2 \in [1- a/4,1]$, then $h \in [a/2,4a] \cup [1-4a,1-a/2]$. 

To close out the proof of Theorem~\ref{t.deviation} , it suffices to show that $p_0(5r,i) \leq p_5\big(3^{4/3}r,i \big)$. Indeed, this bound proves this result when the left-hand instance of $r$ in~(\ref{e.deviationvariant}) is replaced by $5r$; but, as we have noted, the stated form may then be obtained since $\Theta(1)$ notation is used.
We seek then to obtain the just stated bound. Of the five maps involved in the composition, four are shifts or a KPZ dilation.  For each index advance $j \to j+1$ in which one of these maps is involved, the scaling principle shows that $p_j(s,i)$ equals $p_{j+1}(s,i)$. The remaining map is the fourth---the inverse shear map $\tau_{-\kappa}$---involved in the index change $3 \to 4$. Since the value of the parameter $\kappa$ specifying this map is at most a constant multiple of $K$, and $\vert K \vert$ is at most a small constant multiple of $n^{2/3}$, 
$\kappa$  itself is at most a small multiple of $n^{2/3}$; thus,
Lemma~\ref{l.rhoxa}(2)
and $5 > 3^{4/3}$ imply that  $p_3\big(5r,i \big)$ is at most $p_4\big(3^{4/3}r,i\big)$. This confirms 
that $p_0(5r,i) \leq p_5\big(3^{4/3}r,i \big)$ and completes the proof of  Theorem~\ref{t.deviation}. \qed 

\subsection{Polymer weight tails, uniform in variation of endpoints}
In this section, we prove Proposition~\ref{p.onepoint}.

{\bf Proof of Proposition~\ref{p.onepoint}.} Proposition~\ref{p.tailweight} is concerned with the tail of parabolic weight for polymers that begin in the lower third and end in the upper third of the rectangle $[-K,K] \, \times \, n^{-1}\Z \cap [-3,3]$. With $\nu = 6^{-1} \cdot 5/4 \cdot 2^{-\ell}$, this rectangle is mapped under 
the transformation $(x,y) \to (\nu^{2/3}x,\nu y)$ and a vertical shift  to the rectangle
$$
[-K,K]\cdot  6^{-2/3} (5/4)^{2/3} 2^{-2\ell/3} \,  \times \, m^{-1}\Z  \cap  \big[ 0,5/4 \cdot 2^{-\ell} \big] \, ,
$$
where $m = n \cdot 4/5 \cdot 2^\ell$. Consider the collection of translates of the displayed rectangle by vectors of the form $\big( K \cdot  6^{-2/3} (5/4)^{2/3} 2^{-2\ell/3}  \cdot j , 2^{-\ell} \cdot 1/12 \cdot k \big)$, with $(j,k) \in \Z^2$. Let $\Psi$ denote the subcollection indexed by those $(j,k)$ for which the translation vector has horizontal component lying in $[-M,M]$ and vertical component lying in $[0,1]$. Set $K > 0$ so that $L = K \cdot 6^{-2/3} (5/4)^{2/3}$.
Then $\Psi$ is a set of cardinality $\Theta(1) M K^{-1} 2^{5\ell/3}$ such that any pair of elements $(x,s_1)$ and $(y,s_2)$ implicated in the definition in the events 
$\low\big(\zeta,\ell,L,M\big)$ and $\high\big(\zeta,\ell,L,M\big)$ belongs to some element of $\Psi$. 
What we learn from this is that we may apply Proposition~\ref{p.tailweight} with ${\bf n} = n \cdot 5/4 \cdot 2^{-\ell}$, ${\bf K} = K$ and ${\bf R} = (4/5)^{1/3} 2^{-1/3} \zeta$ via translation invariance and the scaling principle, and use a union bound, to conclude that the probability in Proposition~\ref{p.onepoint} is at most $\Theta(1) M K^{-1} 2^{5\ell/3} \Theta(1) K^2 \exp \big\{ - \Theta(1) \zeta^{3/2} \big\}$.  
(That the selection of {\bf R} is satisfactory depends on $\tot \geq 2^{-\ell-1}$.)
Since the obtained bound takes the desired form, and the hypotheses on $n$, $L$ and $\zeta$ in Proposition~\ref{p.onepoint}
enable this application of Proposition~\ref{p.tailweight}, the proof of Proposition~\ref{p.onepoint} is complete. \qed

\section{Slim pickings for slender excursions}\label{s.slenderresults}

Here we prove our result Theorem~\ref{t.nocloseness} asserting the significant shortfall in weight accrued by zigzags that are constrained to follow excursions relative to a given zigzag that are narrower than the width dictated by the KPZ scaling exponent of two-thirds.

 More precisely, but still in summary, this theorem
 concerns the maximum weight that may be accrued by an $n$-zigzag~$\psi$ that is constrained to pursue a slender excursion relative to a given zigzag~$\phi$
 of duration of order $2^{-\ell}$. By slender, we mean that, at a high but fixed proportion $1 - \chi$ of heights along the excursion, the width between $\psi$ and $\phi$ is at most a small multiple 
 $\tza$
 of the characteristic separation~$2^{-2\ell/3}$. There is a degree of choice in the levels, of proportion $\chi$, at which~$\psi$ is not bound by this slenderness constraint.
Proposition~\ref{p.closelow} is a result {\em en route} to Theorem~\ref{t.nocloseness} in which a counterpart conclusion is reached when this set of levels is instead fixed. 
 
 There are four subsections. In the first, we give a brief LPP-based proof of a result that we will need: the mean of the GUE Tracy-Widom  distribution is negative. 
 (See~\cite[Lemma~$A.4$]{Watermelon} for another proof pointed out by Ivan Corwin.)  In the second section, we record two needed results, including a form of~\cite[Theorem~$1.1$]{ModCon} that
concerns variation of polymer weight under endpoint perturbation.  In the third section, we prove Proposition~\ref{p.closelow}. In the fourth, we sum out over the levels fixed in this proposition in order to obtain Theorem~\ref{t.nocloseness}.

 \subsection{The negative mean of the GUE Tracy-Widom distribution}
 
\begin{proposition}\label{p.negativemean}
There exist $d > 0$ and $n_0 \in \N$ such that, for $n \geq n_0$ and $\vert x \vert \leq 2^{-1} c n^{1/19}$, 
$$
\E \, \weight_n \big[ (0,0) \to (x,1) \big] \leq -d \, .
$$
\end{proposition}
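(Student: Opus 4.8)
\textbf{Proof plan for Proposition~\ref{p.negativemean}.}
The strategy is to show that the expectation of the scaled weight $\weight_n \big[ (0,0) \to (x,1) \big]$ is bounded above by a negative constant uniformly in $n$ and in $x$ in the permitted range, and the natural route is to combine an upper tail estimate with a matching lower bound on the probability that the weight is moderately negative, so that the positive contribution to the mean is outweighed. I would first reduce to the case $x = 0$: by translation invariance of Brownian LPP (Lemma~\ref{l.invariance}, or rather its scaled consequences) together with the parabolic shift, the law of $\weight_n \big[ (0,0) \to (x,1) \big]$ differs from that of $\weight_n \big[ (0,0) \to (0,1) \big]$ only by the deterministic parabolic correction $-2^{-1/2}x^2$, which is $\leq 0$, so proving the bound with $-d$ for $x=0$ immediately gives it (with the same $d$) for all $x$ with $\vert x \vert$ in the stated range. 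Equivalently one may work with $\weight^\cup_n$ and invoke Lemma~\ref{l.onepointbounds} directly, since $\weight^\cup_n \big[ (0,0) \to (x,1) \big] = \weight_n \big[ (0,0) \to (x,1) \big]$ when $x=0$.

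Next, using the one-point tail bounds of Lemma~\ref{l.onepointbounds} for $W := \weight^\cup_n \big[ (0,0) \to (0,1) \big]$ — namely $\PP(W \geq t) \leq C \exp\{-ct^{3/2}\}$ and $\PP(W \leq -t) \leq C\exp\{-ct^{3/2}\}$ for $t \geq 0$, valid once $n \geq n_0$ — I would write
$$
\E W \, = \, \int_0^\infty \PP(W \geq t)\, \dd t \, - \, \int_0^\infty \PP(W \leq -t)\, \dd t \, ,
$$
so that $\vert \E W \vert \leq \int_0^\infty C\exp\{-ct^{3/2}\}\,\dd t =: C'$, giving a uniform (in $n$) bound $\E W \in [-C',C']$. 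This alone does not give negativity, so the second, essential ingredient is a quantitative lower bound on the mass that $W$ places on a fixed negative interval. The clean way to obtain this: pick a small constant $\delta > 0$ and a large constant $T$; the tail bounds give $\PP(W \geq T) \leq C\exp\{-cT^{3/2}\}$, which is tiny for $T$ large, and $\PP(W \geq -\delta) \le 1$. The point is to show $\PP(W < -\delta)$ is bounded \emph{below} by a positive constant independent of $n$. I would get this from a lower bound on $\PP(W \leq -t)$ for suitable fixed $t$, which does \emph{not} follow from Lemma~\ref{l.onepointbounds} (that is only an upper bound), so here I would instead argue directly at the level of Brownian LPP: the event that the maximal energy $M[(0,0)\to(n,n)]$ is below its centering $2n$ by an order-$n^{1/3}$ amount has probability bounded below uniformly in $n$. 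This can be seen by a simple sub-optimal-path/Brownian-fluctuation argument, or — more in the spirit of the paper — by noting that $\sup_{x\in\R} Z_n(x,1/2)$ equals $\weight_n[(0,0)\to(0,1)]$ and using convergence to (or one-sided comparison with) the GUE Tracy--Widom law, whose distribution function is strictly between $0$ and $1$ at every real point, hence $\liminf_n \PP(W \le -\delta) \ge \PP(\mathrm{TW} \le -\delta) > 0$ for every $\delta>0$; combined with uniform integrability coming from the Gaussian-type tails of Lemma~\ref{l.onepointbounds}, this forces $\limsup_n \E W \le \E\,\mathrm{TW} < 0$, and the finitely many small $n$ (those below the threshold where the limit argument bites) are handled by enlarging $n_0$.

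The cleanest self-contained version, avoiding invoking the limit theorem, is: fix $\delta, T > 0$ constants to be chosen; then
$$
\E W \, \leq \, -\delta \cdot \PP\big( W \in (-T, -\delta) \big) \, + \, T \cdot \PP\big( W \geq -\delta \big) \, ,
$$
wait—this is not quite an upper bound as written, so instead I would split $\E W \le -\delta\,\PP(W\le -\delta) + \E[W\,\mathbf 1_{W > -\delta}]$ and bound $\E[W\,\mathbf 1_{W>-\delta}] \le \E[W_+] = \int_0^\infty \PP(W\ge t)\dd t \le C''$ using the upper tail, while bounding $\PP(W \le -\delta)$ below by a constant $p_0 > 0$ uniform in $n \ge n_0$. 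Choosing $\delta$ large enough that $\delta p_0 > C'' $ then yields $\E W \le -(\delta p_0 - C'') =: -d < 0$. The main obstacle, and the only genuinely non-routine step, is establishing the uniform-in-$n$ lower bound $\PP(W \le -\delta) \ge p_0$: the tail lemmas of the paper are one-sided in the wrong direction for this, so one must either appeal to distributional convergence to GUE Tracy--Widom together with uniform integrability, or give a direct Brownian-LPP lower bound by exhibiting a family of deterministic-ish staircases whose energies have mean $2n$ and fluctuations of order $n^{1/3}$ with a non-degenerate lower tail (e.g. comparing with a single Brownian increment over a macroscopic interval). I expect the convergence-plus-uniform-integrability route to be the shortest to write, with $n_0$ absorbing the pre-asymptotic regime. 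Everything else — the reduction to $x=0$ via parabolic monotonicity, and the integral bound on $\E W_+$ — is immediate from results already in the excerpt.
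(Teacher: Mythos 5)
There is a genuine gap, and it sits exactly where the whole content of the proposition lies: establishing \emph{strict} negativity. Your ``self-contained'' decomposition $\E W \le \E[W_+] - \delta\,\PP(W\le-\delta)$ cannot close. The upper bound $\E[W_+]\le C''=\int_0^\infty C e^{-ct^{3/2}}\dd t$ involves the non-explicit constants of Lemma~\ref{l.onepointbounds} and may be much larger than the true value of $\E[W_+]$, while $\delta\,\PP(W\le-\delta)\le \E[W_-]$ for every $\delta$; so the inequality $\delta p_0 > C''$ you need would force $\E[W_-]>C''\ge\E[W_+]$, which is (more than) the statement being proved. Worse, even with the exact limiting law in hand, $\sup_\delta \delta\,\PP(X\le-\delta)$ can fall short of $\E[X_+]$ for a random variable with $\E X<0$ (e.g.\ $\PP(X\le -t)=\min(1,1/t)$ on $[0,e]$ and $0$ beyond has $\E X_+=1$, $\E X_-=2$, yet $\delta\PP(X\le-\delta)\le 1$ for all $\delta$), so the decomposition is not merely quantitatively weak but structurally insufficient. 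Your alternative route---convergence to Tracy--Widom plus uniform integrability---reduces the proposition to $\E\,\mathrm{TW}_{\rm GUE}<0$, which you assert without argument or citation; but that inequality is precisely what the paper derives as Corollary~\ref{c.negativemean} \emph{from} this proposition, so within the paper's logic you have assumed the conclusion. (An external citation exists, but you have not supplied one, and you have misidentified the hard step: the lower bound $\PP(W\le-\delta)\ge p_0$ is easy and, as above, does not suffice.)

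The paper's actual argument is entirely different and worth internalizing. Lemma~\ref{l.twoways} uses the Brownian Gibbs property of the regular ensemble containing the profile $x\mapsto\weight_n[(0,0)\to(x,1)]$ to show that $\E\big[\weight_n[(0,0)\to(x,1)]\vee\weight_n[(0,0)\to(x\pm1,1)]\big]\ge \E\,\weight_n[(0,0)\to(x,1)]+d_1$: conditionally on reasonable boundary data, the profile on a unit interval dominates a Brownian bridge, which has probability bounded below of rising by one unit at its midpoint. One then concatenates $K$ such two-point choices (with microscopic jumps between blocks) into a single $n$-zigzag of lifetime $[0,K]$, giving by superadditivity $\liminf_K \sup_z K^{-1}\weight_n[(0,0)\to(z,K)]\ge \tfrac{n}{n+1}(\E\,\weight_n[(0,0)\to(x,1)]+d_1)$. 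The left side is almost surely at most zero, since by the scaling principle it equals $K^{-2/3}\sup_z\weight_{nK}[(0,0)\to(z,1)]$ in law and the supremum is tight (converging to GOE Tracy--Widom). Hence $\E\,\weight_n[(0,0)\to(x,1)]\le -d_1\cdot n/(n+1)$. Note that only \emph{tightness} of the limit is used---no knowledge of its mean---which is what makes the argument non-circular; the extension from $x\in[-1/2,1/2]$ to $|x|\le 2^{-1}cn^{1/19}$ is then done via the near parabolic invariance of regular ensembles, in the spirit of (but more carefully than) your proposed reduction.
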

It is Proposition~\ref{p.negativemean} that we will later employ, but this result has the following interesting consequence.
\begin{corollary}\label{c.negativemean}
The mean of the GUE Tracy-Widom distribution is negative.
\end{corollary}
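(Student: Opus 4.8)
The plan is to deduce Corollary~\ref{c.negativemean} from Proposition~\ref{p.negativemean} via the known weak convergence of scaled Brownian LPP weight to the GUE Tracy--Widom distribution, together with a uniform integrability argument supplied by the one-point tail bounds already recorded in the excerpt. Concretely, set $x=0$ in Proposition~\ref{p.negativemean}: there exist $d>0$ and $n_0\in\N$ so that $\E\,\weight_n\big[(0,0)\to(0,1)\big]\le -d$ for all $n\ge n_0$. On the other hand, by~(\ref{e.weightfirst}) and the theorem of Baik--Deift--Johansson type for Brownian LPP (the limit $\weight_n\big[(0,0)\to(0,1)\big]\Rightarrow \mathrm{TW}_{\mathrm{GUE}}$ as $n\to\infty$, which underlies the whole scaled picture and is referenced in the introduction), the random variables $\weight_n\big[(0,0)\to(0,1)\big]$ converge in distribution to a random variable with the GUE Tracy--Widom law. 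If one can pass the inequality $\E\,\weight_n\le -d$ to the limit, one obtains that the mean of $\mathrm{TW}_{\mathrm{GUE}}$ is at most $-d<0$, which is the corollary.

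The key step is therefore to justify interchanging limit and expectation, i.e. to upgrade distributional convergence to convergence of means. For this I would invoke uniform integrability of the family $\big\{\weight_n\big[(0,0)\to(0,1)\big]:n\ge n_0\big\}$. This is immediate from Lemma~\ref{l.onepointbounds} applied with $x=y=0$: there the parabolic adjustment term $2^{-1/2}(y-x)^2 h_{1,2}^{-1}$ vanishes, so $\weight_n\big[(0,0)\to(0,1)\big]=\weight_n^\cup\big[(0,0)\to(0,1)\big]$ satisfies, for all $n\ge n_0$ and all $t\ge 0$,
$$
\PP\Big(\big|\weight_n\big[(0,0)\to(0,1)\big]\big|\ge t\Big)\le 2C\exp\{-ct^{3/2}\}\,.
$$
Since this bound is uniform in $n$ and has stretched-exponential decay, the family has uniformly bounded $(1+\epsilon)$-th moments for any $\epsilon>0$; in particular it is uniformly integrable, and the tails of the limiting law obey the same bound (by the portmanteau theorem applied to the closed sets $\{|x|\ge t\}$), so $\mathrm{TW}_{\mathrm{GUE}}$ is integrable as well. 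Uniform integrability plus convergence in distribution gives $\E\,\mathrm{TW}_{\mathrm{GUE}}=\lim_{n\to\infty}\E\,\weight_n\big[(0,0)\to(0,1)\big]\le -d$.

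The main obstacle, such as it is, is simply that the excerpt does not itself state the distributional limit $\weight_n\big[(0,0)\to(0,1)\big]\Rightarrow\mathrm{TW}_{\mathrm{GUE}}$ as a numbered result; it is treated as background (it is alluded to in the discussion around~(\ref{e.weightfirst}) and is the defining feature of the scaled coordinate system). So the proof of the corollary should cite the standard reference for this convergence for Brownian LPP (e.g. the work building on~\cite{BDJ1999}, \cite{Johansson2000}, \cite{GlynnWhitt}, \cite{O'ConnellYor}) rather than reprove it. A minor alternative, avoiding any appeal to an identified scaling limit, is to note that the one-point convergence to GUE Tracy--Widom is in any case a prerequisite for the very notion of ``GUE Tracy--Widom distribution'' appearing in the statement; once that is granted, the uniform-integrability step above is entirely routine. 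I expect the write-up to be only a few lines: quote the distributional limit, quote Lemma~\ref{l.onepointbounds} for uniform tails hence uniform integrability, quote Proposition~\ref{p.negativemean} at $x=0$, and conclude $\E\,\mathrm{TW}_{\mathrm{GUE}}\le -d<0$.
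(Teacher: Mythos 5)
Your proposal is correct and takes essentially the same route as the paper: the paper's proof simply records that $\weight_n\big[(0,0)\to(0,1)\big]$ converges in law to $2^{1/3}X$ with $X$ GUE Tracy--Widom (the positive factor $2^{1/3}$, which you omit, is harmless for the sign of the mean) and then invokes Proposition~\ref{p.negativemean}. Your explicit uniform-integrability step via Lemma~\ref{l.onepointbounds} supplies the interchange of limit and expectation that the paper leaves implicit.
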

{\bf Proof.} The limit in law as $n \to \infty$ of  $\weight_n \big[ (0,0) \to (0,1) \big]$ has the distribution of $2^{1/3}X$, where, by 
 \cite{TracyWidom,Baryshnikov}, $X$ has the GUE Tracy-Widom law.
 Thus the result follows from Proposition~\ref{p.negativemean}. \qed

The next result is the principal component of Proposition~\ref{p.negativemean}.

\begin{lemma}\label{l.twoways}
There exist $d_1 > 0$ and $n_0 \in \N$ such that, for $n \geq n_0$ and $\vert x \vert \leq 2^{-1} c n^{1/19}$, 
\begin{equation}\label{e.twoways}
\E \, \Big[  \weight_n \big[ (0,0) \to (x,1) \big] \vee  \weight_n \big[ (0,0) \to (x+1,1) \big]  \Big] \geq 
\E \, \weight_n \big[ (0,0) \to (x,1) \big] + d_1 
\end{equation}
provided that $x \leq 0$ while, if $x > 0$, the same inference holds when $x+1$ is replaced by $x-1$.
\end{lemma}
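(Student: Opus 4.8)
\textbf{Proof proposal for Lemma~\ref{l.twoways}.}

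The plan is to exploit the non-degeneracy of the routed weight profile, i.e.\ the fact that the two weights $\weight_n[(0,0)\to(x,1)]$ and $\weight_n[(0,0)\to(x+1,1)]$ do not coincide and differ, with positive probability uniformly in $n$ (and $x$ in the stated range), by a unit-order amount. If I can produce such a lower bound on $\PP\big(\weight_n[(0,0)\to(x+1,1)]\geq \weight_n[(0,0)\to(x,1)]+c_0\big)$ for some $c_0>0$, then on that event the maximum of the two weights exceeds $\weight_n[(0,0)\to(x,1)]$ by at least $c_0$, while on the complementary event the maximum is still at least $\weight_n[(0,0)\to(x,1)]$; taking expectations gives the claimed gap $d_1=c_0\cdot(\text{that probability})$, provided I also control the contribution when the difference is atypically negative. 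The case $x\leq 0$ (comparing $x$ with $x+1$) and $x>0$ (comparing $x$ with $x-1$) are symmetric under reflection of the Brownian environment, so it suffices to treat one; I will take $x\leq 0$.

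Here is the order of steps I would carry out. First, record a one-point window estimate: using Lemma~\ref{l.onepointbounds} (via the scaling principle / translation invariance, which lets me recentre to the origin), both $\weight^\cup_n[(0,0)\to(x,1)]$ and $\weight^\cup_n[(0,0)\to(x+1,1)]$ lie in a bounded interval $[-T,T]$ with probability at least $1-Ce^{-cT^{3/2}}$, uniformly in $n\geq n_0$ and $|x|\leq 2^{-1}cn^{1/19}$ (so $|x+1|$ is still within the Lemma's hypothesis after enlarging $n_0$). Also, since the parabolic corrections $2^{-1/2}x^2$ and $2^{-1/2}(x+1)^2$ differ by $2^{-1/2}(2x+1)$, which for $x\leq 0$ is $\leq 2^{-1/2}$, passing between $\weight$ and $\weight^\cup$ only shifts things by a bounded amount and does not affect a unit-order-gap argument. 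Second, produce the positive-probability unit-order separation: the cleanest route is the routed-weight-profile description of Lemma~\ref{l.routedprofile} together with the Brownianity input of Theorem~\ref{t.brownianroutedprofile} (or, more directly, its building block: the narrow-wedge weight profile $y\mapsto \weight_n[(0,0)\to(y,1)]$, suitably shifted and normalized, is comparable to Brownian motion of rate one by Theorem~\ref{t.debridge} applied through Proposition~\ref{p.shiftbrownian} / Lemma~\ref{l.shiftbrownian}). From this comparison, the probability that the increment of the profile over an interval of unit length (placed near $x$) is at least $c_0$ is bounded below by a positive constant, uniformly in $n$ and in $x$ within the stated locale; combining with the window estimate of Step~1 to restrict to the event where everything is of bounded order, I get $\PP(\Delta\geq c_0,\ |\weight_n[(0,0)\to(x,1)]|\leq T)\geq p_0>0$, where $\Delta=\weight_n[(0,0)\to(x+1,1)]-\weight_n[(0,0)\to(x,1)]$. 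Third, assemble: write
$$
\E\big[\weight_n[(0,0)\to(x,1)]\vee\weight_n[(0,0)\to(x+1,1)]\big]
=\E\,\weight_n[(0,0)\to(x,1)]+\E\,\Delta^+,
$$
and bound $\E\,\Delta^+\geq c_0 p_0$ by restricting the expectation to the good event of Step~2. Here I must make sure $\E\,\Delta^+$ is actually finite and that no large-deviation tail of $\Delta$ sneaks in with the wrong sign — but $\Delta^+\geq 0$ always, so only a crude upper tail bound on $\Delta$ (again from Lemma~\ref{l.onepointbounds} applied to the two weights separately) is needed to justify integrability, and the lower bound $\E\,\Delta^+\geq c_0 p_0=:d_1$ requires nothing beyond Step~2.

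The main obstacle I anticipate is Step~2: extracting a genuinely uniform-in-$n$ and uniform-in-$x$ lower bound on the probability of a unit-order increment of the narrow-wedge weight profile over a unit interval sitting at horizontal position $x$ that may be as large as $n^{1/19}$. The Brownian comparison results in the excerpt (Theorem~\ref{t.debridge}, and Theorem~\ref{t.brownianroutedprofile}) are phrased as upper bounds on probabilities of ``small'' Brownian events (the parameter $\eta$ must be small), so they are not immediately a two-sided comparison; to get a \emph{lower} bound on an order-one-probability event I would instead use the one-point lower- and upper-tail control of $(c,C)$-regularity (Proposition~\ref{p.shiftbrownian}, conditions $\mathrm{Reg}(2),\mathrm{Reg}(3)$) at the two locations $a^{-2/3}$-rescaled from $x$ and $x+1$: tightness of each marginal plus a decorrelation / anti-concentration argument for the difference. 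Concretely, I would condition on $\weight_n[(0,0)\to(x,1)]\in[-T,T]$ (positive probability, by regularity) and then show that, conditionally, $\weight_n[(0,0)\to(x+1,1)]$ is not too concentrated below $\weight_n[(0,0)\to(x,1)]+c_0$ — this is where a Brownian-Gibbs resampling or the explicit Brownian-bridge decomposition of the profile near $x$ does the work, and where the technical care lies. An alternative, possibly simpler, route avoiding anti-concentration entirely: pick three points $x,x+1,x+2$ (or $x-1,x,x+1$) and observe that $\weight_n[(0,0)\to(x,1)]$ cannot simultaneously be the strict maximum over its two unit-neighbours with probability one; by symmetry considerations and a union bound this forces a positive-probability unit-order gap in at least one of the two comparisons, which, combined with the reflection symmetry relating the $x\leq 0$ and $x>0$ statements, suffices. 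I would develop whichever of these is cleanest, but in all cases the crux is the quantitative, uniform lower bound on the separation probability.
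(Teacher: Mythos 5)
Your skeleton is the paper's: reduce the claim to a uniform lower bound $\PP(\Delta\geq c_0)\geq p_0>0$ for $\Delta=\weight_n[(0,0)\to(x+1,1)]-\weight_n[(0,0)\to(x,1)]$, then conclude via $\E[\,\cdot\vee\cdot\,]-\E\,\weight_n[(0,0)\to(x,1)]=\E\,\Delta^+\geq c_0p_0$. You also correctly diagnose that Theorems~\ref{t.debridge} and~\ref{t.brownianroutedprofile} are one-sided and cannot supply this lower bound. But the crux --- the quantitative, $n$-uniform separation probability --- is exactly what you do not prove: you offer two candidate routes and defer the work to ``where the technical care lies''. Your alternative route (three points and the observation that $\weight_n[(0,0)\to(x,1)]$ cannot a.s.\ strictly dominate both neighbours) is not viable: it gives no quantitative anti-concentration, and nothing rules out $\Delta$ concentrating near $0$ as $n\to\infty$ without a genuine resampling input. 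The paper's Claim~2 is the missing argument: define $N(D)=\{\mc{L}(1,x)\leq D\}\cap\{\mc{L}(1,x+2)\geq -D\}$ (positive probability by ${\rm Reg}(2)$--${\rm Reg}(3)$), condition on the endpoint values at $x$ and $x+2$ \emph{and} on the second curve, so that the top curve on $[x,x+2]$ is a Brownian bridge conditioned to stay above $\mc{L}(2,\cdot)$; the monotonicity of \cite[Lemma 2.18]{BrownianReg} discards the conditioning from below, and the unconditioned bridge midpoint exceeds $u+1$ with probability at least $\nu_{0,1/2}(D+1,\infty)$. Note the structural point you miss: one must control the profile at $x$ and at $x+2$ so that the compared point $x+1$ is \emph{interior} to the resampled interval; controlling only the two compared points, as in your Step~1, leaves nothing to resample.

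A second genuine problem is your treatment of large $|x|$. The difference of the parabolic corrections at $x$ and $x+1$ is $2^{-1/2}(2x+1)$, which is of order $|x|$ --- up to $n^{1/19}$ --- not ``a bounded amount''; and the event $N(D)$ with fixed $D$ has vanishing probability once $\weight_n[(0,0)\to(x,1)]\approx -2^{-1/2}x^2$ is far from $[-D,D]$. What saves the lemma is the one-sided monotonicity of the parabola in the direction of comparison (this is precisely why the statement moves to $x+1$ for $x\leq 0$ and to $x-1$ for $x>0$): for $x\leq -1/2$ both $2^{-1/2}x^2$ and $2^{-1/2}(x+1)^2$ are at most $2^{-1/2}x^2$, so $\weight^\cup_n\vee\weight^\cup_n\leq(\weight_n\vee\weight_n)+2^{-1/2}x^2$ and the gap for $\weight^\cup_n$ transfers to $\weight_n$ with no loss. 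The paper proves the core estimate only for $x\in[-1/2,1/2]$ and propagates it to $|x|\leq 2^{-1}cn^{1/19}$ via the near-parabolic-invariance result \cite[Lemma 2.26]{BrownianReg}; your proposal contains no mechanism for this transfer beyond asserting uniformity. Without (i) the Brownian Gibbs anti-concentration at the interior point and (ii) the parabolic transfer to large $|x|$, the proof is not complete.
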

{\bf Proof.} We will give a Brownian Gibbs argument. The random profile $x \to  \weight_n \big[ (0,0) \to (x,1) \big]$
is $(c,C,n+1)$-regular in the sense of Subsection~\ref{s.normalized} by Proposition~\ref{p.shiftbrownian}(1) (with ${\bf a} = 1$) therein.
We will write $\mc{L}$ for the $(n+1)$-curve regular ensemble  whose uppermost curve $\mc{L}(1,x)$ equals 
$\weight_n \big[ (0,0) \to (x,1) \big]$.

Suppose first that $x  \in [-1/2,1/2]$.
For $D > 0$, let $N = N(D)$ denote the event that the weight $\weight_n \big[ (0,0) \to (x,1) \big]$ is at most $D$, and that $\weight_n \big[ (0,0) \to (x+2,1) \big]$ is at least $-D$.  
 We make two claims.

{\em Claim~$1$.} There exist $D_1 > 0$, $d_2 \in (0,1)$ and $n_1 \in \N$ such that, when $n \geq n_1$,
it is with probability at least $d_2$ that $N(D_1)$ occurs.  

{\em Claim~$2$.} For $n \in \N$ and $D > 0$, the conditional probability of 
$$
\weight_n \big[ (0,0) \to (x+1,1) \big] \geq \weight_n \big[ (0,0) \to (x,1) \big]  + 1
$$  
given $N(D)$ is at least $\nu_{0,1/2} \big( D+1,\infty \big)$, where $\nu_{0,1/2}$ denotes the Gaussian law of mean zero and variance one-half. The same statement holds when the displayed left-hand side is replaced by  $\weight_n \big[ (0,0) \to (x-1,1) \big]$. 

{\em Proof of Claim~$1$.} 
The ensemble $\mc{L}$ satisfies ${\rm Reg}(2)$ and ${\rm Reg}(3)$ in Definition~\ref{d.regularsequence}, and from this, the result follows.

{\em Proof of Claim~$2$.} The event on which we condition in this claim is that  $\mc{L}(1,x) \leq D$ and $\mc{L}(1,x+2) \geq -D$. If we further condition on the value $(u,v)$  of $\big( \mc{L}(1,x) , \mc{L}(1,x+2) \big)$, and on the form $f$ of $\mc{L}(2,\cdot): [x,x+2] \to \R$, then the conditional distribution of $\mc{L}(1,\cdot): [x,x+2] \to \R$ is given by Brownian bridge $B:[x,x+2] \to \R$, with $B(x) = u$ and $B(x+2) = v$---whose law we label $\mc{B}_{u,v}^{[x,x+2]}$---conditioned on $B(z) \geq f(z)$ for $z \in [x,x+2]$.
Note that
$$
\mc{B}_{u,v}^{[0,2]} \big( B(1) \geq u + 1 \big\vert B > f \big) \geq 
\mc{B}_{u,v}^{[0,2]} \big( B(1) \geq u + 1  \big) \geq \mc{B}_{0,0}^{[0,2]} \big( B(1) \geq D + 1  \big) = \nu_{0,1/2} \big( D+1, \infty\big) \, , 
$$
the first inequality by the monotonicity offered in   \cite[Lemma 2.18]{BrownianReg} (a result originally proved in \cite{AiryLE}); the second by the affine scaling property of Brownian bridge; and the third by the law of the midpoint value of standard Brownian bridge. This completes the proof of the first assertion of Claim~$2$. The second assertion has an almost identical proof.

The two claims show that, for $x \in [-1/2,1/2]$, 
\begin{equation}\label{e.xplusone}
\E \, \Big[  \weight_n \big[ (0,0) \to (x,1) \big] \vee  \weight_n \big[ (0,0) \to (x+1,1) \big]  \Big] \geq 
\E \, \weight_n \big[ (0,0) \to (x,1) \big] + d_2 \, , 
\end{equation}
as well as the bound after we replace $x+1$ by $x-1$. 
This proves Lemma~\ref{l.twoways} in the case that $x \in [-1/2,1/2]$. By \cite[Lemma 2.26]{BrownianReg}---a tool of near parabolic invariance that propagates spatial information from unit-order to much broader intervals---we learn from~(\ref{e.xplusone}) that, for  $\vert x \vert \leq 2^{-1} c n^{1/19}$,
$$
\E \, \Big[  \weight^\cup_n \big[ (0,0) \to (x,1) \big] \vee  \weight^\cup_n \big[ (0,0) \to (x+1,1) \big]  \Big] \geq 
\E \, \weight^\cup_n \big[ (0,0) \to (x,1) \big] + d_2 \, , 
$$
where recall that $\weight^\cup_n \big[ (0,0) \to (x,1) \big]$ denotes the parabolically adjusted weight  $\weight_n \big[ (0,0) \to (x,1) \big] + 2^{-1/2}x^2$.
Subtracting $2^{-1/2}x^2$ yields~(\ref{e.twoways}) for $x \leq -1/2$. We obtain~(\ref{e.twoways}) for $x \geq 1/2$ by the same argument, with the role of~(\ref{e.xplusone})
 played by its counterpart where $x-1$ replaces $x+1$. This completes the proof of Lemma~\ref{l.twoways}. \qed

{\bf Proof of Proposition~\ref{p.negativemean}.} We will prove this result by showing that 
$\E \, \weight_n \big[ (0,0) \to (x,1) \big]$ is at most~$-d_1$, where Lemma~\ref{l.twoways} furnishes $d_1 > 0$. To verify this, suppose first that $x \leq 0$. We claim that
\begin{equation}\label{e.limitsuplhs}
 \liminf \, \,  \sup_{z \in \R} K^{-1} \weight_n \big[ (0,0) \to (z,K) \big] \, \geq \, \frac{n}{n+1} \Big( \E \, \weight_n \big[ (0,0) \to (z,1) \big] + d_1 \Big) \, ,
\end{equation}
where the limit infimum is taken as $K \to \infty$ through $K \in \N$. This claim is substantiated by constructing an $n$-zigzag that begins at $(0,0)$. It travels first either to $(x,1)$ or to $(x+1,1)$, the choice being made so that more weight is captured along the way. After arrival, the zigzag makes an immediate microscopic jump, moving by $\big(-2 n^{-2/3},n^{-1} \big)$.
Zigzag formation continues as if the point of arrival plays the role that $(0,0)$ did at the outset. That is, the zigzag continues by travelling to one of the points whose displacement from its present location is $(x,1)$ or $(x+1,1)$, the selection made to maximize weight; then a further microscopic jump is made; and the process iterates indefinitely. 
If an arbitrarily small constant is subtracted from the right-hand side of~(\ref{e.limitsuplhs}), the left-hand supremum is seen to exceed the right-hand side for all sufficiently high $K$. 
Thus, we obtain~(\ref{e.limitsuplhs}).

However, the left-hand side of~(\ref{e.limitsuplhs}) is at most zero almost surely.
Indeed, by the scaling principle, $\sup_{z \in \R} K^{-1} \weight_n \big[ (0,0) \to (z,K) \big]$ equals $K^{-2/3} \sup_{z \in \R}  \weight_{nK} \big[ (0,0) \to (z,1) \big]$ in law; and the latter supremum converges as $K \to \infty$ in law to the Tracy-Widom $GOE$ distribution~$\nu$, because the process $z \to  \weight_m \big[ (0,0) \to (z,1) \big]$ converges in law as $m \to \infty$ in a compact uniform topology to the parabolic Airy process, whose maximum has the law $\nu$ (see e.g. \cite{BFPS07}). 

Thus,  the mean of $\weight_n \big[ (0,0) \to (z,1) \big]$ is at most $- d_1$. This completes the proof of Proposition~\ref{p.negativemean}. \qed 

\subsection{Two tools}

\subsubsection{Gaussian increments for weight profiles}

For a shortly upcoming use, we record a result bounding the tail of increments for the weight of polymers subject to horizontal endpoint perturbation.
The result is better expressed using parabolically adjusted weight, so that a slope arising from a difference of parabolas is eliminated and much higher choices of horizontal endpoint discrepancy may be treated. 
The parabolic weight notation $\weight^\cup_n$ was specified in Subsection~\ref{s.parabolicweight}. We now specify a variant notation in order to describe {\em differences}
in parabolic weight.  
Let $(x_1,x_2)$ and $(y_1,y_2)$ belong to $\R^2_\leq$.
The {\em parabolically adjusted} weight difference
$$
 \Delta^{\cup} \, \weight_n \big[ ( \{ x_1,x_2\} ,s_1) \to ( \{y_1,y_2\},s_2) \big] 
$$
denotes 
$$
\bigg( \weight_n \big[ (x_2,s_1) \to (y_2,s_2) \big] + 2^{-1/2} \frac{(y_2 - x_2)^2}{s_2-s_1} \bigg) \, - \, \bigg( \weight_n \big[ (x_1,s_1) \to (y_1,s_2)  \big] +  2^{-1/2} \frac{(y_1 - x_1)^2}{s_2-s_1} \bigg) \, ,
 $$
\begin{proposition}\label{p.fluc}
Positive constants $C$ and $c$ exist for which the following holds. Let $a \in (0,2^{-4}]$. 
Let $(n,s_1,s_2) \in \N \times \R_\leq^2$ be a compatible triple for which 
$n \tot \geq 10^{32} c^{-18}$ and let $x,y \in \R$ satisfy  $\big\vert x - y  \big\vert \tot^{-2/3} \leq 2^{-2} 3^{-1} \rsc  (n \tot)^{1/18}$.
Let 
 $K \in \big[10^4 \, , \,   10^3 (n \tot)^{1/18} \big]$.
Then
$$
\PP \, \left( \, \sup_{\begin{subarray}{c} x_1,x_2 \in [x,x+a\tot^{2/3}] \, , \, x_1 < x_2 \\
    y_1,y_2 \in [y,y+a\tot^{2/3}] \, , \, y_1 < y_2 \end{subarray}}  \Big\vert  \Delta^{\cup} \, \weight_n \big[ ( \{ x_1,x_2\} ,s_1) \to ( \{y_1,y_2\},s_2) \big]  \Big\vert \, \geq \, K a^{1/2} \tot^{1/3} \, \right) 
$$
is at most 
$10032 \, C \exp \big\{ - c 2^{-24} K^2 \big\}$.
\end{proposition}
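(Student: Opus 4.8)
The plan is to deduce Proposition~\ref{p.fluc} from the modulus-of-continuity estimate for parabolically adjusted weight in \cite[Theorem~$1.1$]{ModCon} via the scaling principle of Section~\ref{s.scalingprinciple}, reducing everything to the case $\tot = 1$, $s_1 = 0$, $s_2 = 1$. Indeed, under the scaling principle a statement about the system of $n$-zigzags is equivalent to the corresponding statement about $n\tot$-zigzags, with times multiplied by $\tot^{-1}$, weights by $\tot^{1/3}$, and horizontal distances by $\tot^{-2/3}$. Applying this, the random variable $\tot^{-1/3} \Delta^\cup \weight_n \big[ (\{x_1,x_2\},s_1) \to (\{y_1,y_2\},s_2) \big]$ becomes the corresponding parabolically adjusted weight difference for $(n\tot)$-zigzags with endpoints $x_i \tot^{-2/3}$ and $y_i \tot^{-2/3}$, all lying (after recentring) in intervals of length $a$ whose left endpoints are $x\tot^{-2/3}$ and $y\tot^{-2/3}$. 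Thus it suffices to prove the $\tot = 1$ version: with $m = n\tot$ in place of $n$, and with the horizontal separation hypothesis $\big\vert x - y\big\vert \leq 2^{-2} 3^{-1} \rsc \, m^{1/18}$, the probability that the supremum of $\big\vert \Delta^\cup \weight_m \big[ (\{x_1,x_2\},0) \to (\{y_1,y_2\},1) \big] \big\vert$ over $x_1, x_2 \in [x, x+a]$ and $y_1, y_2 \in [y, y+a]$ exceeds $K a^{1/2}$ is at most $10032 \, C \exp\{ -c 2^{-24} K^2 \}$.

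Next I would set up the application of \cite[Theorem~$1.1$]{ModCon}. That result, in boldface notation, comes with parameters ${\bf n}$, ${\bf x}$, ${\bf y}$, ${\bm \e}$, ${\bf R}$: it bounds the probability that the supremum of the difference of $\weight^\cup_{\bf n}\big[(u_1,0)\to(v_1,1)\big] - \weight^\cup_{\bf n}\big[(u_2,0)\to(v_2,1)\big]$ over $u_1,u_2 \in [{\bf x}, {\bf x}+{\bm \e}]$ and $v_1,v_2 \in [{\bf y}, {\bf y}+{\bm \e}]$ exceeds ${\bm \e}^{1/2} {\bf R}$ by an expression of the form (an absolute constant times) $C \exp\{ -c {\bf R}^2 \}$, valid under upper-bound conditions on ${\bm \e}$, ${\bf R}$, and on $\vert {\bf x} - {\bf y}\vert$ in terms of ${\bf n}^{1/18}$. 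I would choose ${\bf n} = m$, ${\bf x} = x$, ${\bf y} = y$, ${\bm \e} = a$, and ${\bf R}$ chosen so that ${\bm \e}^{1/2}{\bf R} = K a^{1/2}$, i.e.\ ${\bf R} = K$. The hypotheses to verify are exactly: (i) ${\bm \e} = a \leq 2^{-4}$, which is the standing hypothesis $a \in (0, 2^{-4}]$; (ii) ${\bf n} = m = n\tot \geq 10^{32} c^{-18}$, a lower bound on $m$ which is the hypothesis $n\tot \geq 10^{32} c^{-18}$ and which matches (up to the explicit constants carried in from \cite{ModCon}) the $m \geq m_0$ requirement of that theorem; (iii) $\vert {\bf x} - {\bf y}\vert = \vert x - y\vert \tot^{-2/3} \leq 2^{-2}3^{-1}\rsc \, m^{1/18}$, which is precisely the separation hypothesis; and (iv) ${\bf R} = K \in [10^4, 10^3 m^{1/18}]$, which is the hypothesis on $K$. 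With these choices, \cite[Theorem~$1.1$]{ModCon} directly yields the probability bound $10032 \, C \exp\{ -c 2^{-24} K^2 \}$, where the numerical factor $10032$ and the factor $2^{-24}$ absorb the explicit absolute constants appearing in the statement of \cite[Theorem~$1.1$]{ModCon} (these are the same constants tracked in the applications of that theorem in the proof of Lemma~\ref{l.twoerrors}(1)).

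The only genuine subtlety is a matching-of-definitions check: the quantity $\Delta^\cup \weight_n[\cdots]$ defined just before Proposition~\ref{p.fluc} is the difference of two parabolically adjusted weights $\weight_n[(x_2,s_1)\to(y_2,s_2)] + 2^{-1/2}(y_2-x_2)^2/\tot$ and $\weight_n[(x_1,s_1)\to(y_1,s_2)] + 2^{-1/2}(y_1-x_1)^2/\tot$, and I must confirm that, under the scaling principle, the parabolic adjustment term transforms correctly into the $\weight^\cup_m$ adjustment $2^{-1/2}(v_i - u_i)^2$ appearing in \cite[Theorem~$1.1$]{ModCon} with $u_i = x_i \tot^{-2/3}$, $v_i = y_i \tot^{-2/3}$; this is a direct computation since $(y_i - x_i)^2/\tot = \tot^{1/3} \cdot (v_i - u_i)^2$, consistent with the weight scaling factor $\tot^{1/3}$. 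I expect this to be the main (though still routine) obstacle: one must be careful that the $\Delta^\cup$ notation's two separate parabolic shifts, one for each of the two routes, are exactly what is needed so that the difference is a difference of genuine $\weight^\cup$ values, which is the form \cite[Theorem~$1.1$]{ModCon} controls. Once this bookkeeping is confirmed, the proof consists of invoking the scaling principle to reduce to $\tot = 1$, then quoting \cite[Theorem~$1.1$]{ModCon} with the parameter assignments above, and noting that the four displayed hypotheses of Proposition~\ref{p.fluc} are precisely what that theorem requires.
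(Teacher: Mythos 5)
Your proposal matches the paper's proof essentially verbatim: the paper also reduces to the special case $s_1=0$, $s_2=1$ via the scaling principle and then quotes \cite[Theorem~$1.1$]{ModCon}, with the parenthetical constant-matching you describe (the paper records the quoted bound as $10032\,C\exp\{-c_1 2^{-21}R^{3/2}\}$ with $c_1 = 2^{-5/2}c\wedge 1/8$ rather than the $\exp\{-c{\bf R}^2\}$ form you recalled, but this is only a bookkeeping difference in how the constants are absorbed). Your checks of the four hypotheses and of how the parabolic adjustment transforms under scaling are exactly the routine verifications the paper leaves implicit.
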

{\bf Proof.}
The special case that $s_1 = 0$ and $s_2=1$ is implied by \cite[Theorem $1.1$]{ModCon}.
(The upper bound in the latter result is $10032 \, C  \exp \big\{ - c_1 2^{-21}   R^{3/2}   \big\}$. But $c_1 = 2^{-5/2} c \wedge 1/8$, where $c > 0$ is a constant that is at most one, so that 
 we obtain the upper bound in Proposition~\ref{p.fluc}.)  
 The scaling principle from Section~\ref{s.scalingprinciple}
then yields the proposition from this special case. \qed

\subsubsection{{A control on weight that is uniform as endpoints vary.}}\label{s.nosubpath}
Here we record a consequence of Proposition~\ref{p.onepoint}. Recall that we used $\low(\zeta,\ell,L,M)$ to denote the event that 
$$
 \tot^{-1/3} \weight^\cup_n \big[  (x,s_1) \to  (y, s_2) \big]  
$$ 
is less than $-\zeta$ for some pair $(x,s_1), (y,s_2) \in \R  \times  n^{-1}\Z \cap [0,1]$ with $|x| \vee |y| \le M$, $\vert x - y \vert \leq 2^{-2\ell/3} L$ and $s_{1,2} \in (2^{-\ell-1},2^{-\ell}]$. Similarly, $\high(\zeta,\ell,L,M)$ denotes the event that the displayed quantity exceeds~$\zeta$  for some such pair. 
For $M=n^{1/20}$ and $L = (n 2^{-\ell})^{1/47}$, specify the {\em uniform boundedness} event   
 $$
 {\uni}_n(\zeta)
 \, = \, \bigcap_{\ell}\Big( \neg\low\big(\zeta,\ell,L,M\big) \cap  \neg \high\big(\zeta,\ell,L,M\big) \Big) \, ,
 $$ where the intersection ranges over $\ell$ such that $\zeta^{20} n^{-1} \le 2^{-\ell}\le 1$.

From Proposition~\ref{p.onepoint} and a union bound, it follows that, for  $\zeta\geq \Theta\big( (\log n)^{2/3} \big)$,
\begin{equation}\label{uniformbound}\P\big({\uni}_n(\zeta)\big) \geq 1-e^{-\Theta(1)\zeta^{3/2}}.
\end{equation}

\subsection{Excursions constrained at given heights are uncompetitive}\label{s.excursions-uncomp}

 Let $r \in \N$, and let $\kappa > 0$
satisfy 
\begin{equation}\label{e.rkappa}
\kappa^{3/2} \in r^{-1} \N \, \,  \textrm{and} \, \, \kappa^{-3/2} \in \N \, .
\end{equation}
The parameter $\kappa$ will be positive but small, and these conditions are then ensured if need be by slight adjustment to its value. They ensure that an $r$-zigzag~$\phi$ of lifetime $[0,1]$ begins and ends at moments that are multiples of~$\kappa^{3/2}$, and that every intervening such multiple, being an element of $r^{-1}\N$, is the vertical coordinate of a horizontal interval in $\phi$. 
Further  let $b \in (0,1)$.  
A {\em segment} is a horizontal planar line segment of length $b\kappa$ whose height is an integer multiple of $\kappa^{3/2}$. The role of $b$, which will be taken to be a small enough absolute constant, is elucidated in the discussion following the statement of Proposition \ref{p.closelow}.

Let $\chi \in (0,1)$. A {\em plentiful segment collection} is a set of segments that numbers at least $(1-\chi) \kappa^{-3/2}$ whose elements have distinct heights that include $0$ and $1$ and that belong to $[-r^{1/20},r^{1/20}] \times [0,1]$.
Let $\mc{C}$ denote the set of plentiful segment collections.

Let $\mathsf{c} \in \mc{C}$. A $\mathsf{c}$-path is an $r$-zigzag 
from an element $(z_1,0)$ in the lowest of $\mathsf{c}$'s segments to an element $(z_2,1)$ in its highest 
that intersects every segment in $\mathsf{c}$. Let 
$\weight_r \big[ \mathsf{c}\textrm{-path} \big]$
denote the supremum of the weights of $\mathsf{c}$-paths. 
\begin{proposition}\label{p.closelow}
There exist positive parameters $\kappa_0$, $b$, $d_1$, $d_2$,  and $\chi_0 \in (0,1/2]$ such that,  if $r \in \N$ and $\kappa \in (0,\kappa_0)$
satisfy (\ref{e.rkappa}); if $\chi \in (0,\chi_0)$ satisfies $\chi \geq 2\kappa^{3/2}$ and $2\chi \kappa^{-3/2} \in \N$; 
 and if $\mathsf{c} \in \mc{C}$; then
$$
\PP \Big( 
 \weight_r \big[ \mathsf{c}\textrm{-path} \big] \geq - d_1 \kappa^{-1}  \Big) \leq \exp \big\{ - d_2 \kappa^{-3/2} \big\} \, .
$$
\end{proposition}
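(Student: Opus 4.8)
The strategy is to show that any $\mathsf{c}$-path is forced to traverse roughly $(1-\chi)\kappa^{-3/2}$ horizontally separated segments, spending on each a vertical lifetime of order $\kappa^{3/2}$ but enduring a horizontal excursion (relative to the straight interpolation of its own endpoints) that is confined to width $b\kappa$, i.e. a factor of $b$ below the characteristic KPZ width $\kappa$. By the scaling principle from Section~\ref{s.scalingprinciple}, each such confined passage is equivalent to a unit-order polymer weight problem after rescaling, and the key probabilistic input is that a weight profile whose horizontal increment is artificially compressed by a factor $b$ incurs a negative drift in its mean: this is where Proposition~\ref{p.negativemean} (the negative mean of the GUE Tracy-Widom law) enters. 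Concretely, the total weight of a $\mathsf{c}$-path decomposes, after choosing consecutive segment heights $t_0=0<t_1<\cdots<t_K=1$ with $K\ge(1-\chi)\kappa^{-3/2}$, as the telescoping sum of the weights $\weight_r[(u_i,t_i)\to(u_{i+1},t_{i+1})]$ of its restrictions between consecutive visited segments, and each term's lifetime is at least $\kappa^{3/2}$ while the horizontal endpoints both lie in intervals of length $b\kappa$. I would first reduce, via the scaling principle, each of these restricted weights $\tot^{-1/3}\weight_r[\cdots]$ to a normalized quantity of the form $\weight_{m}[(x,0)\to(x+v,1)]$ with $m\asymp r\tot$ large and $|v|\le \Theta(b)$, plus a parabolic correction that is negligible when $b$ is small.

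\textbf{Key steps.} First, set up the decomposition: fix $\mathsf{c}\in\mc{C}$, let $t_0<\cdots<t_K$ be the heights of its segments (at least $(1-\chi)\kappa^{-3/2}$ of them, with $t_0=0$, $t_K=1$), and note $t_{i+1}-t_i\ge\kappa^{3/2}$ with total $1$, so most gaps equal $\kappa^{3/2}$. Second, for each $i$ let $J_i$ and $J_{i+1}$ be the length-$b\kappa$ horizontal intervals carrying the $i$-th and $(i+1)$-st segments; any $\mathsf{c}$-path restricted to $[t_i,t_{i+1}]$ is a zigzag from a point of $J_i\times\{t_i\}$ to a point of $J_{i+1}\times\{t_{i+1}\}$, so its weight is at most $\sup\weight_r[(x,t_i)\to(y,t_{i+1})]$ over $x\in J_i$, $y\in J_{i+1}$. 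Third, bound this supremum: using Proposition~\ref{p.fluc} (Gaussian increments, applied via the scaling principle with the interval-length parameter $\mathbf{a}=\Theta(b)$) the fluctuation of the parabolically adjusted weight over these compact endpoint windows is $O_{\PP}(b^{1/2}\tot^{1/3})$; combined with the one-point upper tail Lemma~\ref{l.onepointbounds}(1) this gives, for a suitable constant $d_1>0$, that $\PP(\tot^{-1/3}\weight_r[J_i\to J_{i+1}\ \text{passage}]\ge -2d_1)$ is at most some $p=p(b)<1$ which we can make as small as we like by taking $b$ small --- this is the step that ultimately rests on Proposition~\ref{p.negativemean}, since the mean of the rescaled passage weight is genuinely negative and the compression forces it to stay near that negative value rather than exploring to positive values. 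Fourth, the events for disjoint $[t_i,t_{i+1}]$ strips are measurable with respect to Brownian randomness in disjoint horizontal slabs, hence independent; a union bound over the (at most polynomially many, in $\kappa^{-1}$) choices of segment-height configurations and endpoint windows, followed by a large-deviation estimate for the sum of $\ge(1-\chi)\kappa^{-3/2}$ independent contributions each $<-2d_1$ except on probability $p$, yields that $\weight_r[\mathsf{c}\text{-path}]\ge -d_1\kappa^{-1}$ has probability at most $\exp\{-d_2\kappa^{-3/2}\}$, provided $\chi<\chi_0$ and $b<b_0$ are small enough that $(1-\chi)d_1 > $ the error terms and that $p$ is below the Cramér threshold. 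The uniform-boundedness event $\uni_r(\zeta)$ of Subsection~\ref{s.nosubpath}, with $\zeta=\Theta(\kappa^{-1})$, is used to control the few strips of atypically large lifetime and the contribution of any $\mathsf{c}$-path segments that are not slender.

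\textbf{Main obstacle.} The delicate point is extracting the quantitative negative-drift bound $\PP(\text{rescaled confined passage weight}\ge -2d_1)\le p(b)$ with $p(b)\to 0$ as $b\to 0$, uniformly in the (large) scaling parameter $m\asymp r\tot$ and in the endpoint windows. Proposition~\ref{p.negativemean} gives a negative mean for the \emph{unconstrained} one-point weight, but here we need that the \emph{supremum} over a width-$b$ window of horizontal endpoints also has uniformly negative mean once $b$ is small; this requires combining the one-point mean bound with the Gaussian-increment control of Proposition~\ref{p.fluc} to show the supremum exceeds the one-point value by only $O(b^{1/2})$ in expectation, and then upgrading an expectation bound to a probability bound via a one-sided tail estimate (again Lemma~\ref{l.onepointbounds}(1) plus Proposition~\ref{p.fluc}). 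Interlocking the choices of $b$, $\chi_0$, $\kappa_0$, $d_1$ and $d_2$ so that the per-strip failure probability $p$ is below the level needed for the Cramér-type large deviation over $\Theta(\kappa^{-3/2})$ strips, while the union-bound entropy (polynomial in $\kappa^{-1}$) is absorbed, is the bookkeeping heart of the argument; the exponent $\kappa^{-3/2}$ in the conclusion is exactly the number of independent slender passages, so no better rate is possible by this route, which is consistent with the statement.
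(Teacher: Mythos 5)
Your skeleton matches the paper's: split the $\mathsf{c}$-path at the segment heights, treat each confined passage of lifetime $\kappa^{3/2}$ as an independent unit-order weight with negative mean after rescaling (this is where Proposition~\ref{p.negativemean} enters), and conclude by concentration over $\Theta(\kappa^{-3/2})$ strips. But there is a genuine gap in the central quantitative step. You claim the per-strip failure probability $p(b)=\PP\big(\tot^{-1/3}\weight_r[J_i\to J_{i+1}]\ge -2d_1\big)$ "can be made as small as we like by taking $b$ small," and you then run a Cram\'er-type large deviation on the Bernoulli indicators of per-strip failure. This is false: as $b\to 0$ the window-supremum converges to a fixed point-to-point normalized weight, a unit-order random variable (asymptotically a shifted Tracy--Widom law) whose mean is $-d<0$ but which exceeds $-2d_1$ with probability bounded \emph{below} by a constant independent of $b$. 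Shrinking $b$ only controls the amount by which the supremum over the window inflates the \emph{mean} (by $O(b^{1/2})$, via Proposition~\ref{p.fluc}); it does not shrink the intrinsic fluctuation. Moreover, even granting a small $p$, a Bernoulli large deviation does not bound the sum of unbounded variables, since on the failure event the weight can be large and positive. The correct mechanism — which is what the paper's Proposition~\ref{p.weightuseful} implements — is to keep the actual weights: each strip variable has mean $\le -d+O(b^{1/2})\le -d/2$ and a sub-exponential (stretched-exponential $3/2$) upper tail by Lemma~\ref{l.onepointbounds}, so a Bernstein-type inequality (Proposition~\ref{p.Vershynin}(1)) applied to the centred sum yields $\sum_i X_i\le -\Theta(1)\kappa^{-3/2}$ with probability $1-e^{-\Theta(\kappa^{-3/2})}$. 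Your "Main obstacle" paragraph gestures at the mean bound, but the concentration step as you describe it does not close.

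A second, smaller gap concerns the heights missing from $\mathsf{c}$: up to $\chi\kappa^{-3/2}$ of them, so the path is unconstrained over strips of total lifetime up to $2\chi$ and can accrue positive weight there of order $\chi^{1/3}$ per concatenated block. Your plan to control this via the event $\uni_r(\zeta)$ does not give what is needed: with $\zeta=\Theta(1)$ the failure probability of $\uni$ is only a constant (far from $e^{-\Theta(\kappa^{-3/2})}$), while with $\zeta=\Theta(\kappa^{-1})$ the resulting deterministic bound on the summed gap weights exceeds $d_1\kappa^{-1}$. The paper instead exploits superadditivity (its Lemma~\ref{l.weighto}): the sum of the point-to-point weights over all long gaps is stochastically dominated by a \emph{single} polymer weight of lifetime at most $2\chi$, whose upper tail at level $2^{-1}d_1\kappa^{-1}$ is $\exp\{-\Theta(\chi^{-1/2}d_1^{3/2}\kappa^{-3/2})\}$ by Lemma~\ref{l.onepointbounds}(1). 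Also note that no union bound over "segment-height configurations" is needed inside the proposition, since $\mathsf{c}$ is fixed; that entropy (which is exponential, not polynomial, in $\kappa^{-3/2}$) is paid later, in Lemma~\ref{l.highslenderweight}.
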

To derive this result, our task is to show that  typically  
 $\weight_r \big[ \mathsf{c}\textrm{-path} \big]$ is a large negative number. To argue this, 
let $\psi$ be a $\mathsf{c}$-path. 
A value of the form $j \kappa^{3/2}$ for $j \in \llbracket 0, \kappa^{-3/2} - 1 \rrbracket$ is said to be $\psi$-{\em useful}---but we will simply say `useful'---if  
$\psi$  intersects a segment in $\mathsf{c}$ of height $j \kappa^{3/2}$ and another of height $(j+1) \kappa^{3/2}$.
We will write  $\weight_r(\psi)$ in the form $W_u(\psi) + W_o(\psi)$ in a sense that we now explain. We divide~$\psi$ into sub-zigzags by splitting at points of departure of~$\psi$ from levels  that are integer multiples of~$\kappa^{3/2}$ lying in $(0,1)$. Each sub-zigzag is called {\em useful}, or {\em otherwise}, according to whether the height of its starting point is useful or not. Then $W_u(\psi)$ and $W_o(\psi)$ are the respective sums of the weights of the useful, or otherwise, sub-zigzags. 
We aim to carry out the needed task by finding bounds on the upper tail of the two right-hand terms in the inequality
\begin{equation}\label{e.supweightuo}
  \weight_r \big[ \mathsf{c}\textrm{-path} \big] \leq \sup \big\{ W_u(\psi) : \psi \, \, \textrm{a $\mathsf{c}$-path} \big\} \, + \,
  \sup \big\{ W_o(\psi) : \psi \, \, \textrm{a $\mathsf{c}$-path} \big\} \, .
\end{equation}
We will first analyse the useful weight sum supremum $\sup \big\{ W_u(\psi) : \psi \, \, \textrm{a $\mathsf{c}$-path} \big\}$; and  then do likewise for the otherwise counterpart.
The resulting bounds will then permit a quick proof of  Proposition~\ref{p.closelow}. 

Analysing the useful sum is the principal component in the proof of this proposition, and a few words in summary of this analysis will, we hope, be helpful. There are two elements: we will show that weights of the sub-zigzags that contribute to $W_u(\psi)$ have negative mean; and we will then appeal to concentration inequalities for sums of independent random variables.

Regarding the first element, it follows from Proposition \ref{p.negativemean} and the scaling principle that the weight of a useful sub-zigzag with {\em fixed} endpoints has mean at most $-d_1\kappa^{1/2}.$ However, the endpoints of useful sub-zigzags are not fixed, but in fact vary over horizontal segments of length $b\kappa$. 
An effect of Brownian oscillation for polymer weight that will be controlled in Lemma~\ref{errorcontr2} causes our upper bound on the mean to rise by an order of $(b\kappa)^{1/2}$.
 It is at this moment that we will select the value of $b > 0$. By choosing this parameter to be small enough,  the mean supremum weight of useful sub-zigzags 
 traversing between vertically consecutive elements of $\mathsf{c}$ will be shown to be at most $-\frac{d_1}{2}\kappa^{1/2}.$  

In consecutive subsections, we analyse the useful sum; and the otherwise sum; and give the proof of  Proposition~\ref{p.closelow}. 

\subsubsection{The useful sum}

\begin{definition}\label{d.yij}
Let $I$ and $J$ be compact intervals in $[- r^{1/20},r^{1/20}]$ of length $b \kappa$. Set
$$
Y_{I,J} =  \kappa^{-1/2} \sup_{\begin{subarray}{c} u \in I,  v \in J  \end{subarray}}   \weight_r \big[ (u,0) \to (v,\kappa^{3/2}) \big]  \, . 
$$
\end{definition}
Let $\mc{U}$ denote the set of useful values. 
A useful sub-zigzag starts in  a segment belonging to $\mathsf{c}$ and intersects that segment only at this starting point. Its ending point is $\kappa^{3/2}$ higher than its starting point. The law of the weight of a useful sub-zigzag with given starting height is at most the supremum of weights of $r$-zigzags
that begin and end in two given segments and whose lifetime has duration~$\kappa^{3/2}$. Since the sub-zigzag immediately departs from its starting height, the weights of distinct sub-zigzags in our partition are independent. Thus, we find that the useful weight sum supremum
  \begin{equation}\label{e.usefulweightsum}
 \sup \big\{ W_u(\psi) : \psi \, \, \textrm{a $\mathsf{c}$-path} \big\} \, \,  \textrm{is stochastically dominated by} \, \,  \sum_{i=1}^{\vert \mc{U} \vert} U_i \, ,
\end{equation}  
   where the  latter quantity is a  sum of independent random variables, $U_i$ having the law of $\kappa^{1/2}Y_{I,J}$, where $Y_{I,J}$ has been specified in Definition~\ref{d.yij},  and where the pair $(I,J)$ satisfies  the hypothesis in that definition. 

The next result is our conclusion regarding the useful sum; indeed, with $U_i = \kappa^{1/2} X_i$, it will permit analysis of the right-hand sum in~(\ref{e.usefulweightsum}).
\begin{proposition}\label{p.weightuseful}
 Let $j \in \N$ satisfy $\kappa^{-3/2}/2 \leq j \leq \kappa^{-3/2}$. Let $\big\{ X_i: i \in \intint{j} \big\}$ be an independent sequence of random variables, where  $X_i$ has the law of $ Y_{I,J}$ for a possibly $i$-dependent pair $(I,J)$ satisfying the hypothesis in Definition~\ref{d.yij}.
Then
$$
 \PP \bigg( \sum_{i=1}^j X_i \geq -
 \Theta(1)
 \kappa^{-3/2} \bigg) \leq \exp \big\{ - 
 \Theta(1)
  \kappa^{-3/2} \big\} \, .
$$ 
\end{proposition}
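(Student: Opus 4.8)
The plan is to combine a mean estimate with a concentration-of-measure argument for sums of independent variables with light (one-sided) tails. First I would establish a uniform upper bound on the mean $\E Y_{I,J}$, of the form $\E Y_{I,J} \leq - d_1/2$ for some absolute $d_1 > 0$, valid for every pair of intervals $(I,J)$ of length $b\kappa$ inside $[-r^{1/20},r^{1/20}]$, provided $\kappa$ and $b$ are small. The starting point is Proposition~\ref{p.negativemean}, which via the scaling principle of Subsection~\ref{s.scalingprinciple} shows that, for \emph{fixed} endpoints $u,v$ with $|u-v|$ of order $\kappa$, the quantity $\kappa^{-1/2}\weight_r\big[(u,0)\to(v,\kappa^{3/2})\big]$ has mean at most $-d$ (after removing the parabolic term, which is of order $\kappa^2 \cdot \kappa^{-2} = \Theta(1)$ times a harmless shift and in fact $O(b^2)$ here). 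The passage from fixed endpoints to the supremum over $u\in I$, $v\in J$ is the place where $b$ enters: by Proposition~\ref{p.fluc} (with $\mathbf{a}$ of order $b$ and summed/integrated over the threshold $K$), the expected oscillation of $\Delta^\cup\weight$ over endpoint intervals of length $b\kappa$ is at most $\Theta(1)(b\kappa)^{1/2}\cdot \kappa^{1/3}$-type, i.e.\ after the $\kappa^{-1/2}$ normalization it contributes $\Theta(b^{1/2})$ to the mean. Choosing $b$ small enough that this is at most $d/2$ gives $\E Y_{I,J} \leq -d/2 =: -2c_0$ uniformly. I would package this as a lemma (the ``useful mean'' lemma); the paper signals exactly this with the forward reference to ``Lemma~\ref{errorcontr2}''.

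Next I would control the upper tail of $Y_{I,J}$ so as to apply a Bernstein/Hoeffding-type bound. A supremum over an interval of weights is handled by a one-point upper tail (Lemma~\ref{l.onepointbounds}(1) via the scaling principle) combined with a discretization and a modulus-of-continuity estimate (again Proposition~\ref{p.fluc} gives the fluctuation over the mesh): this yields $\PP(Y_{I,J} \geq t) \leq C\exp\{-c t^{3/2}\}$ for $t \geq \Theta(1)$, uniformly in the admissible $(I,J)$. In particular $Y_{I,J}$, centred, is a sub-exponential random variable with uniformly bounded Orlicz norm (its upper tail is even lighter, stretched-exponential with exponent $3/2$; the lower tail is irrelevant since we only need an upper bound on $\sum X_i$). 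Writing $X_i = \E X_i + (X_i - \E X_i)$ and using $\E X_i \leq -2c_0$ for each of the $j \geq \kappa^{-3/2}/2$ terms, we get
\[
\PP\Big(\sum_{i=1}^j X_i \geq - c_0 \kappa^{-3/2}\Big) \;\leq\; \PP\Big(\sum_{i=1}^j (X_i - \E X_i) \geq c_0 j\Big),
\]
and a standard concentration inequality for independent sub-exponential variables (e.g.\ Bernstein's inequality, or a Chernoff bound using the uniform moment generating function control that the stretched-exponential tail provides) bounds the right-hand side by $\exp\{-\Theta(1) j\} \leq \exp\{-\Theta(1)\kappa^{-3/2}\}$. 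Absorbing $c_0$ into the $\Theta(1)$ notation in the statement finishes the proof.

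The main obstacle, I expect, is the mean estimate with varying endpoints --- specifically making sure the oscillation penalty in passing from fixed endpoints to the supremum over segments of length $b\kappa$ is genuinely of order $b^{1/2}$ (not, say, order $1$) after the $\kappa^{-1/2}$ rescaling, so that a single choice of small $b$ kills it uniformly in $\kappa$ and $r$. This requires applying Proposition~\ref{p.fluc} at the correct scale (compatible triple with $s_{1,2} = \kappa^{3/2}$, whence $\tot^{2/3} = \kappa$, $\tot^{1/3} = \kappa^{1/2}$) and checking its hypotheses ($n\tot = r\kappa^{3/2}$ large, endpoint separation within the allowed range, threshold $K$ within $[10^4, 10^3(n\tot)^{1/18}]$), then integrating the tail in $K$ to bound the expected supremum of $|\Delta^\cup\weight|$ by $\Theta(b^{1/2}\kappa^{1/2})$. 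A secondary but routine point is that the parabolic adjustment terms are negligible here: over a segment of length $b\kappa$ at temporal scale $\kappa^{3/2}$, the parabola $2^{-1/2}(v-u)^2\kappa^{-3/2}$ is at most $\Theta(b^2\kappa^{1/2})$, which after normalization is $O(b^2)$ and harmless. Everything else is bookkeeping: the conditions \eqref{e.rkappa} on $\kappa$ and the divisibility hypotheses on $\chi$ are not needed for Proposition~\ref{p.weightuseful} itself (they matter only for the surrounding Proposition~\ref{p.closelow}), so the proof here is clean once the two ingredients --- uniform negative mean and uniform sub-exponential upper tail --- are in hand.
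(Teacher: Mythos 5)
Your overall architecture is the same as the paper's: a negative mean for the point-to-point weight via Proposition~\ref{p.negativemean} and the scaling principle, an endpoint-oscillation penalty of order $b^{1/2}$ via Proposition~\ref{p.fluc}, and then sub-exponential concentration (Proposition~\ref{p.Vershynin}). The gap is in your treatment of the parabolic curvature term, and it is not cosmetic. In Definition~\ref{d.yij} the intervals $I$ and $J$ each have length $b\kappa$ but are allowed to sit anywhere in $[-r^{1/20},r^{1/20}]$; the horizontal separation $|v-u|$ for $u\in I$, $v\in J$ can therefore be as large as $2r^{1/20}$, not $O(b\kappa)$. So the parabola $2^{-1/2}(v-u)^2\kappa^{-3/2}$ is of order up to $r^{1/10}\kappa^{-3/2}$, not $\Theta(b^2\kappa^{1/2})$ as you claim. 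This matters at the exact point you flag as the main obstacle: Proposition~\ref{p.fluc} controls the oscillation of the \emph{parabolically adjusted} difference $\Delta^\cup\weight$, and to convert that into a bound on $Y_{I,J}-Z_{I,J}$ you must add back the difference of parabolas over $I\times J$, which is of order $|v-u|\cdot b\kappa\cdot\kappa^{-3/2}\approx b\,r^{1/20}\kappa^{-1/2}$ for a generic reference point --- vastly larger than the $O(b^{1/2}\kappa^{1/2})$ budget you need for the mean. The paper's fix is to take the reference pair $(u',v')$ to \emph{minimize} $|v-u|$ over $I\times J$ (equation~(\ref{e.uvinfimum})), so that the parabola difference is nonpositive and can simply be discarded in the inequality $E_{I,J}\leq\kappa^{-1/2}\Delta^\cup\weight_r[\cdots]$. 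Without this sign trick, or some substitute, your mean estimate fails.

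The same misreading infects the concentration step. Because of the large deterministic parabolic offset, $Y_{I,J}$ itself is not a uniformly sub-exponential variable; its lower range extends to order $-r^{1/10}\kappa^{-2}$. What saves the argument is that this offset is deterministic for each $(I,J)$ and cancels upon centring, so that $Y_{I,J}-\E Y_{I,J}$ is bounded below by $\bar Z_{I,J}-\E\bar Z_{I,J}-O(1)$, which does have two-sided $e^{-ct^{3/2}}$ tails --- this is why the paper works throughout with the adjusted variable $\bar Z_{I,J}$ of~(\ref{centered}) and only returns to $Z_{I,J}$ via the identity~(\ref{decompose1}). Saying ``the lower tail is irrelevant'' does not suffice for Proposition~\ref{p.Vershynin}(1), which requires $\E\exp\{|X_i|/C\}\le 2$. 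A secondary omission: Proposition~\ref{p.fluc}'s tail bound holds only for $K\le 10^3(r\kappa^{3/2})^{1/18}$, so ``integrating the tail in $K$'' is not available beyond that threshold; one must introduce the truncation events $\mathsf{E}_{I,J}$, bound their probability, and control the truncated contribution to the mean separately (the paper does this by Cauchy--Schwarz, which again requires tracking the polynomially large second moment coming from the parabolic offset). These are exactly the pieces of bookkeeping your proposal dismisses as routine, and they are where the argument would break as written.
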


To derive this result, we will need the second element to which we alluded in summary of the useful sum analysis: a concentration result for independent random variables.
\begin{proposition}\cite[Theorem 2.8.1]{Vershynin}\label{p.Vershynin}
Let $\big\{ X_i: i \in \N \big\}$ be a sequence of independent real-valued random variables of zero mean.
For any $C > 0$, there exist positive $c_1$ and $c_2$ such that, for $k \in \N$ and $t \ge 0$,
\begin{enumerate}
\item if
 $\E \exp \big\{ |X_i|/C \big\} \le 2$ then
$$
\P\bigg( \Big\vert \sum_{i=1}^kX_i \Big\vert \ge t \bigg) \, \le \, \exp \Big\{ - \min \big\{ c_1 t^2 k^{-1}, c_2 t \big\} \Big\} \, ;
$$
\item and if  $\E \exp \big\{  X^2_i /C \big\} \le 2$, then
$$
\P \bigg( \Big\vert \sum_{i=1}^k X_i \Big\vert \geq t \bigg) 
\, \leq \, \exp \big\{- c_1 t^2 k^{-1} \big\} \, .
$$
\end{enumerate}
\end{proposition}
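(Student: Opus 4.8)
The plan is to prove both parts by the classical Cram\'er--Chernoff exponential moment method: first convert the hypothesized exponential moment control on each $X_i$ into a bound on its moment generating function $\lambda \mapsto \E \exp\{\lambda X_i\}$, then multiply across the independent summands, and finally optimize the resulting exponent over $\lambda$. For the precise values of the constants $c_1$ and $c_2$ one may simply invoke \cite[Theorem 2.8.1]{Vershynin}; what follows is the structure of the argument.

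For part~(1), the first step is to note that $\E \exp\{|X_i|/C\} \le 2$ forces $\E |X_i|^p \le 2\, p!\, C^p$ for every $p \in \N$, by comparing Taylor series. Since $\E X_i = 0$, expanding the exponential and summing the geometric tail then yields, for $|\lambda| \le (2C)^{-1}$, a bound of the shape $\E \exp\{\lambda X_i\} \le \exp\{\Theta(1) \lambda^2 C^2\}$. The restriction $|\lambda| \le (2C)^{-1}$ is essential and is precisely what makes the two-regime structure of the conclusion unavoidable. By independence, $\E \exp\{\lambda \sum_{i=1}^k X_i\} \le \exp\{\Theta(1) k \lambda^2 C^2\}$ on the same range of $\lambda$, so Markov's inequality gives $\P\big(\sum_{i=1}^k X_i \ge t\big) \le \exp\{-\lambda t + \Theta(1) k \lambda^2 C^2\}$. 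The final step is to optimize: the unconstrained minimizer is $\lambda^\ast = \Theta(1)\, t/(kC^2)$, which lies in the permitted interval precisely when $t \lesssim kC$; in that regime one obtains the bound $\exp\{-\Theta(1) t^2 k^{-1} C^{-2}\}$, while in the complementary regime $t \gtrsim kC$ one instead sets $\lambda = (2C)^{-1}$ and, using $k \le \Theta(1) t/C$ to absorb the quadratic term, obtains $\exp\{-\Theta(1) t/C\}$. Together these give the advertised $\exp\{-\min\{c_1 t^2 k^{-1}, c_2 t\}\}$. Applying the same argument to $\{-X_i\}$, which satisfies the identical hypotheses, and taking a union bound produces the two-sided statement, the resulting factor of $2$ being absorbed into a mild decrease of $c_1$ and $c_2$.

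For part~(2), the stronger hypothesis $\E \exp\{X_i^2/C\} \le 2$ upgrades the moment bound to $\E |X_i|^p \le \Theta(1) (pC)^{p/2}$ and, crucially, yields a \emph{global} sub-Gaussian bound $\E \exp\{\lambda X_i\} \le \exp\{\Theta(1) \lambda^2 C\}$ valid for \emph{all} $\lambda \in \R$ rather than only near the origin; the mean-zero hypothesis is used to control small $\lambda$ (for large $\lambda$ the elementary inequality $2 e^{\lambda^2 C/4} \le e^{\lambda^2 C/2}$ suffices). Because there is now no constraint on $\lambda$, the Chernoff optimization is carried out globally, with minimizer $\lambda^\ast = \Theta(1)\, t/(kC)$, and directly yields $\P\big(\sum_{i=1}^k X_i \ge t\big) \le \exp\{-\Theta(1) t^2 k^{-1} C^{-1}\}$; the two-sided bound again follows by symmetry and absorption of a factor of $2$.

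The main obstacle --- indeed essentially the only point requiring care --- is the conversion of the exponential moment hypotheses into moment generating function bounds with clean, explicit dependence on $C$, together with, in part~(1), the attendant necessity of splitting the Chernoff step into the two regimes $t \lesssim kC$ and $t \gtrsim kC$ dictated by the range of validity $|\lambda| \le (2C)^{-1}$. Everything else is routine bookkeeping.
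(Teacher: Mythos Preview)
Your proposal is correct and follows the standard Cram\'er--Chernoff approach that is indeed how this result is proved in Vershynin's book. Note, however, that the paper does not give its own proof of this proposition at all: it simply quotes the result as \cite[Theorem~2.8.1]{Vershynin} and moves on, so there is nothing to compare against beyond observing that your sketch matches the cited reference.
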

The variables satisfying the hypotheses in (1) and (2) are known to be sub-exponential and sub-Gaussian variables respectively.
Now let $(u',v') \in I \times J$ satisfy 
$\vert v' - u' \vert = \inf \big\{ \vert v - u \vert: u \in I, v \in J\big\}$.  Write $Y_{I,J} = Z_{I,J} + E_{I,J}$, where $Z_{I,J} =  \kappa^{-1/2}  \weight_r \big[ (u',0) \to (v',\kappa^{3/2}) \big]$
 and 
 $$
  E_{I,J} = \kappa^{-1/2} \sup_{\begin{subarray}{c} u \in I,  v \in J  \end{subarray}}  \Big( \weight_r \big[ (u,0) \to (v,\kappa^{3/2}) \big] - \weight_r \big[ (u',0) \to (v',\kappa^{3/2}) \big] \Big) \, .
  $$
 This decomposition reflects the argument promised in the first element in our summary: $Z_{I,J}$ is a point-to-point weight (normalized to be of unit-order by the factor $\kappa^{-1/2}$), and  the {\em error} term $E_{I,J}$ is a weight difference due to horizontal endpoint perturbation. We offer a Gaussian form of control on the latter next. In a usage also found later in this section, $\Cmac$ and $\cmac$ denote positive constants whose value may change from line to line.

\begin{lemma}  There exist positive  constants $\Cmac$ and $\cmac$ such that, for $r \geq \Cmac  \kappa^{-195}$ and $0<b \le 2^{-4}$, the following holds.
For all $I$ and $J$ as above, there exists an event $\mathsf{E}_{I,J}$ such that 
\begin{equation}\label{e.errorprob}
\PP \big( \mathsf{E}_{I,J} \big) \leq \Cmac \exp \big\{ - \cmac r^{1/9} \kappa^{1/6}  \big\}
\end{equation}
 and that
\begin{equation}\label{e.ijh}
 \PP \Big( E_{I,J} {\bf 1}_{\mathsf{E}_{I,J}^c} \geq b^{1/2}h \Big) \leq \Cmac \exp \big\{ - \cmac  h^2 \big\}
\end{equation}
for $h \geq 0$.  
 \end{lemma}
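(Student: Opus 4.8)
The statement concerns the error term
$$
E_{I,J} \;=\; \kappa^{-1/2} \sup_{u\in I,\,v\in J}\Big( \weight_r\big[(u,0)\to(v,\kappa^{3/2})\big] - \weight_r\big[(u',0)\to(v',\kappa^{3/2})\big]\Big),
$$
where $I,J$ are intervals of length $b\kappa$ inside $[-r^{1/20},r^{1/20}]$, and $(u',v')\in I\times J$ minimises $|v-u|$. The plan is to pass to parabolically adjusted weights so that the endpoint-perturbation fluctuation result Proposition~\ref{p.fluc} applies, extract a Gaussian tail from it, and absorb the difference between parabolically adjusted and ordinary weight into a deterministic error that is tiny for the range of $I,J$ considered (this is where the hypothesis $b\le 2^{-4}$ and the largeness $r\ge \Cmac\kappa^{-195}$ enter).

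First I would rewrite $E_{I,J}$ in terms of the parabolically adjusted weight difference $\paradelta\weight_r$ introduced just before Proposition~\ref{p.fluc}. For endpoints $(u,0),(v,\kappa^{3/2})$ and $(u',0),(v',\kappa^{3/2})$ the difference of the two parabola correction terms $2^{-1/2}\kappa^{-3/2}\big((v-u)^2-(v'-u')^2\big)$ is deterministic; since $u,u'\in I$, $v,v'\in J$ with $|I|=|J|=b\kappa$ and $|u'|,|v'|\le r^{1/20}$, this difference is bounded in absolute value by a constant multiple of $\kappa^{-3/2}\cdot b\kappa\cdot r^{1/20}$, hence by $\Theta(1) b^{1/2}\kappa^{-1/2} r^{1/20}$; after the overall factor $\kappa^{-1/2}$ this contributes a deterministic additive term to $E_{I,J}$ of order $b^{1/2} r^{1/20}\kappa^{-1}$, which is too large unless we are more careful. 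In fact the right move is to note $|(v-u)^2-(v'-u')^2|\le |(v-v')-(u-u')|\cdot(|v-u|+|v'-u'|)$, and on the relevant intervals $|v-u|\le|v'-u'|+2b\kappa$ while $|v'-u'|$ itself may be as large as $2r^{1/20}$; so the deterministic correction is $\Theta(1)\kappa^{-3/2}\cdot b\kappa\cdot r^{1/20}=\Theta(1)b r^{1/20}\kappa^{-1/2}$, still after the $\kappa^{-1/2}$ normalisation giving $\Theta(1) b r^{1/20}\kappa^{-1}$. Since this must sit below $b^{1/2}h$ for the claimed bound, and since $h$ may be small, the correct accounting is that the parabolic correction over an interval pair of this size, \emph{relative} to the minimiser $(u',v')$, actually grows like $b\kappa\cdot r^{1/20}\cdot\kappa^{-3/2}$ only when $|v'-u'|$ is large, but the sup over the box is dominated by the genuinely random fluctuation controlled by Proposition~\ref{p.fluc}; the deterministic slope piece is handled by choosing the event $\mathsf{E}_{I,J}$ to also capture the rare event that $|v'-u'|$ is anomalously large, i.e.\ outside the window where Proposition~\ref{p.fluc}'s hypothesis $|x-y|\kappa^{-1}\le 2^{-2}3^{-1}\rsc(r\kappa^{3/2})^{1/18}$ holds.

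The main body of the argument is then: take $\mathsf{E}_{I,J}$ to be the event that either the one-point parabolic weights $\weight_r^\cup\big[(u',0)\to(v',\kappa^{3/2})\big]$ is anomalously negative/positive (using Lemma~\ref{l.onepointbounds} via the scaling principle) or that the global polymer from the relevant region has anomalous lateral displacement (using Corollary~\ref{c.lateral} or Theorem~\ref{t.maximizersecond}, via the scaling principle, which is why $r^{1/9}\kappa^{1/6}$ appears — it is $({\bf n})^{1/9}$ with ${\bf n}=r\kappa^{3/2}$ up to constants), so that $\PP(\mathsf{E}_{I,J})\le\Cmac\exp\{-\cmac r^{1/9}\kappa^{1/6}\}$ as in \eqref{e.errorprob}. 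On $\mathsf{E}_{I,J}^c$ one has that $|v'-u'|$ lies in the admissible window, so Proposition~\ref{p.fluc} applies with ${\bf n}=r$, lifetime $[0,\kappa^{3/2}]$ (so $s_{1,2}=\kappa^{3/2}$), ${\bf a}=b$, and the box $[u',u'+b\kappa^{3/2\cdot 2/3}]=[u',u'+b\kappa]$ — note $\kappa^{3/2\cdot 2/3}=\kappa$, so the box sizes match $b\,\tot^{2/3}$ exactly. Proposition~\ref{p.fluc} then bounds $\PP\big(\sup_{\text{box}}|\paradelta\weight_r|\ge K b^{1/2}\kappa^{1/2}\big)\le 10032\,C\exp\{-c2^{-24}K^2\}$ for $K\in[10^4,10^3(r\kappa^{3/2})^{1/18}]$. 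Writing $E_{I,J}$'s random part as $\kappa^{-1/2}$ times this sup of $\paradelta\weight_r$ (up to the deterministic parabola correction, which on $\mathsf{E}_{I,J}^c$ is $\le \Theta(1)b^{1/2}\kappa^{1/2}$ after normalisation, hence absorbable into a redefinition of the constants), setting $h=\Theta(1)K$ gives $\PP\big(E_{I,J}\mathbf{1}_{\mathsf{E}_{I,J}^c}\ge b^{1/2}h\big)\le\Cmac\exp\{-\cmac h^2\}$ for $h$ in a range $[\Theta(1),\Theta(1)(r\kappa^{3/2})^{1/18}]$; for $h$ below that range the bound is trivial after increasing $\Cmac$, and for $h$ above that range we fold the event into $\mathsf{E}_{I,J}$, which is why the lower bound $r\ge\Cmac\kappa^{-195}$ on $r$ is needed — it guarantees $(r\kappa^{3/2})^{1/18}$ is large enough that the Gaussian regime of Proposition~\ref{p.fluc} covers all $h$ we care about and that the error-event probability $\exp\{-\cmac r^{1/9}\kappa^{1/6}\}$ is genuinely small.

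The step I expect to be the main obstacle is the bookkeeping of the deterministic parabolic correction term and confirming it really is dominated by $b^{1/2}h$ uniformly over the box: one must check that, \emph{relative to the minimiser} $(u',v')$, the excess parabola term $2^{-1/2}\kappa^{-3/2}\big((v-u)^2-(v'-u')^2\big)$ over $u\in I,v\in J$ is $O(b^{1/2}\kappa^{1/2})$ rather than $O(b\kappa^{-1/2}r^{1/20})$; this hinges on noting that $|v-u|-|v'-u'|\le 2b\kappa$ while $|v-u|+|v'-u'|\le 2b\kappa + 2|v'-u'|$, and $|v'-u'|$ on $\mathsf{E}_{I,J}^c$ is controlled — combined with the factor $\kappa^{-3/2}$ and the $\kappa^{-1/2}$ normalisation this gives $O(b\kappa^{-1/2}|v'-u'|)$, which forces us to also have included $\{|v'-u'| > \text{small power of }r\}$ inside $\mathsf{E}_{I,J}$, via the lateral-displacement bounds, so that on $\mathsf{E}_{I,J}^c$ the correction is genuinely $O(b^{1/2}\kappa^{1/2})$. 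Getting the exponents in $\mathsf{E}_{I,J}$ to line up with the claimed $\exp\{-\cmac r^{1/9}\kappa^{1/6}\}$ (this being $\exp\{-\cmac ({\bf n})^{1/9}\}$ with ${\bf n}=r\kappa^{3/2}$, up to adjusting $\cmac$ and using $r\ge\Cmac\kappa^{-195}$) is routine but must be done with care.
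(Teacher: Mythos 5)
Your overall architecture matches the paper's: apply Proposition~\ref{p.fluc} with ${\bf n}=r$, $s_{1,2}=\kappa^{3/2}$, ${\bf a}=b$ (so the box length $b\kappa$ equals ${\bf a}\,s_{1,2}^{2/3}$), obtain Gaussian tails for $K$ in the admissible window $\big[10^4,\,10^3(r\kappa^{3/2})^{1/18}\big]$, and let $\mathsf{E}_{I,J}$ absorb the regime beyond the top of that window, which is exactly where $\exp\{-\cmac r^{1/9}\kappa^{1/6}\}=\exp\big\{-\cmac\big((r\kappa^{3/2})^{1/18}\big)^2\big\}$ comes from. But your handling of the deterministic parabolic correction --- which you correctly single out as the main obstacle --- has a genuine gap. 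You propose to place inside $\mathsf{E}_{I,J}$ the ``rare event'' that $|v'-u'|$ is anomalously large, to be controlled by lateral-displacement estimates. This cannot work: $|v'-u'|$ is a deterministic function of the given intervals $I$ and $J$, not a random variable. The lemma must hold for every admissible pair $I,J\subset[-r^{1/20},r^{1/20}]$, including pairs for which $|v'-u'|$ is of order $r^{1/20}$; for such pairs your proposed ``event'' holds with probability one rather than probability $\le\Cmac\exp\{-\cmac r^{1/9}\kappa^{1/6}\}$, and the correction, of size $\Theta(1)\,b\kappa^{-1}|v'-u'|$ after the $\kappa^{-1/2}$ normalisation, genuinely exceeds $b^{1/2}h$ for small $h$. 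No probabilistic exclusion is available.

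The resolution is a one-line sign observation that you miss: since $|v'-u'|=\inf\{|v-u|:u\in I,\,v\in J\}$, one has $(v'-u')^2\le(v-u)^2$ for every $(u,v)\in I\times J$, so the deterministic term $2^{-1/2}\kappa^{-3/2}\big((v'-u')^2-(v-u)^2\big)$ is nonpositive and may simply be discarded. This yields the one-sided bound $E_{I,J}\le\kappa^{-1/2}\sup \Delta^{\cup}\,\weight_r\big[(\{u',u\},0)\to(\{v',v\},\kappa^{3/2})\big]$, and an upper-tail bound on $E_{I,J}$ is all that \eqref{e.ijh} requires. With this in hand the rest of your argument closes as in the paper: one takes $\mathsf{E}_{I,J}=\big\{E_{I,J}\ge h_0(b\kappa)^{1/2}\big\}$ with $h_0=10^3r^{1/18}\kappa^{1/12}$, bounds its probability by the $K=h_0$ instance of the fluctuation estimate to get \eqref{e.errorprob}, and no auxiliary one-point-weight or lateral-displacement events are needed.
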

{\bf Proof.} 
Note that, for  $u \in I$ and $v \in J$, 
\begin{eqnarray*}
 & &  \weight_r \big[ (u,0) \to (v,\kappa^{3/2}) \big] -  \weight_r \big[ (u',0) \to (v',\kappa^{3/2}) \big]  \\
 & = & \Delta^{\cup} \, \weight_r \big[ ( \{ u',u\} ,0) \to ( \{v',v\},\kappa^{3/2}) \big] \, + \,  2^{-1/2} \frac{(v' - u')^2}{\kappa^{3/2}} -  2^{-1/2} \frac{(v - u)^2}{\kappa^{3/2}} \\
  & \leq &  \Delta^{\cup} \, \weight_r \big[ ( \{ u',u\} ,0) \to ( \{v',v\},\kappa^{3/2}) \big] \, ,
\end{eqnarray*}
where the inequality is due to~(\ref{e.uvinfimum}). Thus, $E_{I,J} \leq \kappa^{-1/2} \Delta^{\cup} \, \weight_r \big[ ( \{ u',u\} ,0) \to ( \{v',v\},\kappa^{3/2}) \big]$. 
We now apply Proposition~\ref{p.fluc}  with parameter settings ${\bf n} = r$, ${\bf s_{1,2}} = \kappa^{3/2}$, ${\bf a} = b$ and ${\bf K} =h$, and with ${\bf x}$ and ${\bf y}$ equal to the left endpoints of $I$ and 
$J$. Note that the hypothesis ${\bf a} \leq 2^{-4}$ holds due to $b \leq 2^{-4}$.
The hypothesis 
 $\big\vert {\bf x} - {\bf y}  \big\vert s_{1,2}^{-2/3} \leq 2^{-2} 3^{-1} \rsc  (r s_{1,2})^{1/18}$ holds due to $\vert {\bf x} \vert , \vert {\bf y} \vert \leq r^{1/20}$, which follows by hypothesis on $I$ and $J$;  and to the hypothesised lower bound on $r$. 

The hypothesis ${\bf n} \, {\bf \tot} \geq 10^{32} c^{-18}$ is due to $r \kappa^{3/2} \geq 10^{32} c^{-18}$, a consequence of the hypothesised lower bound on $r$ alongside $c < 1$ and $\kappa<1$. The hypothesis  ${\bf K} \in \big[10^4 \, , \,   10^3 (r \kappa^{3/2})^{1/18} \big]$ holds provided that we impose this condition on $h$. This application of Proposition~\ref{p.fluc} yields that
$$
 \PP \big( E_{I,J} \geq  b^{1/2}h \big) \leq 10032 \exp \big\{ - c 2^{-24} h^2 \big\}
$$ 
for  $h \in \big[10^4 \, , \,   10^3 r^{1/18} \kappa^{1/12} \big]$. 
Define the error event $\mathsf{E}_{I,J} = \big\{ E_{I,J} \geq h_0 (b\kappa)^{1/2}  \big\}$ where $h_0$ equals the maximal value  $10^3 r^{1/18} \kappa^{1/12}$  for the range of $h$.
We obtain~(\ref{e.errorprob}), and~(\ref{e.ijh}) 
for $h \geq 0$. \qed

To address the point-to-point normalized weight $Z_{I,J}$, we introduce a parabolically adjusted version:
\begin{equation}\label{centered}
\bar Z_{I,J}=Z_{I,J} + 2^{-1/2}\frac{(v'-u')^2}{\kappa^{2}} \, .
\end{equation}
By Lemma \ref{l.onepointbounds} with parameter setting ${\bf n} = r \kappa^{3/2}$, and the scaling principle, it follows that, for $t \ge 0$,
\begin{equation}\label{e.ijh1}
 \PP \Big( |\bar Z_{I,J}| \geq t \Big) \leq  C\exp \big\{ - ct^{3/2} \big\} \, , \, \, \text{  and hence\, }
\PP \Big( |\bar Z_{I,J}|{\bf 1}_{\mathsf{E}_{I,J}^c}  \geq t \Big) \leq  C \exp \big\{ - ct^{3/2} \big\} \, .
\end{equation}
This  implies that 
\begin{equation}\label{e.ijh2}
 \PP \Big( \left|\bar Z_{I,J} {\bf 1}_{\mathsf{E}_{I,J}^c}-\E[\bar Z_{I,J} {\bf 1}_{\mathsf{E}_{I,J}^c}]\right|\geq t \Big) \leq  C\exp \big\{ - ct^{3/2} \big\} 
\, .
\end{equation}
Concentration of the sums $\sum_{i=1}^{j}\bar Z_{I,J}$ and $\sum_{i=1}^{j} Z_{I,J}$ will be related by means of 
\begin{align}\label{decompose1}{{\sum_{i=1}^j \bar Z_{I,J} {\bf 1}_{\mathsf{E}_{I,J}^c}-\E[\bar Z_{I,J} {\bf 1}_{\mathsf{E}_{I,J}^c}]}}&={{\sum_{i=1}^j \big[ Z_{I,J} {\bf 1}_{\mathsf{E}_{I,J}^c}-\E[ Z_{I,J} {\bf 1}_{\mathsf{E}_{I,J}^c}]\big]}}+
{{\sum_{i=1}^j  2^{-1/2}\frac{(v'-u')^2}{\kappa^{2}}[{\bf 1}_{\mathsf{E}_{I,J}^c}-\E({\bf 1}_{\mathsf{E}_{I,J}^c})]}}
\end{align}
with the next result offering control on the latter parabolic term.
\begin{lemma}With probability at least $1- \Cmac \kappa^{-3/2} \exp \big\{ - \cmac r^{1/9} \kappa^{1/6}  \big\},$ 
\begin{equation}\label{parabolicterm}
\sum_{i=1}^j 2^{-1/2}\frac{(v'-u')^2}{\kappa^{2}}[{\bf 1}_{\mathsf{E}_{I,J}^c}-\E({\bf 1}_{\mathsf{E}_{I,J}^c})]\le \Cmac r^{1/10}\kappa^{-4}\exp \big\{ - \cmac r^{1/9} \kappa^{1/6}  \big\} \, ,
\end{equation}
Note that, by our hypothesis on $\kappa$, the right-hand side is less than one when $r$ is large enough.
\end{lemma}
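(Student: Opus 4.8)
The plan is a direct deterministic-estimate-plus-union-bound argument. Note that the quantity on the left-hand side of~(\ref{parabolicterm}) is a sum of $j \leq \kappa^{-3/2}$ terms, each of the form $2^{-1/2}(v'-u')^2\kappa^{-2}\big[{\bf 1}_{\mathsf{E}_{I,J}^c}-\E({\bf 1}_{\mathsf{E}_{I,J}^c})\big]$. First I would bound the deterministic prefactor: since $I$ and $J$ are contained in $[-r^{1/20},r^{1/20}]$, the endpoints $u'$ and $v'$ have absolute value at most $r^{1/20}$, so $(v'-u')^2 \leq 4 r^{1/10}$, and hence $2^{-1/2}(v'-u')^2\kappa^{-2} \leq \Cmac r^{1/10}\kappa^{-2}$. (In fact, since the $\mathsf{c}$-path segments have distinct heights lying in $[-r^{1/20},r^{1/20}]\times[0,1]$ and consecutive useful sub-zigzags have vertical extension $\kappa^{3/2}$, one could even extract a slightly better bound on $|v'-u'|$, but $r^{1/10}$ is already more than good enough here.)

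Next I would handle the random factor $\big[{\bf 1}_{\mathsf{E}_{I,J}^c}-\E({\bf 1}_{\mathsf{E}_{I,J}^c})\big]$, which lies in $[-1,1]$ deterministically but is small in expectation. The key observation is that on the event $\bigcap_i \mathsf{E}_{I_i,J_i}^c$ — i.e.\ when \emph{none} of the $j$ error events occurs — every indicator ${\bf 1}_{\mathsf{E}_{I,J}^c}$ equals one, so the $i$-th summand equals $-2^{-1/2}(v'-u')^2\kappa^{-2}\E({\bf 1}_{\mathsf{E}_{I,J}}) \leq 0$; more crudely, each summand on this event is at most $2^{-1/2}(v'-u')^2\kappa^{-2}\cdot\E({\bf 1}_{\mathsf{E}_{I,J}})$ in absolute value, and by~(\ref{e.errorprob}) we have $\E({\bf 1}_{\mathsf{E}_{I,J}}) = \PP(\mathsf{E}_{I,J}) \leq \Cmac\exp\{-\cmac r^{1/9}\kappa^{1/6}\}$. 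Summing the $j \leq \kappa^{-3/2}$ terms and using the prefactor bound from the previous paragraph yields that, on $\bigcap_i \mathsf{E}_{I_i,J_i}^c$, the left-hand side of~(\ref{parabolicterm}) is at most $\kappa^{-3/2}\cdot \Cmac r^{1/10}\kappa^{-2}\cdot\Cmac\exp\{-\cmac r^{1/9}\kappa^{1/6}\} \leq \Cmac r^{1/10}\kappa^{-4}\exp\{-\cmac r^{1/9}\kappa^{1/6}\}$ after relabelling constants, which is exactly the claimed bound.

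Finally I would control the probability of the good event $\bigcap_i \mathsf{E}_{I_i,J_i}^c$ by a union bound: its complement has probability at most $\sum_{i=1}^j \PP(\mathsf{E}_{I_i,J_i}) \leq j\cdot\Cmac\exp\{-\cmac r^{1/9}\kappa^{1/6}\} \leq \Cmac\kappa^{-3/2}\exp\{-\cmac r^{1/9}\kappa^{1/6}\}$, using $j\leq\kappa^{-3/2}$. This matches the probability bound in the statement, and the concluding remark that the right-hand side of~(\ref{parabolicterm}) is less than one for $r$ large follows since $r^{1/10}\kappa^{-4}$ grows only polynomially in $r$ (for fixed $\kappa$) while the exponential factor decays, and under the running hypothesis $r \geq \Cmac\kappa^{-195}$ (with $\kappa$ bounded away from zero by, say, $\chi$ or the constraint $\kappa < \kappa_0$) the exponent $\cmac r^{1/9}\kappa^{1/6}$ dominates. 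I do not expect any genuine obstacle here: the lemma is an elementary deterministic bound on a bounded sum whose random indicators are individually exponentially rare, combined with a crude union bound. The only mild care needed is in tracking that the deterministic prefactor $(v'-u')^2\kappa^{-2}$ is controlled by the hypothesis $I,J\subset[-r^{1/20},r^{1/20}]$, and that the constants $\Cmac,\cmac$ absorb the various $\kappa^{-c}$ powers — both routine.
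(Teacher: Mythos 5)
Your proposal is correct and follows essentially the same route as the paper: bound the prefactor by $\Cmac r^{1/10}\kappa^{-2}$ via $|v'-u'|\leq 2r^{1/20}$, observe that on the intersection of the $\mathsf{E}_{I,J}^c$ each centred indicator equals $\PP(\mathsf{E}_{I,J})\leq \Cmac\exp\{-\cmac r^{1/9}\kappa^{1/6}\}$, and close with a union bound over the $j\leq\kappa^{-3/2}$ indices. The only slip is the sign of the summand on the good event --- it equals $+2^{-1/2}(v'-u')^2\kappa^{-2}\,\E({\bf 1}_{\mathsf{E}_{I,J}})\geq 0$, not its negative --- but this is harmless since your subsequent absolute-value bound is what the argument actually uses.
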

{\bf Proof.}
 Since by \eqref{e.errorprob}, $\E({\bf 1}_{\mathsf{E}_{I,J}^c})\ge 1- \Cmac \exp \big\{ - \cmac r^{1/9} \kappa^{1/6}  \big\},$ 
on the event ${\bf 1}_{\mathsf{E}_{I,J}^c}=1$ we have 
$$ 
2^{-1/2}\frac{(v'-u')^2}{\kappa^{2}}[{\bf 1}_{\mathsf{E}_{I,J}^c}-\E({\bf 1}_{\mathsf{E}_{I,J}^c})]\le \Cmac r^{1/10}\kappa^{-2}\exp \big\{ - \cmac r^{1/9} \kappa^{1/6}  \big\} \, ,
$$ 
since $|v'-u'|\le r^{1/20}.$
A union bound over $i \in \intint{j}$ now implies the lemma. \qed

We now note that the tail bound \eqref{e.ijh2} allows us to invoke Proposition \ref{p.Vershynin}(1). For brevity's sake, let $W_{I,J}=\bar Z_{I,J} {\bf 1}_{\mathsf{E}_{I,J}^c}-\E[\bar Z_{I,J} {\bf 1}_{\mathsf{E}_{I,J}^c}].$ By this proposition, we may find $d > 0$ so that
\begin{align*}
\PP \bigg( \sum_{i=1}^j W_{I,J} \ge \frac{d}{10}\kappa^{-3/2}\bigg) \leq \exp \big\{ - \cmac \dmac \kappa^{-3/2} \big\} \, .
\end{align*}
Now, by our assumptions on $r$ and $\kappa$, \eqref{decompose1} and \eqref{parabolicterm}, and the above lemma, it follows that
\begin{align}\label{conc10}
\PP \bigg( \sum_{i=1}^j Z_{I,J}{\bf 1}_{\mathsf{E}_{I,J}^c} -\E[Z_{I,J}{\bf 1}_{\mathsf{E}_{I,J}^c}] \ge \frac{\dmac}{8}\kappa^{-3/2}\bigg) \leq \exp \big\{ - \cmac d \kappa^{-3/2} \big\} \, .
\end{align}
Here we used $\Cmac \kappa^{-3/2} \exp \big\{ - \cmac  r^{1/9} \kappa^{1/6}  \big\}\le e^{-\cmac\kappa^{-3/2}}$, which follows from our hypotheses on $r$ and~$\kappa$.

In the last display, we see, in a mildly truncated form, the point-to-point mean weight that was the mainstay of our overview of the first element of useful sum analysis. Indeed, we next argue that this truncated mean is suitably negative.
\begin{lemma} 
There exists $\dmac>0$ such that, for $r \geq \Theta(1)$, 
$$
 \sum_{i=1}^j\E[Z_{I,J}{\bf 1}_{\mathsf{E}_{I,J}^c}] \le -\frac{\dmac}{4}\kappa^{-3/2} \, .
 $$
\end{lemma}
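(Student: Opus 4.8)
{\bf Proof.}
The plan is to combine the negative-mean input Proposition~\ref{p.negativemean} with the error bound~(\ref{e.errorprob}) and the lower-tail estimate~(\ref{e.ijh1}). Recall that $Z_{I,J} = \kappa^{-1/2}\weight_r\big[(u',0)\to(v',\kappa^{3/2})\big]$, where $(u',v')$ realizes the minimal horizontal displacement between $I$ and $J$. By the scaling principle of Subsection~\ref{s.scalingprinciple}, $\weight_r\big[(u',0)\to(v',\kappa^{3/2})\big]$ equals in law $\kappa^{1/2}\weight_{r\kappa^{3/2}}\big[(u'\kappa^{-1},0)\to(v'\kappa^{-1},1)\big]$, so that $\E Z_{I,J} = \E\,\weight_{r\kappa^{3/2}}\big[(u'\kappa^{-1},0)\to(v'\kappa^{-1},1)\big]$. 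Since $\vert u' \vert, \vert v' \vert \leq r^{1/20}$ and $\vert u' - v' \vert$ is a horizontal displacement bounded by $\Theta(1)\kappa$ (the segments of $\mathsf c$ lie in $[-r^{1/20},r^{1/20}]$ and are separated vertically by $\kappa^{3/2}$, so consecutive ones differ horizontally by at most a fixed multiple of $\kappa$), the rescaled horizontal endpoints have absolute value at most $r^{1/20}\kappa^{-1}$, which is well within $2^{-1}cn^{1/19}$ with $n = r\kappa^{3/2}$ once $r$ exceeds a suitable constant multiple of a power of $\kappa^{-1}$. Proposition~\ref{p.negativemean} therefore applies and gives $\E Z_{I,J} \leq -d$ for an absolute constant $d > 0$, uniformly in $I$, $J$ and $i$.

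Next I would pass from $\E Z_{I,J}$ to the truncated mean $\E[Z_{I,J}{\bf 1}_{\mathsf{E}_{I,J}^c}]$. Write $\E[Z_{I,J}{\bf 1}_{\mathsf{E}_{I,J}^c}] = \E Z_{I,J} - \E[Z_{I,J}{\bf 1}_{\mathsf{E}_{I,J}}]$. By Cauchy--Schwarz, $\big\vert \E[Z_{I,J}{\bf 1}_{\mathsf{E}_{I,J}}] \big\vert \leq \big(\E Z_{I,J}^2\big)^{1/2}\PP(\mathsf{E}_{I,J})^{1/2}$; the second moment $\E Z_{I,J}^2$ is bounded by an absolute constant by integrating the tail bound in~(\ref{e.ijh1}) (the tail $C\exp\{-ct^{3/2}\}$ is integrable against $t\,dt$), and $\PP(\mathsf{E}_{I,J}) \leq \Cmac\exp\{-\cmac r^{1/9}\kappa^{1/6}\}$ by~(\ref{e.errorprob}). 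Hence $\big\vert \E[Z_{I,J}{\bf 1}_{\mathsf{E}_{I,J}}] \big\vert \leq \Cmac\exp\{-\cmac r^{1/9}\kappa^{1/6}\}$, which is smaller than $d/2$ once $r$ is large enough relative to the (negative) powers of $\kappa$ permitted by the standing hypothesis $r \geq \Cmac\kappa^{-195}$. Consequently $\E[Z_{I,J}{\bf 1}_{\mathsf{E}_{I,J}^c}] \leq -d/2$ for each $i \in \intint{j}$.

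Finally, summing over $i \in \intint{j}$ and using $j \geq \kappa^{-3/2}/2$ gives $\sum_{i=1}^j \E[Z_{I,J}{\bf 1}_{\mathsf{E}_{I,J}^c}] \leq -\tfrac{d}{2}\cdot\tfrac12\kappa^{-3/2} = -\tfrac{d}{4}\kappa^{-3/2}$; setting $\dmac = d$ yields the claim. The only point requiring care---and the one I would regard as the main obstacle, though it is a routine rather than a deep one---is the bookkeeping on the admissibility conditions: one must check that the rescaled horizontal endpoints $u'\kappa^{-1}$, $v'\kappa^{-1}$ satisfy the hypothesis $\vert \cdot \vert \leq 2^{-1}c(r\kappa^{3/2})^{1/19}$ of Proposition~\ref{p.negativemean}, and that $r\kappa^{3/2} \geq n_0$, both of which follow from $r \geq \Cmac\kappa^{-195}$ and $\kappa < \kappa_0$ after enlarging $\Cmac$ if necessary; and that the truncation correction $\Cmac\exp\{-\cmac r^{1/9}\kappa^{1/6}\}$ is genuinely negligible against the constant $d$, which again is guaranteed by the same lower bound on $r$. \qed
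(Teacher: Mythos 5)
Your route is the same as the paper's: get $\E Z_{I,J}\leq -d$ from Proposition~\ref{p.negativemean} and the scaling principle, then control the truncation correction $\E[Z_{I,J}{\bf 1}_{\mathsf{E}_{I,J}}]$ by Cauchy--Schwarz together with~\eqref{e.errorprob}. However, one step is wrong as stated: the claim that $\vert u'-v'\vert\leq\Theta(1)\kappa$, and the consequent assertion that $\E Z_{I,J}^2$ is bounded by an absolute constant. Definition~\ref{d.yij} --- and Proposition~\ref{p.weightuseful}, inside whose proof this lemma sits --- only requires $I,J\subset[-r^{1/20},r^{1/20}]$ to be of length $b\kappa$; nothing forces them to be horizontally close, so $\vert u'-v'\vert$ may be as large as $2r^{1/20}$. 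Even in the intended application the proximity claim fails: for a $\tza^{-1/4}$-regular zigzag, consecutive segments at vertical separation $\kappa^{3/2}$ can differ horizontally by an amount of order $\kappa\,\tza^{-1/4}\gg\kappa$ in the relevant rescaled coordinates. Consequently the tail bound~\eqref{e.ijh1} controls $\bar Z_{I,J}$, not $Z_{I,J}$: by~\eqref{centered}, $Z_{I,J}=\bar Z_{I,J}-2^{-1/2}(v'-u')^2\kappa^{-2}$, and the deterministic parabolic term can be as large as $\Theta(1)r^{1/10}\kappa^{-2}$. So $\big(\E Z_{I,J}^2\big)^{1/2}$ is not $O(1)$ but $O\big(r^{1/10}\kappa^{-2}\big)$, which is exactly the bound the paper records via Lemma~\ref{l.onepointbounds}.

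The error is repairable without changing your argument: Cauchy--Schwarz now gives $\vert\E[Z_{I,J}{\bf 1}_{\mathsf{E}_{I,J}}]\vert\leq O\big(r^{1/10}\kappa^{-2}\big)\exp\big\{-\tfrac{\cmac}{2}r^{1/9}\kappa^{1/6}\big\}$, and under the standing hypothesis $r\geq\Cmac\kappa^{-195}$ the stretched-exponential factor decays faster than any power of $r$ and $\kappa^{-1}$, so each correction is still smaller than $d/10$ and the sum closes as you wrote. One further caveat: your admissibility check for Proposition~\ref{p.negativemean} requires $r^{1/20}\kappa^{-1}\leq 2^{-1}c(r\kappa^{3/2})^{1/19}$, which forces $r$ to exceed a power of $\kappa^{-1}$ somewhat larger than $195$; enlarging the constant $\Cmac$ alone does not secure this, though the paper is equally silent on the point.
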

{\bf Proof.}
Since  Proposition \ref{p.negativemean} and the scaling principle imply that $\E[Z_{I,J}]\le-\dmac$,
it suffices to show that $\sum_{i=1}^j\E[Z_{I,J}{\bf 1}_{\mathsf{E}_{I,J}}]\le \frac{d}{10}\kappa^{-3/2}.$
Now note that $\E[Z_{I,J}{\bf 1}_{\mathsf{E}_{I,J}}]\leq \big( \E(Z^2_{I,J})\P(\mathsf{E}_{I,J}) \big)^{1/2}$. Since $r$ is assumed to be large enough, we are now done by  \eqref{e.errorprob}, and $\sqrt{\E(Z^2_{I,J})}\le O(\frac{r^{1/10}}{\kappa^2})$, the latter due to 
 Lemma \ref{l.onepointbounds}, which implies that, in the decomposition \eqref{centered}, the first term  is $O(1),$ and hence the above bound arises from the maximum value of the parabolic term.  \qed

A final result needed to deliver Proposition~\ref{p.weightuseful} concerns the error terms $E_{I,J}$.
\begin{lemma}\label{errorcontr2} There exist positive constants $C_1$ and $c_1$ such that, for small enough $b$ and $\kappa,$
$$
\P \bigg( \sum_{i=1}^jE_{I,J}{\bf 1}_{\mathsf{E}_{I,J}^c} \ge  b^{1/2} C_1 \kappa^{-3/2} \bigg) \, \leq \, \exp \big\{ -c_1{\kappa^{-3/2}} \big\} \, .
$$
\end{lemma}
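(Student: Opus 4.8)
\textbf{Proof plan for Lemma~\ref{errorcontr2}.}
The structure of this lemma mirrors the concentration argument already carried out for the sums $\sum_{i=1}^j Z_{I,J}{\bf 1}_{\mathsf{E}_{I,J}^c}$, so the plan is to reuse that machinery. First I would record that, on the complement of the error event $\mathsf{E}_{I,J}$, the tail bound~(\ref{e.ijh}) gives $\PP \big( E_{I,J}{\bf 1}_{\mathsf{E}_{I,J}^c} \geq b^{1/2}h \big) \leq \Cmac \exp \{ -\cmac h^2 \}$ for $h \geq 0$; this means the rescaled variable $b^{-1/2}E_{I,J}{\bf 1}_{\mathsf{E}_{I,J}^c}$ is sub-Gaussian with a uniform parameter, so in particular $\E \exp \{ (b^{-1/2}E_{I,J}{\bf 1}_{\mathsf{E}_{I,J}^c})^2/\Cmac \} \leq 2$ for a suitable constant $\Cmac$, and the same holds with the mean subtracted. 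The one wrinkle is that $E_{I,J}$ is a one-sided quantity (a supremum of weight differences), not mean-zero; so I would pass to the centred variable $E_{I,J}{\bf 1}_{\mathsf{E}_{I,J}^c} - \E \big[ E_{I,J}{\bf 1}_{\mathsf{E}_{I,J}^c} \big]$ and control the mean separately.

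Next I would bound the mean. By~(\ref{e.ijh}), integrating the tail gives $\E \big[ E_{I,J}{\bf 1}_{\mathsf{E}_{I,J}^c} \big] \leq \Cmac b^{1/2}$ for a universal constant $\Cmac$ (the contribution of the error event itself is negligible by~(\ref{e.errorprob}) together with $|v'-u'|, |u|, |v| \leq r^{1/20}$ and the crude deterministic bound on weights, exactly as in the parabolic-term lemma above). Hence $\sum_{i=1}^j \E \big[ E_{I,J}{\bf 1}_{\mathsf{E}_{I,J}^c} \big] \leq \Cmac b^{1/2} j \leq \Cmac b^{1/2}\kappa^{-3/2}$, using $j \leq \kappa^{-3/2}$. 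Then I would apply Proposition~\ref{p.Vershynin}(2) to the independent, centred, sub-Gaussian summands $b^{-1/2}\big( E_{I,J}{\bf 1}_{\mathsf{E}_{I,J}^c} - \E[\,\cdot\,]\big)$ with $k = j$ and $t = \Theta(1)\kappa^{-3/2}$, to obtain
$$
\PP \bigg( \Big| \sum_{i=1}^j b^{-1/2}\big( E_{I,J}{\bf 1}_{\mathsf{E}_{I,J}^c} - \E[E_{I,J}{\bf 1}_{\mathsf{E}_{I,J}^c}]\big) \Big| \geq \tfrac{t}{b^{1/2}} \bigg) \leq \exp \big\{ - c_1 t^2 b^{-1} j^{-1} \big\} \leq \exp \big\{ - c_1 \kappa^{-3/2} \big\} \, ,
$$
the last step using $j \leq \kappa^{-3/2}$ and $t = \Theta(1)\kappa^{-3/2}$, and absorbing the factor $b^{-1}$ (which only helps, being large) or, more carefully, choosing $t$ a fixed multiple of $\kappa^{-3/2}$ so that $t^2 b^{-1} j^{-1} \geq c\kappa^{-3/2}$ for all small $b$. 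Adding the mean bound $\Cmac b^{1/2}\kappa^{-3/2}$ back in, and choosing $b$ small enough that the deviation threshold plus the mean is at most $C_1 b^{1/2}\kappa^{-3/2}$ for a suitable $C_1$, yields the claimed bound $\PP \big( \sum_{i=1}^j E_{I,J}{\bf 1}_{\mathsf{E}_{I,J}^c} \geq b^{1/2}C_1\kappa^{-3/2} \big) \leq \exp \{ -c_1\kappa^{-3/2} \}$, after possibly decreasing $c_1$.

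The genuinely routine parts are the tail-to-moment-generating-function conversion and the union-bound bookkeeping; the only thing requiring a little care is the sub-Gaussianity constant being uniform over the $i$-dependent pairs $(I,J)$, which is guaranteed because~(\ref{e.ijh}) holds with constants not depending on $I$, $J$. I do not expect a serious obstacle here: the lemma is a direct analogue of~(\ref{conc10}), with Proposition~\ref{p.Vershynin}(2) in place of part~(1), and with the one-sided mean handled by the integrated-tail estimate. The main point to get right is simply that the scale $b^{1/2}$ of the summands, combined with $j \asymp \kappa^{-3/2}$, produces exactly a stretched-exponential bound of order $\exp\{-c\kappa^{-3/2}\}$ and a deviation of order $b^{1/2}\kappa^{-3/2}$, so that $b$ can afterwards be taken small enough to make this term negligible compared to the $-\tfrac{d_1}{2}\kappa^{1/2}$ per-step mean gain in the proof of Proposition~\ref{p.weightuseful}.
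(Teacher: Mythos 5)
Your proposal is correct and follows essentially the same route as the paper: the paper's own proof simply observes that the variables $b^{-1/2}E_{I,J}{\bf 1}_{\mathsf{E}_{I,J}^c}$ are independent and, by (\ref{e.ijh}), sub-Gaussian, and invokes Proposition~\ref{p.Vershynin}(2); you additionally handle the centering (the $E_{I,J}$ are nonnegative, so the zero-mean hypothesis of Proposition~\ref{p.Vershynin} does require subtracting the mean, which you correctly bound by $\Theta(1)b^{1/2}$ per summand via the integrated tail). One small bookkeeping slip: to conclude with a $b$-independent constant $C_1$, the deviation threshold for $\sum_i\big(E_{I,J}{\bf 1}_{\mathsf{E}_{I,J}^c}-\E[E_{I,J}{\bf 1}_{\mathsf{E}_{I,J}^c}]\big)$ should be taken of order $b^{1/2}\kappa^{-3/2}$, so that the rescaled sum $\sum_i b^{-1/2}(\cdots)$ is tested at level $\Theta(1)\kappa^{-3/2}$ and the exponent is $-c_1\kappa^{-3}j^{-1}\leq -c_1\kappa^{-3/2}$; your choice $t=\Theta(1)\kappa^{-3/2}$ makes the resulting threshold exceed the claimed $C_1 b^{1/2}\kappa^{-3/2}$ by a factor $b^{-1/2}$, though the fix is immediate and the rest of the argument is unaffected.
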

{\bf Proof.}
Note that, by definition, the random variables $b^{-1/2}E_{I,J}{\bf 1}_{\mathsf{E}_{I,J}^c}$ are independent.
Further, by~\eqref{e.ijh}, they are sub-Gaussian. Thus, Proposition \ref{p.Vershynin}(2)'s hypotheses are satisfied, and the lemma is obtained. 
 \qed

{\bf Proof of Proposition~\ref{p.weightuseful}.}
By (\ref{conc10}) and the two just stated lemmas, 
we find that, for a suitably small choice of $b > 0$, it is 
 with probability at least $1-e^{-\cmac\kappa^{-3/2}}$ that
\begin{eqnarray*}
\sum_{i=1}^jX_i=\sum_{i=1}^jX_i{\bf 1}_{\mathsf{E}_{I,J}^c}&=&\sum_{i=1}^jZ_{I,J}{\bf 1}_{\mathsf{E}_{I,J}^c}+\sum_{i=1}^jE_{I,J}{\bf 1}_{\mathsf{E}_{I,J}^c}\\
&=&\sum_{i=1}^j\E[Z_{I,J}{\bf 1}_{\mathsf{E}_{I,J}^c}]+ \sum_{i=1}^j[Z_{I,J}{\bf 1}_{\mathsf{E}_{I,J}^c} -\E[Z_{I,J}{\bf 1}_{\mathsf{E}_{I,J}^c}]+\sum_{i=1}^jE_{I,J}{\bf 1}_{\mathsf{E}_{I,J}^c}\\
&\le & \frac{-\dmac}{4}\kappa^{-3/2}+\frac{\dmac}{8}\kappa^{-3/2}+ b^{1/2}C_1\kappa^{-3/2} \le \frac{-\dmac}{16}\kappa^{-3/2} \, .
\end{eqnarray*}
 The first equality follows from \eqref{e.errorprob} and a union bound.
\qed

\subsubsection{The otherwise sum}

Our upper bound on the otherwise sum, namely on the latter right-hand term in~(\ref{e.supweightuo}), will depend partly on there being few otherwise summands. To this end, we begin by stating and proving a simple claim giving a lower bound on the number of useful summands. 
Recall that it is hypothesised in Proposition~\ref{p.closelow} that  $2\chi \kappa^{-3/2} \in \N$ and $2\chi < 1$. 

We {\em claim} that, 
\begin{equation}\label{e.usefulclaim}
\textrm{for at least $(1-2\chi)\kappa^{-3/2} - 2$ indices $j \in \llbracket 0, \kappa^{-3/2}  - 1 \rrbracket$, the value $j \kappa^{3/2}$ is useful} \,, 
\end{equation}
where the notion of usefulness was specified after Proposition \ref{p.closelow}. 
In verifying this, we will describe 
two segments in~$\mathsf{c}$ as being {\em vertically consecutive} if their heights differ by~$\kappa^{3/2}$.
Consider the set formed from $\N \kappa^{3/2} \cap [0,1]$ by the removal of those elements that are the heights of members of $\mathsf{c}$. This set has cardinality at most $\chi \kappa^{-3/2} + 1$. Any element $j \kappa^{3/2}$ in the set forbids the values  $(j-1) \kappa^{3/2}$ and $j \kappa^{3/2}$ from being useful. Cumulatively, at most $2 \chi \kappa^{3/2} +2$ elements of $\kappa^{3/2 }\llbracket 0, \kappa^{-3/2}  - 1 \rrbracket$ are thus forbidden. The remainder, numbering at least $\kappa^{-3/2} - 2 \chi \kappa^{-3/2} -2$, are useful. This is as we claimed. 

We now present a bound on the upper tail of  the otherwise weight sum supremum
$\sup \big\{ W_o(\psi) : \psi \, \, \textrm{a $\mathsf{c}$-path} \big\}$.
Let $\psi$ again denote a given $\mathsf{c}$-path.
Let $O \in \N$ denote the number of otherwise sub-zigzags of~$\psi$. Recalling that $\mc{U}$ denotes the set of useful values, with $U = \vert \mc{U} \vert$, we have $(U+O) \kappa^{3/2} \leq 1$, because every sub-zigzag, useful or otherwise, has height at least $\kappa^{3/2}$;
so that~(\ref{e.usefulclaim}) implies that
\begin{equation}\label{e.oub}
O \leq 2\chi \kappa^{-3/2} + 2 \, .
\end{equation}
The expression $W_o(\psi)$ is the sum of weights of the otherwise sub-zigzags of $\psi$. Each otherwise sub-zigzag begins, but immediately leaves, a given segment in~$\mathsf{c}$, and ends in another such segment.
The two segments may be called the {\em starting} and {\em finishing} segments of the sub-zigzag. 
Labelling the otherwise sub-zigzags  $\big\{ Z_i: i \in \intint{O} \big\}$ in order of increasing height, we denote by $S_i$ and $F_i$ the starting and ending segments of $Z_i$. The respective heights of $S_i$ and $F_i$ will be denoted by $s_i$ and~$f_i$.  Note  that $s_i < f_i \leq s_{i+1} < f_{i+1}$ for $i \in \intint{O-1}$.

For $i \in \intint{O}$, let  $(u_i,s_i) \in S_i$ and $(v_i,f_i) \in F_i$ be chosen so that  the gradient of the line segment connecting $(u_i,s_i)$ and $(v_i,f_i)$ is maximal given that these endpoints lie in $S_i$ and $F_i$. This implies that 
 \begin{equation}\label{e.uvinfimum}
\vert v_i - u_i \vert = \inf \Big\{ \vert v - u \vert:  u,v \in \R, (u,s_i) \in S_i , (v,s_{i+1}) \in F_i \Big\} \, . 
 \end{equation}

We find then that the otherwise weight sum supremum
\begin{equation}\label{e.otherwisesup}
  \sup \big\{ W_o(\psi) : \psi \, \, \textrm{a $\mathsf{c}$-path} \big\} \, \, \textrm{is stochastically dominated by} \, \,
 \sum_{i = 1}^{O}  
 \big( \, \mathsf{W}_i + E_i \, \big) 
   \, ,
\end{equation}
where $\big\{ \mathsf{W}_i : i \in \llbracket 1, O \rrbracket \big\}$
is an independent sequence whose term $W_i$ has the distribution of $\weight_r \big[ (u_i,s_i) \to (v_{i},f_{i}) \big]$; and
where  $\big\{ E_i : i \in \llbracket 1, O \rrbracket \big\}$ is an independent sequence of error terms given by
$$
 E_i \, =  \, \sup_{\begin{subarray}{c} u,v \in \R : \\ (u,s_i) \in S_i \\ (v,f_{i}) \in F_{i}  \end{subarray}}  \Big( \, \weight_r \big[ (u,s_i) \to (v,f_{i}) \big] -  \weight_r \big[ (u_i,s_i) \to (v_{i},f_{i}) \big] \, \Big) \, .
$$
Reminiscently of useful sum analysis, the right-hand quantity in~(\ref{e.otherwisesup}) is a sum of a point-to-point $W$-sum and an error $E$-sum. The next two lemmas, which provide tail bounds on these two sums, are the outcomes of otherwise sum analysis that will be needed to prove Proposition~\ref{p.closelow}. 
\begin{lemma}\label{l.weighto}
There exist  $y \in \R$ and $s \in [0,1]$ that satisfy $\vert y \vert \leq r^{1/20} + 2\chi b \kappa^{-1/2}$
 and $s \leq 2 \chi$ such that
the random variable $\sum_{i = 1}^{O} \mathsf{W}_i$
is stochastically dominated by 
 $\weight_r \big[ (0,0) \to (y,s) \big]$.
 \end{lemma}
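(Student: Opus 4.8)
The plan is to realize the sum $\sum_{i=1}^O \mathsf{W}_i$ as (at most) the weight of a single long polymer, by concatenating the point-to-point weight-maximizing sub-zigzags implicit in the definition of the $\mathsf{W}_i$. First I would recall that $\mathsf{W}_i$ has the law of $\weight_r\big[(u_i,s_i)\to(v_i,f_i)\big]$, where $(u_i,s_i)\in S_i$ and $(v_i,f_i)\in F_i$, and that the otherwise sub-zigzags occur in vertical order $s_1<f_1\le s_2<f_2\le\cdots$. For each gap $[f_i,s_{i+1}]$ between consecutive otherwise sub-zigzags, insert a connecting zigzag; the simplest choice is to run straight (or along any admissible zigzag) from $(v_i,f_i)$ to $(u_{i+1},s_{i+1})$, both of which lie in $\R\times n^{-1}\Z$, so this is permissible provided the horizontal displacement meets the compatibility bound~(\ref{e.xycond}). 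The concatenation of these pieces, from $(u_1,s_1)$ up to $(v_O,f_O)$, is an $r$-zigzag whose weight is at least $\sum_{i=1}^O \mathsf{W}_i$ plus the (deterministic, in law after conditioning) weights of the connecting pieces. To get a clean stochastic domination, the better route is to use that the {\em supremum} over all zigzags with those endpoints---namely $\weight_r\big[(u_1,s_1)\to(v_O,f_O)\big]$---dominates the concatenation's weight; but since the connecting-piece weights are themselves random, one should instead invoke the superadditivity of last-passage weights: $\weight_r\big[(u_1,s_1)\to(v_O,f_O)\big]$ is at least the sum of the maximal weights of {\em any} decomposition into consecutive sub-journeys through the intermediate points, in particular through the $(v_i,f_i)$ and $(u_{i+1},s_{i+1})$. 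Dropping the (typically negative, hence favourable) contributions of the connecting pieces is not immediate, so the cleanest formulation is: $\sum_{i=1}^O \mathsf{W}_i \preceq \weight_r\big[(u_1,s_1)\to(v_O,f_O)\big] - \sum_{i=1}^{O-1}\weight_r\big[(v_i,f_i)\to(u_{i+1},s_{i+1})\big]$, and then bound the connecting weights below, or simply absorb them by comparing to a fixed endpoint pair.

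Next I would reduce the varying endpoints to fixed ones. By translation invariance of Brownian LPP (the scaling principle handles the vertical multiples of $n^{-1}$), the law of $\weight_r\big[(u_1,s_1)\to(v_O,f_O)\big]$ depends only on $v_O-u_1$ and $f_O-s_1$; and by polymer ordering, Lemma~\ref{l.sandwich}, the weight of any zigzag with starting point in a fixed segment and ending point in a fixed segment is dominated in an appropriate sense by one between extreme endpoints. Concretely, I would translate so that $(u_1,s_1)=(0,0)$, set $s=f_O-s_1\le 2\chi$ (this uses the lifetime bound~(\ref{e.oub}): the otherwise sub-zigzags have total duration at most $O\kappa^{3/2}\le 2\chi+2\kappa^{3/2}\le$ a small multiple of $\chi$, after adjusting constants), and set $y=v_O-u_1$. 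To bound $|y|$: each $u_i$ and $v_i$ lies in a segment contained in $[-r^{1/20},r^{1/20}]\times[0,1]$, so $|u_1|,|v_O|\le r^{1/20}$; but the displacement $v_O-u_1$ also receives contributions from the horizontal drift of the connecting pieces across the $O-1$ gaps, and since each segment has length $b\kappa$ and each connecting piece spans at most duration $\kappa^{3/2}$, one gets an extra term of order $O\cdot b\kappa = 2\chi b\kappa^{-1/2}\cdot(\text{const})$; so $|y|\le r^{1/20}+2\chi b\kappa^{-1/2}$, which is exactly the bound claimed. This requires care in tracking how the segments' width $b\kappa$ interacts with the count $O\le 2\chi\kappa^{-3/2}+2$, and is where I would spend the most attention.

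The main obstacle, then, is the bookkeeping that links the decomposition of $\sum_{i=1}^O \mathsf{W}_i$ into a single polymer weight while simultaneously (a) keeping the connecting pieces from contributing positively in a way that breaks the domination, and (b) controlling the accumulated horizontal displacement $y$ so that it stays within $r^{1/20}+2\chi b\kappa^{-1/2}$. For (a), I would argue that one may always choose the connecting zigzag between $(v_i,f_i)$ and $(u_{i+1},s_{i+1})$ to consist of a single horizontal segment followed by the minimal number of sloped segments, so its weight is dominated by $\weight_r$ of a journey with horizontal displacement at most $b\kappa$ over duration at most $\kappa^{3/2}$; these are i.i.d.-like small corrections whose sum is lower-order and can be absorbed by a vanishingly small enlargement of $y$ and $s$ (or, alternatively, one can altogether avoid them by choosing $f_i=s_{i+1}$ whenever the $\mathsf c$-path's structure permits, which it does when the otherwise sub-zigzags are consecutive). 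For (b), the bound on $O$ from~(\ref{e.oub}) is the key quantitative input, together with the fact that each segment's width is $b\kappa$ and we are summing at most $O$ such widths. Once $\sum_{i=1}^O \mathsf{W}_i$ is dominated by $\weight_r\big[(0,0)\to(y,s)\big]$ with $|y|\le r^{1/20}+2\chi b\kappa^{-1/2}$ and $s\le 2\chi$, the lemma is proved; this puts us in position to apply one-point upper-tail control (Lemma~\ref{l.onepointbounds}(1)) via the scaling principle in the subsequent step.
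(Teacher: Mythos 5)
Your overall instinct---realize $\sum_{i=1}^O \mathsf{W}_i$ as (at most) a single polymer weight---is right, but the in-place concatenation you propose breaks down at two points, and both failures trace to the same missing idea. First, your claim that $s = f_O - s_1 \le 2\chi$ is false: the otherwise sub-zigzags are interspersed among the useful ones throughout the lifetime, so the span from the start of the first otherwise sub-zigzag to the end of the last is generically of unit order, not of order $\chi$. The bound~(\ref{e.oub}) controls the \emph{number} $O$, hence the \emph{total duration} $\sum_{i=1}^O(f_i - s_i)$, but not the span $f_O - s_1$. Second, the gaps $[f_i, s_{i+1}]$ are exactly where the useful sub-zigzags live, so the connecting weights $\weight_r[(v_i,f_i)\to(u_{i+1},s_{i+1})]$ are typically large and negative (of total order comparable to the $\kappa^{-1}$ threshold one is trying to beat). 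Superadditivity then yields $\sum_i \mathsf{W}_i \le \weight_r[(u_1,s_1)\to(v_O,f_O)] - \sum_i\weight_r[(v_i,f_i)\to(u_{i+1},s_{i+1})]$, whose subtracted term is a \emph{positive} random quantity that cannot be ``absorbed'' without a separate concentration argument; and even if one supplied that argument, the resulting bound is not of the form $\weight_r[(0,0)\to(y,s)]$ with $s\le 2\chi$, which is what the lemma asserts and what the subsequent one-point tail application needs.

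The missing idea is a rearrangement, in the spirit of Lemma~\ref{l.rearrange}: rather than connecting the otherwise sub-zigzags where they sit, translate them so that they stack. The $O$ rectangles with corner pairs $(u_i,s_i)$, $(v_i,f_i)$ occupy pairwise disjoint vertical strips $\R\times[s_i,f_i]$, on which the noise is independent with a law invariant under vertical and horizontal translation. Setting $s=\sum_{i=1}^O(f_i-s_i)\le 2\chi$, $y=\sum_{i=1}^O(v_i-u_i)$, and $p_j=\bigl(\sum_{i=1}^j(v_i-u_i),\sum_{i=1}^j(f_i-s_i)\bigr)$ for $j\in\intint{O}$, the supremum $W$ of weights of $r$-zigzags from $(0,0)$ to $(y,s)$ that pass through every $p_j$ decomposes into independent crossings of the stacked rectangles, each equal in law to the corresponding $\weight_r[(u_i,s_i)\to(v_i,f_i)]$; hence $W$ has the law of $\sum_i\mathsf{W}_i$, and $W\le\weight_r[(0,0)\to(y,s)]$ since the latter is an unconstrained supremum. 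No connecting pieces appear, and the duration bound $s\le2\chi$ comes for free from the total-duration count. (The endpoint bound $\vert y\vert\le r^{1/20}+2\chi b\kappa^{-1/2}$ then follows from the segments lying in $[-r^{1/20},r^{1/20}]$ together with the count $O\le 2\chi\kappa^{-3/2}+2$ and the segment width $b\kappa$, much as in the bookkeeping you sketched.)
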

 {\bf Proof.} Set $s = \sum_{i=1}^O (f_i - s_i)$ and $y = \sum_{i=1}^O (v_i - u_i)$. Set $S$
 equal to the set of planar points $p_j = \big(  \sum_{i=1}^j (v_i - u_i)  , \sum_{i=1}^j (f_i - s_i) \big)$ where $j$ varies over $\intint{O}$.
 The value $\weight_r \big[ (0,0) \to (y,s) \big]$ is the supremum of the weights of $r$-zigzags from $(0,0)$ to $(y,s)$. This value is at least the supremum~$W$ of weights of $r$-zigzags with these endpoints but that also contain the set $S$. Note that $W$ has the law of  $\sum_{i = 1}^{O} \mathsf{W}_i$, because, if $Z$ denotes the maximizer zigzag in the optimization that specifies~$W$, the weight of the sub-zigzag of $Z$ between $p_i$ and $p_{i+1}$ equals $\mathsf{W}_i$ in law, and the various sub-zigzags have independent weights. \qed
 
 \begin{lemma}\label{l.e}
There exist $\Cmac,\cmac > 0$ such that the following holds. Suppose that 
 $r \geq  \Cmac  \kappa^{-195}$,
 $h \in \big[10^4 \, , \,   10^3 (r s_{i,i+1})^{1/18} \big]$ and $b \leq 2^{-4}$.
\begin{enumerate}
\item 
 For $i \in \llbracket 1, O \rrbracket$,
$$
\PP \Big(  E_i  \geq  h  (b \kappa)^{1/2} \Big) \leq  \Cmac \exp \big\{ - \cmac  h^2 \big\} \, .
$$
\item Suppose that $\chi \geq 2\kappa^{3/2}$. There exist $h_0 > 0$ and an error event $\mathsf{E}$ satisfying 
\begin{equation}\label{e.erroreventbound}
\PP \big( \mathsf{E} \big) \leq \Cmac \chi \kappa^{-3/2}   \exp \big\{ - \cmac  r^{1/9}  \kappa^{1/6} \big\}
\end{equation}
such that, for $h \geq h_0$,
\begin{equation}\label{e.gaussianld}
 \PP \Big( 
 \sum_{i = 1}^{O}  
  E_i  {\bf 1}_{\mathsf{E}^c} \geq h  \chi b^{1/2} \kappa^{-1} \Big) \leq \Cmac \exp \big\{ - \cmac \chi \kappa^{-3/2} h^2 \big\} \, .
\end{equation}
\end{enumerate}
\end{lemma}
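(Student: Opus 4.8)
The plan is to treat Lemma~\ref{l.e} as the exact analogue, for the otherwise sub-zigzags, of the error analysis already performed for the terms $E_{I,J}$ in the useful sum. The only structural novelty is that the $i$-th otherwise sub-zigzag has a variable lifetime $\tau_i := f_i-s_i$ rather than the fixed value $\kappa^{3/2}$; but $\tau_i\ge\kappa^{3/2}$ always (the heights $s_i<f_i$ are multiples of $\kappa^{3/2}$), and this lower bound is all the argument requires.

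For part~(1) I would first reduce $E_i$ to the quantity controlled by Proposition~\ref{p.fluc}. Since $(u_i,v_i)$ is chosen via~(\ref{e.uvinfimum}) to minimise $\vert v-u\vert$ over $(u,s_i)\in S_i$, $(v,f_i)\in F_i$, the difference of parabolic corrections $2^{-1/2}\tau_i^{-1}\big((v-u)^2-(v_i-u_i)^2\big)$ is nonnegative, so
$$\weight_r\big[(u,s_i)\to(v,f_i)\big]-\weight_r\big[(u_i,s_i)\to(v_i,f_i)\big]\ \le\ \weight^{\cup}_r\big[(u,s_i)\to(v,f_i)\big]-\weight^{\cup}_r\big[(u_i,s_i)\to(v_i,f_i)\big]\,,$$
exactly as in the $E_{I,J}$ case. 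Taking the supremum over $u\in S_i$, $v\in F_i$ and routing each difference of parabolic weights through the south-west corner of the two points, one bounds $E_i$ by a bounded multiple of the left-hand side of Proposition~\ref{p.fluc}. I then apply that proposition with ${\bf n}=r$, ${\bf s_{1,2}}=\tau_i$, ${\bf a}=b\kappa\,\tau_i^{-2/3}$ (so that a segment of horizontal length $b\kappa$ is precisely ${\bf a}\,\tau_i^{2/3}$, and ${\bf a}\le b\le 2^{-4}$ since $\tau_i^{2/3}\ge\kappa$), ${\bf K}=h$, and ${\bf x},{\bf y}$ the left endpoints of $S_i,F_i$. Its hypotheses hold: the segments lie in $[-r^{1/20},r^{1/20}]\times[0,1]$, so the separation condition follows from $r\ge\Cmac\kappa^{-195}$; $r\tau_i\ge r\kappa^{3/2}\ge 10^{32}c^{-18}$; ${\bf a}\le 2^{-4}$; and the admissible window $\big[10^4,10^3(r\tau_i)^{1/18}\big]$ for ${\bf K}$ is exactly the range hypothesised for $h$. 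Since ${\bf a}^{1/2}\tau_i^{1/3}=(b\kappa)^{1/2}$, this gives $\PP\big(E_i\ge h(b\kappa)^{1/2}\big)\le\Cmac\exp\{-\cmac h^2\}$, which is part~(1); the hypothesis $r\ge\Cmac\kappa^{-195}$ is consumed precisely here, in the worst case $\tau_i=\kappa^{3/2}$.

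For part~(2) I fix $\mathsf c$, so that the itinerary $\{(S_i,F_i)\}_{i=1}^{O}$ and $O$ are deterministic, with $O\le 2\chi\kappa^{-3/2}+2\le 3\chi\kappa^{-3/2}$ by~(\ref{e.oub}) and $\chi\ge 2\kappa^{3/2}$, and with the $E_i$ independent as recorded after~(\ref{e.otherwisesup}). Put $h_\dagger=10^3(r\kappa^{3/2})^{1/18}$, which lies in the part~(1) window for every $i$ because $\tau_i\ge\kappa^{3/2}$, and set $\mathsf E=\bigcup_{i=1}^{O}\{E_i\ge h_\dagger(b\kappa)^{1/2}\}$; part~(1) and a union bound over the $O$ indices give $\PP(\mathsf E)\le 3\chi\kappa^{-3/2}\,\Cmac\exp\{-\cmac h_\dagger^2\}\le\Cmac\,\chi\kappa^{-3/2}\exp\{-\cmac r^{1/9}\kappa^{1/6}\}$, which is~(\ref{e.erroreventbound}). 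On $\mathsf E^c$ one has $E_i{\bf 1}_{\mathsf E^c}\le\tilde E_i:=E_i{\bf 1}_{\{E_i<h_\dagger(b\kappa)^{1/2}\}}$, and the $\tilde E_i$ are independent, bounded by $h_\dagger(b\kappa)^{1/2}$, and inherit from part~(1) the tail $\PP(\tilde E_i\ge h(b\kappa)^{1/2})\le\Cmac e^{-\cmac h^2}$; hence $\E\exp\{(\tilde E_i/(b\kappa)^{1/2})^2/C\}\le 2$ for an absolute $C$, while $\E\tilde E_i\le\Cmac(b\kappa)^{1/2}$, so $\sum_{i=1}^{O}\E\tilde E_i\le 3\Cmac\,\chi b^{1/2}\kappa^{-1}$. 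For $h$ exceeding an absolute constant $h_0$ the mean contributes at most half of $h\chi b^{1/2}\kappa^{-1}$, and Proposition~\ref{p.Vershynin}(2), applied to $\sum_{i=1}^{O}(\tilde E_i-\E\tilde E_i)/(b\kappa)^{1/2}$ with $k=O\le 3\chi\kappa^{-3/2}$ and deviation $t=\tfrac12 h\chi\kappa^{-3/2}$, yields $\PP\big(\sum_{i=1}^{O}E_i{\bf 1}_{\mathsf E^c}\ge h\chi b^{1/2}\kappa^{-1}\big)\le\exp\{-\cmac h^2\chi\kappa^{-3/2}\}$, which is~(\ref{e.gaussianld}).

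I do not expect any single step to be genuinely hard; the proof is bookkeeping calibrated against the already-established $E_{I,J}$ estimate and Proposition~\ref{p.fluc}. The point needing most care is the reduction in part~(1): checking that the supremum defining $E_i$ is dominated by the exact quantity bounded in Proposition~\ref{p.fluc} (whose $\Delta^{\cup}$ pairs the larger horizontal endpoints), and that a fixed-length segment $b\kappa$ sits inside the scaling-consistent window of width ${\bf a}\,\tau_i^{2/3}$ with ${\bf a}\le b$ --- this is what forces the hypotheses $b\le 2^{-4}$ and, through the worst-case lifetime $\tau_i=\kappa^{3/2}$, $r\ge\Cmac\kappa^{-195}$.
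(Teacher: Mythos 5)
Your proposal is correct and follows essentially the same route as the paper: part (1) bounds $E_i$ by the parabolically adjusted difference $\Delta^{\cup}$ using the minimality of $\vert v_i-u_i\vert$ and then applies Proposition~\ref{p.fluc} with the identical parameter settings (${\bf n}=r$, ${\bf s_{1,2}}=s_{i,i+1}$, ${\bf a}=b\kappa\,s_{i,i+1}^{-2/3}$, ${\bf K}=h$), and part (2) defines the error event by truncating each $E_i$ at the top of the admissible window, uses a union bound over $O\le 3\chi\kappa^{-3/2}$ indices for~(\ref{e.erroreventbound}), and applies the sub-Gaussian concentration of Proposition~\ref{p.Vershynin}(2) to the independent truncated variables for~(\ref{e.gaussianld}). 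Your explicit centering before invoking Proposition~\ref{p.Vershynin}(2) is a minor tidying of the paper's ``for all large $h$'' step, not a different argument.
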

{\bf Proof: (1).}
Note that, for  $u,v \in \R$ with $(u,s_i) \in S_i$ and $(v,s_{i+1}) \in S_{i+1}$, 
\begin{eqnarray*}
 & &  \weight_r \big[ (u,s_i) \to (v,s_{i+1}) \big] -  \weight_r \big[ (u_i,s_i) \to (v_{i+1},s_{i+1}) \big]  \\
 & = & \Delta^{\cup} \, \weight_n \big[ ( \{ u_i,u\} ,s_1) \to ( \{v_i,v\},s_2) \big] \, + \,  2^{-1/2} \frac{(v_i - u_i)^2}{s_2-s_1} -  2^{-1/2} \frac{(v - u)^2}{s_2-s_1} \\
  & \leq &  \Delta^{\cup} \, \weight_n \big[ ( \{ u_i,u\} ,s_1) \to ( \{v_i,v\},s_2) \big] \, ,
\end{eqnarray*}
where the inequality is due to~(\ref{e.uvinfimum}). Thus, $E_i \leq  \Delta^{\cup} \, \weight_n \big[ ( \{ u_i,u\} ,s_1) \to ( \{v_i,v\},s_2) \big]$. 
We now apply Proposition~\ref{p.fluc} with parameter settings ${\bf n} = r$, ${\bf s_{1,2}} = s_{i,i+1}$, ${\bf a} = s_{i,i+1}^{-2/3}  b \kappa$ and ${\bf K} =h$, and with ${\bf x}$ and ${\bf y}$ equal to the left endpoints of $S_i$ and 
$S_{i+1}$. Note that the hypothesis ${\bf a} \leq 2^{-4}$ holds due to $b \leq 2^{-4}$ and $s_{i,i+1} \geq \kappa^{3/2}$.
The hypothesis 
 $\big\vert {\bf x} - {\bf y}  \big\vert s_{i,i+1}^{-2/3} \leq 2^{-2} 3^{-1} \rsc  (r s_{i,i+1})^{1/18}$ holds due to $\vert {\bf x} \vert , \vert {\bf y} \vert \leq r^{1/20}$, which follows from $S_1, S_2 \in \mathsf{c}$; to $s_{i,i+1} \geq \kappa^{3/2}$; and to the hypothesised lower bound on $r$. 
The hypothesis ${\bf n} \, {\bf \tot} \geq 10^{32} c^{-18}$ is due to $r \kappa^{3/2} \geq 10^{32} c^{-18}$, a consequence of the hypothesised lower bound on $r$ alongside $c < 1$ and $\kappa<1$. The hypothesis  ${\bf K} \in \big[10^4 \, , \,   10^3 (n \tot)^{1/18} \big]$ holds because this condition is imposed on $h$. Lemma~\ref{l.e}(1) follows from this application of Proposition~\ref{p.fluc}. 

{\bf (2).} Set the error event $\mathsf{E}$ by defining  $\mathsf{E} =\cap_{i \in \llbracket 0, O - 1 \rrbracket} \mathsf{E}_i$, where
$$
 \mathsf{E}_i =  
 \Big\{ E_i \geq (b\kappa)^{1/2}10^3 \big( r s_{i,i+1}\big)^{1/18}  \Big\} \, .
$$
Since $\chi \geq 2\kappa^{3/2}$,~(\ref{e.oub}) implies that  $\vert O \vert \leq 3 \chi \kappa^{-3/2}$.  
  From this bound, and since
$s_{i,i+1} \geq \kappa^{3/2}$ for all concerned indices~$i$, we obtain~(\ref{e.erroreventbound}), from the conclusion of Lemma~\ref{l.e}(1) and a union bound.

The random variables $E_i {\bf 1}_{\mathsf{E}_i^c}$ verify the conclusion of Lemma~\ref{l.e}(1) for all $h \geq 10^4$; that is, even after the removal of the upper bound on $h$ hypothesised in that result. Hence, they are,  after scaling by the factor $(b\kappa)^{-1/2}$,  sub-Gaussian variables as in Proposition~\ref{p.Vershynin}(2). Since~(\ref{e.oub}) holds, (\ref{e.gaussianld}) is implied by the bound $\PP \big( \sum_{i=1}^K E'_i \geq h K \big)   \leq e^{-\Theta(1) K h^2}$ for all large $h,$
where $K = \lceil 2\chi \kappa^{-2/3} \rceil + 2$ and $E'_i = (b\kappa)^{-1/2} E_i {\bf 1}_{\mathsf{E}_i^c}$. Since the $E_i'$ are by definition independent, the desired bound follows from Proposition~\ref{p.Vershynin}(2).  \qed

\subsubsection{Proof of Proposition~\ref{p.closelow}}
We are ready to return to the bound (\ref{e.supweightuo}) in order to prove the upper tail bound stated by this proposition. Indeed, by~(\ref{e.supweightuo}),~(\ref{e.usefulweightsum}) and~(\ref{e.otherwisesup}),
 $$
 \PP \Big( \weight_r \big[ \mathsf{c}\textrm{-path} \big] \geq - d_1 \kappa^{-1} \Big) \leq A_1 + A_2 + A_3 \, ,
 $$
 where here we set $A_1 = \PP \big( \sum_{i=1}^{\vert \mc{U} \vert} U_i \geq -  2d_1 \kappa^{-1} \big)$,
 $A_2 = \PP \big( \sum_{i=1}^{O} \mathsf{W}_i \geq  2^{-1} d_1 \kappa^{-1} \big)$ and $A_3 = \PP \big( \sum_{i=1}^{O} E_i \geq  2^{-1} d_1 \kappa^{-1} \big)$.

To find an upper bound on $A_1$, note that 
$\sum_{i=1}^{\vert \mc{U} \vert} U_i \geq - 2d_1 \kappa^{-1}$ entails that $\sum_{i=1}^{\kappa^{-3/2}} U_i \geq - 2d_1 \kappa^{-1}$, where on the right-hand side, further $U_i$-terms have been introduced consistently with the conditions on this sequence. Provided that  $b \leq b_0$ and $d_1 \leq d/8$, we may apply 
Proposition~\ref{p.weightuseful}
 with $j = \kappa^{-3/2}$ to find that 
$$
A_1 \leq \exp \big\{ - d \kappa^{-3/2} \big\} \, .
$$ 
Let the parameters $y \in \R$ and $s \in [0,1]$ satisfy the hypotheses of Lemma~\ref{l.weighto}. This result implies that 
$A_2 \leq  \PP \big( \weight_r \big[ (0,0) \to (y,s) \big]  \geq 2^{-1} d_1 \kappa^{-1} \big)$.
From $s \leq 2\chi$,  and
the one-point upper tail bound
 $\PP \big( \weight_r \big[ (0,0) \to (y,s) \big]  \geq h s^{1/3}  \big) \leq C \exp \big\{ - c h^{3/2} \big\}$ offered by  Lemma~\ref{l.onepointbounds}(1) via the scaling principle, we see that
$$
A_2 \leq C \exp \big\{ -  2^{-2} c \chi^{-1/2} d_1^{3/2} \kappa^{-3/2}   \big\} \, . 
$$
In the notation of Lemma~\ref{l.e}(2), $A_3 \leq  \PP \big( \sum_{i=1}^{O} E_i {\bf 1}_{\mathsf{E}^c} \geq  2^{-1} d_1 \kappa^{-1} \big) + \PP \big(\mathsf{E} \big)$. Choose $h$ in  Lemma~\ref{l.e}(2) so that $h  \chi b^{1/2} = 2^{-1} d_1$; we ensure the needed condition that $h \geq h_0$ by insisting that $\chi > 0$ be small enough (as we do by demanding that $\chi \leq \chi_0$ in Proposition~\ref{p.closelow}). From Lemma~\ref{l.e}(2), we thus learn that 
$$
A_3 \leq  \Cmac \exp \big\{ - \cmac  \kappa^{-3/2} \chi^{-1} b^{-1}  2^{-2} d_1^2 \big\}  +  \Cmac \chi \kappa^{-3/2}   \exp \big\{ - \cmac   r^{1/9}  \kappa^{1/6} \big\} \, .
$$
Applying $r \geq \kappa^{-15/2}$
 in the guise $r^{1/9}  \kappa^{1/6} \geq \kappa^{-2/3}$, we obtain Proposition~\ref{p.closelow} by choosing (or adjusting) the positive parameters $\kappa_0$, $d_1$ and $d_2$ to be suitably small. \qed

\subsection{Deriving Theorem~\ref{t.nocloseness}}

Throughout this section, we suppose that $\tza^{-1/4} > C\log n$ and  
 that $\ell \in \N$ satisfies $2^{\ell}\leq n \tza^{40}$, since these conditions are hypothesised by the result that we seek to show.

To apply Proposition~\ref{p.closelow}, let $\kappa > 0$ satisfy $\kappa^{3/2} \in n^{-1}\Z$, $\kappa^{-3/2} \in \N$ and {$b\kappa/8 \in  2^{-2\ell/3} \tza \cdot [1,2]$}, where $b$ appears in the statement of the proposition. {Let $\kappa$ also satisfy $\kappa^{3/2} \leq 2^{-1-\ell}\chi$.}

\begin{figure}[t]
\centering{\epsfig{file=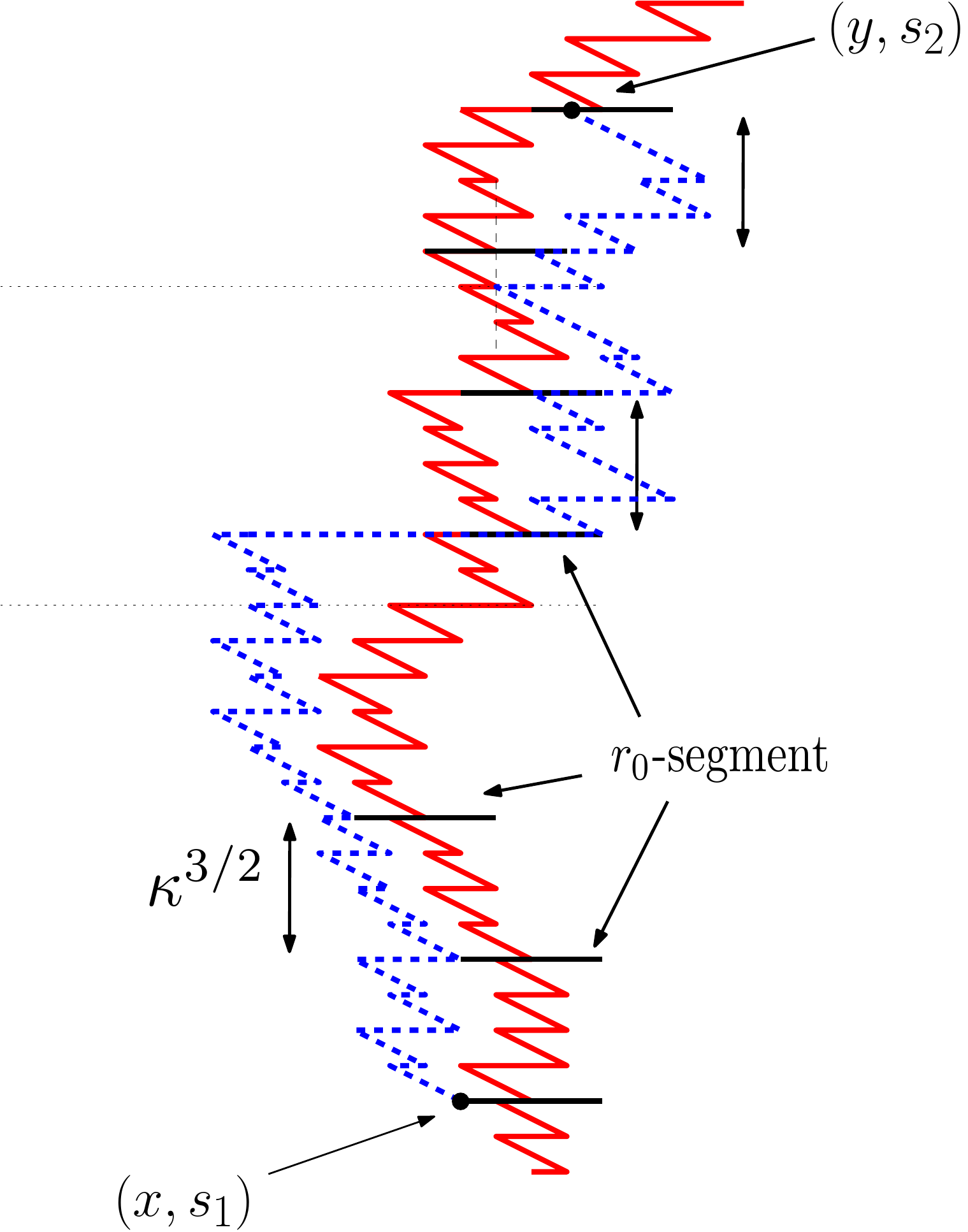, scale=0.4}}
\caption{The solid zigzag is $\phi$ and the dashed zigzag is a $\mathsf{c}$-path which passes through the elements of $\mathsf{c}$, denoted by the black horizontal planar lines of length $b\kappa$, a plentiful $r_0$-segment collection. Such segments occur regularly at vertical separation being integer multiples of $\kappa^{3/2}$. 
 }\label{f.plentifulsegment}
\end{figure} 

As we derive Theorem~\ref{t.nocloseness}, we will define the terms segments, plentiful segment collection and $\mathsf{c}$-path.
In doing so, we abuse the notation employed in  Proposition~\ref{p.closelow}. However, as we will explain shortly, the different usages coincide when suitable parameters are specified and a simple change of coordinates is made.

Suppose given an $n$-zigzag $\phi$ from $(0,0)$ to $(0,1)$. 
For now, we take an arbitrary such $\phi$, though we later impose the condition, seen in Theorem~\ref{t.nocloseness}, {that $\phi$ be $\theta^{-1/40}$ regular.}

Further suppose given $s_1,s_2 \in n^{-1}\Z \cap [0,1]$ for which $2^{-1-\ell} \leq \tot \leq 2^{-\ell}$; Recall that our choice of~$\kappa$ ensures that {$\kappa^{3/2} \leq \chi \tot$}. Let $\remainder \in n^{-1} \llbracket 0, n \kappa^{3/2} - 1 \rrbracket$. An {\em  $\remainder$-segment} is a horizontal planar interval of the form 
$$
I_{j/n \, + \, \remainder} := \big[ \phi\big(j/n + \remainder \big) -  b \kappa/2,   \phi\big(j/n + \remainder \big) + b \kappa/2 \big] \times \{ j n^{-1} + \remainder \}
$$ 
where $j/n \in \kappa^{3/2}\Z$ and  $j/n + r_0 \in n^{-1} \llbracket ns_1,ns_2    \rrbracket$: see Figure \ref{f.plentifulsegment}.

 A plentiful $\remainder$-segment collection is a subset of $\big\{ I_{j/n + \remainder}:  j/n \in \kappa^{3/2}\Z \, , \, j/n + r_0 \in n^{-1} \llbracket ns_1,ns_2    \rrbracket \big\}$
whose cardinality is at least $(1-\chi) \tot \kappa^{-3/2}$, where note that the quantity $\tot \kappa^{-3/2}$ differs from the cardinality of the set
$\big\{ j/n \in \kappa^{3/2}\Z : j/n + r_0 \in n^{-1} \llbracket ns_1,ns_2    \rrbracket \big\}$ by at most one. If $v^-$ and $v^+$ denote the lowest and highest vertical coordinates assumed by elements in a plentiful $\remainder$-segment collection, note that
\begin{equation}\label{e.lowesthighest}
  \big\vert v^-  - s_1 \big\vert   \vee  \big\vert v^+ - s_2 \big\vert  
  \leq 2\chi \tot \, .
\end{equation}
Indeed, the left-hand side is at most $\chi \tot + \kappa^{3/2}$, and $\kappa^{3/2} \leq \chi \tot$.

Let $\mc{C}_{\remainder}$ denote the set of plentiful $\remainder$-segment collections. Set $\mc{C}$ equal to the union of $\mc{C}_{\remainder}$ as $\remainder$ varies over $n^{-1} \llbracket 0, n \kappa^{3/2} - 1 \rrbracket$. For given such $\remainder$, let $\mathsf{c} \in \mc{C}_{\remainder}$.
A $\mathsf{c}$-path is an $n$-zigzag 
whose starting moment is the lowest height of an element of $\mathsf{c}$;
whose ending moment is the greatest such height; and that 
 intersects every element of $\mathsf{c}$.
Let $\weight_n \big[ \mathsf{c}\textrm{-path} \big]$ denote the supremum of the weights of $\mathsf{c}$-paths. 

Define  
$$
{\mathsf{HighSlenderWeight} \big( s_1,s_2,1-\chi;\phi \big) \, :=} \, \bigcup_{\mathsf{c} \in \mc{C}} \,  \Big\{ \, \weight_n \big[ \mathsf{c}\textrm{-path} \big] \geq -  2^{-1} \redconst \kappa^{-1} \tot \, \Big\} \, . 
$$

Recall the event $\uni_n(\cdot)$ from Subsection~\ref{s.nosubpath}.
The argument to prove Theorem \ref{t.nocloseness}  proceeds by showing that, on the event $\uni_n(\cdot)$---with a choice of this event's parameter that will make it typical---the non-occurrence of $\mathsf{LowSlenderWeight}^*(\ell, \tza, 1 - \chi;  \phi)$ implies the occurrence of ${\mathsf{HighSlenderWeight} \big( s_1,s_2,1-\chi;\phi \big) \,}$ for some $s_1$ and $s_2.$  The latter event is then shown to be rare, implying the desired rarity of $\mathsf{LowSlenderWeight}^*(\ell, \tza, 1 - \chi;  \phi)$.

Recall the notion of regularity from \eqref{regular-criterion}.
 and that the definition of $\mathsf{LowSlenderWeight}^*(\ell, \tza , 1 - \chi;  \phi)$ involves a constant $d_0.$
\begin{lemma}\label{l.lowcontain} 
 Suppose that $\chi^{1/3} \leq 2^{-11/3}\redconst$. 
Then, for any zigzag $\phi$ from $(0,0)$ to $(0,1)$ that is $\theta^{-1/4}$-regular, 
\begin{eqnarray*}
 & &  \uni \big( \frac{b}{10}\tza^{-1}\big) \, \cap \, \neg \,  \mathsf{LowSlenderWeight}^*(\ell, \tza , 1 - \chi;  \phi) \\
 & \subseteq & \bigcup_{\begin{subarray}{c} 0 \leq s_1 \leq s_2 \leq 1 \\
\tot \in [2^{-1-\ell},2^{-\ell}] \\   \end{subarray}} \mathsf{HighSlenderWeight}(s_1,s_2,1-\chi; \phi) \, .  
\end{eqnarray*}
\end{lemma}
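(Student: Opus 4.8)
### Plan for proving Lemma~\ref{l.lowcontain}

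\textbf{Setup and overall strategy.} The plan is to argue contrapositively at the level of zigzags. Suppose the event $\uni\big(\tfrac{b}{10}\tza^{-1}\big)$ occurs but none of the events $\mathsf{HighSlenderWeight}(s_1,s_2,1-\chi;\phi)$ occurs, for any admissible pair $(s_1,s_2)$ with $s_{1,2}\in[2^{-1-\ell},2^{-\ell}]$. I want to deduce that $\mathsf{LowSlenderWeight}^*(\ell,\tza,1-\chi;\phi)$ holds, i.e.\ that every $(\phi,\tza,1-\chi)$-close zigzag $\psi$ from some $(x,s_1)$ to $(y,s_2)$ with $2^{-1-\ell}\le s_{1,2}\le 2^{-\ell}$ has $\tot^{-1/3}\weight_n^*[\,(x,s_1)\to(y,s_2)\,;\,(\phi,\tza,1-\chi)\text{-close}\,]\le -\redconst\tza^{-1}$. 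So fix such a $\psi$; I must bound its weight from above by $-\redconst\kappa^{-1}\tot$ (recall $\kappa^{-1}$ is comparable to $2^{2\ell/3}\tza^{-1}$, so $\kappa^{-1}\tot$ is comparable to $\tot^{2/3}\tza^{-1}$, giving the claimed $\tot^{1/3}$-normalized bound after we divide).

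\textbf{From a close zigzag to a $\mathsf{c}$-path.} The key combinatorial step is to show that a $(\phi,\tza,1-\chi)$-close zigzag $\psi$ naturally determines a plentiful $\remainder$-segment collection $\mathsf{c}\in\mc{C}_{\remainder}$ for an appropriate phase $\remainder\in n^{-1}\llbracket 0, n\kappa^{3/2}-1\rrbracket$, such that $\psi$ (restricted appropriately) is a $\mathsf{c}$-path, or at least a concatenation of a $\mathsf{c}$-path with two short end-pieces. Concretely: at each height $j/n+\remainder$ lying in $\kappa^{3/2}\Z+\remainder$ within $[s_1,s_2]$ at which the closeness bound $|\psi(h)-\phi(h)|\le \tot^{2/3}\tza\le b\kappa/8$ holds, the point $(\psi(h),h)$ lies inside the segment $I_{j/n+\remainder}$ (here I use $b\kappa/8\ge 2^{-2\ell/3}\tza$ and $\tot\le 2^{-\ell}$, so $\tot^{2/3}\tza\le 2^{-2\ell/3}\tza\le b\kappa/8 < b\kappa/2$). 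Since closeness holds at a proportion at least $1-\chi$ of heights in $[s_1,s_2]\cap n^{-1}\Z$ and includes the endpoints, a pigeonhole over the $n\kappa^{3/2}$ possible phases $\remainder$ produces one phase for which at least $(1-\chi)\tot\kappa^{-3/2}$ of the heights in $(\kappa^{3/2}\Z+\remainder)\cap[s_1,s_2]$ are "good''; the corresponding segments form the desired $\mathsf{c}\in\mc{C}_\remainder$, and $\psi$ intersects every one of them. The two end-pieces of $\psi$ outside the span $[v^-,v^+]$ of $\mathsf{c}$ together have vertical extent at most $4\chi\tot$ by \eqref{e.lowesthighest}; I will need to bound their weight contribution separately (see below). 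This is the step I expect to be the main obstacle, since the bookkeeping around the phase $\remainder$, the possibility that $s_1,s_2$ themselves are not in $\kappa^{3/2}\Z+\remainder$, and keeping the segment collection genuinely plentiful all require care; the regularity hypothesis on $\phi$ enters here to ensure the segments $I_{j/n+\remainder}$ stay inside $[-r^{1/20},r^{1/20}]\times[0,1]$ as demanded in the definition of plentiful collections (with $r=n$), because $\theta^{-1/4}$-regularity bounds $|\phi(h)|\lesssim \theta^{-1/4}$, which is $\le n^{1/20}$ by the hypothesis $\theta^{-1/4}>C\log n$ combined with $2^\ell\le n\theta^{40}$, after adjusting constants.

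\textbf{Bounding the pieces and concluding.} Given the decomposition, $\weight_n(\psi)$ is at most the weight of its $\mathsf{c}$-path portion plus the weights of the (at most two) end sub-zigzags. The $\mathsf{c}$-path portion has weight at most $\weight_n[\mathsf{c}\text{-path}]$, which under our standing assumption that $\mathsf{HighSlenderWeight}(v^-,v^+,1-\chi;\phi)$ fails is less than $-2^{-1}\redconst\kappa^{-1}(v^+-v^-)$; since $v^+-v^-\ge \tot(1-2\chi)\ge \tot/2$, this is at most $-4^{-1}\redconst\kappa^{-1}\tot$. Each end sub-zigzag starts and ends in $\mathsf{c}$-segments (or at an endpoint of $\psi$) and runs over a vertical span of total size at most $4\chi\tot$; on the event $\uni\big(\tfrac{b}{10}\tza^{-1}\big)$, invoking the uniform weight control of Subsection~\ref{s.nosubpath} (via \eqref{uniformbound}-style bounds on $\low$ and $\high$, applied at the dyadic scale of $4\chi\tot$ and with the horizontal displacement controlled by closeness plus segment width, i.e.\ $\lesssim \tot^{2/3}$), the parabolic weight of each end piece is at most $\tfrac{b}{10}\tza^{-1}(4\chi\tot)^{1/3}$ in absolute value up to the bounded parabolic correction. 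Using $\kappa^{-1}\asymp 2^{2\ell/3}\tza^{-1}\asymp \tot^{-2/3}\tza^{-1}$, the end-piece contribution is $o(\kappa^{-1}\tot)$ when $\chi$ is small, and under the hypothesis $\chi^{1/3}\le 2^{-11/3}\redconst$ it is dominated by, say, $\tfrac{1}{8}\redconst\kappa^{-1}\tot$. Summing: $\weight_n(\psi)\le -4^{-1}\redconst\kappa^{-1}\tot + 2\cdot\tfrac{1}{8}\redconst\kappa^{-1}\tot = 0$... so I should instead extract a slightly stronger bound from the $\mathsf{c}$-path term or tighten the choice of $\chi_0$; the cleaner route is to note $\weight_n^*[\cdots]\le -2^{-1}\redconst\kappa^{-1}\tot + (\text{end pieces})$ and choose $\chi_0$, $\redconst$ so that the end-piece error plus the factor loss from $v^+-v^-\ge \tot/2$ still leaves the final normalized weight below $-\redconst\tza^{-1}$ (using $\redconst\kappa^{-1}\tot \asymp \redconst \tot^{1/3}\tza^{-1}\cdot\tot^{-... }$; the constants are arranged in Proposition~\ref{p.closelow} precisely for this). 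Dividing through by $\tot^{1/3}$ and relabelling constants then gives $\neg\,\mathsf{LowSlenderWeight}^*$ fails, which is the contrapositive of the claimed inclusion. I would carry this out by: (i) fixing $\psi$ and the phase $\remainder$; (ii) constructing $\mathsf{c}$ and verifying plentifulness and the regularity-based horizontal bound; (iii) invoking the failure of $\mathsf{HighSlenderWeight}$ at $(v^-,v^+)$; (iv) controlling the two end pieces on $\uni(\cdot)$; (v) assembling the inequality and normalizing.
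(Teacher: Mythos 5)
Your plan follows the paper's proof essentially step for step: the pigeonhole over the $n\kappa^{3/2}$ phases $\remainder$ to extract a plentiful segment collection $\mathsf{c}$ from the closeness of $\psi$ to $\phi$ (using $\tot^{2/3}\tza\leq b\kappa/8$ so that good heights land inside segments), the decomposition $\psi=\psi^-\circ\psi^0\circ\psi^+$ with $\weight_n\big[\mathsf{c}\textrm{-path}\big]$ compared to $\weight_n(\psi^0)$, and the use of $\uni$ together with the $\tza^{-1/4}$-regularity of $\phi$ to control the two end pieces. Arguing by contraposition rather than directly is immaterial.

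The step that would fail as written is your bound on the end pieces. You propose to invoke the uniform control of Subsection~\ref{s.nosubpath} ``at the dyadic scale of $4\chi\tot$,'' but $\uni(\zeta)$ at a dyadic scale $\ell'$ only constrains journeys whose duration lies in $(2^{-\ell'-1},2^{-\ell'}]$; the durations of $\psi^-$ and $\psi^+$ are merely bounded \emph{above} by roughly $2\chi\tot$ and can be as short as a single microscopic step $n^{-1}$, in which case no term in the intersection defining $\uni$ applies to them at all. The missing device is superadditivity: extend, say, $\psi^-$, with endpoints $(x_1,s_1)$ and $(x_2,v^-)$, to the journey $(x_1,s_1)\to(x_2,v^-+\chi\tot)$, whose duration \emph{is} of order $\chi\tot$, and use $\weight_n(\psi^-)\leq \weight_n\big[(x_1,s_1)\to(x_2,v^-+\chi\tot)\big]-\weight_n\big[(x_2,v^-)\to(x_2,v^-+\chi\tot)\big]$; both right-hand terms can then be bounded on $\uni$, with the regularity of $\phi$ (plus closeness of $\psi$ to $\phi$ at the heights $s_1$ and $v^-$) keeping $|x_1-x_2|$ small enough that the parabolic correction and the $|x|\vee|y|\leq n^{1/20}$ window are respected. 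Separately, you are right to distrust your constants: with the thresholds $-2^{-1}\redconst\kappa^{-1}\tot$ in $\mathsf{HighSlenderWeight}$ and $-\redconst\tza^{-1}$ in $\mathsf{LowSlenderWeight}^*$ as given, adding a positive end-piece error to the less negative threshold cannot yield the more negative one, so shrinking $\chi_0$ alone does not close the argument; the relative placement of the factor of two between the two definitions must be adjusted (a harmless relabelling of $\redconst$, but one that needs to be made explicit rather than deferred).
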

{\bf Proof.}
When $\neg \,  \mathsf{LowSlenderWeight}^*(\ell, \tza , 1 - \chi;  \phi)$
occurs, there exist $(x,s_1),(y,s_2) \in \R \times n^{-1}\Z$ for which $2^{-1-\ell}\leq \tot \leq 2^{-\ell}$, and a $(\phi,\tza,1-\chi)$-close zigzag $\psi$
from $(x,s_1)$ to $(y,s_2)$ 
that satisfies 
\begin{equation}\label{e.weightpsi}
\weight_n (\psi) > -  \redconst \kappa^{-1}\tot \, . 
\end{equation}
By our choice of $b$ and $\kappa$, the zigzag $\psi$ satisfies the bound 
\begin{equation}\label{e.psiphi}
\big\vert \psi(s)  - \phi(s) \big\vert \leq b\kappa/2
\end{equation}
 for at least $(1 - \chi) \big\vert  [s_1,s_2] \cap n^{-1}\Z \big\vert$ values of $s \in [s_1,s_2] \cap n^{-1}\Z$. When elements of $[s_1,s_2] \cap n^{-1}\Z$  are identified if they differ by a multiple of $\kappa^{3/2}$, they are partitioned into classes which are naturally indexed by $\remainder$. Since $\tot \geq \kappa^{3/2}$, the number of classes equals $n \kappa^{3/2}$. 
At least one of these classes---call it $\mc{D}_{r_0}$---contains at least $n^{-1}\kappa^{-3/2} (1 - \chi)  \big\vert  [s_1,s_2] \cap n^{-1}\Z \big\vert = \kappa^{-3/2} (1 - \chi)  \big( \tot + n^{-1}\big)$ elements~$s$ that satisfy~(\ref{e.psiphi}).
Let $\mathsf{c}$ denote the set of $I_{j/n + r_0}$ indexed by those $j/n + r_0 \in \mc{D}_{r_0}$ such that~(\ref{e.psiphi}) is satisfied with $s = j/n + r_0$ (where note that $j/n \in \kappa^{3/2} \Z$ and $j/n + r_0 \in n^{-1} \llbracket ns_1,ns_2    \rrbracket$).

Note that $\vert \mathsf{c} \vert \geq \kappa^{-3/2} (1 - \chi) \tot$.
Thus, $\mathsf{c}$ is a plentiful $r_0$-segment collection; so that $\mathsf{c} \in \mc{C}$. We claim that 
\begin{equation}\label{e.fortyeight}
\weight_n \big[ \mathsf{c}\textrm{-path} \big] \, \geq \, -   2^{-1}\tot\redconst \kappa^{-1} \, . 
\end{equation}
To verify this, let $\psi^0$ denote the sub-zigzag of $\psi$ from the entry of $\psi$ to the lowest vertical coordinate $v^-$ in $\mc{D}_{r_0}$ to its departure from the highest such coordinate~$v^+$. Note that 
$\weight_n \big[ \mathsf{c}\textrm{-path} \big]$  is at least $\weight_n (\psi^0)$. To bound below the latter weight, 
 we write $\psi$ as a concatenation $\psi^- \circ \psi^0 \circ \psi^+$.

{We now claim that, since $\phi$ is regular, the occurrence of the event $\uni_n\big(b\tza^{-1}\big)$ entails that $\weight_n(\psi^-)$ and $\weight_n(\psi^+)$
are at most $$2\big( 4\chi \tot \big)^{1/3} \tot^{2/3}\kappa^{-1}= 2^{5/3}\chi^{1/3} \tot\kappa^{-1}.$$

Admitting the claim, note that, by weight additivity, $\weight_n(\psi^0)$ equals  $\weight_n(\psi) -  \weight_n(\psi^-) - \weight_n(\psi^+)$; by~(\ref{e.weightpsi}), and the claim, this weight is thus seen to be at least  
$- \tot \redconst \kappa^{-1} + 2^{8/3}\chi^{1/3} \tot \kappa^{-1}$. Since $2^{8/3}\chi^{1/3} \leq 2^{-1}\redconst$, we have verified~(\ref{e.fortyeight}) for all $n$ high enough. In view of the definition of the $\mathsf{HighSlenderWeight}$ event, this reduces the proof of Lemma~\ref{l.lowcontain} to deriving the claim. 

To this end, note first that bounds on $\weight_n(\psi^-)$ and $\weight_n(\psi^+)$ cannot be obtained directly from Proposition~\ref{p.onepoint}, since the duration of these zigzags may be too small for this result to offer a meaningful bound. This said, the claimed bounds follow easily from superadditivity. 
We will provide only the argument for $\psi^-$.  Let  $(x_1,s_1)$ and $(x_2, v_-)$ denote this zigzag's endpoints (recall from \eqref{e.lowesthighest}, that $v_--s_1\le 2\chi \tot$). Now consider the point $(x_2, v_-+\chi \tot).$ 
We then have 
$$
\weight_n(\psi_-)\le \weight_n \big[ (x_1,s_1)\to (x_2,v_-+\chi \tot) \big] - \weight_n \big[ (x_2,v_-)\to (x_2,v_-+\chi \tot) \big] \, .
$$

{Now, recall that, when $\phi$ is $\theta^{-1/4}$-regular, we have that,  for any $(u,h_1),(v,h_2) \in \phi$,  
\begin{equation*}
\big\vert v -u \big\vert \leq h_{1,2}^{2/3}\theta^{-1/4}\le  h_{1,2}^{2/3} \max(\theta^{-50/4}, nh_{1,2})^{1/50}.
\end{equation*}
In particular, this means that $x_1$ and $x_2$ are at most $n^{1/50}$ in absolute value, and $|x_1-x_2|\le (4\chi\tot)^{2/3}(n4\chi \tot)^{1/50}.$ Note that this last deduction needs $4\chi\tot\ge \frac{\tza^{-50/4}}{n}$ for all fixed $\chi$ and all large enough $n$, which condition is implied by our stronger standing assumption that  $2^{-\ell}\ge \frac{\tza^{-40}}{n}.$  } 

We can now conclude that the event $\uni_n\big(\frac{b}{10}\tza^{-1}\big)$ implies that the quantities $\weight_n \big[ (x_1,s_1)\to (x_2,v_-+\chi \tot) \big]$ and  $\weight_n \big[ (x_2,v_-)\to (x_2,v_-+\chi \tot) \big]$ are at most $\big( 4\chi \tot \big)^{1/3} (\frac{b}{8}\theta^{-1}).$
We prove only the {\em claim} that this bound holds for the first term, because a similar argument works for the second. The occurrence of $\uni_n\big(\frac{b}{10}\tza^{-1}\big)$ entails that the parabolically adjusted weight satisfies 
$$
\weight^\cup_n \big[ (x_1,s_1)\to (x_2,v_-+\chi \tot) \big] \le \frac{b}{10}\tza^{-1} \, ,
$$ 
while the parabolic correction term $ 2^{-1/2}(x_1-x_2)^2 (v_-+\chi \tot-s_1)^{-1}$ is at most $O(\big( 4\chi \tot \big)^{1/3} \theta^{-1/2})\le \big( 4\chi \tot \big)^{1/3} \frac{b}{100}\theta^{-1}$ for all large enough $n$, since $\theta $ is assumed to satisfy $\theta^{-1/4}\ge C \log n$ throughout this section.

Since $b \kappa\le 8\tot^{2/3}\tza$, we find that the above bound is at most $\big( 4\chi \tot \big)^{1/3} \tot^{2/3}\kappa^{-1},$ finishing the proof of the claim that we sought to show.
\qed

\begin{lemma}\label{l.highslenderweight} There exists $\chi_0 \in (0,1)$ such that, when $\chi \in (0,\chi_0)$,
 $s_1,s_2$  satisfy $0 \leq s_1 \leq s_2 \leq 1$, and 
$\tot \in [2^{-1-\ell},2^{-\ell}]$ and $\phi$ is a $\theta^{-1/4}$-regular $n$-zigzag from $(0,0)$ to $(0,1)$,
$$
 \PP \Big(  \mathsf{HighSlenderWeight} \big( s_1,s_2,1-\chi;\phi \big)  \Big) \, \leq  \, \exp \big\{ - d_2 \kappa^{-3/2} 2^{-\ell}  \big\}  \, .
$$ 
\end{lemma}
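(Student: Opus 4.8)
\textbf{Proof proposal for Lemma~\ref{l.highslenderweight}.}
The plan is to reduce the assertion to Proposition~\ref{p.closelow} by a change of coordinates that straightens the reference zigzag $\phi$ and rescales the lifetime $[s_1,s_2]$ to unit length. First I would invoke the scaling principle from Section~\ref{s.scalingprinciple} to pass from the system of $n$-zigzags on $[s_1,s_2]$ to the system of $m$-zigzags on $[0,1]$, where $m = n\tot$; a weight is multiplied by $\tot^{1/3}$, a horizontal distance by $\tot^{-2/3}$, and time by $\tot^{-1}$. Under this map the vertical spacing $\kappa^{3/2}$ of the segments in a plentiful collection becomes $\kappa^{3/2}\tot^{-1}$, so I would set $\widetilde\kappa^{3/2} = \kappa^{3/2}\tot^{-1}$ (recording that $\widetilde\kappa^{-3/2} = \tot\kappa^{-3/2}$, which is essentially the number of segments, up to an additive one, and that the conditions~(\ref{e.rkappa}) on $\widetilde\kappa$ hold after slight adjustment given our standing choice of $\kappa$), and the segment half-width $b\kappa/2$ becomes $b\kappa\tot^{-2/3}/2$; by our choice $b\kappa/8 \in 2^{-2\ell/3}\tza\cdot[1,2]$ and $\tot \in [2^{-1-\ell},2^{-\ell}]$ this is comparable to $b\widetilde\kappa/2$ up to a bounded factor, which can be absorbed by shrinking the absolute constant $b$ in Proposition~\ref{p.closelow}. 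The union over the offset parameter $\remainder \in n^{-1}\llbracket 0, n\kappa^{3/2}-1\rrbracket$ (of cardinality $n\kappa^{3/2} = \widetilde\kappa^{3/2}m$) will be handled by a union bound and absorbed into the final exponential.

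The real content is that straightening $\phi$ does not alter the law of the weight field in a way that breaks the argument. Here I would use the shear-invariance machinery: a plentiful $\remainder$-segment collection $\mathsf{c}$ for the zigzag $\phi$ has its $j$-th segment centred at $\phi(j/n + \remainder)$. Because $\phi$ is $\theta^{-1/4}$-regular, the displacements $\phi(\cdot)$ vary by at most a controlled amount over the lifetime, so the map replacing $\phi$ by the horizontal line through its starting point is, on each sub-interval between consecutive segment heights, close to a shear $\tau_\kappa$ with $|\kappa|$ bounded polynomially in $n$; alternatively, and more cleanly, I would observe that a $\mathsf{c}$-path for $\phi$, after this straightening, becomes a path constrained to pass through a plentiful segment collection in the sense of Proposition~\ref{p.closelow} (segments of length $\approx b\widetilde\kappa$ at regularly spaced heights, centred within $[-m^{1/20},m^{1/20}]\times[0,1]$ after using the regularity bound to check the $m^{1/20}$ containment). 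The condition $2^\ell \le n\tza^{40}$, i.e.\ $m = n\tot \ge 2^{-1}\tza^{40}\cdot 2^{\ell}\cdot 2^{-\ell} \cdots$—more precisely $m \ge \Theta(1)n 2^{-\ell}$—together with $\widetilde\kappa \asymp \tza\cdot(\text{bounded})$ ensures the hypothesis $r \ge \Cmac\kappa^{-195}$ of Proposition~\ref{p.closelow} (which requires $m$ large relative to a fixed negative power of $\widetilde\kappa$); this is exactly the role of the standing assumptions $\tza^{-1/4} > C\log n$ and $2^\ell \le n\tza^{40}$ from the start of the section, and I would make this numerology explicit.

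With these reductions in hand, Proposition~\ref{p.closelow} applied with $\mathbf{r} = m = n\tot$, $\boldsymbol\kappa = \widetilde\kappa$ (so $\widetilde\kappa^{-3/2} = \tot\kappa^{-3/2} \ge \Theta(1)2^{\ell}\kappa^{-3/2}$), and the same $\chi$ gives, for each fixed $\mathsf{c} \in \mc{C}$,
$$
\PP\Big( \weight_m\big[\mathsf{c}\text{-path}\big] \ge -d_1\widetilde\kappa^{-1}\Big) \le \exp\big\{-d_2\widetilde\kappa^{-3/2}\big\}\, .
$$
Translating back through the scaling principle, the event $\{\weight_m[\mathsf{c}\text{-path}] \ge -d_1\widetilde\kappa^{-1}\}$ corresponds to $\{\weight_n[\mathsf{c}\text{-path}] \ge -\tot^{1/3}d_1\widetilde\kappa^{-1}\} = \{\weight_n[\mathsf{c}\text{-path}] \ge -d_1\kappa^{-1}\tot\}$ after using $\tot^{1/3}\widetilde\kappa^{-1} = \tot^{1/3}(\kappa\tot^{-2/3})^{-1} = \kappa^{-1}\tot$; with $d_1$ replaced by its half this matches the threshold $-2^{-1}\redconst\kappa^{-1}\tot$ in the definition of $\mathsf{HighSlenderWeight}$, provided $\redconst \le d_1$ (which we may and do impose, since $\redconst$ is at our disposal). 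Finally I would take a union bound over the $\Theta(1)\widetilde\kappa^{3/2}m \le \Theta(1)n$ values of the offset $\remainder$ and over the (polynomially many in $n$, by the regularity and the containment in $[-m^{1/20},m^{1/20}]$) choices of plentiful collections $\mathsf{c}$; since $\widetilde\kappa^{-3/2} \ge \Theta(1)2^\ell\kappa^{-3/2}$ and $2^\ell \le n\tza^{40}$ forces $n$ to be at most a polynomial in $\tza^{-1}2^\ell$, the polynomial-in-$n$ entropy factor is dominated by $\exp\{\Theta(1)\kappa^{-3/2}2^\ell\}$, which is in turn beaten by halving $d_2$ in the exponent $\exp\{-d_2\widetilde\kappa^{-3/2}\} = \exp\{-d_2\tot\kappa^{-3/2}\} \le \exp\{-d_2 2^{-1-\ell}\kappa^{-3/2}\}$. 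Relabelling $d_2$ yields the bound $\exp\{-d_2\kappa^{-3/2}2^{-\ell}\}$ claimed in the lemma. The main obstacle I anticipate is the bookkeeping in the previous sentence: making sure the combined entropy from the offset union, the choice-of-collection union, and any discretization of segment positions is genuinely subexponential in $\widetilde\kappa^{-3/2}2^{-\ell}$, which rests on the polynomial relationship between $n$, $2^\ell$ and $\tza^{-1}$ enforced by the hypotheses, and on keeping the constant $b$ small enough that the rescaled segments genuinely satisfy the length and centring requirements of Proposition~\ref{p.closelow}.
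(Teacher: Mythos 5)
Your overall route—apply the scaling principle to bring the lifetime to unit order, use the $\theta^{-1/4}$-regularity of $\phi$ to check that the rescaled segments sit inside $[-r^{1/20},r^{1/20}]\times[0,1]$, invoke Proposition~\ref{p.closelow} for each fixed $\mathsf{c}$, and close with a union bound over the offset $\remainder$ and over the collections $\mathsf{c}$—is the same as the paper's. (Two small remarks: the straightening-by-shear digression is unnecessary, since Proposition~\ref{p.closelow} already permits arbitrary horizontal positions for the segments subject only to the $r^{1/20}$ containment; and the paper rescales by $r=n(v^+-v^-)$ together with a vertical translation sending $v^-$ to $0$, rather than by $n\tot$, precisely so that the lowest and highest segments land at heights $0$ and $1$ as the definition of a plentiful collection requires.)

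There is, however, a genuine gap in your entropy accounting. The number of plentiful $\remainder$-segment collections is \emph{not} polynomial in $n$. For a fixed offset there are $N\asymp\tot\kappa^{-3/2}$ candidate segments (one per admissible height), and a plentiful collection is an arbitrary subset omitting at most $\chi N$ of them, so the count is of order $\binom{N}{\lceil\chi N\rceil}\approx\exp\{H(\chi)N\}$ with $H$ the binary entropy. Since $\kappa\asymp 2^{-2\ell/3}\tza$, we have $N\asymp\tza^{-3/2}\asymp\kappa^{-3/2}2^{-\ell}$, which is \emph{exactly} the quantity in the exponent of the bound supplied by Proposition~\ref{p.closelow}; and because $\tza^{-3/2}\geq(C\log n)^6$, this count is super-polynomial in $n$. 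So the union bound over collections is not a harmless polynomial factor: it competes at the same exponential scale as the gain, and it is beaten only because $\chi_0$ is chosen small enough that $H(\chi)<d_2/2$ (say). This is precisely where the hypothesis $\chi\in(0,\chi_0)$ of the lemma is consumed, and your write-up never uses it for this purpose. Relatedly, your claim that the entropy is "dominated by $\exp\{\Theta(1)\kappa^{-3/2}2^{\ell}\}$, which is in turn beaten by halving $d_2$" cannot be right as stated (the sign of the exponent of $2$ is wrong, and even after correcting it to $2^{-\ell}$ the assertion is circular unless the implied constant is shown to be small via the choice of $\chi_0$). The offset union, of size $n\kappa^{3/2}\leq n$, is indeed negligible by $\log n\leq\tza^{-1/4}\ll\tza^{-3/2}$, as you say; it is the collection union that needs the careful treatment.
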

{\bf Proof.} 
Note that
$$
 \vert \mc{C} \vert \leq 
 n \kappa^{3/2} \sum_{k=0}^{\lceil  \chi \tot \kappa^{-3/2} \rceil} {\lceil \tot \kappa^{-3/2} \rceil \choose k}  \leq  n \kappa^{3/2} \cdot \tot \kappa^{-3/2} { \lceil \tot \kappa^{-3/2} \rceil \choose \lceil \chi \tot \kappa^{-3/2} \rceil} \, , 
$$
where the latter bound invokes $\chi \leq 1/4$. 

Now let $\mathsf{c} \in \mc{C}$. We now proceed to express our present circumstance in the notation of Proposition~\ref{p.closelow}. Given $\mathsf{c},$ let $r=n(v^+-v^-),$ where $v^-$ and $v^+$ are the lowest and highest vertical coordinates assumed by elements in the plentiful $\remainder$-segment collection given by $\mathsf{c}.$  Note that, when $\chi_0$ is small enough, $\frac12 {n \tot}\le { r} \le  n \tot.$ We will now apply Proposition~\ref{p.closelow} with  ${\bf r}=r$ and ${\bm \kappa} = \kappa \big(\frac{n}{{r}}\big)^{2/3} \in b^{-1}\tza \cdot [8,16]$. In a detail to ensure formal accuracy of the application, we apply a vertical translation that sends $v^-$ to zero. Furthermore, given the collection of horizontal segments of length $b\kappa$ forming~$\mathsf{c}$, let $\mathsf{c}_*$ be the collection obtained by multiplying each of them by the factor $\big(\frac{n}{{\bf r}}\big)^{2/3},$ and their vertical heights by $\frac{n}{{ r}}.$ Now, by the scaling principle, we conclude that  $\big(\frac{{ r}}{n}\big)^{1/3} \weight_n \big[ \mathsf{c}\textrm{-path} \big]$ is equal in law to $\weight_{ r} \big[ \mathsf{c}_*\textrm{-path} \big].$
 Note that, by the assumed regularity of $\phi$,  the  horizontal segments of $\mathsf{c}$ are confined in an interval of length ${\tot}^{2/3}({n\tot})^{1/30}\le \big( r n^{-1} \big)^{2/3}{{r}}^{1/20},$ and hence the elements of $\mathsf{c}_*$ are contained in a horizontal interval of length ${{r}}^{1/20}$.  Thus, by Proposition~\ref{p.closelow},  we learn that
$$
\PP \Big(  \weight_n \big[ \mathsf{c}\textrm{-path} \big] \geq  - \tot d_1 \kappa^{-1} \Big) \leq   \exp \big\{ - d_2 \kappa^{-3/2}2^{-\ell} \big\} \,.
$$

Hence, if $2^{-1}d_0\ge d_1,$
$$
 \PP \Big(  \mathsf{HighSlenderWeight} \big( s_1,s_2,1-\chi;\phi \big)  \Big) \leq n \kappa^{3/2} \cdot \tot \kappa^{-3/2} { \lceil \tot \kappa^{-3/2} \rceil \choose \lceil \chi \tot \kappa^{-3/2} \rceil}
  \exp \big\{ - d_2\kappa^{-3/2}2^{-\ell}  \big\} \, .
$$
Since
$\kappa \in  2^{-2\ell/3} b^{-1}\tza \cdot [8,16]$, $\tot \in [2^{-1-\ell},2^{-\ell}]$, $\tza^{-1/4} \ge C(\log n)$ and $b > 0$ is given, we may choose $\chi_0 \in (0,1)$ small enough that this right-hand side is at most $\exp \big\{ - 2^{-1}d_2 \kappa^{-3/2}   \big\}$.
Lemma~\ref{l.highslenderweight} follows by relabelling $d_2 > 0$. \qed

{\bf Proof of Theorem~\ref{t.nocloseness}.} By Lemmas~\ref{l.lowcontain} and~\ref{l.highslenderweight},
\begin{align*}
\PP \Big( \uni \big( \frac{b}{10}\tza^{-1}\big) \cap  \neg \,  \mathsf{LowSlenderWeight}^*(\ell, \tza , 1 - \chi;  \phi) \Big) &\leq  (n+1)^2 \exp \big\{ - d_2 (b^{-1}\tza)^{-3/2}  \big\}\\
&\le  \exp \big\{ - \frac{d_2}{2} (b^{-1}\tza)^{-3/2}  \big\}\, .
\end{align*}
The factor $(n+1)^2$ arises from the union bound taken over values of $s_1$ and $ s_2$ in Lemma \ref{l.lowcontain}; it is absorbed during the second inequality in view of $\tza^{-1/4} \ge C(\log n)$.
Further, by \eqref{uniformbound},
$$
\PP \Big(\neg \uni \big( \frac{b}{10}\tza^{-1}\big)\Big)\le \exp \big\{ -C \theta^{-3/2}\big\} \, .
$$

 For a suitably high choice of $n_0 \in \N$, the theorem is obtained from Proposition~\ref{p.onepoint} by assembling the preceding estimates and by adjusting the constant $d_2 > 0$. \qed

We are ready to prove Theorem \ref{t.slenderpolymer}.  Alongside the result just proved, the main ingredients are Theorem \ref{t.toolfluc}, which asserts that the polymer $\rho_n$ is typically regular with high probability; and
  the FKG inequality. Indeed, from the latter, we will learn that conditioning on $\rho_n$ has a negative effect, so that the proof will be completed by invoking Theorem~\ref{t.nocloseness} and the observation that 
  $\mathsf{LowSlenderExcursion}(\ell, \tza , 1 - \chi;  \rho_n)$ is a decreasing event in the remaining environment.
  
  \noindent  
 {\bf Proof of Theorem \ref{t.slenderpolymer}.} We start by recording some notation in order to state a stochastic domination lemma.
 A noise field will be viewed as a random function sending $\R \times n^{-1}\Z$  to~$\R$.
Let $X$ and~$Y$ denote two such. For any subset $A$ of $\R \times n^{-1}\Z$, $Y$ stochastically dominates $X$ on~$A$ if  there exists a coupling of $X$ and $Y$ such that, 
whenever $(j,u,v) \in \Z \times \R^2$ satisfies $u < v$ and $\{ j/n \} \times [u,v] \subset A$,
the bound
$Y(v,j/n)- Y(u,j/n) \geq X(v,j/n)- X(u,j/n)$ holds. An event $E$ that is measurable with respect to the natural $\sigma$-algebra generated by the increments of the Brownian motion on $A$ is called decreasing on $A$ if $\PP (Y \in E) \leq \PP (X \in E)$ whenever $Y$  stochastically dominates $X$ on $A$.  

For a given $n$-zigzag $\phi$, let the {\em exterior} ${\rm Ext}(\phi)$ of $\phi$ denote  $\big( \R \times n^{-1} \Z \big) \setminus  \phi$.  {Recall from Sections \ref{s.brlpp} and \ref{s:scaledcoordinates} that our scaled noise environment is given by an ensemble of independent  two-sided Brownian motions, thought of as a function $\R \times \frac{1}{n}\Z \to \R$.}
\begin{lemma}\label{FKG}Given a zigzag  $\phi$, and two independent noise environments $\Omega$ and $\tilde \Omega$, the restriction of $\widetilde \Omega$ to  ${\rm Ext}(\rho_n)$ stochastically dominates $\Omega$ on this set.
\end{lemma}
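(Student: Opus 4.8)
\textbf{Proof plan for Lemma~\ref{FKG}.}
The plan is to reduce this to the FKG inequality for the underlying family of independent Brownian motions, together with the fact that conditioning on a polymer's location forces the increments along that polymer to be ``as large as possible'' and hence exerts a downward push on increments everywhere else. First I would set up the comparison precisely. Write $\Omega$ for the (unconditioned) scaled noise environment; its law is that of a countable collection of independent two-sided standard Brownian motions $B(\cdot,k/n)$, $k \in \Z$, viewed as a function on $\R \times n^{-1}\Z$. The polymer $\rho_n$ is a measurable function of $\Omega$. What we must produce is a coupling of the restriction of $\widetilde\Omega$ to ${\rm Ext}(\rho_n)$ with $\Omega$ under which, for every triple $(j,u,v)$ with $u<v$ and $\{j/n\}\times[u,v]\subseteq {\rm Ext}(\rho_n)$, the increment $\widetilde\Omega(v,j/n)-\widetilde\Omega(u,j/n)$ dominates $\Omega(v,j/n)-\Omega(u,j/n)$. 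Here $\widetilde\Omega$ is an independent copy of $\Omega$, so its restriction to ${\rm Ext}(\rho_n)$ is simply a collection of independent Brownian increments on the horizontal slits that form ${\rm Ext}(\rho_n)$, entirely independent of $\rho_n$.

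The key step is to express the conditional law of $\Omega$ restricted to ${\rm Ext}(\rho_n)$, given $\rho_n$, as a conditioning of the independent Brownian family on a \emph{decreasing} event, so that Harris's/the FKG inequality applies. The point is that, for a fixed candidate zigzag $\phi$ from $(0,0)$ to $(0,1)$, the event $\{\rho_n = \phi\}$ is exactly the event that the energy of $\phi$ strictly exceeds the energy of every other zigzag with the same endpoints. Fixing the increments of $B$ along $\phi$ (these are the increments that, once $\rho_n$ is revealed, become determined up to their realized values), the remaining randomness is the Brownian motion on ${\rm Ext}(\phi)$, and the event $\{\rho_n=\phi\}$ translates into a finite or countable intersection of events each of the form ``the energy accrued by an alternative sub-zigzag on ${\rm Ext}(\phi)$ is not too large'' --- that is, each such event is a decreasing event in the increments on ${\rm Ext}(\phi)$ (increasing any such increment can only make an alternative path more competitive, hence can only push us off the event $\{\rho_n=\phi\}$). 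Since the Brownian motions on ${\rm Ext}(\phi)$ form a family with the FKG lattice property (independent coordinates, so trivially positively associated, and the increment-domination partial order is coordinatewise monotone), conditioning on this decreasing event produces a law that is stochastically dominated, in the increment order on ${\rm Ext}(\phi)$, by the unconditioned law. Taking expectation over the realization of $\rho_n=\phi$ and assembling a coupling monotone in each fiber yields the claim; the standard measure-theoretic device of building a global monotone coupling from fiberwise monotone couplings (e.g.\ via the quantile/Strassen coupling applied conditionally on $\rho_n$) handles the measurability bookkeeping.

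Concretely, the steps I would carry out are: (i) fix $\phi$ and condition on $\{\rho_n=\phi\}$, isolating the increments on ${\rm Ext}(\phi)$ as the only residual randomness relevant to the comparison; (ii) verify that $\{\rho_n=\phi\}$, as an event in these residual increments, is decreasing --- this is where one uses the deterministic description of zigzags from Section~\ref{s.staircasezigzag} and the additivity of energy along a zigzag, noting that raising an increment on a horizontal slit of ${\rm Ext}(\phi)$ can only increase the energy of any competing zigzag passing through that slit while leaving $\phi$'s energy unchanged, hence can only violate strict optimality of $\phi$; (iii) invoke the FKG inequality for the independent Brownian family to conclude that the conditioned increment field is stochastically dominated by an unconditioned one, and hence by (a copy of) $\widetilde\Omega$ restricted to ${\rm Ext}(\phi)$; (iv) integrate over $\phi$ and assemble the monotone couplings fiberwise into a single coupling of $\widetilde\Omega|_{{\rm Ext}(\rho_n)}$ with $\Omega$ realizing the required increment domination.

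The main obstacle I anticipate is step (ii)/(iii): making rigorous the passage from ``$\{\rho_n=\phi\}$ is a decreasing event in countably many Brownian increments on ${\rm Ext}(\phi)$'' to a genuine stochastic domination. One has to be careful that the relevant $\sigma$-algebra is exactly the one generated by increments of the form $B(v,j/n)-B(u,j/n)$ over slits $\{j/n\}\times[u,v]\subseteq {\rm Ext}(\phi)$, that the optimality event is measurable with respect to it (after conditioning on the increments along $\phi$ itself, which are held fixed), and that the FKG inequality is being applied in the correct product space with the correct partial order --- the coordinatewise order on increments, not on the Brownian paths themselves. A secondary technical point is the countable-intersection nature of the optimality event (over all competing zigzags): one should either discretize and take limits, or observe directly that an arbitrary intersection of decreasing events is decreasing, so no limiting argument is strictly needed. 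Once these measurability and ordering conventions are pinned down, the conclusion follows from the classical Harris inequality.
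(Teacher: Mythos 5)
Your proposal is correct and follows essentially the same route as the paper: both arguments hinge on observing that the optimality of $\rho_n$ is a decreasing event in the Brownian increments on ${\rm Ext}(\rho_n)$ and then invoking the Harris/FKG inequality for products of independent Brownian motions. The paper merely packages the conditioning more cleanly—by forming the hybrid environment ($\Omega$ on $\rho_n$, $\widetilde\Omega$ off it) and conditioning on $\rho_n$ remaining optimal, which reproduces the law of $\Omega$ and sidesteps the null-event conditioning on $\{\rho_n=\phi\}$ that your fiberwise version must handle via regular conditional probabilities.
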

{\bf Proof.}
Consider the noise environment that is given by $\Omega$ on $\rho_n$ and by~$\widetilde\Omega$ on ${\rm Ext}(\rho_n)$. When this environment is conditioned on the event that there exists no $n$-zigzag from $(0,0)$ to $(0,1)$ whose weight determined by this environment exceeds that of $\rho_n$, the result is a distributional copy of $\Omega$.
 The event in the conditioning is negative for $\Omega$ on ${\rm Ext}(\rho_n)$. The system~$\Omega$  on ${\rm Ext}(\rho_n)$ is a countable collection of Brownian motions whose domains are either copies of the real line or semi-infinite real intervals; indeed, to each height in $y \in n^{-1}\Z$ are associated one or two intervals, formed by the sometimes vacuous removal from $\R \times \{y \}$ of this set's intersection with~$\rho_n$. The FKG inequality for products of independent Brownian motions is implied by \cite[Theorems~$3$ and~$4$]{Barbato}. Applying it, we obtain the lemma.  \qed

The next lemma says that the polymer is typically regular.
\begin{lemma}Given $C>0,$ there exists $c>0$ such that, for all large $n$, and for $\theta$ with $C\log n< \tza^{-1/4} <C n^{\frac{1}{10}},$ with probability at least $1-\exp(-c \theta^{-1/2})$, the polymer $\rho_n$ is $\theta^{-1/4}$-{\em regular}.
\end{lemma}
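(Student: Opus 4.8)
The condition \eqref{regular-criterion} that $\rho_n := \rho_n\big[(0,0)\to(0,1)\big]$ be $\theta^{-1/4}$-regular is precisely a modulus-of-continuity bound for this polymer, and the plan is to read it off from Theorem~\ref{t.toolfluc} and Corollary~\ref{c.lateral} applied with endpoints $x=y=0$ and with the fluctuation parameter $r = c_1\,\theta^{-1/6}$, where $c_1 > 0$ is a small constant depending only on $C$ and on the absolute constants of those two results. Under the hypotheses $C\log n < \theta^{-1/4} < C n^{1/10}$ one has $\theta^{-1/6} = (\theta^{-1/4})^{2/3} \in \big( (C\log n)^{2/3},\, C^{2/3}n^{1/15}\big)$, so $r_0 \le r \le n^{1/10}$ once $n$ is large, and the quoted results apply with this $r$. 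I would work on the intersection of the event of Corollary~\ref{c.lateral}, the events of Theorem~\ref{t.toolfluc}(1) for each dyadic index $k \in \llbracket 1, \lfloor\log_2(hn)\rfloor\rrbracket$, and the event of Theorem~\ref{t.toolfluc}(2). A union bound bounds the failure probability of this intersection by $H\exp\{-hr^3\} + \sum_{k\ge1}H\exp\{-hr^3 k\} + Hn^{-hr^3} \le 4H\exp\{-hc_1^3\,\theta^{-1/2}\}$ (with $H$, $h$ the largest and smallest of the constants in the three results), which is at most $\exp\{-c\theta^{-1/2}\}$ for a suitable $c = c(C) > 0$ and all large $n$.

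On this intersection I then check $|v-u| \le h_{1,2}^{2/3}\theta^{-1/4}$ for every pair $(u,h_1),(v,h_2)\in\rho_n$ with $h_1,h_2 \in n^{-1}\Z\cap[0,1]$ and $h_1 \ne h_2$, splitting into three scale regimes. For \emph{large} separations $h_{1,2}\in(1/2,1]$, Corollary~\ref{c.lateral} gives $|u|\vee|v| \le Hr$, hence $|v-u| \le 2Hr = 2Hc_1\theta^{-1/6} \le 2^{-2/3}\theta^{-1/4} \le h_{1,2}^{2/3}\theta^{-1/4}$ when $c_1$ is small (the middle inequality reducing to $2Hc_1 \le 2^{-2/3}\theta^{-1/12}$). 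For \emph{intermediate} separations $h_{1,2}\in(2^{-k-1},2^{-k}]$ with $1 \le k \le \lfloor\log_2(hn)\rfloor$, Theorem~\ref{t.toolfluc}(1) gives $|v-u| \le H h_{1,2}^{2/3}(\log(1+h_{1,2}^{-1}))^{1/3} r$; since $h_{1,2}^{-1} < 2^{k+1} \le \Theta(1)n$, one has $\log(1+h_{1,2}^{-1}) \le \Theta(1)\log n$, so $|v-u| \le \Theta(1)\,c_1\,(\log n)^{1/3}\theta^{-1/6}\,h_{1,2}^{2/3}$. For \emph{short} separations, Theorem~\ref{t.toolfluc}(2) (which together with part (1) accounts for all separations down to the microscopic level $n^{-1}$) gives $|v-u| \le Gn^{-2/3}(\log n)^{1/3} r \le \Theta(1)\,c_1\,(\log n)^{1/3}\theta^{-1/6}\,h_{1,2}^{2/3}$, using $h_{1,2}^{2/3} \ge n^{-2/3}$. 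In the latter two regimes it then remains to note that $\Theta(1)\,c_1\,(\log n)^{1/3}\theta^{-1/6} \le \theta^{-1/4}$, which is equivalent to $\Theta(1)\,c_1\,(\log n)^{1/3} \le \theta^{-1/12}$: since the hypothesis $\theta^{-1/4} > C\log n$ gives $\theta^{-1/12} > C^{1/3}(\log n)^{1/3}$, this holds provided $c_1$ is chosen small compared with $C^{1/3}$ and the absolute constants. As every pair of distinct heights in $n^{-1}\Z\cap[0,1]$ falls into one of the three regimes, $\rho_n$ is $\theta^{-1/4}$-regular on the intersection of favourable events, which proves the lemma.

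The only genuinely delicate point is the joint calibration of $r$. It must be as large as $\Theta(\theta^{-1/6})$ so that the failure probability $\exp\{-\Theta(r^3)\}$ beats the target $\exp\{-c\theta^{-1/2}\}$, yet small enough that $r$ times the polylogarithmic modulus-of-continuity correction present at every dyadic scale down to $n^{-1}$ stays below $\theta^{-1/4}$; it is exactly the standing hypothesis $\theta^{-1/4} > C\log n$ that reconciles these, by making $\theta^{-1/12}$ dominate the $(\log n)^{1/3}$ correction uniformly over all scales. A minor caveat is the boundary case $h_1 = h_2$ (horizontal segments of $\rho_n$), which is not literally covered by \eqref{regular-criterion} with $\mathfrak{R}$ finite; it is harmless, as regularity is later invoked only for pairs of distinct heights, and Theorem~\ref{t.toolfluc}(2) in any case bounds the length of such segments by $\Theta\big(n^{-2/3}(\log n)^{1/3}r\big)$.
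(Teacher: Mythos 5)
Your proof is correct and follows essentially the same route as the paper's (which is a one-line application of Theorem~\ref{t.toolfluc} with a union bound over dyadic scales, using the hypothesis $\tza^{-1/4} > C\log n$ to absorb the polylogarithmic correction). The only cosmetic difference is the calibration of $r$: the paper in effect takes $r \sim \tza^{-1/4}(\log n)^{-1/3}$, obtaining the failure probability $\exp\{-c\tza^{-3/4}/\log n\}$ and then invoking the hypothesis to reduce it to $\exp\{-c\tza^{-1/2}\}$, whereas you take $r \sim \tza^{-1/6}$ and invoke the hypothesis to control the modulus instead; both calibrations are valid and yield the stated bound.
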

\begin{proof}
By Theorem \ref{t.toolfluc} and a union bound over all $h_1, h_2 \in \frac{1}{n}\Z \cap [0,1]$, we get 
\begin{align*}
\P\big( \rho_n \text{ is } \theta^{-1/4}\text{-regular} \big) &\geq 1 - \exp \big\{ -c\tza^{-3/4}/\log n) \big\}\\
&\ge 1 - \exp \big\{ -c\tza^{-1/2}\big\},
\end{align*} 
using the upper bound on $\theta.$
\end{proof}

Note that $\theta$ in the hypothesis of Theorem \ref{t.slenderpolymer} satisfies $C\log n< \tza^{-1/4} < n^{\frac{1}{160}}$ since $2^{\ell}\ge 1.$
Theorem \ref{t.slenderpolymer}  now follows  from Theorem~\ref{t.nocloseness}; the just noted bound; Lemma \ref{FKG}; the event   $\mathsf{LowSlenderExcursion}(\ell, \tza , 1 - \chi;  \rho_n)$ being decreasing on  ${\rm Ext}(\rho_n)$; and that $\mathsf{LowSlenderExcursion}(\ell, \tza , 1 - \chi;  \rho_n)$ implies $\mathsf{LowSlenderExcursion}^*(\ell, \tza , 1 - \chi;  \rho_n).$

We finish with a brief discussion regarding the point that Theorem \ref{t.nocloseness} bounds the probability of $\mathsf{LowSlenderExcursion}^*(\ell, \tza , 1 - \chi;  \phi)$ for a fixed zigzag $\phi$ while Theorem \ref{t.slenderpolymer} only bounds the probability of $\mathsf{LowSlenderExcursion}(\ell, \tza , 1 - \chi;  \rho_n)$. In short, this is because $\phi$ is deterministic and hence independent of the noise environment, while $\rho_n$ is highly correlated with the latter. Namely,  notice that the proof of Theorem  \ref{t.slenderpolymer}  uses Theorem \ref{t.nocloseness}, along with an FKG inequality; the latter implies that the noise environment off $\rho_n$ is stochastically smaller than a typical environment, this rendering $\mathsf{LowSlenderExcursion}(\ell, \tza , 1 - \chi;  \rho_n)$ more likely. However, the same cannot be said for $\mathsf{LowSlenderExcursion}^*(\ell, \tza , 1 - \chi;  \rho_n)$, since the environment on $\rho_n$ is, in fact,  stochastically larger than a typical one---indeed, it is easily seen that the path $\rho_n$ itself obstructs the event $\mathsf{LowSlenderExcursion}^*(\ell, \tza , 1 - \chi;  \rho_n)$ from occurring.

\qed
\section{There are few cliffs along the geodesic}\label{s.fewcliffs}

Here we derive Theorem~\ref{t.notallcliffs}. In a first subsection, we reduce to a principal component, Proposition~\ref{p.gammapsibound}; and, in a second, we prove this proposition. Theorem~\ref{t.notallcliffs} will find application in the investigation of Brownian LPP under dynamical perturbation in \cite{Dynamics}. This study is undertaken in scaled coordinates, and uses a scaled counterpart to Theorem~\ref{t.notallcliffs}. In the third and final subsection, the counterpart, Proposition~\ref{p.notallcliffs}, is presented and proved.  

\subsection{Proving Theorem~\ref{t.notallcliffs}, a main component admitted}

Let $\gamma \subset [0,n]^2$ denote any staircase between $(0,0)$ and $(n,n)$. We may associate to $\gamma$ the index set $\mc{I}(\gamma)$ (that is specified before the theorem), just as we did to the geodesic staircase $\Gamma_n$.

For now, let $\alpha$ be any given value in $(1/2,1)$ for which $\alpha m \in \N$; the lower bound $\alpha_0 > 1/2$ on this parameter's value will be set later in the proof of Theorem~\ref{t.notallcliffs}. Consider the class $\Theta$ of difference functions $\Psi: \llbracket 0,m \rrbracket \to \llbracket 0, n \rrbracket$ that are associated to staircases $\gamma \subset [0,n]^2$ with $(0,0),(n,n) \in \gamma$
and $\big\vert \mc{I}(\gamma) \big\vert \geq \alpha m$. 

Let $\Gamma(\Psi) \subset [0,n]^2$ denote the staircase of maximum energy that contains $(0,0)$ and $(n,n)$ and whose $Z$-difference function as specified by~(\ref{e.zdifferencefunction})
is equal to $\Psi$.

Our approach to proving Theorem~\ref{t.notallcliffs} is governed by the bound 
 $$
 \PP \big( \big\vert \mc{I} \big\vert \geq \alpha m \big) \leq \sum_{\Psi \in \Theta} 
 \PP \Big( E \big( \Gamma(\Psi) \big) \geq E(\Gamma_n) \Big)  \, .
 $$
 This right-hand side is in fact equal to  $\sum_{\Psi \in \Theta} 
 \PP \Big( E \big( \Gamma(\Psi) \big)=E(\Gamma_n) \Big).$
 
 The next two results form the backbone of the proof of Theorem~\ref{t.notallcliffs}.
\begin{proposition}\label{p.gammapsibound} There exist positive constants $H$ and $h$ such that, 
for $A$ high enough and $\Psi \in \Theta$, 
\begin{equation}\label{e.gammapsibound}
 \PP \Big( E \big( \Gamma(\Psi) \big) \geq E(\Gamma_n) \Big) \leq  H \exp \big\{-hn \big\} \, .
\end{equation}
\end{proposition}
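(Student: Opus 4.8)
\textbf{Proposal for the proof of Proposition~\ref{p.gammapsibound}.}

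The plan is to fix a difference function $\Psi \in \Theta$ and show that the associated staircase $\Gamma(\Psi)$, being forced to take at least $\alpha m$ of its $A$-blocks as cliffs (horizontal advance at most two while climbing $A$ units), must accumulate an energy deficit relative to the free geodesic $\Gamma_n$ that is linear in $n$, except on an event of exponentially small probability. The key heuristic is scale mismatch: a genuine geodesic crossing a block of vertical extent $A$ advances horizontally by order $A$; forcing the horizontal advance to be $O(1)$ is like forcing a polymer into a corridor whose aspect ratio is wrong by a factor of order $A^{-1}$ in the transversal direction, which by KPZ scaling costs an energy of order $A^{1/3}$ per block relative to the unconstrained value; accumulated over $\Theta(m) = \Theta(n/A)$ blocks this is $\Theta(n A^{-2/3})$, comfortably linear in $n$ for fixed $A$, and in fact the honest comparison has to be made against $\Gamma_n$'s energy on those same blocks, which typically advances diagonally. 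So the first step is to write $E(\Gamma(\Psi))$ as a sum of block energies $\sum_{i=0}^{m} M[(X_i, iA) \to (X_{i+1},(i+1)A)]$ over the $A$-blocks (plus the short terminal block), where the $X_i$ are constrained by $\Psi$, and to compare this, block by block, with a lower bound on $E(\Gamma_n)$ obtained by restricting $\Gamma_n$ to pass through a favourable sequence of milestone points.

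The second, and I expect main, step is the per-block estimate: for a single block of height $A$ with horizontal displacement at most two, one needs an upper-tail bound of the form $\PP\big( M[(x,0)\to(y,A)] \geq 2A + \Theta(1) - \delta A \big) \leq \exp\{-\Theta(A^{\Theta(1)})\}$ for a fixed small $\delta>0$ depending on how much below the diagonal value $2A$ a constrained crossing typically sits; here one should translate to scaled coordinates via the scaling map $R_A$ (or $R_n$ restricted to the block), so that the block becomes a zigzag of unit lifetime with endpoints at scaled horizontal separation of order $A^{-2/3} \ll 1$, and then invoke the one-point weight tail bounds, e.g. Lemma~\ref{l.onepointbounds}(2) for the lower tail of parabolic weight together with the parabolic curvature penalty $2^{-1/2}(y-x)^2 (\text{lifetime})^{-1}$, which is the source of the cliff penalty once we note that $(y-x)$ is of unit order while the typical value would be of order $A^{2/3}$ in unscaled horizontal units. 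The subtlety is that these block energies are not independent of one another and not independent of the randomness governing $\Gamma_n$; so rather than multiplying per-block probabilities naively, I would group the $A$-blocks into even-indexed and odd-indexed families (so that within a family the blocks occupy disjoint horizontal--vertical strips $\R \times [iA,(i+1)A]$ with a one-unit vertical buffer removed, hence are governed by independent Brownian increments), apply a concentration/large-deviation bound within each family, and union over the two families. A clean way to organise this is: on a global good event of probability $1 - He^{-hn}$ (controlled via the one-point upper tail of $E(\Gamma_n)$ from Lemma~\ref{l.onepointbounds}(1) applied at scale $n$, plus a uniform lower bound on $E(\Gamma_n)$ via the superadditive milestone construction already used in the proof of Proposition~\ref{p.threepart}), we have $E(\Gamma_n) \geq 2n - \Theta(n^{1/3+o(1)})$; and on a block-level good event, also of probability $1-He^{-hn}$ after summing the per-block failure probabilities $\exp\{-\Theta(A^{\Theta(1)})\}$ over $O(n/A)$ blocks in each of the two families (using that $A$ is fixed large but $n/A$ is linear in $n$, so $n \exp\{-\Theta(A^{\Theta(1)})\}$ is still exponentially small in $n$ once $A$ is chosen large enough relative to the constants), we have $E(\Gamma(\Psi)) \leq 2n - \delta A \cdot \alpha m + \Theta(m) \leq 2n - \Theta(n)$ provided $A$ is large and $\alpha > 1/2$. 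Comparing the two bounds yields $E(\Gamma(\Psi)) < E(\Gamma_n)$ off an event of probability $\leq H e^{-hn}$, which is~\eqref{e.gammapsibound}.

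The main obstacle, as indicated, is getting the per-block cliff penalty $\delta A$ to be genuinely a positive proportion of $A$ (so that the total deficit beats the $\Theta(n^{1/3+o(1)})$ upper fluctuation of $E(\Gamma_n)$ and, more importantly, is linear in $n$), while keeping the failure probabilities small enough to survive the union bound over the $\Theta(n/A)$ blocks \emph{and} over the $|\Theta|$ choices of $\Psi$. For the union over $\Psi \in \Theta$ one notes that a difference function on $\llbracket 0,m\rrbracket$ taking values in $\llbracket 0,n\rrbracket$ with sum at most $n$ is a weak composition, so $|\Theta| \leq \binom{n+m}{m} \leq e^{\Theta(n/A \cdot \log A)}$ (since $m = \lfloor n/A \rfloor$ and $A$ is fixed); this entropy factor is $e^{o(n)}$ relative to $e^{hn}$ once $A$ is large, but one must be careful that the per-$\Psi$ bound $He^{-hn}$ in~\eqref{e.gammapsibound} has $h$ independent of $A$ and large enough to absorb $\Theta(\log A / A)$ --- so in fact the cliff penalty per block must be taken proportional to $A$ with a proportionality constant that does not degrade as $A \to \infty$, which is exactly what the parabolic-curvature term $2^{-1/2}(y-x)^2/(\text{scaled lifetime})$ delivers after rescaling: in unscaled energy units this parabolic penalty reads $2^{-1/2}(y-x)^2/A$ in a block of height $A$, which is \emph{tiny} ($O(1/A)$) when $y-x = O(1)$ --- so this naive curvature reading is the \emph{wrong} direction and the real penalty must instead come from comparing with the milestone construction for $\Gamma_n$ through points spaced $\Theta(A)$ apart horizontally, i.e.\ from the fact that a cliff block forgoes the $\Theta(A^{1/3})$-scale typical weight \emph{and} the $\Theta(A)$-scale linear term gained by advancing diagonally; reconciling these bookkeeping conventions (linear term $2$ per unit of $\min(\text{horiz},\text{vert})$ advance versus $1$ per unit otherwise) so that a cliff costs $\Theta(A)$ in the raw energy $E$ is the delicate accounting at the heart of the argument, and is presumably where the authors' choice of the quantity $\Psi(i)\le 2$ (rather than $\le 1$) and of $\alpha_0 > 1/2$ enters. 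I would spend the bulk of the write-up making that accounting precise and then feeding it into the two-family large-deviation union bound.
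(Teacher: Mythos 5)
Your overall architecture --- coarse-grain $\Gamma(\Psi)$ over the $A$-blocks, control the approximation error between $\Gamma(\Psi)$ and its milestone skeleton, and compare against a lower bound on $E(\Gamma_n)$ --- matches the paper's first and third steps, and your remarks on the entropy of $\Theta$ are consistent with how the proposition is used. But the central step, a proof that the cliff-constrained coarse-grained energy is at most $(2-\kappa)n$ for some $\kappa>0$ not degrading as $A\to\infty$, is missing, and neither mechanism you float closes it. You rightly retract the parabolic-curvature reading. The second reading --- each cliff block ``forgoes'' the $\Theta(A)$ gained by diagonal advance --- is not a proof either, because the horizontal advance withheld from the $\alpha m$ cliff blocks is simply transferred to the remaining $(1-\alpha)m$ blocks, which then each advance by more than $A$ and gain \emph{more} energy per block than a balanced one; a block of height $A$ and width $w$ has maximal crossing energy $\approx 2(wA)^{1/2}$, so whether the total still falls short of $2n$ is a concavity question ($\sum_i 2(w_iA)^{1/2}$ subject to $\sum_i w_i= n$ is maximized at equal widths), which your write-up never addresses --- indeed your stated per-block benchmark of $2A$ is not what the free geodesic earns per block either. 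The paper settles exactly this point with a rearrangement device (Lemma~\ref{l.rearrange}): the constrained maximum is a sum of independent crossing energies of disjoint rectangles, so its law is invariant under reordering the rectangles by width; after sorting, the path is forced through a single point with vertical coordinate $Am/2\approx n/2$ but horizontal coordinate at most $m\approx n/A$, hence at distance $\Theta(n)$ from the diagonal, and Lemma~\ref{l.offdiagonal} extracts the linear deficit from the explicit two-piece representation $m_1^{1/2}G_{m_2+1}(1)+(n-m_1)^{1/2}G_{n-m_2+1}(1)$. Without this (or an explicit Cauchy--Schwarz-plus-concentration argument across the non-cliff blocks), your bound $E(\Gamma(\Psi))\le 2n-\delta A\cdot\alpha m+\Theta(m)$ is an assertion rather than a consequence of anything proved.

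A second, smaller but genuine, error: you claim that summing per-block failure probabilities $\exp\{-\Theta(A^{\Theta(1)})\}$ over $O(n/A)$ blocks is ``still exponentially small in $n$ once $A$ is chosen large enough.'' For fixed $A$ that sum is $\Theta(n)\exp\{-\Theta(A^{c})\}$, which grows linearly in $n$; no union bound over blocks can yield an $e^{-hn}$ failure probability. What is required is a large-deviation estimate at scale $n$ for the \emph{sum} of the $\Theta(n/A)$ independent block contributions --- this is how the paper obtains (\ref{e.gammazerogamma}) via sub-Gaussian concentration (Proposition~\ref{p.Vershynin}) and how Lemma~\ref{l.offdiagonal} obtains (\ref{e.gammazeropsi}) via the $e^{-\Theta(n)}$ left-tail rate of the GUE top eigenvalue. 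Your parenthetical mention of a concentration bound within each family points the right way, but the quantitative bookkeeping that follows contradicts it and would not deliver the stated exponential rate.
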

The quantity $\Theta$ grows at a rate that is exponential in $n/A$
 when $\alpha$ is close to one.
\begin{lemma}\label{l.thetaentropy}
\begin{equation}\label{e.thetaentropy}
 \big\vert \Theta  \big\vert \, \leq \, 3^{\alpha m}{m+1\choose{\alpha m}} { n+1 \choose{(1-\alpha)m + 1}} \, .
 \end{equation}
\end{lemma}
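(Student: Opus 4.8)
\textbf{Proof proposal for Lemma~\ref{l.thetaentropy}.}
The plan is to bound the number of admissible difference functions $\Psi \in \Theta$ by encoding each such $\Psi$ with three independent pieces of combinatorial data and counting the choices for each piece. Recall that $\Psi: \llbracket 0,m \rrbracket \to \llbracket 0,n \rrbracket$ satisfies $\sum_{i=0}^m \Psi(i) = Z_{m+1} - Z_0 \leq n$, and that membership of $\Psi$ in $\Theta$ means precisely that $\big\vert \mc{I}(\gamma) \big\vert \geq \alpha m$ for an associated staircase, i.e.\ at least $\alpha m$ of the values $\Psi(i)$ lie in $\{0,1,2\}$. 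The key observation is that $\Psi$ is determined by: (i) the location of a size-$\lceil \alpha m \rceil$ subset $S \subseteq \llbracket 0,m \rrbracket$ of indices at which $\Psi(i) \leq 2$ (we may take $S$ to be exactly $\alpha m$ of the cliff indices, discarding extras); (ii) the actual value in $\{0,1,2\}$ that $\Psi$ takes at each index of $S$; and (iii) the values of $\Psi$ on the complementary set $\llbracket 0,m \rrbracket \setminus S$, which has size $m+1-\alpha m = (1-\alpha)m+1$.

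First I would count (i): the number of $\alpha m$-element subsets of the $(m+1)$-element index set $\llbracket 0,m\rrbracket$ is $\binom{m+1}{\alpha m}$. Next, (ii) contributes a factor of $3$ for each of the $\alpha m$ indices in $S$, giving $3^{\alpha m}$. Finally, for (iii), the restriction of $\Psi$ to the complement is a function from a set of size $(1-\alpha)m+1$ into $\llbracket 0,n\rrbracket$ whose entries sum to at most $n$; such lists of nonnegative integers summing to at most $n$ are in bijection with lattice points, and a standard stars-and-bars count bounds their number by $\binom{n+1}{(1-\alpha)m+1}$ (equivalently, the number of ways to choose $(1-\alpha)m+1$ "bar" positions among $n+1$ slots, which dominates the count of weak compositions of $n$ into $(1-\alpha)m+1$ parts). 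Multiplying the three factors yields the bound $3^{\alpha m}\binom{m+1}{\alpha m}\binom{n+1}{(1-\alpha)m+1}$ asserted in~(\ref{e.thetaentropy}).

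The only mildly delicate point is to make sure the encoding map $\Psi \mapsto (\text{(i),(ii),(iii)})$ is injective once a canonical choice of $S$ is fixed (for instance, $S$ = the $\alpha m$ smallest cliff indices), and that every $\Psi \in \Theta$ does admit such an encoding, which follows immediately from $\big\vert\mc{I}(\gamma)\big\vert \geq \alpha m$. I expect this step — verifying that the three data pieces genuinely reconstruct $\Psi$ and that the complementary-values count is correctly bounded by a binomial coefficient rather than an exact composition count — to be the only thing requiring care; the arithmetic is otherwise routine. No probabilistic input is needed here: the lemma is a pure counting statement about difference functions.
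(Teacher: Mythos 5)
Your decomposition of $\Psi$ into three pieces of data — the set of $\alpha m$ cliff indices, the $\{0,1,2\}$-values there, and the values on the complement — is exactly the paper's, and your counts for the first two pieces ($\binom{m+1}{\alpha m}$ and $3^{\alpha m}$) agree with it. The gap is in piece (iii). You count the complementary values directly as a list of $k := (1-\alpha)m+1$ nonnegative integers summing to at most $n$, and assert that stars and bars bounds this by $\binom{n+1}{k}$. That is not what stars and bars gives: the number of such lists is $\binom{n+k}{k}$, and since $(n+k)(n+k-1)\cdots(n+1) > (n+1)n\cdots(n-k+2)$ termwise, one has $\binom{n+k}{k} > \binom{n+1}{k}$ for every $k \geq 2$. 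Your parenthetical identification of $\binom{n+1}{k}$ with "$k$ bar positions among $n+1$ slots" is not the stars-and-bars count for weak compositions of $n$ into $k$ parts (that count is $\binom{n+k-1}{k-1}$), so the claimed domination does not follow, and indeed for the "sum at most $n$" version it is false. As written, your encoding does not establish the stated bound.

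The paper avoids this by not counting the raw values on $J^c(\gamma)$ but rather their \emph{partial sums} $p(i)=\sum_{j \in J^c(\gamma),\, j\le i}\Psi(j)$, viewed as an increasing function $\llbracket 0,(1-\alpha)m\rrbracket \to \llbracket 0,n\rrbracket$; such a function is recovered from its set of values, i.e.\ from a subset of $\llbracket 0,n\rrbracket$ of size at most $(1-\alpha)m+1$, which is what produces the factor $\binom{n+1}{(1-\alpha)m+1}$. To repair your argument you should either adopt this partial-sum encoding, or accept the larger factor $\binom{n+k}{k}$ and check that the downstream entropy computation in the proof of Theorem~\ref{t.notallcliffs} (which only needs $\vert\Theta\vert$ to grow slower than $e^{hn/2}$ for $\alpha$ near $1$ and $A$ large) still closes — it does, but then you are proving a weaker lemma than the one stated.
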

We close out the proof of Theorem~\ref{t.notallcliffs} before deriving these two inputs.

{\bf Proof of Theorem~\ref{t.notallcliffs}.}
By Proposition~\ref{p.gammapsibound} and Lemma~\ref{l.thetaentropy},  we see that 
 $$
 \PP \big( \big\vert \mc{I} \big\vert \geq \alpha m \big)  
 \, \leq \, 3^{\alpha m}{m+1\choose{\alpha m}} { n+1 \choose{(1-\alpha)m + 1}} \cdot  H \exp \big\{-hn \big\} \, .
 $$

  Since $m \in [n/A,n/A+1)$, the right-hand factor arising from Lemma~\ref{l.thetaentropy} is at most 
 $$
  3^{\alpha (A^{-1} n + 1)} \cdot (n/A + 1) \cdot \exp \big\{ ( n A^{-1} + 1 )K(\alpha) \big\} \cdot {(n+1)} \exp \big\{ (n+1)  K\big((1-\alpha) A^{-1}\big) \big\} \, ,
 $$ 
where $K:(0,1) \to \R$ denotes the entropy rate $K(p) = p \log p + (1-p) \log (1-p)$.
Our choice of $A \in \N$ made so that~(\ref{e.gammapsibound}) holds as well as $3^{\alpha (A^{-1} n + 1)}\le e^{hn/4}$, we specify $\alpha_0 \in (1/2,1)$ to be high enough that the last display with any $\alpha \in [\alpha_0,1]$ is less than~$e^{hn/2}$ when $n$ is supposed to be sufficiently high. 
We obtain Theorem~\ref{t.notallcliffs} by further relabelling the parameter $h > 0$ to be one-half of its present value. \qed

{\bf Proof of Lemma~\ref{l.thetaentropy}.} For any staircase $\gamma$ that offers a function $\Psi$ belonging to $\Theta$, let $J(\gamma)$ denote the set of the~$\alpha m$ lowest elements of $\mc{I}(\gamma)$. Further set  $J^c(\gamma)  = \llbracket 0, m \rrbracket \setminus J(\gamma)$. The element $\Psi$ of $\Theta$
associated to $\gamma$ may be surmised from three pieces of data:
\begin{itemize}
\item the set $J(\gamma)$;
\item the values $\Psi(j)$ indexed by $j \in J(\gamma)$; 
\item and the remaining values, namely $\Psi(i)$ for $i \in J^c(\gamma)$.
\end{itemize}
The number of choices for $J(\gamma)$ is equal to ${m+1\choose{\alpha m}}$. 
For each index $j \in J(\gamma)$, $\Psi(j)$ is valued in $\{ 0,1,2\}$, so that there are at most $3^{\alpha m}$ choices for the second piece of data. 
The remaining values, in the third piece of data, are indexed by $i \in J^c(\gamma)$; to each such index~$i$ is associated the partial sum $p(i)$ of $\Psi(j)$ over $j \in J^c(\gamma)$ with  $j \leq i$. The index set $J^c(\gamma)$ has cardinality $m+1 - \alpha m$ and may be identified with the integer interval  $\llbracket 0,(1-\alpha)m \rrbracket$ via an increasing  map $I:\llbracket 0,(1-\alpha)m \rrbracket \to J^c(\gamma)$. The third item data is specified by the function mapping $\llbracket 0,(1-\alpha)m \rrbracket$  to $\llbracket 0,n \rrbracket$ given by $i \to (p \circ I)(i)$. This function is increasing, so that it is determined by its values; thus, the number of such functions is at most 
${n+1 \choose (1-\alpha) m + 1}$.

The right-hand side of~(\ref{e.thetaentropy}) is a product of three factors. These factors have been verified to offer respective upper bounds on the cardinality of the set of choices for the second, first and third pieces of displayed data. Thus, the proof of Lemma~\ref{l.thetaentropy} is complete.  \qed

 \subsection{Energy near a given cliff-strewn route is unlikely to attain the maximum}
 
To complete the proof of Theorem~\ref{t.notallcliffs}, it remains to give the next derivation.

{\bf Proof of Proposition~\ref{p.gammapsibound}.} Let
 $\Psi \in \Theta$. Let $P$ denote the set of points of the form  $\big(\sum_{j=0}^i \Psi(j),iA\big)$ for $i \in \llbracket 0,m\rrbracket$. 
 Let $\Gamma_0(\Psi)$ denote the almost surely unique staircase between $(0,0)$ and $(n,n)$ that has maximum energy among those that visit every element in $P$ and that, on arrival at any such element, immediately jump upwards by one unit. The staircase $\Gamma_0(\Psi)$ offers a coarse-grained description of any staircase specifying $\Psi$, including the staircase $\Gamma(\Psi)$ among these of maximum energy.  
 
 The plan of attack for proving Proposition~\ref{p.gammapsibound} has three steps:
 \begin{enumerate}
 \item We wish to argue that the energies of $\Gamma(\Psi)$ and its coarse-grained cousin $\Gamma_0(\Psi)$ are typically similar. 
 Indeed, we will find positive constants $H$ and $h$ such that, for $r \geq 0$,
\begin{equation}\label{e.gammazerogamma}
 \PP \Big(  E\big(\Gamma(\Psi)\big) - E\big( \Gamma_0(\Psi) \big)
  \geq (5 + r) n A^{-1/2} 
  \Big) \leq H \exp \big\{ - h n r^2 \big\} \, .
\end{equation}
 \item We will then need to analyse $E \big( \Gamma_0(\Psi) \big)$. This is the maximum energy of a staircase from $(0,0)$ to $(n,n)$ that visits every point $\big(\sum_{j=0}^i \Psi(j),iA\big)$ for $i \in \llbracket 0,m\rrbracket$. We will argue that this maximum energy is unchanged in law if the vector of differences between consecutively visited points is reordered so that these vectors are presented in decreasing order of gradient. Because $\Psi \in \Theta$, the reordered collection of points-to-be-visited contains an element whose distance from the diagonal has order $n$.
 \item Thus, $E \big( \Gamma_0(\Psi) \big)$ has the law of the maximum energy of a staircase from $(0,0)$ and $(n,n)$ that visits a given point at a distance of order $n$ from the diagonal. We will exploit this information to argue that there exist positive constants $K$ and $\kappa$ such that
\begin{equation}\label{e.gammazeropsi}
 \PP \Big(  E\big(\Gamma_0(\Psi)\big) \geq (2 - \kappa) n \Big) \leq K \exp \big\{ - \kappa n \big\} \, .
\end{equation}
The sought bound~(\ref{e.gammapsibound}) will then emerge directly from~(\ref{e.gammazerogamma}) and~(\ref{e.gammazeropsi}).
 \end{enumerate}
In three subsections, we accomplish these respective steps. 
 
\subsubsection{The coarse-grained energetic approximation: deriving~(\ref{e.gammazerogamma})} 
To derive~(\ref{e.gammazerogamma}), we split the staircases  $\Gamma(\Psi)$ and  $\Gamma_0(\Psi)$ into pieces that traverse consecutive strips of height $A$. The staircase $\Gamma(\Psi)$
is divided into pieces by splittling at its points of entry to the levels $\{iA \} \times \R$  indexed by $i \in \llbracket 0,m\rrbracket$. The coarse-grained counterpart $\Gamma_0(\Psi)$
is partitioned by splitting at the points $\big(\sum_{j=0}^i \Psi(j),iA\big)$ indexed by the same set. The elements of the two partitions may be paired according to which strip of height $A$ they cross. If the difference in energy between the fragment of $\Gamma(\Psi)$ crossing the $i\textsuperscript{th}$ strip and its counterpart for $\Gamma_0(\Psi)$ is denoted by $E_i$, then $E\big(\Gamma(\Psi)\big) - E\big( \Gamma_0(\Psi) \big) = \sum_{i=1}^{\lceil n/A \rceil} E_i$. 
The pair of fragments involved in specifying  $E_i$ each begin with a unit vertical movement, from level $(i-1)A$ to level $(i-1)A + 1$. 
With $h_1 = (i-1)A + 1$ and $h_2 = iA$, note thus that, for $i \in \intint{\lfloor n/A \rfloor}$, $E_i$ takes the form  
$M \big[ (u,h_1) \to (v,h_2) \big] - M \big[ (x,h_1) \to (y,h_2) \big]$ for a choice of $(u,v,x,y)$ that satisfy the hypotheses of the next result.
{ \begin{lemma}\label{l.adjust}
 Let $u,v \in \N$ and $x,y \in \R$ satisfy $u \leq v$, $x \leq y$, $x \in [u,u+1]$ and $y \in [v,v+1]$. Let $h_1,h_1 \in \N$ satisfy $h_1 \leq h_2$. Writing $h_{1,2} = h_2 - h_1$, we have that
\begin{equation}\label{coarseapproxerror}
      \PP \Bigg(     \sup_{\begin{subarray}{c}    x \in [u,u+1]  \\  y \in [v,v+1]  \end{subarray}}   \Big\vert M \big[ (u,h_1) \to (v,h_2) \big] - M \big[ (x,h_1) \to (y,h_2) \big] \Big\vert \geq  4 \big(h_{1,2}+1\big)^{1/2} + \big(h_{1,2}+1\big)^{-1/6}r \Bigg) 
 \end{equation}
is at most $C \exp \Big\{ - c\frac{r^2}{(h_{1,2}+1)^{1/3}} \Big\}$. 
 \end{lemma}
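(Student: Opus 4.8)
\textbf{Proof proposal for Lemma~\ref{l.adjust}.}
The plan is to treat Lemma~\ref{l.adjust} by combining the Brownian Gibbs / KPZ fluctuation estimate Proposition~\ref{p.fluc} (equivalently \cite[Theorem~$1.1$]{ModCon}, which is how Proposition~\ref{p.fluc} itself was obtained) with an elementary path-surgery argument. First I would reduce to the case $h_1=0$, $h_2=\tot$ where $\tot:=h_{1,2}\in\N$, using translation invariance of Brownian LPP; since $M\big[(u,0)\to(v,\tot)\big]$ does not depend on $(x,y)$, the quantity in~\eqref{coarseapproxerror} is controlled by the fluctuation of the function $(x,y)\mapsto M\big[(x,0)\to(y,\tot)\big]$ over the unit box $[u,u+1]\times[v,v+1]$ together with the corner value. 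I would then split into two regimes according to the horizontal displacement $L=v-u$: the \emph{KPZ regime} $|L|\le c\tot^{7/9}$ (so that, after scaling by $\tot^{-2/3}$, the endpoints land in the narrow-wedge parabolic window), and the \emph{flat regime} where $|L|$ is larger or $\tot$ is small.

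In the KPZ regime I would pass to scaled coordinates: by~\eqref{e.weightgeneral} with $n=1$ and the scaling principle of Section~\ref{s.scalingprinciple},
$M\big[(x,0)\to(y,\tot)\big]=2\tot+(y-x)+2^{1/2}\tot^{1/3}\,\weight_\tot\big[(\tfrac{x}{2}\tot^{-2/3},0)\to(\tfrac{y}{2}\tot^{-2/3},1)\big]$,
so that $M\big[(u,0)\to(v,\tot)\big]-M\big[(x,0)\to(y,\tot)\big]$ equals $\big((v-u)-(y-x)\big)$ (an $O(1)$ affine correction) plus $2^{1/2}\tot^{1/3}$ times a difference of $\weight_\tot$-values at endpoints varying over horizontal windows of scaled length $\asymp\tot^{-2/3}$. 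Rewriting that weight difference in parabolically adjusted form $\Delta^{\cup}\weight_\tot$ introduces a further correction that, using $|L|\le c\tot^{7/9}$, is of order $\tot^{-4/3}|L|\le c\tot^{-5/9}$ and hence again $O(1)$ after multiplying by $2^{1/2}\tot^{1/3}$. Absorbing both corrections into the $4(h_{1,2}+1)^{1/2}$ term, I would then apply Proposition~\ref{p.fluc} with ${\bf n}=\tot$, ${\bf s_{1,2}}=1$, ${\bf a}\asymp\tot^{-2/3}$ and ${\bf K}$ chosen to match the target threshold, exactly as in the proof of Proposition~\ref{p.fluc}; this yields the bound $4(h_{1,2}+1)^{1/2}+(h_{1,2}+1)^{-1/6}r$ with failure probability $C\exp\{-cr^2/(h_{1,2}+1)^{1/3}\}$, the Gaussian tail surviving the conversion of parameters.

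In the flat regime I would instead use the explicit representation
$M\big[(x,0)\to(y,\tot)\big]=B(y,\tot)-B(x,0)+\max\big\{\sum_{k=1}^{\tot}\big(B(w_k,k-1)-B(w_k,k)\big):x\le w_1\le\cdots\le w_\tot\le y\big\}$,
and compare the $[x,y]$-optimizer with the $[u,v]$-optimizer by clipping each coordinate $w_k$ into $[x',y']$. Since clipping moves $w_k$ by at most a unit and only for indices whose coordinate lies within a unit of an endpoint, the resulting change in $M$ is bounded by the oscillations of $B(\cdot,0)$ and $B(\cdot,\tot)$ over unit intervals near the endpoints, plus a \emph{signed} partial sum $\sum_{k\le j}\big(D_k(w)-D_k(u)\big)$ of the independent rate-two motions $D_k=B(\cdot,k-1)-B(\cdot,k)$ near each endpoint, maximized over $j\le\tot$ and over $w$ in a unit interval. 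By the reflection principle (as in~\eqref{e.reflection}, Lemma~\ref{l.brownianmodcon}, Proposition~\ref{p.microfluc}) and a union bound over $j$, this supremum is $O(\tot^{1/2})$ with a Gaussian tail, uniformly in $|L|$; the bound does not depend on $(x,y)$, so taking the supremum over the box is free.

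The main obstacle is the flat regime with $\tot$ large: there the narrow-wedge parabolic approximation underlying Proposition~\ref{p.fluc} is unavailable because the rescaled horizontal displacement lies outside the admissible window, and the crude surgery bound above produces a tail of the form $\exp\{-cr^2/\tot^{4/3}\}$ rather than the sharper $\exp\{-cr^2/\tot^{1/3}\}$. Fortunately this discrepancy is harmless for the intended use: in the proof of~\eqref{e.gammazerogamma} (hence of Proposition~\ref{p.gammapsibound}) the lemma is applied only with $h_{1,2}=A-1$, a \emph{fixed} constant, so $\tot^{1/3}$ and $\tot^{4/3}$ differ by a bounded factor and the elementary estimate suffices; one may simply restrict the statement, or its proof, to the regime of bounded $h_{1,2}$ needed downstream.
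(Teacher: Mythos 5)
Your proposal has two genuine gaps, and the second one is the heart of the matter.

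First, the ``KPZ regime'' branch rests on an incorrect change of coordinates. From~(\ref{e.weightgeneral}) with ${\bf n}=h_{1,2}$, the point $(v,h_2)$ corresponds in scaled coordinates to horizontal location $(v-u-h_{1,2})/(2h_{1,2}^{2/3})$ relative to $(u,h_1)$ --- the diagonal must be re-centred --- not to $(v-u)/(2h_{1,2}^{2/3})$ as your formula $M[(x,0)\to(y,h_{1,2})]=2h_{1,2}+(y-x)+2^{1/2}h_{1,2}^{1/3}\weight_{h_{1,2}}[(\tfrac{x}{2}h_{1,2}^{-2/3},0)\to(\tfrac{y}{2}h_{1,2}^{-2/3},1)]$ asserts. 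Consequently, in your regime $|v-u|\le c\,h_{1,2}^{7/9}$ with $h_{1,2}$ large, the scaled displacement is of order $h_{1,2}^{1/3}$, far outside the window $|{\bf x}-{\bf y}|s_{1,2}^{-2/3}\le 2^{-2}3^{-1}\rsc(n s_{1,2})^{1/18}$ required by Proposition~\ref{p.fluc} (equivalently by \cite[Theorem~$1.1$]{ModCon}); the regime where that machinery applies is the near-diagonal one, $|v-u-h_{1,2}|\lesssim h_{1,2}^{13/18}$, which is essentially disjoint from the cases the lemma is needed for. Second, in the complementary regime your clipping error is not a random-walk maximum at a fixed location $w$: writing $M[(x,0)\to(y,h_{1,2})]=B(y,h_{1,2})-B(x,0)+\sup\sum_k D_k(w_k)$, the error incurred by clipping the optimizer is $\sup_j\sup\{\sum_{k>j}(D_k(w_k)-D_k(v)):v\le w_{j+1}\le\cdots\le w_{h_{1,2}}\le v+1\}$, a supremum over \emph{monotone sequences}, i.e.\ a unit-width column last passage time. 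Freezing $w$ gives a lower bound for this supremum, not an upper bound, so the claimed $O(h_{1,2}^{1/2})$-with-Gaussian-tail control is not established. Identifying that supremum correctly is unavoidable: it equals (up to a Brownian increment) $M[(v,0)\to(v+1,h_{1,2})]$, whose law is that of the top eigenvalue $G_{h_{1,2}+1}(1)$ of a GUE matrix, and it is precisely this object that produces the centering $4(h_{1,2}+1)^{1/2}$ (two such columns, each contributing $2(h_{1,2}+1)^{1/2}$) and the fluctuation scale $(h_{1,2}+1)^{-1/6}$ with the tail $C\exp\{-cr^2/(h_{1,2}+1)^{1/3}\}$ via the bound~(\ref{aubrunbound1}).

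This is in fact the entire content of the paper's proof: no Brownian Gibbs or \cite{ModCon} input is used. Superadditivity sandwiches $M[(x,h_1)\to(y,h_2)]-M[(u,h_1)\to(v,h_2)]$ between combinations of the deterministic-endpoint quantities $M[(u,h_1)\to(u+1,h_2)]$, $M[(v,h_1)\to(v+1,h_2)]$ (GUE laws) and unit-scale horizontal passage times (Gaussian tails), uniformly in $x,y$ and with no case split on $v-u$ beyond $v\ge u+1$ versus $v=u$. Your closing observation --- that the downstream application only requires $h_{1,2}\le A-1$ with $A$ fixed --- is accurate, and with $h_{1,2}$ bounded one could indeed get by with cruder, $A$-dependent sub-Gaussian estimates; but even then one must bound the column supremum by something other than a fixed-$w$ random walk, so the repair of your flat-regime step essentially forces the superadditivity argument, at which point the two-regime split and the appeal to Proposition~\ref{p.fluc} become superfluous.
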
} 
 The tail of the remainder term indexed by $i = \lceil n/A \rceil$
is also treated by Lemma~\ref{l.adjust} with $h_{1,2}$ assuming a value in $\llbracket 0, A-1 \rrbracket$.  We derive \eqref{e.gammazerogamma} and then prove this lemma.
 
{\bf {Proof of \eqref{e.gammazerogamma}}. }
We rely on the bound 
\begin{align*}E\big(\Gamma(\Psi)\big) - E\big( \Gamma_0(\Psi) \big) &\le \sum_{i=1}^{\lfloor n/A \rfloor} E_i +R\le (R-4 A^{1/2})+4 A^{1/2}+\sum_{i=1}^{\lfloor n/A \rfloor}\big[4 A^{1/2}+(E_i-4 A^{1/2})_+\big]\\
&\le (4 n+A)A^{-1/2}+(R-4 A^{1/2})_+\sum_{i=1}^{\lfloor n/A \rfloor}(E_i-4 A^{1/2})_+,
\end{align*} where $R$ is the remainder term (and $a = \max (a,0)$). 
By Lemma \ref{l.adjust}, the random variables $X_i=(E_i-4 A^{1/2})_+$ are independent and satisfy the uniform tail bound $\P(X_i\ge r)\le Ce^{-cr^2}$. This tail bound is also satisfied by $\hat R=(R-4 A^{1/2})_+$.
By Proposition \ref{p.Vershynin}(2), we thus see that, for $t\ge c \lceil n/A \rceil,$ 
$$
\P\bigg(\sum_{i=1}^{\lceil n/A \rceil}X_{i}+\hat R \ge \sum_{i=1}^{\lceil n/A \rceil}\E(X_{i}) +t\bigg)\le e^{-Ct^2A n^{-1}} \, .
$$

Observing that $\E(X_i)=O(1),$  we see that, for a large enough $A$, and  for all large $n$,
$$\P\Big(E\big(\Gamma(\Psi)\big) - E\big( \Gamma_0(\Psi) \big) \ge (5 + r) n A^{-1/2} 
  \Big) \le e^{-h n r^2},$$ for some constant $h>0$.
  \qed
  
{\bf Proof of Lemma~\ref{l.adjust}.} The argument is simple and relies on using superadditivity to bound the expression on the left-hand side of \eqref{coarseapproxerror} by a linear combination of passage times between deterministic points, which is then easy to bound using the well-known correspondence between passage time in Brownian LPP between fixed points and the largest eigenvalue of a matrix drawn from a suitable Gaussian unitary ensemble.

We now carry out the first part of the above strategy. 
Suppose first that $v \geq u+1$. Note that
\begin{eqnarray*}
& &  M \big[ (x,h_1) \to (u+1,h_1) \big] + M \big[ (u+1,h_1) \to (v,h_2) \big] + M \big[ (v,h_2) \to (y,h_2) \big] \\
& \leq &
M \big[ (x,h_1) \to (y,h_2) \big] \\
& \leq &  M \big[ (x,h_1) \to (u+1,h_2) \big] + M \big[ (u+1,h_1) \to (v,h_2) \big] + M \big[ (v,h_1) \to (y,h_2) \big] \, .
\end{eqnarray*}
These bounds hold also when the replacements $x \to u$ and $y \to v$ are made. Thus,
\begin{eqnarray}
& & \Big\vert M \big[ (x,h_1) \to (y,h_2) \big] - 
M \big[ (u,h_1) \to (v,h_2) \big] \Big\vert \nonumber \\
& \leq & M \big[ (x,h_1) \to (u+1,h_2) \big] - M \big[ (x,h_1) \to (u+1,h_1) \big] \nonumber 
\\
& & \qquad  + \, M \big[ (v,h_1) \to (y,h_2) \big] -  M \big[ (v,h_2) \to (y,h_2) \big] \nonumber \\
& \leq & M \big[ (u,h_1) \to (u+1,h_2) \big] - M \big[ (u,h_1) \to (u+1,h_1) \big] \label{e.rightfour} 
\\
& & \qquad  + \, M \big[ (v,h_1) \to (v+1,h_2) \big] -  M \big[ (v,h_2) \to (v+1,h_2) \big] \, . \nonumber
\end{eqnarray}
Here, the latter inequality depended on 
$$
M \big[ (u,h_1) \to (u+1,h_2) \big] \geq   M \big[ (u,h_1) \to (x,h_1) \big] + M \big[ (x,h_1) \to (u+1,h_2) \big]
$$ 
and  
$$
M \big[ (u,h_1) \to (u+1,h_1) \big] = M \big[ (u,h_1) \to (x,h_1) \big] + M \big[ (x,h_1) \to (u+1,h_1) \big]
$$
as well as  
$$
M \big[ (v,h_1) \to (v+1,h_2) \big] \geq  M \big[ (v,h_1) \to (y,h_2) \big] + M \big[ (y,h_2) \to (v+1,h_2) \big]
$$ 
and 
$$
M \big[ (v,h_2) \to (v+1,h_2) \big] = M \big[ (v,h_2) \to (y,h_2) \big] + M \big[ (y,h_2) \to (v+1,h_2) \big] \, .
$$  
Note that the right-hand expression in~(\ref{e.rightfour}) does not depend on $x$ or $y$, so that the upper bound on the left-hand term is valid when the supremum over $x \in [u,u+1]$ and $y \in [v,v+1]$ is taken. For $n \in \N$ and $s \geq 0$, let $G_n(s)$ denote the uppermost eigenvalue of an $n \times n$ random matrix drawn from the Gaussian unitary ensemble with entry variance $s$. 
Viewing the right-hand expression in~(\ref{e.rightfour})
{as a sum of four terms, we see that the first and third are independent and have the law of $G_{h_{12}+1}(1),$}
and the second and fourth are independent and have the law of $G_1(1)$. The latter two clearly have Gaussian tails.
We also use the following tail estimate of $G_{h_{12}+1}(1)$, which is the content of \cite[(5),(6)]{Aubrun}.
For all $r\ge 0,$
\begin{equation}\label{aubrunbound1}
\P\Big(G_{h_{12}+1}(1)- 2(h_{12}+1)^{1/2}\ge \frac{r}{(h_{12}+1)^{1/6}}\Big)\le C \exp \bigg\{ - c \max \Big(r^{3/2}, \frac{r^2}{(h_{1,2}+1)^{1/3}} \Big) \bigg\}.
\end{equation}

The lemma in the case that $v \geq u+1$ now follows by a simple union bound over the possibilities that one of the four quantities is bigger than $\frac{r}{4(h_{12}+1)^{1/6}}.$

Suppose then that $v < u+1$. Since $u,v \in \N$ and $v \geq u$, we have $v =u$.
Thus, $M \big[ (u,h_1) \to (v,h_2) \big] = 0$. 
Note that 
\begin{equation}\label{e.fourm}
 M \big[ (u,h_1) \to (u+1,h_2) \big] \geq  M \big[ (u,h_1) \to (x,h_1) \big] +  M \big[ (x,h_1) \to (y,h_2) \big] +  M \big[ (y,h_2) \to (u+1,h_2) \big] 
\end{equation}
and that
\begin{equation}\label{e.mtriple}
 M \big[ (u,h_1) \to (u+1,h_2) \big] \leq  M \big[ (u,h_1) \to (x,h_2) \big] +  M \big[ (x,h_1) \to (y,h_2) \big] +  M \big[ (y,h_1) \to (u+1,h_2) \big] \, .
\end{equation}
Hence, by rearranging, we obtain 
$$
M \big[ (x,h_1) \to (y,h_2) \big] \geq  M \big[ (u,h_1) \to (u+1,h_2) \big] - M \big[ (u,h_1) \to (x,h_2) \big] -  M \big[ (y,h_1) \to (u+1,h_2) \big] \, . 
$$
Note further that  
$$
M \big[ (u,h_1) \to (x,h_2) \big] +  M \big[ (x,h_2) \to (u+1,h_2) \big] \leq  M \big[ (u,h_1) \to (u+1,h_2) \big] \, ,
$$
and that 
$$
  M \big[ (u,h_1) \to (y,h_1) \big] 
 + M \big[ (y,h_1) \to (u+1,h_2) \big] \leq  M \big[ (u,h_1) \to (u+1,h_2) \big] \, .  
$$ 
Applying the two preceding bounds, we find that
$$
  M \big[ (x,h_1) \to (y,h_2) \big] \geq 
  - M \big[ (u,h_1) \to (u+1,h_2) \big] +  M \big[ (x,h_2) \to (u+1,h_2) \big]
 +  M \big[ (u,h_1) \to (y,h_1) \big]  \, .
$$
From this bound, and~(\ref{e.fourm}), we see that
\begin{eqnarray*}
 & &  - M \big[ (u,h_1) \to (u+1,h_2) \big] +  M \big[ (x,h_2) \to (u+1,h_2) \big]
 +  M \big[ (u,h_1) \to (y,h_1) \big] \\
 & \leq & 
 M \big[ (x,h_1) \to (y,h_2) \big] \\
& \leq &   
 M \big[ (u,h_1) \to (u+1,h_2) \big] -  M \big[ (u,h_1) \to (x,h_1) \big] -  M \big[ (y,h_2) \to (u+1,h_2) \big] \, .
\end{eqnarray*}
If the first line takes the form $A_1 + A_2 + A_3$,
then $A_1$ has the law of $G_{h_{1,2} + 1}(1)$, while the suprema of $\vert A_2 \vert$ 
and $\vert A_3 \vert$ over  $x \in [u,u+1]$ and $y \in [v,v+1]$ are stochastically dominated by the supremum of standard Brownian motion on the interval $[0,1]$. This statement is equally true of the third line.
Since $M \big[ (u,h_1) \to (v,h_2) \big] = 0$, Lemma~\ref{l.adjust} when $v = u$ follows from 
\eqref{aubrunbound1} 
and an upper tail bound on the supremum of Brownian motion obtained from the reflection principle and a standard bound on the Gaussian tail. This completes the proof of Lemma~\ref{l.adjust}. \qed
\subsubsection{Reordering the trajectory of the coarse-grained cousin}  We now analyse $E \big( \Gamma_0(\Psi) \big)$.
We first argue that the path $\Gamma_0(\Psi)$ may be reordered in order to visit a point far away from the diagonal
without a change to the law of its energy. 
We will present notation and a general form Lemma~\ref{l.rearrange} for such a rearrangement result before discussing the energy $E\big(\Gamma_0(\Psi)\big)$.
 
 Let $P$ denote a collection of points in $\llbracket 0 , n \rrbracket^2$
 with distinct $y$-coordinates, which lie in the range of a staircase that begins at $(0,0)$ and ends at $(n,n)$.
 We write $M_n[P]$ for the supremum of the energies of staircases that begin at $(0,0)$; end at $(n,n)$; that visit every element of $P$; and that, on visiting any element  $(m_1,m_2)$ of $P$ for which $m_2 < n$, immediately jump upwards by one step, to $(m_1,m_2 +1)$.
 When $P$ is a singleton set whose element is $(m_1,m_2)$, we abuse notation, and denote  $M_n[P]$ by $M_n[m_1,m_2]$.

Suppose that $(u,0) \in P$ for some $u \in \llbracket 0,n \rrbracket$.
Thus, the staircases involved in specifying $M_n[P]$ begin at 
$(0,0)$; remain on the $x$-axis until $(u,0)$; 
and end at $(n,n)$.  
 
To the collection $P$, we may add the point
$(n,n)$. The resulting set of points may be ordered so that each successive element lies strictly upwards, and to the right, of its predecessor. To each consecutive pair in this sequence, we may associate the rectangle whose lower-left corner is the former element in the pair and whose upper-right corner is the latter. Any staircase from $(u,0)$ to $(n,n)$ that visits every element of $P$ crosses all of these rectangles, passing out of the upper-right corner  of one into the lower-left corner of the next. 

The rectangles may be placed in increasing order of width, and translated so that the lower-left corner of the lowest rectangle equals $(0,0)$, and the upper-right corner of one rectangle is the lower-left corner of the next one. Note that the upper-right corner of the last rectangle is $(n-u,n)$. 

Let $P^{\rightarrow}$ denote the collection of upper-right corners of the rectangles when so placed.
 \begin{lemma}\label{l.rearrange}
Let $u \in \llbracket 0,n \rrbracket$. Let $P$ denote a collection of points in $\llbracket 0 , n \rrbracket^2$ which satisfies the condition in the second paragraph of this section and which contains $(u,0)$. 
Then $M_n[P]$ and $M_n[P^{\rightarrow}]$ are equal in law.
 \end{lemma}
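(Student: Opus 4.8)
\textbf{Proof plan for Lemma~\ref{l.rearrange}.}
The plan is to show that both $M_n[P]$ and $M_n[P^{\rightarrow}]$ can be written as the same deterministic function of the same finite family of independent Brownian passage-time increments, once those increments are listed in the appropriate order; the equality in law then follows from the exchangeability (indeed, mutual independence and individual distributional self-similarity under horizontal dilation) of the building blocks. First I would decompose the optimization defining $M_n[P]$ strip-by-strip, exactly as in the rectangle picture described just before the statement: writing the ordered elements of $P \cup \{(n,n)\}$ as $q_0 = (u,0), q_1, \dots, q_k = (n,n)$ with each $q_{i+1}$ strictly up and to the right of $q_i$, a staircase counted in $M_n[P]$ is forced, after its mandatory unit upward jump at each $q_i$ with second coordinate less than $n$, to cross the rectangle $R_i$ with lower-left corner $q_i + (0,1)$ (or $q_i$ for $i=0$) and upper-right corner $q_{i+1}$. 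Because the staircase must enter each rectangle at its lower-left corner and leave at its upper-right corner, and because these corners are deterministic, the energy splits as an \emph{independent} sum $M_n[P] = \sum_{i=0}^{k-1} M\big[ (\text{LL of } R_i) \to (\text{UR of } R_i) \big]$, the independence coming from the fact that the rectangles $R_i$ occupy disjoint horizontal strips (the strip between consecutive $y$-coordinates in the list) and Brownian LPP uses independent noise on disjoint strips, together with the mandatory jump ensuring no horizontal segment is shared across a strip boundary.

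Next I would observe that the summand $M\big[(\text{LL of }R_i) \to (\text{UR of }R_i)\big]$ depends on $R_i$ only through its \emph{width} $w_i$ (the horizontal extent) and its \emph{height} $h_i$ (the number of strips it spans): by translation invariance of Brownian LPP in the horizontal coordinate and by the integer-shift invariance in the curve index, this passage time has the law of $M\big[(0,a_i)\to(w_i,a_i+h_i)\big]$ for the appropriate heights $a_i$, and in fact by Lemma~\ref{l.invariance} (or simply Brownian scaling of the constituent motions) its law depends on $w_i$ and $h_i$ only, not on the absolute placement. Crucially, the family of pairs $\{(w_i,h_i): 0 \le i \le k-1\}$ is \emph{identical} for $P$ and for $P^{\rightarrow}$: the construction of $P^{\rightarrow}$ reorders the same rectangles by increasing width and re-stacks them corner-to-corner, so the multiset of widths and the multiset of heights are preserved, and moreover the vertical stacking order is preserved (heights are stacked in the same sequence $h_0, h_1, \dots$ read along the path because the rectangles are stacked in the order listed, only their widths rearranged). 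Hence $M_n[P^{\rightarrow}] = \sum_{i} M\big[(\text{LL of }R_{\sigma(i)}^{\rightarrow}) \to (\text{UR of }R_{\sigma(i)}^{\rightarrow})\big]$ for the width-sorting permutation $\sigma$, and term by term each of these summands has the same marginal law as the corresponding summand of $M_n[P]$, while both sums are sums of independent terms over disjoint strips.

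Therefore $M_n[P]$ and $M_n[P^{\rightarrow}]$ are both equal in law to $\sum_{i=0}^{k-1} X_i$ where the $X_i$ are independent and $X_i$ has the law of the passage time across a rectangle of width $w_i$ and height $h_i$; since reordering a finite sum of independent (not necessarily identically distributed) random variables does not change its law, we conclude $M_n[P] \eqdist M_n[P^{\rightarrow}]$. The main point requiring care — and the step I expect to be the real content rather than bookkeeping — is verifying that the rectangle decomposition is genuinely a decomposition into \emph{independent} pieces and that no noise is double-counted or omitted: this hinges on the mandatory unit vertical jump at each visited point of $P$, which guarantees that the horizontal line segment a staircase uses at the boundary level between two consecutive strips belongs entirely to the rectangle below (it is the single horizontal segment ending at the upper-right corner), so the two adjacent rectangle passage times are measurable with respect to Brownian increments on disjoint (open) horizontal strips. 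I would spell this out by noting that the noise environment restricted to rows with indices in $[a_i, a_{i+1})$ is what determines the $i$-th summand, these row-index sets being pairwise disjoint, and then invoke independence of the constituent Brownian motions across disjoint index sets. The rest — that widths and the vertical order of heights match between $P$ and $P^{\rightarrow}$ — is immediate from the explicit construction of $P^{\rightarrow}$.
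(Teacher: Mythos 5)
Your argument is correct and is essentially the paper's own proof: both decompose the passage time into a sum of independent rectangle-crossing energies carried by disjoint horizontal strips, note that the law of each summand depends only on the rectangle's dimensions (which are preserved rectangle-by-rectangle, since the width-sorting merely translates whole rectangles), and conclude by independence. The one detail your rectangle sum omits is the free horizontal run along row $0$ from $(0,0)$ to $(u,0)$ (whose counterpart in the $P^{\rightarrow}$ configuration is the run along row $n$ from $(n-u,n)$ to $(n,n)$); these two increments are equal in law and independent of the respective rectangle sums, so the conclusion is unaffected, but they should be accounted for when $u>0$.
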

 {\bf Proof.} The quantity $M_n[P]$ equals $A_1 + A_2$, where $A_1 = B(0,u) - B(0,0)$ 
 is the energy accrued along $(0,0) \to (u,0)$ and where $A_2$ is the sum over the rectangles associated to $P$ of the maximum energy available in a staircase that crosses the rectangle from its lower-left to its upper-right corner. The quantity $M_n[P^{\rightarrow}]$ equals $A_1 + B_2$, where $B_2$ is the sum, counterpart to~$A_2$, over rectangles associated to $P^{\rightarrow}$. The two collections of rectangles are disjoint, except for endpoint intersections, and one is a rearrangement of the other given by translations applied to the elements. So $A_2$ and $B_2$ are equal in law, conditionally on the value of $A_1$. This proves the lemma. \qed

We now employ Lemma~\ref{l.rearrange} to find an upper bound on the energy $E\big(\Gamma_0(\Psi)\big)$ of the coarse-grained staircase. 
In the notation of Lemma~\ref{l.rearrange}, this energy takes the form 
 \begin{equation}\label{e.energyform}
E\big(\Gamma_0(\Psi)\big) = 
M_n[P] \, ,
 \end{equation}
where $P = \big\{ \big(\sum_{j=0}^i \Psi(j),iA\big): i \in \llbracket 0,m\rrbracket\big\} \cup \big\{ (n,n) \big\}$.
By this lemma, $M_n[P]$ and $M_n[P^{\rightarrow}]$ are equal in law. Note that if $\alpha\ge 1/2$ then $P^{\rightarrow}$ contains a point of the form $\big(\kappa m/2, A m/2\big)$ where $\kappa \leq 2$ is determined by the given element $\Psi \in \Theta$ that we are considering. Thus, 
\begin{equation}\label{offdiagfluc}
M_n[P^{\rightarrow}] \leq M_n\big[ \big(\kappa m/2, A m/2\big) \big].
\end{equation}

\subsubsection{The energetic penalty for highly off-diagonal travel: deriving~(\ref{e.gammazerogamma})}
In this third step, we present a tool indicating how $M_n\big[ (\kappa \alpha m, A \alpha m) \big]$ typically falls far below the typical energy maximum~$2n$ for the route $(0,0) \to (n,n)$. We will then promptly be able to obtain~(\ref{e.gammazeropsi}). Finally,
from~(\ref{e.gammazerogamma}) and~(\ref{e.gammazeropsi}), we will obtain~(\ref{e.gammapsibound}), the derivation of which is the final step in proving Theorem~\ref{t.notallcliffs}.
 \begin{lemma}\label{l.offdiagonal}
Let $\mu \in (0,1)$. Let $m_1,m_2 \in \llbracket 0,n \rrbracket$ satisfy  $m_2 \in \big[\mu n , (1 - \mu)n\big]$ and $\big\vert m_2 - m_1 \big\vert \geq \mu n$. Then 
there exist positive $\mu$-dependent $K$ and $\kappa$ such that
$$
 \PP \big( M_n[m_1,m_2] \geq (2 - \kappa)n  \big) \leq K e^{-\kappa n} \, . 
$$
 \end{lemma}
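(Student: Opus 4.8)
The plan is to decompose $M_n[m_1,m_2]$ additively into two independent last passage values, each of which is a point-to-point energy in Brownian LPP, and then to use the fact that forcing a geodesic to pass through a point at macroscopic distance $\Theta(n)$ from the diagonal costs $\Theta(n)$ in energy at the level of the law of large numbers, together with the exponential upper-tail bound for the GUE largest eigenvalue. Concretely, recall that the staircases implicated in $M_n[m_1,m_2]$ travel from $(0,0)$ to $(m_1,m_2)$, then make a unit vertical jump to $(m_1,m_2+1)$, then travel to $(n,n)$; thus, using the correspondence between Brownian LPP passage times between deterministic points and largest eigenvalues of GUE matrices (as invoked already in the proof of Lemma~\ref{l.adjust}), we have
$$
M_n[m_1,m_2] \, \eqdist \, G_{m_2}\big( \tfrac{m_1}{m_2} \big) + \big(B(m_1,m_2) - B(m_1,m_2+1)\big) + G_{n - m_2 - 1}\big( \tfrac{n - m_1}{n - m_2 - 1} \big) \, ,
$$
where $G_k(s)$ is, as in the proof of Lemma~\ref{l.adjust}, the top eigenvalue of a $k \times k$ GUE matrix with entry variance $s$, the three terms are independent, and the middle (Brownian increment) term is a standard Gaussian that is negligible. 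Here I am using the classical identity that $M\big[(0,0)\to(x,k)\big]$ for $x,k>0$ has the law of $\sqrt{x/k}$ times the top eigenvalue of a $k\times k$ standard GUE matrix, i.e.\ $G_k(x/k)$; the law of large numbers for the GUE spectral edge gives $G_k(s) = 2\sqrt{k}\sqrt{s}\,(1 + o(1)) = 2\sqrt{ks}(1+o(1))$, so that $\E\, M\big[(0,0)\to(x,k)\big] \approx 2\sqrt{xk}$.

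The point is then purely one of arithmetic: with $m_1 = \kappa m/2 = \Theta(n)$ and $m_2 = A m /2 = \Theta(n)$ in the regime of interest — or, in the abstract statement, $m_2 \in [\mu n, (1-\mu)n]$ and $|m_2 - m_1| \ge \mu n$ — the sum of the two ``typical'' values $2\sqrt{m_1 m_2} + 2\sqrt{(n-m_1)(n-m_2)}$ is strictly less than $2n$ by a $\mu$-dependent positive multiple of $n$. Indeed, $\sqrt{ab} \le (a+b)/2$ with equality only when $a=b$, and the constraint $|m_2 - m_1| \ge \mu n$ keeps $m_1$ and $m_2$ quantitatively separated, so that $\sqrt{m_1 m_2} \le (m_1 + m_2)/2 - c(\mu) n$ for a positive $c(\mu)$; the second term is bounded by $(n-m_1 + n - m_2)/2$ without needing strictness; summing, the deterministic bound is at most $2n - c(\mu) n$. (One should check the worst case, where $n - m_1$ and $n - m_2$ are close and the second term is near-maximal; the separation $|m_2 - m_1| \ge \mu n$ still forces a loss of order $n$ in the first term, which is not compensated.) Fix $\kappa = c(\mu)/2 > 0$. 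Then the event $\{ M_n[m_1,m_2] \ge (2-\kappa)n \}$ forces at least one of: $G_{m_2}(m_1/m_2)$ exceeds its mean by $\Theta(n) = \Theta(\sqrt{m_2}) \cdot \Theta(\sqrt{m_2})$; $G_{n-m_2-1}(\cdot)$ likewise; or the Gaussian increment exceeds $\Theta(n)$.

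The tail control comes from \eqref{aubrunbound1} (the bound quoted from \cite{Aubrun}): writing $G_k(s) = \sqrt{s}\, G_k(1)$ by scaling, a deviation of $G_k(1)$ above $2\sqrt{k}$ by an amount $\delta \sqrt{k}$ with $\delta = \Theta(1)$ corresponds in the notation of \eqref{aubrunbound1} to $r/(k)^{1/6}$ being a constant multiple of $\sqrt{k}$, i.e.\ $r = \Theta(k^{2/3})$, so the right-hand side of \eqref{aubrunbound1} is at most $C\exp\{ - c\, r^2/k^{1/3} \} = C\exp\{-c\, k\} = C\exp\{-\Theta(n)\}$, using $k = \Theta(n)$ (and here it is essential that $m_2 \ge \mu n$ and $n - m_2 - 1 \ge \mu n - 1 = \Theta(n)$, which is exactly the hypothesis $m_2 \in [\mu n,(1-\mu)n]$). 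The Gaussian increment contributes at most $C\exp\{-\Theta(n^2)\}$, which is smaller. A union bound over these $O(1)$ contingencies then gives $\PP\big( M_n[m_1,m_2] \ge (2-\kappa)n \big) \le K e^{-\kappa n}$ after adjusting $K$ and $\kappa$, proving the lemma; and combined with \eqref{offdiagfluc} and the distributional identity \eqref{e.energyform}, this yields \eqref{e.gammazeropsi}, whence \eqref{e.gammapsibound} follows from \eqref{e.gammazerogamma}.

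The main obstacle, such as it is, is the elementary but slightly fiddly optimization establishing that $2\sqrt{m_1 m_2} + 2\sqrt{(n-m_1)(n-m_2)}$ is uniformly bounded away from $2n$ by $\Theta(\mu n)$ under the stated constraints — one must be careful that the off-diagonal excess does not somehow cancel between the two geodesic segments, which is where the two-sided hypothesis $m_2 \in [\mu n, (1-\mu)n]$ and the lower bound $|m_2 - m_1| \ge \mu n$ are used. The probabilistic input (the concentration of the GUE edge, already cited in the paper via \cite{Aubrun}) is entirely standard and requires only bookkeeping of the scaling exponents to confirm that a macroscopic deviation of the edge has probability $e^{-\Theta(n)}$ rather than a slower rate.
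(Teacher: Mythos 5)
Your proof follows the paper's argument essentially verbatim: decompose $M_n[m_1,m_2]$ into two independent point-to-point passage times, identify each with a GUE top eigenvalue, observe via the AM--GM deficit that the deterministic leading order $2\sqrt{m_1 m_2}+2\sqrt{(n-m_1)(n-m_2)}$ falls short of $2n$ by a $\mu$-dependent multiple of $n$, and control the fluctuations at rate $e^{-\Theta(n)}$ using the bound quoted from \cite{Aubrun} together with the hypothesis that both matrix dimensions $m_2$ and $n-m_2$ are of order $n$. The only blemishes are cosmetic: the stated identity that $M\big[(0,0)\to(x,k)\big]$ has the law of $G_k(x/k)$ should read $x^{1/2}G_{k+1}(1)$ (your subsequent use of the correct mean $2\sqrt{xk}$ shows this is a slip rather than an error that propagates), and the ``Brownian increment'' attributed to the unit vertical jump is in fact identically zero, since energy accrues only along horizontal segments.
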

 {\bf Proof.} Note that $M_n[m_1,m_2] = M_n \big[ (0,0) \to (m_1,m_2) \big] + M_n \big[ (m_1,m_2) \to (n,n) \big]$
 is a sum of two independent terms having the respective distributions $m_1^{1/2} G_{m_2 + 1}(1)$ and  $(n - m_1)^{1/2}G_{n - m_2 +1}(1)$.
 In an expression for this sum arising by writing the two $G$ terms as a sum of a leading order term and a random fluctuation, the deterministic part is 
 $$
 2 \big( m_1^{1/2} (m_2 + 1)^{1/2} + (n - m_1)^{1/2}(n - m_2 +1)^{1/2} \big) \, . 
 $$
 This term falls short of $2n$ by a quantity that grows linearly in $n$ under the hypotheses of the lemma. The fluctuation term in the sum equals $m_1^{1/2} (m_2 + 1)^{-1/6}R_1  + (n - m_1)^{1/2}(n - m_2 +1)^{-1/6} R_2$ where the $R$-terms are independent random variables that satisfy $\PP \big( R \geq r \big) \leq C e^{-cr^{3/2}}$ for $r \geq 0$. 
 By our hypothesis on $m_2$, the fluctuation term is at most a constant multiple of $n^{1/3} (R_1+R_2)$. Thus the fluctuation term exceeds one-half of the shortfall of the leading term with probability at most  $K e^{-\kappa n}$ for suitable positive $K$ and $\kappa$. Decreasing the value of $\kappa > 0$ completes the proof of Lemma~\ref{l.offdiagonal}. \qed

From Lemma~\ref{l.offdiagonal}, we learn that for $\kappa, A$ as in \eqref{offdiagfluc},
$$
 \PP \Big(  M_n\big[ (\kappa m/2, A m/2 ) \big] \geq (2 - \kappa) n \Big) \leq K \exp \big\{ - \kappa n \big\}
$$
for suitable positive $K$ and $\kappa$ not depending on $A$. Thus, we confirm~(\ref{e.gammazeropsi}) via~(\ref{e.energyform}).

From~(\ref{e.gammazerogamma}) and~(\ref{e.gammazeropsi}), we find that
$$
 \PP \Big( E\big( \Gamma(\Psi) \big) \geq   \big( (2 - \kappa)  + (5 + r)  A^{-1/2} \big) n  \Big) \leq 
  H \exp \big\{ - h n r^2 \big\}  +  K \exp \big\{ - \kappa n \big\} \, .
$$
Setting $r = 1$ and choosing $A$ high enough that $6 A^{-1/2} \leq \kappa/2$, we learn that
$$
 \PP \Big( E\big( \Gamma(\Psi) \big) \geq    (2 - \kappa/2)  n  \Big) \leq 
  H \exp \big\{ - h n \big\} \, ,
$$
where we have relabelled the positive constants $H$ and $h$.

Using the bound Lemma~\ref{l.onepointbounds}(2) on the lower tail of the uppermost GUE eigenvalue, we have  
$$
 \PP \Big( E(\Gamma_n) \leq 2n - x n^{1/3} \Big) \leq C \exp \big\{ - c x^{3/2} \big\}  
$$
for $x \geq 0$. Setting $x$ equal to $(c \wedge \kappa/4)n^{2/3}$,
we confirm from the two preceding displays that~(\ref{e.gammapsibound}) holds, 
after suitable adjustment to the positive values of $H$ and $h$. This bound derived, the proof of Proposition~\ref{p.gammapsibound} is complete, and, with it, the derivation of the elements needed for Theorem~\ref{t.notallcliffs}. \qed

\subsection{Few cliffs in scaled coordinates}

In \cite{Dynamics}, use is made of a scaled counterpart to Theorem~\ref{t.notallcliffs}. We finish by presenting the notation needed to express this result; by stating it as Proposition~\ref{p.notallcliffs}; and by deriving it from Theorem~\ref{t.notallcliffs}.

We start by recalling some notation: $\rho_n$ denotes the polymer $\rho_n \big[ (0,0) \to (0,1) \big]$. 
For $i \in \llbracket 0, n \rrbracket$, $\rho_n(i/n)$ equals  the supremum of the set $\big\{ x \in \R : (x,i/n) \in \rho_n \big\}$. The sequence $\big\{ \rho_n(i/n): i \in \llbracket 0, n \rrbracket \big\}$
records the horizontal coordinates of departures of the polymer $\rho_n$ from the consecutive horizontal intervals that it traverses. Indeed, the projections to~$\R$ of the horizontal intervals of $\rho_n$ take the form $[0,\rho_n(0)]$ and 
$\big[ \rho_n\big((i-1)/n\big) - 2^{-1}n^{-2/3} , \rho_n(i/n) \big]$ for $i \in \intint{n}$. Thus, by writing $\omega_0 = \rho_n(0)$ and $\omega_i = \rho_n(i/n) - \rho_n \big((i-1)/n\big) + 2^{-1}n^{-2/3}$ for $i \in \intint{n}$, the lengths of the consecutive horizontal intervals of $\rho_n$ are recorded in the sequence $\big\{ \omega_i: i \in \llbracket 0 , n \rrbracket \big\}$. 
The unscaled preimage $R_n^{-1}(\rho_n)$ of $\rho_n$ has endpoints with horizontal coordinates zero and $n$, so 
the form~(\ref{e.scalingmap}) of the scaling map $R_n:\R^2 \to \R^2$  implies that $\sum_{i=0}^n \omega_i = 2^{-1}n^{1/3}$.

Let $\beta_1 > 0$ and $\beta_2 \in (0,1)$. The polymer $\rho_n$ is said to advance horizontally with $(\beta_1,\beta_2)$-steadiness if the cardinality of the set of $i \in \llbracket 0,n \rrbracket$
for which $\omega_i \geq \beta_1 n^{-2/3}$ is at least $\beta_2 n$.

\begin{proposition}\label{p.notallcliffs}
There exist  $\beta_1 > 0$, $\beta_2 \in (0,1)$, $h > 0$ and $n_0 \in \N$ such that, for $n \geq n_0$, the probability that $\rho_n$ fails to  advance horizontally with $(\beta_1,\beta_2)$-steadiness is at most $e^{-hn}$.
\end{proposition}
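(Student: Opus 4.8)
The plan is to deduce Proposition~\ref{p.notallcliffs} from Theorem~\ref{t.notallcliffs} by translating the ``few cliffs'' statement for the unscaled geodesic $\Gamma_n$ into the ``steady horizontal advance'' statement for the scaled polymer $\rho_n$. The two objects are related by the scaling map $R_n$ of~(\ref{e.scalingmap}): since $R_n$ sends horizontal displacements to horizontal displacements scaled by $2^{-1}n^{-2/3}$ and is a bijection between staircases and $n$-zigzags, the polymer $\rho_n = \rho_n[(0,0)\to(0,1)]$ is the image under $R_n$ of the geodesic $\Gamma_n$ from $(0,0)$ to $(n,n)$. In particular, with $\omega_i$ defined as in the paragraph preceding the proposition (so that $\omega_i n^{2/3}$ records, up to the fixed shift $2^{-1}$ coming from the vertical segments becoming sloped lines, the length of the $i$-th unscaled horizontal interval of $\Gamma_n$), we have $\omega_i = 2^{-1}n^{-2/3}\big(X^{\mathrm{u}}_i + 1\big)$, where $X^{\mathrm{u}}_i$ is the unscaled horizontal advance of $\Gamma_n$ along the horizontal segment at height $i$. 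So the event $\{\omega_i \ge \beta_1 n^{-2/3}\}$ is the event that this unscaled horizontal advance is at least $2\beta_1 - 1$; choosing $\beta_1$ so that $2\beta_1 - 1 > 2$, this forces the advance to be at least $3$.

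First I would make the correspondence precise at the level of the coarse-grained quantities in Theorem~\ref{t.notallcliffs}. Fix the constant $A > 0$, the threshold $\alpha_0 \in (1/2,1)$, and $h > 0$ furnished by that theorem, and let $m = \lfloor n/A \rfloor$. Recall that $\mc{I}(\Gamma_n)$ is the set of indices $i \in \llbracket 0,m \rrbracket$ for which $\Psi(i) = Z_{i+1}-Z_i \le 2$, where $Z_i = \lfloor X_i \rfloor$ and $X_i$ is the horizontal coordinate at which $\Gamma_n$ departs height $iA$. The key deterministic observation is that if $\Gamma_n$ makes at most $2$ net unit-scale horizontal units of progress across a height-$A$ strip, then by pigeonhole at least $A - 2$ of the $A$ constituent horizontal segments of $\Gamma_n$ in that strip have horizontal advance at most $2$ (indeed, the sum of the individual advances is at most the net progress plus the number of segments, which here is at most $2 + 1 = 3$ integer units since only one vertical milestone is crossed within a strip, wait---more carefully, the sum of the $\lfloor\cdot\rfloor$-advances across a strip is $Z_{i+1}-Z_i$, so if this is $\le 2$ then at most $2$ of the $A$ segments in that strip can have advance $\ge 1$ with the rest having advance $0$, hence at least $A-2$ segments have $\omega$-value below the threshold). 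Consequently, on the event $\{|\mc{I}(\Gamma_n)| \ge \alpha m\}$ of Theorem~\ref{t.notallcliffs}, a deterministic count gives that at least $(A-2)\alpha m \ge (A-2)\alpha_0 n/A - (A-2)$ of the segments of $\Gamma_n$ have unscaled horizontal advance at most $2$, i.e.\ at least $\beta_2 n$ of the $\omega_i$ satisfy $\omega_i < \beta_1 n^{-2/3}$ for a suitable $\beta_2 \in (0,1)$ depending on $A$ and $\alpha_0$. Equivalently, the complementary event---that fewer than $\beta_2 n$ indices $i$ have $\omega_i \ge \beta_1 n^{-2/3}$---is contained in $\{|\mc{I}(\Gamma_n)| \ge \alpha m\}$ for an appropriate choice of $\alpha \in [\alpha_0,1)$. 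Care is needed to align the free parameter $\alpha$ in Theorem~\ref{t.notallcliffs} (which ranges over $[\alpha_0,1)$ subject to $\alpha m \in \N$) with the fixed $\beta_2$ we want; this is handled by taking $\alpha$ to be the least admissible value at least $\alpha_0$ and then choosing $\beta_2$ small enough in terms of $A$, $\alpha_0$ to absorb the $O(1/n)$ slack.

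Having set up this containment, I would finish by invoking Theorem~\ref{t.notallcliffs} directly: the probability of the complementary (non-steadiness) event is at most $\PP\big(|\mc{I}(\Gamma_n)| \ge \alpha m\big) \ge 1 - e^{-hn}$---wait, that inequality goes the wrong way; rather, Theorem~\ref{t.notallcliffs} bounds $\PP\big(|\mc{I}(\Gamma_n)| < \alpha m\big) \ge 1 - e^{-hn}$, so $\PP\big(|\mc{I}(\Gamma_n)| \ge \alpha m\big) \le e^{-hn}$, and hence $\PP(\rho_n \text{ not } (\beta_1,\beta_2)\text{-steady}) \le e^{-hn}$. A small subtlety: Theorem~\ref{t.notallcliffs} requires $\alpha m \in \N$, so one should phrase the reduction using the largest integer multiple structure available and note that rounding changes $\beta_2$ only by a vanishing amount, which is harmless. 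The main obstacle---though it is more bookkeeping than genuine difficulty---is the deterministic combinatorial step relating per-strip coarse-grained advance $\Psi(i) \le 2$ to the number of individual horizontal segments with small advance, and in particular getting the constants $\beta_1, \beta_2$ to come out consistently with the pigeonhole bound while respecting that $\Psi$ records $\lfloor X_i \rfloor$ differences rather than exact advances (so there is an extra $\pm 1$ rounding at each strip boundary that must be tracked). Once that is done, and the parameters $\beta_1 = 3/2$ (say, so $2\beta_1 - 1 = 2$, or slightly larger to be safe), $\beta_2 = \tfrac12 (1 - 2/A)\alpha_0$, and $n_0$ chosen large enough to kill lower-order terms, the proof is complete, with the same $h$ as in Theorem~\ref{t.notallcliffs}.
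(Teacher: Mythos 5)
Your overall strategy --- deduce the proposition from Theorem~\ref{t.notallcliffs} via the scaling map and a deterministic count of horizontal segments --- is the paper's, but your deterministic step points in the wrong direction, and this is a genuine gap. You analyse the \emph{cliff} strips, those $i$ with $\Psi(i)\le 2$, and conclude that on $\{|\mc{I}(\Gamma_n)|\ge\alpha m\}$ many individual horizontal segments have small advance. The implication you thereby obtain is ``many cliffs $\Rightarrow$ many slow segments'', whose contrapositive is ``few slow segments $\Rightarrow$ few cliffs''. What the proof needs is the containment $\{\rho_n\ \textrm{not}\ (\beta_1,\beta_2)\textrm{-steady}\}\subseteq\{|\mc{I}(\Gamma_n)|\ge\alpha m\}$, i.e.\ ``few cliffs $\Rightarrow$ many \emph{fast} segments''. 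These are not equivalent: ``at least $\beta_2 n$ segments with $\omega_i<\beta_1 n^{-2/3}$'' is not the complement of ``fewer than $\beta_2 n$ segments with $\omega_i\ge\beta_1 n^{-2/3}$'' (both can hold simultaneously; indeed your conclusion on the many-cliffs event is nearly vacuous, since the unscaled advances $W_j$ average to about one, so most segments always have advance at most two). The ``Equivalently'' in your third paragraph is therefore a non sequitur.

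The correct deterministic step, which is the one the paper makes, looks at the \emph{non}-cliff strips: if $i\notin\mc{I}(\Gamma_n)$ then $\Psi(i)\ge 3$, so the total unscaled horizontal advance $\sum_{j=iA}^{(i+1)A-1}W_j$ across that strip is at least two, and pigeonhole furnishes one segment $j$ in the strip with $W_j\ge 2/A$, i.e.\ $\omega_j\ge A^{-1}n^{-2/3}$ (note that $\omega_j=2^{-1}n^{-2/3}W_j$; your extra ``$+1$'' is spurious). On $\{|\mc{I}(\Gamma_n)|<\alpha m\}$ there are at least $(1-\alpha)m$ such strips, each contributing a distinct fast segment, yielding steadiness with $\beta_1$ of order $A^{-1}$ and $\beta_2$ slightly below $(1-\alpha)/A$; hence non-steadiness implies many cliffs, and Theorem~\ref{t.notallcliffs} finishes the proof. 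Note also that your proposed $\beta_1=3/2$ cannot work with any version of this argument: the pigeonhole only guarantees segments of unscaled length $2/A\ll 1$, so $\beta_1$ must be taken of order $A^{-1}$, not of unit order. (A further small error in your parenthetical: $Z_{i+1}-Z_i\le 2$ does not force ``at most $2$ of the $A$ segments have advance $\ge 1$'', since the individual advances are real-valued and their sum is $X_{i+1}-X_i$, not $Z_{i+1}-Z_i$.)
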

{\bf Proof.}
For $i \in \llbracket 0, n \rrbracket$, let $W_i$ denote the length of the horizontal interval at which $\Gamma_n$ intersects $\R \times \{ i \}$. The geodesic $\Gamma_n$ maps to the polymer $\rho_n$ under the scaling map $R_n$ from Subsection~\ref{s.scalingmap}. Recall that $\omega_i$ denotes the length of the horizontal interval at which $\rho_n$ intersects $\R \times \{ i/n \}$. The form of the scaling map thus dictates that $\omega_i = 2^{-1}n^{-2/3} W_i$.

Suppose that, for some $i \in \llbracket 0, m-1 \rrbracket$, $\Psi(i) \ge 2$.
The sum $\sum_{j= iA}^{(i+1)A - 1} W_j$ is readily seen to be at least two. Thus, there is at least one $j \in \llbracket iA , (i+1)A - 1 \rrbracket$ for which $W_j \geq 2A^{-1}$.

If $\big\vert \mc{I}(\Gamma_n) \big\vert \leq \alpha m$, then the cardinality of the set of $i \in \llbracket 0, mA \rrbracket$ for which $\omega_i \geq 2A^{-1} n^{-2/3}$
is thus seen to be at least $ (1-\alpha)m$. In the language of Theorem~\ref{t.notallcliffs}, the event that $\big\vert \mc{I}(\Gamma_n) \big\vert \leq \alpha m$ entails that $\rho_n$ advances horizontally with $(\beta_1,\beta_2)$-steadiness, for $\beta_1 = 2A^{-1}$, $\beta_2 = \frac{(1-\alpha)}{A}$, and for~$n$ at least a level $n_0$ determined by $\beta_2$.

Thus Theorem~\ref{t.notallcliffs} implies Proposition~\ref{p.notallcliffs}. \qed

\bibliographystyle{plain}

\bibliography{airy}

\end{document}